\pgfplotsset{compat=1.6}
\pgfplotsset{soldot/.style={color=blue,only marks,mark=*}} 
\pgfplotsset{holdot/.style={color=blue,fill=white,only marks,mark=*}}
\theoremstyle{plain}
\newtheorem{Thm}{Theorem}
\newtheorem{Cor}[Thm]{Corollary}
\newtheorem{Conj}[Thm]{Conjecture}
\newtheorem{Lem}[Thm]{Lemma}
\newtheorem{Prop}[Thm]{Proposition}
\theoremstyle{definition}
\newtheorem{Def}[Thm]{Definition}
\newtheorem{Rmk}[Thm]{Remark}
\newtheorem{Eg}[Thm]{Example}
\theoremstyle{remark}
\def\pprec{\mathrel{\scalebox{.9}[1]{$\prec$}\mkern-5mu%
  \scalebox{.4}[1]{$\prec$}\mkern-5.5mu\scalebox{.4}[1]{$\prec$}}} 
 \def\ppreceq{\mathrel{\scalebox{.9}[1]{$\preceq$}\mkern-5mu%
  \scalebox{.4}[1]{$\preceq$}\mkern-5.5mu\scalebox{.4}[1]{$\preceq$}}} 
\begin{document}

 
\title[Continuity of entropy and natural extensions]{Continuity of entropy  for all  $\alpha$-deformations of an infinite class of continued fraction transformations}
\author{Kariane Calta}
\address{Vassar College\\
Poughkeepsie
NY 12604, U.S.A.}  
\email{kacalta@vassar.edu }

\author{Cor Kraaikamp}
\address{Technische Universiteit Delft and Thomas Stieltjes Institute of Mathematics\\ EWI\\ Mekelweg 4\\ 2628 CD Delft, the Netherlands}
\email{c.kraaikamp@tudelft.nl}

\author{Thomas A. Schmidt}
\address{Oregon State University\\Corvallis, OR 97331, USA}
\email{toms@math.orst.edu}
\subjclass[2020]{11K50 (primary); 37A10, 37E05, 28D20, 37D40
(secondary).}  
\date{16 March 2023}


\begin{abstract}    We extend the results of our 2020 paper in the Annali della Scuola Normale Superiore di Pisa, Classe di Scienze.
There, we associated to each of an infinite family of triangle Fuchsian groups a one-parameter family of continued fraction maps and  showed that the matching (or, synchronization) intervals are of full measure.   Here, we find planar extensions of each of the maps, and prove the continuity of the entropy function associated to each one-parameter family.   

We also  introduce a notion of ``first pointwise expansive power" of an eventually expansive interval map. We prove that  for every map in one of our  one-parameter families its first pointwise expansive power map  has its natural extension given by the first return of the geodesic flow to a cross section in the unit tangent bundle of the hyperbolic orbifold uniformized by the corresponding group.   We conjecture that this holds for all of our maps. We give  numerical evidence for the conjecture.
\end{abstract}

\maketitle

\tableofcontents

\section{Introduction}  Shortly after the introduction at the end of the 1950s  of the Kolmogorov--Sinai  entropy, hereafter simply \emph{entropy}, Rohlin \cite{Rohlin61} introduced the notion of the natural extension of a dynamical system.
In briefest terms,    the natural extension is the minimal invertible dynamical system of which the original system is a factor under a surjective map. 
Rohlin showed that the original system and its natural extension share entropy values.

At least since the 1977 \cite{NakadaItoTanaka},  natural extensions have been exploited to  determine various properties of continued fractions and related interval maps. In particular,  the continuity of the entropy function on the parameter interval for  Nakada's  $\alpha$-continued fractions, of the 1981  \cite{N}, was proven using natural extensions.  Initiated in \cite{N}, this approach was  pursued in \cite{KraaikampNewClass}, with a further significant step  provided in 2008  by \cite{LuzziMarmi08}, who conjectured that continuity holds on the full parameter interval. The conjecture   was proven  in 2012 independently by  \cite{CT}   and \cite{KraaikampSchmidtSteiner}.    

A common technique in those 2012 proofs is the use of `matching' (also known in the literature as: synchronization, cycle property, etc.).  On a matching parameter interval, for each map the orbits of endpoints of the interval of definition meet one another in finitely many steps and do so in  a common fashion;  planar models of the  natural extensions vary continuously over the interval.   This causes the entropy to change continuously along these parameter intervals.

Katok-Ugarcovici  \cite{KatokUgarcoviciStructure} introduced the family of $(a,b)$-continued fraction maps and determined the full subset of its two-dimensional parameter set for which matching occurs; see also    \cite{CIT,   KatokUgarcovici12}.   These continued fractions, as also the Nakada family, are  associated to the modular group $\text{PSL}_2(\mathbb Z)$.    To each of the triangle Fuchsian groups known as the Hecke groups, \cite{DKS} associated a one-parameter family of continued fraction maps and began the study of their entropy functions; see also \cite{KraaikampSchmidtSmeets}.     In \cite{CaltaKraaikampSchmidt},   to each of another infinite family of triangle Fuchsian groups we associated a one-parameter family of continued fraction maps and determined the set of matching intervals (there called ``synchronization intervals").   

 Series \cite{Series} showed that the regular continued fraction interval map is a factor of  the first return of the geodesic flow to a cross section in the unit tangent bundle of the hyperbolic orbifold uniformized by the modular group.
Luzzi and Marmi \cite{LuzziMarmi08} conjectured that  this is also true for all of the  Nakada $\alpha$-continued fraction maps.  Using the 2012 results on the entropy function, this was proven in \cite{ArnouxSchmidtCross}.    Related results for 
$(a,b)$-continued fractions maps, using coding of geodesics, can be found in \cite{KatokUgarcovici12}.

See 
\S~\ref{ss:quoVadimus}  for more on our motivation for this project.

\subsection{Main results}

The bulk of this paper is devoted to determining explicit planar natural extensions for the infinite family of continuous deformations of interval maps defined and studied in \cite{CaltaKraaikampSchmidt}.   We give detailed descriptions of candidate domains for the planar natural extensions  for maps parametrized along  matching  intervals; these candidates are bijectivity domains for the planar two-dimensional map associated to the given interval map.   This is fairly straightforward for small values of $\alpha$, see \S ~\ref{s:relationsOnHts},~\ref{s:MainForSmallAlps};  but rather intricate for larger values, see \S~\ref{ss:terseForBigAlps}--~\ref{s:largeAlpsRightSide}.   
With these  domains in hand,  we can prove both that they do indeed form the planar natural extensions and furthermore 
that their appropriate limits give natural extensions for the remaining maps.    The explicit natural extensions $\Omega_{n,\alpha}$ are key to proving the following dynamical properties of the systems of the interval maps.   (The basic terminology and notation of \cite{CaltaKraaikampSchmidt} are recalled in \S\S ~\ref{ss:notation} and~\ref{ss:terseRev}.)

 \begin{Thm}\label{t:naturallyErgodicParTout}  Fix $n \ge 3$.   For  $\alpha \in (0,1)$, let $\mu_{\alpha}$  be the normalization of $\mu$ to a probability measure on $\Omega_{\alpha}= \Omega_{n, \alpha}$, and $\mathscr B'_{\alpha}$ denote the Borel sigma algebra on $\Omega_{\alpha}$.      Then the system $(\mathcal T_{\alpha}, \Omega_{\alpha}, \mathscr B'_{\alpha}, \mu_{\alpha})$ is the natural extension of  $(T_{\alpha},\mathbb I_{\alpha},  \mathscr B_{\alpha}, \nu_{\alpha})$, where $\nu_{\alpha}$ is the marginal measure of $\mu_{\alpha}$ and $\mathscr B_{\alpha}$  the Borel sigma algebra on $\mathbb I_{\alpha}$.   Furthermore, both systems are ergodic. 
\end{Thm}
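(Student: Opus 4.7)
The plan is to verify the three standard conditions characterizing a natural extension (invertibility, factor relation, generation), then deduce ergodicity via Rohlin. The whole scheme mirrors the proof pattern in \cite{NakadaItoTanaka}, \cite{KraaikampNewClass}, and the second-named author's later work on Nakada's family, adapted to the explicit $\Omega_{n,\alpha}$ constructed in Sections~\ref{s:relationsOnHts}--\ref{s:largeAlpsRightSide}.

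\textbf{Step 1: invertibility and invariance.} First I would check that $\mathcal T_\alpha$ is a bijection on $\Omega_{n,\alpha}$ up to a set of $\mu$-measure zero. This should be immediate from the explicit construction of $\Omega_{n,\alpha}$, since the candidate domain was built as a bijectivity domain for the planar map. Because each branch of $\mathcal T_\alpha$ is the planar extension of a M\"obius transformation, it preserves the appropriate invariant two-form (of the form $dx\,dy/(1-xy)^2$ used throughout \cite{CaltaKraaikampSchmidt}), so $\mu$ is $\mathcal T_\alpha$-invariant, and normalization yields the probability measure $\mu_\alpha$. For $\alpha$ outside a matching interval, I would realize $\Omega_{n,\alpha}$ as a Hausdorff limit of matching-interval domains and transfer invariance through the limit (this is routine, as these limits converge in the symmetric-difference pseudometric with respect to $\mu$).

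\textbf{Step 2: factor property.} Let $\pi: \Omega_{n,\alpha} \to \mathbb I_\alpha$ be the projection onto the first coordinate. By construction of $\mathcal T_\alpha$ as a planar extension one has $\pi \circ \mathcal T_\alpha = T_\alpha \circ \pi$, and $\nu_\alpha$ is the marginal $\pi_\ast \mu_\alpha$ by definition, so $\pi$ is a measurable factor map realizing $(T_\alpha,\mathbb I_\alpha,\mathscr B_\alpha,\nu_\alpha)$ as a factor of $(\mathcal T_\alpha,\Omega_\alpha,\mathscr B'_\alpha,\mu_\alpha)$.

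\textbf{Step 3: generation (the minimal extension property).} The remaining condition is that $\bigvee_{k\ge 0} \mathcal T_\alpha^k\,\pi^{-1}\mathscr B_\alpha = \mathscr B'_\alpha \pmod{\mu_\alpha}$. Equivalently, I must show that for $\mu_\alpha$-almost every $(x,y)\in\Omega_{n,\alpha}$, the $\mathcal T_\alpha^k$-image of the fiber over $T_\alpha^{-k}\{x\}$ through $(x,y)$ shrinks to the singleton $\{(x,y)\}$ as $k\to\infty$. Since $T_\alpha$ is piecewise M\"obius and uniformly expanding on cylinders of bounded rank, the inverse planar branches are uniform contractions in the second coordinate (of Blaschke/M\"obius type on the appropriate half-line), so the fibers in $\Omega_{n,\alpha}$ have diameter decaying geometrically in $k$ along almost every backward orbit. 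This is the main technical hurdle: the piecewise description of $\Omega_{n,\alpha}$ for large $\alpha$ is intricate, and one must check that the uniform expansion constants and distortion bounds hold with parameters independent of $\alpha$ on each matching interval, so that the shrinking-fiber argument goes through simultaneously; near endpoints of matching intervals and at the limit parameters one passes to the limit using Step~1.

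\textbf{Step 4: ergodicity.} Once the natural-extension identification is in place, ergodicity of the two systems is equivalent by Rohlin, so it suffices to prove ergodicity of $(T_\alpha,\mathbb I_\alpha,\nu_\alpha)$. The map $T_\alpha$ is piecewise M\"obius with finitely many branches (after matching, or through a jump transformation into an induced version) and has bounded distortion; applying a Folklore theorem in the Adler/Rychlik form, it admits a unique absolutely continuous invariant probability measure. That measure must coincide with $\nu_\alpha$, since Fubini over $\Omega_{n,\alpha}$ yields an absolutely continuous marginal. I expect Step~3 to be the principal obstacle: tracking the contraction of planar fibers through the piecewise definition of $\Omega_{n,\alpha}$ for large $\alpha$, uniformly along a matching interval and through its endpoints, is where the careful geometric work of the preceding sections must be exploited in full.
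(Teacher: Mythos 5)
The proposal follows the standard natural-extension playbook, but it is a genuinely different approach from the paper's, and there are gaps that the paper's authors specifically designed their machinery to overcome.

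The paper's proof does \emph{not} directly verify factor and generation properties and does not appeal to a Folklore theorem. Instead, it relies on a packaged result, (\cite{CKStoolsOfTheTrade}, Theorem~2.3), whose hypotheses include finiteness of $\mu(\Omega_\alpha)$, bounded vertical fibers, and crucially the \emph{bounded non-full cylinders} condition (Remark~\ref{rmk:bddNonfullCylinders}): the orbits of endpoints of non-full cylinders must avoid the interior of some full cylinder. This last hypothesis is \emph{not} satisfied by all $T_\alpha$. The paper's route is therefore: (i) verify the hypothesis directly for endpoints of synchronization intervals, for the division points $\delta_{-k,v}$, and for non-synchronizing $\alpha$ (Propositions~\ref{p:NaturallyErgodicEndPts}, \ref{p:ergodicLargeEta}, \ref{p:ergodicLargeZeta}, \ref{p:ergodicDelta}, \ref{p:bijectivityDomIsNatExtErgSmallNonSyn}, \ref{p:biDomIsNatExtErgLargeNonSyn}); (ii) show the set of $\alpha$ for which the hypothesis holds is dense (Proposition~\ref{p:bddNonFullAreDense}); (iii) show every remaining $\alpha$ has an open neighborhood of ``close neighbors'' (Proposition~\ref{p:goodNeighborsAreThere}); and (iv) transfer the natural-extension and ergodicity properties from a bounded-non-full-range close neighbor by quilting (Propositions~\ref{p:quiltCloseNeighsSmall}, \ref{p:quiltCloseNeighsLargeLeft}, \ref{p:quiltCloseNeighsLargeRight}). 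None of this machinery appears in your proposal, and the reason it exists is precisely that the direct route you sketch does not go through uniformly in $\alpha$.

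Concretely, two of your steps hide real trouble. In Step~3 you assert that $T_\alpha$ is ``uniformly expanding on cylinders of bounded rank.'' But Proposition~\ref{p:expansive} says only that $T_{n,\alpha}$ is \emph{eventually} expansive, and the paper emphasizes (see \S\ref{ss:quoVadimus} and the proof of Corollary~\ref{c:oneFactorOfFlow}) that for each $n$ there is a nonempty subinterval of $\alpha$'s on which $T_{n,\alpha}$ is genuinely non-expansive; the derivative is $x^{-2}$ (or $(x-1)^{-2}$), which is less than $1$ in absolute value on part of $\mathbb I_\alpha$ whenever $\ell_0(\alpha) \le -1$ or $r_0(\alpha)\ge 2$. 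Passing to a power $T^r$ is possible in principle, but the ``bounded non-full cylinders'' hypothesis is exactly what controls the distortion and the non-Markov structure that you would need to track when running a shrinking-fibers argument through the cylinders of $T^r$; your proposal simply assumes this goes through. In Step~4, the Adler/Rychlik Folklore theorem does not apply as stated: $T_\alpha$ has infinitely many branches, is not expansive (only eventually so), and the cylinder structure is not Markov; the parenthetical about ``finitely many branches after matching'' is not correct, and an induced/jump version would itself need all the care that the quilting argument supplies. Also, a small slip: the invariant measure in this paper is $d\mu = dx\,dy/(1+xy)^2$, not $(1-xy)^{-2}$.

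In summary, your Steps~1 and~2 are fine, Step~3's ``uniform expansion'' claim is literally false for many parameters and glosses over the non-full-cylinder/distortion obstruction, and Step~4's ergodicity argument is not rigorous as sketched. What is missing is exactly the paper's two-part strategy: a precise criterion (bounded non-full range) under which the natural-extension and ergodicity conclusions follow from a quotable theorem, plus a density-and-quilting argument to reach every parameter for which that criterion fails.
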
  

For the proof of the theorem, as well as the following step in its proof, we rely upon the theoretical tools established in \cite{CKStoolsOfTheTrade}.
  \begin{Prop}\label{p:expansive}   Every $T_{n,\alpha}$ with  $\alpha \in (0,1)$ is eventually expansive.
\end{Prop}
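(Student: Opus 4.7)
The plan is to exploit the explicit description of $T_{n,\alpha}$ as a piecewise M\"obius map whose branches come from elements of the ambient triangle Fuchsian group. On each cylinder $T_{n,\alpha}$ coincides with some group element $M$, so $|T_{n,\alpha}'(x)| = |cx+d|^{-2}$ is explicitly computable, and the cylinders on which the pointwise bound $|T_{n,\alpha}'(x)| > 1$ fails form at most a small finite collection $E_\alpha \subset \mathbb{I}_\alpha$, concentrated near an endpoint of $\mathbb{I}_\alpha$ (the one whose forward orbit hugs a parabolic or near-parabolic fixed point of the associated group element). The first step would be the branch-by-branch bookkeeping that pins down $E_\alpha$ and extracts an expansion constant $\lambda_0 = \lambda_0(n,\alpha) > 1$ valid uniformly on $\mathbb{I}_\alpha \setminus E_\alpha$.

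On the slow cylinders the per-step factor $|T_{n,\alpha}'|$ lies only in some interval $(\eta, 1]$ with $\eta > 0$, so what is really required is a bound on the number of consecutive visits any $T_{n,\alpha}$-orbit can make to $E_\alpha$. Following the approach used for related triangle-group continued fractions in \cite{DKS, KraaikampSchmidtSmeets}, I would trace the forward images of the cylinders in $E_\alpha$ and show that each lands in $\mathbb{I}_\alpha \setminus E_\alpha$ after a uniformly bounded number of steps. The matching data of \cite{CaltaKraaikampSchmidt}, recalled in \S~\ref{ss:terseRev}, is the essential input here: the synchronization orbits describe precisely where the endpoints and neighbouring slow cylinders are transported, and in particular rule out any orbit that stays in $E_\alpha$ indefinitely. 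Combining the chain-rule product $|(T_{n,\alpha}^k)'(x)| = \prod_{j<k} |T_{n,\alpha}'(T_{n,\alpha}^j x)|$ with this visit bound yields an $N$ and a $\lambda > 1$ with $|(T_{n,\alpha}^N)'(x)| \ge \lambda$ for every $x \in \mathbb{I}_\alpha$, which is the eventual expansiveness claimed.

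The main obstacle I expect is the large-$\alpha$ regime. As the excerpt signals in \S~\ref{ss:terseForBigAlps}--\S~\ref{s:largeAlpsRightSide}, the branch combinatorics and matching data become considerably more intricate there, so both the identification of $E_\alpha$ and the forward tracking of its cylinders will demand a case analysis partitioned along the matching intervals inside $(0,1)$. The countably many non-matching parameters form a measure-zero set but must still be addressed; for these I would appeal to a limiting argument based on the abstract criteria of \cite{CKStoolsOfTheTrade}, letting the eventual expansiveness of nearby matching-parameter maps pass to the limit via continuity of the branch data in $\alpha$.
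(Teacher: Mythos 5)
Your approach is genuinely different from the paper's. The paper does no branch bookkeeping at all: it invokes a criterion from \cite{CKStoolsOfTheTrade} (Theorem~2.3 / Proposition~2.3 there) whose hypotheses are geometric properties of the planar domain $\Omega_{n,\alpha}$ rather than derivative estimates for $T_{n,\alpha}$ itself. Specifically, the paper notes that the work of the earlier sections has already established that all vertical fibers of $\Omega_{n,\alpha}$ are intervals of finite length, that for $\alpha\in(0,1)$ the fibers lie a bounded distance from the singular locus $y=-1/x$, and that each block is mapped with a fiber-ratio (Lebesgue measure of image fiber to receiving fiber) bounded away from $0$ and $1$. Those three facts are precisely the hypotheses of the companion paper's theorem, which then yields eventual expansiveness directly. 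In short, the heavy machinery of the explicit natural extensions is cashed in, and the proof is a few lines.

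Your proposed route — identify the finitely many slow branches $E_\alpha$, use the synchronization data to bound the escape time, and multiply derivatives by the chain rule — is plausible and is indeed the route one would take without the planar-extension technology in hand. But as written it has a gap: you claim a uniform expansion constant $\lambda_0>1$ on $\mathbb{I}_\alpha\setminus E_\alpha$. That cannot exist, because $|T'_{n,\alpha}(x)|$ equals $x^{-2}$ (resp.\ $(x-1)^{-2}$) on the relevant branches, and this tends to $1$ as $x$ approaches $\pm 1$ (resp.\ $0$ or $2$) from the expansive side. So the infimum of $|T'|$ on the fast region is exactly $1$. You would instead need to argue that after the bounded number of slow steps the orbit lands strictly inside the fast region, far enough from the boundary $|x|=1$ that the accumulated product exceeds $1$ — which is what ``eventually expansive'' actually requires in the sense of Definition~\ref{d:firstExpansiveReturn} (pointwise, not uniform). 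That fix is compatible with your plan, but the bookkeeping would still have to be carried out case by case across the large-$\alpha$ combinatorics, which is exactly the labor the paper avoids by delegating to the abstract criterion.
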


The detailed knowledge of the domains $\Omega_{\alpha}$ allows us to prove the following.
 \begin{Thm}\label{t:continuityOfMass}   For each $n \ge 3$,   the function  $\alpha \mapsto  \mu(\Omega_{\alpha})$ is continuous on $(0,1)$.
 \end{Thm}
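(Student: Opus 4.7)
The plan is to exploit the detailed piecewise description of $\Omega_\alpha$ constructed in the body of the paper and to compare parameter values via the matching (synchronization) structure. Write $\mathcal M \subset (0,1)$ for the union of all matching intervals; by \cite{CaltaKraaikampSchmidt}, $\mathcal M$ is open and of full Lebesgue measure, and on each connected component $J \subset \mathcal M$ the matching pattern (that is, the pair of $T_\alpha$-itineraries of the endpoints of $\mathbb I_\alpha$ up to the step at which they coincide) is constant. The strategy is to establish continuity on each $J$, then at boundary points between adjacent components of $\mathcal M$, and finally at points of the exceptional set $(0,1) \setminus \mathcal M$.

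First, I would show that $\alpha \mapsto \mu(\Omega_\alpha)$ is continuous (indeed real-analytic) on each component $J$ of $\mathcal M$. On $J$, the domain $\Omega_\alpha$ is a finite union of rectangles whose horizontal extents are intervals in $\mathbb I_\alpha$ with endpoints given by iterates of the inverse branches of $T_\alpha$ applied to $\alpha$ and $\alpha-1$, and whose vertical heights lie in a fixed finite collection of values determined by the matching pattern. Since each branch of $T_\alpha$ is a Möbius transformation whose coefficients depend analytically on $\alpha$, and the invariant measure $\mu$ has an explicit rational density, integrating over each rectangle yields a real-analytic function of $\alpha$ on $J$, and hence so does their sum.

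Second, I would handle transitions between neighboring components of $\mathcal M$. At such a transition parameter, the two matching patterns differ but the two piecewise descriptions have a common limit: either a rectangle in one description degenerates (its width or height shrinks to zero as endpoint orbits coalesce) or two pieces merge. In either case, the one-sided limits of $\mu(\Omega_\alpha)$ computed from the two adjacent components agree.

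The main obstacle is continuity at exceptional parameters $\alpha_0 \in (0,1) \setminus \mathcal M$, approached by a sequence $\alpha_k \to \alpha_0$ with $\alpha_k$ lying in matching intervals whose matching depth grows without bound. Here the approach mirrors the one used in \cite{KraaikampSchmidtSteiner} and \cite{CT} for Nakada's family: one realizes $\Omega_{\alpha_0}$ as the (possibly countable) union of limiting pieces, and bounds the $\mu$-measure of the symmetric difference $\Omega_{\alpha_k} \triangle \Omega_{\alpha_0}$ by the mass in thin strips near the $T_{\alpha_0}$-orbits of $\alpha_0$ and $\alpha_0-1$ past depth roughly equal to the matching depth of $\alpha_k$. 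This symmetric difference has $\mu$-measure tending to zero because the eventual expansivity of $T_{n,\alpha_0}$ (Proposition \ref{p:expansive}) forces exponential contraction of cylinders at deep levels, and because the tools developed in \cite{CKStoolsOfTheTrade} provide the required uniform control of $\mu$ on such strips. This delivers $\mu(\Omega_{\alpha_k}) \to \mu(\Omega_{\alpha_0})$ and completes the proof.
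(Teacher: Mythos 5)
Your proposal follows the same overall structure as the paper's proof (Theorems \ref{t:continuity} and \ref{t:continuityBigAlps} together with Lemma \ref{l:atGammaToo}): continuity and smoothness on the closure of each matching interval from the explicit rectangular structure, followed by a limiting argument at non-synchronizing parameters where $\Omega_{\alpha}$ is realized as a countable union of pieces. The one substantive difference is the tail estimate at a non-synchronizing $\alpha_0$: you invoke exponential contraction of cylinders via eventual expansivity, whereas the paper argues more cheaply that the pieces $\mathcal T^{|u|}(\Delta_{\alpha}(u)\times\Phi^{\pm})$ have $\mu$-measures summing to the finite total $\mu(\Omega_{\alpha}^{\pm})$, so once admissibility and cylinder endpoints are shown to converge for all bounded word-lengths, the remaining tail is negligible by dominated convergence -- no quantitative decay rate is required. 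Both mechanisms work; the paper's is slightly lighter on hypotheses. You also elide two case distinctions the paper handles explicitly: the small- vs.\ large-$\alpha$ regimes use genuinely different limiting decompositions ($\mathcal Z$ of Definition \ref{d:notationForEndpoints} vs.\ $\mathcal W$ of Definition \ref{d:notationForEndpointsLargeAlps}), and continuity at the transition value $\gamma_n$ requires its own argument (Lemma \ref{l:atGammaToo}). These are details to fill in rather than conceptual gaps, but your sketch should at least flag them.
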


 \begin{Thm}\label{t:prodIsConst}     For each $n \ge 3$,  the function  $\alpha \mapsto  h(T_{\alpha}) \mu(\Omega_{\alpha})$ is constant on $(0,1)$.
 \end{Thm}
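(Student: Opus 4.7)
The plan is to express $h(T_\alpha)\,\mu(\Omega_\alpha)$ as a single integral over the planar domain $\Omega_\alpha$, prove the integral is constant on every matching interval, and then conclude by density and continuity.

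First, by Theorem \ref{t:naturallyErgodicParTout} the measure $\nu_\alpha$ is the marginal of the normalized probability $\mu_\alpha=\mu/\mu(\Omega_\alpha)$, so the unnormalized marginal $\tilde\nu_\alpha:=\mu(\Omega_\alpha)\,\nu_\alpha$ is the first-coordinate projection of $\mu|_{\Omega_\alpha}$. Because $T_\alpha$ is eventually expansive by Proposition \ref{p:expansive}, Rohlin's formula applies to the ergodic system $(T_\alpha,\nu_\alpha)$ and gives
\[
h(T_\alpha)\,\mu(\Omega_\alpha) \;=\; \int_{\mathbb I_\alpha}\log|T'_\alpha(x)|\,d\tilde\nu_\alpha(x) \;=\; \int_{\Omega_\alpha}\log|T'_\alpha(x)|\,d\mu(x,y) \;=:\; J(\alpha).
\]
It therefore suffices to prove $J$ is constant on $(0,1)$.

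Second, I would prove $J$ is locally constant on each matching interval $M$. On $M$, the endpoints of $\mathbb I_\alpha$ synchronize under a fixed number $k$ of iterates of $T_\alpha$. For $\alpha,\alpha'\in M$ close enough together, the symmetric difference $\Omega_\alpha \,\triangle\, \Omega_{\alpha'}$ decomposes into finitely many ``exchange regions'' whose shapes are given concretely by the planar descriptions of \S \ref{s:relationsOnHts}--\S \ref{s:largeAlpsRightSide}. Each disappearing region is carried bijectively by a suitable iterate $\mathcal T_\alpha^{j}$ onto a reappearing region on the other side. Since $\mathcal T_\alpha$ preserves $\mu$, the chain rule along the synchronized orbit expresses $\log|(T_\alpha^k)'|$ as a telescoping sum whose terms balance across the exchange, so that the net contribution to $J(\alpha)-J(\alpha')$ is zero. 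Hence $J$ is constant on each connected component of the matching locus.

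Third, I would establish continuity of $J$ on $(0,1)$ and conclude. The domains $\Omega_\alpha$ vary continuously in Hausdorff distance (a byproduct of the analysis behind Theorem \ref{t:continuityOfMass}), $\mu$ has a smooth density, and $\log|T'_\alpha(x)|$ depends continuously on $\alpha$; any singularities near indifferent fixed points or at the endpoints of $\mathbb I_\alpha$ are uniformly dominated by an integrable majorant of the form $\log(1/|x-x_0|)$. Dominated convergence then yields continuity of $J$. Since the union of matching intervals has full Lebesgue measure in $(0,1)$ by \cite{CaltaKraaikampSchmidt}, it is dense, and a continuous function that is constant on a dense subset of $(0,1)$ is constant throughout. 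The main obstacle will be the combinatorial bookkeeping of exchange regions inside the intricate planar domains of \S \ref{ss:terseForBigAlps}--\S \ref{s:largeAlpsRightSide} and the verification that the telescoping truly balances; the continuity step is a comparatively routine dominated-convergence argument.
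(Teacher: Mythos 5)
Your overall strategy --- convert $h(T_{\alpha})\,\mu(\Omega_{\alpha})$ into the Rohlin integral $J(\alpha)=\int_{\Omega_{\alpha}}\log|T'_{\alpha}|\,d\mu$ and then argue constancy of $J$ --- is a genuinely different route from the one taken in the paper, and the first step is sound and in fact appears in the paper's Lemma~\ref{l:integralsMatch}. The paper, however, does not try to make $J$ locally constant on each matching interval and then patch by density: in Proposition~\ref{p:continuityOfEntropyOnIntervals} it proves constancy of $J$ directly on the two long \emph{overlapping} parameter intervals $(0,1/t_n]$ and $(\gamma_n,1)$, by exhibiting a fixed positive-measure subset of $\Omega_0$ (respectively of $\Omega_1$) on which the first-return maps of $\mathcal T_{\alpha}$ and $\mathcal T_{0}$ (resp.\ $\mathcal T_{1}$) agree (Lemmas~\ref{l:firstReturnToDoubleNegVal}, \ref{l:firstReturnToDoubleLarge}) and then invoking Abramov's formula for the common induced system. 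Because the two parameter intervals overlap and cover $(0,1)$, constancy is immediate with no density-and-continuity step.

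There are two genuine gaps in your plan. The more serious is your final step: a continuous function on $(0,1)$ that is \emph{locally} constant on a dense open set, but with a priori \emph{different} constants on different components, need not be constant --- the Cantor (devil's staircase) function is exactly such an object, and here the complement of the union of matching intervals is Cantor-like. So ``constant on each matching interval + continuous + matching intervals dense'' does not suffice; you would have to show in addition that the per-component constants all coincide, which is essentially what the paper's overlapping-interval construction achieves in one stroke. The second gap is in the ``telescoping'' claim of your step two: the quilting of Propositions~\ref{p:quiltCloseNeighsSmall}--\ref{p:quiltCloseNeighsLargeRight} removes $\overline{S}+1$ copies of $\mathcal T_{\alpha}^{i}(\mathcal C)$ and inserts $\underline{S}+1$ copies of $\mathcal T_{\alpha'}^{j}(\mathcal C)$ (with $\underline{S}\neq\overline{S}$), so the exchange regions do not come in matching disappearing/reappearing pairs; indeed $\mu(\Omega_{\alpha})$ \emph{does} change across the matching interval, and the balance that makes $J$ constant involves the compensating change in the integrand $\log|T'_{\alpha}|$, not a one-to-one pairing of excised and adjoined floors. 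Making that balance precise is essentially the content of the tower/first-return computation that the paper carries out in Lemma~\ref{l:integralsMatch}; it is not a routine cancellation. If you want to salvage the plan, prove the common-first-return property on the two big parameter intervals as the paper does, or else prove directly that the per-matching-interval constant takes a single value (which is tantamount to Conjecture~\ref{con:Vol} and currently known only for $n=3$).
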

 
The previous two results immediately imply our main result.   
 \begin{Thm}\label{t:continuityOfEntropy}     The function  $\alpha \mapsto  h(T_{\alpha})$ is continuous on $(0,1)$.
 \end{Thm}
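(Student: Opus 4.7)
The plan is to deduce continuity of $\alpha \mapsto h(T_\alpha)$ directly from the two preceding theorems, since no other analytic information about entropy is needed once we know (i) that $h(T_\alpha)\mu(\Omega_\alpha)$ is a constant function of $\alpha$ on $(0,1)$ and (ii) that $\mu(\Omega_\alpha)$ depends continuously on $\alpha$ on the same interval. With these in hand, the entropy is simply the ratio of a constant by a continuous function, and so inherits continuity as soon as the denominator is shown to be bounded away from $0$.

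The first step is to name the constant. By Theorem~\ref{t:prodIsConst}, there exists $C_n \in \mathbb{R}$ with $h(T_\alpha)\mu(\Omega_\alpha) = C_n$ for every $\alpha \in (0,1)$. Since $T_\alpha$ is eventually expansive by Proposition~\ref{p:expansive}, and since Theorem~\ref{t:naturallyErgodicParTout} identifies its natural extension as $(\mathcal{T}_\alpha,\Omega_\alpha,\mathscr{B}'_\alpha,\mu_\alpha)$, the system has strictly positive entropy, hence $C_n > 0$ and in particular $\mu(\Omega_\alpha) < \infty$ for every $\alpha$.

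The second step, which is the only point that requires any verification beyond quoting the previous results, is to show $\mu(\Omega_\alpha) > 0$ uniformly on compact subsets of $(0,1)$ (in fact pointwise positivity is enough for pointwise continuity). Positivity is immediate from the explicit planar descriptions of $\Omega_{n,\alpha}$ obtained in the sections on small and large $\alpha$: the candidate domains are nonempty open subsets of the plane with nonzero two-dimensional Lebesgue measure with respect to the invariant density under consideration, and Theorem~\ref{t:naturallyErgodicParTout} confirms they serve as the natural extension domains.

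With $\mu(\Omega_\alpha)$ continuous on $(0,1)$ by Theorem~\ref{t:continuityOfMass} and strictly positive by the remark above, the quotient
\[
h(T_\alpha) \;=\; \frac{C_n}{\mu(\Omega_\alpha)}
\]
is a continuous function of $\alpha$ on $(0,1)$, which is the desired conclusion. The only conceivable obstacle is the nonvanishing of $\mu(\Omega_\alpha)$, but as noted this is guaranteed by the explicit geometry of the bijectivity domains constructed in the bulk of the paper; so in effect all of the work has already been carried out in Theorems~\ref{t:continuityOfMass} and~\ref{t:prodIsConst}, and the present theorem is a one-line consequence.
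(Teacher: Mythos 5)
Your proposal is correct and follows the same route as the paper, which simply observes that Theorems~\ref{t:continuityOfMass} and~\ref{t:prodIsConst} give $h(T_\alpha) = C_n/\mu(\Omega_\alpha)$ with continuous, strictly positive (and finite) denominator. Your extra attention to positivity and finiteness of $\mu(\Omega_\alpha)$ is a reasonable precaution but is already implicit in the paper's preceding results.
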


  We introduce the notion of ``first pointwise expansive power" of an eventually expansive interval map, see Definition~\ref{d:firstExpansiveReturn}.
  Numerical evidence leads us to the following conjecture.  See, say,  \cite{Manning} for related  background. 
\begin{Conj}\label{con:returnReturn}   For all $n\ge 3$ and for all $\alpha \in (0,1)$ we conjecture that 
the  first pointwise expansive power of  $T_{n, \alpha}$ has its natural extension given by
the first return of the geodesic flow to a cross section in the unit tangent bundle of the hyperbolic orbifold uniformized by $G_n$.
\end{Conj}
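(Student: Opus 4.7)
The plan is to follow the general strategy of Series~\cite{Series} and Arnoux--Schmidt~\cite{ArnouxSchmidtCross}, using the explicit planar natural extensions $\Omega_{n,\alpha}$ constructed in this paper. One uses the classical identification of oriented geodesics on $\mathbb H^2$ with ordered pairs of distinct boundary points, augmented by a basepoint along the geodesic. Given a point $(x,y)\in \Omega_{n,\alpha}$, take $x$ as the forward endpoint of the geodesic and recover its backward endpoint from $y$ via the usual change of variables that replaces the ``past'' fiber with the dual continued fraction expansion. This furnishes a $G_n$-equivariant map from $\Omega_{n,\alpha}$ into $T^1(\mathbb H^2)$; passing to the quotient by $G_n$ yields a candidate cross section $\Sigma_{n,\alpha}\subset T^1(G_n\backslash \mathbb H^2)$. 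One then verifies that $\Sigma_{n,\alpha}$ is transverse to the geodesic flow and meets almost every orbit.

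The heart of the argument is the computation of the first-return map of the geodesic flow to $\Sigma_{n,\alpha}$. As the geodesic crosses the walls of the tessellation of $\mathbb H^2$ by translates of a fundamental domain for $G_n$, each crossing implements the action of a generator of $G_n$ on the endpoint pair, which is exactly one symbolic digit of the continued fraction algorithm; thus the forward endpoint evolves under $T_{n,\alpha}$ and the backward coordinate under the corresponding inverse branch. The candidate factor map from $\Sigma_{n,\alpha}$ to $\mathbb I_{n,\alpha}$ projects onto the forward endpoint. The return time should then equal the first pointwise expansive power $k_{n,\alpha}(x)$, because on those iterates at which $T_{n,\alpha}$ fails to be expansive at $x$ the corresponding element of $G_n$ is ``too short'' for the geodesic to leave the flow-tubular neighborhood of the cross section; only the first genuinely expanding composite produces a true crossing back into $\Sigma_{n,\alpha}$.

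The main obstacle is precisely to verify the coincidence of return time and first pointwise expansive power uniformly in $n$ and $\alpha$. The combinatorial shape of $\Omega_{n,\alpha}$ changes across matching intervals (compare Sections~\ref{ss:terseForBigAlps}--\ref{s:largeAlpsRightSide}), so the cellular structure of the cross section, and the corresponding identifications on its boundary, must be analyzed case by case and indexed by the matching data and the tools of \cite{CKStoolsOfTheTrade}. Theorems~\ref{t:continuityOfMass} and~\ref{t:continuityOfEntropy} would be deployed to extend the identification continuously across the boundaries between combinatorial types, while Proposition~\ref{p:expansive} ensures that $k_{n,\alpha}$ is finite almost everywhere. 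Once coincidence of return time and $k_{n,\alpha}$ has been established on a set of full measure, the uniqueness (up to isomorphism) of natural extensions, together with the ergodicity provided by Theorem~\ref{t:naturallyErgodicParTout}, will identify the first-return system of the geodesic flow to $\Sigma_{n,\alpha}$ with the natural extension of the first pointwise expansive power of $T_{n,\alpha}$.
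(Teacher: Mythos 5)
First, note that the statement you are proving is stated in the paper as a \emph{conjecture}: the paper does not prove it for general $n$ and $\alpha$, but rather proves (Theorem~\ref{t:expansiveUisFactorOfFlow}) that it would follow from the volume identity $h(T_{\alpha})\,\mu(\Omega_{\alpha}) = \mathrm{vol}_n$ (Conjecture~\ref{con:Vol}), verifies that identity only for $n=3$ (Theorem~\ref{t:conjConfirmedNis3}) by an explicit integral computation with $\Omega_0,\Omega_1$, and reports numerical evidence for $4\le n\le 12$. So any complete proof you give must contain genuinely new content beyond the paper.

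Your proposal does not supply that content. The construction of the candidate section from $\Omega_{n,\alpha}$ via Arnoux's transversal, and the fact that each digit of the algorithm corresponds to crossing a wall of the tessellation so that the section map is \emph{some} return of the flow, is exactly the part that is already standard (and carried out in the paper following \cite{ArnouxSchmidtCross, ArnouxSchmidtCommeUneFraction}). The crux is your sentence claiming that the return time equals the first pointwise expansive power because non-expansive steps correspond to group elements ``too short'' for the geodesic to leave a ``flow-tubular neighborhood'' of the section. This is not an argument, and it misrepresents the difficulty: where $|T_{\alpha}'(x)|<1$ the associated flow displacement $\tau_{\alpha}(x)=\log|T_{\alpha}'(x)|$ is \emph{negative} — the image tangent vector points so that its geodesic flows backwards (see the discussion preceding Conjecture~\ref{con:Vol}) — and after passing to the first pointwise expansive power one only knows the composite displacement is positive, i.e.\ that the induced map is some forward return. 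Ruling out earlier crossings on a positive-measure set cannot be done by a local tubular-neighborhood heuristic; by ergodicity of the flow it is \emph{equivalent} to the statement that the $\mu$-integral of the displacement, namely $h(T_{\alpha})\,\mu(\Omega_{\alpha})$, equals the volume of the unit tangent bundle — which is precisely Conjecture~\ref{con:Vol}, open for $n\ge 4$. Your appeal to Theorems~\ref{t:continuityOfMass} and~\ref{t:continuityOfEntropy} does not close this: continuity (indeed constancy, Theorem~\ref{t:prodIsConst}) of $\alpha\mapsto h(T_{\alpha})\mu(\Omega_{\alpha})$ reduces the problem to evaluating one dilogarithm-type integral per $n$, but your proposal offers no method to evaluate those integrals, and that evaluation is exactly where the paper's proof stops at $n=3$.
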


We prove the following, where  the ``set of M\"obius transformations" of a  piecewise M\"obius interval map $T$ denotes the set of M\"obius transformations required to define $T$. 
 \begin{Thm}\label{t:expansiveUisFactorOfFlow}     Suppose that the set of M\"obius transformations of a piecewise M\"obius interval map $T$ and its first pointwise expansive power $U$  generate the same Fuchsian group $G$. Suppose further  that $h(T)\, \mu(\Omega)$ equals the volume of the unit tangent bundle of $G\backslash \mathbb H$  and both are finite.   Then there is a cross section in the unit tangent bundle of $G\backslash \mathbb H$ to geodesic flow, for  which the first return map under the geodesic flow gives the natural extension of the system of $U$.
 \end{Thm}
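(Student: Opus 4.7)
The plan is to construct a measurable map $\Phi \colon \Omega_U \to T^1(G\backslash\mathbb H)$ whose image $\Sigma$ serves as the cross section, to verify that $\Phi$ semi-conjugates $\mathcal U$ to the first return of the geodesic flow on $\Sigma$, and finally to invoke the entropy--volume hypothesis through the Abramov--Ambrose--Kakutani formula to rule out that $\Sigma$ is a proper sub-section of the true cross section. First I would send a pair $(x,y) \in \Omega_U$ to the unit tangent vector, based at a canonical reference point on the oriented geodesic in $\mathbb H$ from $y$ to $x$---a natural choice being the intersection of that geodesic with a fixed boundary arc of a chosen fundamental region for $G$---and then project by $T^1(\mathbb H) \to T^1(G\backslash\mathbb H)$. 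The hypothesis that the M\"obius maps defining $T$ and those defining $U$ generate the same Fuchsian group $G$ ensures the quotient is well-defined and that every branch image lands consistently in $G\backslash\mathbb H$.

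Next I would verify the semi-conjugacy branch by branch. Each branch of $U$ is represented by a specific word $g \in G$; its action on $(x,y)$ is the action of $g^{-1}$ on the associated geodesic, and the canonical choice of reference point converts this into positive geodesic flow to the next crossing of $\Sigma$. The crucial step is that $U$ is pointwise expansive by construction (see Definition~\ref{d:firstExpansiveReturn}), so one application of $\mathcal U$ corresponds to a \emph{single} crossing: intermediate geodesic crossings that arise for non-expansive iterates of $T$ are suppressed, because each step of $U$ represents a full traversal between adjacent intersections of the geodesic with $\Sigma$. Were one instead to start from $T$, the lack of pointwise expansivity could fold several crossings into a single step, preventing $\mathcal T$ itself from being realized as a first return.

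The quantitative closing argument uses the Abramov--Ambrose--Kakutani formula. The true first-return map $R$ on the true cross section $\Sigma'$, with induced Liouville measure $\sigma'$, satisfies $h(R)\,\sigma'(\Sigma') = h_{\mathrm{flow}} \cdot \mathrm{vol}(T^1(G\backslash\mathbb H)) = \mathrm{vol}(T^1(G\backslash\mathbb H))$, using that the geodesic flow on a finite-volume constant curvature $-1$ quotient has unit metric entropy. The semi-conjugacy of the preceding paragraph, combined with the Abramov-style comparison between $(\mathcal U, \Omega_U, \mu_U)$ and $(\mathcal T, \Omega, \mu)$---the two natural extensions are related by a jump transformation along the step function $n(x)$ defining $U$, whence a standard tower computation yields $h(U)\mu_U(\Omega_U) = h(T)\mu(\Omega)$---shows that the candidate system carries exactly the cross-sectional entropy--mass demanded by the true first return. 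Hence $\Sigma$ cannot be strictly smaller than $\Sigma'$, and $\mathcal U$ must be measure-theoretically isomorphic to $R$.

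The main obstacle is the geometric bookkeeping in the construction of $\Phi$: well-definedness and injectivity modulo $G$, matching the combinatorial branch labels of $U$ with the side pairings of a chosen fundamental polygon for $G$, and selecting the reference point so that the $\mathcal U$-action really does correspond to positive geodesic flow rather than to some shift conjugate of it. The abstract theorem claims that once these geometric ingredients are in place, the entropy--volume hypothesis does the rest. The hard case in applications---which is the content of Conjecture~\ref{con:returnReturn}---is producing the explicit $\Phi$ for the intricate domains $\Omega_{n,\alpha}$ of \S~\ref{ss:terseForBigAlps}--\S~\ref{s:largeAlpsRightSide}.
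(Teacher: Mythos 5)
Your proposal is broadly aligned with the paper's strategy: both arguments construct the Arnoux-type map $v$ from a planar natural extension into $T^1(G\backslash\mathbb H)$, verify that the cross-section dynamics are given by \emph{some} positive-time return of the geodesic flow, and then use the entropy-volume hypothesis together with Abramov's formula to upgrade ``some return'' to ``first return.'' The paper in fact packages nearly everything into the earlier Proposition~\ref{p:expansiveMapNatIsInducedSys} (realizing the natural extension of $U$ as the induced system of $\mathcal T$ on $\mathcal F\subset\Omega$) and the proof of Corollary~\ref{c:oneFactorOfFlow}, so that the proof of Theorem~\ref{t:expansiveUisFactorOfFlow} reduces to the one-line Abramov observation $h(U)\,\mu(\mathcal F) = h(T)\,\mu(\Omega)$. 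Your ``jump transformation'' phrasing is the same mechanism, and your construction of $\Phi$ is essentially Arnoux's transversal as used in the paper.

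The genuine flaw is your claim in the second paragraph that pointwise expansiveness of $U$ already forces each application of $\mathcal U$ to be a \emph{single} crossing of the section. If that were true, the volume hypothesis would be superfluous and the theorem would hold unconditionally for expansive maps. It is not: as the paper itself points out in its closing remarks, \cite{ArnouxSchmidtCross} produces an expansive piecewise-M\"obius map with a nice planar natural extension which is \emph{not} of first-return type. What expansiveness actually buys is that $\tau_U(x) = \log|U'(x)| > 0$ almost everywhere, i.e.\ the geodesic flows \emph{forward} to the image vector, so that $\mathcal U$ corresponds to a return time $\tau_U(x)\geq \rho(x)$, where $\rho$ is the true first-return time. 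The entropy-volume hypothesis then forces equality: $\int \tau_U\, d\mu = h(U)\mu(\mathcal F) = h(T)\mu(\Omega) = \mathrm{vol}$, while ergodicity of the flow gives $\int\rho\,d\mu = \mathrm{vol}$, so $\tau_U = \rho$ a.e. Your closing Abramov-versus-volume argument is heading toward the right comparison, but it is aimed at ruling out ``$\Sigma$ strictly smaller than $\Sigma'$,'' which is not the relevant alternative; the alternative to exclude is that $\mathcal U$ realizes a higher-order return on the full section. Rewriting your paragraph two to replace ``single crossing by expansiveness'' with ``positive flow time by expansiveness, hence $\tau_U\geq\rho$'' and then letting the integral comparison do the work would repair the argument and bring it in line with the paper's.
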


From this last, the conjecture is equivalent to the following.
 
\begin{Conj}\label{con:Vol}   For all $n\ge 3$ and for all $\alpha \in (0,1)$  one has that
$h(T_{\alpha})\, \mu(\Omega_{\alpha})$ equals the volume of the unit tangent bundle of the hyperbolic orbifold uniformized by the triangle group $G_n$. 
\end{Conj}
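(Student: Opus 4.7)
The main reduction is Theorem~\ref{t:prodIsConst}: the function $\alpha\mapsto h(T_\alpha)\mu(\Omega_\alpha)$ is constant on $(0,1)$, while the target quantity $\operatorname{vol}(T^1(G_n\backslash\mathbb{H}))$ is independent of $\alpha$. It therefore suffices to verify Conjecture~\ref{con:Vol} at a single, carefully chosen $\alpha_0\in(0,1)$ for each $n\ge 3$.

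My plan is to select $\alpha_0$ lying in a matching interval of very short itinerary, ideally one at which $T_{n,\alpha_0}$ is itself pointwise expansive so that the first pointwise expansive power $U$ of Conjecture~\ref{con:returnReturn} equals $T_{n,\alpha_0}$. With this choice, the combinatorics of $\Omega_{n,\alpha_0}$ reduce to a small number of rectangular pieces described explicitly in the bulk of the paper, and the mass $\mu(\Omega_{n,\alpha_0})$ can be evaluated as a finite sum of elementary expressions in the fixed arithmetic data of $G_n$. Simultaneously, the entropy $h(T_{n,\alpha_0})$ may be computed from Rohlin's formula
\[
h(T_{n,\alpha_0}) \;=\; \int_{\mathbb{I}_{\alpha_0}} \log|T_{n,\alpha_0}'(x)|\,d\nu_{\alpha_0}(x),
\]
using the explicit marginal density provided by the planar model, and the result matched against $2\pi\cdot\operatorname{area}(G_n\backslash\mathbb{H})$ as given by Gauss--Bonnet in terms of the triangle angles of $G_n$.

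The alternative, and in my view cleaner, route is geometric. At the chosen $\alpha_0$ I would construct by hand a cross-section $\Sigma\subset T^1(G_n\backslash\mathbb{H})$ to the geodesic flow following the template of Series~\cite{Series} and Arnoux--Schmidt~\cite{ArnouxSchmidtCross}: to $(x,y)\in\Omega_{n,\alpha_0}$ one attaches the geodesic on $\mathbb{H}$ from $y$ to $x$ together with a marked unit tangent vector on a distinguished transversal, and descends the construction to a measurable section in $T^1(G_n\backslash\mathbb{H})$. Once one verifies that $\mathcal{T}_{\alpha_0}$ is conjugate to the first-return map on $\Sigma$, Abramov's formula applied to the geodesic flow (which has Liouville entropy one in the standard normalization) yields
\[
h(T_{n,\alpha_0})\,\mu(\Omega_{n,\alpha_0}) \;=\; h(\phi_1)\cdot\operatorname{vol}\bigl(T^1(G_n\backslash\mathbb{H})\bigr) \;=\; \operatorname{vol}\bigl(T^1(G_n\backslash\mathbb{H})\bigr)
\]
automatically, without any closed-form entropy computation.

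The main obstacle, and presumably the reason the statement is offered only as a conjecture, is the explicit combinatorial realization of this cross-section. The partitioning of $\Omega_{n,\alpha_0}$ into bijectivity pieces must be matched to a partitioning of $\Sigma$ into flow boxes in $T^1(G_n\backslash\mathbb{H})$, and the return-time function under $\phi_t$ must be shown to coincide with the roof function implicitly carried by the planar area $\mu$ on $\Omega_{n,\alpha_0}$. For the Nakada $\alpha$-continued fractions this required the full intricacy of \cite{ArnouxSchmidtCross}; here the additional dependence on $n$ and on the position of $\alpha_0$ within the rich matching structure of \cite{CaltaKraaikampSchmidt} makes even the single-point verification genuinely delicate. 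A plausible way forward is to execute the cross-section construction for one distinguished $\alpha_0$ per $n$ (for example a symmetric, self-dual value), and then propagate the identity to all of $(0,1)$ via Theorem~\ref{t:prodIsConst}.
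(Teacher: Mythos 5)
The statement you are ``proving'' is offered in the paper only as a conjecture: the paper proves it for $n=3$ (Theorem~\ref{t:conjConfirmedNis3}), reports numerical evidence for $4\le n\le 12$, and in the closing ``Remarks on the conjecture'' explicitly lists the candidate strategies you describe as open problems. So a complete proof cannot be expected, and none should be claimed; your write-up should be framed (as it mostly is) as a strategy, not a proof.

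Within that framing your starting point is exactly right and matches the paper: Theorem~\ref{t:prodIsConst} makes $\alpha\mapsto h(T_\alpha)\mu(\Omega_\alpha)$ constant, so it suffices to verify the equality at one parameter per $n$. However, the single-point verification the paper actually carries out (for $n=3$) does not use an interior $\alpha_0$ in a short matching interval with $T_{n,\alpha_0}$ expansive. Instead Lemma~\ref{l:integralsMatch} shows that $\int_{\Omega_\alpha}\tau_\alpha\,d\mu$ is independent of $\alpha$ including the degenerate endpoints $\alpha\in\{0,1\}$ (where $\mu(\Omega_\alpha)=\infty$), and Lemma~\ref{l:alpOneGivesIt} then evaluates the integral over $\Omega_{3,1}$: the domain splits into two pieces congruent under a parabolic $\mathcal T_M$, each giving $\int_{[0,1]\times[-1,0]}-2\log x\,d\mu=\pi^2/3$, which is imported from the modular-group by-excess computation of \cite{ArnouxSchmidtCross}. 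This is sharper than your proposal because the boundary domains $\Omega_{n,0},\Omega_{n,1}$ are the simplest planar models available and (for $n=3$) tie directly to an already-computed integral; an interior $\alpha_0$ generally gives a more complicated union of rectangles and no such shortcut, which is exactly why the authors report only numerical values for $n\ge 4$.

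Your geometric route (b) is essentially the content of Conjecture~\ref{con:returnReturn}, and Theorem~\ref{t:expansiveUisFactorOfFlow} establishes that it is logically \emph{equivalent} to the volume conjecture. The genuine gap you leave open --- verifying that $\mathcal T_{\alpha_0}$ is the \emph{first} return to Arnoux's transversal, not merely some return --- is the same gap the paper identifies. Note too that the paper's proof of Corollary~\ref{c:oneFactorOfFlow} runs the implication in the \emph{opposite} direction from yours: it first establishes the volume identity (for $n=3$) by integration, then deduces the first-return property, precisely because proving first-return directly is hard. So route (b), absent a new idea, does not circumvent the difficulty. Finally, be careful about normalization: the unit tangent bundle of a $\mathrm{PSL}_2(\mathbb R)$-quotient has fiber of length $\pi$, not $2\pi$, so $\operatorname{vol}=\pi\cdot\operatorname{area}$ as in~\eqref{e:volare}, not $2\pi\cdot\operatorname{area}$ as you wrote; the factor of $2$ in $\operatorname{vol}_n$ comes from the orbifold being a double of the $(3,n,\infty)$ triangle, not from the fiber.
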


We report on numerical corroboration of this latter conjecture in \S~\ref{ss:conjComputation4to12}, and also prove the case when $n=3$.   
 \begin{Thm}\label{t:conjConfirmedNis3}     Conjecture~\ref{con:Vol},  and therefore also Conjecture~\ref{con:returnReturn}, holds when $n=3$.
 \end{Thm}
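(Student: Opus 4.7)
The plan is to leverage the constancy of the product $h(T_{3,\alpha})\,\mu(\Omega_{3,\alpha})$ on $(0,1)$ established in Theorem~\ref{t:prodIsConst}, which reduces Conjecture~\ref{con:Vol} for $n=3$ to evaluating this product at a single convenient value of $\alpha$ and comparing the result with the volume of the unit tangent bundle of $G_3\backslash\mathbb H$.

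First I would compute the target quantity. Since $G_3$ is a Fuchsian triangle group, its signature determines the hyperbolic area of $G_3\backslash\mathbb H$ via the Gauss-Bonnet formula, and the volume of the unit tangent bundle is then $2\pi$ times this area. Next I would select a value $\alpha_0 \in (0,1)$ at which the planar natural extension $\Omega_{3,\alpha_0}$ admits a particularly simple description. Natural candidates are an endpoint of a matching interval at which the bijectivity domains constructed in Sections~\ref{s:relationsOnHts} and \ref{s:MainForSmallAlps} degenerate to an easily integrable region, or a value of $\alpha_0$ for which $T_{3,\alpha_0}$ is conjugate (or otherwise directly related) to a classical continued fraction map whose entropy is available in the existing literature.

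Using the explicit description of $\Omega_{3,\alpha_0}$ together with its invariant density, I would then evaluate $\mu(\Omega_{3,\alpha_0})$ directly and invoke Rokhlin's formula to write
\begin{equation*}
h(T_{3,\alpha_0})\,\mu(\Omega_{3,\alpha_0}) \;=\; \iint_{\Omega_{3,\alpha_0}} \log|T'_{3,\alpha_0}(x)|\, d\mu(x,y),
\end{equation*}
and evaluate the right-hand integral piecewise over the cylinders of $T_{3,\alpha_0}$, exploiting the M\"obius nature of each branch to obtain a closed form. Comparison with $2\pi \cdot \mathrm{Area}(G_3\backslash\mathbb H)$ then completes the proof of Conjecture~\ref{con:Vol} for $n=3$.

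The main obstacle I anticipate is precisely this final closed-form evaluation: even for $n=3$, the geometry of $\Omega_{3,\alpha_0}$ and the piecewise-M\"obius structure of $T_{3,\alpha_0}$ must be handled with care, and the success of the method rests on a judicious choice of $\alpha_0$ for which either the entropy integral telescopes cleanly along matching orbits, or a known entropy value from the literature can be substituted. Once the equality $h(T_{3,\alpha})\,\mu(\Omega_{3,\alpha}) = \mathrm{vol}(T^1(G_3\backslash\mathbb H))$ has been established, Conjecture~\ref{con:returnReturn} for $n=3$ follows immediately from Theorem~\ref{t:expansiveUisFactorOfFlow} applied to $T_{3,\alpha}$ and its first pointwise expansive power, provided one checks the Fuchsian-group compatibility hypothesis of that theorem, which should be routine since the first pointwise expansive power is built from iterates of $T_{3,\alpha}$ and so its defining M\"obius transformations lie in the group generated by those of $T_{3,\alpha}$.
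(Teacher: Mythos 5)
Your strategy matches the paper's overall route: use constancy of $h(T_{\alpha})\mu(\Omega_{\alpha})$ to reduce to a single well-chosen parameter, write the product as a Rohlin integral, compare the value to $\mathrm{vol}_3$, and then invoke Theorem~\ref{t:expansiveUisFactorOfFlow} to pass from Conjecture~\ref{con:Vol} to Conjecture~\ref{con:returnReturn}. The Fuchsian-group compatibility check you flag at the end is indeed routine (full cylinders in the expansive set give $U=T$ there, so enough generators survive), and the paper dispatches it exactly as you suggest.

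The one substantive difference, and the place your outline is vague, is the actual evaluation of the entropy integral. You propose choosing $\alpha_0\in(0,1)$ where $\Omega_{3,\alpha_0}$ is a simple union of rectangles; such values do exist (e.g.\ $\alpha_0=\delta_{-1,1}$ yields a two-rectangle domain), but you then still face a dilogarithm-type integral with no obvious closed form, and you acknowledge you'd have to hope for a telescoping or import a literature value. The paper's device for resolving this is to evaluate at $\alpha=1$ rather than in $(0,1)$. That choice is not covered by Theorem~\ref{t:prodIsConst}, and $\mu(\Omega_1)=\infty$ so Rohlin's formula in your displayed form is unavailable there; the bridge is Lemma~\ref{l:integralsMatch}, which uses a tower/first-return-time argument to show $\int_{\Omega_{\alpha}}\tau_{\alpha}\,d\mu = \int_{\Omega_1}\tau_1\,d\mu$ for every $\alpha\in(0,1)$. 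The payoff is that $\Omega_1$ for $n=3$ is just $[0,1]\times[-1,0]\cup[1,2]\times[-1/2,0]$, which by a change of variables reduces the whole thing to $2\int_{[0,1]\times[-1,0]}-2\log x\,d\mu$, and that integral is recognized (Lemma~\ref{l:useMtwoNthreeAlpZero}) as the Rohlin integral of the classical by-excess map (Nakada's $\alpha=0$ continued fraction), whose value $\pi^2/3$ is known. So the strategy is right; what your proposal leaves open is precisely the mechanism for getting a closed form, and the paper's answer is: pass to the boundary parameter $\alpha=1$ via Lemma~\ref{l:integralsMatch}, then quote the known Nakada value.
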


\phantom{here}\\  
\noindent
{\bf Conventions}   Throughout, we will allow ourselves the minor abuse of using adjectives such as injective, surjective and bijective to mean in each case {\em up to measure zero}, and thus similarly where we speak of disjointness and the like we again will assume the meaning being taken to include the proviso ``up to measure zero" whenever reasonable.  
 For legibility,  we suppress various subscripts, trusting that this causes no confusion for the reader.
  We interchangeably use our old term ``synchronization" and the now more standard ``matching". Finally, it is convenient to refer to the complement of the closure of the synchronization intervals as the ``non-synchronization" points.

\subsection{Overarching  questions}\label{ss:quoVadimus}
We undertook the project of which this is the third paper motivated by a desire to gather more information to hopefully contribute to eventual answers to the following questions. 

Our core motivation springs from the connection between Diophantine approximation and metric number theory.    The central result is that of  L\'evy \cite{L}, which can be seen as relating the entropy value of the regular continued fractions map to a value determined by the convergents to $x$ of almost every $x$.  See say \cite{ArnouxCodage, NakadaLenstra} for explicit formulations of this.      
Thus, for any continued fraction-like map, we desire to know its entropy.  For any family of such maps, we desire to know properties such as continuity of the function that gives  the entropy values of the maps.

The question presents itself:  {\em Are their properties of a family of interval maps which ensure that the associated entropy map is continuous?}      To date, it seems that only direct argumentation, as we give here, has resulted in proofs of such continuity.  We are most familiar with the proofs of continuity of this function in the setting of the Nakada \cite{N} $\alpha$-continued fractions which are directly related to the modular group, and in the setting of their generalizations to similar maps related to each of the Hecke groups, \cite{DKS, KraaikampSchmidtSmeets}.  The proofs there center on planar natural extensions for the interval maps;   with the following Steps: (1)  the natural extensions vary nicely as geometric objects on `certain' parameter intervals, (2) the natural extensions $\Omega$ in fact vary continuously on the full parameter space, (3) the normalizing constant $\mu(\Omega)$ for the mass of each of these natural extensions  then varies continuously, (4) the product of the entropy $h(T)$ of each map with the normalizing constant for the measure of the natural extension defines a constant function on the parameter space: $h(T)\mu(\Omega)$ is constant.

The `certain' parameter intervals mentioned above have to date been intervals on which `matching' (or, synchronization, cycle property, etc.)  occurs.   On such an interval, for each map the orbits of endpoints of the interval of definition meet one another in finitely many steps.  A second question arises:  {\em Are their properties of a family of interval maps, parametrized by an interval, which ensure that matching intervals exist? That they are of full measure?}    We ourselves tend towards the use of the technique of `quilting'  \cite{KraaikampSchmidtSmeets},  which directly shows Step (1) at least locally.  In general, Step (2) requires refined analysis (in the case of `small $\alpha$' our very detailed analysis is contained in some three sections, beginning with \S~\ref{s:relationsOnHts}).  Upon showing finiteness of the normalizing constants, Step (3) follows.   Step (4) results from successful use of Abramov's formula for entropy,  for example see \S~\ref{ss:EM=C}.

Alternately, if the family of maps is associated to a Fuchsian group uniformizing a hyperbolic surface (or orbifold) of finite volume,  one can hope to show that each of the interval maps has an extension given by the return to a cross section by the geodesic flow on the unit tangent bundle of the  surface.   It then follows from the ergodicity of this flow with respect to the standard measure that the integral which appears in Rohlin's formula is equal to the quotient of the volume of the unit tangent bundle by the normalizing constant.   We first saw this in Arnoux's \cite{ArnouxCodage} (to be precise,  in that setting of the regular continued fractions, due to the presence of M\"obius transformations of negative determinant, an extra factor of two belongs in the quotient that we just mentioned).   A third question presents itself: {\em Which interval maps    have extensions given by the return of the geodesic flow to a cross section in the unit tangent bundle of a hyperbolic surface/orbifold?}

In the usual way of  determining an extension of an interval map given by the geodesic flow,
one has that the interval map  itself must be expansive.  
However,  for each of our infinitely many groups there is a nonempty parameter subinterval along which our maps are {\em non-expansive}. 
For each of these, there is a region in the cross section of the corresponding tangent bundle
 where our image unit tangent vectors point so that their geodesics flow backwards; see p.~\pageref{c:oneFactorOfFlow} for details.    Still, experimentation led us to  conjecture that all of our maps are such that the product of their entropy with the normalizing constant  of Step (3) above is equal to the volume of the unit tangent bundle of the orbifold uniformized by the group generated by their set of M\"obius transformations.   Thus our Conjecture~\ref{con:Vol}, see above.    In particular, we realized that there is a way to pointwise ``accelerate"  any eventually expansive map so as to obtain a candidate for  having an extension given by the  {\em first} return map of the geodesic flow to a cross section.
This we defined as the first pointwise expansive power of the map, and Theorem~\ref{t:expansiveUisFactorOfFlow} presented itself to us. Having proved it, we have Conjecture~\ref{con:returnReturn}.  
Of course, we also ask for a proof of the conjectures.

  In \cite{CaltaKraaikampSchmidt} we also defined a one parameter family of interval maps related to each of the groups denoted there as $G_{m,n}$ with $n\ge m>3$.  We fully expect that for each of these that matching occurs on a full measure subset of the parameter interval,  as for the case of $m=3$ proven in \cite{CaltaKraaikampSchmidt},  and even that the tree of words $\mathcal V$ can play an analogous central role in the description of the matching intervals.   Computer experimentation suggests that  as $m\to\infty$ the direct analog of the set of small $\alpha$ occupies an ever decreasing portion of the parameter interval,  and the dynamics of the range where $C$ can appear to large powers seem quite complicated.     We mention in passing that one can also define families of interval maps for each of the groups where the interval maps also involve M\"obius transformations  which are orientation-reversing. Vaguely, we ask: {\em  Can one obtain the analog of the results here for these other families?}

\subsection{Terse review of notation and terminology}\label{ss:notation}   
 We give a quick review of some of the notation and terminology from \cite{CaltaKraaikampSchmidt}.  In fact, we simplify some of that notation as throughout this paper the index $m$ there is set here to equal 3.

\subsubsection{Groups, maps, digits, cylinders, parameter interval partition, ``small" $\alpha$}\label{gpsMapsEtc}   For integer  $n\ge 3$  we let $\nu = \nu_n =  2 \cos \pi/n$ and 

\[ t  := t_{n} = 1 + 2 \cos \pi/n.\]

We use the projective group  $G_{n}$ generated by 
\begin{equation}\label{e:generators} 
A = \begin{pmatrix} 1& t\\
                                      0&1\end{pmatrix},\,  C = \begin{pmatrix} -1& 1\\
                                      -1&0\end{pmatrix}\,.
\end{equation}
(We will have no direct use of the matrix $B = A^{-1}C$ of \cite{CaltaKraaikampSchmidt}.) 

 \bigskip
 Fix $n\ge 3$. For each  $\alpha \in [0,1]$, let  $\ell_0(\alpha) = (\alpha - 1)t,    r_0(\alpha) = \alpha t$ and $\mathbb I_{\alpha} = \mathbb I_{n, \alpha} =  [\ell_0(\alpha), r_0(\alpha))$. 
Our interval maps are piecewise M\"obius,  of the form 
\begin{equation}\label{e:theMaps} 
T_{\alpha} = T_{n,\alpha}:   [\ell_0(\alpha), r_0(\alpha)] \to  \mathbb I_{\alpha}, \;\;\; x \mapsto A^{k} C^{l}\cdot x
\end{equation}
where $\ell\in\{1,2\}$ is minimal such that $C^{l}\cdot x \notin  \mathbb I_{\alpha}$
 and $k$ is the unique integer such that then $ A^{k} C^{l}\cdot x \in \mathbb I_{\alpha}$.     We call $b_{\alpha}(x)= (k,l)$ the $\alpha$-digit of such an $x$, and say that $x$ lies in the cylinder $\Delta_{\alpha}(k,l)$.   To a each element of $\mathbb I_{\alpha}$ we then associate the sequence of its $\alpha$-digits.   The $T_{\alpha}$-orbits of the endpoints $\ell_0(\alpha), r_0(\alpha)$ are of extreme importance in our discussions,  one writes 
 \[\underline{b}_{[1, \infty)}^{\alpha} =  (k_1, l_1)(k_2, l_2)\dots \] for the $\alpha$-digits  of $\ell_0(\alpha)$ and replaces the lower bar by an upper bar,  $\overline{b}_{[1, \infty)}^{\alpha}$ for $r_0(\alpha)$.   We also label entries in these orbits by $\ell_0, \ell_1, \dots$ and $r_0, r_1, \dots$.   The collection of all finite words in the $(k,l)$ which arise in the expansions of any $x \in \mathbb I_{\alpha}$ form the language $\mathcal L_{\alpha}$ for $T_{\alpha}$; any word in $\mathcal L_{\alpha}$ is called $\alpha$-admissible.   There is an order, $\prec$ on the words in the $(k,l)$ such that given $\alpha$, for any $x_1, x_2 \in \mathbb I_{\alpha}$ one has $x_1 \le x_2$ if and only if their $\alpha$-digits sequences are similarly related under  $\prec$.  From this, a word is in $\mathcal L_{\alpha}$ if and only if it lies between $\underline{b}_{[1, \infty)}^{\alpha}$ and $\overline{b}_{[1, \infty)}^{\alpha}$.     See (\cite{CaltaKraaikampSchmidt}, Subsection~1.6) for more about this order.

 The parameter interval is naturally partitioned, 
 $(0, 1] = (0, \gamma_n)\cup [\gamma_n, \epsilon_n) \cup [\epsilon_n, 1]$,  where $\alpha < \gamma_n$ if  $\forall x \in [\ell_0(\alpha), r_0(\alpha)]$ the $\alpha$-digit $(k,l)$ of $x$ has $l=1$ and $\alpha \ge \epsilon_n$ if and only if both $\alpha > \gamma_n$ and the $\alpha$-digit of $\ell_0(\alpha)$ equals  $(k,1)$ with $k \ge 2$,  see (\cite{CaltaKraaikampSchmidt}, Figure~4.1) for plots indicating dynamical behavior related to this partition.   We informally refer to the set of $\alpha< \gamma_n$ as the {\em small $\alpha$},  and all others as  {\em large $\alpha$}.       For small $\alpha$, we use {\em simplified digits:} since $l =1$ we only report the exponent of $A$;  in this setting we use $\underline{d}_{[1, \infty)}^{\alpha}, \overline{d}_{[1, \infty)}^{\alpha}$  in place of $\underline{b}_{[1, \infty)}^{\alpha}, \overline{b}_{[1, \infty)}^{\alpha}$, respectively.  
 
\subsubsection{Expansions given words, synchronization intervals, tree of words;  for small $\alpha$} 
For small $\alpha$,  we define parameter intervals on which an initial portion of the expansion of the $r_0(\alpha)$ are fixed.   That is, there is a common prefix of the  $\overline{d}_{[1, \infty)}^{\alpha}$.     Given a word $v = c_1 d_1 \cdots d_{s-1}c_s$ of positive integers, let  $\overline{S}(v) = \sum_{i = 1}^s\, c_i + \sum_{j=1}^{s-1} d_j$ and for each  $k \in \mathbb N$  let 
\begin{equation}\label{e:overDkV} 
\overline{d}(k,v) = k^{c_1}, (k+1)^{d_1},\cdots,  (k+1)^{d_{s-1}},k ^{c_s}, \;\;\; \text{when}\;\; v = c_1 d_1 \cdots d_{s-1}c_s.
\end{equation}   The corresponding subinterval of parameters is  $I_{k,v} = \{ \alpha\mid \overline{d}_{[1, \overline{S}(v) ]}^{\alpha} = \overline{d}(k,v)\,\}$; in words, this is the set of $\alpha$ for which  the $\alpha$-expansion (in simplified digits)  of 
$r_0(\alpha)$ has $\overline{d}(k,v)$ as a prefix. 
Thus, setting   
\begin{equation}\label{e:rKv}
R_{k,v}  =    (A^kC)^{ c_s}\; (A^{k+1}C)^{d_{s-1}}(A^kC)^{c_{s-1}}\cdots (A^{k+1}C)^{d_1} (A^kC)^{c_1},
\end{equation}    for $\alpha \in I_{k,v}$ one has  $T_{\alpha}^{\overline{S}(v)}(\, r_0(\alpha) \,)= R_{k,v} \cdot  r_0(\alpha)$. The left endpoint of $I_{k,v}$ is denoted $\zeta_{k,v}$, one finds that $R_{k,v} \cdot  r_0( \zeta_{k,v}) =  \ell_0( \zeta_{k,v})$.   We define $J_{k,v} = [\zeta_{k,v}, \eta_{k,v})$  where  with $L_{k,v} = C^{-1}ACR_{k,v}$ we have $L_{k,v}\cdot r_0(\eta_{k,v}) = r_0(\eta_{k,v})$.   Confer (\cite{CaltaKraaikampSchmidt}, Figure~4.2).

A main result of \cite{CaltaKraaikampSchmidt} is that each $J_{k,v}$ is a {\em synchronization interval}\,:   for all $\alpha$ in the interior of $J_{k,v}$, the $T_{\alpha}$-orbits of $\ell_0(\alpha)$ and of $r_0(\alpha)$ meet and do so in a common fashion.   We furthermore showed that the complement in $(0, \gamma_n)$ of the union of the $J_{k,v}$ is of measure zero.   Key to this was determining a maximal common prefix of the $\underline{d}_{[1, \infty)}^{\alpha}$ for $\alpha \in J_{k,v}$.   For that, we used 
$w = w_{n} = (-1)^{n-2}, -2,  (-1)^{n-3},-2$ and for $k$ fixed, we let  $\mathcal C = \mathcal C_k =(-1)^{n-3}, -2, w^{k-1}$ and $\mathcal D = C_{k+1}$, and defined 
$\underline{d}(k,v) = w^k, \mathcal C^{c_1-1} \mathcal D^{d_1}\cdots \mathcal D^{d_{s-1}}\mathcal C^{c_s}, (-1)^{n-2}$.    This is the common prefix, and also 
\begin{equation}\label{e:lowerDetaExpansion}
 \underline{d}{}^{\eta_{k,v}}_{[1,\infty)}  =  \overline{ w^k, \mathcal C^{c_1-1} \mathcal D^{d_1}\cdots \mathcal D^{d_{s-1}}\mathcal C^{c_s}, (-1)^{n-3},-2}\,,
 \end{equation} 
where the overline indicates a period.   We denote the length as a word in $\{-1, -2\}$ of $\underline{d}(k,v)$  by $\underline{S}(k,v)$.  There is an expression for $L_{k,v}$ related to $\underline{d}(k,v)$ in a manner similar to how \eqref{e:rKv} relates $R_{k,v}$ to  $\overline{d}(k,v)$.   The aforementioned synchronization is of the form
\begin{equation}\label{e:synchronizationExplicit}
T_{\alpha}^{\overline{S}(v)+1}(\, r_0(\alpha) \,)= T_{\alpha}^{\underline{S}(v)+1}(\, \ell_0(\alpha) \,).
 \end{equation} 

The words $v$ that we use form a tree, $\mathcal V$.    For general  $v \in \mathcal V$   and nonnegative integers $q$,  we define a new word $\Theta_q(v) = v (v')^q v''$ where $v'$ is defined --- see \eqref{e:vPrimeDef} below --- so that any $\overline{d}_{[1, \infty)}^{\eta_{k,v}}$ equals  $\overline{d}(k,v(v')^{\infty}\,)$ using the obvious extension of our notation,  and $v''$ is an appropriate suffix of $v$.   The tree is rooted at $v=1$, and for words of particularly short length there are special details of the definition of the $\Theta_q$, see   (\cite{CaltaKraaikampSchmidt},  \S 4.2).   There is also a type of self-similarity of $\mathcal V$ which allows the explicit definition of the {\em derived words} operator $\mathscr D$ such that (again for  general $v$)  $\mathscr D\circ \Theta_q (v) = \Theta_q\circ\mathscr D(v)$.    In general $\mathscr D$ decreases the length of words while preserving various properties, and hence assists in induction proofs.   For example, we applied it to prove  that every $v \in \mathcal V$ is a palindrome.

Very similar structures were used in \cite{CaltaKraaikampSchmidt} for the large values of $\alpha$.  We will remind the reader of any of those used in this work, nearer to where they are employed; see \S~\ref{ss:terseRev}.

\subsubsection{Two-dimensional maps}\label{ss:2dMaps}
 
The standard number theoretic planar map associated to a M\"obius transformation $M$ is 
\[
\mathcal{T}_M(x,y) := \bigg( M\cdot x,  N\cdot y\,\bigg) := \bigg( M\cdot x,  RMR^{-1}\cdot y\,\bigg)\, \quad  \text{for}\;\; x \in \mathbb{I}_M,\ y \in \mathbb R\setminus \{(RMR^{-1})^{-1}\cdot \infty\}\,,
\]
where $R = \begin{pmatrix}0&-1\\1&0\end{pmatrix}$.  Thus,   $\mathcal{T}_M(x,y) = (\,M\cdot x, -1/(M\cdot (-1/y))\,)$.
An elementary Jacobian matrix calculation verifies that the measure  $\mu$ on $\mathbb R^2$ given by
\begin{equation}\label{e:muDefd}
d\mu = \dfrac{dx\, dy}{(1 + xy)^2}
\end{equation}
is (locally) $\mathcal{T}_M$-invariant.  

Fixing $n$, each $T_{\alpha}$ is piecewise M\"obius, so that there is some partition of its domain into subintervals,  $\mathbb I_{\alpha} = \cup_\beta\, K_\beta$,  such that  $T_\alpha(x) = M_{\beta}\cdot x$ for all $x \in K_\beta$.   We thus set  
\[
\mathcal{T}_\alpha(x, y) = \bigg( M_\beta \cdot x,  RM_\beta R^{-1}\cdot y\,\bigg)\, \quad  \text{for}\;\; x \in K_\beta,\ y \in \mathbb R\setminus \{N^{-1}\cdot \infty\}\,,
\]
  We thus consider $\mathcal T_{\alpha}$ as being defined on the infinite cylinder fibering over $I_{\alpha}$ and seek a subset on which this map is bijective.

 \subsection{Outline of remainder of paper}\label{ss:outline}   
 The bulk of this paper is devoted to explicitly describing the domains $\Omega = \Omega_{\alpha} = \Omega_{n,\alpha}$, and then proving that each is a bijectivity domain, up to measure zero, for the corresponding two dimensional map.   We give the theorem announcing this in \S~\ref{s:biDom}.  We collect some main background results in \S~\ref{s:elemIdsAndOrder}.  
 
 In \S~\ref{s:relationsOnHts} and~\ref{s:MainForSmallAlps}, we verify that the proposed domains are bijectivity domains, in the case of    small  $\alpha$ in synchronization intervals.    In \S~\ref{s:ContSmall} we treat the remaining small $\alpha$;   the section ends with the proof of the continuity of the function sending small $\alpha$ to the $\mu$-measure of the associated bijectivity domain.

 \S~\ref{ss:terseForBigAlps} tersely reviews notation and terminology for the setting of large $\alpha$. 
 \S~\ref{s:bigLefties} and ~\ref{leftBlocks} are the analogs of  \S~\ref{s:relationsOnHts} and~\ref{s:MainForSmallAlps}, but now for the more challenging setting of large $\alpha$ in the left portion of synchronization intervals (see \S~\ref{ss:terseRev}  for the notion of the left- and right-portions).   \S~\ref{s:largeAlpsRightSide} treats the case of large $\alpha$ in the right portion of synchronization intervals.   \S~\ref{s:limLarge} and  \S~\ref{s:conLarge} together form the analog of \S~\ref{s:ContSmall}  for large $\alpha$.
 
 In \S~\ref{s:dynamProp}  we rely on results of \cite{CKStoolsOfTheTrade} to prove:   our maps are eventually expansive;  the systems on the bijectivity domains give natural extensions  of the interval maps; and, these are all ergodic.   In \S~\ref{s:contEntropy} we use Abramov's formula for the entropy value of an induced system to prove that the function assigning to $\alpha$ the product of the entropy and $\mu$-measure is constant on $(0,1)$. 
 
 Finally, in \S~\ref{ss:theEnd} we treat the setting of the conjectures,  by way of a connection between Rohlin's integral formula for entropy of an interval map and the volume of the unit tangent bundle of the hyperbolic surface/orbifold associated to the map.

\subsection{Thanks}\label{ss:bedankt}   The last named authors thankfully acknowledge the kind hospitality extended by the Mathematics and Statistics Department of Vassar College during a visit where this work was furthered.
 
\section{Bijectivity domains}\label{s:biDom}  The bulk of this paper is involved in providing explicit bijectivity domains for our maps.    For parameter $\alpha$ in the closure of synchronization intervals the behavior of the associated two-dimensional map leads to a partitioning of the domain into an upper $\Omega^+$, and a lower portion $\Omega^-$.

\subsection{Two dimensional bijectivity domain associated to every map}\label{ss:OmegaForSmallAlps}      The overall shape of these upper and lower portions of these domains depends upon whether $\alpha$ is in the interior, or is an endpoint of its synchronization interval; in the case of large $\alpha$, synchronization intervals naturally divide into two portions, as we recall in \S~\ref{ss:terseRev}.   In each case, we give the domain in detail.  In order to announce this fundamental result, here we simply point forward to where these corresponding portions are defined.

\begin{table}[h] 
\begin{tabular}{c|c|c|c}
&$\alpha$ in Interior of $J_{k,v}$ &$\alpha = \zeta_{k,v}$ & $\alpha = \eta_{k,v}$\\
\hline\hline
$\Omega^+$ &Definition~\ref{d:topYvalues}&Definition~\ref{d:topYvalues}&Proposition~\ref{p:OmegaForEta}\\
\hline
$\Omega^-$&Definition~\ref{d:bottomYvalues}&Proposition~\ref{p:OmegaForZeta} &Definition~\ref{d:bottomYvalues}\\
\end{tabular}\\
\phantom{Need vertical space}
\newline
\caption{Location of definitions of $\Omega^{\pm}$ for small $\alpha$ in the closure of a synchronization interval.}
\label{t:defsSmallOm}
\end{table}


\begin{table}[h] 
\begin{tabular}{c|c|c|c|c|c}
&$\eta_{-k,v}< \alpha<\delta_{-k,v}$&$\delta_{-k,v}<\alpha< \zeta_{-k,v}$ &$\alpha = \eta_{-k,v}$ & $\alpha = \zeta_{-k,v}$&$\alpha = \delta_{-k,v}$\\
\hline\hline
$\Omega^+$ &Definition~\ref{d:topYvaluesLargeAlp}&Definition~\ref{d:topYvaluesLargeAlp}&Definition~\ref{d:topYvaluesLargeAlp}&Proposition~\ref{p:OmegaForZetaKneg}&Definition~\ref{d:topYvaluesLargeAlp}\\
\hline
$\Omega^-$&Definition~\ref{d:bottomYvaluesLargeAlps}&Definition~\ref{d:bottomYvaluesPastDelta}&Definition~\ref{d:bottomYvaluesLargeAlps}&Definition~\ref{d:bottomYvaluesPastDelta}&Definition~\ref{d:bottomYvaluesLargeAlps}\\
\end{tabular}
\phantom{Need vertical space}
\newline
\caption{Location of definitions of $\Omega^{\pm}$ for large $\alpha$ in the closure of a synchronization interval.}
\label{t:defsLargeOm}
\end{table}

\medskip

Compare the following with Figures~\ref{f:omegaSmallAlp}, \ref{f:topBottomVerticesSmallAlpsZeta} and \ref{f:topBottomVerticesSmallAlpsEta} for small $\alpha$ values;  and with \ref{f:omegaLargeAlpLessThanDelta_{-k,v}}, \ref{f:omegaLargeAlpBiggerThanDelta_{-k,v}} for large $\alpha$ values. 
\begin{Thm}\label{t:Omega}   Fix $n\ge 3$ and $\alpha \in (0,1)$.  If $\alpha$ is in the closure of a synchronization interval, let $\Omega_{n, \alpha}  := \Omega^{+} \cup \Omega^{-}$ where $\Omega^\pm$ are given according to the locations pointed to in Tables ~\ref{t:defsSmallOm} or ~\ref{t:defsLargeOm}.   For all other $\alpha$, let $\Omega_{n, \alpha}$ be given as in Propositions~\ref{p:fillUpFromZ} or ~\ref{p:fillUpFromW}  according to whether $\alpha<\gamma_n$ or not. 

Then 
$\mathcal T_{n,\alpha}$ is bijective on $\Omega_{n, \alpha}$, up to 
$\mu$-measure zero.   
\end{Thm}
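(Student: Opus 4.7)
The plan is to prove Theorem~\ref{t:Omega} by a case analysis that exactly mirrors the stratification of the parameter interval given in Tables~\ref{t:defsSmallOm} and~\ref{t:defsLargeOm}, treating first the interior of each synchronization interval and then passing to the endpoints and the non-synchronization points by limiting arguments. In every case the basic strategy is the same: view $\Omega^{+}$ and $\Omega^{-}$ as being built from the union, over all cylinders $\Delta_{\alpha}(k,l)$, of the rectangles $\Delta_{\alpha}(k,l)\times Y_{\alpha}(k,l)$, where the fiber $Y_{\alpha}(k,l)$ is the projection onto the second coordinate of those points whose image under $\mathcal T_{\alpha}$ lies in the prescribed bijectivity candidate; then check that the images of these cylinders under $\mathcal T_{\alpha}$ tile $\Omega_{\alpha}$.

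For small $\alpha$ lying in the interior of a synchronization interval $J_{k,v}$, I would use the explicit synchronization identity \eqref{e:synchronizationExplicit} together with the formulas for $R_{k,v}$ and $L_{k,v}$ recalled in \S~\ref{ss:notation}. This gives a finite list of second-coordinate values (the $y$-coordinates of the orbits of the interval endpoints) on which both $\Omega^{+}$ and $\Omega^{-}$ are stacked as unions of horizontal strips. The proof of bijectivity reduces to two combinatorial verifications: first, that each cylinder $\Delta_{\alpha}(k,l)\times Y_{\alpha}(k,l)$ is mapped injectively (this is automatic because $\mathcal T_{\alpha}$ restricted to a cylinder is a single M\"obius map in both coordinates), and second, that the images of the cylinders partition $\Omega_{\alpha}$. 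This second verification is the heart of \S~\ref{s:relationsOnHts} and~\ref{s:MainForSmallAlps}: one works out the height relations among the candidate $y$-values $N_{\beta}\cdot y$ for every $\beta$, then checks that the \emph{top} and \emph{bottom} boundaries of $\Omega$ are sent to themselves consistently by the orbit relations. I would carry this out by induction on the depth of the word $v$ in the tree $\mathcal V$, exploiting the derived-word operator $\mathscr D$ and the identity $\mathscr D\circ\Theta_q = \Theta_q\circ\mathscr D$ so that the inductive step reduces to a verification for the root $v=1$ and a uniform recursive update.

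For the endpoints $\alpha = \zeta_{k,v}$ and $\alpha = \eta_{k,v}$, bijectivity follows by taking monotone limits of the interior domains from the appropriate side and checking that the limiting configuration is still a disjoint cover; these are the contents of Propositions~\ref{p:OmegaForZeta} and~\ref{p:OmegaForEta}. For non-synchronization small $\alpha$, I would invoke the fill-up procedure of Proposition~\ref{p:fillUpFromZ}: starting from a nearby synchronization endpoint and iterating $\mathcal T_{\alpha}$ on the symmetric difference between the proposed $\Omega$ and its image, the measure of the un-accounted-for part decreases geometrically under the eventual expansivity supplied by Proposition~\ref{p:expansive}, forcing bijectivity up to $\mu$-measure zero. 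The large-$\alpha$ case proceeds in the same architectural fashion, using the more elaborate tree structure reviewed in \S~\ref{ss:terseForBigAlps} and the splitting into left/right portions of each synchronization interval, with the additional subtlety that cylinders with digit $(k,2)$ contribute extra horizontal strips that need to be threaded through $\Omega^{+}$ and $\Omega^{-}$ correctly.

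The main obstacle will be the large-$\alpha$ left-portion case handled in \S~\ref{s:bigLefties} and \S~\ref{leftBlocks}: here the $\Omega^{\pm}$ regions have combinatorially intricate shapes, with strips at many distinct heights coming from both the $(k,1)$-cylinders and the $(k,2)$-cylinders, and the endpoint orbits pass through the domain in patterns that change qualitatively at $\alpha = \delta_{-k,v}$. The verification that the M\"obius images of these strips actually tile the target region, rather than overlap or leave gaps, is where the explicit height identities of \S~\ref{s:bigLefties} are indispensable, and where an inductive argument along the analog of $\mathcal V$ for large $\alpha$ has to be set up carefully. Once this tiling is established for each synchronization-interval interior, the remaining endpoint and non-synchronization cases close off by the same limiting and fill-up arguments used in the small-$\alpha$ case, completing the proof.
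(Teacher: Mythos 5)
Your treatment of the synchronizing-interval interiors is essentially the paper's argument: fix the rectangle heights from the endpoint orbits, check the height relations, and verify that the block images laminate and cover the top and bottom boundaries (the paper does this with the lamination lemmas of \S~\ref{s:elemIdsAndOrder} and the height identities of \S~\ref{s:relationsOnHts} and \S~\ref{s:bigLefties}; the $\mathscr D$-induction you propose is used there only for auxiliary word lemmas, not as the frame of the bijectivity proof). The endpoint parameters $\zeta_{k,v},\eta_{k,v}$ are handled directly by the same lamination machinery with one rectangle degenerating, not by monotone limits of interior domains, but that is a minor divergence.

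The genuine gap is the non-synchronizing case. You invoke Proposition~\ref{p:fillUpFromZ} as an input, but that proposition \emph{is} the non-synchronizing case of the theorem, so citing it is circular; and the substitute mechanism you sketch --- iterate $\mathcal T_{\alpha}$ on the symmetric difference between the proposed $\Omega$ and its image and let eventual expansivity force a geometric decay of its measure --- cannot work, because $\mathcal T_{\alpha}$ is (locally) $\mu$-preserving, so iteration never shrinks the $\mu$-measure of any set; eventual expansivity of the interval map gives no contraction in the planar picture, and in the paper Proposition~\ref{p:expansive} is proved \emph{from} the already-constructed domains and used only for the ergodicity/natural-extension results via \cite{CKStoolsOfTheTrade}. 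What the proof actually requires is an exact identification of the vertical fibers of the swept-out set $\overline{\cup_{j\ge1}\mathcal T^{j}(\mathcal Z)}$: the fibers are assembled from the images $\mathcal T^{|u|}(\Delta_{\alpha}(u)\times\Phi^{\mp})$ with $u$ ranging over $\{-1,-2\}^{*}\cap\mathcal L_{\alpha}$, respectively $\{k,k+1\}^{*}\cap\mathcal L_{\alpha}$, and the fact that these images tile without gaps or overlaps rests on the combinatorial statement that $u\mapsto\overleftarrow{u}$ is a self-bijection of each restricted language (Lemma~\ref{l:admissibility}, extended to non-synchronizing $\alpha$ by a limit of languages in Corollary~\ref{c:symmetryInLimitAlph}), because the $y$-coordinate action of $\mathcal T^{|u|}$ is governed by the reversed word (Lemma~\ref{l:negAndReverse}); one also needs the finiteness $\mu(\Omega_{\alpha})<\infty$ proved separately (Proposition~\ref{p:finMeas}). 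None of these ingredients appear in your outline, nor are they replaceable by a decay argument, so the non-synchronizing parameters (and likewise the large-$\alpha$ analogue, Proposition~\ref{p:fillUpFromW}) are not actually handled.
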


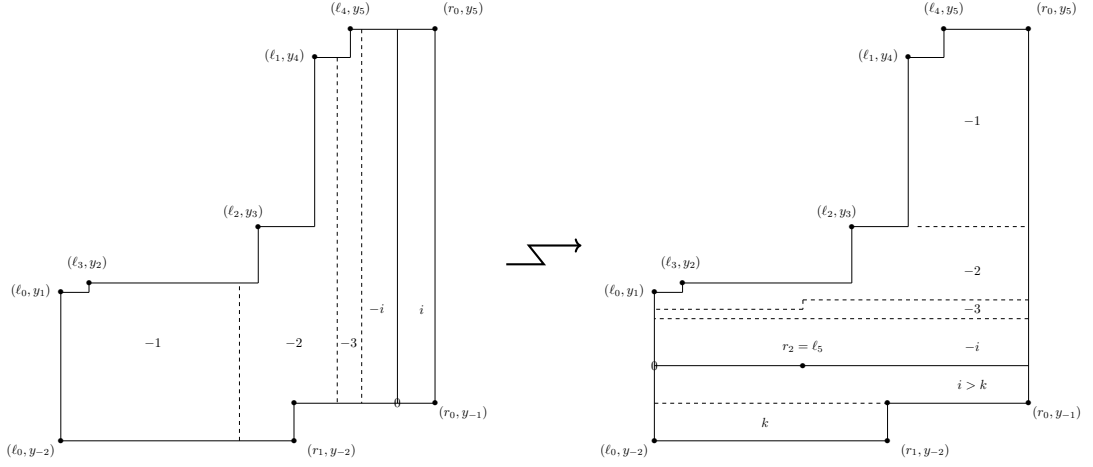
\begin{figure}[h]
\scalebox{.5}{
\noindent
\begin{tabular}{lll}
\begin{tikzpicture}[x=5cm,y=5cm] 
\draw  (-1.79, -0.4)--(-0.55, -0.4); 
\draw  (-0.55, -0.4)--(-0.55, -0.2); 
\draw  (-0.55, -0.2)--(0.2, -0.2); 
\draw  ( 0.2, -0.2)--(0.2,  1.79); 
\draw  (0.2,  1.79)--(-0.25,  1.79); 
\draw  (-0.25,  1.79)--(-0.25,  1.64); 
\draw  (-0.25,  1.64)--(-0.44,  1.64); 
\draw  (-0.44,  1.64)--(-0.44,  0.74); 
\draw  (-0.44,  0.74)--(-0.74,  0.74);
\draw  (-0.74,  0.74)--(-0.74,  0.44);
\draw  (-0.74,  0.44)--(-1.64,  0.44);
\draw  (-1.64,  0.44)--(-1.64,  0.39);   
\draw  (-1.64,  0.39)--(-1.79,  0.39);  
\draw  (-1.79,  0.39)--(-1.79,  -0.4); 
\draw  (0.0, -0.2)--(0.0, 1.79);         
\draw[thin,dashed] ( -0.84, -0.4)--(-0.84, 0.44);        
\draw[thin,dashed] (-0.32, 1.64)--(-0.32, -0.2); 
\draw[thin,dashed] (-0.19, 1.79)--(-0.19, -0.2); 
\node at (-1.3, 0.12) {$-1$};  
\node at (-0.55, 0.12) {$-2$};      
\node at (-0.26, 0.12) {$-3$};      
\node at (-0.11, 0.3) {$-i$};  
\node at (0.13, 0.3) {$i$}; 
 \node at (-1.95, -0.45) {$(\ell_0, y_{-2})$}; 
 \node at (-1.95,  0.40) {$(\ell_0, y_1)$}; 
 \node at ( -1.65, 0.55) {$(\ell_3, y_2)$}; 
 \node at (-0.82, 0.82) {$(\ell_2, y_3)$};
 \node at (-0.6, 1.65)  {$(\ell_1, y_4)$}; 
 \node at (-0.25, 1.9)  {$(\ell_4, y_5)$}; 
 \node at (0.35, -0.25) {$(r_0, y_{-1})$}; 
 \node at (0.35, 1.9) {$(r_0, y_5)$}; 
 \node at (-0.35, -0.45)  {$(r_1, y_{-2})$}; 
 \node at (0, -0.2)  {$0$}; 
 \foreach \x/\y in {-1.79/-0.4,-0.55/-0.4, -0.55 /-0.2,
 0.2/-0.2,  0.2 /1.79, -0.25/1.79, -0.44/1.64, -0.74/0.74,
-1.64/0.44, -1.79/0.39%
} { \node at (\x,\y) {$\bullet$}; }  
\end{tikzpicture}
&
\begin{tikzpicture}[x=2cm,y=5cm] 
\node at (2, 0) {\phantom{here}};
\draw[->, ultra thick] (1.5, 1)--(2, 1) -- (1.8, 1.1) -- (2.5,1.1);
\end{tikzpicture}
&
\begin{tikzpicture}[x=5cm,y=5cm] 
\draw  (-1.79, -0.4)--(-0.55, -0.4); 
\draw  (-0.55, -0.4)--(-0.55, -0.2); 
\draw  (-0.55, -0.2)--(0.2, -0.2); 
\draw  ( 0.2, -0.2)--(0.2,  1.79); 
\draw  (0.2,  1.79)--(-0.25,  1.79); 
\draw  (-0.25,  1.79)--(-0.25,  1.64); 
\draw  (-0.25,  1.64)--(-0.44,  1.64); 
\draw  (-0.44,  1.64)--(-0.44,  0.74); 
\draw  (-0.44,  0.74)--(-0.74,  0.74);
\draw  (-0.74,  0.74)--(-0.74,  0.44);
\draw  (-0.74,  0.44)--(-1.64,  0.44);
\draw  (-1.64,  0.44)--(-1.64,  0.39);   
\draw  (-1.64,  0.39)--(-1.79,  0.39);  
\draw  (-1.79,  0.39)--(-1.79,  -0.4); 
\draw  (-1.79,  0.0)--(0.2, 0.0);  
\draw[thin,dashed] (-1.0, 0.35 )--(0.2, 0.35);         
\draw[thin,dashed] (-1.0, 0.35 )--(-1.0, 0.3 );
\draw[thin,dashed] (-1.0, 0.3 )--(-1.79, 0.3 );
\draw[thin,dashed] (0.2, 0.25)--(-1.79, 0.25 );
\draw[thin,dashed] (-0.39, 0.74)--(0.2, 0.74);
\draw[thin,dashed] (-1.79, -0.2)--(-0.55, -0.2);
\node at (-0.1,  1.3) {$-1$};    
\node at (-0.1,  0.5) {$-2$};      
\node at (-0.1,  0.3) {$-3$};      
\node at (-0.1,  -0.1) {$ i>k$};  
\node at (-0.1,  0.1) {$-i$}; 
\node at (-1, 0.1) {$r_2 = \ell_5$};      
\node at (-1.2,  -0.3) {$k$};
\node at (-1.95, -0.45) {$(\ell_0, y_{-2})$}; 
 \node at (-1.95,  0.40) {$(\ell_0, y_1)$}; 
 \node at ( -1.65, 0.55) {$(\ell_3, y_2)$}; 
 \node at (-0.82, 0.82) {$(\ell_2, y_3)$};
 \node at (-0.6, 1.65)  {$(\ell_1, y_4)$}; 
 \node at (-0.25, 1.9)  {$(\ell_4, y_5)$}; 
 \node at (0.35, -0.25) {$(r_0, y_{-1})$}; 
 \node at (0.35, 1.9) {$(r_0, y_5)$}; 
 \node at (-0.35, -0.45)  {$(r_1, y_{-2})$}; 
\node at (-1.79,  0 ) {$0$}; 
 \foreach \x/\y in {-1.79/-0.4,-0.55/-0.4, -0.55 /-0.2,
 0.2/-0.2,  0.2 /1.79, -0.25/1.79, -0.44/1.64, -0.74/0.74,
-1.64/0.44, -1.79/0.39%
} { \node at (\x,\y) {$\bullet$}; } 
\node at (-1, 0){$\bullet$};
\end{tikzpicture}
%
\end{tabular}
}
\caption{The domain $\Omega_{3, 0.14}$, with blocks $\mathcal B_i$ (see Subsection~\ref{ss:theBlocks}), and their images, both denoted by $i$.   Here $R_{k,v} = AC$ and $L_{k,v} = A^{-1}C A^{-2}CA^{-2}CA^{-1}CA^{-1}$, and $\alpha$ is an interior point of $J_{1,1}$.}%
\label{f:omegaSmallAlp}%
\end{figure}
\bigskip

      For $\alpha$ in a synchronization interval,  the surjectivity of $\mathcal T_\alpha$  requires  that the upper boundary of $\Omega^+$ surjects onto itself, and the lower boundary of $\Omega^-$ does as well;  this  imposes certain relations on the heights of their constituent rectangles.   Using these relations, we solved for the heights, and thus simply define $\Omega^{\pm}$  appropriately.   Surjectivity also requires that   the top and bottom heights satisfy what we call lamination relations, see Subsection~\ref{ss:lamin}.      Section~\ref{s:relationsOnHts} is devoted to verifying that these relations do indeed hold for small $\alpha$ lying in any synchronization interval $J_{k,v}$.   We treat the remaining cases of  $\alpha$ in subsequent sections. 


\subsection{Definition of $\Omega_{\alpha}$  for small $\alpha$ in synchronization interval}\label{ss:theTwoHalves}

We define our domains $\Omega_{\alpha}$ as the union of an upper and lower region.    Compare each of the following two definitions with both Figures~\ref{f:omegaSmallAlp}, ~\ref{f:topBottomVerticesSmallAlps}.  
 
\begin{Def}\label{d:topYvalues}       Fix $n\ge 3, v \in \mathcal V, k\in \mathbb N$ and $\alpha\in (\zeta_{k,v}, \eta_{k,v})$ and let  $\underline{S} = \underline{S}(k,v)$. 
\begin{enumerate}
\item[i.)]     The upper region is the union of rectangles. We let  

\begin{equation}\label{e:omPlusSmallAlp}
 \Omega^{+} = \bigcup_{a=1}^{ \underline{S}+1}\,   K_a \times [0, y_a], 
 \end{equation}
where $K_a$ and $y_a$ are defined below.  

\medskip 
\item[ii.)]  for $1\le a \le \underline{S}$ we let 
\[ K_a =   [\ell_{i_{a}}, \ell_{i_{a+1}}),\]
after having labelled as  $\ell_{i_1}, \ell_{i_2},\dots, \ell_{i_{\underline{S}+1}}$ the first $\underline{S}+1$ elements of the $T_{\alpha}$-orbit of $\ell_0(\alpha)$ in increasing real order.   

\medskip 

\item[iii.)]  Ordering and labelling the elements of the $T_{\zeta_{k,v}}$-orbit of $\ell_0(\zeta_{k,v})$ in exactly the same way,  also let $\tau: \{0, \dots, \underline{S}\}\to \{1, \dots, \underline{S}+1\}$,  be defined by $\tau(j) = a$ exactly when $i_a = j$.    And, finally, for $i \in \{0, \dots,  \underline{S}\}$,   set 
\[y_{\tau(i)} = - \ell_{\underline{S}-i}(\zeta_{k,v}).\] 
\end{enumerate}
\end{Def}

The ordering of the  $\ell_i(\alpha)$ is the same as that of the $\ell_i(\zeta_{k,v})$ because the initial $\underline{S}$ simplified digits of $\ell_0(\alpha)$ are constant on $J_{k,v}$, and by  Lemma~\ref{l:lastEllValueIsLargest} below.

\begin{figure}[h]
\scalebox{.8}{
\noindent
\begin{tabular}{ll}
\begin{tikzpicture}[x=3.5cm,y=5cm] 
\draw  (-.32, 0)--(-.32, 0.2); 
\draw  (-.32, 0.2)--(-.15, 0.2);
\draw  (-.15, 0.2)--(-.15, 0.3);
\draw  (-.15, 0.3)--(-.1, 0.3);
\draw  (-.1, 0.3)--(-.1, 0.4);
\draw  (-.1, 0.4)--(-.05, 0.4);  
\draw  (0.0, 0.45)--(0.0, 0.5);    
\draw  (0.0, 0.5)--(0.15, 0.5); 
\draw  (0.15, 0.5)--(0.15, 0.6); 
\draw  (0.15, 0.6)--(0.2, 0.6);  
\draw  (0.3, 0.65)--(0.3, 0.7);      
\draw  (0.3, 0.7)--(0.35, 0.7);  
\draw  (0.35, 0.7)--(0.35, 0.8);  
\draw  (0.35, 0.8)--(1, 0.8);   
\draw  (1, 0.8)--(1, 0.9); 
\draw  (1, 0.9)--(1.4, 0.9);  
\draw  (1.4, 0.9)--(1.4, 0);  
\draw[thin,dashed] (0.1, 0)--(0.1, 0.5);     
\draw[thin,dashed] (0.55, 0)--(0.55, 0.8); 
\node at (-0.09, 0.12) {$-1$};    
\node at (.35, 0.12) {$-2$};           
\node at (-.55,   0.25) {$(\ell_0, y_1)$};           
\node at (-0.4, 0.55) {$(\ell_{\underline{S}-1}, y_{\tau(\underline{S}-1)})$};   
\node at (0.25,   .9) {$(\ell_{i_{\underline{S}}}, y_{\underline{S}})$}; 
\node at (.9,   1){$(\ell_{\underline{S}}, y_{\underline{S}+1})$}; 
\node at ( 1.45,   1) {$(r_0,  y_{\underline{S}+1})$}; 
 \foreach \x/\y in {-.32/0.2,  0.0/0.5, 0.35/0.8, 1/0.9, 1.4/0.9%
} { \node at (\x,\y) {$\bullet$}; } 
\end{tikzpicture}
&\;\;\;
\begin{tikzpicture}[x=3.5cm,y=5cm] 
\draw  (-.32, 0.6)--(-.32, 0 ); 
\draw  (-.32, 0)--(0, 0 ); 
\draw  (0, 0)--(0, 0.05 ); 
\draw  (0, 0.05)--(0.58, 0.05 ); 
\draw  (0.58, 0.05)--(0.58, 0.1); 
\draw  (0.58, 0.1)--(0.63, 0.1); 
\draw  (0.8, 0.1)--(0.8, 0.15); 
\draw  (0.8, 0.15)--(0.9, 0.15);   
\draw  (0.9, 0.15)--(0.9, 0.2);  
\draw  (0.9, 0.2)--(1.05, 0.2);   
\draw  (1.05, 0.2)--(1.05, 0.25);  
\draw  (1.05, 0.25)--(1.1, 0.25); 
\draw  (1.2, 0.3)--(1.2, 0.35); 
\draw  (1.2, 0.35)--(1.25, 0.35);     
\draw  (1.25, 0.35)--(1.25, 0.4);  
\draw  (1.25, 0.4)--(1.3, 0.4); 
\draw  (1.3, 0.4)--(1.4, 0.4);  
\draw  (1.4, 0.4)--(1.4, 0.6);   
\draw[thin,dashed] (0.55, 0.05)--(0.55, 0.6);  
\draw[thin,dashed] (0.95, 0.2)--(0.95, 0.6); 
\node at ( .75, 0.5) {$k+1$};    
\node at (1.1, 0.5) {$k$};   
\node at (-.38,   -0.15) {$(\ell_0, y_{-\overline{S}-1})$};          
\node at (0.14,   -0.07) {$(r_{\overline{S}}, y_{-\overline{S}-1})$};          
\node at (0.75,  -0.06) {$(r_{\iota}, y_{\beta(\iota)})$};      
\node at (1.28, 0.07) {$(r_{\overline{S}-1}, y_{\beta(\overline{S}-1)})$}; 
\node at (1.5,  .3) {$(r_0,  y_{-1})$}; 
 \foreach \x/\y in {-.32/0,  0.0/0, 0.58/0.05, 1.05/0.2, 1.4/0.4%
} { \node at (\x,\y) {$\bullet$}; } 
 
\end{tikzpicture} 
\end{tabular}
}
\caption{Schematic representations showing the most important vertices of the tops and bottoms of blocks, for general $\alpha \in (\zeta_{k,v}, \eta_{k,v})$.  Labels in part justified by  Lemmas~\ref{l:lastRValueIsLeast} and ~\ref{l:2ndLargestR}.  See Definitions~\ref{d:topYvalues},     ~\ref{d:bottomYvalues} and \ref{d:iota}  for notation at vertices.   Interior label  $i$ denotes partitioning ``block" $\mathcal B_i$, see Subsection~\ref{ss:theBlocks} for definitions.}
\label{f:topBottomVerticesSmallAlps}%
\end{figure}
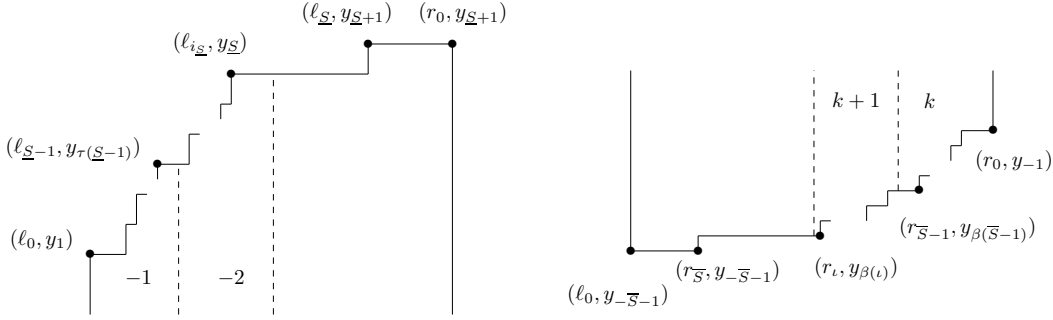

\bigskip 

The lower region is also a union of rectangles.  (In the following we will use  singly subscripted $L$ to denote subintervals, we trust that there will be no confusion with our notation for certain group elements.)

\begin{Def}\label{d:bottomYvalues} 
\phantom{hi} 
\begin{enumerate}
\item[i.)] Let 
\begin{equation}\label{e:omMinusSmallAlp}
 \Omega^{-} = \bigcup_{b=-1}^{ -\overline{S}-1}\,   L_{b} \times [y_b, 0],
\end{equation} 
where the $L_b$ and $y_b$ in the following items. 

\medskip
\item[ii.)]  For $-\overline{S}\le b \le -1$ we let 
  
\[ L_b =   [ r_{j_{b-1}}, r_{j_b}]\]
and $L_{-\overline{S}-1} = [\ell_0, r_{\overline{S}}]$, after having labelled as 
$r_{j_{-1}}, r_{j_{-2}},\dots, r_{j_{-\overline{S}-1}}$ the points      $r_0, r_1, \dots, r_{\overline{S}}$ in {\em decreasing} order as real numbers.

(Lemma~\ref{l:lastRValueIsLeast} below guarantees that there is no nontrivial intersection of the various $L_b$.) 
 
\medskip 
 
\item[iii.)]  Ordering and labelling the elements of the $T_{\eta_{k,v}}$-orbit of $r_0(\eta_{k,v})$ in exactly the same way,  also let  $\beta: \{0, \dots, \overline{S}\}\to \{-1, \dots, -\overline{S}-1\}$,  be defined by $\beta(i) = b$ exactly when $j_b = i$.    And, finally, for $j \in \{0, \dots,  \overline{S}\}$,   set 
\[y_{\beta(j)} = - r_{\overline{S}-j}(\eta_{k,v}).\] 

\end{enumerate}
\end{Def}

\begin{Eg}\label{e:kIs1}  Fix  $n=3$, $k=1$, $v=1$.   As indicated in the caption to Figure~\ref{f:omegaSmallAlp}, we have 
$R_{1,1} = AC$ and $L_{1,1} = A^{-1}C A^{-2}CA^{-2}CA^{-1}CA^{-1}$.    In general, $AR_{k,v}$ fixes $r_0(\zeta_{k,v}) = \zeta_{k,v} t$; here, we find 
$\zeta_{1,1} = (5 - \sqrt{21})/4 \sim 0.104$.  
Similarly,   $L_{k,v}$ fixes $r_0(\eta_{k,v})$ and thus here $\eta_{1,1} = (-1+ \sqrt{21})/20 \sim 0.179$.   Table~\ref{t:orbsInEg} gives the initial portions of the orbits of each of the endpoints of the interval of definition for both $\alpha = 
\zeta_{1,1}$ and $\alpha = \eta_{1,1}$.

\begin{table}  
$\begin{array}{c|c|c||c|c|}
&\zeta_{1,1}&\text{approx. value}&\eta_{1,1}&\text{approx. value}\\
\hline\hline
r_0 &\phantom{\bigg)}(5 - \sqrt{21})/2&\phantom{-}0.209&(-1 + \sqrt{21})/2&\phantom{-}0.358\\
r_1 &\phantom{\big)}(1 - \sqrt{21})/2&-1.791&(5 - \sqrt{21})/2&\phantom{-}0.209\\
\hline
\ell_0 &\phantom{\bigg)}(1 - \sqrt{21})/2&-1.791&(-21+ \sqrt{21})/10&-1.642\\
\ell_1 &\phantom{\big)}(-9+ \sqrt{21})/10&-0.442&(-21+ \sqrt{21})/42&-0.391\\
\ell_2 &\phantom{\big)}(-9+ \sqrt{21})/6&-0.736&(-9+ \sqrt{21})/10&-0.442\\
\ell_3 &\phantom{\big)}(-21+ \sqrt{21})/10&-1.642&(-9+ \sqrt{21})/6&-0.736\\
\ell_4 &\phantom{\big)}(-21+ \sqrt{21})/42&-0.391&\ell_0(\eta_{1,1})\\
\ell_5 &\phantom{\big)}\ell_1(\zeta_{1,1})&\\
\end{array}
$
\phantom{Need vertical space}
\newline
\\
\caption{Endpoint orbits for $\alpha = 
\zeta_{1,1}$ and $\alpha = \eta_{1,1}$ when $n=3$.}
\label{t:orbsInEg}
\end{table}

Since $\underline{S}(1,1) = 4$ and $\overline{S}(1,1) = 1$, the various $y_a, a>0$ and $y_b, b<0$ for the $\Omega_{\alpha}$ with $\alpha \in (\zeta_{1,1}, \eta_{1,1})$ can now be determined, see Table~\ref{t:yValsInEg}. In particular, this second table gives the values of the various ``heights" of rectangles shown in Figure~\ref{f:omegaSmallAlp}.

\begin{table}
$\begin{array}{c|c||c|c|}
&\text{approx. value}&&\text{approx. value}\\
\hline\hline
y_5 &1.791&y_{-1}&-0.209\\
y_4 &1.642&y_{-2}&-0.358\\
y_3&0.736\\
y_2&0.442\\
y_1&0.391\\
\end{array}
$
\phantom{Need vertical space}
\newline
\phantom{Need vertical space}
\newline
\caption{The horizontal boundary values, the ``heights",   of $\Omega_{3,\alpha}$ for $\alpha \in  (\zeta_{1,1}, \eta_{1,1})$.}
\label{t:yValsInEg}
\end{table} 
\end{Eg}

\section{Elementary identities and another order on words}\label{s:elemIdsAndOrder}   We collect a few basic tools for use in proving that the images of regions under the various two-dimensional maps $\mathcal T_M$ are as we claim.    We imagine that the reader will initially skip this section and return to it as its results are applied. 

\subsection{Relating  $y$-coordinate action of $\mathcal T_M$ to $x$-coordinate action} 
In what follows, a left arrow over a word indicates the word taken in reverse order.   Recall from Subsection~\ref{ss:2dMaps} that our two-dimensional maps are defined using the matrix $R$.
 
 \begin{Lem}\label{l:conjByRofAtoPc}       If $X$ is a word in $\{ A^pC, \,p \in \mathbb Z,\}$ and $x_1, x_2 \in \mathbb R$,  then 
 \[ R X R^{-1} \cdot (-x_1) =  - x_2\;\; \text{if and only if}\;\; x_1 = \overleftarrow{X}\cdot x_2\,.\]

In particular,   for 
real $x$, 
\[(R A^pC R^{-1})^{-1}\cdot (-x) = - \; (A^pC\cdot x).\]
 
 \end{Lem}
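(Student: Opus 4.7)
\medskip

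\noindent\textbf{Proof plan for Lemma~\ref{l:conjByRofAtoPc}.}

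The plan is to recast the biconditional as a single identity of M\"obius transformations and then induct on the length of $X$ as a word in the letters $A^pC$. Let $\iota$ denote the M\"obius involution $x\mapsto -x$, realized by the matrix $\bigl(\begin{smallmatrix} 1 & 0\\ 0 & -1\end{smallmatrix}\bigr)$. Since $\iota\circ\iota=\mathrm{id}$, the biconditional
\[
RXR^{-1}\cdot(-x_1)=-x_2 \iff x_1=\overleftarrow{X}\cdot x_2
\]
holds for all $x_1,x_2$ if and only if the composition $\iota\circ RXR^{-1}\circ\iota$ agrees with the M\"obius transformation $(\overleftarrow{X})^{-1}$, i.e.\
\begin{equation}\label{e:planIdentity}
RXR^{-1}\;=\;\iota\circ(\overleftarrow{X})^{-1}\circ\iota.
\end{equation}
So it suffices to establish \eqref{e:planIdentity} for every word $X$ in the letters $\{A^pC:p\in\mathbb Z\}$.

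I would first handle the base case $X=A^pC$, in which $\overleftarrow{X}=X$. For a matrix $M=\bigl(\begin{smallmatrix}a&b\\c&d\end{smallmatrix}\bigr)$ of determinant one, conjugation by $R$ produces $\bigl(\begin{smallmatrix}d&-c\\-b&a\end{smallmatrix}\bigr)$, conjugation by $\iota$ produces $\bigl(\begin{smallmatrix}a&-b\\-c&d\end{smallmatrix}\bigr)$, and matrix inversion produces $\bigl(\begin{smallmatrix}d&-b\\-c&a\end{smallmatrix}\bigr)$. Composing these operations and comparing with the explicit form $A^pC=\bigl(\begin{smallmatrix}-1-pt&1\\-1&0\end{smallmatrix}\bigr)$ of the base letter shows that $RA^pCR^{-1}$ and $\iota(A^pC)^{-1}\iota$ coincide as M\"obius maps.

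The inductive step uses only the two functorial properties: $\overleftarrow{X\cdot Y}=\overleftarrow{Y}\cdot\overleftarrow{X}$ for the reversal, and $R(XY)R^{-1}=(RXR^{-1})(RYR^{-1})$ for the conjugation. If \eqref{e:planIdentity} holds for the word $X$ and for the single letter $A^pC$, then for $X':=X\cdot A^pC$,
\[
RX'R^{-1}
=\bigl[\iota(\overleftarrow{X})^{-1}\iota\bigr]\bigl[\iota(A^pC)^{-1}\iota\bigr]
=\iota\bigl[(\overleftarrow{X})^{-1}(A^pC)^{-1}\bigr]\iota
=\iota\bigl[(\overleftarrow{X'})^{-1}\bigr]\iota,
\]
closing the induction.

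Finally, the displayed ``in particular'' assertion is exactly the case $X=A^pC$ of the biconditional after the substitution $x_2=x$, $x_1=A^pC\cdot x$, followed by applying $(RA^pCR^{-1})^{-1}$ to both sides. I do not anticipate any serious obstacle: the content is a clean matrix computation at length one together with a trivial functorial induction; the only care required is bookkeeping of signs in the base case.
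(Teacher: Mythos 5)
Your proposal is correct and follows essentially the same strategy as the paper: verify the length-one case for $A^pC$ by a matrix computation, then induct on word length using the anti-homomorphism property of reversal and the fact that conjugations distribute over products. The only cosmetic difference is in the base case, where the paper packages the identity via $RMR^{-1}=(M^t)^{-1}$ and the observation that $A^pC=\bigl(\begin{smallmatrix}a&b\\-b&0\end{smallmatrix}\bigr)$ satisfies $(A^pC)^t\cdot(-x)=-(A^pC\cdot x)$, whereas you compute the $\iota$-conjugation explicitly; both amount to the same matrix check.
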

\begin{proof}    We prove the statements in opposite order.  First, observe that  $A^p C$ is of the form $\begin{pmatrix} a&b\\-b& 0\end{pmatrix}$.   And therefore, using $M^{t}$  to denote the transpose of a matrix $M$,  
$(A^p C)^{t}\cdot(-x) =-\; (A^pC\cdot x)$  for any real $x$.   Since for any $M \in \text{SL}_2(\mathbb R)$,  $RM R^{-1} =   (M^{t})^{-1}$ (projectively), we have that 
$(R  A^p C R^{-1})^{-1} \cdot (-x) =  -\; (A^pC\cdot x)$  for any real $x$.   

Now,  $ R X R^{-1} \cdot (-x_1) =  - x_2$ if and only if   $-x_1 =  (R X R^{-1})^{-1}  \cdot (-x_2)$.   Writing $X = X_1 \cdots X_{|X|}$, we have $(R X R^{-1})^{-1} 
= (R X_{|X|} R^{-1})^{-1} \cdots (R X_1 R^{-1})^{-1}$, and induction gives $ R X R^{-1} \cdot (-x_1) =  - x_2$ if and only if  $x_1 = \overleftarrow{X}\cdot x_2$.    
\end{proof}

\subsection{Relations for lamination} \label{ss:lamin}  
The simplest manner for  regions to have images under $\mathcal T_{n,\alpha}$ that abut one another, thus to {\em laminate}, occurs when regions fibering above two distinct cylinders   are mapped to lie directly one above the other.   We give the simple formulas that imply such behavior in the special cases that arise in this work.

 \begin{Lem}\label{l:aAndCrCommute}      For any $n\ge 3$  the  matrices  $A$ and $CR$ commute.   Furthermore,  $RAR^{-1} = C^{-1} A C$.   
 \end{Lem}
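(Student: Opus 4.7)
Both claims reduce to direct $2\times 2$ matrix arithmetic, so my plan is simply to organize the computations cleanly rather than to invoke any structural machinery.

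For the first assertion, I would start by computing $CR$ explicitly from the definitions, namely
\[
CR \;=\; \begin{pmatrix} -1 & 1\\ -1 & 0\end{pmatrix}\begin{pmatrix} 0 & -1\\ 1 & 0\end{pmatrix} \;=\; \begin{pmatrix} 1 & 1\\ 0 & 1\end{pmatrix}.
\]
This displays $CR$ as an upper-triangular unipotent matrix, the same shape as $A = \begin{pmatrix} 1 & t\\ 0 & 1\end{pmatrix}$. The punchline is then the general identity $\begin{pmatrix} 1 & a\\ 0 & 1\end{pmatrix}\begin{pmatrix} 1 & b\\ 0 & 1\end{pmatrix} = \begin{pmatrix} 1 & a+b\\ 0 & 1\end{pmatrix}$, which is symmetric in $a$ and $b$ and therefore yields $A(CR) = (CR)A$ immediately.

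For the second assertion, I would compute both sides and verify equality. For the left-hand side, multiplying first on the left and then on the right gives
\[
RAR^{-1} \;=\; \begin{pmatrix} 0 & -1\\ 1 & 0\end{pmatrix}\begin{pmatrix} 1 & t\\ 0 & 1\end{pmatrix}\begin{pmatrix} 0 & 1\\ -1 & 0\end{pmatrix} \;=\; \begin{pmatrix} 1 & 0\\ -t & 1\end{pmatrix}.
\]
For the right-hand side, using $\det C = 1$ so that $C^{-1} = \begin{pmatrix} 0 & -1\\ 1 & -1\end{pmatrix}$, I compute
\[
C^{-1}AC \;=\; \begin{pmatrix} 0 & -1\\ 1 & -1\end{pmatrix}\begin{pmatrix} 1 & t\\ 0 & 1\end{pmatrix}\begin{pmatrix} -1 & 1\\ -1 & 0\end{pmatrix} \;=\; \begin{pmatrix} 1 & 0\\ -t & 1\end{pmatrix}.
\]
The two computed matrices coincide, giving $RAR^{-1} = C^{-1}AC$.

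There is no genuine obstacle to overcome here; both parts are unconditional identities independent of $n$ (the parameter $t = 1 + 2\cos\pi/n$ plays only a passive role and cancels cleanly). The only thing to be careful about is the arithmetic: matching sign conventions for $R^{-1}$ and $C^{-1}$, and remembering that these are projective equalities in $\mathrm{PSL}_2(\mathbb{R})$ which here in fact hold on the nose in $\mathrm{SL}_2(\mathbb{R})$.
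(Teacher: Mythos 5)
Your proof is correct; both parts check out by the direct matrix arithmetic you carry out. The one small divergence from the paper's argument is in the second assertion: the paper deduces $RAR^{-1}=C^{-1}AC$ as an immediate algebraic consequence of the commutation $A(CR)=(CR)A$ (rearranging $ACR = CRA$ to $C^{-1}ACR = RA$ and then $C^{-1}AC = RAR^{-1}$), whereas you verify it by a fresh independent computation of both sides. Either route is fine here since everything is a short $2\times 2$ calculation, but the paper's derivation makes the logical dependence of the two claims explicit, which is arguably slightly more illuminating.
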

\begin{proof}  First, $CR = \begin{pmatrix} 1 &1\\0&1\end{pmatrix}$  and thus clearly commutes with $A$.      The stated equality then easily follows.   
\end{proof}

 \begin{Lem}\label{l:lamEqsSameCexpon}  Fix $n \ge 3$ and $\alpha \in [0,1]$.  Suppose that $a<b, c<d$ are real numbers.   The rectangle 
$\Delta_{\alpha}(k, l)\times [a,b]$  is mapped by $\mathcal T_{n,\alpha}$ below the image of  $\Delta_{\alpha}(k+1, l)\times [c, d]$ 
so as to share a common horizontal line segment  
 if and only if 
 \[ b = R^{-1}C^{-l}AC^lR\cdot c\,.\]
 In particular,  if $l=1$ then this holds if and only if $b = c + t$.  
 \end{Lem}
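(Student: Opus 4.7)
The plan is to unwind the lamination condition directly into an algebraic identity on $y$-coordinates and then simplify.

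By the definition of $\mathcal T_{n,\alpha}$, on $\Delta_{\alpha}(k,l)\times\mathbb R$ the action on the $y$-coordinate is by the M\"obius transformation $N_k := RA^{k}C^{l}R^{-1}$, and on $\Delta_{\alpha}(k+1,l)\times\mathbb R$ by $N_{k+1} := RA^{k+1}C^{l}R^{-1}$. I would first observe that, on any $y$-interval avoiding its pole, each of $N_k, N_{k+1}$ is orientation preserving (positive derivative as an element of $\mathrm{PSL}_2(\mathbb R)$), so each input rectangle is sent to a rectangle of vertical range $[N_k\cdot a, N_k\cdot b]$ or $[N_{k+1}\cdot c, N_{k+1}\cdot d]$ respectively; this mild non-degeneracy is implicit in the statement of the lemma.

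Under that setup, the image of $\Delta_{\alpha}(k,l)\times[a,b]$ lies below the image of $\Delta_{\alpha}(k+1,l)\times[c,d]$ and shares a horizontal segment with it if and only if the top of the first image coincides with the bottom of the second, i.e.\ $N_k\cdot b = N_{k+1}\cdot c$. Left-multiplying by $N_k^{-1} = RC^{-l}A^{-k}R^{-1}$ and collapsing $A^{-k}A^{k+1}=A$ yields
\[
b \;=\; RC^{-l}A C^{l} R^{-1}\cdot c.
\]
Since $R^{-1} = -R$ as matrices, $R^{-1}$ and $R$ act identically on $\mathbb R \cup \{\infty\}$, so this is the same as $b = R^{-1}C^{-l}A C^{l} R\cdot c$, establishing the asserted equivalence.

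For the $l=1$ specialization, I would invoke Lemma~\ref{l:aAndCrCommute}, which gives $A\cdot CR = CR\cdot A$ and thus $C^{-1}ACR = C^{-1}(CR)A = RA$. Hence
\[
R^{-1}C^{-1}ACR\cdot c \;=\; R^{-1}RA\cdot c \;=\; A\cdot c \;=\; c + t.
\]
The only potential obstacle is the orientation subtlety --- ensuring that each $y$-interval sits on one side of the pole of the relevant $N_j$, so that ``below'' behaves as expected --- but in every application in the paper the boxes are constructed to lie in a region where the acting $N_j$ is manifestly orientation preserving, so this comes essentially for free.
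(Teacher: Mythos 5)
Your proof is correct and follows essentially the same route as the paper: identify the $y$-coordinate action as $RA^kC^lR^{-1}$ (resp.\ $RA^{k+1}C^lR^{-1}$), use positivity of the determinant to match the top of the first image with the bottom of the second, and solve; then, for $l=1$, simplify via the commutation of $CR$ with $A$ and the fact that $R^2$ acts as the identity. Your explicit remark that $R^{-1}$ and $R$ act identically on $\mathbb R\cup\{\infty\}$ is a small but welcome clarification that the paper leaves tacit.
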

\begin{proof} The second coordinate of $\mathcal T_{n,\alpha}(x,y)$ for $x \in \Delta(k, l)$ is given by 
$RA^kC^lR^{-1}\cdot y$, and similarly when $x \in \Delta(k+1, l)$.  Since all of the matrices here are of positive determinant, 
the images of the first rectangle highest horizontal edge of $y$-value $RA^{k}C^lR^{-1}\cdot b$, while the second will have 
lowest horizontal edge $RA^{k+1}C^lR^{-1}\cdot c$, respectively.   Setting these equal and solving gives the first result. 

The second result follows since $CR$ and $A$ commute, and $R^2$ acts as the identity.  
\end{proof}
 
 \begin{Lem}\label{l:lamEqsSameAexpon}   The rectangle $\Delta(k, l)\times [a,b]$ is mapped by $\mathcal T_{n,\alpha}$ above
 the image of $\Delta(k, l+1)\times [e,f]$   so as to share exactly a common horizontal line 
 if and only if 
 \[ a =  RCR^{-1}\cdot f\,.\]
 \end{Lem}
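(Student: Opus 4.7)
The plan is to mimic the proof of Lemma~\ref{l:lamEqsSameCexpon} almost verbatim, since the only change is that the two cylinders now share the same $A$-exponent $k$ but differ in their $C$-exponents by one. First I would write down the $y$-coordinate action of $\mathcal T_{n,\alpha}$ on each strip: on $\Delta(k,l)\times[a,b]$ it is $y\mapsto RA^{k}C^{l}R^{-1}\cdot y$, while on $\Delta(k,l+1)\times[e,f]$ it is $y\mapsto RA^{k}C^{l+1}R^{-1}\cdot y$. All matrices involved have positive determinant, and one checks that on the relevant intervals the relevant poles are avoided, so each of these conjugate M\"obius maps is orientation-preserving in $y$. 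Hence the image of the first rectangle has lowest horizontal edge at height $RA^{k}C^{l}R^{-1}\cdot a$ and the image of the second has highest horizontal edge at height $RA^{k}C^{l+1}R^{-1}\cdot f$.

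Next I would translate the geometric hypothesis into the equation
\[
RA^{k}C^{l}R^{-1}\cdot a \;=\; RA^{k}C^{l+1}R^{-1}\cdot f,
\]
since the image of $\Delta(k,l)\times[a,b]$ being situated directly above that of $\Delta(k,l+1)\times[e,f]$ with exactly one shared horizontal segment means the bottom of the first image meets the top of the second. Applying $(RA^{k}C^{l}R^{-1})^{-1}=RC^{-l}A^{-k}R^{-1}$ to both sides and cancelling $A^{-k}A^{k}$ and $C^{-l}C^{l}$ yields $a = RCR^{-1}\cdot f$. The reverse direction is immediate by running the computation backwards.

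I do not expect any real obstacle: the argument is a one-line matrix cancellation once the images of the horizontal edges are identified, and the orientation-preserving observation is the same one already used in the previous lemma. The only subtlety worth a sentence is confirming that we are comparing the bottom edge of one image to the top edge of the other (rather than top-to-top or bottom-to-bottom), which is exactly what the phrase ``mapped above \ldots so as to share exactly a common horizontal line'' encodes.
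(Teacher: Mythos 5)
Your proof is correct and takes essentially the same route as the paper, which simply solves for $a$ in $RA^{k}C^{l}R^{-1}\cdot a = RA^{k}C^{l+1}R^{-1}\cdot f$. The extra remarks you add about orientation preservation and identifying which horizontal edges are compared mirror the justification already given in the proof of Lemma~\ref{l:lamEqsSameCexpon}, which the paper relies on implicitly here.
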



\begin{proof} Here we simply solve  for $a$ in the equation $R A^k C^{l}R^{-1} \cdot a = R A^{k} C^{l+1}R^{-1} \cdot f$.
\end{proof}

  The next lemma is also immediate. 
 
 \begin{Lem}\label{l:lamEqsDifferingAandCexpons}   The rectangle $\Delta(k, l)\times [a, b]$ is mapped by $\mathcal T_{n,\alpha}$ below the image of 
 $\Delta(k+1, l+1)\times [g,h]$   so as to share exactly a common horizontal line 
 if and only if 
 \[ b =  RC^{-l}A C^{l+1} R^{-1}\cdot g\,.\]  
 \end{Lem}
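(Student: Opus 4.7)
The plan is to mirror the proofs of Lemmas~\ref{l:lamEqsSameCexpon} and~\ref{l:lamEqsSameAexpon} almost verbatim, since the geometry is the same: compute the heights of the images of the two rectangles from the formula for the second coordinate of $\mathcal T_{n,\alpha}$, then equate the top edge of one image with the bottom edge of the other and solve for $b$.

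More explicitly, the second coordinate of $\mathcal T_{n,\alpha}(x,y)$ is $R A^k C^l R^{-1}\cdot y$ on $\Delta(k,l)$ and $R A^{k+1} C^{l+1} R^{-1}\cdot y$ on $\Delta(k+1,l+1)$. Since both $R A^k C^l R^{-1}$ and $R A^{k+1} C^{l+1} R^{-1}$ lie in $\mathrm{PSL}_2(\mathbb R)$ and so have positive determinant, each acts monotonically on the relevant $y$-interval (its pole lying outside of it). Hence the image of $\Delta(k,l)\times[a,b]$ has its top edge at $y$-value $R A^k C^l R^{-1}\cdot b$, while the image of $\Delta(k+1,l+1)\times[g,h]$ has its bottom edge at $y$-value $R A^{k+1} C^{l+1} R^{-1}\cdot g$. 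Demanding that the first image sit directly below the second and meet it in exactly one horizontal line forces the equality $R A^k C^l R^{-1}\cdot b = R A^{k+1} C^{l+1} R^{-1}\cdot g$. Applying $(R A^k C^l R^{-1})^{-1} = R C^{-l} A^{-k} R^{-1}$ to both sides and collapsing $A^{-k}A^{k+1}=A$ yields $b = R C^{-l} A C^{l+1} R^{-1}\cdot g$, which is the claimed identity.

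There is no genuine obstacle here: the verification reduces to a one-line algebraic manipulation. The only point requiring mild attention is the orientation bookkeeping needed to confirm that ``top of lower image equals bottom of upper image'' pairs the parameter $b$ with $g$ (rather than with $h$), and this follows from the positive-determinant observation exactly as in Lemma~\ref{l:lamEqsSameCexpon}. Unlike Lemmas~\ref{l:lamEqsSameCexpon} and~\ref{l:lamEqsSameAexpon}, there is no further simplification via Lemma~\ref{l:aAndCrCommute} in general, since the $C$-exponent here changes simultaneously with the $A$-exponent; the expression $R C^{-l} A C^{l+1} R^{-1}$ is already in its simplest standard form.
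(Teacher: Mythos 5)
Your argument is correct and is exactly what the paper means when it calls the lemma ``also immediate'': one solves $RA^kC^lR^{-1}\cdot b = RA^{k+1}C^{l+1}R^{-1}\cdot g$ just as in the proofs of Lemmas~\ref{l:lamEqsSameCexpon} and~\ref{l:lamEqsSameAexpon}. The paper gives no further proof, so there is nothing to compare beyond noting the agreement.
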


\subsection{An order on words} 
The order on words mentioned in Subsection~\ref{gpsMapsEtc} agrees with the standard ordering of real numbers, and since each $T_{\alpha}$ takes on only values less than $r_0(\alpha)$,  this leads to a corresponding order on words of $\mathcal V$.    This is what might be called an alternating dictionary order.  In fact, we made use this order in    (\cite{CaltaKraaikampSchmidt}, Lemmas~4.16 and  ~6.7)   
 without giving it a distinct notation,  including stating, without proof, a variant of  our Lemma~\ref{l:bothOrders} below.   
\begin{Def}\label{d:orderOnV} 
Let $\pprec$ be the dictionary order  on words $a = a_1 a_2 \cdots a_m$ in natural numbers induced by a dictionary ordering extending (for all words $a,b$ and any $i,j \in \mathbb N$):
$a_{2i-1} \pprec b_{2i-1}$ if $a_{2i-1}  < b_{2i-1}$;    $a_{2j} \pprec b_{2j}$ if $a_{2j}  > b_{2j}$; $b_{2j-1} \pprec a_{2i}$.
\end{Def} 
\bigskip
 
The following result justifies our introduction of the second word order; recall that $\overline{d}(k,v)$ is given in \eqref{e:overDkV}.    
\begin{Lem}\label{l:bothOrders}  Fix $n, k$.     Suppose $a, b \in \mathcal V$, then $\overline{d}(k,a) \preceq \overline{d}(k,b)$ if and only if $a \ppreceq b$.
 \end{Lem}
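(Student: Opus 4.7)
The plan is to first understand the order $\prec$ concretely as lexicographic, then carry out a case analysis on the first position at which $a$ and $b$ differ. To set up, note that the branch $x \mapsto A^k C \cdot x = 1 - 1/x + kt$ has derivative $1/x^2 > 0$, so $T_\alpha$ is order-preserving within each simplified cylinder $\Delta_\alpha(k,1)$. Consequently, the order $\prec$ on simplified digit sequences agrees with the standard lexicographic order once one fixes the appropriate order on single digits. On the positive side of $\mathbb I_\alpha$ (where the orbit of $r_0(\alpha)$ originates), as $x$ decreases toward $0$ the required digit $k$ grows; hence the cylinder of digit $k+1$ lies to the left of the cylinder of digit $k$, giving the single-digit comparison $k+1 \prec k$.

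Given $a, b \in \mathcal V$ with $a \pprec b$, I would locate the first index $p$ at which they differ and split into two cases by the parity of $p$. If $p = 2i-1$ is odd, then by the definition of $\pprec$ we have $a_p = c^a_i < c^b_i = b_p$. Writing out the run-length encoding, both $\overline{d}(k,a)$ and $\overline{d}(k,b)$ agree on their first $\sum_{j<i}(c_j + d_j) + c^a_i$ entries; at the next position, $a$ either moves into its $(k+1)^{d^a_i}$-block or terminates (when $i = s_a$), while $b$ still carries the digit $k$. Since $k+1 \prec k$ and, by the dictionary convention, a terminating word is less than any of its proper extensions, we obtain $\overline{d}(k,a)\prec \overline{d}(k,b)$. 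If instead $p=2j$ is even, then $a_p = d^a_j > d^b_j = b_p$; using that every $v \in \mathcal V$ ends at an odd position (so $b$ must continue past its $d^b_j$-block with a $c^b_{j+1}$-run of $k$'s), a parallel analysis shows that at the first differing entry $a$ carries $k+1$ while $b$ carries $k$, again giving $\overline{d}(k,a)\prec \overline{d}(k,b)$.

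For the reverse implication, I would combine the injectivity of $\overline{d}(k,\cdot)$ on $\mathcal V$---the run-length structure determines $v$ uniquely---with trichotomy for the total order $\pprec$: if $\overline{d}(k,a)\prec\overline{d}(k,b)$ did not imply $a\pprec b$, then either $a=b$ (which contradicts $\overline{d}(k,a)\neq \overline{d}(k,b)$) or $b\pprec a$, which by the forward direction gives $\overline{d}(k,b)\prec \overline{d}(k,a)$, a contradiction. The main obstacle throughout is the bookkeeping in the boundary sub-cases where one of $a,b$ terminates at the critical position; this is exactly where the auxiliary clause $b_{2j-1}\pprec a_{2i}$ of Definition~\ref{d:orderOnV} is needed, ensuring that words terminating on an odd-indexed $c$-entry are correctly ranked below words continuing into the subsequent $d$-entry, mirroring the prefix convention on the digit-sequence side.
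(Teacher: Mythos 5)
Your proof is correct and follows essentially the same route as the paper: both arguments reduce the comparison to the first position of disagreement, use $k+1 \prec k$ together with the run-length structure of $\overline{d}(k,\cdot)$, and split into the two cases corresponding to a longer run of $k$'s (odd, $c$-entries) versus a longer run of $(k+1)$'s (even, $d$-entries). The only cosmetic differences are that you argue from $a \pprec b$ to the digit words and recover the converse via injectivity and trichotomy, whereas the paper runs the first-difference analysis in the opposite direction, and that you make the prefix/termination convention explicit where the paper only notes that the case of equality is easily verified.
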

\begin{proof}     The word order on simplified digits is such that $k+1 \prec k$ and hence $\overline{d}(k,a) \prec \overline{d}(k,b)$ if and only if there is some $i$ such that $\overline{d}(k,a)_{[1,i-1]} =  \overline{d}(k,b)_{[1,i-1]}$ and $\overline{d}(k,b)_{[i,i]} =k$ while $\overline{d}(k,a)_{[i,i]}  = k+1$.    In particular,   either (1) $\overline{d}(k,a)_{[i-1,i-1]} =  \overline{d}(k,b)_{[i-1,i-1]} = k$ and thus there is some $j$ such that $b_{2j-1} \ge a_{2j-1}+1$;  or, (2)   $\overline{d}(k,a)_{[i-1,i-1]} =  \overline{d}(k,b)_{[i-1,i-1]} = k+1$ and thus there is some $j$ such that $a_{2j} \ge b_{2j} +1$.   In either case, we find that $a \pprec b$.   The case of equality is easily verified.
\end{proof}

 A proof of the following can be given by use of the derived words map $\mathscr D$.   We let  $\sigma$ be the left-shift on letters, thus  $\sigma(c_1d_1\cdots d_{s-1}c_s) = d_1\cdots d_{s-1}c_s$.
\begin{Lem}\label{l:selfD}  Each $v \in \mathcal V$ is {\em self-dominant} in the sense that 
  $\sigma^j(v) \ppreceq v$  for all $1\le j\le |v|$. 
 \end{Lem}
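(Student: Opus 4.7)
The plan is to prove this by structural induction on the depth of $v$ in the tree $\mathcal V$, mirroring the inductive scheme used in \cite{CaltaKraaikampSchmidt} to establish palindromicity of elements of $\mathcal V$. The base case is the root $v = 1$: then $|v| = 1$ and the sole shift $\sigma^{1}(v)$ is the empty word, which is $\ppreceq$-minimal under the alternating dictionary order.

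For the inductive step, fix $v \in \mathcal V$ of positive depth and write $v = \Theta_{q}(u)$ for some $u \in \mathcal V$ of strictly smaller depth and some $q \ge 0$, so that $v = u (u')^{q} u''$ in the notation of the excerpt. Any shift $\sigma^{j}(v)$ with $1 \le j \le |v|$ removes the first $j$ letters of $v$ and therefore falls into one of three cases, depending on whether the cut lies inside the initial $u$ block, inside one of the interior $u'$ blocks, or inside the terminal $u''$. In each case, $\sigma^{j}(v)$ has the form (suffix of a block) concatenated with the remaining full blocks, and the desired inequality $\sigma^{j}(v) \ppreceq v$ reduces, under the alternating dictionary order, to a position-by-position comparison that unpacks to self-dominance statements for words of strictly smaller depth.

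To execute this reduction, I would invoke the commutation $\mathscr D \circ \Theta_{q} = \Theta_{q} \circ \mathscr D$ to obtain $\mathscr D(v) = \Theta_{q}(\mathscr D(u))$, and apply the inductive hypothesis to $\mathscr D(u)$ or to $\mathscr D(v)$ as needed. The main obstacle is precisely the pullback step: because $\mathscr D$ alters letters rather than merely shifting positions, one must track how the alternating odd-versus-even rule in Definition~\ref{d:orderOnV} interacts with the letter-substitution rules defining $\mathscr D$. The palindromic property of $v$, established in \cite{CaltaKraaikampSchmidt}, provides the symmetry needed to make this tracking tractable; Lemma~\ref{l:bothOrders} can be used to transfer $\ppreceq$-comparisons to $\preceq$-comparisons of $\overline{d}(k,\cdot)$ whenever a geometric interpretation of the order is convenient. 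Finally, the short-word exceptional cases of $\Theta_{q}$ flagged in \cite{CaltaKraaikampSchmidt} will need separate direct verification as base-like cases in the induction.
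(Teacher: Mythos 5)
The paper supplies no proof of this lemma, only the one-line remark that a proof ``can be given by use of the derived words map $\mathscr D$,'' so there is no paper proof to compare against; the question is whether your plan closes, and it does not. You flag the crux---tracking how $\mathscr D$ interacts with the parity-alternating order---as ``the main obstacle'' and leave it unresolved, and the obstacle is real. $\mathscr D$ contracts blocks of letters to single letters, so it does not commute with the shift $\sigma$: for most $j$, $\sigma^j(v)$ begins mid-block, and there is no $j'$ with $\mathscr D(\sigma^j(v)) = \sigma^{j'}(\mathscr D(v))$; the commutation $\mathscr D \circ \Theta_q = \Theta_q \circ \mathscr D$ concerns $\Theta_q$, not $\sigma$, so it does not repair this. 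You also conflate your inductive variable: you announce induction on tree depth, but since $\mathscr D$ commutes with $\Theta_q$ it preserves depth, so $\mathscr D(u)$ has the same depth as $u$ and $\mathscr D(v)$ the same depth as $v$; invoking a depth-indexed inductive hypothesis ``on $\mathscr D(u)$ or $\mathscr D(v)$'' is therefore not licensed. Finally, even when $j$ falls at a block boundary, the inductive hypothesis may only give $\sigma^{j}(u) \ppreceq u$ with equality over the full overlap of lengths, and the comparison of $\sigma^j(v)$ with $v$ is then decided at position $|u|-j+1$, about which self-dominance of $u$ alone says nothing; you must invoke the relations among $u$, $u'$, $u''$ (e.g.\ Lemma~\ref{l:maxSuffixPrefix}).

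A more executable route---and, given its placement, plausibly the one the authors had in mind---is the direct $\Theta_q$-induction the paper carries out for the immediately following Lemma~\ref{l:wordOrder}, with no $\mathscr D$ at all: write $v = u(u')^{q}u''$, split $j$ into the ranges $1 \le j < |u|$, $j = |u|+p|u'|$, $j = 1+|u|+p|u'|$, and $j = r+|u|+p|u'|$ with $2 \le r < |u|$, and in each range handle $\sigma^j(v)$ using the inductive hypothesis on $u$, the inequalities $u' \pprec u'' \ppreceq u$, and the explicit form of $u'$, with the short-word $\Theta_q$ exceptions checked by hand as you anticipate. (This likely runs best as a joint induction with the statement of Lemma~\ref{l:wordOrder}, since each uses pieces of the other one level down.) Your plan assembles the raw ingredients but does not execute the step that makes the lemma nontrivial.
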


 Recall that  
 (\cite{CaltaKraaikampSchmidt}, Definition ~4.8) 
  defines
\begin{equation}\label{e:vPrimeDef}  v' =  \begin{cases} 1 (c_1 - 1) 1c_2 \cdots 1 c_s &\text{if}\;\;\; c_1\neq 1\,,\\
                               (d_1+1) 1 \cdots d_{s-1} 1 &\text{otherwise}\,,
                              \end{cases}
 \end{equation}
and that we show in \cite{CaltaKraaikampSchmidt} that when $c_1>1$ then for all $i$,  $c_i \in \{c_1, c_1-1\}$ as well as that when $c_1 = 1$, each  $d_i \in \{d_1,  d_1+1\}$.    In  (\cite{CaltaKraaikampSchmidt}, Definition ~4.10), 
 we give $\Theta_q(v)$ for short words, and then recursively define values of the operators  $\Theta_q$: whenever $v = \Theta_p(u) = u v''$ for some $p\ge 0$ and some suffix $v''$, we let    $\Theta_q(v) = v (v')^q v''$.

 \begin{Lem}\label{l:wordOrder}     Suppose that $v \in \mathcal V$.  Then $v' \pprec \sigma^j(v)$ for any $j<|v|$.
 \end{Lem}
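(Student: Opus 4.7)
The plan is to prove $v' \pprec \sigma^j(v)$ by induction on $|v|$ using the self-similar structure of $\mathcal{V}$ under the derived-word operator $\mathscr{D}$, splitting along the two cases of \eqref{e:vPrimeDef}. In the case $c_1 > 1$, the word $v' = 1, (c_1-1), 1, c_2, 1, c_3, \ldots, 1, c_s$ has the minimal value $1$ at every odd position and values in $\{c_1-1, c_1\}$ at every even position; in the case $c_1 = 1$ the analogous decomposition applies using the constraint $d_i \in \{d_1, d_1+1\}$. For each admissible $j$, I would compare $v'$ with $\sigma^j(v)$ position by position under $\pprec$, using the odd-position rule (normal order) and the even-position rule (reverse order).

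At a typical position the desired inequality would follow immediately from the structural constraints on the $c_i$ and $d_i$: at an odd position of $v'$, the value $1$ is minimal, so the inequality $v'_i < \sigma^j(v)_i$ holds unless $\sigma^j(v)_i = 1$ as well; at an even position, $v'_i$ lies in $\{c_1-1, c_1\}$, and the only possibilities are equality or $\sigma^j(v) \pprec v'$. Thus the comparison either produces $v' \pprec \sigma^j(v)$ quickly or enters a chain of equalities whose positional values are forced by these constraints.

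The hard part will be to show that any such equality chain must terminate in a strict inequality in the correct direction. I would combine two ingredients: the palindromic structure of $v$, which forces $c_s = c_1$ (respectively $d_{s-1} = d_1$) and so prevents the tail of $v$ from consisting entirely of minimal values; and the self-dominance $\sigma^j(v) \ppreceq v$ from Lemma~\ref{l:selfD}, which rules out a wrong-direction mismatch at the first break by transitivity with an intermediate comparison against $v$ itself. To keep the bookkeeping manageable, I would set up the induction using the compatibility $\mathscr{D} \circ \Theta_q = \Theta_q \circ \mathscr{D}$: writing $v = \Theta_q(u)$, the various shifts $\sigma^j(v)$ either lie within a single block of the decomposition $v = u (u')^q u''$, where the inductive hypothesis on $u$ applies after a reindexing of shifts, or they straddle block boundaries, where the explicit form of $u'$ relative to $u$ controls the comparison. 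I expect the straddling cases, together with the small-word base cases where $\Theta_q$ and $v \mapsto v'$ have special definitions, to be the main source of delicate case work.
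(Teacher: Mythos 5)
Your instinct to induct along the tree $\mathcal V$ via the decomposition $v=\Theta_q(u)=u(u')^q u''$ and to reduce each shift $\sigma^j$ to the inductive hypothesis on the parent $u$ is exactly right, and your final paragraph is essentially the paper's argument: the paper treats the base cases (length one, and those $v$ with no $c_i=c_1-1$, resp.\ no $d_i=d_1+1$), notes $(\Theta_q(u))'=(u')^{q+1}u''$, and then disposes of each shift range $j<|u|$, $j=|u|+p|u'|$, $j=1+|u|+p|u'|$, and $j=r+|u|+p|u'|$ with $2\le r<|u|$ either by applying the hypothesis (directly, or via the fact that $u''$ and these shifted tails are proper suffixes of $u$) or by a short explicit check using the form of $u'$. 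That block-level case analysis, which you relegate to ``bookkeeping,'' is in fact the whole proof; the operator $\mathscr D$ plays no role in it.

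The gap is in your proposed mechanism for resolving the letter-by-letter comparison. You correctly note that at even positions a wrong-direction outcome $\sigma^j(v)\pprec v'$ is a priori possible, and you propose to rule it out by combining palindromicity with self-dominance $\sigma^j(v)\ppreceq v$ (Lemma~\ref{l:selfD}) ``by transitivity with an intermediate comparison against $v$ itself.'' That chain does not close: both the target inequality $v'\pprec\sigma^j(v)$ and self-dominance place $\sigma^j(v)$ on the small side relative to a third word, so transitivity yields nothing in the required direction (and depending on the case, $v'$ is not even uniformly comparable to $v$ in the direction that would help). The paper never invokes self-dominance or palindromicity in this proof, precisely because the $\Theta_q$-block reduction already hands every shift to the inductive hypothesis or to an explicit computation, making the positional analysis unnecessary. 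Drop the letter-by-letter framing and the appeal to Lemma~\ref{l:selfD}, and promote the $\Theta_q$ case split to the main argument, and you will be carrying out the paper's proof.
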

\begin{proof}  Note that if $v$ is such that no $c_i = c_1-1$ or no $d_i = d_1 + 1$ then it is immediate that every $v' \pprec \sigma^j(v)$.  Furthermore, for $v$ of length one that the result holds is easily verified.    We therefore proceed by induction, assuming our result for a given $v$ and proving that it holds for $\Theta_q(v)$.

Now suppose that $v' \pprec \sigma^j(v)$ holds for all $j< |v|$.   Since, $\Theta_q(v) = v (v')^q v''$  we have  $(\,\Theta_q(v)\,)' = (v')^{q+1} v''$.   We aim to show that $(\,(\Theta_q(v)\,)'\,)^{\infty} \pprec \sigma^j(\, \Theta_q(v)\,)$ for all $j \le |\Theta_q(v)|$.   From our hypothesis,  if 
 $j<|v|$ then   $(\,\Theta_q(v)\,)' \pprec\sigma^j(\,\Theta_q(v)\,)$. Also by this hypothesis we have $v' \pprec v''$,  and hence    $(\,\Theta_q(v)\,)' \pprec \sigma^j(\,\Theta_q(v)\,)$ when $j=|v| + p |v'|$, with $0\le p \le q$.  The inequality for the values $j= 1+|v| + p |v'|$ holds due to the form of $v'$. For   $j = r +|v| + p |v'|$, with $2\le r < |v|$,   
$\sigma^j(\,\Theta_q(v)\,)$  begins with a suffix of $v$, and thus our induction hypothesis shows that the inequality holds for all of these values.  
\end{proof}

Combined with the property of the self-dominance of $v$,  the following result aids in determining explicit orbit elements with extremal value.   See for instance the proof of Lemma~\ref{l:lastEllValueIsLargest}. 
 \begin{Lem}\label{l:maxSuffixPrefix}     Suppose that $v \in \mathcal V$ is of length greater than one.  The parent of $v$ is the longest proper suffix of $v$ that is also a prefix.    
 \end{Lem}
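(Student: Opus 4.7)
The plan is to argue by strong induction on the depth of $v$ in $\mathcal V$ (equivalently, on $|v|$), with the short-word base cases verified by direct inspection of the explicit definitions in \cite{CaltaKraaikampSchmidt}. For the inductive step, let $v$ have parent $u$, so $v = \Theta_q(u) = u(u')^q u''$ for some $q \ge 1$.

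The first reduction I would make is to invoke the palindrome property of every element of $\mathcal V$, already established in \cite{CaltaKraaikampSchmidt} via the derived-words operator $\mathscr D$. Since $v$ is a palindrome, the suffix of $v$ of length $|w|$ is exactly $\overleftarrow{w}$ whenever $w$ is the prefix of that length; hence a proper prefix $w$ of $v$ is also a suffix of $v$ if and only if $w$ itself is a palindrome. The lemma is therefore equivalent to the assertion that $u$ is the longest proper palindromic prefix of $v$.

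That $u$ is a palindromic prefix of $v$ is immediate: $u$ is a prefix of $v$ by the definition of $\Theta_q$, and $u \in \mathcal V$ is a palindrome by the inductive hypothesis. The real work is to rule out any palindromic prefix $w$ with $|u|<|w|<|v|$. I would write $w = u\cdot w_0$, with $w_0$ a nonempty proper prefix of $(u')^q u''$, and split into cases according to where $w_0$ terminates: strictly inside one of the copies of $u'$, at a boundary between two copies, or inside the tail $u''$. In each case, the palindrome equation $w=\overleftarrow{w}$ forces a prefix of $u'$ (respectively $u''$) to agree with a suffix of $u$ of the same length. Lemma~\ref{l:wordOrder} provides the strict inequality $u'\pprec \sigma^j(u)$ for every $j<|u|$, while the self-dominance furnished by Lemma~\ref{l:selfD} gives $\sigma^j(u)\ppreceq u$; combined, these exhibit a position at which the two sides of the putative palindromic identity differ in the $\pprec$-order, yielding a contradiction.

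The main obstacle I anticipate is the interface between the final $u'$-copy and the tail $u''$. Here $u''$ sits as a genuine suffix of $u$ (inherited from $u$'s own $\Theta$-construction at the previous level), so both the $u'$-strict-ordering of Lemma~\ref{l:wordOrder} and the self-dominance of Lemma~\ref{l:selfD} must be combined with some care to exclude an ``off by $u''$'' palindromic match just past position $|u|$. I expect to manage this by applying the commutation identity $\mathscr D\circ \Theta_q = \Theta_q\circ \mathscr D$ to transport the problem to the strictly shorter word $\mathscr D(v)$, whose parent is $\mathscr D(u)$ by construction, and where the inductive hypothesis applies directly; the palindrome property transfers through $\mathscr D$, closing the induction.
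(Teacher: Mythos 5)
Your overall strategy — exploit the palindrome property of words in $\mathcal V$, reduce to a combinatorial statement about $v$, and close the induction via the derived words operator $\mathscr D$ — is in the same spirit as the paper's. The paper, however, uses a slicker intermediate reduction: since $u$ is both a prefix and a suffix of $v$, any longer proper border $w$ of $v$ necessarily contains $u$ as a suffix of $w$ sitting in an interior position of $v$; hence it suffices to show $v$ has \emph{no internal occurrence of $u$}. That statement is the one the paper transports via $\mathscr D$ and checks for short words, bypassing the case analysis entirely.

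Two points in your plan need more care. First, the case analysis via Lemmas~\ref{l:wordOrder} and~\ref{l:selfD}: the palindrome equation $uw_0 = \overleftarrow{w_0}\,u$ forces $w_0 = \sigma^{|u|-|w_0|}(u)$, and if $|w_0|\le |u'|$ then $w_0$ is a prefix of $u'$. You would like to contradict $u'\pprec \sigma^{|u|-|w_0|}(u)$, but that inequality compares the \emph{full} word $u'$ against the \emph{full} suffix $\sigma^{|u|-|w_0|}(u)$, and a strict dictionary-order inequality is entirely compatible with the two words agreeing on a common prefix — which is exactly the situation you are in when $|w_0|<|u'|$. So Lemmas~\ref{l:wordOrder}/\ref{l:selfD} do not, by themselves, dispose of these cases; you would have to locate the actual position of disagreement, which is precisely the work you are hoping to avoid. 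Second, the $\mathscr D$-transport at the end is asserted rather than argued: it is not automatic that a border of $v$ longer than $u$ descends to a border of $\mathscr D(v)$ longer than $\mathscr D(u)$. The paper sidesteps this by transporting the cleaner property ``$u$ does not appear internally in $v$,'' for which preservation under $\mathscr D$ is a more tractable claim. If you adopt that reduction, your proof collapses to the paper's; as written, the intermediate stage has genuine gaps.
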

\begin{proof}  We have that $v = u (u')^q u''$ with $q\ge 0$ and $u$ the parent of $v$.    Since every word in $\mathcal V$ is a palindrome, certainly $u$ is a suffix of $v$.   Any longer suffix/prefix must of course admit $u$ as both a prefix and a suffix.  In particular, it suffices to show that there are no internal appearances of $u$ within $v$.    Since this sort of containment is a property preserved by the derived words operator $\mathscr D$, it suffices to check this in the setting of the shortest possible $v$.   But for these short $v$, the result is clear. 
\end{proof}  

\bigskip

\section{Relations on heights of rectangles, for small $\alpha$}\label{s:relationsOnHts}
Throughout this section, unless otherwise stated, we assume that $\alpha$ is a fixed value in the interior of some   synchronization interval $J_{k,v}$.    Recall that a review of definitions and notation is given in Subsection~\ref{ss:notation}.

 We state and prove the main results on the heights of the rectangles of Definitions~\ref{d:topYvalues}  and ~\ref {d:bottomYvalues} whose union gives $\Omega_{\alpha}$.   
For larger values of $\alpha$, see Subsection~\ref{s:RelHtsLargeAlps} and Section~\ref{s:largeAlpsRightSide}.

\subsection{Heights of rectangles, top half}
     
\begin{Def}\label{d:indivStepsTop} 
  Considering $\underline{d}(k,v)$ as a word in $\{-1, -2\}$, for $i \in \{0, \dots, \underline{S}\}$ let $p_i$ be the first letter of $\sigma^i(\,\underline{d}(k,v)\,)$, where $\sigma$ is the left-shift on letters.  That is,  $p_i$ is the first simplified digit of $\ell_i = \ell_i(\alpha)$; equivalently,  $\ell_i \in \Delta_{\alpha}(p_i, 1)$. 
\end{Def} 

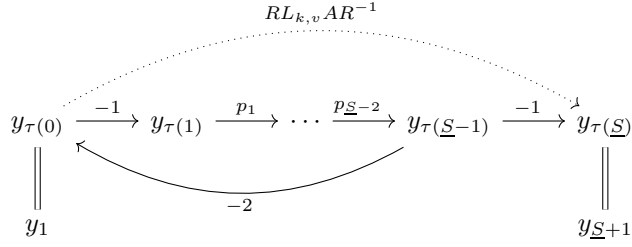
\begin{figure}[t]
\begin{tikzcd}[column sep=2pc,row sep=2pc]
y_{\tau(0)}\ar[pos=0.5]{r}{-1} \ar[bend left, dotted]{rrrr}{R L_{k,v}AR^{-1}}\ar[equal]{d}&y_{\tau(1)}  \ar{r}{p_1}& \cdots  \ar[pos=0.4]{r}{p_{\underline{S}-2}} &y_{\tau(\underline{S}-1)} \ar[pos=0.4]{r}{-1}\ar[bend left]{lll}{-2} &y_{\tau(\underline{S})}\ar[equal]{d}\\
y_1&&&&y_{\underline{S}+1}
\end{tikzcd} 

\caption{Relations on the top heights of rectangles comprising $\Omega^{+}$ for $\alpha \in [\zeta_{k,v}, \eta_{k,v})$.  Here $-1$ denotes $RA^{-1}CR^{-1}$, and $-2$ denotes $RA^{-2}CR^{-1}$, similarly each $p_i$ denotes $RA^{p_i}CR^{-1}$ with notation as in Definition~\ref{d:indivStepsTop}.  The leftmost vertical equality sign is due to Definition ~\ref{d:topYvalues}, the rightmost is due to Lemma~\ref{l:lastEllValueIsLargest}.  Note that up to relabeling vertices and arrows, the top row is the reverse diagram of that for the orbit of $\ell_0(\zeta_{k,v})$; see Figure~\ref{f:zetaLzeroPeriodic}.}
\label{f:topIsPeriodic} 
\end{figure}

\begin{figure}[b]
\begin{tikzcd}[column sep=2pc,row sep=2pc]
\ell_0\ar[pos=0.64]{r}{-1} \ar[bend left, dotted]{rrrr}{L_{k,v}A}& \ell_1 \ar{r} & \cdots  \ar{r}  &\ell_{\underline{S}-1} \ar[pos=0.2]{r}{-1} &\ell_{\underline{S}}\ar[bend left]{lll}{-2}
\end{tikzcd} 
\caption{The orbit of $\ell_0=\ell_0(\zeta_{k,v})$ under $T_{n,\zeta_{k,v}}$.  Here $-1$ denotes $A^{-1}C$, and $-2$ denotes $A^{-2}C$. Compare with Table~\ref{t:orbsInEg}.}
\label{f:zetaLzeroPeriodic} 
\end{figure}
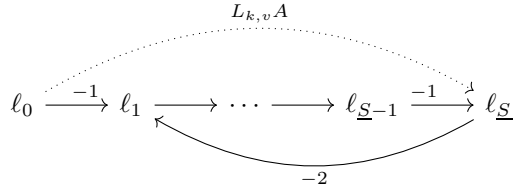

 \medskip  
 Compare the following  with Figures~\ref{f:topBottomVerticesSmallAlps}, \ref{f:topIsPeriodic} and ~\ref{f:zetaLzeroPeriodic}.
 \begin{Lem}\label{l:topValuesFirstRelations}     The following hold:
 \begin{enumerate}
\item[i.)]   For all  $i \in \{0, \dots, \underline{S}-1\}$,  $y_{\tau(i+1)} = R A^{p_i}C R^{-1}\cdot  y_{\tau(i)}$;
 \item[ii.)]  $y_{\tau(\underline{S})} = R L_{k,v} A R^{-1}\cdot  (-\ell_{\underline{S}}(\zeta_{k,v})\,)$;
 \item[iii.)]   $R A^{-2}C R^{-1}\cdot y_{\tau(\underline{S}-1)}  = y_{\tau(0)}\,$;
 \item[iv.)]   $y_1  = R A^{-1}R^{-1} \cdot (-\ell_0(\zeta_{k,v})\,).$
\end{enumerate} 
\end{Lem}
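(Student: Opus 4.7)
My plan is to reduce each of (i)--(iv) to an identity about the forward $T_{\zeta_{k,v}}$-orbit of $\ell_0(\zeta_{k,v})$, using the defining relation $y_{\tau(i)} = -\ell_{\underline{S}-i}(\zeta_{k,v})$ of Definition~\ref{d:topYvalues} together with Lemma~\ref{l:conjByRofAtoPc}.

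For (i), applying Lemma~\ref{l:conjByRofAtoPc} with $X = A^{p_i}C$ (a one-letter word, hence its own reverse) reduces the claim to
\[\ell_{\underline{S}-i}(\zeta_{k,v}) = A^{p_i}C \cdot \ell_{\underline{S}-i-1}(\zeta_{k,v}),\]
i.e.\ the simplified digit of $\ell_{\underline{S}-i-1}(\zeta_{k,v})$ equals $p_i$. Since $p_i$ is the digit of $\ell_i(\alpha)$ and the first $\underline{S}$ simplified digits of $\ell_0(\cdot)$ are constant on $J_{k,v}$, this is equivalent to saying that the length-$\underline{S}$ word $\underline{d}(k,v)$ is a palindrome. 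The key additional input for (ii) is that $L_{k,v}A$ is itself a word in $\{A^p C\}$: in analogy with \eqref{e:rKv}, the expression for $L_{k,v}$ in terms of $\underline{d}(k,v)$ (mentioned just after \eqref{e:synchronizationExplicit}) yields
\[L_{k,v}A = M_{\underline{d}} := A^{p_{\underline{S}-1}}C \cdots A^{p_0}C,\]
the matrix realizing $T_{\zeta_{k,v}}^{\underline{S}}$ on $\ell_0$, so in particular $M_{\underline{d}}\cdot \ell_0 = \ell_{\underline{S}}$. A second application of Lemma~\ref{l:conjByRofAtoPc}, now with $X = M_{\underline{d}}$, reduces (ii) to $\ell_{\underline{S}} = \overleftarrow{M_{\underline{d}}}\cdot \ell_0$, which follows from the same palindromicity of $\underline{d}(k,v)$.

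For (iii) and (iv), the starting input is the synchronization identity \eqref{e:synchronizationExplicit} evaluated at $\alpha = \zeta_{k,v}$, which together with $R_{k,v}\cdot r_0(\zeta_{k,v}) = \ell_0(\zeta_{k,v})$ yields $\ell_{\underline{S}+1}(\zeta_{k,v}) = \ell_1(\zeta_{k,v})$. Via Lemma~\ref{l:conjByRofAtoPc}, (iii) becomes the assertion that the simplified digit of $\ell_{\underline{S}}(\zeta_{k,v})$ is $-2$; this follows by exclusion, since the alternative digit $-1$ would give $A^{-1}C\cdot\ell_{\underline{S}} = \ell_1 = A^{-1}C\cdot \ell_0$, forcing the distinct orbit points $\ell_{\underline{S}}$ and $\ell_0$ to coincide. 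For (iv), we have $y_1 = y_{\tau(0)} = -\ell_{\underline{S}}(\zeta_{k,v})$ because the orbit lies inside $\mathbb I_{\zeta_{k,v}} = [\ell_0(\zeta_{k,v}), r_0(\zeta_{k,v}))$, making $\ell_0$ its minimum. Using Lemma~\ref{l:aAndCrCommute} to write $RA^{-1}R^{-1} = C^{-1}A^{-1}C$, a direct computation of the projective action reduces the claim to the identity $AC\cdot \ell_0 = C\cdot \ell_{\underline{S}}$; combining $\ell_1 = A^{-1}C\ell_0$ (from $p_0 = -1$, the first letter of $w$) with the digit conclusion of (iii) gives exactly this.

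The principal obstacle is the palindromicity of $\underline{d}(k,v)$. Since $w$ itself is not a palindrome, the cancellation producing the palindrome has to be tracked through the interplay of $w^k$ with both the trailing $(-1)^{n-2}$ and the surrounding $\mathcal C^{c_1-1}\mathcal D^{d_1}\cdots\mathcal C^{c_s}$. My plan is to proceed by induction on $|v|$ along the derived-words operator $\mathscr D$: the short-$v$ base case (where $\underline{d}(k,1) = w^k, (-1)^{n-2}$) is easily verified by hand, and the inductive step should mirror the strategy already used in \cite{CaltaKraaikampSchmidt} to show that every $v \in \mathcal V$ is a palindrome.
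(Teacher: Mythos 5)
Your treatment of (i), (ii) and (iv) is essentially the paper's argument: reduce via Lemma~\ref{l:conjByRofAtoPc} to statements about the $T_{\zeta_{k,v}}$-orbit of $\ell_0(\zeta_{k,v})$, with the palindromicity of $\underline{d}(k,v)$ as the combinatorial input (the paper deduces that palindromicity directly from the already-known fact that every $v\in\mathcal V$ is a palindrome, rather than re-running a $\mathscr D$-induction as you propose; your plan is workable but heavier than needed and only sketched). Your reduction of (iv) to $AC\cdot\ell_0 = C\cdot\ell_{\underline{S}}$ checks out and is equivalent to the paper's computation $RA^{-2}C(A^{-1}C)^{-1}R^{-1} = RA^{-1}R^{-1}$.

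The genuine gap is in (iii). The paper gets $A^{-2}C\cdot\ell_{\underline{S}}(\zeta_{k,v}) = \ell_1(\zeta_{k,v})$ by citing the structural results recalled in \eqref{e:lowerDzetaExpan} (i.e.\ Lemmas~4.5 and 4.33 of \cite{CaltaKraaikampSchmidt}), which say at once that the $(\underline{S}+1)$-st digit of $\ell_0(\zeta_{k,v})$ is $-2$ and that this step of the orbit returns to $\ell_1$. Your replacement argument has three unjustified steps. First, you evaluate the synchronization identity \eqref{e:synchronizationExplicit} at $\alpha=\zeta_{k,v}$, but the paper only asserts synchronization on the \emph{interior} of $J_{k,v}$; extending it to the left endpoint is not a routine continuity statement, since the $(\underline{S}+1)$-st digit of $\ell_0(\alpha)$ and the $(\overline{S}+1)$-st digit of $r_0(\alpha)$ need not be locally constant at $\zeta_{k,v}$ -- in fact the validity of \eqref{e:synchronizationExplicit} at $\zeta_{k,v}$ is exactly the assertion $\ell_{\underline{S}+1}(\zeta_{k,v})=\ell_1(\zeta_{k,v})$ you are trying to obtain. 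Second, the ``by exclusion'' step tacitly assumes the digit of $\ell_{\underline{S}}(\zeta_{k,v})$ lies in $\{-1,-2\}$; a priori a point of $\mathbb I_{\zeta_{k,v}}$ can have any admissible digit, and the fact that every digit of $\ell_0(\zeta_{k,v})$ is $-1$ or $-2$ is again precisely the content of \eqref{e:lowerDzetaExpan}. Third, you assert that $\ell_{\underline{S}}(\zeta_{k,v})$ and $\ell_0(\zeta_{k,v})$ are distinct; this is true (the expansion of $\ell_0(\zeta_{k,v})$ is eventually but not purely periodic), but you give no argument, and without it the exclusion of the digit $-1$ does not go through. All three points are supplied at once by citing \eqref{e:lowerDzetaExpan}, so the repair is easy, but as written your (iii) -- and hence the digit input you feed into (iv) -- is not proved.
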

\begin{proof} 
 Definition~\ref{d:topYvalues} gives   $y_{\tau(i)} = - \ell_{\underline{S}-i}(\zeta_{k,v})$ and $y_{\tau(i+1)} = - \ell_{\underline{S}-i-1}(\zeta_{k,v})$.  Of course,  $A^{p_{\underline{S}-i-1}}C\cdot  \ell_{\underline{S}-i-1}(\zeta_{k,v}) = \ell_{\underline{S}-i}(\zeta_{k,v})$.  Since $v \in \mathcal V$  is a palindrome,  it follows that $\underline{d}(k,v)$ is a palindrome as a word in $\{-1, -2\}$; in particular, $p_i = p_{\underline{S}-i-1}$.  Hence, $-y_{\tau(i)} = - A^{p_i}C\cdot (- y_{\tau(i+1)} )$. Lemma~\ref{l:conjByRofAtoPc} now implies $y_{\tau(i+1)} = R A^{p_i}C R^{-1}\cdot  y_{\tau(i)}$.
 
That $y_{\tau(\underline{S})} = R L_{k,v} A R^{-1}\cdot  (-\ell_{\underline{S}}(\zeta_{k,v})\,)$ follows by elementary induction. 

By (\cite{CaltaKraaikampSchmidt}, Lemma~4.5),  
  see also (\cite{CaltaKraaikampSchmidt}, Lemma~4.33),
 $A^{-2}C \cdot \ell_{\underline{S}}(\zeta_{k,v}) = \ell_1(\zeta_{k,v})$.   Lemma~\ref{l:conjByRofAtoPc}   yields  $y_{\tau(0)} = R A^{-2}C R^{-1}\cdot y_{\tau(\underline{S}-1)}\,$. 
 
 Finally, since $\underline{d}(k,v)$ ends with a $-1$,   we find $y_1= y_{\tau(0)} = R A^{-2}C (A^{-1}C)^{-1} R^{-1} \cdot y_{\tau(\underline{S})}$, and thus  $ y_1  = R A^{-1}R^{-1} \cdot (-\ell_0(\zeta_{k,v})\,).$
\end{proof} 

 \begin{Lem}\label{l:finiteMassUpperSmallAlps}  We have  $\mu(\Omega^+)< \infty$.  
 \end{Lem}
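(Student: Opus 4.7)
The plan is to treat $\Omega^+$ directly as a finite union of rectangles and check that each summand has finite $\mu$-measure. By Definition~\ref{d:topYvalues}, $\Omega^+ = \bigcup_{a=1}^{\underline{S}+1} K_a \times [0, y_a]$, and $\underline{S}+1$ is a fixed positive integer determined by $k$ and $v$ (namely, the length of $\underline{d}(k,v)$ as a word in $\{-1,-2\}$, plus one). Each $K_a$ is a bounded subinterval of $\mathbb{I}_\alpha \subset [-t,t]$, and each $y_a = -\ell_{\underline{S}-i_a}(\zeta_{k,v})$ is a specific finite positive real number. In particular, $\Omega^+$ is a finite union of bounded rectangles.

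The second step is to exhibit, for each rectangle $K_a = [p_a, q_a]$, the closed form
\begin{equation*}
\mu(K_a \times [0, y_a]) \;=\; \int_{p_a}^{q_a}\!\!\int_0^{y_a} \frac{du\,dx}{(1+xu)^2} \;=\; \log\frac{1 + q_a y_a}{1 + p_a y_a},
\end{equation*}
which is finite whenever $1+xy_a$ does not vanish on $K_a$. Since $y_a > 0$, and $1+xy_a$ is linear increasing in $x$, the worst case is the lower-left corner, so the whole lemma reduces to verifying $1 + p_a y_a > 0$ for each $a$, i.e. $y_a < 1/|p_a|$ whenever $p_a < 0$.

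The third step is to establish this pointwise positivity. For rectangles with $p_a \ge 0$ it is automatic. For the leftmost rectangle, where $p_a = \ell_0(\alpha) = (\alpha-1)t$, I would invoke item (iv) of Lemma~\ref{l:topValuesFirstRelations} together with the explicit form $RA^{-1}R^{-1}\cdot z = z/(1+tz)$ (computed directly from the generators of $G_n$) to write
\begin{equation*}
y_1 \;=\; \frac{-\ell_0(\zeta_{k,v})}{1 - t\,\ell_0(\zeta_{k,v})} \;=\; \frac{(1-\zeta_{k,v})t}{1 + (1-\zeta_{k,v})t^2},
\end{equation*}
from which one reads off $|\ell_0(\alpha)|\,y_1 < 1$ directly. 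For the intermediate rectangles one iterates item (i) of the same lemma, expressing $y_a$ as the image of a finite product of $RA^{p_i}CR^{-1}$-transformations applied to an orbit point lying in $\mathbb{I}_{\zeta_{k,v}}\subset [-t, t]$.

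The main obstacle is precisely this third step: the heights $y_a$ for intermediate $a$ are not furnished in closed form, and in principle one needs a uniform control of $|p_a|\,y_a$ for all indices. My plan is to observe that the M\"obius factors appearing are the same ones that define the $T_{\zeta_{k,v}}$-orbit (the top row of Figure~\ref{f:topIsPeriodic} being, up to relabeling, the reversed orbit diagram of Figure~\ref{f:zetaLzeroPeriodic}), and so the pair $(\ell_{i_a}(\alpha), y_a)$ tracks this orbit closely; the bound $|\ell_{i_a}|\,y_a < 1$ should then follow from the basic geometry of $\mathbb{I}_\alpha$ together with the palindromic structure of $\underline{d}(k,v)$ already exploited in the lemma. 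Once this is in hand, the finiteness of $\mu(\Omega^+)$ is immediate by summing over $a$.
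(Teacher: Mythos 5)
Your reduction is essentially the one the paper uses: both arguments come down to showing that the upper-left vertex $(\ell_{i_a},\,y_a)$ of each rectangle lies in the region $1+xy>0$, i.e.\ below the branch of $y=-1/x$ in the second quadrant, and both treat the $a=1$ case by the same direct computation (your $y_1 < -1/\ell_0$ check is correct, and amounts to $|\ell_0(\zeta_{k,v})|$ lying between the roots of $z^2-tz-1$, which holds because $|\ell_0|<t$). However, the proposal has a genuine gap at exactly the point it labels ``the main obstacle.'' You write that the bound $|\ell_{i_a}|\,y_a<1$ for intermediate $a$ ``should follow from the basic geometry of $\mathbb{I}_\alpha$ together with the palindromic structure of $\underline d(k,v)$,'' but that is not an argument, and there is no obvious uniform estimate of this kind available directly from the geometry of the interval or the palindromicity alone.

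The missing idea is an invariance statement, not an estimate. Lemma~\ref{l:topValuesFirstRelations} gives a chain of elements $M\in G_n$ with $\mathcal T_M(\ell_{i_a}(\zeta_{k,v}),\,y_a)=(\ell_{i_{a+1}}(\zeta_{k,v}),\,y_{a+1})$; one then checks directly that every such planar map $\mathcal T_M$ carries the locus $\{1+xy=0\}$ to itself, and since $\mathcal T_M$ is an orientation-preserving, $\mu$-preserving bijection, it carries the region $\{1+xy>0\}$ to itself as well. Hence the property ``lies below $y=-1/x$'' is transported from the base case $a=1$ (at the parameter $\zeta_{k,v}$) to every vertex $(\ell_{i_a}(\zeta_{k,v}),\,y_a)$ without any further estimation. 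Finally one passes from $\zeta_{k,v}$ to a general $\alpha\in J_{k,v}$ using $\ell_{i_a}(\zeta_{k,v})<\ell_{i_a}(\alpha)$ and the fact that $-1/x$ is increasing on $x<0$. You identified every other piece of this chain, but without the hyperbola-invariance step the argument for $a\geq 2$ does not close, so as written the proposal is a sketch of a reduction rather than a proof.
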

\begin{proof} 
To show that $\mu(\Omega^+)< \infty$, we show that for each negative $x \in \mathbb I_\alpha$, the fiber in $\Omega^+$ above it, say $\{x\}\times[0,y_a]$,  lies below the curve $y = -1/x$.     Set $\zeta = \zeta_{k,v}$; by Lemma~\ref{l:topValuesFirstRelations},  $y_1 = RA^{-1}R\cdot(-\ell_0(\zeta)\,) = 1/(t - 1/\ell_0(\zeta))$ and thus $y_1 < -1/\ell_0(\zeta)$ holds if $\ell_0(\zeta)$ lies between the roots of $x^2 + t x - 1 =0$.  Since this interval has left endpoint less than $-t$ and has a positive right endpoint, this condition holds and hence $y_1 < -1/\ell_0(\zeta)$.    Thus, all of the line segment $\{\ell_{i_1}(\zeta)\}\times [0, y_1]$ lies below  $y = -1/x$.       

Recall that $\ell_{i_1} = \ell_0$.  Lemma~\ref{l:topValuesFirstRelations} shows that for each $a$,  there is an $M \in G_{n}$ sending $(\ell_{i_1}(\zeta), y_1)$  to 
$(\ell_{i_a}(\zeta), y_a)$; compare Figures~\ref{f:topIsPeriodic} and \ref{f:zetaLzeroPeriodic}.   A simple check confirms that  any $M$ is such that $\mathcal T_M$ sends the  locus $y=-1/x$ to itself (and since $M^{-1}$ exists this is the only preimage of the locus).    Since every 
$\mathcal T_M$ is both orientation preserving and $\mu$-mass preserving, each   $\{\ell_{i_a}(\zeta)\}\times [0, y_a]$ lies below  $y = -1/x$.  For each $\alpha \in J_{k,v}$, $\ell_{i_a}(\zeta)< \ell_{i_a}(\alpha)$ and it follows that each $\{\ell_{i_a}(\alpha)\}\times [0,y_a]$  lies below  $y = -1/x$ (whenever $\ell_{i_a}(\alpha)<0$), and thus all of $K_a \times [0,y_a]$ does also.  Thus, the result holds.
\end{proof}

\subsection{Heights of rectangles, bottom half}
\begin{Def}\label{d:indivStepsBot} 
Considering $\overline{d}(k,v)$ as a word in $\{k, k+1\}$, for each $j \in \{-1, \dots, -\overline{S}\}$ let $q_j$ be the first letter of $\sigma^j(\overline{d}(k,v)\,)$.  That is,  $q_j$ is the first simplified digit of $r_j$; equivalently,  $r_j \in \Delta(q_j, 1)$. 
\end{Def} 
\bigskip

To simplify typography, we define the following. 
\begin{Def}\label{d:iota} 
If $u\in \mathcal V$ is the parent of $v$, 
let 
\[ \iota = |\overline{d}(k,u)|,\]
where the length of $\overline{d}(k,u)$ is as a word in $\{k, k+1\}$.   When $v=1$, let $\iota = 0$.  
\end{Def} 
\bigskip
  
\begin{figure}
\begin{tikzcd}[column sep=2pc,row sep=2pc]
y_{\beta(0)}\ar[pos=0.64]{r}{k} \ar[bend left, swap]{rrr}{R R_{k,u}R^{-1}}\ar[bend left]{rrrrrr}{R R_{k,v}R^{-1}}&y_{\beta(1)} \ar{r} & \cdots  \ar{r} {k} &y_{\beta(\iota)}\ar[pos=0.2]{r}{k+1} &\cdots  \ar{r}&y_{\beta(\overline{S}-1)}  \ar{r}{k}\ar[bend left]{lllll}{k+1} &y_{\beta(\overline{S}) }
\end{tikzcd}
%
\caption{Relations on the bottom heights of rectangles comprising $\Omega^{-}$ for $\alpha \in (\zeta_{k,v}, \eta_{k,v})$, when $v$ has parent $u$.   Arrow label $k$ denotes $RA^{k}CR^{-1}$, and $k+1$ denotes $RA^{k+1}CR^{-1}$.}
\label{f:bottomOrbit}
\end{figure}
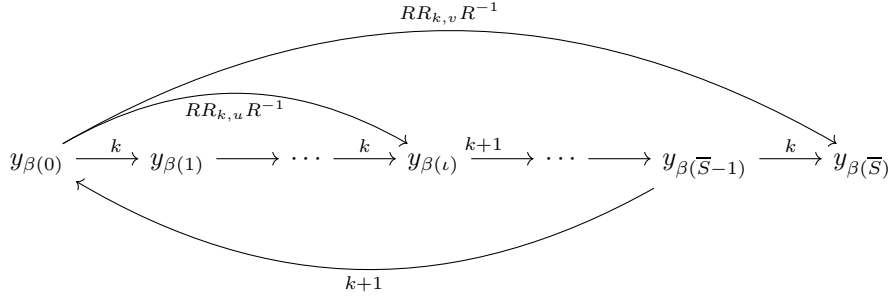

 Compare the following  with Figures~\ref{f:topBottomVerticesSmallAlps}, \ref{f:bottomOrbit} and \ref{f:rZeroEtaOrbit}.  Recall that the details of the construction of $\Omega^-$ are given in Definition~\ref{d:bottomYvalues}.
 \begin{Lem}\label{l:bottomValuesFirstRelations}     The following hold: 
 \begin{enumerate}
 \item[i.)]    For all  $j \in \{-1, \dots, -\overline{S}-2\}$,  $y_{\beta(j+1)} = R A^{q_j}C R^{-1}\cdot  y_{\beta(j)}$;
\item[ii.)]  $y_{\beta(\overline{S})} = R R_{k,v} R^{-1}\cdot y_{\beta(0)}$;
 \item[iii.)]  $y_{\beta(0)} = R A^{k+1}C R^{-1}\cdot y_{\beta(\overline{S}-1)}\,$;
 \item[iv.)]  if $v$ has parent $u$, then   $y_{\beta(\iota)} = R R_{k,u} R^{-1}\cdot y_{\beta(0)}$; 
 \item[v.)]  $y_{\beta(0)} = R A R^{-1}\cdot (- r_{0}(\eta_{k,v})\,)$.
 \end{enumerate} 
\end{Lem}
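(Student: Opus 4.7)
My plan is to prove the five items of Lemma~\ref{l:bottomValuesFirstRelations} following the pattern of Lemma~\ref{l:topValuesFirstRelations}, dualizing from the $T_{\zeta_{k,v}}$-orbit of $\ell_0(\zeta_{k,v})$ to the $T_{\eta_{k,v}}$-orbit of $r_0(\eta_{k,v})$. I would handle the items in the order (i), then (ii) and (iv), then (iii), then (v).

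For (i), the definition $y_{\beta(i)} = -r_{\overline{S}-i}(\eta_{k,v})$ combined with the orbit relation $r_{i+1} = A^{q_i}C\cdot r_i$ (from Definition~\ref{d:indivStepsBot}) and Lemma~\ref{l:conjByRofAtoPc} reduces the claim to a palindromic symmetry $q_j = q_{\overline{S}-j-1}$ of the simplified digits. This symmetry holds because $v\in\mathcal V$ is a palindrome, so by~\eqref{e:overDkV} the word $\overline{d}(k,v) = k^{c_1}(k+1)^{d_1}\cdots(k+1)^{d_{s-1}}k^{c_s}$ is itself a palindrome in the two-letter alphabet $\{k,k+1\}$. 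Items (ii) and (iv) then follow by iteration of (i): composing the single-step relations across $\overline{S}$ steps produces the matrix product $(A^{q_{\overline{S}-1}}C)\cdots(A^{q_0}C)$, which by~\eqref{e:rKv} equals $R_{k,v}$, giving (ii); truncating after the first $\iota = |\overline{d}(k,u)|$ steps gives (iv), the key point being that when $u$ is the parent of $v$, the formula $v=\Theta_q(u) = u(u')^q u''$ forces $u$ to be a prefix of $v$ in the tree $\mathcal V$, so that $\overline{d}(k,u)$ is a prefix of $\overline{d}(k,v)$ and the truncated matrix product collapses to $R_{k,u}$.

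For (iii), the analog of the external identity $A^{-2}C\cdot \ell_{\underline{S}}(\zeta_{k,v}) = \ell_1(\zeta_{k,v})$ used in the proof of Lemma~\ref{l:topValuesFirstRelations}(iii) is the identity $A^{k+1}C\cdot r_{\overline{S}}(\eta_{k,v}) = r_1(\eta_{k,v})$. To verify this, I would combine the defining fixed-point relation $L_{k,v}\cdot r_0(\eta_{k,v}) = r_0(\eta_{k,v})$ of $\eta_{k,v}$, which since $L_{k,v} = C^{-1}ACR_{k,v}$ and $R_{k,v}\cdot r_0(\eta_{k,v}) = r_{\overline{S}}(\eta_{k,v})$ rearranges to $r_{\overline{S}}(\eta_{k,v}) = C^{-1}A^{-1}C\cdot r_0(\eta_{k,v})$, with the first orbit step $r_1(\eta_{k,v}) = A^{k}C\cdot r_0(\eta_{k,v})$ (valid since $q_0 = k$); this yields $A^{k+1}C\cdot r_{\overline{S}}(\eta_{k,v}) = A^{k+1}\cdot A^{-1}C\cdot r_0(\eta_{k,v}) = A^kC\cdot r_0(\eta_{k,v}) = r_1(\eta_{k,v})$. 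Applying Lemma~\ref{l:conjByRofAtoPc} with $X = A^{k+1}C$ then gives (iii). Finally, (v) follows by combining (iii) with (i) at the last step: since $q_{\overline{S}-1}=k$, item (i) reads $y_{\beta(\overline{S})} = RA^{k}CR^{-1}\cdot y_{\beta(\overline{S}-1)}$, and eliminating $y_{\beta(\overline{S}-1)}$ between this and (iii) and using the elementary identity $A^{k+1}C\cdot (A^{k}C)^{-1} = A$ gives $y_{\beta(0)} = RAR^{-1}\cdot y_{\beta(\overline{S})} = RAR^{-1}\cdot(-r_0(\eta_{k,v}))$, since $y_{\beta(\overline{S})} = -r_0(\eta_{k,v})$ by Definition~\ref{d:bottomYvalues}.

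The principal subtlety lies in the external identity $A^{k+1}C\cdot r_{\overline{S}}(\eta_{k,v}) = r_1(\eta_{k,v})$ underlying (iii): it depends essentially on both the defining fixed-point relation of $\eta_{k,v}$ and the value $q_0 = k$ of the first letter of $\overline{d}(k,v)$. Care is also needed to align the indexing conventions of Definitions~\ref{d:bottomYvalues} and~\ref{d:indivStepsBot} with the palindromic symmetry used in (i), so that the reversal implicit in $y_{\beta(i)} = -r_{\overline{S}-i}(\eta_{k,v})$ matches the reversal of digits provided by the palindrome property of $\overline{d}(k,v)$. Once (i) and (iii) are in place, the remaining items reduce to formal manipulations using Lemmas~\ref{l:aAndCrCommute} and~\ref{l:conjByRofAtoPc} together with the defining product~\eqref{e:rKv} of $R_{k,v}$.
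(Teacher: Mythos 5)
Your proposal is correct and follows essentially the same line of reasoning as the paper's proof: items (i), (ii), (iv) via Lemma~\ref{l:conjByRofAtoPc} and the palindromicity of $v$ (the paper phrases (iv) via the suffix property of $u$ rather than iterating (i) on the prefix, but these are interchangeable since $\overline{d}(k,u)$ is both prefix and suffix of $\overline{d}(k,v)$), item (iii) via the identity $A^{k+1}C\cdot r_{\overline{S}}(\eta_{k,v}) = r_1(\eta_{k,v})$, and item (v) by eliminating $y_{\beta(\overline{S}-1)}$. The only notable difference is that you re-derive the external identity underlying (iii) from the fixed-point relation $L_{k,v}\cdot r_0(\eta_{k,v}) = r_0(\eta_{k,v})$, whereas the paper cites it from an earlier lemma of the companion paper.
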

\begin{proof}  As in the proof of Lemma~\ref{l:topValuesFirstRelations}, the first two statements follow from Lemma~\ref{l:conjByRofAtoPc}.  
By (\cite{CaltaKraaikampSchmidt}, Lemma~4.5), 
  $A^{k+1}C\cdot r_{\overline{S}}(\eta_{k,v}) = r_1(\eta_{k,v})$, and thus Lemma~\ref{l:conjByRofAtoPc} yields the third statement.   
Since $v$ is palindromic and has prefix $u$, it also has suffix $u$ (itself a palindrome).  From this,  Lemma~\ref{l:conjByRofAtoPc} yields the fourth statement.     Finally,  due to the palindromic nature of $v$, we know that the final digit of $\overline{d}(k,v)$ is $k$ and hence 
from (i),  $y_{\beta(\overline{S})} = R A^{k}C R^{-1}\cdot  y_{\beta(\overline{S}-1)}$.   Thus, (iii) gives that $y_{\beta(0)} = R A R^{-1}\cdot y_{\beta(\overline{S}-1)}$.  The result holds, since by definition  $y_{\beta(\overline{S})} =  - r_{0}(\eta_{k,v})$.
\end{proof} 

 \begin{Lem}\label{l:finiteMassLowerSmallAlps}  We have  $\mu(\Omega^-)< \infty$.  
 \end{Lem}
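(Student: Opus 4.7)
The plan is to mirror the argument of Lemma~\ref{l:finiteMassUpperSmallAlps}. I will show that every rectangle $L_b \times [y_b, 0]$ of $\Omega^-$ satisfies $1 + xy > 0$ throughout, so the integrand $(1+xy)^{-2}$ of $d\mu$ is bounded on each rectangle; since there are only finitely many rectangles, this yields $\mu(\Omega^-) < \infty$. For $(x,y)$ with $x \le 0$ the condition $1 + xy > 0$ is automatic, since $y_b < 0$ forces $xy \ge 0$. Thus the task reduces to ensuring the positive-$x$ portion of each rectangle lies strictly above the hyperbola $y = -1/x$. Since $-1/x$ is increasing on $(0, \infty)$ and $y_b$ is constant on $L_b$, it suffices to verify the inequality $y_b > -1/x$ at the right endpoint of $L_b \cap (0, \infty)$, which for the relevant rectangles is the orbit vertex $(r_{j_b}, y_b) = (r_{j_b}, y_{\beta(j_b)})$.

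First I handle the base case at $(r_0(\eta_{k,v}), y_{\beta(0)})$. By Lemma~\ref{l:bottomValuesFirstRelations}(v) together with Lemma~\ref{l:aAndCrCommute}, the matrix of $RAR^{-1}$ is $\begin{pmatrix} 1 & 0 \\ -t & 1 \end{pmatrix}$, so
\[
y_{\beta(0)} = RAR^{-1}\cdot\bigl(-r_0(\eta_{k,v})\bigr) = \frac{-r_0(\eta_{k,v})}{1 + t\, r_0(\eta_{k,v})}.
\]
A short computation then yields
\[
1 + r_0(\eta_{k,v})\, y_{\beta(0)} = \frac{1 + t\, r_0(\eta_{k,v}) - r_0(\eta_{k,v})^2}{1 + t\, r_0(\eta_{k,v})}.
\]
The quadratic $x^2 - tx - 1$ has roots $(t \pm \sqrt{t^2+4})/2$; since $r_0(\eta_{k,v}) = \eta_{k,v}\, t$ lies in $(0, t)$ and $(t + \sqrt{t^2+4})/2 > t$, the numerator is positive, so the base vertex lies strictly above the hyperbola.

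Next I propagate via the orbit relations. Lemma~\ref{l:bottomValuesFirstRelations}(i) asserts $\mathcal T_{A^{q_j}C}(r_j, y_{\beta(j)}) = (r_{j+1}, y_{\beta(j+1)})$, and a direct check using $A^qC \cdot x = (1 + qt) - 1/x$ shows each such $\mathcal T_M$ sends the locus $1 + xy = 0$ to itself, while $\mathcal T_M$ is orientation and $\mu$-preserving. By induction each orbit vertex $(r_j(\eta_{k,v}), y_{\beta(j)})$ shares the side of the hyperbola occupied by the base vertex; in particular each right-corner vertex $(r_{j_b}(\eta_{k,v}), y_b)$ lies strictly above it. Finally I extend to arbitrary $\alpha$ in the interior of $J_{k,v}$: the heights $y_b$ are $\alpha$-independent, while on $J_{k,v}$ the initial digits of the $T_\alpha$-orbit of $r_0(\alpha)$ are constant, so each $r_{j_b}(\alpha)$ is a fixed M\"obius transformation applied to $\alpha t$ with positive derivative in $\alpha$, whence $r_{j_b}(\alpha) \le r_{j_b}(\eta_{k,v})$ for $\alpha \le \eta_{k,v}$. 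The monotonicity of $-1/x$ on $(0, \infty)$ then gives $-1/r_{j_b}(\alpha) \le -1/r_{j_b}(\eta_{k,v}) < y_b$, preserving the inequality at the critical corner. The main subtlety, and the one step I expect to be tedious, is this last monotonicity verification for the positive-$x$ orbit elements, which leans on the constancy of the prefix $\overline{d}(k,v)$ of $\overline{d}^\alpha$ throughout $J_{k,v}$ and on Lemma~\ref{l:lastRValueIsLeast}; once granted, summing the finite contributions from the finitely many rectangles completes the proof.
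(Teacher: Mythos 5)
Your proof takes essentially the same approach as the paper's: verify that the critical corner $(r_0(\eta_{k,v}), y_{\beta(0)})$ lies above the hyperbola $y=-1/x$ by checking that $r_0(\eta_{k,v})$ sits between the roots of $x^2-tx-1$, then propagate along the orbit vertices using the facts that each $\mathcal T_M$ preserves the locus, preserves $\mu$, and is orientation-preserving, and finally pass from $\eta_{k,v}$ to general $\alpha \in J_{k,v}$ by monotonicity of the $r_{j_b}(\alpha)$. The paper's own proof is terse (it does the base calculation and then simply defers to the proof of Lemma~\ref{l:finiteMassUpperSmallAlps} ``mutatis mutandis''), but your write-up fills in exactly those details.
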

\begin{proof} 
To show that $\mu(\Omega^-)< \infty$, we show that for each positive $x \in \mathbb I_\alpha$, its fiber $\{x\}\times[y_b,0]$ lies above the curve $y = -1/x$.     Set $r_0 = r_0(\eta_{k,v})$; simplifying in  Lemma~\ref{l:bottomValuesFirstRelations} (v) gives   $y_{\beta(0)} = - r_0/( t \,r_0 + 1)$.   Since the interval between the roots of $x^2 - t x - 1 =0$ contains all positive numbers up to at least $t$, it easily follows that   $y_{\beta(0)} > -1/r_0$.   The proof of Lemma~\ref{l:finiteMassUpperSmallAlps}  can be further adjusted {\it mutatis mutandis}.
\end{proof}

\subsection{Ordering  the $\ell_i$; top heights increase}

To further simplify typography, we define the following. 
\begin{Def}\label{d:simplerNotation}
Let  $d_{\alpha}(x)$ denote the sequence of {\em simplified} $\alpha$-digits of $x$.   Similarly,  set $\underline{S} = \underline{S}(k,v)$ and for all $i$ let $\ell_i$ denote $\ell_i(\alpha)$.
\end{Def} 

\medskip 
  
 \begin{Lem}\label{l:lastEllValueIsLargest}     Fix $n \ge 3$ and $k \in \mathbb N$ and $\alpha \in J_{k,v}$.    We have that 
 \begin{enumerate}
 \item [i.)] 
 \[ \ell_{\underline{S}} = \max_{0\le i \le \underline{S}}\, \ell_i\,.\]

\medskip 
 \noindent
Equivalently,   $\tau(\underline{S}) = \underline{S}+1$ and $i_{\underline{S}+1} = \underline{S}$.

\medskip
\item [ii.)] Moreover, 
\[ d_{\alpha}(\ell_{i_{\underline{S}}}) = \begin{cases}  -2, (-1)^{n-3}, -2, (-1)^{n-2},\, d_{\alpha}(\ell_{\underline{S}}) & \text{if}\; v =1 ;\\
                                -2, (-1)^{n-3}, -2, \, \sigma^{n-1}(\,\underline{d}(k,u)\,),\, d_{\alpha}(\ell_{\underline{S}}) & \text{if}\; u \,\text{is the parent of}\; v.
                               \end{cases}
 \]                                 
\end{enumerate} 
\end{Lem}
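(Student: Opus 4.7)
The plan is to translate the real order of the orbit points $\ell_i$ into a lexicographic comparison of their simplified digit sequences. Since each branch $A^pC$ of $T_\alpha$ is orientation-preserving on its cylinder and the cylinders $\Delta(p,1)$ are linearly ordered in $\mathbb I_\alpha$ (left-to-right as $\Delta(-1,1),\Delta(-2,1),\ldots$ approaching $0^-$, then jumping to $\Delta(p,1)$ with $p$ large positive decreasing to $\Delta(1,1)$ near $r_0(\alpha)$), one has $\ell_i<\ell_j$ iff their simplified digit sequences satisfy the corresponding lex inequality. For $0\le i\le\underline{S}$, the digit sequence of $\ell_i$ equals $\sigma^i(d_\alpha(\ell_0))$, where $d_\alpha(\ell_0) = \underline{d}(k,v)\cdot d_\alpha(\ell_{\underline{S}})$ and the tail $d_\alpha(\ell_{\underline{S}})$ is pinned down by the synchronization \eqref{e:synchronizationExplicit}.

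For part (i), the assertion becomes that $d_\alpha(\ell_0)$ is self-dominant at position $\underline{S}$: the shift by $\underline{S}$ is the lex-largest among shifts by $0,1,\ldots,\underline{S}$. I would establish this by a position-by-position comparison between $\sigma^{\underline{S}}(d_\alpha(\ell_0))$ and each $\sigma^i(d_\alpha(\ell_0))$. Whenever the first differing position falls inside the overlap of shifts with the prefix $\underline{d}(k,v) = w^k,\mathcal C^{c_1-1}\mathcal D^{d_1}\cdots\mathcal C^{c_s},(-1)^{n-2}$, the inequality follows from the palindromic self-dominance of $v$, translated through the explicit letter layout of $w = (-1)^{n-2},-2,(-1)^{n-3},-2$ and $\mathcal C = (-1)^{n-3},-2,w^{k-1}$ via Lemmas~\ref{l:selfD}, \ref{l:wordOrder}, \ref{l:maxSuffixPrefix}. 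Where the first difference falls outside the overlap, one uses synchronization to read off the initial digits of $d_\alpha(\ell_{\underline{S}}) = d_\alpha(r_{\overline{S}+1})$: since the last simplified letter of $\overline{d}(k,v)$ is $k$ by palindromicity of $v$, one has $r_{\overline{S}+1} = A^kC\cdot r_{\overline{S}}$, whose first few simplified digits can be computed explicitly and shown to dominate the tails of the proper shifts.

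For part (ii), having located $\ell_{\underline{S}}$ as the maximum, the second largest $\ell_{i_{\underline{S}}}$ corresponds to the lex-runner-up shift $\sigma^i(\underline{d}(k,v))$ among $0\le i<\underline{S}$. Inspecting the letter layout of $\underline{d}(k,v)$ and invoking Lemma~\ref{l:wordOrder}, the runner-up shift begins immediately past the first $w$: at position $|w|=2n-3$ when $v=1$, and at the analogous position determined by the parent $u$ when $v\neq 1$. Reading off this shift's prefix and appending the shared tail $d_\alpha(\ell_{\underline{S}})$ produces exactly the displayed formulas: $-2,(-1)^{n-3},-2,(-1)^{n-2},d_\alpha(\ell_{\underline{S}})$ for $v=1$, and $-2,(-1)^{n-3},-2,\sigma^{n-1}(\underline{d}(k,u)),d_\alpha(\ell_{\underline{S}})$ for general $v$ with parent $u$. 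An induction on the depth of $v$ in the tree $\mathcal V$, using $\mathscr D\circ\Theta_q=\Theta_q\circ\mathscr D$ and the fact that $\mathscr D$ preserves the constituent structure, reduces the general case to the base $v=1$, verified by direct inspection of $\underline{d}(k,1)=w^k,(-1)^{n-2}$.

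The main obstacle I expect is the self-dominance verification in part (i): the first digit $p_{\underline{S}}$ need not lie outside $\{-1,-2\}$ (it equals $-2$ at $\alpha=\zeta_{k,v}$ and for a range of nearby interior $\alpha$), so the lex comparison often cannot be settled at position zero and must descend deep into the tail, where it meets the ``new'' digit first produced past the end of $\underline{d}(k,v)$. Managing this uniformly across all of $J_{k,v}$ and all $v\in\mathcal V$—without circular appeal to the downstream bijectivity or continuity results—is the delicate point; the inductive $\mathscr D$-mechanism, together with the palindromicity of $v$, provides the necessary structural control.
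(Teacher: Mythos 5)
Your overall strategy — translate real order to lex order on simplified digit strings, invoke palindromicity and the self-dominance machinery (Lemmas~\ref{l:selfD}, \ref{l:wordOrder}, \ref{l:maxSuffixPrefix}), identify the runner-up shift via the parent $u$, and induct along $\mathcal V$ via $\mathscr D$ — is indeed the same skeleton as the paper's proof, and your description of part~(ii) (runner-up located $|w|$ letters before the end of $\underline{d}(k,v)$ when $v=1$, and at the prefix/suffix position corresponding to the parent in general) is essentially correct. However, there is a genuine gap exactly at the spot you flag as delicate, namely the control of the tail $d_\alpha(\ell_{\underline{S}})$, and your proposed mechanism for handling it contains an error.

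You claim that synchronization pins down $d_\alpha(\ell_{\underline{S}})$ because ``the last simplified letter of $\overline{d}(k,v)$ is $k$,'' and hence $r_{\overline{S}+1}=A^kC\cdot r_{\overline{S}}$. This is not right: the last letter of $\overline{d}(k,v)$ is the $\overline{S}$-th digit of $r_0$, which controls the step $r_{\overline{S}-1}\mapsto r_{\overline{S}}$, not $r_{\overline{S}}\mapsto r_{\overline{S}+1}$. The digit of $r_{\overline{S}}$ itself is precisely what varies as $\alpha$ ranges over $J_{k,v}$ (it is coupled to the digit of $\ell_{\underline{S}}$, which can be $-2$ near $\zeta_{k,v}$ and increases as $\alpha\to\eta_{k,v}$), so you cannot fix the first few letters of $d_\alpha(\ell_{\underline{S}})=d_\alpha(r_{\overline{S}+1})$-with-a-prepended-digit uniformly across $J_{k,v}$ this way. (There is also an off-by-one: synchronization \eqref{e:synchronizationExplicit} gives $\ell_{\underline{S}+1}=r_{\overline{S}+1}$, so $d_\alpha(\ell_{\underline{S}})$ starts with one extra, $\alpha$-dependent, digit before $d_\alpha(r_{\overline{S}+1})$.)

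The paper sidesteps this entirely by a reduction you do not make: since $\ell_{\underline{S}}(\alpha)\ge\ell_{\underline{S}}(\zeta_{k,v})$, it suffices to show $\ell_j(\alpha)\prec\ell_{\underline{S}}(\zeta_{k,v})$, and at $\alpha=\zeta_{k,v}$ the expansion of $\ell_{\underline{S}}$ is explicit and purely periodic by \eqref{e:lowerDzetaExpan}. This turns the comparison into the \emph{finite-word} inequality \eqref{e:InequalityToProve}, after which the $a,b,c,\star$-letter encoding of $\underline{d}(k,v)$ makes the maximal-suffix identification transparent and the parent emerges from Lemma~\ref{l:maxSuffixPrefix}. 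To repair your argument you would need either to import this reduction to the $\zeta_{k,v}$-endpoint or to find some other way to get a uniform handle on the tail across all of $J_{k,v}$; the synchronization-based shortcut you propose does not do this.
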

\begin{proof}  
Since   
 $\ell_{\underline{S}}(\alpha) \ge \ell_{\underline{S}}(\zeta_{k,v})$ for any $\alpha \in J_{k,v}$, and our order of words agrees with the usual order on real numbers, it suffices to show that $ \sigma^j(\,\underline{d}(k,v)\,)\,d_{\alpha}(\, \ell_{\underline{S}}(\alpha)\,) \prec   d_{\alpha}(\, \ell_{\underline{S}}(\zeta_{k,v})\,)$ for all $0\le j< \underline{S}$.    
 
 Letting $\zeta = \zeta_{k,v}$, (\cite{CaltaKraaikampSchmidt}, Lemma~4.33)
 gives
\begin{equation}\label{e:lowerDzetaExpan}
\begin{aligned} 
\underline{d}{}^{\zeta}_{[1,\infty)}  &= -1, \overline{(-1)^{n-3}, -2,  (-1)^{n-3},-2, w^{k-1}, \mathcal C^{c_1-1} \mathcal D^{d_1}\cdots \mathcal D^{d_{s-1}}\mathcal C^{c_s},  (-1)^{n-2}, -2} \\
                                                 &= \underline{d}(k,v), \,\overline{ -2, (-1)^{n-3},-2, \mathcal C^{c_1} \mathcal D^{d_1}\cdots \mathcal D^{d_{s-1}}\mathcal C^{c_s},  (-1)^{n-2}}. 
\end{aligned}
\end{equation}
Hence, for the first result,  it is sufficient to prove that 
 \begin{equation}\label{e:InequalityToProve}
 \sigma^j(\,\underline{d}(k,v)\,)  \prec -2, (-1)^{n-3},-2, \, \mathcal C^{c_1}, \mathcal D^{d_1}\cdots \mathcal C^{c_s}, (-1)^{n-2}
\end{equation}
for all $0\le j< \underline{S}$.  

\bigskip 

Temporarily viewing $\underline{d}(k,v)$ as a word in $a:= (-1)^{n-2}, b:= (-1)^{n-3}, c:=  -2$, and letting $\star$ denote the expansion of $\ell_{\underline{S}}$,     
\[\underline{d}{}^{\alpha}_{[1,\infty)}  = a c [bc (acbc)^{k-1}]^{c_1}\, [bc (acbc)^{k}]^{d_1}\,\cdots\, [bc (acbc)^{k}]^{d_{s-1}}\, [bc (acbc)^{k-1}]^{c_s}\,a \star.\] 
We may use $a\prec b \prec c \preceq \star$  and easily determine that the greatest subword of $\underline{d}(k,v)$ in two of these letters is $cb$, in three is $cbc$ and so forth (of course, if $n=3$ special considerations are necessary).     Thus, when $v=1$ we find that the greatest suffix of $\underline{d}{}^{\alpha}_{[1,\infty)}$  is $cbca\star$.   For the remainder of the proof, we no longer explicitly refer to the letters $a, b, c, \star$, but the reader may still find them of use.
 
Due to the maximality of $c_1$ and the  self-domination of $v$,  we find that when $c_1$ appears only as the initial and final letter of $v$,
$ \sigma^j(\,\underline{d}(k,v)\,)$ has maximal value $-2,  (-1)^{n-3}, -2,\, \mathcal C^{c_1}, (-1)^{n-2}$.   Such appearances of $c_1$ implies that $v =\Theta_q(c_1)$ for $q\ge 1$.  These cases are also easily confirmed. 

For all other $v$, due to Lemma~\ref{l:selfD},     the maximal value of  $ \sigma^j(\,\underline{d}(k,v)\,)$ occurs 
when  $\sigma^{j+n-1}(\,\underline{d}(k,v)\,)$   equals some  prefix of $\mathcal C^{c_1}, \mathcal D^{d_1}\cdots \mathcal C^{c_s}, (-1)^{n-2}$.   Of course, by its very definition,  $\sigma^{j+n-1}(\,\underline{d}(k,v)\,)$ is a suffix of $\underline{d}(k,v)$.  From this simultaneous prefix/suffix property, 
there is an ancestor $u \in \mathcal V$ of $v$ such that $\sigma^{j+n-1}(\,\underline{d}(k,v)\,) = \sigma^{n-1}(\,\underline{d}(k,u)\,)$.  Maximality combined with Lemma~\ref{l:maxSuffixPrefix} implies  that $u$ is in fact the direct parent of $v$.    We have thus proved the second statement. 

 Of course, 
$\underline{d}(k,u)$ ends with $w,\mathcal C^{c_s}, (-1)^{n-2}$.   But, $v$ is strictly longer than $u$, and hence the prefix corresponding to $u$, in $\sigma^{n-1}$ applied to the period of  $\ell_{\underline{S}(\zeta_{k,v})}$,  is followed by a ${-1}^{n-3}, -2$.   Therefore,  inequality ~\eqref{e:InequalityToProve} holds, and the result is proven. 
\end{proof}

 \begin{Lem}\label{l:ellOneIsBigEnough}   The $T_\alpha$-image of the set of $\ell_i$ in $\Delta_{\alpha}(-1,1)$ with $i<\underline{S}$ lies to the right of the $T_\alpha$-image of the set of $\ell_i, i <\underline{S}$ in $\Delta_{\alpha}(-2,1)$.  In particular, 
$\tau(1) = 1+\tau(1+i_{\underline{S}})$.   
\end{Lem}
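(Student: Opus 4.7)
The plan is to reduce the separation of the two image sets to a single inequality $\ell_1 > \ell_{1+i_{\underline{S}}}$, and then to verify this inequality by comparing simplified $\alpha$-expansions.

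First I would set up the geometry. For small $\alpha$, the cylinders $\Delta_\alpha(-1,1)$ and $\Delta_\alpha(-2,1)$ are adjacent in $\mathbb{I}_\alpha$, with $\Delta_\alpha(-1,1)$ lying strictly to the left of $\Delta_\alpha(-2,1)$; a direct computation of the boundaries via $(A^{-1}C)^{-1}\cdot\ell_0(\alpha)$, $(A^{-1}C)^{-1}\cdot r_0(\alpha)$ and $(A^{-2}C)^{-1}\cdot r_0(\alpha)$ confirms this. The map $T_\alpha$ acts as $A^{-1}C$ on the first cylinder and as $A^{-2}C = A^{-1}\circ A^{-1}C$ on the second; since the Möbius map $A^{-1}C$ has derivative $1/x^2$, both restrictions are monotone increasing, with respective images $[\ell_1, r_0(\alpha))$ and $[\ell_0(\alpha), r_0(\alpha))$.

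Next I would identify the extremal orbit points on each side. The global minimum of the orbit is $\ell_0$, which lies in $\Delta_\alpha(-1,1)$ since $\underline{d}(k,v)$ begins with $-1$; hence the least $T_\alpha$-image from the first cylinder is $T_\alpha(\ell_0)=\ell_1$. By Lemma~\ref{l:lastEllValueIsLargest}(ii), the second-largest orbit element $\ell_{i_{\underline{S}}}$ has $\alpha$-expansion beginning with $-2$ and so lies in $\Delta_\alpha(-2,1)$; since $\ell_{\underline{S}}$ has index $\underline{S}$ and is therefore excluded from $\{\ell_i : i<\underline{S}\}$, the maximum of $\Delta_\alpha(-2,1)\cap\{\ell_i : i<\underline{S}\}$ is $\ell_{i_{\underline{S}}}$, with $T_\alpha$-image $\ell_{1+i_{\underline{S}}}$. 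By order-preservation on each cylinder, the first assertion of the lemma reduces to the single inequality $\ell_1 > \ell_{1+i_{\underline{S}}}$.

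To establish this key inequality I would compare the simplified expansions $d_\alpha(\ell_1)$ and $d_\alpha(\ell_{1+i_{\underline{S}}})$ under the order $\prec$, which for small $\alpha$ is the lexicographic order extending $-1 \prec -2$. The expansion $d_\alpha(\ell_1)$ is the shift by one of $\underline{d}(k,v)\cdot d_\alpha(\ell_{\underline{S}})$, while shifting the formula of Lemma~\ref{l:lastEllValueIsLargest}(ii) by one gives $d_\alpha(\ell_{1+i_{\underline{S}}}) = (-1)^{n-3},-2,(-1)^{n-2}, d_\alpha(\ell_{\underline{S}})$ when $v=1$, and $d_\alpha(\ell_{1+i_{\underline{S}}}) = (-1)^{n-3},-2,\sigma^{n-1}(\underline{d}(k,u)), d_\alpha(\ell_{\underline{S}})$ when $u$ is the parent of $v$. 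Both sequences share the prefix $(-1)^{n-3},-2$, and the first position of discrepancy carries a $-2$ on the $\ell_1$ side and a $-1$ on the $\ell_{1+i_{\underline{S}}}$ side, yielding $\ell_1 \succ \ell_{1+i_{\underline{S}}}$. The main obstacle is locating this discriminating position uniformly as $v$ varies over $\mathcal V$; the plan is to reduce to the root case $v=1$ by induction along $\mathcal V$ using the derived-words operator $\mathscr D$, invoking the palindromicity of $v$ together with self-dominance and the ordering lemmas (Lemmas~\ref{l:selfD} and~\ref{l:wordOrder}) to control the asymmetry between the block $(-1)^{n-3}$ appearing in the shift of $\underline{d}(k,v)$ and the block $(-1)^{n-2}$ appearing in the shift of $d_\alpha(\ell_{i_{\underline{S}}})$. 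Finally, the consecutive-rank statement $\tau(1) = 1+\tau(1+i_{\underline{S}})$ is immediate: the $T_\alpha$-images of $\{\ell_i : i<\underline{S}\}$ exhaust $\{\ell_1,\dots,\ell_{\underline{S}}\}$, and the only remaining orbit element $\ell_0$ is the global minimum, so $\ell_0 < \ell_{1+i_{\underline{S}}}$ and no orbit point lies strictly between $\ell_{1+i_{\underline{S}}}$ and $\ell_1$.
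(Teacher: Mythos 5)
Your approach matches the paper's in its essential structure: reduce the separation claim to the pointwise inequality $\ell_1 > \ell_{1+i_{\underline{S}}}$ via monotonicity of $A^pC$ on each cylinder, then settle the inequality by comparing the shifted expansions supplied by Lemma~\ref{l:lastEllValueIsLargest}(ii). Where your write-up differs is in the final step, which you treat as an open problem (``the plan is to reduce to the root case $v=1$ by induction along $\mathcal V$ using $\mathscr D$''); this hedge is unnecessary. For $v$ with parent $u$ one has $v=u(u')^q u''$, so $\underline{d}(k,u)$ and $\underline{d}(k,v)$ agree through position $\underline{S}(k,u)-1$, and at position $\underline{S}(k,u)$ the word $\underline{d}(k,u)$ closes with the final $-1$ of its terminating $(-1)^{n-2}$ while $\underline{d}(k,v)$ opens the block $\mathcal D^{d_{s'}}$ with a $-2$. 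Shifting by $n-1$ and appending $d_\alpha(\ell_{\underline S})$ to both words transfers this single-letter discrepancy directly to $\sigma^{n-1}(\underline{d}(k,u))\cdot d_\alpha(\ell_{\underline S}) \prec \sigma^{n-1}(\underline{d}(k,v))\cdot d_\alpha(\ell_{\underline S})$, which is exactly $d_\alpha(\ell_{1+i_{\underline S}}) \prec d_\alpha(\ell_1)$; no $\mathscr D$-induction is required once Lemma~\ref{l:lastEllValueIsLargest}(ii) is invoked. The paper's own proof is terser still, disposing of general $v$ by citing inequality~\eqref{e:InequalityToProve} (which had been established en route to Lemma~\ref{l:lastEllValueIsLargest}), but the underlying comparison is the same one you set up. Your treatment of the rank identity $\tau(1)=1+\tau(1+i_{\underline S})$ at the end is clean and a small improvement in exposition over the paper's silence on that point.
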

\begin{proof}   Certainly $\ell_0$ is the smallest $\ell_i$ in $\Delta_{\alpha}(-1,1)$ with $i<\underline{S}$.  By definition,   $\ell_{i_{\underline{S}}}$ is the largest  $\ell_i$ with $i<\underline{S}$;  it certainly lies in $\Delta_{\alpha}(-2,1)$.   Since each $A^{p}C$ is an order preserving function,  it suffices to show that $\ell_1 > \ell_{1+i_{\underline{S}}}$.    We thus refer to Lemma~\ref{l:lastEllValueIsLargest}; our  inequality  when $v = c_1$ can be directly verified.  For general $v$, since the expansion of $\ell_{i_{\underline{S}}}$ begins with a $-2$,  inequality \eqref{e:InequalityToProve} implies that  
$\ell_{i_{\underline{S}}+1}$ has expansion less than the expansion of $\ell_1$.  
The result thus holds. 
\end{proof}

 \begin{Lem}\label{l:oppositeOrderEllsSmallAlp}   For all $0\le i < \underline{S}\,$, 
 \[ \tau(i) + \tau(\underline{S}-i) = \underline{S} + 2.\]
  \end{Lem}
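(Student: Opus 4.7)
The claim $\tau(i) + \tau(\underline{S}-i) = \underline{S}+2$ is equivalent to asserting that the index involution $\iota: i \mapsto \underline{S}-i$ induces an \emph{order-reversing} bijection on the orbit $\{\ell_0, \ldots, \ell_{\underline{S}}\}$; that is, for any pair of distinct indices $i, j$ in $\{0, \ldots, \underline{S}\}$, $\ell_i < \ell_j$ if and only if $\ell_{\underline{S}-i} > \ell_{\underline{S}-j}$. The plan is to verify this order-reversal by direct comparison of simplified digit expansions, exploiting the palindromicity of $\underline{d}(k,v)$ as a word in $\{-1,-2\}$ (the fact used throughout the proof of Lemma~\ref{l:topValuesFirstRelations}).

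First I would dispose of the extremal cases. Since $\ell_0 = (\alpha-1)t$ is the left endpoint of $\mathbb{I}_\alpha$, it is the minimum of the orbit, so $\tau(0) = 1$; by Lemma~\ref{l:lastEllValueIsLargest}, $\ell_{\underline{S}}$ is the maximum, so $\tau(\underline{S}) = \underline{S}+1$, and the identity $\tau(0) + \tau(\underline{S}) = 1 + (\underline{S}+1) = \underline{S}+2$ holds.

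For the general case, for every $m \in \{0, \ldots, \underline{S}-1\}$ the expansion factors as
\[
d_\alpha(\ell_m) = p_m \, p_{m+1} \cdots p_{\underline{S}-1}\cdot d_\alpha(\ell_{\underline{S}}),
\]
a suffix of $\underline{d}(k,v)$ followed by the tail $d_\alpha(\ell_{\underline{S}})$. For $i<j$ in $\{0, \ldots, \underline{S}-1\}$, the ordering of $\ell_i$ versus $\ell_j$ under the natural dictionary order $\prec$ on simplified digits is determined by the first position $k^*$ at which $d_\alpha(\ell_i)$ and $d_\alpha(\ell_j)$ disagree, comparing $p_{i+k^*-1}$ with $p_{j+k^*-1}$. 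By palindromicity, $p_l = p_{\underline{S}-1-l}$, so the set $\{a : p_a \ne p_{a+(j-i)}\}$ is invariant under the involution $a \mapsto \underline{S}-1-(j-i) - a$. When one rewrites $d_\alpha(\ell_{\underline{S}-i})$ and $d_\alpha(\ell_{\underline{S}-j})$ via palindromicity (so that their leading digits become $p_{i-1}\, p_{i-2}\cdots p_0$ and $p_{j-1}\, p_{j-2}\cdots p_0$ respectively, followed by the common tail), this involution sends the first differing position for the pair $(\ell_i, \ell_j)$ to the first differing position for the pair $(\ell_{\underline{S}-j}, \ell_{\underline{S}-i})$, while interchanging the roles of $i$ and $j$ at the disagreeing digits. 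The net effect is to produce the opposite order.

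The main obstacle I anticipate is the boundary case in which $d_\alpha(\ell_i)$ and $d_\alpha(\ell_j)$ agree throughout the overlapping palindrome region, so the comparison ``spills over'' into the tail $d_\alpha(\ell_{\underline{S}})$; one must then ensure that the comparison with the first digit of $d_\alpha(\ell_{\underline{S}})$ is consistent with the symmetric comparison on the $\iota$-image pair. Here Lemma~\ref{l:lastEllValueIsLargest}(ii) gives the initial digits of $d_\alpha(\ell_{i_{\underline{S}}})$, and one step of the orbit dynamics from that point to $\ell_{\underline{S}}$ supplies the needed first digit of $d_\alpha(\ell_{\underline{S}})$, letting one conclude. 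An alternative route, which may be cleaner, is induction on $|v|$ via the derived words operator $\mathscr{D}$: the palindromic and self-similar properties of $\mathcal{V}$ (notably Lemmas~\ref{l:selfD} and~\ref{l:maxSuffixPrefix}) permit reducing the general case to short base palindromes, for which the symmetric placement of orbit points can be checked by hand, and then promoting the symmetry through each application of $\Theta_q$.
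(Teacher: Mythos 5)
Your reformulation of the lemma as order-reversal of the involution $i \mapsto \underline{S}-i$ on the orbit is correct, but the mechanism you propose — palindromicity of $\underline{d}(k,v)$ forcing a symmetric first-disagreement comparison — has a genuine gap, and in fact cannot suffice as stated. Order reversal is \emph{not} a consequence of palindromicity alone: for a general palindrome $P$ with an arbitrary tail $\star$ appended, the ranks of the words $\sigma^iP\star$, $0\le i\le |P|$, need not satisfy $\tau(i)+\tau(|P|-i)=|P|+2$. For instance, over two letters with $A\prec B$, the palindrome $P=ABBA$ with tail $\star=AAB^{\infty}$ gives the order $\ell_3<\ell_4<\ell_0<\ell_2<\ell_1$, so $\tau(0)+\tau(4)=5\neq 6$ (and $\tau(2)+\tau(2)=8$). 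What excludes such configurations in the present setting is not the palindrome structure but the extremality and admissibility facts: $\ell_0$ is the minimum of the orbit, $\ell_{\underline{S}}$ is the maximum (Lemma~\ref{l:lastEllValueIsLargest}, which rests on inequality \eqref{e:InequalityToProve}, self-dominance, and Lemma~\ref{l:maxSuffixPrefix}), and the $T_\alpha$-images of the orbit points in $\Delta_{\alpha}(-1,1)$ lie to the right of those of the points in $\Delta_{\alpha}(-2,1)$ (Lemma~\ref{l:ellOneIsBigEnough}). In your sketch these inputs surface only in the ``boundary case'' where the comparison spills into the tail $d_\alpha(\ell_{\underline{S}})$, and there you assert rather than argue the conclusion; but that case is precisely where the content lies. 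There is also an index-bookkeeping problem even inside the palindrome region: with $d=j-i$, the first disagreement of $(\ell_i,\ell_j)$ is the least $a\ge i$ with $p_a\neq p_{a+d}$, and your involution $a\mapsto \underline{S}-1-d-a$ sends it to the \emph{greatest} such position $\le \underline{S}-1-j$, whereas the first disagreement of $(\ell_{\underline{S}-j},\ell_{\underline{S}-i})$ occurs at the greatest such position $\le i-1$; these windows do not correspond in general, so the claimed matching of first-differing positions needs a proof, not just the invariance of the disagreement set.

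For comparison, the paper's proof avoids digit-string comparisons entirely. It first records $\tau(0)=1$ and, via Lemma~\ref{l:lastEllValueIsLargest}, $\tau(\underline{S})=\underline{S}+1$, giving the case $i=0$. Then Lemma~\ref{l:ellOneIsBigEnough} (images of the $\Delta_{\alpha}(-1,1)$-points lie to the right of images of the $\Delta_{\alpha}(-2,1)$-points, with $\ell_0$ the global minimum) shows that the rank increment $\tau(i+1)-\tau(i)$ depends only on which cylinder contains $\ell_i$, i.e.\ only on the digit $p_i$; since $\underline{d}(k,v)$ is a palindrome, $p_i=p_{\underline{S}-1-i}$, so $\tau(i+1)-\tau(i)=\tau(\underline{S}-i)-\tau(\underline{S}-i-1)$, and the identity follows by telescoping from the base case. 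If you wish to pursue your route, you will end up re-proving exactly these extremality statements inside the comparison argument (or carrying them through an induction with $\mathscr D$), so the rank-increment argument is both shorter and the one the surrounding lemmas were designed for.
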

\begin{proof}  Since certainly $\tau(0) = 1$,  Lemma~\ref{l:lastEllValueIsLargest} yields the result when $i=0$.   
 
Lemma~\ref{l:ellOneIsBigEnough} implies that $\tau(i+1)-\tau(i)$ is determined by the simplified digit of $\ell_i$.  Indeed, if $\ell_i \in \Delta_{\alpha}(-1,1)$, then  $\tau(i+1) = \tau(i) + (\tau(1) - \tau(0))$. Similarly,  if $\ell_i \in \Delta_{\alpha}(-2,1)$, then  $\tau(i+1) = \tau(i) + (\tau(1+i_{\underline{S}}) -\tau(i_{\underline{S}}))$.

Due to the palindromic nature of $\underline{d}(k,v)$,  $\ell_{\underline{S}-i-1}$ and $\ell_i$ share the same digit.   Therefore,  $\tau(\underline{S}-i)-\tau(\underline{S}-i-1) = \tau(i+1)-\tau(i)$.  The result thus follows from induction. 
\end{proof} 

 \begin{Lem}\label{l:topHeightsIncrease}   For $1\le a \le \underline{S}$,  
 \[y_{a+1} > y_a\,.\]  
\end{Lem}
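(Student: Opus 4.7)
The plan is to derive a closed-form expression for the $y_a$ that makes monotonicity visibly a consequence of the ordering convention in Definition~\ref{d:topYvalues}(ii). Observe that by the definition of $\tau$, for each $a \in \{1,\dots,\underline{S}+1\}$ there is a unique index $i$ with $\tau(i)=a$, namely $i=i_a$, and thus
\[
y_a \;=\; -\ell_{\underline{S}-i_a}(\zeta_{k,v}).
\]
First I would rewrite $\underline{S}-i_a$ as another index of the form $i_b$. By Lemma~\ref{l:oppositeOrderEllsSmallAlp} applied to $i=i_a$ (which is legitimate for $a \le \underline{S}$, since then $i_a < \underline{S}$ thanks to Lemma~\ref{l:lastEllValueIsLargest}), we have $\tau(\underline{S}-i_a) = \underline{S}+2-a$, so $\underline{S}-i_a = i_{\underline{S}+2-a}$. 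For the boundary case $a=\underline{S}+1$, Lemma~\ref{l:lastEllValueIsLargest} gives $i_a = \underline{S}$, hence $\underline{S}-i_a = 0 = i_1 = i_{\underline{S}+2-a}$ directly. Therefore
\[
y_a \;=\; -\,\ell_{i_{\underline{S}+2-a}}(\zeta_{k,v}) \qquad \text{for all } 1 \le a \le \underline{S}+1.
\]

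Once this formula is in hand, the inequality is immediate. As $a$ increases to $a+1$, the index $\underline{S}+2-a$ strictly decreases to $\underline{S}+1-a$. Since by Definition~\ref{d:topYvalues}(ii) the indices $i_1,i_2,\dots,i_{\underline{S}+1}$ are chosen so that
\[
\ell_{i_1}(\zeta_{k,v}) \;<\; \ell_{i_2}(\zeta_{k,v}) \;<\; \cdots \;<\; \ell_{i_{\underline{S}+1}}(\zeta_{k,v}),
\]
we obtain $\ell_{i_{\underline{S}+1-a}}(\zeta_{k,v}) < \ell_{i_{\underline{S}+2-a}}(\zeta_{k,v})$, and negating flips the inequality to give $y_{a+1} > y_a$. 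The strict inequality is genuine because the orbit elements are distinct (the $T_{\zeta_{k,v}}$-orbit of $\ell_0(\zeta_{k,v})$ has $\underline{S}+1$ pairwise distinct members, as $\zeta_{k,v}$ is an interior point of the parameter range on which matching occurs and these are the first $\underline{S}+1$ iterates).

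There is no real obstacle here beyond correctly invoking Lemmas~\ref{l:lastEllValueIsLargest} and~\ref{l:oppositeOrderEllsSmallAlp}; the only mild subtlety is checking the endpoint $a=\underline{S}+1$ separately, since Lemma~\ref{l:oppositeOrderEllsSmallAlp} is stated only for $0\le i<\underline{S}$. The work has already been done in those lemmas, so this step reduces to pure bookkeeping.
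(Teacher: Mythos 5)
Your argument is correct and is essentially the paper's proof, rewritten in slightly more explicit index notation: both reduce the claim to Lemma~\ref{l:oppositeOrderEllsSmallAlp} (plus the boundary observation from Lemma~\ref{l:lastEllValueIsLargest}) and then read off monotonicity from the fact that negation reverses the sorted order of the $\ell_{i_b}(\zeta_{k,v})$. One trivial slip in your parenthetical aside: $\zeta_{k,v}$ is the \emph{left endpoint} of $J_{k,v}$, not an interior point, though distinctness of $\ell_0(\zeta_{k,v}),\dots,\ell_{\underline{S}}(\zeta_{k,v})$ holds regardless.
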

\begin{proof}  By definition,  there is an $i$ such that $y_a = y_{\tau(i)} = - \ell_{\underline{S}-i}(\zeta_{k,v})$.   Due to the negative sign, this is the 
$\underline{S}+2 - \tau(\underline{S}-i) ^{\text{th}}$ of the values in $\{-\ell_{i'}(\zeta_{k,v})\,|\,i'<\underline{S}\}$ ordered  as increasing real numbers.  
 Lemma~\ref{l:oppositeOrderEllsSmallAlp} yields that this is the  $\tau(i)^{\text{th}}$ value.   But, $a = \tau(i)$, and thus the position in this ordered set increases with $a$. 
\end{proof}

 \begin{Cor}\label{c:obviousYsubUnderlineS}    We have 
 \[ y_{\underline{S}} = - \ell_{i_2}(\zeta_{k,v}).\]    
 \end{Cor}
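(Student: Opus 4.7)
The plan is to derive this corollary as a direct consequence of the symmetry encoded in Lemma~\ref{l:oppositeOrderEllsSmallAlp} together with the identification of the topmost height in Lemma~\ref{l:lastEllValueIsLargest}. Since $\tau$ is a bijection from $\{0,\dots,\underline{S}\}$ onto $\{1,\dots,\underline{S}+1\}$, there is a unique $i_0 \in \{0,\dots,\underline{S}\}$ with $\tau(i_0) = \underline{S}$. Lemma~\ref{l:lastEllValueIsLargest} already tells us that $\tau(\underline{S}) = \underline{S}+1$, so $i_0 \neq \underline{S}$; in particular $i_0 \in \{0,\dots,\underline{S}-1\}$, which is precisely the range in which Lemma~\ref{l:oppositeOrderEllsSmallAlp} applies.

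Now I apply that lemma to $i_0$: the equality $\tau(i_0) + \tau(\underline{S}-i_0) = \underline{S}+2$ combined with $\tau(i_0) = \underline{S}$ forces $\tau(\underline{S}-i_0) = 2$. Unwinding the definition of $\tau$ (namely $\tau(j) = a$ iff $i_a = j$), this says exactly that $\underline{S} - i_0 = i_2$. Finally, by item (iii) of Definition~\ref{d:topYvalues} I have
\[
y_{\underline{S}} \;=\; y_{\tau(i_0)} \;=\; -\ell_{\underline{S}-i_0}(\zeta_{k,v}) \;=\; -\ell_{i_2}(\zeta_{k,v}),
\]
which is the assertion of the corollary.

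There is no real obstacle here since the heavy lifting has been done in Lemma~\ref{l:oppositeOrderEllsSmallAlp} (the palindromic symmetry of $\underline{d}(k,v)$) and Lemma~\ref{l:lastEllValueIsLargest} (the identification of the largest orbit element). The only point worth verifying carefully is that the value $i_0$ with $\tau(i_0) = \underline{S}$ genuinely satisfies $i_0 < \underline{S}$ so that Lemma~\ref{l:oppositeOrderEllsSmallAlp} may be invoked; this is guaranteed because $\tau(\underline{S})$ is already pinned to $\underline{S}+1$, not $\underline{S}$. The remainder of the argument is a single substitution.
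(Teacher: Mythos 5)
Your argument is correct, but it takes a slightly different route from the paper. The paper's own proof is a one-liner: Lemma~\ref{l:topHeightsIncrease} (which the paper has just established, and whose proof already uses Lemma~\ref{l:oppositeOrderEllsSmallAlp}) shows that the heights $y_1 < y_2 < \cdots < y_{\underline{S}+1}$ are strictly increasing, so $y_{\underline{S}}$ is the second largest; since the multiset of heights is exactly $\{-\ell_i(\zeta_{k,v}) : 0 \le i \le \underline{S}\}$, the second largest of these is $-1$ times the second smallest $\ell_i(\zeta_{k,v})$, which by the labelling convention is $-\ell_{i_2}(\zeta_{k,v})$. You instead bypass the monotonicity statement and work one level down, directly from the bijectivity of $\tau$, the pinning $\tau(\underline{S}) = \underline{S}+1$ from Lemma~\ref{l:lastEllValueIsLargest}, and the symmetry identity of Lemma~\ref{l:oppositeOrderEllsSmallAlp} applied to the unique $i_0$ with $\tau(i_0) = \underline{S}$. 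This is a sound and cleanly justified variant: it isolates exactly the single instance of the symmetry relation that is needed, at the cost of a slightly longer chain of substitutions. The paper's version is marginally more economical because it leverages the already-proved monotonicity wholesale; yours has the minor pedagogical merit of making the index bookkeeping ($\underline{S}-i_0 = i_2$) explicit rather than implicit in an ordering argument. Either is acceptable.
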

\begin{proof} The lemma yields that $y_{\underline{S}}$ is the second largest height.   Since these heights are the various $-\ell_i(\zeta_{k,v})$, this second largest height is indeed negative one times the second smallest $\ell_i(\zeta_{k,v})$.
\end{proof}

\subsection{Ordering the  $r_j$; bottom heights increase} 

 \begin{Lem}\label{l:lastRValueIsLeast}     We have that 
 \[ r_{\overline{S}} = \min_{0\le j \le \overline{S}}\, r_j\,.\]
 Equivalently,   $\beta(\overline{S}) = -\overline{S}-1$.
\end{Lem}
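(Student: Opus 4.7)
The plan is to mirror the proof of Lemma~\ref{l:lastEllValueIsLargest}, swapping the roles of $\underline{d}(k,v)$, $\ell$, $\zeta_{k,v}$, and ``largest'' with those of $\overline{d}(k,v)$, $r$, $\eta_{k,v}$, and ``least''. First I would establish monotonicity of the orbit. Each $A^{q}C$ with $q\in\{k,k+1\}$ is an orientation-preserving M\"obius transformation that is strictly increasing on positive reals, and since the $\alpha$-digits of $r_0,\dots,r_{\overline{S}-1}$ are fixed by $\overline{d}(k,v)$ throughout $J_{k,v}$, the orbit stays in positive cylinders. Thus each $r_j(\alpha)$ is an increasing function of $\alpha$ on $J_{k,v}$; in particular $r_{\overline{S}}(\alpha)\le r_{\overline{S}}(\eta_{k,v})$.

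Next I would translate the statement $r_{\overline{S}}\le r_j$ into $\prec$-order language. Since $d_\alpha(r_j)=\sigma^j(\overline{d}(k,v))\cdot d_\alpha(r_{\overline{S}})$ for $\alpha$ in the interior of $J_{k,v}$ and $0\le j\le\overline{S}$, and real order on $\mathbb{I}_\alpha$ agrees with the $\prec$-order on $\alpha$-digit sequences, the claim reduces to
$$d_\alpha(r_{\overline{S}})\preceq\sigma^j(\overline{d}(k,v))\cdot d_\alpha(r_{\overline{S}})\quad\text{for all }0\le j<\overline{S}.$$
Using the monotonicity together with the limit expansion $d_{\eta_{k,v}}(r_{\overline{S}})=\overline{d}(k,(v')^\infty)$ from the construction of $v'$ in \cite{CaltaKraaikampSchmidt}, it further suffices to establish the shift-dominance inequality
$$\overline{d}(k,(v')^\infty)\preceq\sigma^j(\overline{d}(k,v))\cdot\overline{d}(k,(v')^\infty)\quad\text{for all }0\le j<\overline{S}.$$

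For shifts $j$ that land on a block boundary of $\overline{d}(k,v)=k^{c_1}(k+1)^{d_1}\cdots k^{c_s}$, the shifted prefix is itself of the form $\overline{d}(k,u)$ for some suffix-of-$v$ word $u$, and Lemma~\ref{l:wordOrder} combined with Lemma~\ref{l:bothOrders} yields the inequality directly from $v'\pprec u$. The main obstacle is the ``mid-run'' shifts $j$, where $\sigma^j(\overline{d}(k,v))$ begins inside a run of identical letters and so is not of the form $\overline{d}(k,u)$ for any $u\in\mathcal V$. For these I would argue directly at the letter level, exploiting the palindromic nature of $v$ (which makes $\overline{d}(k,v)$ palindromic in $\{k,k+1\}$) together with the tight bounds $c_i\in\{c_1,c_1-1\}$ and $d_i\in\{d_1,d_1+1\}$ established in \cite{CaltaKraaikampSchmidt}; these bounds force each mid-run position to be dominated, letter by letter, by the adjacent block-boundary position that has already been handled, thus completing the proof.
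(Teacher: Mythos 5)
Your proposal follows essentially the same route as the paper: reduce to the endpoint $\eta_{k,v}$ by monotonicity of $r_j(\alpha)$, reformulate as $\sigma^j(\overline{d}(k,v))\succ\overline{d}(k,(v')^\infty)$ for $j<\overline{S}$, and invoke Lemmas~\ref{l:bothOrders} and~\ref{l:wordOrder}. Where the paper simply asserts that ``Lemma~\ref{l:bothOrders} shows that this is equivalent to the result of Lemma~\ref{l:wordOrder},'' you rightly flag that not every digit-level shift $\sigma^j$ corresponds to a $\mathcal V$-word (nor does a shift landing at the start of a $(k+1)$-block), so a short supplementary argument for the remaining shifts is warranted; this is a fair and easily closeable gap in the paper's terse presentation rather than a different approach.
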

\begin{proof}  To show that $r_{\overline{S}}$ is the minimal value in the initial portion of the orbit of $r_0$ for each $\alpha \in J_{k,v} = [\zeta_{k,v}, \eta_{k,v})$, it suffices to show that $\sigma^j(\, \overline{d}(k,v)\,) \succ d_{[1,\infty)}(\,r_{\overline{S}}\,)$ for all $j< \overline{S}$, where $d_{[1,\infty)}(x)$ denotes the sequence of simplified digits of $x$.    Since $r_{\overline{S}}(\eta_{k,v})$ is larger than $r_{\overline{S}}(\alpha)$ for all $\alpha \in J_{k,v}$, and $\overline{d}{}^{\eta_{k,v} }_{[1,\infty)} = \overline{d}(k, v (v')^\infty)$,    it in fact suffices to show  the $\sigma^j(\, \overline{d}(k,v)\,) \succ \overline{d}(k, (v')^\infty)$.   
But, Lemma~\ref{l:bothOrders} shows that this is equivalent to the result of Lemma~\ref{l:wordOrder}.
\end{proof}

 We can also identify the second smallest $r_j$.  Recall that $\iota = \overline{S}(u)$, when $v = \Theta_q(v)$ and $\iota= 0$ when $v=1$.
 \begin{Lem}\label{l:2ndLargestR}   We have 
    \[  r_\iota = \min_{0\le j < \overline{S}}\, r_j\,.\]
    Equivalently,   $\beta(\iota) =  -\overline{S}$.
\end{Lem}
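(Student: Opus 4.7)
The plan is to mirror the proof of Lemma \ref{l:lastRValueIsLeast} in the obvious way: having already established that $r_{\overline S}$ is the overall minimum among $r_0,\dots,r_{\overline S}$, we seek the minimum over the restricted range $0\le j<\overline S$ and argue that it is attained at $j=\iota$. Since $\alpha\in J_{k,v}$ forces the first $\overline S$ simplified digits of the expansion of $r_0(\alpha)$ to coincide with $\overline d(k,v)$, and since for $\alpha<\eta_{k,v}$ the tail of each $r_j(\alpha)$ shifts in the same direction as the corresponding tail at $\eta_{k,v}$ (this is exactly the reduction already used in Lemma \ref{l:lastRValueIsLeast}), it suffices to prove, at $\alpha=\eta_{k,v}$ where the expansion equals $\overline d(k,v(v')^\infty)$, that
\[
 \sigma^{\iota}\overline d\bigl(k,v(v')^\infty\bigr)\ \prec\ \sigma^{j}\overline d\bigl(k,v(v')^\infty\bigr)
\]
for every $j\in\{0,\dots,\overline S-1\}\setminus\{\iota\}$.

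The shift by $\iota=\overline S(u)$ is distinguished because $v=u(u')^q u''$ begins with $u$, so $\sigma^{\iota}$ lands cleanly at the boundary immediately after the initial $\overline d(k,u)$-block of $\overline d(k,v)$. The other shifts split into three kinds and each is handled by the tools of Section \ref{s:elemIdsAndOrder}. First, if $j$ lies in the interior of a run of $k$'s or $k+1$'s inside $\overline d(k,v)$, then $\sigma^{j}\overline d(k,v(v')^\infty)$ begins with a strictly shorter run of that letter than the next clean-boundary shift does; by the definition of $\prec$ on simplified digits, such a shift is strictly $\prec$-larger than the corresponding clean-boundary shift, so it cannot compete with $\sigma^{\iota}$. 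Second, if $j$ is a clean $\mathcal V$-letter boundary at some proper prefix $u'$ of $v$ with $u'\neq u$, then Lemma \ref{l:maxSuffixPrefix} (stating that $u$ is the longest proper prefix of $v$ that is also a suffix) forces $u'$ to be a proper prefix of $u$: we may translate the inequality via Lemma \ref{l:bothOrders} into a $\pprec$-comparison of words, and Lemma \ref{l:wordOrder} combined with the self-dominance of $v$ (Lemma \ref{l:selfD}) delivers the needed inequality. Third, the case $j=0$ reduces via Lemma \ref{l:bothOrders} to comparing $v(v')^\infty$ against the $\mathcal V$-suffix obtained by removing the leading $u$ from $v$ and then continuing with $(v')^\infty$; this is settled by Lemma \ref{l:wordOrder} applied at the parent $u$.

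The main obstacle is the second case, where one must verify that every other clean-boundary shift strictly exceeds $\sigma^{\iota}$ in the $\prec$-order. The cleanest route is an induction on the $\mathcal V$-length of $v$ by means of the derived-words operator $\mathscr D$: the identity $\mathscr D\circ\Theta_q=\Theta_q\circ\mathscr D$ reduces the comparison to a base case of short $v$, where each competing prefix is easily enumerated and directly checked. Once the three cases are in hand, Lemma \ref{l:bothOrders} together with the reduction to $\eta_{k,v}$ yields $r_\iota(\alpha)<r_j(\alpha)$ for $\alpha\in J_{k,v}$ and $j\in[0,\overline S-1]\setminus\{\iota\}$; the equivalent statement $\beta(\iota)=-\overline S$ follows immediately from the definition of $\beta$ in Definition \ref{d:bottomYvalues} together with Lemma \ref{l:lastRValueIsLeast}.
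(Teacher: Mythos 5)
Your proposal follows essentially the same route as the paper's proof: both reduce the claim to a combinatorial word comparison over shifts of $\overline{d}(k,v)$ (the paper's ``sufficiently small'' $\epsilon$ playing the role of your tail $(v')^\infty$ at $\eta_{k,v}$, justified in each case by Lemma~\ref{l:lastRValueIsLeast}), verify short base words, and close with the $\mathscr D$-induction via $\mathscr D\circ\Theta_q=\Theta_q\circ\mathscr D$. Your three-way case split is a more explicit unpacking of what the paper compresses into ``easily verified''; one small wording wrinkle is that, for a shift landing inside a $(k+1)$-run, the relevant dominating comparison is against the \emph{start} of that run rather than the ``next'' boundary, but the intended inequality and its use of the $\prec$-order are sound.
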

\begin{proof} The statement for $v=1$ is trivial.  

Due to Lemma~\ref{l:lastRValueIsLeast},  we can repose the statement for $v \neq 1$ as $\sigma^{\iota} (\, \Theta_q(u) \epsilon\,) \prec \sigma^j (\, \Theta_q(u) \epsilon\,)$ for all $j<|v|$, where $\epsilon$ is any word such that $\epsilon \prec \sigma^j (\, \Theta_q(u) \epsilon\,)$ for all  $j<|v|$.   (That is, $\epsilon$ is simply any sufficiently small word, allowing comparisons to accord with the minimality of $r_{\overline{S}(v)}$.) 

Now, this reformulated statement is easily verified for $v$ and for any $\Theta_{0}^{h}\circ \Theta_p(v)$ when $v$ is of length one.    We can thus apply the morphism $\mathscr D$   and deduce that it is true for all $v\in \mathcal V$.  
\end{proof} 

 \begin{Lem}\label{l:rOneIsBigEnough}   We have that $\beta(1+\iota) = 1 + \beta(1)$.
\end{Lem}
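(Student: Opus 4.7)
The proof mirrors that of Lemma \ref{l:ellOneIsBigEnough}, adapted to the bottom half. The plan is to identify the cylinders containing $r_0$ and $r_\iota$, exploit the order-preservation of the cylinder maps together with the extremal properties of these two points (Lemmas~\ref{l:lastRValueIsLeast} and \ref{l:2ndLargestR}), and then reduce to verifying a single key inequality.

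Since $\overline{d}(k,v)$ begins with a run of $k$'s, we have $q_0 = k$, placing $r_0 \in \Delta_\alpha(k,1)$. Writing $v = \Theta_q(u) = u(u')^q u''$ with $q \ge 1$ for the parent $u$ of $v$ (implicitly $v \neq 1$), I claim $q_\iota = k+1$. Indeed, $u$ is a palindrome of odd length ending in a c-letter, so the prefix $\overline{d}(k,u)$ of $\overline{d}(k,v)$ ends with the final $k$ of the maximal run $k^{c_s^{(u)}}$. Because $|u|$ is odd, the position immediately following $u$ in the word $v$ has even parity, so the first letter of $u'$ is interpreted there as a d-letter, contributing a run of $(k+1)$'s beginning at position $\iota+1$; hence $r_\iota \in \Delta_\alpha(k+1,1)$.

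Next, $r_0$ is trivially the maximum of the $r_j$, and by Lemma \ref{l:2ndLargestR}, $r_\iota$ is the minimum of $\{r_j : 0 \le j < \overline{S}\}$. Every $r_j$ with $j < \overline{S}$ has simplified digit in $\{k, k+1\}$, so this set is partitioned between the two cylinders $\Delta_\alpha(k,1)$ and $\Delta_\alpha(k+1,1)$. Since both $A^k C$ and $A^{k+1} C$ are order-preserving, $r_1 = A^k C \cdot r_0$ is the largest image arising from $\Delta_\alpha(k,1)$ and $r_{1+\iota} = A^{k+1} C \cdot r_\iota$ is the smallest from $\Delta_\alpha(k+1,1)$. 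It therefore suffices to show $r_1 < r_{1+\iota}$: the resulting total separation of the two families of images forces $r_{1+\iota}$ to occupy the rank immediately above $r_1$ in the decreasing order of $\{r_0, \dots, r_{\overline{S}}\}$, giving $\beta(1+\iota) = 1 + \beta(1)$. The key inequality is handled by comparing the simplified-digit expansions of $r_1$ and $r_{1+\iota}$, which share the common tail of the $r_0$-expansion beyond position $\overline{S}$; the comparison reduces to contrasting $\sigma(\overline{d}(k,v))$ with $\sigma^{1+\iota}(\overline{d}(k,v))$ in the digit order $\prec$, which by Lemma \ref{l:bothOrders} transfers to a $\pprec$-comparison of the associated words, where Lemma \ref{l:wordOrder} ($v' \pprec \sigma^j(v)$) and the self-dominance of $v$ (Lemma \ref{l:selfD}) deliver the required inequality once the structure $v = u(u')^q u''$ is taken into account.

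The main obstacle is the parity-tracking in identifying the cylinder of $r_\iota$---the letters of $u'$ swap their effective c/d role when concatenated after $u$ owing to the odd length of $u$---together with the uniform word-order calculation in the key inequality. As in the proof of Lemma \ref{l:ellOneIsBigEnough}, the shortest-word base cases are verified directly, and the general case is handled by induction via the derived-words operator $\mathscr D$, exactly as was done in the proof of Lemma \ref{l:wordOrder}.
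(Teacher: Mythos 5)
Your proof is the intended analogue of the argument for Lemma~\ref{l:ellOneIsBigEnough}: identify the cylinders of $r_0$ and $r_\iota$, reduce via extremality and order-preservation to showing $r_1<r_{1+\iota}$, and settle that by comparing digit expansions. The reduction is correct, and the parity observation pinning $r_\iota$ in $\Delta_\alpha(k+1,1)$ is the right structural fact behind it.

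Three points should be tightened, however. The standing hypothesis needs to be $|v|>1$ rather than just $v\neq 1$: for $v=c_1$ with $c_1\ge 2$, $\overline{d}(k,v)=k^{c_1}$ contains no digit $k+1$, so $r_\iota\notin\Delta_\alpha(k+1,1)$ and indeed the stated equality itself fails (one has $1+\iota=\overline{S}$, so $\beta(1+\iota)=-\overline{S}-1$ whereas $1+\beta(1)=-1$). The exponent should be $q\ge 0$, not $q\ge 1$, since $\Theta_0$ also produces children in $\mathcal V$; your parity observation needs only $|v|>|u|$, so it covers $q=0$ as well. Finally, the transfer of the key inequality to $\pprec$ via Lemma~\ref{l:bothOrders} is not a direct application --- $\sigma^j(\overline{d}(k,v))$ is not of the form $\overline{d}(k,a)$ for $a\in\mathcal V$, and the common tail (the expansion of $r_{\overline{S}}$) enters the two expansions at different positions --- so that step remains a sketch. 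To be fair, the paper's own proof is a one-line reference, but a careful version would use the $\epsilon$-device appearing in the proof of Lemma~\ref{l:2ndLargestR}.
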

\begin{proof}  The argument here is the exact analog of the proof of Lemma~\ref{l:ellOneIsBigEnough}.
\end{proof}
 
 \begin{Lem}\label{l:bottomHeightsIncrease}   For $-1\le b \le -\overline{S}+1$,  
 \[y_{b-1} < y_b\,.\]  
\end{Lem}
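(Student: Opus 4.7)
The proof will be obtained by directly transposing the argument for the upper-half analog, Lemma~\ref{l:topHeightsIncrease}, with appropriate book-keeping changes. By Definition~\ref{d:bottomYvalues}(iii), writing $b = \beta(j)$ one has $y_b = -r_{\overline{S}-j}(\eta_{k,v})$. Since $r_{i'}(\eta_{k,v})$ is by construction the $|\beta(i')|$-th largest among $r_0(\eta_{k,v}),\ldots,r_{\overline{S}}(\eta_{k,v})$, its negative is the $|\beta(i')|$-th smallest element of the negated set. Consequently, $y_b$ sits in the $|\beta(\overline{S}-j)|$-th slot when $\{-r_{i'}(\eta_{k,v})\}_{i'=0}^{\overline{S}}$ is arranged in increasing order. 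The claim $y_{b-1}<y_b$ will then reduce to establishing the reflection identity
\[
\beta(j)+\beta(\overline{S}-j) \;=\; -\overline{S}-2, \qquad 0\le j \le \overline{S},
\]
the bottom-half analog of Lemma~\ref{l:oppositeOrderEllsSmallAlp}: as $|b|$ increases, $|\beta(\overline{S}-j)|$ decreases, so $y_b$ must occupy a lower rank in the ordering.

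I would prove this identity by induction on $j$, paralleling the proof of Lemma~\ref{l:oppositeOrderEllsSmallAlp}. The base case $j=0$ is immediate: $r_0$ is the right endpoint of $\mathbb{I}_{\eta_{k,v}}$ and thus the maximum of the initial orbit segment, giving $\beta(0)=-1$, while Lemma~\ref{l:lastRValueIsLeast} provides $\beta(\overline{S})=-\overline{S}-1$. For the inductive step, Lemma~\ref{l:rOneIsBigEnough}, combined with the fact that each of $A^kC$ and $A^{k+1}C$ is order preserving on its branch domain, implies that the increment $\beta(j+1)-\beta(j)$ depends only on the simplified digit $q_j\in\{k,k+1\}$ of $r_j$. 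Because $v\in\mathcal V$ is a palindrome, $\overline{d}(k,v)$ is palindromic as a word in $\{k,k+1\}$, so $q_j=q_{\overline{S}-1-j}$, and therefore the increments at positions $j$ and $\overline{S}-1-j$ coincide. Telescoping from the base case closes the induction.

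With the reflection identity established, the monotonicity $y_{b-1}<y_b$ follows at once from the slot-ordering observation of the first paragraph, valid throughout $-\overline{S}+1\le b\le -1$. The main obstacle, modest as it is, lies in justifying cleanly that $\beta(j+1)-\beta(j)$ is a function of $q_j$ alone: just as in the derivation of Lemma~\ref{l:oppositeOrderEllsSmallAlp} from Lemma~\ref{l:ellOneIsBigEnough}, this needs the order preservation of the two branch maps to be combined with the boundary information of Lemma~\ref{l:rOneIsBigEnough} and the labeling carried by $\beta$. The remainder is a direct transcription of the top-half argument, with $\tau,\underline{S},\ell_i,\zeta_{k,v}$ replaced by $\beta,\overline{S},r_j,\eta_{k,v}$ respectively.
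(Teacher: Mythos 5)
Your proof is correct and follows exactly the route the paper intends: the paper's own proof of this lemma consists solely of the remark that it is ``the exact analog'' of Lemma~\ref{l:topHeightsIncrease}. The reflection identity $\beta(j)+\beta(\overline{S}-j)=-\overline{S}-2$ that you isolate, together with its induction via the palindromic structure of $\overline{d}(k,v)$ and the separation of images given by Lemma~\ref{l:rOneIsBigEnough}, is precisely the unstated bottom-half analog of Lemma~\ref{l:oppositeOrderEllsSmallAlp} and its proof.
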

\begin{proof}   The proof here is the exact analog of that for Lemma~\ref{l:topHeightsIncrease}.  
\end{proof}

\bigskip
\subsection{Meeting of $T_{\zeta_{k,v}}$-orbits with $T_{\eta_{k,v}}$-orbits}\label{ss:OrbitsMeet}    We now explore a phenomenon that already  can  be noticed in  the setting of Example~\ref{e:kIs1},  see in particular Table~\ref{t:orbsInEg}.

In the following, note that although in general for different values of $\alpha$, a fixed sequence of (simplified) digits determines distinct real values, (after having fixed $n$) purely periodic digit sequences correspond to fixed points of well defined elements of the group $G_{n}$, independent of choice of $\alpha$.    

Recall from Definition~\ref{d:iota} that  if $u$ is the parent of $v$ in $\mathcal V$ and $k\in \mathbb N$ are fixed, then $\iota$ is short hand notation for $|\overline{d}(k,u)|$.   Compare the following with Figure~\ref{f:rZeroEtaOrbit}.

\begin{figure}
\begin{tikzpicture}[baseline= (a).base]
\node[scale=.75] (a) at (0,0){
\begin{tikzcd}[ column sep=2pc,row sep=2pc]
r_0(\eta)\ar[pos=0.64]{r}{k} \ar[bend left=40, swap]{rrrrrrr}{R_{k,v}}&r_1(\eta) \ar[equal]{d}\ar{r} & \cdots  \ar{r} {k} &r_{\overline{S}-\iota+1} (\eta)\ar{r}{k+1} \ar[equal]{d}&r_{\overline{S}-\iota} (\eta)\ar{r}\ar[bend left,  swap]{rrr}{ R_{k,u}}\ar[equal]{d} &\cdots  \ar{r}&  r_{\overline{S}-1}(\eta) \ar{r}{k} \ar[equal]{d} &r_{\overline{S}}(\eta)\ar[equal]{d}\\
&r_{\iota+1}(\zeta)&\cdots&r_{\overline{S}-1}(\zeta)&r_0(\zeta)&\cdots&r_{\iota-1}(\zeta)&r_{\iota}(\zeta)\ar[bend left=18]{llllll}{k+1}
\end{tikzcd} 
};
\end{tikzpicture}
%
\caption{The $T_{\eta_{k,v}}$-orbit of $r_0(\eta_{k,v})$ contains the initial $\overline{S}$ elements of the $T_{\zeta_{k,v}}$-orbit of $r_0(\zeta_{k,v})$, see Proposition~\ref{p:rOrbitsOfSynIntEndptsMeet}.    Here $\eta,\zeta$ denote  $\eta_{k,v}, \zeta_{k,v}$, respectively.     Arrow labels $k, k+1$ denote  $A^{k}C, A^{k+1}C$, respectively.}
\label{f:rZeroEtaOrbit}
\end{figure}
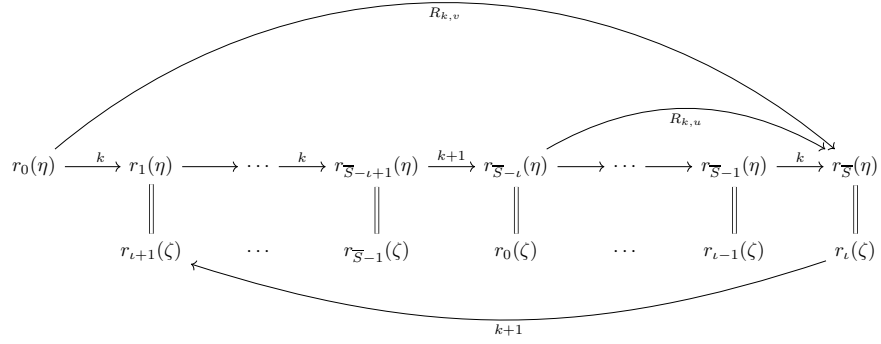
 
\begin{Prop}\label{p:rOrbitsOfSynIntEndptsMeet}   Fix $n\ge 3,  k \in \mathbb N, v \in \mathcal V$.  
Then $T_{\eta_{k,v}}$-orbit of $r_0(\eta_{k,v})$ contains the initial $\overline{S}$ elements of the $T_{\zeta_{k,v}}$-orbit of $r_0(\zeta_{k,v})$.  

In particular, 
\begin{enumerate}
\item [i.)] For any length one $v$, we have $r_1(\eta_{k,v}) = r_0(\zeta_{k,v})$;

\item[ii.)]  For $|v| >1$, 
\[  r_{\overline{S}}(\eta_{k,v}) = r_{\iota}(\zeta_{k,v}).\]
\end{enumerate}

\end{Prop}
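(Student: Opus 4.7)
My plan is to reformulate the two identifications (i) and (ii) into a single clean Möbius identity relating $r_0(\eta_{k,v})$ and $r_0(\zeta_{k,v})$, reduce that identity to a group relation in $G_n$, and verify the relation by induction on the depth of $v$ in $\mathcal V$, using the palindromic and self-similar structure of words in $\mathcal V$. The full orbit-containment claim will then follow by propagating the anchoring identification, using that the shared prefix $\overline{d}(k,v)$ governs the first $\overline{S}$ steps of both orbits.

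Concretely, for $v$ with parent $u$ (interpreting $u$ as trivial when $v = 1$), the identities (i)--(ii) read $R_{k,v}\cdot r_0(\eta_{k,v}) = R_{k,u}\cdot r_0(\zeta_{k,v})$, where the right-hand side equals $r_\iota(\zeta_{k,v})$ because $\zeta_{k,v}\in I_{k,u}$. Combined with the fixed-point equation $L_{k,v}\cdot r_0(\eta) = r_0(\eta)$, which rearranges to $R_{k,v}\cdot r_0(\eta) = C^{-1}A^{-1}C\cdot r_0(\eta)$, this becomes the clean relation
\[
r_0(\eta_{k,v}) \;=\; L_{k,u}\cdot r_0(\zeta_{k,v}),
\]
where we interpret $L_{k,u}$ as $C^{-1}AC$ in the length-one case, recovering (i). Since $r_0(\zeta_{k,v})$ is fixed by $AR_{k,v}$ (indeed $R_{k,v}\cdot r_0(\zeta) = \ell_0(\zeta) = A^{-1}\cdot r_0(\zeta)$), it suffices to prove the group-theoretic relation $L_{k,v}\,L_{k,u} = L_{k,u}\cdot AR_{k,v}$, which a short calculation shows to be equivalent to
\[
R_{k,u}^{-1}\, R_{k,v}\, C^{-1}AC\, R_{k,u} \;=\; A R_{k,v}.
\]
The case $|v|=1$ is direct, as both sides evaluate to $A^{k+1}C$. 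For general $v = \Theta_q(u)$ I would induct on the depth, using that $v$ is a palindrome with $u$ as both prefix and suffix, the explicit form of $v'$ from \eqref{e:vPrimeDef}, and the derived-words operator $\mathscr D$ to reduce to tractable base cases.

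With the anchoring identity in hand, the remaining orbit identifications follow by propagation. Since the first $\overline{S}$ simplified $\eta_{k,v}$-digits of $r_0(\eta_{k,v})$ and the first $\overline{S}$ simplified $\zeta_{k,v}$-digits of $r_0(\zeta_{k,v})$ both equal $\overline{d}(k,v)$, and since $\overline{d}(k,v)$ is a palindrome (as $v$ is), the Möbius factors applied step-by-step in the two orbits match up, with the cyclic offset $\iota$ inherited from (ii); one concludes that $r_j(\eta_{k,v}) = r_{(j+\iota)\bmod \overline{S}}(\zeta_{k,v})$ for each $1 \le j \le \overline{S}$. The main obstacle in this plan is the group identity $R_{k,u}^{-1}R_{k,v}C^{-1}ACR_{k,u} = AR_{k,v}$: while it reflects the palindromic symmetry underlying $\mathcal V$ and is elementary for small $v$, verifying it in general demands careful bookkeeping of the boundary-letter merging in $\overline{d}(k,v)$ under the decomposition $v = u(u')^qu''$ (adjacent blocks of $k$'s can coalesce) together with the non-commutativity of $A$ and $C$. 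The derived-words operator $\mathscr D$ is the natural tool for reducing the inductive step to manageable short-word verifications.
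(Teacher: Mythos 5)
Your reduction is exactly the one underlying the paper's proof. Writing (ii) as $R_{k,v}\cdot r_0(\eta_{k,v}) = R_{k,u}\cdot r_0(\zeta_{k,v})$, combining with $L_{k,v}\cdot r_0(\eta)=r_0(\eta)$ to get $r_0(\eta_{k,v}) = L_{k,u}\cdot r_0(\zeta_{k,v})$, and then pushing the problem to the conjugation identity $R_{k,u}^{-1}R_{k,v}\,C^{-1}AC\,R_{k,u} = AR_{k,v}$ is correct, and this is precisely the relation the paper exploits: it shows $r_{\overline{S}}(\eta)$ is fixed by $R_{k,v}C^{-1}AC = R_{k,v}(A^kC)^{-1}A^{k+1}C$ and $r_\iota(\zeta)$ by $R_{k,u}\,AR_{k,v}\,R_{k,u}^{-1}$, then identifies both as ``$R_{k,v'}$'' by a direct palindromic calculation. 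However, there are two genuine gaps in your proposal.

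First, you never prove the group relation, and you explicitly flag it as ``the main obstacle.'' The paper dispatches it in a few lines: using the decomposition $v=uaz$ from Remark~4.14 of \cite{CaltaKraaikampSchmidt}, with $v,u,a,z,z'a$ all palindromes, one computes $\overleftarrow{v'}=\overleftarrow{(zau)'}=\overleftarrow{z'au}=u\overleftarrow{(z'a)}=uz'a$, hence $u^{-1}\overleftarrow{v'}u = z'au = v'$, so the two stabilizing elements coincide. Your plan to induct with $\mathscr D$ is plausible but not carried out; the palindromic decomposition $v=uaz$ gives the relation directly without induction, so you are proposing harder machinery for the step the paper resolves outright. (Also, your base-case claim that both sides ``evaluate to $A^{k+1}C$'' is literally true only for $c_1=1$; for general length-one $v=c_1$ with parent $c_1-1$ they both equal $A^{k+1}C(A^kC)^{c_1-1}$ --- the relation still holds, but the stated value is off.)

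Second, and more seriously, you assert that proving the group relation ``suffices.'' It does not. The relation shows only that $L_{k,u}\cdot r_0(\zeta_{k,v})$ is \emph{a} fixed point of $L_{k,v}$; but $r_0(\eta_{k,v})$ is also a fixed point of $L_{k,v}$, which is hyperbolic and hence has two fixed points. One still has to identify which fixed point, and this is not automatic. The paper addresses this explicitly (in the length-one case, extending implicitly to the general case): both candidate points and their orbits lie in $(0, r_0(\gamma_n))\subset(0,1)$, where every branch $A^jC$ is expansive, so both must be the repulsive fixed point of the common hyperbolic element, forcing equality. Without some such dynamical argument your chain of implications does not close, so this omission is a real gap rather than a notational detail.
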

\begin{proof} We first prove the enumerated statements.  For all $v$, (\cite{CaltaKraaikampSchmidt}, Lemma~4.5)
 implies that $A^{k+1}CR_{k,v} (A^kC)^{-1}$ fixes $r_1(\eta_{k,v})$,  while $AR_{k,v}$ fixes $r_0(\zeta_{k,v})$.    When $v = c_1$, we have $R_{k,v} = (A^kC)^{c_1}$ and thus $r_1(\eta_{k,v}), r_0(\zeta_{k,v})$ are fixed by the same element of $G_n$.    Both $r_1(\eta_{k,v}), r_0(\zeta_{k,v})$ and their respective orbits are certainly positive and  less than $r_0(\gamma_n)$ which in turn is less than one; in this interval each $A^jC$ defines an expansive map.  Therefore, both $r_1(\eta_{k,v}), r_0(\zeta_{k,v})$ must be the repulsive fixed point of the hyperbolic $AR_{k,v}$ ,  and hence equality does hold. 

For longer length $v$, let $u$ be the parent of $v$.  Then $r_{\overline{S}}(\eta_{k,v})$ is fixed by $R_{k,v} (A^kC)^{-1}A^{k+1}C$ and $r_{\iota}(\zeta_{k,v})$ by $R_{k,u}A R_{k,v}(R_{k,u})^{-1}$.  With mild abuse of notation, the first of these is $R_{k, v'}$, the second is $R_{k, u^{-1} \overleftarrow{v'}  u}$.   (Recall that $v'$ is given in \eqref{e:vPrimeDef}.)  Now,    (\cite{CaltaKraaikampSchmidt}, Remark~4.14)
 gives $v = uaz$, with $v, u, a, z$ and $z'a$   all being palindromes. Thus, $\overleftarrow{v'} = \overleftarrow{(zau)'}  = \overleftarrow{z'au} = u \overleftarrow{(z'a)} = uz'a$.  Hence, $u^{-1} \overleftarrow{v'}  u = z'a u = v'$,  and therefore $r_{\overline{S}}(\eta_{k,v}) = r_{\iota}(\zeta_{k,v})$  does hold. 
  
 Again by the palindromic nature of $v$, one easily sees that   $r_{\overline{S}-\iota + i}(\eta_{k,v}) = r_i(\zeta_{k,v})$ for $0 \le i \le \iota$ and $r_{j}(\eta_{k,v}) = r_{\iota+j}(\zeta_{k,v})$ for $1 \le i \le \overline{S}-\iota-1$.
\end{proof}

\begin{Cor}\label{c:ellZeroEtaAsEllJzetaSmallAlps}   Fix $n\ge 3,  k \in \mathbb N, v \in \mathcal V$.  
Then $\ell_0(\eta_{k,v})$ is in the  $T_{\zeta_{k,v}}$-orbit  of $\ell_0(\zeta_{k,v})$.   

More precisely, 
\begin{enumerate}
\item[i.)]  $\ell_{|w|}(\zeta_{k,1}) = \ell_0(\eta_{k,1})$; 

\item[ii.)]  For $v \neq1$,  
\[  \ell_{ |\underline{d}(k,u)|}(\zeta_{k,v}) = \ell_0(\eta_{k,v}).\]
\end{enumerate}

\end{Cor}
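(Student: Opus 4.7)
The plan is to reduce the claim to an equality of infinite simplified-digit sequences. Since $A^{-1}C$ and $A^{-2}C$ are elements of $G_n$ independent of $\alpha$, a purely periodic admissible simplified expansion determines a unique real number, namely the attracting fixed point of the associated hyperbolic element of $G_n$; this real value does not depend on the parameter $\alpha$ producing the expansion. Moreover, the simplified expansion of $\ell_j(\alpha)$ under $T_\alpha$ is $\sigma^j$ applied to the expansion of $\ell_0(\alpha)$. It therefore suffices to establish the sequence-level identities
\[
\sigma^{|w|}\bigl(\underline{d}{}^{\zeta_{k,1}}_{[1,\infty)}\bigr) = \underline{d}{}^{\eta_{k,1}}_{[1,\infty)} \quad\text{and}\quad \sigma^{|\underline{d}(k,u)|}\bigl(\underline{d}{}^{\zeta_{k,v}}_{[1,\infty)}\bigr) = \underline{d}{}^{\eta_{k,v}}_{[1,\infty)},
\]
for cases (i) and (ii) respectively, using the explicit formulas \eqref{e:lowerDetaExpansion} and \eqref{e:lowerDzetaExpan}.

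For case (i), upon removing the initial $w$ from the prefix $\underline{d}(k,1) = w^k, (-1)^{n-2}$ and attaching the period $-2, (-1)^{n-3}, -2, \mathcal{C}, (-1)^{n-2}$ (where $\mathcal{C} = (-1)^{n-3}, -2, w^{k-1}$), one regroups the resulting concatenation using the observation that $(-1)^{n-2}, -2, (-1)^{n-3}, -2 = w$ to recover consecutive copies of $w^k, (-1)^{n-3}, -2$; this is exactly $\underline{d}{}^{\eta_{k,1}}_{[1,\infty)}$.

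For case (ii), I would proceed by induction over the tree $\mathcal{V}$ using the derived-words operator $\mathscr{D}$ and the structural relation $v = u(u')^q u''$. The base cases are those short $v$ in the image of $\Theta_q$ applied to length-one ancestors, handled by direct computation analogous to (i). The inductive step exploits the intertwining $\mathscr{D}\circ\Theta_q = \Theta_q\circ\mathscr{D}$ together with the controlled way in which $\Theta_q$ alters $\underline{d}(k,\cdot)$ and the period: shifting by $|\underline{d}(k,u)|$ deletes exactly the $u$-contribution in the prefix of $\underline{d}{}^{\zeta_{k,v}}_{[1,\infty)}$, leaving a purely periodic tail that regroups to $\overline{w^k, \mathcal{C}^{c_1-1}\mathcal{D}^{d_1}\cdots\mathcal{D}^{d_{s-1}}\mathcal{C}^{c_s}, (-1)^{n-3}, -2}$.

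The principal obstacle is the combinatorial bookkeeping required to verify, for arbitrary $v \in \mathcal{V}$, both that the transient prefix of $\underline{d}{}^{\zeta_{k,v}}_{[1,\infty)}$ has length exactly $|\underline{d}(k,u)|$ and that the cyclic rotation of the period beginning there matches $\underline{d}{}^{\eta_{k,v}}_{[1,\infty)}$ on the nose. A cleaner alternative, parallel to the argument already given for Proposition~\ref{p:rOrbitsOfSynIntEndptsMeet}, would chain that proposition --- which supplies $r_\iota(\zeta_{k,v}) = r_{\overline{S}}(\eta_{k,v})$ --- with the synchronization identity \eqref{e:synchronizationExplicit} applied at both $\zeta_{k,v}$ and $\eta_{k,v}$, exploiting the palindromic nature of $v$ to certify digit alignment at each intermediate iterate; this would transport the known meeting of $r$-orbits into a corresponding meeting of $\ell$-orbits at step $|\underline{d}(k,u)|$, yielding the desired identification of $\ell_0(\eta_{k,v})$ with an iterate of $\ell_0(\zeta_{k,v})$ under $T_{\zeta_{k,v}}$.
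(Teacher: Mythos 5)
Your case (i) is fine and is essentially what the paper does (compare the two explicit expansions and shift by $|w|$). The gap is in case (ii), which is the real content of the corollary, and there you complete neither route. Your primary route reduces the claim to the sequence identity $\sigma^{|\underline{d}(k,u)|}\bigl(\underline{d}{}^{\zeta_{k,v}}_{[1,\infty)}\bigr) = \underline{d}{}^{\eta_{k,v}}_{[1,\infty)}$, but you explicitly leave its verification (the ``combinatorial bookkeeping'') open, and that verification is exactly where the difficulty sits: $\underline{d}(k,u)$ is \emph{not} a prefix of $\underline{d}(k,v)$ --- the terminal $(-1)^{n-2}$ of $\underline{d}(k,u)$ appears inside $\underline{d}(k,v)$ as $-2,(-1)^{n-3}$ --- so your phrase ``shifting by $|\underline{d}(k,u)|$ deletes exactly the $u$-contribution'' is not literally correct, and an induction through $\mathscr D$ would have to track precisely this defect. (Also, the value determined by such an expansion is the \emph{repelling} fixed point of the relevant hyperbolic element, not the attracting one, though the uniqueness principle you invoke is available via eventual expansiveness.)

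Your ``cleaner alternative'' is indeed the paper's route, but as stated it is only a gesture: the synchronization identity \eqref{e:synchronizationExplicit} is not what is used, and ``certifying digit alignment at each intermediate iterate'' is not needed. The actual mechanism is a short matrix computation built from three inputs you do not supply: (1) the observation that, because of the prefix defect above, $\ell_{|\underline{d}(k,u)|}(\alpha) = A^{-1}L_{k,u}A\cdot\ell_0(\alpha)$ rather than the naive expression; (2) the relation $L_{k,w} = C^{-1}ACR_{k,w}$; and (3) the defining fixed-point property $L_{k,v}\cdot r_0(\eta_{k,v}) = r_0(\eta_{k,v})$. With these, Proposition~\ref{p:rOrbitsOfSynIntEndptsMeet} gives $L_{k,u}A\cdot\ell_0(\zeta_{k,v}) = C^{-1}ACR_{k,u}\cdot r_0(\zeta_{k,v}) = C^{-1}AC\cdot r_{\iota}(\zeta_{k,v}) = C^{-1}AC\cdot r_{\overline{S}}(\eta_{k,v}) = L_{k,v}\cdot r_0(\eta_{k,v}) = r_0(\eta_{k,v})$, and applying $A^{-1}$ yields $\ell_{|\underline{d}(k,u)|}(\zeta_{k,v}) = \ell_0(\eta_{k,v})$ (the case $v=c_1$, $u=c_1-1$ being handled by the same chain). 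Until either the sequence identity is proved for general $v\in\mathcal V$ or this algebraic chain is written down, the proposal is a plan rather than a proof.
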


\begin{proof}  From \eqref{e:lowerDetaExpansion} and \eqref{e:lowerDzetaExpan} we have  $\underline{d}{}^{\eta_{k,1}}_{[1,\infty)} = \overline{w^k,(-1)^{n-3}, -2}$ and $\underline{d}{}^{\zeta_{k,1}}_{[1,\infty)} = w^{k+1}, \overline{(-1)^{n-3}, -2, w^k}$.  The case of $v=1$ thus visibly holds.
 
 We now suppose that $u$ is the parent of $v$.
 In order to apply the results of the previous proposition, we use the relation $L_{k,v} = C^{-1}AC R_{k,v}$.  Note that in general $\ell_{|\underline{d}(k,u)|}(\alpha) = A^{-1}L_{k,u}A \cdot \ell_0(\alpha)$, instead of what might be naively expected.  Indeed, the definition of the $L_{k,v}$ in general, see (\cite{CaltaKraaikampSchmidt}, Definition~ 4.32),
 is such that the prefix of $(A^{-1}C)^{n-2}$ of $L_{k,u}$ must be replaced by $A^{-2}C(A^{-1}C)^{n-3}$ in order to achieve an admissible suffix of $L_{k,v}$. 
  
First suppose that $v = c_1$ with $c_1>1$, thus of parent  $u = (c_1-1)\,$.   Then $\ell_{|\underline{d}(k,u)|}(\zeta_{k,c_1}) = A^{-1}L_{k,c_1-1}A \cdot \ell_0(\zeta_{k,c_1}) = A^{-1}C^{-1}AC R_{k,c_1-1}\cdot r_0(\zeta_{k,c_1})= A^{-1}C^{-1}AC R_{k,c_1-1}\cdot r_1(\eta_{k,c_1})= A^{-1}C^{-1}AC R_{k,c_1}\cdot r_0(\eta_{k,c_1})= A^{-1}L_{k,c_1}A \cdot \ell_0(\eta_{k,c_1}) = A^{-1}\cdot r_0(\eta_{k,c_1}) = \ell_0(\eta_{k,c_1})$.
 
  From Proposition~\ref{p:rOrbitsOfSynIntEndptsMeet}, 
$L_{k,u}A \cdot \ell_0(\zeta_{k,v}) = L_{k,u} \cdot r_0(\zeta_{k,v}) = C^{-1}AC R_{k,u}\cdot r_0(\zeta_{k,v}) = C^{-1}AC \cdot r_{\iota}(\zeta_{k,v}) = C^{-1}AC \cdot r_{\overline{S}}(\eta_{k,v}) = C^{-1}AC R_{k,v}  \cdot r_0(\eta_{k,v}) = L_{k,v}  \cdot r_0(\eta_{k,v})= r_0(\eta_{k,v})$. 
Thus, $\ell_{|\underline{d}(k,u)|}(\zeta_{k,v}) = A^{-1}\cdot  r_0(\eta_{k,v}) = \ell_0(\eta_{k,v})$.
 \end{proof}

\begin{Prop}\label{p:ellZeroEtaIsEllSubIsubTwoZetaSmallAlps}   Fix $n\ge 3,  k \in \mathbb N, v \in \mathcal V$.  
Then $\ell_0(\eta_{k,v})$ is   the second smallest element of the  $T_{\zeta_{k,v}}$-orbit  of $\ell_0(\zeta_{k,v})$.   
That is,  
\[ \ell_0(\eta_{k,v}) = \ell_{i_2}(\zeta_{k,v}).\]
\end{Prop}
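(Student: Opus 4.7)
The plan is to show that $\ell_0(\eta_{k,v})$ occupies the slot indexed by $i_2$ by directly combining three results already in the excerpt. First, by Corollary~\ref{c:ellZeroEtaAsEllJzetaSmallAlps}, we already know $\ell_0(\eta_{k,v}) = \ell_{j_0}(\zeta_{k,v})$, where $j_0 = |w|$ in the case $v = 1$ and $j_0 = |\underline{d}(k,u)|$ when $u$ is the parent of $v$. So the proposition reduces to the purely combinatorial claim $j_0 = i_2$.

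Next I would pin down $i_{\underline{S}}$, the index producing the second-largest orbit value, by reading off its digit expansion from Lemma~\ref{l:lastEllValueIsLargest}(ii). That lemma gives $d_\alpha(\ell_{i_{\underline{S}}})$ as a prefix of length $n-1$ (the word $-2,(-1)^{n-3},-2$) followed by $\sigma^{n-1}(\underline{d}(k,u))$, terminated by $d_\alpha(\ell_{\underline{S}})$ — with the analogous statement, tailed by $(-1)^{n-2}$, when $v=1$. On the other hand, for any $j \le \underline{S}$ the digit expansion of $\ell_j(\zeta_{k,v})$ is just $\sigma^j(\underline{d}(k,v))$ concatenated with $d_\alpha(\ell_{\underline{S}})$. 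Matching the two descriptions and comparing lengths forces $\underline{S} - j = |\underline{d}(k,u)|$ (respectively $|w|$ when $v=1$), so $i_{\underline{S}} = \underline{S} - j_0$.

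The final step invokes the reflection symmetry of Lemma~\ref{l:oppositeOrderEllsSmallAlp}: $\tau(i) + \tau(\underline{S} - i) = \underline{S} + 2$. Applied at $i = i_{\underline{S}}$, and using $\tau(i_{\underline{S}}) = \underline{S}$, this yields $\tau(\underline{S} - i_{\underline{S}}) = 2$, so $i_2 = \underline{S} - i_{\underline{S}} = j_0$, which combined with Corollary~\ref{c:ellZeroEtaAsEllJzetaSmallAlps} gives the desired equality $\ell_0(\eta_{k,v}) = \ell_{i_2}(\zeta_{k,v})$. The hypothesis $0 \le i_{\underline{S}} < \underline{S}$ of the symmetry lemma is automatic, since $i_{\underline{S}+1} = \underline{S}$ (Lemma~\ref{l:lastEllValueIsLargest}(i)) and $i_1 = 0$ (because $\ell_0(\zeta_{k,v})$ is the left endpoint of $\mathbb I_{\zeta_{k,v}}$, hence the infimum of the orbit). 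No step is a genuine obstacle: all of the heavy lifting — the identification of $\ell_0(\eta_{k,v})$ in the orbit, the formula for $d_\alpha(\ell_{i_{\underline{S}}})$, and the reflection of labels — is already packaged in earlier results, so the proof reduces to a short length-matching argument plus the symmetry identity.
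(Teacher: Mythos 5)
Your proof is correct, and it is a genuinely different route from the paper's. The paper, after Corollary~\ref{c:ellZeroEtaAsEllJzetaSmallAlps} places $\ell_0(\eta_{k,v})$ at index $j_0$ (namely $|w|$ or $|\underline{d}(k,u)|$) in the $\zeta_{k,v}$-orbit, proceeds by \emph{elimination}: it supposes some $0<i<j_0$ satisfies $\ell_i(\zeta_{k,v})<\ell_0(\eta_{k,v})$, reads off the forced prefix of the digit string of $\ell_i(\zeta_{k,v})$ from \eqref{e:lowerDzetaExpan}, identifies the internal copy of $v'$ that this would entail, and then invokes Lemma~\ref{l:wordOrder} to rule it out. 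Your argument instead works by \emph{reflection}: you read $i_{\underline{S}} = \underline{S} - j_0$ directly out of the explicit digit formula in Lemma~\ref{l:lastEllValueIsLargest}(ii) (the prefix before the $d_\alpha(\ell_{\underline{S}})$ tail has length $|w|$ in the $v=1$ case and $|\underline{d}(k,u)|$ otherwise), then apply the palindrome-driven identity $\tau(i)+\tau(\underline{S}-i)=\underline{S}+2$ of Lemma~\ref{l:oppositeOrderEllsSmallAlp} at $i=i_{\underline{S}}$ to get $\tau(\underline{S}-i_{\underline{S}})=2$, i.e. $i_2=\underline{S}-i_{\underline{S}}=j_0$. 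The boundary checks you note — $i_{\underline{S}}\neq\underline{S}$ via part (i) of the same lemma, and $i_{\underline{S}}\neq 0$ since $i_1=0$ and $\underline{S}>1$ — are exactly what is needed for the symmetry identity to apply as stated. What your route buys is that no fresh digit combinatorics is required: the entire burden is carried by two results already in hand, and the conclusion is exhibited as the order-reversing image of the already-proved identification of the second-largest orbit element, making the role of the palindromicity of $\underline{d}(k,v)$ fully transparent. The paper's elimination argument is closer to self-contained (it re-derives the needed ordering fact from the word order) but is noticeably longer and less structural.
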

 \begin{proof}    Suppose that $|v| >1$.   The previous result implies that the $T_{\zeta_{k,v}}$-orbit  of $\ell_0(\zeta_{k,v})$ not only reaches $\ell_0(\eta_{k,v})$ but also thereafter agrees with  the $T_{\eta_{k,v}}$-orbit  of $\ell_0(\eta_{k,v})$.  Suppose that $i$ is such that   $\ell_i(\zeta_{k,v}) < \ell_0(\eta_{k,v})$.  Since every element of the $T_{\eta_{k,v}}$-orbit  of $\ell_0(\eta_{k,v})$ is greater than or equal to $\ell_0(\eta_{k,v})$, we must have   $i < |\underline{d}(k,u)|$.   Now, the  first $|\underline{d}(k,v)|-1$ simplified digits of $\ell_0(\zeta_{k,v})$ agree with those of $\ell_0(\eta_{k,v})$, and of course  $\ell_0(\zeta_{k,v})\le \ell_i(\zeta_{k,v})$.  Thus, the simplified digits of  $\ell_i(\zeta_{k,v})$  begin  $w^{k},  
\mathcal C^{c_1-1} \mathcal D^{d_1}\cdots \mathcal D^{d_{s-1}}\mathcal C^{c_s}, (-1)^{n-3}
 \cdots$. 
 Recall that  $\mathcal D = \mathcal C w = (-1)^{n-3}, -2, w^k$.  
 
 If $i >0$, then in light of \eqref{e:lowerDzetaExpan},  the  digit sequence of $\ell_i(\zeta_{k,v})$  begins with the final $w^k$ obtained by factoring some $\mathcal D^{d_j}$ in the prefix $\underline{d}(k,u)$ of $\underline{d}(k,v)$ as $\mathcal D^{-1+d_j},  (-1)^{n-3}, -2, w^k$.  But then 
$\ell_{i-(n-2)}(\zeta_{k,v})$ corresponds to factoring instead  as 
$\mathcal D^{d_j} =  \mathcal D^{-1+d_j}, \mathbf {\mathcal D}$.   Indeed, in light of the digits of $\ell_i(\zeta_{k,v})$, $\ell_{i-(n-2)}(\zeta_{k,v})$ is associated with
a copy of $v'$, refer to \eqref{e:vPrimeDef}.  But, this copy of $v'$ begins within the prefix $u$ of the word $v = u (u')^q u''$.    This in turn implies that there is  a copy of $u'$ contained in $v$, beginning within the prefix $u$.     However,  Lemma~\ref{l:wordOrder} shows that this is impossible.  Therefore, the  case of $i>0$ is void.

   Finally, the case of $|v|=1$ is argued similarly, but is more straightforward.   
 \end{proof}
 
 \begin{Cor}\label{c:secondHighest}     We have 
 \[y_{\underline{S}} = - \ell_0(\eta_{k,v}).\]
\end{Cor}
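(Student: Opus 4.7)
The plan is to observe that this corollary is essentially an immediate combination of two results already established earlier in this section, and to present it as such.

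More specifically, I would first invoke Corollary~\ref{c:obviousYsubUnderlineS}, which identifies the second-largest of the top heights. That corollary states
\[
y_{\underline{S}} \;=\; -\,\ell_{i_2}(\zeta_{k,v}),
\]
since Lemma~\ref{l:topHeightsIncrease} guarantees monotonicity of the $y_a$'s and the heights are by Definition~\ref{d:topYvalues} exactly the negatives of the orbit points $\ell_j(\zeta_{k,v})$; so the second largest height corresponds to the second smallest orbit point of $\ell_0(\zeta_{k,v})$.

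Next I would invoke Proposition~\ref{p:ellZeroEtaIsEllSubIsubTwoZetaSmallAlps}, which identifies that second smallest orbit point as $\ell_0(\eta_{k,v})$, i.e.\
\[
\ell_{i_2}(\zeta_{k,v}) \;=\; \ell_0(\eta_{k,v}).
\]
Combining these two equalities gives $y_{\underline{S}} = -\ell_0(\eta_{k,v})$, which is the claim.

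There is no real obstacle here: all the dynamical content lies in the two earlier results, the genuinely nontrivial one being Proposition~\ref{p:ellZeroEtaIsEllSubIsubTwoZetaSmallAlps} (which used the palindromic structure of $v$, the form of the periodic expansion \eqref{e:lowerDetaExpansion}, and the word-order lemma~\ref{l:wordOrder} to rule out smaller orbit entries lying strictly between $\ell_0(\zeta_{k,v})$ and $\ell_0(\eta_{k,v})$). With those in hand, the corollary is a one-line consequence and requires no new computation.
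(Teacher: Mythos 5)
Your proof is correct and matches the paper's own argument: the paper's proof likewise observes that $y_{\underline{S}}$ is the second largest height, hence equal to $-\ell_{i_2}(\zeta_{k,v})$ (which is exactly Corollary~\ref{c:obviousYsubUnderlineS}), and then applies Proposition~\ref{p:ellZeroEtaIsEllSubIsubTwoZetaSmallAlps} to conclude. You simply spell out the citations a bit more explicitly; there is no substantive difference.
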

\begin{proof}  By definition, $y_{\underline{S}}$ is the second largest of the values $-\ell_i(\zeta_{k,v})$.   Thus,  $y_{\underline{S}} = \ell_{i_2}(\zeta_{k,v})$ and by the proposition this equals $- \ell_0(\eta_{k,v})$.
\end{proof}

\subsection{Differing by $t$: two paired top/bottom heights}

 \begin{Lem}\label{l:easyHeightPairDifference}    
 We have $y_{\underline{S}+1} = t + y_{-\overline{S}}\,$.  
 \end{Lem}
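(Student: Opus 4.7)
The plan is to unwind the two heights directly from their definitions and then invoke the orbit-matching result of Proposition~\ref{p:rOrbitsOfSynIntEndptsMeet}. Concretely, I expect the equality to reduce to the trivial identity $r_0(\zeta_{k,v}) - \ell_0(\zeta_{k,v}) = t$, which is immediate from $\ell_0(\alpha)=(\alpha-1)t$ and $r_0(\alpha)=\alpha t$.

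First I would identify $y_{\underline{S}+1}$. By Lemma~\ref{l:lastEllValueIsLargest} we have $\tau(\underline{S}) = \underline{S}+1$, so Definition~\ref{d:topYvalues}(iii) with $i=\underline{S}$ yields
\[
y_{\underline{S}+1} \;=\; y_{\tau(\underline{S})} \;=\; -\,\ell_0(\zeta_{k,v}) \;=\; (1-\zeta_{k,v})\,t.
\]
Next I would identify $y_{-\overline{S}}$. By Lemma~\ref{l:2ndLargestR} we have $\beta(\iota) = -\overline{S}$, so Definition~\ref{d:bottomYvalues}(iii) with $j=\iota$ gives
\[
y_{-\overline{S}} \;=\; y_{\beta(\iota)} \;=\; -\,r_{\overline{S}-\iota}(\eta_{k,v}).
\]

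The key step is then to show $r_{\overline{S}-\iota}(\eta_{k,v}) = r_0(\zeta_{k,v})$. For $|v|>1$ this is precisely the case $i=0$ in the final sentence of the proof of Proposition~\ref{p:rOrbitsOfSynIntEndptsMeet} (the identity $r_{\overline{S}-\iota+i}(\eta_{k,v}) = r_i(\zeta_{k,v})$ for $0\le i\le \iota$). For $|v|=1$ with $v = c_1$ one has $\iota = c_1 - 1 = \overline{S}-1$, so $\overline{S}-\iota = 1$, and Proposition~\ref{p:rOrbitsOfSynIntEndptsMeet}(i) gives $r_1(\eta_{k,v}) = r_0(\zeta_{k,v})$ directly.

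Combining these three observations,
\[
y_{\underline{S}+1} - y_{-\overline{S}} \;=\; -\ell_0(\zeta_{k,v}) + r_0(\zeta_{k,v}) \;=\; \zeta_{k,v}\,t - (\zeta_{k,v}-1)\,t \;=\; t,
\]
which is the desired identity. I do not anticipate any obstacle; the only mild subtlety is keeping the $v=1$ and $|v|>1$ cases straight when invoking Proposition~\ref{p:rOrbitsOfSynIntEndptsMeet}, but both are handled by that proposition's statement.
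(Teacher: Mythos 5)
Your proposal is correct and follows essentially the same route as the paper's own proof: identify $y_{\underline{S}+1}=-\ell_0(\zeta_{k,v})$ via Lemma~\ref{l:lastEllValueIsLargest} and Definition~\ref{d:topYvalues}, identify $y_{-\overline{S}}=y_{\beta(\iota)}=-r_{\overline{S}-\iota}(\eta_{k,v})$ via Lemma~\ref{l:2ndLargestR} and Definition~\ref{d:bottomYvalues}, convert the latter to $-r_0(\zeta_{k,v})$ by Proposition~\ref{p:rOrbitsOfSynIntEndptsMeet}, and conclude from $r_0(\zeta_{k,v})-\ell_0(\zeta_{k,v})=t$. Your separate treatment of the $|v|=1$ case and your pointer to the orbit-matching identity in the proposition's proof merely make explicit what the paper's one-line citation of that proposition leaves implicit.
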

\begin{proof}   By Lemma~\ref{l:2ndLargestR},  $-\overline{S}= \beta(\iota)$.   
By definition,  $y_{\beta(\iota)} = -r_{\overline{S}-\iota}(\eta_{k,v})$.   Proposition~\ref{p:rOrbitsOfSynIntEndptsMeet} now gives 
$y_{\beta(\iota)} = -r_0(\zeta_{k,v})$.

From Lemma~\ref{l:lastEllValueIsLargest},   $\underline{S}+1 = \tau(\underline{S})$.
By definition, $y_{\tau(\underline{S})} = - \ell_0(\zeta_{k,v})$, and thus the result holds.
\end{proof} 

 \begin{Lem}\label{l:harderHeightPairDifference}    
 We have $y_{\underline{S}} = t + y_{-\overline{S}-1}\,$.  
 \end{Lem}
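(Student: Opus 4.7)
My plan is to show that both heights can be computed explicitly in terms of the endpoints $\ell_0(\eta_{k,v})$ and $r_0(\eta_{k,v})$, and then observe that the $t$-gap between these endpoints, built into the very definition of $\mathbb I_{\eta_{k,v}}$, gives the result directly. This parallels the strategy used just above in Lemma~\ref{l:easyHeightPairDifference}, but using the orbits of $\eta_{k,v}$ rather than those of $\zeta_{k,v}$.

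First I would identify $y_{\underline{S}}$. Corollary~\ref{c:secondHighest}, itself a consequence of Proposition~\ref{p:ellZeroEtaIsEllSubIsubTwoZetaSmallAlps} together with Corollary~\ref{c:obviousYsubUnderlineS}, gives
\[ y_{\underline{S}} = -\ell_0(\eta_{k,v}). \]
Next I would identify $y_{-\overline{S}-1}$. Lemma~\ref{l:lastRValueIsLeast} asserts that $\beta(\overline{S}) = -\overline{S}-1$, that is, that $r_{\overline{S}}$ is the minimum element of the $T_{\eta_{k,v}}$-orbit segment $\{r_0,\dots,r_{\overline{S}}\}$. Combined with part (iii) of Definition~\ref{d:bottomYvalues}, this yields
\[ y_{-\overline{S}-1} = y_{\beta(\overline{S})} = -r_{\overline{S}-\overline{S}}(\eta_{k,v}) = -r_0(\eta_{k,v}). \]

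Subtracting these two identifications gives $y_{\underline{S}} - y_{-\overline{S}-1} = r_0(\eta_{k,v}) - \ell_0(\eta_{k,v})$. Since by the definitions of $\ell_0(\alpha) = (\alpha-1)t$ and $r_0(\alpha) = \alpha t$ in Subsection~\ref{gpsMapsEtc} we have $r_0(\alpha) - \ell_0(\alpha) = t$ for every $\alpha$, the claim $y_{\underline{S}} = t + y_{-\overline{S}-1}$ follows immediately.

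There is no real obstacle here beyond the bookkeeping of bringing the two previously established identifications together; the nontrivial content lives in Corollary~\ref{c:secondHighest} and Lemma~\ref{l:lastRValueIsLeast}, both of which are already in hand. I would write the proof in the same compact style as Lemma~\ref{l:easyHeightPairDifference}, namely a single short paragraph citing the two height identifications and closing with the observation $r_0(\eta_{k,v}) - \ell_0(\eta_{k,v}) = t$.
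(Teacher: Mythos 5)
Your proof is correct and follows essentially the same route as the paper: identify $y_{\underline{S}} = -\ell_0(\eta_{k,v})$ via Corollary~\ref{c:secondHighest}, identify $y_{-\overline{S}-1} = y_{\beta(\overline{S})} = -r_0(\eta_{k,v})$ via Lemma~\ref{l:lastRValueIsLeast} and Definition~\ref{d:bottomYvalues}(iii), and conclude from $r_0(\alpha) - \ell_0(\alpha) = t$. The paper's proof is the same argument, merely omitting the explicit arithmetic at the end.
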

\begin{proof}  By Corollary~\ref{c:secondHighest},  $y_{\underline{S}} = - \ell_0(\eta_{k,v})$.  On the other hand,  from Lemma~\ref{l:lastRValueIsLeast},   
$ y_{-\overline{S}-1} =  y_{\beta(\overline{S})}$, which by definition has the value $-r_0(\eta_{k,v})$.  The result thus holds. 
\end{proof} 

\smallskip

\section{Bijectivity of $\mathcal T_{\alpha}$ on  $\Omega_{\alpha}$ for synchronizing small $\alpha$}\label{s:MainForSmallAlps} 
In this section, we prove Theorem~\ref{t:Omega}  for $\alpha \in (0, \gamma_n)$.   This bijectivity of $\mathcal T_{\alpha}$ on $\Omega_{\alpha}$  
follows from: ($i$) the upper boundary of $\Omega^+$ surjects onto itself; ($ii$)  the lower boundary of $\Omega^-$ surjects onto itself; ($iii$)  the images of the ``blocks", defined directly below, laminate.  

\subsection{Partitioning $\Omega_{\alpha}$ by blocks $\mathcal B_i$}\label{ss:theBlocks}  Recall that the upper and lower  parts of $\Omega_{\alpha}$ are given in \eqref{e:omPlusSmallAlp}  and \eqref{e:omMinusSmallAlp}. 
  
For each $i \in \{-1, -2, \cdots \} \cup \{k, k+1, \dots\}$, let the {\em block} $\mathcal B_i$ be the closure of the set $\{(x,y)\in \Omega\,|\, x \in \Delta_{\alpha}(i, 1)\,\}$.   Thus the  blocks partition $\Omega$ up to $\mu$-measure zero, confer Figures~\ref{f:omegaSmallAlp} and \ref{f:topBottomVerticesSmallAlps}.  

Since $\mathcal T$ is invertible, it is clear that for the values of $\alpha$ under consideration, Theorem ~\ref{t:Omega}  follows from the following two results. 

 \begin{Prop}\label{p:leftGivesTop}     The union of the $\mathcal T(\mathcal B_i)$ taken over all negative $i$ equals $\Omega^{+}$ up to $\mu$-measure zero.  
 \end{Prop}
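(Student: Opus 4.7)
The plan is to compute each image $\mathcal{T}(\mathcal B_i)$ for $i<0$ explicitly and show that they fit together to form $\Omega^{+}$ up to $\mu$-measure zero. First, I would describe each negative block $\mathcal B_i$ concretely: the steps of the top staircase of $\Omega^{+}$ occur only at the orbit points $\ell_j(\alpha)$, whose simplified digits all lie in $\{-1,-2\}$ by \eqref{e:lowerDzetaExpan}, and similarly for $\Omega^{-}$ with the $r_j(\alpha)$. Hence for $i\in\{-1,-2\}$ the block $\mathcal B_i$ captures staircase geometry, while for $i\le -3$ the cylinder $\Delta_\alpha(i,1)$ lies entirely within the leftmost slab $K_1$ of $\Omega^{+}$ and the leftmost slab of $\Omega^{-}$, so $\mathcal B_i$ is a single rectangle.

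Second, I would compute $\mathcal{T}(\mathcal B_i)$ using the formula $(x,y)\mapsto(A^{i}C\cdot x,\, RA^{i}CR^{-1}\cdot y)$. Since $A^{i}C$ is a bijection of $\Delta_\alpha(i,1)$ onto (a subinterval of) $\mathbb I_\alpha$, each image is a horizontal strip. Lemma~\ref{l:conjByRofAtoPc} converts the $y$-coordinate analysis into the analysis of an orbit under $A^{i}C$ acting on the negatives of the heights, and Lemma~\ref{l:topValuesFirstRelations} then shows that the values $y_{\tau(j)}$ cycle under the operators $RA^{p_j}CR^{-1}$ in the reverse pattern to the way the $\ell_j(\zeta_{k,v})$ cycle under the corresponding $A^{p_j}C$. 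This pins down the $y$-range of each image strip and identifies its top and bottom with the heights $y_a$ appearing in Definition~\ref{d:topYvalues}.

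Third, I would verify the lamination between consecutive strips. Lemma~\ref{l:lamEqsSameCexpon} with $l=1$ supplies the criterion: $\mathcal{T}(\mathcal B_{i-1})$ sits directly above $\mathcal{T}(\mathcal B_{i})$ with a shared horizontal edge precisely when the top of the source rectangle in $\mathcal B_{i-1}$ exceeds the bottom of the source rectangle in $\mathcal B_{i}$ by exactly $t$. The two extremal paired heights of $\Omega$ differ by $t$ by Lemmas~\ref{l:easyHeightPairDifference} and~\ref{l:harderHeightPairDifference}, and the remaining lamination gaps propagate by iterating the commutation $A\leftrightarrow RAR^{-1}=C^{-1}AC$ from Lemma~\ref{l:aAndCrCommute}, which is how the $\mathcal T$ action transfers one $t$-gap to the next.

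The main obstacle will be the combinatorial bookkeeping: matching each horizontal slab $K_a\times[0,y_a]$ of $\Omega^{+}$ (indexed by $a\in\{1,\dots,\underline S+1\}$) to the image of a specific block $\mathcal B_i$, and handling the tail of the tower as $i\to-\infty$. The matching is governed by the palindromic structure of $\underline d(k,v)$ and the orderings of the $\ell_i$ recorded in Lemmas~\ref{l:oppositeOrderEllsSmallAlp} and~\ref{l:topHeightsIncrease}; the tail requires showing that the cylinders $\Delta_\alpha(i,1)$ shrink to the point $0^{-}$ and that the corresponding image strips accumulate at the top height $y_{\underline S+1}$, which should follow from a direct limit computation on $RA^{i}CR^{-1}$ as $i\to-\infty$.
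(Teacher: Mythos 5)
Your overall strategy matches the paper's: partition into blocks, laminate the images using Lemma~\ref{l:lamEqsSameCexpon} together with the $t$-difference relations of Lemmas~\ref{l:easyHeightPairDifference} and~\ref{l:harderHeightPairDifference}, then control the tail and the top boundary. But you have the stacking orientation reversed, consistently, in three places, and this would prevent you from closing the argument. First, the cylinder $\Delta_\alpha(i,1)$ for $i\le -3$ does \emph{not} lie in the leftmost slab $K_1$: as $i\to-\infty$ these cylinders approach $0$ from the left, whereas $K_1=[\ell_0,\ell_{i_2})\subset\Delta_\alpha(-1,1)$ is at the far left; by Lemma~\ref{l:topsOfBlocks} the top heights of $\mathcal B_i$ for $i\le-3$ are among the two \emph{largest} heights $y_{\underline S},y_{\underline S+1}$, not $y_1$. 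Second, Lemma~\ref{l:laminationInEasyCase} puts $\mathcal T(\mathcal B_i)$ \emph{above} $\mathcal T(\mathcal B_{i-1})$, not below. Third, and decisively, the tail accumulates at the \emph{bottom}: $RA^iCR^{-1}\cdot y = 1/(-y-1-it)\to 0^+$ as $i\to-\infty$ for $y>0$ fixed, so $\mathcal T(\mathcal B_i)\to\mathbb I_\alpha\times\{0\}$. The top of $\Omega^{+}$ is realized by $\mathcal T(\mathcal B_{-1})$. If you carry out the limit computation expecting the strips to climb to $y_{\underline S+1}$, you will hit a contradiction; with the correct orientation the argument closes exactly as in the paper.

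You also underdevelop the step where the top boundary of $\Omega^{+}$ is shown to lie in the image. ``Matching each slab $K_a\times[0,y_a]$ to the image of a specific block'' is not the right picture, since the $\mathcal T(\mathcal B_i)$ are horizontal strips over (most of) $\mathbb I_\alpha$, not vertical slabs. What must be shown is a statement about the dynamics on the top edges: the pieces $K_a\times\{y_a\}$, split where they cross between $\Delta_\alpha(-1,1)$ and $\Delta_\alpha(-2,1)$ (and where $K_{\underline S}$ leaves $\Delta_\alpha(-2,1)$), are permuted by $\mathcal T$ onto the full staircase. This is the content of Lemma~\ref{l:coveringUpperBoundary}, which rests on Lemmas~\ref{l:intervalMeetsTwoCylinders} and~\ref{l:rightmostIntervalMeetsTwoCylinders}; your vague appeal to the palindromic structure and Lemmas~\ref{l:oppositeOrderEllsSmallAlp},~\ref{l:topHeightsIncrease} does not substitute for the explicit identification of which edge maps where.
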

This holds because:  the  various blocks appropriately laminate (in Subsection~\ref{ss:laminate});   their image includes the upper boundary of  $\Omega^{+}$(in Subsection~\ref{ss:upperBoundaryInImage});  and, it is directly verified that the limit as $i \to - \infty$ of $\mathcal T(\mathcal B_i)$ is $\mathbb I_{\alpha} \times \{0\}$.

 \begin{Prop}\label{p:rightGivesBottom}     The union of the $\mathcal T(\mathcal B_i)$ taken over all positive $i\ge k$ equals $\Omega^{-}$, up to  $\mu$-measure zero. 
 \end{Prop}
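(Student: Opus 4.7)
The plan is to follow the same three-step strategy sketched for Proposition~\ref{p:leftGivesTop}, now using the $r_0$-orbit data that governs $\Omega^{-}$. First, I would verify that the images $\mathcal{T}(\mathcal B_i)$ for $i\ge k$ laminate: since throughout the small-$\alpha$ range every simplified digit has $l=1$, the criterion of Lemma~\ref{l:lamEqsSameCexpon} reduces to $b=c+t$, i.e.\ adjacent cylinders $\Delta_\alpha(i,1)$ and $\Delta_\alpha(i+1,1)$ produce abutting images precisely when their paired heights differ by $t$. The nontrivial pairings are supplied by Lemmas~\ref{l:easyHeightPairDifference} and~\ref{l:harderHeightPairDifference}, which give $y_{\underline{S}+1}-y_{-\overline{S}}=t$ and $y_{\underline{S}}-y_{-\overline{S}-1}=t$; these are exactly the heights that meet across the boundary between the two ``thick'' blocks $\mathcal B_k$ and $\mathcal B_{k+1}$. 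For the thin blocks $\mathcal B_i$ with $i\ge k+2$, each is (up to measure zero) a single rectangle $\Delta_\alpha(i,1)\times[y_{-1},0]$, so lamination to the neighboring block is automatic.

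Second, I would show that the union of these images contains the lower step-function boundary of $\Omega^{-}$. This reduces to tracing the diagram of Figure~\ref{f:bottomOrbit}: by Lemma~\ref{l:bottomValuesFirstRelations}(i)--(iii), each transition $y_{\beta(j)}\mapsto y_{\beta(j+1)}=RA^{q_j}CR^{-1}\cdot y_{\beta(j)}$ is precisely the $y$-coordinate action of $\mathcal{T}_\alpha$ on a rectangle lying in the block $\mathcal B_{q_j}$. Since every $q_j\in\{k,k+1\}$, the horizontal pieces of $\mathcal T(\mathcal B_k)\cup\mathcal T(\mathcal B_{k+1})$ successively realize every $y_{\beta(j)}$, and the closing identity $y_{\beta(\overline{S})}=RR_{k,v}R^{-1}\cdot y_{\beta(0)}$ of Lemma~\ref{l:bottomValuesFirstRelations}(ii) guarantees that the orbit closes up, so the bottom boundary is traced exactly once.

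Third, to fill the vertical strips in $\Omega^{-}$, I would observe that for $i\ge k+2$ the $y$-action $RA^iCR^{-1}$ sends $y_{-1}$ to a value approaching $0$ as $i\to\infty$, and the $x$-action of $A^iC$ sends $\Delta_\alpha(i,1)$ to a subinterval of $\mathbb I_\alpha$. A direct M\"obius calculation shows that the family of thin-block images, together with the two thick-block images, abuts the horizontal line $y=0$ from below and meets the traced lower boundary from above. Overlap (beyond measure zero) is ruled out by $\mathcal T_\alpha$-invariance of $\mu$ combined with the finiteness of $\mu(\Omega^{-})$ from Lemma~\ref{l:finiteMassLowerSmallAlps}, so the union of images equals $\Omega^{-}$ up to measure zero.

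The main obstacle is the bookkeeping in the thick-block region: one must check that the internal subdivision of the $y$-fiber above each $x\in\Delta_\alpha(k,1)\cup\Delta_\alpha(k+1,1)$ into the levels $y_{\beta(j)}$ pairs correctly under the $A^{q_j}C$-dynamics with the correct horizontal sub-segments of the bottom boundary, and that no interior vertex of the step-function is either skipped or covered twice. This is where the order-theoretic lemmas~\ref{l:lastRValueIsLeast},~\ref{l:2ndLargestR},~\ref{l:rOneIsBigEnough} and the monotonicity statement of Lemma~\ref{l:bottomHeightsIncrease}---together with the orbit-meeting identity of Proposition~\ref{p:rOrbitsOfSynIntEndptsMeet}---do the real work, by forcing the indices $\beta(j)$ and the heights $y_{\beta(j)}$ to be arranged so that each $A^{q_j}C$-image slots into the unique place left by the preceding ones.
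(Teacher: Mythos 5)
Your three-step outline (lamination, coverage of the lower boundary, limit to $y=0$) does match the paper's proof sketch, which simply points to Subsections~\ref{ss:laminate} and~\ref{ss:lowerBoundaryInImage} together with the observation that $\mathcal T(\mathcal B_i)\to \mathbb I_\alpha\times\{0\}$ as $i\to\infty$.

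However, your first step rests on a misdescription of the blocks.  By the definition in Subsection~\ref{ss:theBlocks}, $\mathcal B_i$ is the full vertical strip $\{(x,y)\in\Omega\mid x\in\Delta_\alpha(i,1)\}$; for $i\ge k+2$ it extends from the \emph{deepest} levels $y_{-\overline{S}-1}$ or $y_{-\overline{S}}$ at the bottom up through $\Omega^+$ to $y_{\underline{S}}$ or $y_{\underline{S}+1}$ at the top (see Lemmas~\ref{l:bottomsOfBlocks} and~\ref{l:topsOfBlocks}), not the shallow strip $\Delta_\alpha(i,1)\times[y_{-1},0]$.  Consequently lamination for these $i$ is not ``automatic'': Lemma~\ref{l:laminationInEasyCase} must still invoke the $t$-shift relations of Lemmas~\ref{l:easyHeightPairDifference} and~\ref{l:harderHeightPairDifference} to match the \emph{bottom} of $\mathcal B_i$ (in $\Omega^-$) against the \emph{top} of $\mathcal B_{i-1}$ (in $\Omega^+$), so that $\mathcal T_\alpha(\mathcal B_i)$ sits just above $\mathcal T_\alpha(\mathcal B_{i-1})$.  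Your claim that these height identities ``meet across the boundary between $\mathcal B_k$ and $\mathcal B_{k+1}$'' inverts the roles: the transition at $i\in\{k+1,-1\}$ is the \emph{special} case treated separately by Lemmas~\ref{l:laminationFromRightBlocks} and~\ref{l:laminationFromLeftBlocks} (using Lemmas~\ref{l:rOneIsBigEnough} and~\ref{l:ellOneIsBigEnough}), while the $t$-shift lemmas handle all the remaining adjacencies.  Without correcting this picture — that blocks are vertical strips of the full $\Omega$ whose images are horizontal strips laminating one above the other — the lamination step of your argument would not go through.  Your second and third steps are broadly in the right direction, tracking Lemmas~\ref{l:lowerIntervalMeetsTwoCylinders}--\ref{l:coveringLowerBoundary} and the limiting behavior respectively.
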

  This follows similarly from the results of Subsections~\ref{ss:laminate} and ~\ref{ss:lowerBoundaryInImage}, along with the easily verified fact that  the limit as $i \to   \infty$ of $\mathcal T(\mathcal B_i)$ is $\mathbb I_{\alpha} \times \{0\}$.

\subsection{Blocks laminate one above the other}\label{ss:laminate} 
As an initial step to proving these two results, we have the following results.

\begin{Lem}\label{l:bottomsOfBlocks}    Let $s$ denote the first simplified digit of $r_{\overline{S}}$.  Then 
for all $i \notin \{k+1, k\}$,   
 the lower boundary of the block $\mathcal B_{i}$ has height given by 
\[ \begin{cases}    
    y = y_{-\overline{S}-1}& \text{if} \;\;s \succ i\,; \\
     (y = y_{-\overline{S}-1}) \cup (y = y_{-\overline{S}})& \text{if} \;\;s = i\,; \\
     y = y_{-\overline{S}}& \text{if} \;\;s \prec i\,.
     \end{cases}
\]     
\end{Lem}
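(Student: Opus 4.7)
The plan is to read off the lower boundary of $\mathcal B_i$ from the structure of $\Omega^{-}$: since $\mathcal B_i$ fibers above the cylinder $\Delta_{\alpha}(i,1)$, its bottom at a point $x$ is the height $y_b$ of the unique horizontal slab $L_b\times [y_b,0]$ with $x\in L_b$. So the question reduces to identifying which of the intervals $L_b$ meet $\Delta_{\alpha}(i,1)$.

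First I would pin down the two leftmost pieces of the $L_b$-partition. By Lemma~\ref{l:lastRValueIsLeast}, $r_{\overline{S}}$ is the smallest of the $r_j$ with $0\le j\le\overline{S}$, which by the very definition of $L_{-\overline{S}-1}$ gives $L_{-\overline{S}-1}=[\ell_0,r_{\overline{S}}]$. By Lemma~\ref{l:2ndLargestR}, $r_{\iota}$ is the second smallest, so $L_{-\overline{S}}=[r_{\overline{S}},r_{\iota}]$. Thus the two heights appearing in the statement correspond exactly to these two leftmost slabs.

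The key intermediate step is the claim that for every $i\notin\{k,k+1\}$, the cylinder $\Delta_{\alpha}(i,1)$ lies entirely inside $[\ell_0,r_{\iota}]=L_{-\overline{S}-1}\cup L_{-\overline{S}}$. The point is that the first simplified digit of $r_{\iota}$ is the $(\iota+1)$-st letter of the digit expansion of $r_0(\alpha)$; since $\iota<\overline{S}$ and the first $\overline{S}$ letters of that expansion coincide with $\overline{d}(k,v)\in\{k,k+1\}^{\overline{S}}$, this digit lies in $\{k,k+1\}$. Consequently $r_{\iota}\in\Delta_{\alpha}(k,1)\cup\Delta_{\alpha}(k+1,1)$, and any cylinder $\Delta_{\alpha}(i,1)$ with $i\notin\{k,k+1\}$ (either $i\le -1$ or $i\ge k+2$) lies strictly to the left of both $\Delta_{\alpha}(k,1)$ and $\Delta_{\alpha}(k+1,1)$ in $\mathbb I_{\alpha}$, hence to the left of $r_{\iota}$.

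I would then conclude by distinguishing cases according to the position of $\Delta_{\alpha}(i,1)$ relative to $r_{\overline{S}}\in\Delta_{\alpha}(s,1)$, using the order $\prec$ on simplified digits (which agrees with real-number order on the corresponding cylinders). If $s\succ i$, then $\Delta_{\alpha}(i,1)$ sits strictly to the left of $\Delta_{\alpha}(s,1)$ and hence to the left of $r_{\overline{S}}$, so $\Delta_{\alpha}(i,1)\subset L_{-\overline{S}-1}$ and the lower boundary is $y=y_{-\overline{S}-1}$. If $s\prec i$, then $\Delta_{\alpha}(i,1)$ sits to the right of $r_{\overline{S}}$, and combined with the previous step it lies in $[r_{\overline{S}},r_{\iota}]=L_{-\overline{S}}$, giving $y=y_{-\overline{S}}$. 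If $s=i$, then $r_{\overline{S}}$ lies inside $\Delta_{\alpha}(i,1)=\Delta_{\alpha}(s,1)$ and splits it into a left sub-piece in $L_{-\overline{S}-1}$ and a right sub-piece in $L_{-\overline{S}}$, so the lower boundary consists of both horizontal segments $y=y_{-\overline{S}-1}$ and $y=y_{-\overline{S}}$. The only delicate point is the intermediate claim about $r_{\iota}$; once that is in hand, everything else is bookkeeping on the disposition of cylinders relative to $r_{\overline{S}}$ and $r_{\iota}$.
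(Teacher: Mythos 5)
Your argument is correct and follows essentially the same route as the paper's proof. Both hinge on the definition of $\Omega^{-}$ as a union of slabs $L_b\times[y_b,0]$, the identification $L_{-\overline{S}-1}=[\ell_0,r_{\overline{S}}]$, and the key observation that $r_j\in\Delta_{\alpha}(k,1)\cup\Delta_{\alpha}(k+1,1)$ for all $j<\overline{S}$, which forces every cylinder $\Delta_{\alpha}(i,1)$ with $i\notin\{k,k+1\}$ to lie in $L_{-\overline{S}-1}\cup L_{-\overline{S}}$. The paper states this last step more tersely and says ``the rest of the statement follows''; you make the same deduction explicit by invoking Lemma~\ref{l:2ndLargestR} to identify $L_{-\overline{S}}=[r_{\overline{S}},r_{\iota}]$ and then locating the digit of $r_{\iota}$ within $\overline{d}(k,v)$ — a slightly more pointed but equivalent way of reaching the same conclusion.
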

\begin{proof}  Recall that $L_{-\overline{S}-1} = [\ell_0, r_{\overline{S}}]$.   Hence, 
the leftmost piece in the partition defining $\Omega^{-}$  is $L_{-\overline{S}-1} \times [y_{-\overline{S}-1},0]$.   Therefore, 
we certainly find that whenever $s \succ i$, the lower boundary of the block $\mathcal B_{i}$ is given by   $ y = y_{-\overline{S}-1}$.
Since the next partition piece to the right is $J_{-\overline{S}} \times [y_{-\overline{S}},0]$,   and for all $j<\overline{S}$ we have 
$r_j \in \Delta_{\alpha}(k+1,1) \cup \Delta_{\alpha}(k,1)$,  the rest of the statement follows. 
\end{proof}

 \begin{Lem}\label{l:topsOfBlocks}    Let $u$ denote the first simplified digit of $\ell_{\underline{S}}$.  Then 
for all $i \notin \{-1, -2\}$,   
 the top boundary of the block $\mathcal B_{i}$ has height given by 
\[ \begin{cases}    
    y = y_{\underline{S}+1}& \text{if} \;\;u \succ i\,; \\
    (y = y_{\underline{S}+1}) \cup (y = y_{\underline{S}})& \text{if} \;\;u = i\,; \\
     y = y_{\underline{S}}& \text{if} \;\;u \prec i\,.
     \end{cases}
\]      
\end{Lem}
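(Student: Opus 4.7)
The plan is to mirror the proof of Lemma~\ref{l:bottomsOfBlocks}, working from the right end of the partition of $\Omega^+$ instead of the left end of $\Omega^-$. I first note that Lemma~\ref{l:lastEllValueIsLargest} identifies $\ell_{\underline{S}}$ as the largest of the orbit points $\ell_0,\dots,\ell_{\underline{S}}$, so the rightmost piece of the partition defining $\Omega^+$ is $K_{\underline{S}+1} = [\ell_{\underline{S}}, r_0)$, paired with the maximum height $y_{\underline{S}+1}$ (using Lemma~\ref{l:topHeightsIncrease}). The immediate neighbor on the left, $K_{\underline{S}} = [\ell_{i_{\underline{S}}}, \ell_{\underline{S}})$, carries the second-largest height $y_{\underline{S}}$, and by Lemma~\ref{l:lastEllValueIsLargest}(ii) its left endpoint $\ell_{i_{\underline{S}}}$ lies inside $\Delta_\alpha(-2,1)$.

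The crucial observation, and the reason the digits $-1$ and $-2$ must be excluded from the statement, is that the first simplified digits of $\ell_0,\dots,\ell_{\underline{S}-1}$ all lie in $\{-1,-2\}$, since $\underline{d}(k,v)$ is by construction a word in these two letters; see Definition~\ref{d:indivStepsTop}. Consequently, whenever $i \notin \{-1,-2\}$, the cylinder $\Delta_\alpha(i,1)$ contains no point of the orbit $\ell_0,\dots,\ell_{\underline{S}-1}$, so the only orbit point whose interior it can contain is $\ell_{\underline{S}}$, and this occurs precisely in the case $i = u$.

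The proof then concludes by a trichotomy on the position of $\Delta_\alpha(i,1)$ with respect to $\ell_{\underline{S}}$, translated via the order $\prec$ on simplified digits into the three comparisons of $u$ with $i$ appearing in the statement. If $\Delta_\alpha(i,1)$ lies entirely within $K_{\underline{S}+1}$, the top boundary is the single height $y_{\underline{S}+1}$; if $i = u$, then $\ell_{\underline{S}}$ splits $\Delta_\alpha(i,1)$ into a portion in $K_{\underline{S}}$ and a portion in $K_{\underline{S}+1}$, producing both heights; and otherwise $\Delta_\alpha(i,1)$ sits strictly between $\Delta_\alpha(-2,1)$ and $\Delta_\alpha(u,1)$, hence inside $K_{\underline{S}}$, contributing only $y_{\underline{S}}$. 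I do not expect a substantive obstacle; the one piece of bookkeeping requiring care, exactly analogous to the closing sentence in the proof of Lemma~\ref{l:bottomsOfBlocks}, is verifying this last case, namely that $\Delta_\alpha(i,1)$ cannot escape $K_{\underline{S}}$ past its left endpoint $\ell_{i_{\underline{S}}}$, which is precisely where the exclusion $i \notin \{-1,-2\}$ is essential.
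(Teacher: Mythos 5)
Your geometric reasoning is correct and takes the same route as the paper, whose proof is just the one-line observation that $K_{\underline{S}+1}=[\ell_{\underline{S}},r_0]$ is the rightmost piece and then ``one argues as for the bottoms of the blocks.'' In particular your two key observations --- that every $\ell_j$ with $j<\underline{S}$ lies in $\Delta_\alpha(-1,1)\cup\Delta_\alpha(-2,1)$ with the second-largest one $\ell_{i_{\underline{S}}}$ already in $\Delta_\alpha(-2,1)$, so that for $i\notin\{-1,-2\}$ the cylinder $\Delta_\alpha(i,1)$ can meet the orbit only at $\ell_{\underline{S}}$ --- are exactly what makes the trichotomy work.

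The one thing you gloss over is the step where you say the trichotomy translates ``into the three comparisons of $u$ with $i$ appearing in the statement'': you never check which geometric case corresponds to which inequality. If you do, the translation comes out the \emph{opposite} of what the statement asserts. Under the ordering used throughout ($-1\prec-2\prec-3\prec\cdots$, so $i\prec u$ means $\Delta_\alpha(i,1)$ sits to the left of $\Delta_\alpha(u,1)$), your case ``$\Delta_\alpha(i,1)$ between $\Delta_\alpha(-2,1)$ and $\Delta_\alpha(u,1)$, top $y_{\underline{S}}$'' is the case $u\succ i$, while ``inside $K_{\underline{S}+1}$, top $y_{\underline{S}+1}$'' is the case $u\prec i$; the Lemma assigns these the other way round. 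Your geometry is the right one --- it is what the proof of Lemma~\ref{l:laminationInEasyCase} actually uses (there, case $s\succ i$ invokes $u=s-1\succ i-1$ to get top $y_{\underline{S}}$, and case $s\prec i$ invokes $u\prec i-1$ to get top $y_{\underline{S}+1}$), and it agrees with Figure~\ref{f:omegaSmallAlp}. So the stated Lemma appears to have its first and third cases interchanged; rather than silently appealing to ``the statement,'' you should carry out the translation and note the discrepancy.
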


\begin{proof}  Due to Lemma~\ref{l:lastEllValueIsLargest},   $K_{\underline{S}+1} = [\ell_{\underline{S}}, r_0]$.   Hence, 
the rightmost piece in the partition defining $\Omega^{+}$  is $K_{\overline{S}+1} \times [0, y_{\underline{S}+1}]$.   
With this, one argues as for the bottoms of the blocks.
\end{proof}

 \begin{Lem}\label{l:laminationInEasyCase}  Suppose that $i \notin\{-1, k+1, k\}$.  
Then   $\mathcal T_{\alpha}(\mathcal B_i)$ laminates above $\mathcal T_{\alpha}(\mathcal B_{i-1})$.
 \end{Lem}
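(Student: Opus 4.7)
The plan is to reduce the claim to a single algebraic identity by invoking Lemma~\ref{l:lamEqsSameCexpon} with $l = 1$.  Applied with $k = i - 1$ and $k+1 = i$, that lemma says that the images of rectangles sitting over $\Delta_\alpha(i-1, 1)$ and $\Delta_\alpha(i, 1)$ share a common horizontal boundary if and only if the top of the $(i-1)$-rectangle equals $t$ plus the bottom of the $i$-rectangle.  So I will set out to prove precisely this identity for the tops and bottoms of $\mathcal B_{i-1}$ and $\mathcal B_i$.

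First I will verify that the hypothesis $i \notin \{-1, k+1, k\}$ forces both $i - 1 \notin \{-1, -2\}$ (since $i \neq -1$ and $i = 0$ is not a valid digit) and $i \notin \{k, k+1\}$, so that Lemmas~\ref{l:topsOfBlocks} and~\ref{l:bottomsOfBlocks} apply to $\mathcal B_{i-1}$ and $\mathcal B_i$ respectively.  These lemmas determine the top of $\mathcal B_{i-1}$ to be $y_{\underline{S}+1}$ or $y_{\underline{S}}$ (with a union of both occurring only in the equality case $u = i - 1$) and similarly the bottom of $\mathcal B_i$ to be $y_{-\overline{S}-1}$ or $y_{-\overline{S}}$ (or their union when $s = i$).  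The two ``twin'' identities supplied by Lemmas~\ref{l:easyHeightPairDifference} and~\ref{l:harderHeightPairDifference}, namely $y_{\underline{S}+1} - y_{-\overline{S}} = t$ and $y_{\underline{S}} - y_{-\overline{S}-1} = t$, then furnish exactly the algebraic relations required; what remains is a bookkeeping check to confirm which pairs actually occur.

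That bookkeeping is a case analysis driven by the positions of the adjacent cylinders $\Delta_\alpha(i-1, 1)$ and $\Delta_\alpha(i, 1)$ relative to the distinguished points $\ell_{\underline{S}}$ and $r_{\overline{S}}$.  Because these cylinders are consecutive in $\mathbb I_\alpha$, their positions couple into a single matched pair-type: when the top of $\mathcal B_{i-1}$ equals $y_{\underline{S}+1}$ the bottom of $\mathcal B_i$ equals $y_{-\overline{S}}$, and when the top equals $y_{\underline{S}}$ the bottom equals $y_{-\overline{S}-1}$.  I expect the subtlest step to be the ``union'' case, where a cylinder straddles $\ell_{\underline{S}}$ or $r_{\overline{S}}$ and the block decomposes into two rectangles; there the lamination identity must be checked piecewise and the $x$-ranges of the image pieces verified to align correctly.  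However, both twin identities are available simultaneously, so the two kinds of rectangular pieces match up consistently and the overall lamination holds.
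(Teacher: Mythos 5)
Your overall scaffolding is the same as the paper's: reduce to Lemma~\ref{l:lamEqsSameCexpon} with $l=1$, read off the top of $\mathcal B_{i-1}$ from Lemma~\ref{l:topsOfBlocks} and the bottom of $\mathcal B_i$ from Lemma~\ref{l:bottomsOfBlocks}, and close with the two identities $y_{\underline{S}}=t+y_{-\overline{S}-1}$ and $y_{\underline{S}+1}=t+y_{-\overline{S}}$ of Lemmas~\ref{l:harderHeightPairDifference} and~\ref{l:easyHeightPairDifference}. The gap is the coupling step, which is the actual content of the lemma. The top of $\mathcal B_{i-1}$ is determined by how the digit $u$ of $\ell_{\underline{S}}$ compares with $i-1$, while the bottom of $\mathcal B_i$ is determined by how the digit $s$ of $r_{\overline{S}}$ compares with $i$; these are two \emph{different} orbit points, and the adjacency of the cylinders $\Delta_{\alpha}(i-1,1)$ and $\Delta_{\alpha}(i,1)$ in $\mathbb I_{\alpha}$ says nothing about how $u$ and $s$ are related. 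Your sentence ``because these cylinders are consecutive \dots their positions couple into a single matched pair-type'' asserts exactly what must be proved. The paper gets it from the relation $L_{k,v}=C^{-1}AC\,R_{k,v}$ together with (\cite{CaltaKraaikampSchmidt}, Lemma~4.2), which give that $r_{\overline{S}}\in\Delta_{\alpha}(u+1,1)$ if and only if $\ell_{\underline{S}}\in\Delta_{\alpha}(u,1)$, i.e.\ $s=u+1$; only then does $s\succ i$ (resp.\ $\prec$, $=$) translate into $u\succ i-1$ (resp.\ $\prec$, $=$), forcing the pair (top of $\mathcal B_{i-1}$, bottom of $\mathcal B_i$) to be $(y_{\underline{S}},y_{-\overline{S}-1})$ or $(y_{\underline{S}+1},y_{-\overline{S}})$ and never a mismatched pair. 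The coupling is not cosmetic: since the top heights and bottom heights strictly increase (Lemmas~\ref{l:topHeightsIncrease}, \ref{l:bottomHeightsIncrease}), a mismatched pair differs by strictly more or strictly less than $t$, and the images would overlap or leave a gap.

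In the equality case you also omit the decisive ingredient. When $s=i$ and $u=i-1$, each of the two boundaries consists of two horizontal segments, and what makes the image pieces align in the $x$-direction is the synchronization identity $T_{\alpha}(r_{\overline{S}})=T_{\alpha}(\ell_{\underline{S}})$ coming from \eqref{e:synchronizationExplicit}: the break point of the bottom of $\mathcal B_i$ (at $x=r_{\overline{S}}$) and the break point of the top of $\mathcal B_{i-1}$ (at $x=\ell_{\underline{S}}$) have the same image, so the two-piece boundaries match segment by segment. Saying that ``both twin identities are available simultaneously'' gives the correct heights but not this alignment of $x$-ranges. Your preliminary check that the hypothesis $i\notin\{-1,k+1,k\}$ makes Lemmas~\ref{l:topsOfBlocks} and~\ref{l:bottomsOfBlocks} applicable is fine.
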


\begin{proof}   Since  $L_{k,v} = C^{-1}AC\, R_{k,v}$, (\cite{CaltaKraaikampSchmidt}, Lemma~4.2)
  and its proof show that $r_{\overline{S}} \in \Delta_{\alpha}(u+1, 1)$ if and only if $\ell_{\underline{S}} \in \Delta_{\alpha}(u, 1)$.
 
 Case:   $s \succ i$.   Here, $\mathcal B_i$ has bottom boundary height $y_{-\underline{S}-1}$.  Also,  $u = s-1 \succ i-1$, and hence 
 $\mathcal B_{i-1}$ has top boundary height $y_{\underline{S}}$.  Lemma~\ref{l:harderHeightPairDifference}    
states that $y_{\underline{S}} = t + y_{-\overline{S}-1}$, and thus Lemma~\ref{l:lamEqsSameCexpon} yields the result in this case. 

 Case:   $s \prec i$.   Here,  $\mathcal B_i$ has bottom boundary height $y_{-\underline{S}}$.  And,  $u = s-1 \prec i-1$, gives that 
 $\mathcal B_{i-1}$ has top boundary height $y_{\underline{S}+1}$.    Lemma~\ref{l:easyHeightPairDifference}  and Lemma~\ref{l:lamEqsSameCexpon} yield the result in this case. 
  
 Case: $s = i$.   Since $T_{\alpha}(r_{\overline{S}}) = T_{\alpha}(\ell_{\underline{S}})$ and the corresponding two pieces of each of the bottom boundary of $\mathcal B_i$ and top boundary of $\mathcal B_{i-1}$ are sent to the same height (by the arguments for the previous two cases),  the result holds in this final case as well.  
\end{proof}

 \begin{Lem}\label{l:laminationFromLeftBlocks}    
The  $\mathcal T_{\alpha}$-image of the block $\mathcal B_{-1}$ laminates above a portion of $\mathcal T_{\alpha}(\mathcal B_{-2})$.
\end{Lem}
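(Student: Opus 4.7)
The plan is to apply Lemma~\ref{l:lamEqsSameCexpon} with $k=-2$ and $l=1$ (so the lamination relation reduces to $b = c + t$) to $\mathcal B_{-1}$ paired with a suitable sub-rectangle of $\mathcal B_{-2}$, verifying the required height identity via Lemma~\ref{l:harderHeightPairDifference}.

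First I would pin down the boundary heights of $\mathcal B_{-1}$ and $\mathcal B_{-2}$. By Lemma~\ref{l:lastEllValueIsLargest} the first simplified digit $u$ of $\ell_{\underline{S}}$ equals $-2$. Applying Lemma~\ref{l:topsOfBlocks} with $i=-1$, where $u \prec i$, shows that the top of $\mathcal B_{-1}$ is uniformly $y_{\underline{S}}$; applying it with $i=-2$, where $u = i$, shows that the top of $\mathcal B_{-2}$ splits at $x = \ell_{\underline{S}}$, with height $y_{\underline{S}+1}$ over the piece to the right of $\ell_{\underline{S}}$ and $y_{\underline{S}}$ over the piece to the left. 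Since the first simplified digit $s$ of $r_{\overline{S}}$ is a positive integer, whence $s\succ -1$ and $s\succ -2$, Lemma~\ref{l:bottomsOfBlocks} gives that both blocks have bottom uniformly at $y_{-\overline{S}-1}$.

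I would then identify the ``portion'' referenced in the statement as the left portion of $\mathcal B_{-2}$, namely the sub-rectangle above $\Delta_\alpha(-2,1)\cap[\ell_0,\ell_{\underline{S}})$ with $y\in[y_{-\overline{S}-1}, y_{\underline{S}}]$. Its vertical extent exactly matches that of $\mathcal B_{-1}$. Setting $b=y_{\underline{S}}$ and $c=y_{-\overline{S}-1}$, Lemma~\ref{l:harderHeightPairDifference} supplies $b=c+t$, and Lemma~\ref{l:lamEqsSameCexpon} (with $k=-2$, $l=1$) then yields that $\mathcal T_\alpha(\mathcal B_{-1})$ laminates above $\mathcal T_\alpha$ applied to this portion of $\mathcal B_{-2}$.

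The main obstacle---and the reason this case must be handled separately from Lemma~\ref{l:laminationInEasyCase}---is the split in the top boundary of $\mathcal B_{-2}$: only the left portion participates in lamination with $\mathcal B_{-1}$. For the remaining right portion (top $y_{\underline{S}+1}$) to laminate with $\mathcal B_{-1}$ one would need $y_{\underline{S}+1}=y_{-\overline{S}-1}+t$, but Lemma~\ref{l:easyHeightPairDifference} gives instead $y_{\underline{S}+1}=y_{-\overline{S}}+t$, an identity pairing it with a neighboring lower boundary at height $y_{-\overline{S}}$ rather than with $\mathcal B_{-1}$. This stray right portion's image will therefore be absorbed later, when the upper boundary of $\Omega^+$ is verified in Subsection~\ref{ss:upperBoundaryInImage}; that is precisely why the lemma speaks only of ``a portion of'' $\mathcal T_\alpha(\mathcal B_{-2})$.
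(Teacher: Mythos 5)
Your proposal does not reconstruct the paper's argument, and more importantly it rests on a picture of the blocks that is incorrect for $i\in\{-1,-2\}$.

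\textbf{Misapplication of Lemma~\ref{l:topsOfBlocks}.}  That lemma is stated ``for all $i\notin\{-1,-2\}$,'' and its hypothesis is essential: the blocks $\mathcal B_{-1}$ and $\mathcal B_{-2}$ are exactly the two whose upper boundaries are \emph{stair-cases} with many levels, since \emph{all} of $\ell_0,\dots,\ell_{\underline{S}-1}$ lie in $\Delta_\alpha(-1,1)\cup\Delta_\alpha(-2,1)$.   In particular $\mathcal B_{-1}$'s top is far from ``uniformly $y_{\underline{S}}$'' --- the $y$-values climb from $y_1$ at $\ell_0$ upward --- and $\mathcal B_{-2}$'s top does not consist of just two levels $y_{\underline{S}}$ and $y_{\underline{S}+1}$ splitting at $\ell_{\underline{S}}$.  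Your case analysis (``$u\prec i$'', ``$u=i$'') simply does not apply.

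\textbf{Misreading Lemma~\ref{l:lastEllValueIsLargest}.}  Part~(ii) of that lemma gives the digits of $\ell_{i_{\underline{S}}}$ (the \emph{second largest} orbit value), not of $\ell_{\underline{S}}$.  It is $\ell_{i_{\underline{S}}}$ that has first digit $-2$.  The first digit of $\ell_{\underline{S}}$ is only known to be $\succeq -2$ (see the proof of Lemma~\ref{l:rightmostIntervalMeetsTwoCylinders}), and in general $\ell_{\underline{S}}$ need not lie in $\Delta_\alpha(-2,1)$ at all.  Likewise, the first digit of $r_{\overline{S}}$ need not be positive: the two cases distinguished in the paper's proof are precisely $r_{\overline{S}}\notin\Delta_\alpha(-1,1)$ versus $r_{\overline{S}}\in\Delta_\alpha(-1,1)$.

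\textbf{Wrong ``portion''.}  Because you assumed a two-level top for $\mathcal B_{-2}$, you identified the matching portion as the \emph{left} sub-rectangle $\Delta_\alpha(-2,1)\cap[\ell_0,\ell_{\underline{S}})\times[y_{-\overline{S}-1},y_{\underline{S}}]$.  The actual matching portion is the \emph{rightmost} piece of $\mathcal B_{-2}$'s top boundary, the one fibering over $[\ell_{i_{\underline{S}}},\cdot)$ at height $y_{\underline{S}}$ (together with a possible second step at $y_{\underline{S}+1}$ if $\ell_{\underline{S}}\in\Delta_\alpha(-2,1)$).  Lemma~\ref{l:harderHeightPairDifference} is the correct height identity, as you note, but it is applied to the constant bottom $y_{-\overline{S}-1}$ of $\mathcal B_{-1}$ and the \emph{rightmost} top step $y_{\underline{S}}$ of $\mathcal B_{-2}$; and Lemma~\ref{l:ellOneIsBigEnough} is needed to check that the $x$-images nest correctly, i.e.\ that $\ell_1>\ell_{1+i_{\underline{S}}}$ so $T_\alpha(\Delta_\alpha(-1,1))\subset T_\alpha\bigl([\ell_{i_{\underline{S}}},\cdot)\bigr)$.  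Your proposal omits this entirely.

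To repair the argument, discard the claim that $\mathcal B_{-1},\mathcal B_{-2}$ have flat or two-level tops; instead compare the (one- or two-step) bottom of $\mathcal B_{-1}$ with the one- or two-step rightmost piece of $\mathcal B_{-2}$'s top, using Lemma~\ref{l:harderHeightPairDifference} (and, when both have a second step, Lemma~\ref{l:easyHeightPairDifference} plus synchronization so the jump points $r_{\overline{S}}$ and $\ell_{\underline{S}}$ have the same image), together with Lemma~\ref{l:ellOneIsBigEnough} for the $x$-alignment.
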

\begin{proof}  We have that $\mathcal B_{-1}$ has bottom height $y_{-\overline{S}-1}$ if $r_{\overline{S}} \notin \Delta_{\alpha}(-1,1)$, and both heights $y_{-\overline{S}-1}, y_{-\overline{S}}$ otherwise.   The block $\mathcal B_{-2}$ has multiple heights along its upper boundary, 
with its rightmost height being either $y_{\underline{S}}$ if $\ell_{\underline{S}} \notin \Delta_{\alpha}(-2,1)$ and otherwise its two rightmost heights being $y_{\underline{S}}, y_{\underline{S}+1}$. In either case, the corresponding portion of the upper boundary fibers above an interval whose left endpoint is $\ell_{i_{\underline{S}}}$. 

Arguments as in the proof of the previous lemma, combined with Lemma~\ref{l:ellOneIsBigEnough},  show that the lower boundary of $\mathcal B_{-1}$ has image  equalling the image of a rightmost portion of the upper boundary of $\mathcal B_{-2}$.    
\end{proof}

 \begin{Lem}\label{l:laminationFromRightBlocks}    
 A portion of  $\mathcal T_{\alpha}(\mathcal B_{k+1})$ laminates above $\mathcal T_{\alpha}(\mathcal B_k)$.
 \end{Lem}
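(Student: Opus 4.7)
The plan is to mirror the proof of Lemma~\ref{l:laminationFromLeftBlocks}, interchanging the roles of the two ends of $\mathbb I_\alpha$ and of the two halves of $\Omega$. Since the goal is to show that $\mathcal T_\alpha(\mathcal B_{k+1})$ sits directly above $\mathcal T_\alpha(\mathcal B_k)$, Lemma~\ref{l:lamEqsSameCexpon} (with $l=1$) tells us the matching criterion is that the top boundary of $\mathcal B_k$ and the bottom boundary of $\mathcal B_{k+1}$ differ by exactly $t$ along corresponding staircases at the common interface between $\Delta_\alpha(k,1)$ and $\Delta_\alpha(k+1,1)$.

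First I would describe the two potentially relevant heights on each boundary. Using Lemma~\ref{l:lastEllValueIsLargest}, the rightmost piece of $\Omega^{+}$ is $K_{\underline{S}+1} = [\ell_{\underline{S}}, r_0]$ at height $y_{\underline{S}+1}$, while its left neighbor $K_{\underline{S}}$ ends at $\ell_{\underline{S}}$ with height $y_{\underline{S}}$. Hence the top of $\mathcal B_k$ is either the single height $y_{\underline{S}+1}$ (when $\ell_{\underline{S}} \notin \Delta_\alpha(k,1)$) or the two heights $y_{\underline{S}+1}$ and $y_{\underline{S}}$ with the step occurring at $\ell_{\underline{S}}$ (when $\ell_{\underline{S}} \in \Delta_\alpha(k,1)$). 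Symmetrically, using Lemmas~\ref{l:lastRValueIsLeast} and \ref{l:2ndLargestR}, the bottom of $\mathcal B_{k+1}$ at its right end is either the single height $y_{-\overline{S}}$ (when $r_{\overline{S}} \notin \Delta_\alpha(k+1,1)$) or the two heights $y_{-\overline{S}}$ and $y_{-\overline{S}-1}$ with the step occurring at $r_{\overline{S}}$ (otherwise). Lemma~\ref{l:rOneIsBigEnough}, the $r$-analog of Lemma~\ref{l:ellOneIsBigEnough}, ensures that the corresponding $x$-subintervals on either side of the interface line up correctly.

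The lamination now follows immediately from the two height identities already at hand: Lemma~\ref{l:easyHeightPairDifference} supplies $y_{\underline{S}+1} = y_{-\overline{S}} + t$, and Lemma~\ref{l:harderHeightPairDifference} supplies $y_{\underline{S}} = y_{-\overline{S}-1} + t$. Combined with Lemma~\ref{l:lamEqsSameCexpon}, these say that under $\mathcal T_\alpha$ the rectangle-pair at heights $(y_{\underline{S}+1}, y_{-\overline{S}})$ meet along a common horizontal image edge, and likewise the pair at $(y_{\underline{S}}, y_{-\overline{S}-1})$. The overlapping portion of $\mathcal T_\alpha(\mathcal B_{k+1})$ therefore sits directly above $\mathcal T_\alpha(\mathcal B_k)$, as claimed.

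The main delicate point is to track the simultaneous two-step staircases that arise in the coincident case when both $\ell_{\underline{S}} \in \Delta_\alpha(k,1)$ and $r_{\overline{S}} \in \Delta_\alpha(k+1,1)$, verifying that the two staircases align step-by-step under $\mathcal T_\alpha$. However, this is structurally identical to the corresponding coincident case treated in the proof of Lemma~\ref{l:laminationFromLeftBlocks}, and the argument goes through unchanged.
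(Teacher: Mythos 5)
Your proposal follows essentially the same route as the paper: the paper's own proof is a one-line pointer to Lemma~\ref{l:rOneIsBigEnough} together with the arguments of Lemmas~\ref{l:laminationInEasyCase} and~\ref{l:laminationFromLeftBlocks}, and you correctly assemble the same ingredients (the height identities of Lemmas~\ref{l:easyHeightPairDifference} and~\ref{l:harderHeightPairDifference}, the criterion of Lemma~\ref{l:lamEqsSameCexpon}, and the synchronization $T_\alpha(r_{\overline{S}})=T_\alpha(\ell_{\underline{S}})$ for the two-step case), with the correct pairings $y_{\underline{S}+1}=y_{-\overline{S}}+t$ and $y_{\underline{S}}=y_{-\overline{S}-1}+t$.

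One localization slip should be fixed, since as written two of your statements are false. The matching does not take place ``at the common interface between $\Delta_\alpha(k,1)$ and $\Delta_\alpha(k+1,1)$,'' and the heights $y_{-\overline{S}-1},y_{-\overline{S}}$ occur at the \emph{left} end of the lower boundary of $\mathcal B_{k+1}$, not its right end: the bottom staircase of $\Omega^{-}$ increases with $x$, so over $\Delta_\alpha(k+1,1)$ the two lowest heights sit next to $\lambda_{k+1}$, while the right end of $\Delta_\alpha(k+1,1)$ carries the higher steps and is mapped near $r_0$, entirely outside the range where lamination is asserted. The correct picture is the mirror of Lemma~\ref{l:laminationFromLeftBlocks} with the anchoring switched to the left endpoint: $\mathcal T_\alpha(\mathcal B_k)$ fibers over $[\ell_0, r_1)$ (image of the non-full cylinder $\Delta_\alpha(k,1)$), its top being the image of the whole (one- or two-step) top of $\mathcal B_k$; what must match it is the image of the \emph{leftmost} portion of the lower boundary of $\mathcal B_{k+1}$, at heights $y_{-\overline{S}-1}$ (if $r_{\overline{S}}\in\Delta_\alpha(k+1,1)$) and $y_{-\overline{S}}$. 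The precise role of Lemma~\ref{l:rOneIsBigEnough} is then that the remaining steps of that lower boundary, located at the $r_j\in\Delta_\alpha(k+1,1)$ with $j<\overline{S}$, have images landing at or to the right of $r_1$ (the leftmost such image being $r_{1+\iota}$, immediately to the right of $r_1$), so they never intrude on $[\ell_0, r_1)$; your phrase about ``$x$-subintervals on either side of the interface'' should be replaced by this statement. With that correction, your argument is the paper's argument, written out in more detail than the paper gives.
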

\begin{proof} Here Lemma~\ref{l:rOneIsBigEnough} combines with the above arguments to imply the result. 
\end{proof}
 
\subsection{Upper boundary  is in image} \label{ss:upperBoundaryInImage} 
 
 \begin{Lem}\label{l:intervalMeetsTwoCylinders}    
 There is a partition 
 \[ K_ {\tau(\underline{S}-1)} = K'_{\tau(\underline{S}-1)} \cup K''_{\tau(\underline{S}-1)}\]
 such that 
 \[ A^{-1}C \cdot K'_{\tau(\underline{S}-1)} = K_{\underline{S}+1},  \;\;\;A^{-2}C \cdot K''_{\tau(\underline{S}-1)} = K_1.\]
 
\bigskip  
Furthermore,   $\mathcal T$ sends $K_ {\tau(\underline{S}-1)} \times \{y_{\tau(\underline{S}-1)}\}$ to $(K_1\times \{y_1\}) \cup (K_{\underline{S}+1}\times \{y_{\underline{S}+1}\})$.
 \end{Lem}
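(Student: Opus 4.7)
The plan is to split $K_{\tau(\underline{S}-1)}$ at the unique cylinder boundary lying in its interior. I would define $x^{*}\in \mathbb I_{\alpha}$ by $A^{-1}C\cdot x^{*}=r_{0}(\alpha)$, which, since $A^{-2}C=A^{-1}\circ A^{-1}C$, is equivalent to $A^{-2}C\cdot x^{*}=\ell_{0}(\alpha)$; this $x^{*}$ is the common boundary of $\Delta_{\alpha}(-1,1)$ and $\Delta_{\alpha}(-2,1)$. My candidates for the partition would be $K'_{\tau(\underline{S}-1)}:=[\ell_{\underline{S}-1},x^{*})$ and $K''_{\tau(\underline{S}-1)}:=[x^{*},\ell_{i_{\tau(\underline{S}-1)+1}})$.

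The first task is to verify that $x^{*}$ lies strictly in the interior of $K_{\tau(\underline{S}-1)}$. For the left inequality, since $\underline{d}(k,v)$ ends with $-1$ we have $p_{\underline{S}-1}=-1$, so $\ell_{\underline{S}-1}\in \Delta_{\alpha}(-1,1)$ and hence $\ell_{\underline{S}-1}<x^{*}$. For the right inequality, I would use the strict monotonicity of $A^{-1}C\cdot x=-\nu-1/x$ together with Lemma~\ref{l:lastEllValueIsLargest}: since $\ell_{\underline{S}}$ is the maximum orbit value and is the image of $\ell_{\underline{S}-1}$, no other $\ell_{j}\in\Delta_{\alpha}(-1,1)$ can be larger than $\ell_{\underline{S}-1}$ (else $\ell_{j+1}$ would exceed $\ell_{\underline{S}}$), so every orbit point greater than $\ell_{\underline{S}-1}$ lies in $\Delta_{\alpha}(-2,1)$ and therefore beyond $x^{*}$. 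The two bijection claims then reduce to endpoint matching via monotonicity: $A^{-1}C$ sends $\ell_{\underline{S}-1}\mapsto\ell_{\underline{S}}$ and $x^{*}\mapsto r_{0}$, giving $A^{-1}C(K'_{\tau(\underline{S}-1)})=[\ell_{\underline{S}},r_{0})=K_{\underline{S}+1}$ by Lemma~\ref{l:lastEllValueIsLargest}; and $A^{-2}C$ sends $x^{*}\mapsto\ell_{0}$ and, as I address below, $\ell_{i_{\tau(\underline{S}-1)+1}}\mapsto\ell_{i_{2}}$, giving $A^{-2}C(K''_{\tau(\underline{S}-1)})=[\ell_{0},\ell_{i_{2}})=K_{1}$.

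The main obstacle is the endpoint identification $A^{-2}C\cdot \ell_{i_{\tau(\underline{S}-1)+1}}=\ell_{i_{2}}$: the argument I would make is that by monotonicity of $A^{-2}C$, the image of the spatially smallest orbit point in $\Delta_{\alpha}(-2,1)$ is the smallest image produced from $\Delta_{\alpha}(-2,1)$ sources, and Lemma~\ref{l:ellOneIsBigEnough} then promotes this to the overall smallest image among the $\ell_{j+1}$; since the overall minimum $\ell_{0}$ is not itself an image, that smallest image must equal $\ell_{i_{2}}$. Finally, for the $\mathcal T_{\alpha}$ statement, the $y$-coordinate is transformed by $RA^{-1}CR^{-1}$ on $K'_{\tau(\underline{S}-1)}$ and by $RA^{-2}CR^{-1}$ on $K''_{\tau(\underline{S}-1)}$: Lemma~\ref{l:topValuesFirstRelations}(i) with $i=\underline{S}-1$ gives the image height $y_{\tau(\underline{S})}=y_{\underline{S}+1}$, while Lemma~\ref{l:topValuesFirstRelations}(iii) gives the image height $y_{\tau(0)}=y_{1}$, yielding the claimed decomposition $(K_{1}\times\{y_{1}\})\cup (K_{\underline{S}+1}\times\{y_{\underline{S}+1}\})$.
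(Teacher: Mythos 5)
Your proposal tracks the paper's own proof closely: both split $K_{\tau(\underline{S}-1)}$ at the boundary between $\Delta_{\alpha}(-1,1)$ and $\Delta_{\alpha}(-2,1)$, both use Lemma~\ref{l:lastEllValueIsLargest} to show $\ell_{\underline{S}-1}$ is the largest orbit element in $\Delta_{\alpha}(-1,1)$ (so that the cylinder boundary lies in the interior of $K_{\tau(\underline{S}-1)}$), and both conclude with Lemma~\ref{l:topValuesFirstRelations} for the heights. The only stylistic difference is that you cite Lemma~\ref{l:ellOneIsBigEnough} to identify $\ell_{i_2}$ as the smallest among the images, where the paper instead runs an inline digit-expansion argument, but that inline argument is precisely the content of Lemma~\ref{l:ellOneIsBigEnough}, so the routes are the same.
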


\begin{proof} 
Since $\alpha \in J_{k,v}$ certainly  $\ell_{\underline{S}}< r_0$, 
and since $\underline{d}(k, v)$ ends with $-1$,   it follows that $\ell_{\underline{S}-1}$ lies in the interior of $\Delta_{\alpha}(-1,1)$.   
Furthermore,  Lemma~\ref{l:lastEllValueIsLargest}  yields that $\ell_{\underline{S}-1}$ is the largest $\ell_i$ lying in $\Delta_{\alpha}(-1,1)$.     By definition,  $K_{\tau(\underline{S}-1)} = [\ell_{\tau(\underline{S}-1)}, \ell_{1+\tau(\underline{S}-1)}]$, and it follows that $\ell_{1+\tau(\underline{S}-1)}$ is the smallest $\ell_i \in \Delta_{\alpha}(-2,1)$.   Thus, $K_{\tau(\underline{S}-1)}$ 
is partitioned into two pieces,  which we define as $K'_{\tau(\underline{S}-1)} \cup K''_{\tau(\underline{S}-1)}$, where the first of these   is mapped by $A^{-1}C$ to  $[\ell_{\underline{S}}, r_0 ] = K_{\underline{S}+1}$ and the second is mapped by  $A^{-2}C$ to $[\ell_0(\alpha), \ell_{i}]$ for some $i$.   Since there is certainly at least one sequence $-2, (-1)^{n-2}$ in $\underline{d}(k,v)$,  surely the smallest $\ell_i \in \Delta_{\alpha}(-2,1)$ has image under $A^{-2}C$ that is larger than any $A^{-1}C\cdot \ell_{i'}$ for $i' \in \Delta_{\alpha}(-1,1)$, as these latter images have digits starting at least (in the sense of our ordering) with $(-1)^{n-3}$.     Therefore,   $A^{-2}C$ sends $K''_{\tau(\underline{S}-1)}$ to $[\ell_0, \ell_{i_2}] = K_1$.
 
 Finally,   Lemma~\ref{l:topValuesFirstRelations}  implies the result on images of the top height.   
\end{proof} 

\begin{Lem}\label{l:rightmostIntervalMeetsTwoCylinders}    
 Let $K'_{\underline{S}}$ be the closure of  $K_{\underline{S}} \cap \Delta_{\alpha}(-2,1)$.   Then

 \[ A^{-2}C \cdot K'_{\underline{S}} = \begin{cases}  K_{\tau(1+ i_{\underline{S}})} &\text{if}\;\; \ell_{\underline{S}} \in \Delta_{\alpha}(-2,1);\\
K_{\tau(1+ i_{\underline{S}})} \cup [\ell_{1+ i_{\underline{S}}}, r_0]&\text{otherwise}.
 \end{cases}
 \]
 
 Furthermore,   $\mathcal T$ correspondingly sends $ K'_{\underline{S}} \times \{y_{\underline{S}}\}$ to $K_{\tau(1+ i_{\underline{S}})} \times \{y_{\tau(1+ i_{\underline{S}})}\}$, respectively  $(K_{\tau(1+ i_{\underline{S}})} \times \{y_{\tau(1+ i_{\underline{S}})}\}) \cup 
  (\, [\ell_{1+ i_{\underline{S}}}, r_0] \times  \{y_{\tau(1+ i_{\underline{S}})}\})$.  
\end{Lem}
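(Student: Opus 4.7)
The plan is to compute directly the image of $K'_{\underline{S}}$ under $A^{-2}C$ by exploiting that this M\"obius transformation has derivative $1/x^{2}>0$ on the negative reals and is therefore monotonically increasing there; the height identity then follows from Lemma~\ref{l:topValuesFirstRelations}(i).

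First I would use Lemma~\ref{l:lastEllValueIsLargest}(ii) to note that the first simplified digit of $\ell_{i_{\underline{S}}}$ is $-2$, so $\ell_{i_{\underline{S}}}\in\Delta_{\alpha}(-2,1)$. Combined with $i_{\underline{S}+1}=\underline{S}$ from Lemma~\ref{l:lastEllValueIsLargest}(i), this shows that $K_{\underline{S}}=[\ell_{i_{\underline{S}}},\ell_{\underline{S}})$ has its left endpoint inside $\Delta_{\alpha}(-2,1)$, so the case split is governed solely by the location of the right endpoint $\ell_{\underline{S}}$. In Case~1 ($\ell_{\underline{S}}\in\Delta_{\alpha}(-2,1)$) the whole interval $K_{\underline{S}}$ sits inside the cylinder, so $K'_{\underline{S}}=K_{\underline{S}}$, and monotonicity of $A^{-2}C$ gives $A^{-2}C\cdot K_{\underline{S}}=[\ell_{1+i_{\underline{S}}},\ell_{\underline{S}+1})$; I would then identify this interval with $K_{\tau(1+i_{\underline{S}})}$ by invoking Lemma~\ref{l:ellOneIsBigEnough}, which places $\ell_{1}$ as the orbit element immediately above $\ell_{1+i_{\underline{S}}}$ and hence pins down $K_{\tau(1+i_{\underline{S}})}=[\ell_{1+i_{\underline{S}}},\ell_{1})$, and by matching the right endpoint through the synchronization identity $\ell_{\underline{S}+1}=r_{\overline{S}+1}$ from \eqref{e:synchronizationExplicit} together with the palindromic symmetry of $\underline{d}(k,v)$. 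In Case~2 ($\ell_{\underline{S}}\notin\Delta_{\alpha}(-2,1)$) the interval $K_{\underline{S}}$ crosses the right boundary $\xi$ of $\Delta_{\alpha}(-2,1)$, so $K'_{\underline{S}}=[\ell_{i_{\underline{S}}},\xi]$; since $\xi$ was defined so that $A^{-2}C\cdot\xi=r_{0}$, the image is the interval $[\ell_{1+i_{\underline{S}}},r_{0}]$, which is just the (redundant) union $K_{\tau(1+i_{\underline{S}})}\cup[\ell_{1+i_{\underline{S}}},r_{0}]$ appearing in the statement.

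The $y$-coordinate assertion is then a direct application of Lemma~\ref{l:topValuesFirstRelations}(i) with $i=i_{\underline{S}}$: since $p_{i_{\underline{S}}}=-2$ and $\tau(i_{\underline{S}})=\underline{S}$, one has $R A^{-2}C R^{-1}\cdot y_{\underline{S}}=y_{\tau(1+i_{\underline{S}})}$, which is exactly the image height asserted for $\mathcal{T}$ acting on the slice $\{y_{\underline{S}}\}$. The main obstacle is Case~1: identifying the right endpoint $\ell_{\underline{S}+1}=A^{-2}C\cdot\ell_{\underline{S}}$ with the correct ordering neighbour of $\ell_{1+i_{\underline{S}}}$ in the finite orbit $\{\ell_{0},\dots,\ell_{\underline{S}}\}$. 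This is where the combined information from Lemma~\ref{l:ellOneIsBigEnough} and the synchronization relation is essential; Case~2 is comparatively routine once the right boundary of $\Delta_{\alpha}(-2,1)$ has been correctly located as the right endpoint of $K'_{\underline{S}}$.
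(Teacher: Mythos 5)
Your structure---establish that $\ell_{i_{\underline{S}}}\in\Delta_{\alpha}(-2,1)$, split on the location of $\ell_{\underline{S}}$, and use Lemma~\ref{l:topValuesFirstRelations}(i) for the height---matches the paper's. Case~2 and the height statement are fine, and you correctly observe that the Case~2 union is written redundantly.

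The problem is the right-endpoint matching in Case~1. You correctly compute $A^{-2}C\cdot K'_{\underline{S}} = [\ell_{1+i_{\underline{S}}},\,\ell_{\underline{S}+1}]$ and correctly identify $K_{\tau(1+i_{\underline{S}})} = [\ell_{1+i_{\underline{S}}},\,\ell_{1})$ via Lemma~\ref{l:ellOneIsBigEnough}, but then you propose to force $\ell_{\underline{S}+1}=\ell_{1}$ using ``the synchronization identity $\ell_{\underline{S}+1}=r_{\overline{S}+1}$ together with the palindromic symmetry of $\underline{d}(k,v)$.'' That cannot work: synchronization places $\ell_{\underline{S}+1}$ in the forward $T_{\alpha}$-orbit of $r_{0}$, not of $\ell_{0}$, and palindromy of $\underline{d}(k,v)$ says nothing about this. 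The identity $A^{-2}C\cdot\ell_{\underline{S}}=\ell_{1}$ holds precisely at the endpoint $\alpha=\zeta_{k,v}$ (this is exactly what is invoked inside the proof of Lemma~\ref{l:topValuesFirstRelations}), and it fails at interior $\alpha$: e.g.\ with $n=3$, $k=v=1$, $\alpha=0.11\in J_{1,1}$ one computes $\ell_{\underline{S}+1}=\ell_5\approx -0.353$ while $\ell_{1}\approx -0.438$, so that $A^{-2}C\cdot K'_{\underline{S}}$ strictly contains $K_{\tau(1+i_{\underline{S}})}$. The stronger equality you are aiming for is therefore not available. The paper's own proof is terse on exactly this point and does not argue the right endpoint at all; what is actually consumed downstream in Lemma~\ref{l:coveringUpperBoundary} is only the containment $A^{-2}C\cdot K'_{\underline{S}}\supseteq K_{\tau(1+i_{\underline{S}})}$ (together with the lamination facts, which absorb the overhang of the image above $\ell_{1}$). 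In Case~1 your argument should stop at establishing that containment (left endpoints agree by Lemma~\ref{l:ellOneIsBigEnough}, and $\ell_{\underline{S}+1}\geq\ell_{1}$), rather than trying to prove an equality that is false.
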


\begin{proof}  The first simplified digit of  $\ell_{\underline{S}}$ is 
 at least $-2$ (in our usual ordering). 
On the other hand, all $\ell_i$ with $i <  \underline{S}$  are contained in  $ \Delta_{\alpha}(-1,1) \cup  \Delta_{\alpha}(-2,1)$, and there do exist $i$ with $\ell_i \in \Delta_{\alpha}(-2,1)$.   Thus certainly, $K_{\underline{S}} \cap \Delta_{\alpha}(-2,1) \neq \emptyset$.   This intersection is all of  $K_{\underline{S}}$ if  $\ell_{\underline{S}} \in \Delta_{\alpha}(-2,1)$;  otherwise, this intersection includes the right endpoint of  $\Delta_{\alpha}(-2,1)$.   The first statement thus holds.    The second statement is an immediate consequence of the first. 
\end{proof}

Recall that the various $p_i$ are defined in Definition~\ref{d:topYvalues}.
 \begin{Lem}\label{l:coveringUpperBoundary}    For each $1\le a < \underline{S}$ with $a \neq \tau(\underline{S}-1)$, 
 \[ \mathcal T(\, K_a \times \{y_a\}\,) =  K_{\tau( i_a + 1)}\times \{y_{\tau( i_a + 1)}\}.\]
 
 In particular,  
 \[ \bigcup_{a=1}^{\underline{S}}\; \mathcal T(\, K_a \times \{y_a\}) \supset   \bigcup_{a=1}^{\underline{S}+1}\;   K_a \times \{y_a\}.\]
 \end{Lem}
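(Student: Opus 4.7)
The plan is to establish the identity $\mathcal T(K_a \times \{y_a\}) = K_{\tau(i_a+1)} \times \{y_{\tau(i_a+1)}\}$ by separately analyzing the horizontal ($x$) and vertical ($y$) coordinates, and then assemble the \emph{in particular} containment via a counting argument combined with Lemmas~\ref{l:intervalMeetsTwoCylinders} and~\ref{l:rightmostIntervalMeetsTwoCylinders}.

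First, I would show that for $a \in \{1, \ldots, \underline{S}-1\} \setminus \{\tau(\underline{S}-1)\}$, the block $K_a = [\ell_{i_a}, \ell_{i_{a+1}})$ lies inside a single cylinder $\Delta_{\alpha}(p_{i_a}, 1)$. Since for small $\alpha$ all simplified digits of the orbit points $\ell_i$ with $i<\underline{S}$ belong to $\{-1,-2\}$, the only cylinder boundary traversed by the blocks $K_1,\ldots,K_{\underline{S}}$ is the transition between $\Delta_{\alpha}(-1,1)$ and $\Delta_{\alpha}(-2,1)$, and the proof of Lemma~\ref{l:intervalMeetsTwoCylinders} locates this boundary in the interior of $K_{\tau(\underline{S}-1)}$. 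Hence $T_{\alpha}$ acts on $K_a$ as the single orientation-preserving M\"obius map $A^{p_{i_a}}C$.

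The image $T_{\alpha}(K_a)$ is then the real interval $[\ell_{i_a+1}, \ell_{i_{a+1}+1})$, since the endpoints of $K_a$ lie in the same cylinder and map under one step of $T_{\alpha}$ to $\ell_{i_a+1}$ and $\ell_{i_{a+1}+1}$. To see this interval equals $K_{\tau(i_a+1)}$, I would argue that the open interval $(\ell_{i_a+1}, \ell_{i_{a+1}+1})$ contains no orbit point: were $\ell_j$ such a point, Lemma~\ref{l:ellOneIsBigEnough} forces $\ell_{j-1}$ to lie in the same cylinder $\Delta_{\alpha}(p_{i_a},1)$ as $K_a$ (the $T_{\alpha}$-images coming from the two small-$\alpha$ cylinders occupy disjoint real subregions), and then monotonicity of $A^{p_{i_a}}C$ on that cylinder would demand $\ell_{j-1} \in K_a$, contradicting that $\ell_{i_a}$ and $\ell_{i_{a+1}}$ are consecutive among the orbit points. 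The height formula is then immediate from Lemma~\ref{l:topValuesFirstRelations}(i), which gives $y_{\tau(i_a+1)} = RA^{p_{i_a}}CR^{-1}\cdot y_{\tau(i_a)} = RA^{p_{i_a}}CR^{-1}\cdot y_a$, matching the $y$-coordinate action of $\mathcal T$ on fibers over $K_a$.

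For the \emph{in particular} containment, a direct counting shows that as $a$ varies over $\{1,\ldots,\underline{S}-1\}\setminus\{\tau(\underline{S}-1)\}$, the index $i_a$ traverses $\{0, 1, \ldots, \underline{S}-1\}\setminus\{i_{\underline{S}}, \underline{S}-1\}$, so $\tau(i_a+1)$ covers $\{2, 3, \ldots, \underline{S}\}\setminus\{\tau(1+i_{\underline{S}})\}$. Lemma~\ref{l:intervalMeetsTwoCylinders} then supplies the images $K_1 \times \{y_1\}$ and $K_{\underline{S}+1}\times \{y_{\underline{S}+1}\}$ via $a=\tau(\underline{S}-1)$, while Lemma~\ref{l:rightmostIntervalMeetsTwoCylinders} supplies $K_{\tau(1+i_{\underline{S}})} \times \{y_{\tau(1+i_{\underline{S}})}\}$ via $a=\underline{S}$. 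Together these exhaust the blocks $K_a\times\{y_a\}$ for $a\in\{1,\ldots,\underline{S}+1\}$. The main obstacle I anticipate is the single-cylinder claim for $K_a$ when $a \neq \tau(\underline{S}-1)$: while morally clear from the proof of Lemma~\ref{l:intervalMeetsTwoCylinders}, a clean verification requires carefully tracking the palindromic structure of $\underline{d}(k,v)$ and the position of orbit points relative to cylinder boundaries; once this is in hand, the monotonicity-based rule-out of interior orbit points is the subtler remaining piece, hinging essentially on Lemma~\ref{l:ellOneIsBigEnough}.
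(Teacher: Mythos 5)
Your proposal is correct and takes essentially the same approach as the paper's (very terse) proof: single-cylinder containment of $K_a$ plus the M\"obius action on both coordinates, followed by the counting argument invoking Lemmas~\ref{l:intervalMeetsTwoCylinders} and~\ref{l:rightmostIntervalMeetsTwoCylinders} for the exceptional indices. You usefully make explicit the step the paper leaves implicit — that $T_\alpha(K_a)$ not only has the right left endpoint but is exactly $K_{\tau(i_a+1)}$, via monotonicity of $A^{p_{i_a}}C$ on the cylinder together with the image-separation given by Lemma~\ref{l:ellOneIsBigEnough}.
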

\begin{proof}    
If $a<\underline{S}+1$ and $a \neq \tau(\underline{S}+1)$, then $K_a \subset \Delta_{\alpha}(p_{i_a},1)$.   Thus,    $\mathcal T(\, K_a \times \{y_a\})  = K_{\tau( i_a + 1)}\times \{y_{\tau( i_a + 1)}\}$.

Lemmas~\ref{l:intervalMeetsTwoCylinders}  and ~\ref{l:rightmostIntervalMeetsTwoCylinders} show that $a \in \{\tau(\underline{S}+1),   \underline{S}\}$  account for images indexed by $a \in \{1, \tau(1+ i_{\underline{S}}), \underline{S}+1\}$.  An elementary counting argument shows that the remaining values of $a$ account for images indexed by the remaining $\underline{S}-2$ values. 
\end{proof}

\subsection{Lower boundary  is in image}\label{ss:lowerBoundaryInImage}

\begin{Lem}\label{l:lowerIntervalMeetsTwoCylinders}
Let 
\[L'_{\beta(\overline{S}-1)}  = L_{\beta(\overline{S}-1)} \cap\Delta_{\alpha}(k+1,1)\]
and 
\[L''_{\beta(\overline{S}-1)}  = L_{\beta(\overline{S}-1)} \cap \Delta_{\alpha}(k,1).\]
Then 
 \[ A^{k+1}C \cdot L'_{\beta(\overline{S}-1)} \supset L_{-1},  \;\;\;A^kC \cdot L''_{\beta(\overline{S}-1)}= L_{-\overline{S}-1}.\]
 
\bigskip 
Furthermore,   $\mathcal T$ applied to  $L_{\beta(\overline{S}-1)} \times  \{y_{\beta(\overline{S}-1)}\}$  contains $(L_{-1}\times \{y_{-1}\}) \cup (L_{-\overline{S}-1)}\times \{y_{-\overline{S}-1)}\})$.
\end{Lem}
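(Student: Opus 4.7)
My plan is to mirror the proof of Lemma~\ref{l:intervalMeetsTwoCylinders} in the lower-boundary setting, with $r_{\overline{S}-1}$ playing the role of $\ell_{\underline{S}-1}$ and Lemma~\ref{l:lastRValueIsLeast} playing the role of Lemma~\ref{l:lastEllValueIsLargest}.

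First, I would observe that because $v\in\mathcal V$ is a palindrome, so is $\overline{d}(k,v)$ viewed as a word in $\{k,k+1\}$; in particular, its first and last letters both equal $k$. Hence $q_{\overline{S}-1}=k$, which places $r_{\overline{S}-1}$ in the interior of $\Delta_{\alpha}(k,1)$. Next, I would show that $r_{\overline{S}-1}$ is the smallest $r_j$ lying in $\Delta_{\alpha}(k,1)$ for $0\le j\le \overline{S}-1$: if some $r_j\in\Delta_{\alpha}(k,1)$ satisfied $r_j<r_{\overline{S}-1}$ with $j\le\overline{S}-2$, then applying the order-preserving $A^kC$ would yield $r_{j+1}<A^kC\cdot r_{\overline{S}-1}=r_{\overline{S}}$, contradicting the minimality furnished by Lemma~\ref{l:lastRValueIsLeast}.

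Because every $r_j$ with $0\le j\le\overline{S}-1$ lies in $\Delta_{\alpha}(k,1)\cup\Delta_{\alpha}(k+1,1)$, and because $\Delta_{\alpha}(k+1,1)$ sits entirely to the left of $\Delta_{\alpha}(k,1)$, the $r_j$ immediately below $r_{\overline{S}-1}$ in decreasing real order (which by definition is the left endpoint of $L_{\beta(\overline{S}-1)}$) must equal the largest $r_{j'}$ in $\Delta_{\alpha}(k+1,1)$ among $0\le j'\le\overline{S}-1$. Consequently $L_{\beta(\overline{S}-1)}$ straddles the cylinder boundary $\xi$, which is characterized by $A^kC\cdot\xi=\ell_0(\alpha)$ and $A^{k+1}C\cdot\xi=r_0(\alpha)$.

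The image claims then follow by direct computation. On $L''_{\beta(\overline{S}-1)}=[\xi,r_{\overline{S}-1}]$, the order-preserving map $A^kC$ sends the endpoints to $\ell_0(\alpha)$ and $r_{\overline{S}}$ respectively, giving $A^kC\cdot L''_{\beta(\overline{S}-1)}=[\ell_0(\alpha),r_{\overline{S}}]=L_{-\overline{S}-1}$. On $L'_{\beta(\overline{S}-1)}=[r_{j'},\xi]$, the map $A^{k+1}C$ produces $[r_{j'+1},r_0(\alpha)]$; since $1\le j'+1\le\overline{S}$, one has $r_{j'+1}\ne r_0=r_{j_{-1}}$, so $r_{j'+1}\le r_{j_{-2}}$, whence $A^{k+1}C\cdot L'_{\beta(\overline{S}-1)}\supset[r_{j_{-2}},r_0(\alpha)]=L_{-1}$. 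For the second-coordinate assertion I would appeal to Lemma~\ref{l:bottomValuesFirstRelations}: part (i) with $j=\overline{S}-1$ gives $RA^kCR^{-1}\cdot y_{\beta(\overline{S}-1)}=y_{\beta(\overline{S})}=y_{-\overline{S}-1}$, and part (iii) gives $RA^{k+1}CR^{-1}\cdot y_{\beta(\overline{S}-1)}=y_{\beta(0)}=y_{-1}$, completing the proof.

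The main obstacle I anticipate is verifying that $L_{\beta(\overline{S}-1)}$ genuinely straddles the cylinder boundary when $|v|=1$, where $\iota=0=\overline{S}-1$ and several indices collapse; in that short-word case the location of $r_1=r_{\overline{S}}$ has to be examined directly to confirm the conclusion, though the slack in the inclusion $\supset$ for $L_{-1}$ should absorb the degenerate behavior there.
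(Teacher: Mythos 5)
Your proof follows essentially the same route as the paper's: identify $r_{\overline{S}-1}$ as the smallest $r_j$ in $\Delta_{\alpha}(k,1)$ via the palindromicity of $\overline{d}(k,v)$ and Lemma~\ref{l:lastRValueIsLeast}, show $L_{\beta(\overline{S}-1)}$ straddles the cylinder boundary $\xi$, and compute the images of the two pieces directly. The appeal to Lemma~\ref{l:bottomValuesFirstRelations} for the second-coordinate claim is also what the paper implicitly uses.

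The one place where you and the paper diverge is in how the degenerate case is framed. The paper explicitly splits into $v \neq c_1$ (where some $r_j$, $j<\overline{S}$, lies in $\Delta_{\alpha}(k+1,1)$, so that $A^{k+1}C\cdot L'$ actually \emph{equals} $L_{-1}$) and $v=c_1$ (where $L_{\beta(\overline{S}-1)}=[r_{\overline{S}},r_{\overline{S}-1}]$ and only the stated inclusion holds). You correctly flag the $|v|=1$ (i.e.\ $v=c_1$) case as the obstacle, but your parenthetical ``$\iota=0=\overline{S}-1$'' is only accurate for $v=1$: for $v=c_1$ with $c_1>1$ one has $\iota=c_1-1=\overline{S}-1\neq 0$. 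This is a minor slip and doesn't affect the substance, since the real issue — that there is then \emph{no} $r_j\in\Delta_\alpha(k+1,1)$ with $j<\overline{S}$, so the ``straddling'' must be argued from the location of $r_{\overline{S}}$ itself rather than from some $r_{j'}$ — is what needs (and in the paper gets) separate handling. Your proof of the main case is actually a bit more explicit than the paper's (the paper relies on ``it easily follows''); the small remaining gap is the one you already flagged.
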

 
\begin{proof} Since $\overline{d}(k,v)$ ends with a $k$,  Lemma~\ref{l:lastRValueIsLeast} implies that $r_{\overline{S}-1}$ is the least $r_j$ in $\Delta_{\alpha}(k,1)$.  If $v \neq c_1$, there exist $r_j \in \Delta_{\alpha}(k+1,1)$.  In this case,  $L_{\beta(\overline{S}-1)}$ is partitioned as $L'_{\beta(\overline{S}-1)} \cup L''_{\beta(\overline{S}-1)}$.  Since  $L'_{\beta(\overline{S}-1)}$ contains the right endpoint of $\Delta_{\alpha}(k+1,1)$,  it easily follows that $A^{k+1}C \cdot L'_{\beta(\overline{S}-1)} = L_{-1}$.  Similar considerations show  $A^kC \cdot L''_{\beta(\overline{S}-1)}= L_{-\overline{S}-1}$.     The rest of the proof follows easily in this case.   

If $v = c_1$, then $L_{\beta(\overline{S}-1)}= [r_{\overline{S}}, r_{\overline{S}-1}]$ and here $A^{k+1}C \cdot L'_{\beta(\overline{S}-1)} \supset L_{-1}$ (with equality in general not holding).   Once again, the rest of the proof follows easily.
\end{proof}

The following now easily follows. 
\begin{Lem}\label{l:coveringLowerBoundary}     
We have 
 \[ \bigcup_{b=-1}^{-\overline{S}}\; \mathcal T(\, L_b \times \{y_b\}) \supset   \bigcup_{b=-1}^{-\overline{S}-1}\;   L_b \times \{y_b\}.\]
\end{Lem}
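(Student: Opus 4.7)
Plan. The proof closely mirrors that of Lemma~\ref{l:coveringUpperBoundary}. I would proceed by identifying a single exceptional source index and then handling the remaining indices via a straightforward orbit-transport argument. Concretely, for each non-exceptional $b \in \{-1, \ldots, -\overline{S}\} \setminus \{\beta(\overline{S}-1)\}$, write $b = \beta(j)$ with $j \in \{0, 1, \ldots, \overline{S}-2\}$. I would first check that $L_b$ lies in a single cylinder $\Delta_\alpha(q_j, 1)$, where $q_j$ is the first simplified digit of $r_j$; this uses the observation, from the proof of Lemma~\ref{l:lowerIntervalMeetsTwoCylinders}, that within $\{r_0, \ldots, r_{\overline{S}-1}\}$ only the simplified digits $k$ and $k+1$ appear, and that $r_{\overline{S}-1}$ is the unique member of this set whose left neighbor in real-line order lies in $\Delta_\alpha(k+1, 1)$ while $r_{\overline{S}-1}$ itself lies in $\Delta_\alpha(k, 1)$. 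Then, combining $T_\alpha(r_j) = r_{j+1}$ with Lemma~\ref{l:bottomValuesFirstRelations}(i) for the height transformation, one concludes
\[
\mathcal T(L_b \times \{y_b\}) = L_{\beta(j+1)} \times \{y_{\beta(j+1)}\}.
\]

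For the exceptional $b = \beta(\overline{S}-1)$, Lemma~\ref{l:lowerIntervalMeetsTwoCylinders} shows that the image of $L_{\beta(\overline{S}-1)} \times \{y_{\beta(\overline{S}-1)}\}$ contains both $L_{-1} \times \{y_{-1}\}$ and $L_{-\overline{S}-1} \times \{y_{-\overline{S}-1}\}$. A counting step then closes the argument: since $\beta$ is a bijection with $\beta(0) = -1$ (as $r_0$ is the largest $r_j$) and $\beta(\overline{S}) = -\overline{S}-1$ by Lemma~\ref{l:lastRValueIsLeast}, the $\overline{S}-1$ non-exceptional sources produce precisely the target indices $\{\beta(1), \ldots, \beta(\overline{S}-1)\} = \{-2, \ldots, -\overline{S}\}$, while the exceptional source supplies the two remaining targets $L_{-1}$ and $L_{-\overline{S}-1}$; together these exhaust $\{-1, \ldots, -\overline{S}-1\}$, which is the right-hand side.

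The principal technical obstacle I anticipate lies in verifying that each non-exceptional $L_b$ truly lies in a single cylinder. The delicate case is $b = -\overline{S}$: here $L_{-\overline{S}} = [r_{\overline{S}}, r_\iota]$ has left endpoint $r_{\overline{S}}$ whose first simplified digit may lie outside $\{k, k+1\}$, since $r_{\overline{S}}$ need not be positive for general $\alpha \in J_{k,v}$. I would resolve this either by a careful case analysis confirming that no cylinder boundary falls strictly in the interior of $L_{-\overline{S}}$ (using the ordering consequences of Lemma~\ref{l:2ndLargestR} together with the positions of orbit points from Proposition~\ref{p:rOrbitsOfSynIntEndptsMeet}), or by exploiting the weaker $\supset$ form of the conclusion: any multiply-cylindered $L_b$ may be treated piecewise, and each piece in a cylinder $\Delta_\alpha(q, 1)$ will map onto a sub-segment of the appropriate $L_{\beta(j'+1)} \times \{y_{\beta(j'+1)}\}$, so the union of these images still covers the relevant targets.
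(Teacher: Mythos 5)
Your overall plan is the right one and matches what the paper intends (it offers no explicit proof, treating it as the mirror image of Lemma~\ref{l:coveringUpperBoundary}, deferring to Lemma~\ref{l:lowerIntervalMeetsTwoCylinders}). The counting and the role of the exceptional index $\beta(\overline{S}-1)$ are correct.

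However, your first proposed resolution of the delicate case $b=-\overline{S}$ does not work: cylinder boundaries \emph{do} fall strictly inside $L_{-\overline{S}}=[r_{\overline{S}},r_\iota]$ in general, because $r_{\overline{S}}$ is the infimum of the orbit segment and can lie far to the left of $\Delta_{\alpha}(k+1,1)$ (for instance, near $\zeta_{k,v}$ one has $r_{\overline{S}}$ close to $\ell_0$, which lies in $\Delta_\alpha(-1,1)$). The correct route is your second option, but it needs sharpening: not every piece of $L_{-\overline{S}}$ maps onto a sub-segment of an appropriate $L_{b'}\times\{y_{b'}\}$. Rather, only the piece $L'_{-\overline{S}}:=L_{-\overline{S}}\cap\Delta_\alpha(q_\iota,1)$ (the piece containing the \emph{right} endpoint $r_\iota$) maps at the correct height $y_{\beta(\iota+1)}$, and $A^{q_\iota}C\cdot L'_{-\overline{S}}=[\ell_0,r_{1+\iota}]$, which is a superset of the target $L_{1+\beta(1)}=[r_1,r_{1+\iota}]$ (here $q_\iota=k+1$ for $|v|>1$, and Lemma~\ref{l:rOneIsBigEnough} gives $\beta(\iota+1)=1+\beta(1)$). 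The other pieces land at heights that are not among the $y_b$ and are irrelevant for the $\supset$ claim. In short, you should state and use the exact analogue of Lemma~\ref{l:rightmostIntervalMeetsTwoCylinders}, which the paper omits but clearly intends. Also note that for $v=c_1$ one has $\iota=\overline{S}-1$, so the two exceptional sources $\beta(\overline{S}-1)$ and $-\overline{S}$ coincide, and for $v=1$ Lemma~\ref{l:rOneIsBigEnough} is vacuous; your counting still closes in both degenerate situations.
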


\subsection{Left endpoint values:   $\alpha = \zeta_{k,v}$}     Recall that for any $v$ and with $k \in \mathbb N$, when $\alpha = \zeta_{k,v}$ one has $r_{\overline{S}} = \ell_0$ and $A^{-2}C\cdot \ell_{\underline{S}} = \ell_1$.   The first of these equations shows that the interval $L_{-\overline{S}-1}$ of Definition~\ref{d:bottomYvalues} degenerates to a point in this case;  correspondingly,  $\Omega_{-}$ will now have one fewer lower height.      As well, Lemma~\ref{l:easyHeightPairDifference} gives  $y_{\underline{S}+1} = -\ell_0(\zeta_{k,v})$  and $y_{\beta(\iota)} = -r_0(\zeta_{k,v})$.     Note that Lemma~\ref{l:rOneIsBigEnough} implies that $A^{k+1}C$ sends $r_\iota$ to the right of $r_1$.   Thus the various preceding sections now easily imply the following, see Figure~\ref{f:topBottomVerticesSmallAlpsZeta}. 

\bigskip 

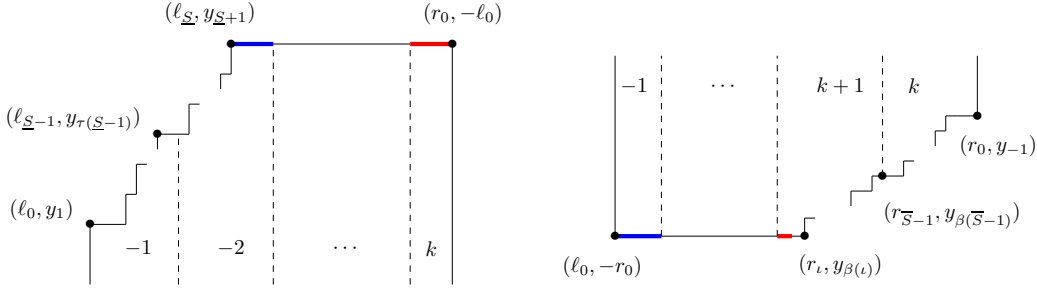
\begin{figure}[h]
\scalebox{.8}{
\noindent
\begin{tabular}{ll}
 
\begin{tikzpicture}[x=3.5cm,y=5cm] 
\draw  (-.32, 0)--(-.32, 0.2); 
\draw  (-.32, 0.2)--(-.15, 0.2);
\draw  (-.15, 0.2)--(-.15, 0.3);
\draw  (-.15, 0.3)--(-.1, 0.3);
\draw  (-.1, 0.3)--(-.1, 0.4);
\draw  (-.1, 0.4)--(-.05, 0.4);  
\draw  (0.0, 0.45)--(0.0, 0.5);    
\draw  (0.0, 0.5)--(0.15, 0.5); 
\draw  (0.15, 0.5)--(0.15, 0.6); 
\draw  (0.15, 0.6)--(0.2, 0.6);  
\draw  (0.3, 0.65)--(0.3, 0.7);      
\draw  (0.3, 0.7)--(0.35, 0.7);  
\draw  (0.35, 0.7)--(0.35, 0.8);  
\draw[blue, line width=2]  (0.35, 0.8)--(0.55, 0.8);   
\draw  (0.55, 0.8)--(1.2, 0.8);
\draw[red, line width = 2]  (1.2, 0.8)--(1.4, 0.8);
\draw  (1.4, 0.8)--(1.4, 0);  
\draw[thin,dashed] (0.1, 0)--(0.1, 0.5);     
\draw[thin,dashed] (0.55, 0)--(0.55, 0.8); 
\draw[thin,dashed] (1.2, 0)--(1.2, 0.8); 
\node at (-0.09, 0.12) {$-1$};    
\node at (.35, 0.12) {$-2$};   
\node at (.9, 0.12) {$\cdots$};         
\node at (1.3, 0.12) {$k$};   
\node at (-.55,   0.25) {$(\ell_0, y_1)$};           
\node at (-0.4, 0.55) {$(\ell_{\underline{S}-1}, y_{\tau(\underline{S}-1)})$};   
\node at (0.25,   .9) {$(\ell_{\underline{S}}, y_{\underline{S}+1})$}; 
\node at ( 1.45,   0.9) {$(r_0,  -\ell_0)$}; 
 \foreach \x/\y in {-.32/0.2,  0.0/0.5, 0.35/0.8,1.4/0.8%
} { \node at (\x,\y) {$\bullet$}; } 
\end{tikzpicture}
&\;\;\;
\begin{tikzpicture}[x=3.5cm,y=5cm] 
\draw  (-.32, 0.6)--(-.32, 0 ); 
\draw[blue, line width = 2]  (-.32, 0)--(-0.1, 0 ); 
\draw  (-.1, 0)--(0.45, 0 ); 
\draw[red, line width = 2] (0.45, 0)--(0.52, 0); 
\draw  (0.52, 0)--(0.58, 0); 
\draw  (0.58, 0)--(0.58, 0.06); 
\draw  (0.58, 0.06)--(0.63, 0.06); 
\draw  (0.8, 0.1)--(0.8, 0.15); 
\draw  (0.8, 0.15)--(0.9, 0.15);   
\draw  (0.9, 0.15)--(0.9, 0.2);  
\draw  (0.9, 0.2)--(1.05, 0.2);   
\draw  (1.05, 0.2)--(1.05, 0.25);  
\draw  (1.05, 0.25)--(1.1, 0.25); 
\draw  (1.2, 0.3)--(1.2, 0.35); 
\draw  (1.2, 0.35)--(1.25, 0.35);     
\draw  (1.25, 0.35)--(1.25, 0.4);  
\draw  (1.25, 0.4)--(1.3, 0.4); 
\draw  (1.3, 0.4)--(1.4, 0.4);  
\draw  (1.4, 0.4)--(1.4, 0.6);   
\draw[thin,dashed] (0.45, 0)--(0.45, 0.6);  
\draw[thin,dashed] (0.95, 0.2)--(0.95, 0.6); 
\draw[thin,dashed] (-0.1, 0)--(-0.1, 0.6);  
\node at ( -.23, 0.5) {$-1$};  
\node at ( .2, 0.5) {$\cdots$};    
\node at ( .75, 0.5) {$k+1$};    
\node at (1.1, 0.5) {$k$};   
\node at (-.38,   -0.1) {$(\ell_0, -r_0)$};          
\node at (0.75,  -0.1) {$(r_{\iota}, y_{\beta(\iota)})$};        
\node at (1.28, 0.07) {$(r_{\overline{S}-1}, y_{\beta(\overline{S}-1)})$}; 
\node at (1.5,  .3) {$(r_0,  y_{-1})$}; 
 \foreach \x/\y in {-.32/0, 0.58/0, 0.95/0.2, 1.4/0.4%
} { \node at (\x,\y) {$\bullet$}; } 
 
\end{tikzpicture} 
\end{tabular}
}
\caption{Schematic representations (not to scale) showing the most important vertices of the tops and bottoms of blocks, for  $\alpha = \zeta_{k,v}$ (and $|v|>1$).    
See Proposition~\ref{p:OmegaForZeta}.   Interior label  $i$ denotes partitioning ``block" $\mathcal B_i$, see Subsection~\ref{ss:theBlocks} for definitions.  Thick line segments of same color have equal images under $\mathcal T_{n,\zeta_{k,v}}$, where the bottommost thick red segment has right endpoint at $(A^{k+1}C)^{-1}\cdot r_1$.}
\label{f:topBottomVerticesSmallAlpsZeta}%
\end{figure}

\begin{Prop}\label{p:OmegaForZeta}   Fix $n\ge 3,  k \in \mathbb N, v \in \mathcal V$   and $\alpha = \zeta_{k,v}$.      Let $\Omega^{+} $ be as in Definition~\ref{d:topYvalues}. 
Let $\Omega^{-}$ be as in Definition~\ref{d:bottomYvalues},   except that we redefine $L_{-\overline{S}}$ to be  $[\ell_0, r_\iota]$, and set \newline 
$\Omega^{-} = \bigcup_{b=-1}^{ -\overline{S}}\,   L_b \times [y_b,0]$.     

Then 
$\mathcal T_{n,\alpha}$ is bijective on $\Omega_{n, \alpha}  := \Omega^{+} \cup \Omega^{-}$, up to 
$\mu$-measure zero.   Furthermore,   $\mu(\Omega)_{n, \alpha}) < \infty$. 
\end{Prop}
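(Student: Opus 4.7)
The plan is to mirror the architecture of the proof for interior $\alpha \in (\zeta_{k,v}, \eta_{k,v})$ already established through Sections~\ref{s:relationsOnHts} and~\ref{s:MainForSmallAlps}, taking advantage of the fact that only two degenerations occur at the left endpoint: the rightmost cylinder piece $L_{-\overline{S}-1} = [\ell_0, r_{\overline{S}}]$ collapses to a point because $r_{\overline{S}} = \ell_0(\zeta_{k,v})$, and correspondingly the upper height $y_{\underline{S}+1} = -\ell_0(\zeta_{k,v})$ coalesces meaningfully with $y_{\underline{S}}$ via the identity $A^{-2}C\cdot \ell_{\underline{S}} = \ell_1$. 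Once these identifications are noted, essentially every statement from Sections~\ref{s:relationsOnHts} and~\ref{s:MainForSmallAlps} survives verbatim, with the understanding that the cylinder indexed by $-\overline{S}-1$ simply does not appear in $\Omega^-$ and that the lower height $y_{-\overline{S}-1}$ is replaced by $y_{-\overline{S}} = -r_0(\zeta_{k,v})$ on the enlarged piece $L_{-\overline{S}} = [\ell_0, r_\iota]$.

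First, I would re-verify the height relations of Lemmas~\ref{l:topValuesFirstRelations} and~\ref{l:bottomValuesFirstRelations} at $\alpha = \zeta_{k,v}$. The upper relations are unaffected, since they depend only on the orbit of $\ell_0(\zeta_{k,v})$ which is exactly the defining orbit. For the lower relations, Proposition~\ref{p:rOrbitsOfSynIntEndptsMeet} and its corollaries give $r_{\overline{S}}(\eta_{k,v}) = r_\iota(\zeta_{k,v})$, so the height formula $y_{\beta(\iota)} = -r_0(\zeta_{k,v})$ comes out correctly and the pairing lemma $y_{\underline{S}+1} = t + y_{-\overline{S}}$ of Lemma~\ref{l:easyHeightPairDifference} now produces the needed $t$-shift between the top of the rightmost block on the positive side and the top of the newly enlarged lowest piece $L_{-\overline{S}}$.

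Next I would reprove the lamination results of Subsection~\ref{ss:laminate} with this modified bottom, observing that Lemma~\ref{l:laminationInEasyCase} still applies for every $i \notin \{-1, k+1, k\}$ because the case distinction there (according to whether the first simplified digit of $r_{\overline{S}}$ is larger than, equal to, or smaller than $i$) collapses to a single case at $\zeta_{k,v}$: the bottom of each such block has unique height $y_{-\overline{S}}$ and pairs via Lemma~\ref{l:lamEqsSameCexpon} with the unique top height $y_{\underline{S}+1}$ of $\mathcal B_{i-1}$. The boundary-covering results (Lemmas~\ref{l:coveringUpperBoundary} and~\ref{l:coveringLowerBoundary}) then give surjectivity onto $\Omega^+$ and the reduced $\Omega^-$; the only place requiring care is the left edge of $\mathcal B_{-\overline{S}-1}$-type contributions, which now simply no longer exist. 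Finiteness $\mu(\Omega_{n,\alpha}) < \infty$ follows word-for-word from Lemmas~\ref{l:finiteMassUpperSmallAlps} and~\ref{l:finiteMassLowerSmallAlps}, since those proofs used only that fibers lie below (resp.\ above) the hyperbola $y = -1/x$ at the endpoints of the parameter interval.

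The main obstacle, as I see it, is the bookkeeping around the simultaneous degeneration of $L_{-\overline{S}-1}$ to a point and the fusion of the two heights $y_{-\overline{S}-1}, y_{-\overline{S}}$ into the single height $y_{-\overline{S}}$ on the enlarged $L_{-\overline{S}}$: one must verify that the image of $\mathcal B_{k+1}$ under $\mathcal T$ still laminates cleanly above $\mathcal T(\mathcal B_k)$ at $\zeta_{k,v}$ (analog of Lemma~\ref{l:laminationFromRightBlocks}) now that $r_{\overline{S}} = \ell_0$, and dually that the $K''_{\tau(\underline{S}-1)}$/$K'_{\underline{S}}$ partition of Lemmas~\ref{l:intervalMeetsTwoCylinders}--\ref{l:rightmostIntervalMeetsTwoCylinders} still distributes the top-boundary image correctly given the collapse $A^{-2}C\cdot \ell_{\underline{S}} = \ell_1$. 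Both of these can be checked directly using $R_{k,v}\cdot r_0(\zeta_{k,v}) = \ell_0(\zeta_{k,v})$ and the palindromic structure of $\underline{d}(k,v)$, which forces the ``extra'' surjectivity onto the rightmost block $K_{\underline{S}+1}\times\{y_{\underline{S}+1}\}$ to come precisely from $A^{-2}C$ applied to the piece of $K_{\tau(\underline{S}-1)}$ that would have contributed $K_1$ in the interior case, only now also recovering the rightmost strip $[\ell_{\underline{S}}, r_0]\times\{y_{\underline{S}+1}\}$ via the $\zeta_{k,v}$-identity. Once these two laminations are confirmed, bijectivity up to $\mu$-measure zero follows as in Propositions~\ref{p:leftGivesTop} and~\ref{p:rightGivesBottom}.
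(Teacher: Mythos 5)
Your proposal is correct and follows essentially the same route as the paper, whose own argument is simply the observation that at $\zeta_{k,v}$ one has $r_{\overline{S}}=\ell_0$ and $A^{-2}C\cdot\ell_{\underline{S}}=\ell_1$, so that $L_{-\overline{S}-1}$ degenerates to a point, the heights satisfy $y_{\underline{S}+1}=-\ell_0(\zeta_{k,v})$ and $y_{\beta(\iota)}=-r_0(\zeta_{k,v})$, and (using Lemma~\ref{l:rOneIsBigEnough} for the wrap-around of the lower boundary) the lamination, boundary-covering and finite-mass arguments of Sections~\ref{s:relationsOnHts}--\ref{s:MainForSmallAlps} go through with one fewer lower rectangle. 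One harmless slip in your framing: nothing ``coalesces'' on the top side at $\zeta_{k,v}$ (that degeneration occurs at $\eta_{k,v}$) and $L_{-\overline{S}-1}$ is the leftmost, not rightmost, lower piece; your actual verification steps treat $\Omega^{+}$ as unchanged with all $\underline{S}+1$ heights, which is what the proposition requires.
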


\subsection{Right endpoint values: $\alpha =\eta_{k,v}$} Fix $n\ge 3$, and $v \in \mathcal V$.   Although  when   $\alpha = \eta_{k,v}$, the $T_{\alpha}$-obits of $\ell_0(\alpha), r_0(\alpha)$ do not synchronize,  we still can precisely describe the domain on which the associated two-dimensional maps is bijective.

Recall that $\ell_{\underline{S}} = \ell_0(\eta_{k,v})$ and $r_{\overline{S}+1}(\eta_{k,v}) = A^{k+1}C\cdot r_{\overline{S}}(\eta_{k,v}) = r_1(\eta_{k,v})$.  Furthermore,  from Lemmas~\ref{l:harderHeightPairDifference}  and ~\ref{l:lastRValueIsLeast},   $y_{\underline{S}} =  -\ell_0(\eta_{k,v})$  and  $y_{-\overline{S}-1} =  y_{\beta(\overline{S})} = -r_0(\eta_{k,v})$.    In particular,  here $\Omega^{+}$ has one less height than the general case.    

    Thus the various preceding sections now easily imply the following results, see Figure~\ref{f:topBottomVerticesSmallAlpsEta}.  
\begin{Prop}\label{p:OmegaForEta}   Fix $n\ge 3,  k \in \mathbb N, v \in \mathcal V$   and $\alpha = \eta_{k,v}$.     Let $\Omega^{-}$ be as in Definition~\ref{d:bottomYvalues}.  Let $\Omega^{+} $ be as in Definition~\ref{d:topYvalues} except that we redefine $K_{\underline{S}}$ to be  $[\ell_{i_{\underline{S}}}, r_0]$, and $\Omega^{+} = \bigcup_{a=1}^{ \underline{S}}\,   K_a \times [0, y_a]$.     

Then 
$\mathcal T_{n,\alpha}$ is bijective on $\Omega_{n, \alpha}  := \Omega^{+} \cup \Omega^{-}$, up to 
$\mu$-measure zero.     Furthermore,  $\mu(\Omega_{n,  \alpha}) < \infty$. 
\end{Prop}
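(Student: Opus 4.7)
The plan is to adapt the arguments of Section~\ref{s:MainForSmallAlps} to the right-endpoint case $\alpha = \eta_{k,v}$, using the two facts that Corollary~\ref{c:ellZeroEtaAsEllJzetaSmallAlps} combined with \eqref{e:lowerDetaExpansion} identifies $\ell_{\underline{S}}(\eta_{k,v})$ with a point (namely $\ell_0(\eta_{k,v})$) already in $[\ell_0(\eta_{k,v}),r_0(\eta_{k,v})]$, so $\ell_{\underline{S}} = \ell_0(\eta_{k,v})$; and that Proposition~\ref{p:ellZeroEtaIsEllSubIsubTwoZetaSmallAlps} combined with Corollary~\ref{c:secondHighest} and Lemma~\ref{l:easyHeightPairDifference} gives $y_{\underline{S}} = -\ell_0(\eta_{k,v}) = y_{\underline{S}+1}$ in the limit. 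Together these collapse the two rightmost rectangles $K_{\underline{S}}\times[0,y_{\underline{S}}]$ and $K_{\underline{S}+1}\times[0,y_{\underline{S}+1}]$ into the single rectangle $[\ell_{i_{\underline{S}}},r_0]\times[0,y_{\underline{S}}]$ of the redefined $K_{\underline{S}}$, while $\Omega^-$ is untouched since the orbit $r_0,r_1,\dots,r_{\overline{S}}$ still produces $\overline{S}+1$ distinct real values (it only becomes eventually periodic at the next step, $r_{\overline{S}+1}=r_1$).

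First I would verify that the height relations of Lemmas~\ref{l:topValuesFirstRelations} and \ref{l:bottomValuesFirstRelations} persist at $\alpha = \eta_{k,v}$; the top relations corresponding to $a\in\{1,\dots,\underline{S}\}$ are unaffected because they describe the image of $\ell_i$ under $A^{p_i}C$ for $i<\underline{S}-1$ plus the single branch $y_{\tau(\underline{S}-1)}\mapsto y_{\underline{S}}$, and the latter is now forced by the identification $\ell_{\underline{S}}=\ell_0(\eta_{k,v})$ rather than by the $\ell_0$-orbit of $\zeta_{k,v}$. Next I would check block lamination: Lemmas~\ref{l:laminationInEasyCase}, \ref{l:laminationFromLeftBlocks}, \ref{l:laminationFromRightBlocks} transfer directly, since the paired-height identities $y_{\underline{S}}=t+y_{-\overline{S}-1}$ and $y_{\underline{S}+1}=t+y_{-\overline{S}}$ continue to hold (their common value, after merger, paired with both $y_{-\overline{S}-1}$ and $y_{-\overline{S}}$, respects Lemma~\ref{l:lamEqsSameCexpon} applied to whichever of $\mathcal B_{-1}$ or $\mathcal B_{-2}$ is involved).

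Then I would establish the boundary-covering analogs of Lemmas~\ref{l:coveringUpperBoundary} and \ref{l:coveringLowerBoundary}. For the top boundary, the key adaptation is to Lemma~\ref{l:rightmostIntervalMeetsTwoCylinders}: the enlarged interval $K_{\underline{S}}=[\ell_{i_{\underline{S}}},r_0]$ now splits at the cylinder boundary between $\Delta_\alpha(-2,1)$ and $\Delta_\alpha(-1,1)$ into two subintervals whose $A^{-2}C$ and $A^{-1}C$ images cover respectively $K_{\tau(1+i_{\underline{S}})}$ and the new rightmost slot of the upper boundary, the relevant relation being precisely  $A^{-2}C\cdot\ell_{\underline{S}}(\eta_{k,v})=\ell_1(\eta_{k,v})$ from (\cite{CaltaKraaikampSchmidt}, Lemma~4.5). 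For the bottom boundary, since $\Omega^-$ is unchanged and the orbit relations of Lemma~\ref{l:bottomValuesFirstRelations} still hold, Lemma~\ref{l:coveringLowerBoundary} applies verbatim. Finiteness of $\mu(\Omega_{n,\alpha})$ then follows exactly as in Lemmas~\ref{l:finiteMassUpperSmallAlps} and \ref{l:finiteMassLowerSmallAlps}, since those arguments depend only on the orbit relations and on the initial inequality comparing $y_1$ (respectively $y_{\beta(0)}$) to $-1/\ell_0$ (respectively $-1/r_0$), both of which pass to the limit.

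The main obstacle is the bookkeeping at the collapsed right end of $\Omega^+$. One must be careful that when $\ell_{\underline{S}}\in\Delta_\alpha(-2,1)$ versus $\ell_{\underline{S}}\in\Delta_\alpha(-1,1)$, the enlarged $K_{\underline{S}}$ is mapped surjectively onto the full union of images that previously came from $K_{\underline{S}}\cup K_{\underline{S}+1}$; equivalently, one must check that the height that was split in Lemma~\ref{l:rightmostIntervalMeetsTwoCylinders} now reassembles into a single horizontal edge at height $y_{\underline{S}}=y_{\underline{S}+1}$, and that the lamination of $\mathcal T(\mathcal B_{-1})$ above $\mathcal T(\mathcal B_{-2})$ is consistent with this reassembly. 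Granted this check, bijectivity follows from the invertibility of $\mathcal T_\alpha$ combined with the matching of images to $\Omega^+\cup\Omega^-$ on all blocks.
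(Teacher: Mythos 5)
Your overall strategy --- adapt the block/lamination/boundary-covering arguments of \S\ref{s:MainForSmallAlps} to the endpoint $\alpha=\eta_{k,v}$ --- is the same one the paper uses (the paper simply points to the preceding sections after recording the two key identities). But your description of the mechanism contains a real error. You assert that $y_{\underline{S}}=-\ell_0(\eta_{k,v})=y_{\underline{S}+1}$ ``in the limit'' and that the two rightmost rectangles merge because their heights become equal. This is false. The heights $y_a$ of Definition~\ref{d:topYvalues} are defined via the $T_{\zeta_{k,v}}$-orbit and are \emph{constant} over the closure of $J_{k,v}$; there is no limit to take. One has $y_{\underline{S}}=-\ell_{i_2}(\zeta_{k,v})=-\ell_0(\eta_{k,v})$ (Corollary~\ref{c:secondHighest}) and $y_{\underline{S}+1}=-\ell_0(\zeta_{k,v})$, and since $\zeta_{k,v}<\eta_{k,v}$ these differ; indeed $y_{\underline{S}}<y_{\underline{S}+1}$ strictly by Lemma~\ref{l:topHeightsIncrease}. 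Corollary~\ref{c:contAtRightEndPtSyncIntervalSmallAlps} records exactly this gap: the Hausdorff limit of $\Omega_\alpha$ as $\alpha\to\eta_{k,v}^{-}$ differs from $\Omega_{\eta_{k,v}}$ by the degenerate segment $\{r_0(\eta_{k,v})\}\times[-\ell_0(\eta_{k,v}),-\ell_0(\zeta_{k,v})]$, which is nonempty precisely because the two heights do not coincide. What actually produces ``one less height'' is that the $T_{\eta_{k,v}}$-orbit of $\ell_0(\eta_{k,v})$ is purely periodic of period $\underline{S}$ (so $\ell_{\underline{S}}=\ell_0$ and only $\underline{S}$ distinct values exist), equivalently $\ell_{\underline{S}}(\alpha)\to r_0(\eta_{k,v})$ as $\alpha\to\eta_{k,v}^-$ so that $K_{\underline{S}+1}=[\ell_{\underline{S}},r_0)$ shrinks to a point. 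The rectangle over $K_{\underline{S}+1}$ disappears by having zero width, not by matching heights with its neighbor.

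This error propagates into your lamination argument. You write that the ``common value, after merger, paired with both $y_{-\overline{S}-1}$ and $y_{-\overline{S}}$'' satisfies Lemma~\ref{l:lamEqsSameCexpon}; taken at face value this would force $y_{-\overline{S}-1}=y_{-\overline{S}}$, contradicting Lemma~\ref{l:bottomHeightsIncrease}. The correct observation is that at $\alpha=\eta_{k,v}$ the first simplified digit of $\ell_{\underline{S}}=\ell_0$ is $-1$ and the first simplified digit of $r_{\overline{S}}$ is $k+1$ (from $A^{k+1}C\cdot r_{\overline{S}}(\eta_{k,v})=r_1(\eta_{k,v})$), so that every ``easy'' block $\mathcal B_i$, $i\notin\{-1,-2,k,k+1\}$, has top boundary at the single height $y_{\underline{S}}$ and bottom boundary at the single height $y_{-\overline{S}-1}$; the case split of Lemma~\ref{l:laminationInEasyCase} collapses to one case, handled by Lemma~\ref{l:harderHeightPairDifference} alone (note also that the relation $u=s-1$ used in that proof, a consequence of synchronization, fails at $\eta_{k,v}$ and is no longer needed). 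Your treatment of the boundary-covering lemmas and the finiteness of mass is otherwise sound.
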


\begin{figure}[h]
\scalebox{.8}{
\noindent
\begin{tabular}{ll}
\begin{tikzpicture}[x=3.5cm,y=5cm] 
\draw  (-.32, 0)--(-.32, 0.2); 
\draw  (-.32, 0.2)--(-.15, 0.2);
\draw  (-.15, 0.2)--(-.15, 0.3);
\draw  (-.15, 0.3)--(-.1, 0.3);
\draw  (-.1, 0.3)--(-.1, 0.4);
\draw  (-.1, 0.4)--(-.05, 0.4);  
\draw  (0.0, 0.45)--(0.0, 0.5);    
\draw  (0.0, 0.5)--(0.15, 0.5); 
\draw  (0.15, 0.5)--(0.15, 0.6); 
\draw  (0.15, 0.6)--(0.2, 0.6);  
\draw  (0.3, 0.65)--(0.3, 0.7);      
\draw  (0.3, 0.7)--(0.35, 0.7);  
\draw  (0.35, 0.7)--(0.35, 0.8);    
\draw  (0.35, 0.8)--(0.42, 0.8);   
\draw[blue, line width = 2] (0.42, 0.8)--(0.55, 0.8); 
\draw  (0.55, 0.8)--(1.2, 0.8);  
\draw[red, line width = 2] (1.2, 0.8)--(1.4, 0.8); 
\draw  (1.4, 0.8)--(1.4, 0);  
\draw[thin,dashed] (0, 0)--(0,0.5);     
\draw[thin,dashed] (0.55, 0)--(0.55, 0.8); 
\draw[thin,dashed] (1.2, 0)--(1.2, 0.8); 
\node at (-0.15, 0.12) {$-1$};    
\node at (.35, 0.12) {$-2$};           
\node at (1.3, 0.12) {$k$};   
\node at (-.55,   0.25) {$(\ell_0, y_1)$};           
\node at (-0.4, 0.55) {$(\ell_{\underline{S}-1}, y_{\tau(\underline{S}-1)})$};   
\node at (0.25,   .9) {$(\ell_{i_{\underline{S}}}, y_{\underline{S}})$}; 
\node at ( 1.45,   .9) {$(r_0,  -\ell_0)$}; 
 \foreach \x/\y in {-.32/0.2,  0.0/0.5, 0.35/0.8, 1.4/0.8%
} { \node at (\x,\y) {$\bullet$}; } 
\end{tikzpicture}
&\;\;\;
\begin{tikzpicture}[x=3.5cm,y=5cm] 
\draw  (-.32, 0.6)--(-.32, 0); 
\draw[blue, line width = 2] (-0.32, 0)--(-0.1, 0.0); 
\draw  (-.32, 0)--(0.55, 0); 
\draw[red, line width = 2] (0.55, 0)--(0.75, 0); 
\draw  (0.75, 0)--(0.75, 0.05); 
\draw  (0.75, 0.05)--(0.78, 0.05); 
\draw  (0.8, 0.1)--(0.8, 0.15); 
\draw  (0.8, 0.15)--(0.9, 0.15);   
\draw  (0.9, 0.15)--(0.9, 0.2);  
\draw  (0.9, 0.2)--(1.05, 0.2);   
\draw  (1.05, 0.2)--(1.05, 0.25);  
\draw  (1.05, 0.25)--(1.1, 0.25); 
\draw  (1.2, 0.3)--(1.2, 0.35); 
\draw  (1.2, 0.35)--(1.25, 0.35);     
\draw  (1.25, 0.35)--(1.25, 0.4);  
\draw  (1.25, 0.4)--(1.3, 0.4); 
\draw  (1.3, 0.4)--(1.4, 0.4);  
\draw  (1.4, 0.4)--(1.4, 0.6);   
\draw[thin,dashed] (0.55, 0)--(0.55, 0.6);  
\draw[thin,dashed] (0.95, 0.2)--(0.95, 0.6); 
\draw[thin,dashed] (-0.1, 0)--(-0.1, 0.6);  
\node at ( .75, 0.5) {$k+1$};    
\node at (1.1, 0.5) {$k$};   
\node at ( -.23, 0.5) {$-1$};  
\node at (-.38,   -0.15) {$(\ell_0, -r_0)$};          
\node at (0.9,   -0.15) {$(r_{\overline{S}}, y_{-\overline{S}-1})$};          
\node at (1.28, 0.1) {$(r_{\overline{S}-1}, y_{\beta(\overline{S}-1)})$}; 
\node at (1.5,  .3) {$(r_0,  y_{-1})$}; 
 \foreach \x/\y in {-.32/0,   0.75/0, 1.05/0.2, 1.4/0.4%
} { \node at (\x,\y) {$\bullet$}; } 
 
\end{tikzpicture} 
\end{tabular}
}
\caption{Schematic representations (not to scale) showing the most important vertices of the tops and bottoms of blocks, for  $\alpha = \eta_{k,v}$ (and $|v|>1$).    
 See Proposition~\ref{p:OmegaForEta}.   Interior label  $i$ denotes partitioning ``block" $\mathcal B_i$, see Subsection~\ref{ss:theBlocks} for definitions.  Thick line segments of same color have equal images under $\mathcal T_{n,\eta_{k,v}}$; the topmost thick blue segment in the left figure  has left endpoint at $(\,(A^{-2}C)^{-1}\cdot \ell_1, -\ell_0)$. }
\label{f:topBottomVerticesSmallAlpsEta}%
\end{figure}

\medskip 
\begin{Cor}\label{c:contAtRightEndPtSyncIntervalSmallAlps}   Fix $n\ge 3,  k \in \mathbb N, v \in \mathcal V$.  Then the union of 
$\Omega_{\eta_{k,v}}$ with the line segment $r_0(\eta_{k,v}) \times [-\ell_0(\eta_{k,v}), -\ell_0(\zeta_{k,v})]$ is the limit, with respect to the Hausdorff metric topology on compact subsets of $\mathbb R^2$, of $\Omega_{\alpha}$ as $\alpha$ tends to $\eta_{k,v}$ from the left.
\end{Cor}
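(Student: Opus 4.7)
The plan is to decompose $\Omega_\alpha = \Omega_\alpha^+ \cup \Omega_\alpha^-$ and show Hausdorff convergence of each part as $\alpha \to \eta^-$, where I write $\eta = \eta_{k,v}$ and $\zeta = \zeta_{k,v}$. Since the heights $y_a$ are defined from the $T_\zeta$-orbit of $\ell_0(\zeta)$ and the heights $y_b$ from the $T_\eta$-orbit of $r_0(\eta)$, they are all independent of $\alpha \in J_{k,v}$. Only the bases of the rectangles in $\Omega_\alpha$ vary with $\alpha$: each base endpoint is the image of $\ell_0(\alpha) = (\alpha - 1)t$ or $r_0(\alpha) = \alpha t$ under a fixed product of M\"obius matrices, the product being determined by the common prefixes of the expansions of $\ell_0(\alpha)$ and $r_0(\alpha)$ throughout $J_{k,v}$. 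These fixed M\"obius transformations are continuous, so such endpoints depend continuously on $\alpha$. This immediately yields $\Omega_\alpha^- \to \Omega_\eta^-$ (all $r_j(\alpha)$ for $0 \le j \le \overline{S}$ are continuous at $\eta$), and likewise $K_a(\alpha) \times [0, y_a] \to K_a(\eta) \times [0, y_a]$ for $a = 1, \ldots, \underline{S}-1$, since the bases involve only $\ell_i(\alpha)$ with $i < \underline{S}$.

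The crux lies in the two rightmost upper rectangles, indexed $a \in \{\underline{S}, \underline{S}+1\}$, both of which involve the point $\ell_{\underline{S}}(\alpha)$; this point is \emph{discontinuous} at $\alpha = \eta$, and pinning down its one-sided limit is the hardest step. By \eqref{e:lowerDetaExpansion}, the purely periodic $\eta$-expansion of $\ell_0(\eta)$ terminates each period with $-2$, whereas the fixed prefix $\underline{d}(k,v)$ of the $\alpha$-expansion throughout $J_{k,v}$ terminates with $-1$. Hence $\ell_{\underline{S}-1}$ crosses a cylinder boundary at $\alpha = \eta$: while $\ell_{\underline{S}}(\alpha) = A^{-1}C \cdot \ell_{\underline{S}-1}(\alpha)$ for $\alpha \in J_{k,v}$, at $\eta$ one has $\ell_{\underline{S}}(\eta) = A^{-2}C \cdot \ell_{\underline{S}-1}(\eta) = \ell_0(\eta)$. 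Since $\ell_{\underline{S}-1}(\alpha)$ is itself a fixed M\"obius image of $\ell_0(\alpha)$ and thus continuous at $\eta$, the identity $A^{-1}C = A \cdot A^{-2}C$ yields
\[
\lim_{\alpha \to \eta^-} \ell_{\underline{S}}(\alpha) \;=\; A \cdot \ell_0(\eta) \;=\; \ell_0(\eta) + t \;=\; r_0(\eta).
\]

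Assembling the pieces then finishes the argument. Since $r_0(\alpha) \to r_0(\eta)$ as well, the base $K_{\underline{S}+1}(\alpha) = [\ell_{\underline{S}}(\alpha), r_0(\alpha)]$ degenerates to the point $\{r_0(\eta)\}$, so the tall rectangle $K_{\underline{S}+1}(\alpha) \times [0, -\ell_0(\zeta)]$ Hausdorff-collapses onto the vertical segment $\{r_0(\eta)\} \times [0, -\ell_0(\zeta)]$. Meanwhile $K_{\underline{S}}(\alpha) = [\ell_{i_{\underline{S}}}(\alpha), \ell_{\underline{S}}(\alpha)]$ grows to $[\ell_{i_{\underline{S}}}(\eta), r_0(\eta)]$, exactly matching the redefinition of $K_{\underline{S}}$ at $\eta$ in Proposition~\ref{p:OmegaForEta}, with height $y_{\underline{S}} = -\ell_0(\eta)$ by Corollary~\ref{c:secondHighest}. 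Thus the Hausdorff limit of $\Omega_\alpha$ is $\Omega_\eta \cup \bigl(\{r_0(\eta)\} \times [0, -\ell_0(\zeta)]\bigr)$; since $\Omega_\eta^+$ already contains the right edge $\{r_0(\eta)\} \times [0, -\ell_0(\eta)]$ of its rightmost rectangle, the non-redundant portion is precisely $\{r_0(\eta)\} \times [-\ell_0(\eta), -\ell_0(\zeta)]$, as claimed.
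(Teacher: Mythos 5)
Your proof is correct and follows essentially the same route as the paper's: decompose $\Omega_\alpha$ into the fixed-height rectangles, note that all base endpoints except those involving $\ell_{\underline S}(\alpha)$ are continuous across the closure of $J_{k,v}$, and show that the extra rectangle $K_{\underline S+1}\times[0,-\ell_0(\zeta)]$ collapses onto the vertical segment $\{r_0(\eta)\}\times[0,-\ell_0(\zeta)]$. The one place you add value is the explicit derivation $\lim_{\alpha\to\eta^-}\ell_{\underline S}(\alpha) = A^{-1}C\cdot\ell_{\underline S-1}(\eta) = A\cdot\ell_{\underline S}(\eta) = A\cdot\ell_0(\eta) = r_0(\eta)$ via the digit change $(-1)^{n-2}\leadsto(-1)^{n-3},-2$ at the endpoint and the pure periodicity in \eqref{e:lowerDetaExpansion}; the paper asserts this convergence without spelling out the mechanism, so your fleshed-out version is a faithful and slightly more informative rendition of the same argument.
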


\begin{proof}   When evaluating the limit from the left as $\alpha \to \eta_{k,v}$, we can assume that $\alpha \in J_{k,v}$.     Recall that we  define both $\Omega_{\alpha}, \Omega_{\eta_{k,v}}$ as the union of a top piece and a bottom piece: in simple notation $\Omega^+\cup \Omega^-$.      The bottom piece of each is defined as in Definition~\ref{d:bottomYvalues}.  In particular,  each is the union of the same number of rectangles; the vertices of these rectangles are such that their first coordinates clearly vary continuously in $\alpha$, while their second coordinates are fixed throughout the closure of $J_{k,v}$.     The top pieces are similarly defined,  however for $\alpha \in J_{k,v}$ the number of rectangles is greater by one, with the ``extra" rectangle being   $[\ell_{i_{\underline{S}+1}}(\alpha), r_0(\alpha)] \times [0, -\ell_0(\zeta_{k,v})]$.   Since $\ell_{i_{\underline{S}+1}}(\alpha)$ and  $r_0(\alpha)$ both converge to $r_0(\eta_{k,v})$,  the limit of this rectangle is simply a vertical line segment.  The remaining rectangles comport themselves continuously in the manner discussed above for  the rectangles comprising the bottom portion.   The result thus holds.  
 \end{proof}

\section{Continuity of mass, for small $\alpha$}\label{s:ContSmall}

We aim to show that the  bijectivity domains  vary continuously at points of non-synchronization.     Since these are limit points of sequences of the types of $\alpha$ which we have already considered,  we bootstrap on the above work. 
  
  We prove Proposition~\ref{p:NaturallyErgodicEndPts},  giving in particular the ergodicity for the systems indexed by endpoints of synchronization intervals, the $\zeta_{k,v}$ and $\eta_{k,v}$,   by applying a main result of \cite{CKStoolsOfTheTrade}.   After establishing prerequisite results,  we use this  to show Lemma~\ref{l:admissibility}: Reversing words is a bijection for these systems' languages of admissible words, in the two cases of the letters being restricted to  $\{-1, -2\}$,  or to $\{k, k+1\}$.  A limiting argument then shows that this is also true for the systems of  the non-synchronization $\alpha$-values.  This bijection is key in the proof of Proposition~\ref{p:fillUpFromZ},   which states that the domain we define there for each non-synchronization $\alpha$  is a domain of bijectivity  for its associated 2-dimensional map.   The reversal of words is naturally required in that argument as it is related to the piecewise action on $y$-values, see Lemma~\ref{l:imagesAndReversal}.  
  
  This section closes with its main result, Theorem~\ref{t:continuity}, the continuity of the function taking $\alpha$ to the mass of its associated planar domain.

\subsection{Ergodicity at endpoints of synchronization intervals} 

  Here and later, we rely on (\cite{CKStoolsOfTheTrade}, Theorem~2.3), one of whose hypotheses is that the interval map in question satisfy the ``bounded non-full cylinders" condition.
 
\begin{Rmk}\label{rmk:bddNonfullCylinders}   The bounded non-full cylinders condition of \cite{CKStoolsOfTheTrade} is that the orbits of the endpoints of all non-full cylinders avoid  the interior of some full cylinder.    All of our maps have infinitely many full cylinders.  Hence, the condition is certainly satisfied by any of our maps  whose non-full cylinders have endpoints with expansions involving only finitely many digits. Recall, see  (\cite{CaltaKraaikampSchmidt}, \S~1.6.2), that for any of our maps, the only possible non-full cylinders are those containing:  $r_0(\alpha)$; $\ell_0(\alpha)$ and hence,  for large $\alpha$, $\mathfrak b_{\alpha}$.    Furthermore, the remaining endpoint of each of these possible non-full cylinders always meet the orbit of one of the endpoints. 
 \end{Rmk}

Recall that our underlying two-dimensional measure is $\mu$,  see ~\eqref{e:muDefd}.   
 
\begin{Prop}\label{p:NaturallyErgodicEndPts}   Fix $n\ge 3,  k \in \mathbb N, v \in \mathcal V$ and $\alpha \in \{\zeta_{k,v},\eta_{k,v}\}$.   
Let $\mu_{\alpha}$  be the normalization of $\mu$ to a probability measure on $\Omega_{n, \alpha}$, and $\mathscr B'_{\alpha}$ denote the Borel sigma algebra on $\Omega_{\alpha}$.      Then the system $(\mathcal T_{\alpha}, \Omega_{\alpha}, \mathscr B'_{\alpha}, \mu_{\alpha})$ is ergodic.   Furthermore,    this two dimensional system is the natural extension of  $(T_{\alpha},\mathbb I_{\alpha},  \mathscr B_{\alpha}, \nu_{\alpha})$, where $\nu_{\alpha}$ is the marginal measure of $\mu_{\alpha}$ and $\mathscr B_{\alpha}$  the Borel sigma algebra on $\mathbb I_{\alpha}$.   In particular, the one dimensional system is ergodic. 
\end{Prop}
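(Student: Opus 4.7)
The plan is to invoke Theorem~2.3 of \cite{CKStoolsOfTheTrade}, which bundles together bijectivity, finite mass, eventual expansiveness and the bounded non-full cylinders condition to yield both that the planar system is the natural extension of the interval system and that the planar system is ergodic. Ergodicity of the one-dimensional system will then be automatic, since every factor of an ergodic system is ergodic and the interval system is, by construction, a factor of the planar one via the projection to the first coordinate.

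Concretely, I would verify the hypotheses in order. Bijectivity of $\mathcal T_\alpha$ on $\Omega_\alpha$, up to $\mu$-measure zero, together with $\mu(\Omega_\alpha)<\infty$, is the content of Proposition~\ref{p:OmegaForZeta} for $\alpha=\zeta_{k,v}$ and Proposition~\ref{p:OmegaForEta} for $\alpha=\eta_{k,v}$. Eventual expansiveness of $T_\alpha$ is Proposition~\ref{p:expansive} (applied at a single parameter value). For the bounded non-full cylinders condition, I would appeal to Remark~\ref{rmk:bddNonfullCylinders}: since $T_\alpha$ has infinitely many full cylinders, it suffices to observe that the endpoints of every non-full cylinder have $\alpha$-expansion realizing only finitely many distinct values. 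At $\alpha=\zeta_{k,v}$, the synchronization identity $R_{k,v}\cdot r_0(\zeta_{k,v})=\ell_0(\zeta_{k,v})$ shows that after $\overline S$ steps the $r_0$-orbit joins the $\ell_0$-orbit, and by \eqref{e:lowerDzetaExpan} the latter is eventually periodic, so the combined orbit is a finite set. At $\alpha=\eta_{k,v}$, the matrix $L_{k,v}$ fixes $r_0(\eta_{k,v})$, so the $r_0$-orbit is purely periodic, while Corollary~\ref{c:ellZeroEtaAsEllJzetaSmallAlps} identifies $\ell_0(\eta_{k,v})$ with an element of the (preperiodic) $\ell_0$-orbit at $\zeta_{k,v}$, and again we obtain a finite combined orbit; either way the endpoints of non-full cylinders trivially miss the interior of some full cylinder.

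With the three hypotheses in hand, Theorem~2.3 of \cite{CKStoolsOfTheTrade} delivers both conclusions at once: $(\mathcal T_\alpha,\Omega_\alpha,\mathscr B'_\alpha,\mu_\alpha)$ is the natural extension of $(T_\alpha,\mathbb I_\alpha,\mathscr B_\alpha,\nu_\alpha)$, the marginal $\nu_\alpha$ is indeed the appropriate $T_\alpha$-invariant probability, and the planar system is ergodic; projecting to the first coordinate transfers ergodicity to the interval map. The main obstacle I foresee is verifying the bounded non-full cylinders condition at $\eta_{k,v}$, where synchronization in the usual sense fails and one must instead combine the periodicity of the $r_0$-orbit with Corollary~\ref{c:ellZeroEtaAsEllJzetaSmallAlps} to see that the $\ell_0$-orbit is also eventually periodic; once that finiteness is in place, the rest of the argument is a direct application of the cited theorem.
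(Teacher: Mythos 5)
Your proposal is correct and follows essentially the same route as the paper: the paper's (sketch) proof likewise verifies the hypotheses of (\cite{CKStoolsOfTheTrade}, Theorem~2.3), observing that for $\alpha\in\{\zeta_{k,v},\eta_{k,v}\}$ the digits of $\underline{d}^{\alpha}_{[1,\infty)}$ and $\overline{d}^{\alpha}_{[1,\infty)}$ lie in the finite set $\{-1,-2,k,k+1\}$, so the non-full cylinders are bounded in range, and then invoking the finiteness of $\mu(\Omega_{\alpha})$ and of its vertical fibers coming from Propositions~\ref{p:OmegaForZeta} and~\ref{p:OmegaForEta}. The only cosmetic difference is at $\eta_{k,v}$, where the periodicity of the $\ell_0$-expansion is read off directly from \eqref{e:lowerDetaExpansion}, making your detour through Corollary~\ref{c:ellZeroEtaAsEllJzetaSmallAlps} (which concerns the $T_{\zeta_{k,v}}$-orbit rather than the $T_{\eta_{k,v}}$-orbit) unnecessary.
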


\begin{proof} [Sketch]   For the given values of $\alpha$,    the digits of $\overline{d}_{[1, \infty)}^{\alpha}$ and $\underline{d}_{[1, \infty)}^{\alpha}$ are contained in $\{-1, -2, k, k+1\}$.  In particular the non-full cylinders of $T_{\alpha}$ are bounded in range.   Due to this and the finiteness of $\mu(\Omega_{\alpha})$ and of its vertical fibers, the hypotheses of (\cite{CKStoolsOfTheTrade}, Theorem~2.3) are met, thus giving the result here.
\end{proof}

We will use the ergodicity to argue that a subsystem gives rise to another description of the  $\Omega_{\alpha}$ when $\alpha = \zeta_{k,v}$ or $\eta_{k,v}$.    This will then be key to showing continuity of $\alpha \mapsto \Omega_{\alpha}$, as we will define the two-dimensional domain for non-synchronizing $\alpha$ analogously. 

\subsection{Bijectivity domain for non-synchronizing $\alpha$}\label{ss:domNonSync}   In this subsection, we announce the bijectivity domain for non-synchronizing $\alpha$.

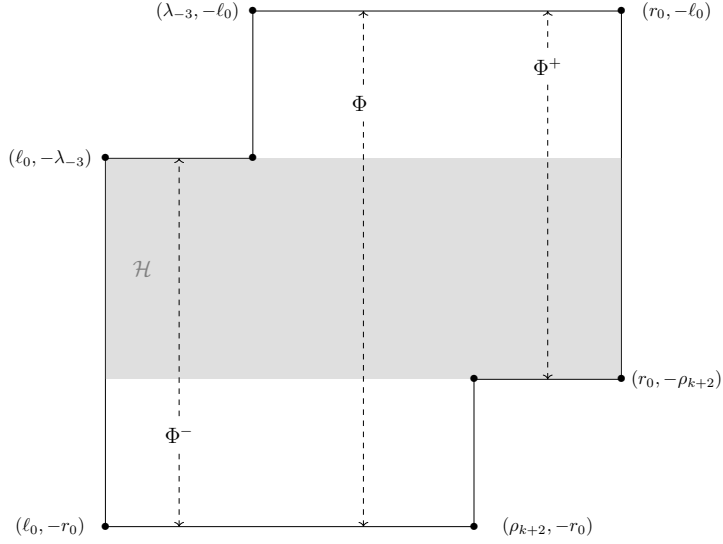
\begin{figure}
\scalebox{.7}{
\begin{tikzpicture}[x=7cm,y=7cm] 
\draw  (0, 0)--(0, 1.0); 
\draw  (0, 1.0)--(0.4, 1);
\draw  (0.4,1)--(0.4, 1.4);
\draw  (0.4, 1.4)--(1.4, 1.4);
\draw (1.4, 1.4)--(1.4, 0.4);
\draw (1.4, 0.4)--(1.0, 0.4);
\draw (1.0, 0.4)--(1.0, 0.0);
\draw (1.0, 0.0)--(0,0);
\fill [opacity= 0.25, gray] (0, 1) -- (1.4,1) --(1.4, 0.4)--(0, 0.4)--cycle;
\node [gray] at (0.1,  .7) {\Large{$\mathcal H$}}; 
\draw[<-, dashed] (0.2, 0)--(0.2, .2);
\node at (0.2,  .25) {\Large{$\Phi^{-}$}}; 
\draw[->, dashed] (0.2, 0.3)--(0.2, 1.0);
\draw[<-, dashed] (0.7, 0)--(0.7, 1.1);
\node at (0.69,  1.15) {\Large{$\Phi$}}; 
\draw[->, dashed] (0.7, 1.2)--(0.7, 1.4);
\draw[<-, dashed] (1.2, 0.4)--(1.2, 1.2);
\node at (1.2,  1.25) {\Large{$\Phi^{+}$}}; 
\draw[->, dashed] (1.2, 1.3)--(1.2, 1.4);
 \foreach \x/\y in {0/0,   0/1, 0.4/1, 0.4/1.4, 1.4/1.4, 1.4/0.4,1.0/0.4,1.0/0.0%
} { \node at (\x,\y) {$\bullet$}; } 
\node at ( -0.15, 0) {$(\ell_0, -r_0)$};  
\node at ( -0.15, 1) {$(\ell_0, -\lambda_{-3})$};  
\node at ( 0.25, 1.4) {$(\lambda_{-3}, -\ell_0)$};  
\node at (1.55, 1.4) {$(r_0, -\ell_0)$};  
\node at (1.55, 0.4) {$(r_0, -\rho_{k+2})$};  
\node at (1.2, 0) {$(\rho_{k+2}, -r_0)$};  
\end{tikzpicture}
}
\caption{Schematic representation of $\mathcal Z = \mathcal Z_{\alpha}$, see Definition~\ref{d:notationForEndpoints}.   For $\alpha < \gamma_{n}$ non-synchronizing or the endpoint of a synchronization interval,  the  central ``vertical" region $[\lambda_{-3}, \rho_{k+2}) \times \Phi$  is sent by $\mathcal T_{\alpha}$ to the (gray) ``horizontal" region $\mathcal H = \mathcal H_\alpha = \mathbb I_{\alpha} \times (\Phi^{-}\cap \Phi^{+})$ while the complementary pieces are sent by powers of  $\mathcal T_{\alpha}$ to fill in a region that strictly includes the complement of $\mathcal H_\alpha$ in $\mathcal Z_{\alpha}$.}
\label{f:theZ}%
\end{figure}

We introduce some more notation and introduce basic sets for our construction.    For legibility,  we suppress various subscripts, trusting that this causes no confusion for the reader. 
\begin{Def}\label{d:notationForEndpoints}   Fix $n\ge 3,  k \in \mathbb N$ and $\alpha < \gamma_{n}$. Let $\mathcal L= \mathcal L_{\alpha}$ denote the language of admissible (simplified) digits.   For finite length $u \in \mathcal L$,  as usual let $\Delta_{\alpha}(u)$ be the corresponding $T_{\alpha}$-cylinder.    Let $\lambda_u,  \rho_u$ denote the left- and right endpoints, respectively of $\Delta_{\alpha}(u)$.      

We then define 
\[ \Phi = [ -r_0, -\ell_0], \; \Phi^{-} = [-r_0, - \lambda_{-3}],  \; \text{and}\; \Phi^{+} = [-\rho_{k+2}, -\ell_0]\,,\]  
\[ \mathcal H= \mathbb I_{\alpha} \times (\Phi^{-}\cap \Phi^{+}) \]
and 
\[\mathcal Z =  [\ell_0, \lambda_{-3})\times \Phi^{-} \;\cup \;[\lambda_{-3}, \rho_{k+2}) \times \Phi  \;\cup \; [\rho_{k+2}, r_0) \times \Phi^{+},\]
see Figure~\ref{f:theZ}.   Note that $\mathcal Z \supset \mathcal H$.
\end{Def}

For each non-synchronizing $\alpha$, we determine a bijectivity domain for its two dimensional map.    Recall that we allow ourselves the freedom to write set equalities (strict inclusion,  disjoint unions, etc) when in fact these only hold up to $\mu$-measure zero (and usually, the measure zero exceptional set is easily found).    As usual, for any finite set $S$ of digits, we let $S^*$ denote the set of finite words in $S$. 
 \begin{Prop}\label{p:fillUpFromZ}     Fix a non-synchronizing $\alpha$ with $\alpha <\gamma_{n}$.  
 With notation as above, let $\mathcal T = \mathcal T_{\alpha}$ and define 
 \[\Omega = \overline{\cup_{j=1}^{\infty}\, \mathcal T^j(\, \mathcal Z)\,}\,.\]
 Then    $\Omega \supset \mathcal Z$ and $\mathcal T$ is bijective  up to $\mu$-measure zero on $\Omega$.   Furthermore, 
 \begin{equation} \label{e:jtUnion} \Omega = \mathcal H \, \bigsqcup \,  \overline{\sqcup_{u\in \{-1,-2\}^{^*}\cap \mathcal L_{\alpha}}\; \mathcal T^{|u|}(\, \Delta_{\alpha}(u) \times \Phi^{-} \,)} \,\bigsqcup \, \overline{\sqcup_{u\in \{k+1,k\}^{^*}\cap \mathcal L_{\alpha}}\; \mathcal T^{|u|}(\, \Delta_{\alpha}(u) \times \Phi^{+} \,)}.
\end{equation}
\end{Prop}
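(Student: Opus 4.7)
The plan is to view $\mathcal Z$ as a seed whose forward iterates, by progressively consuming the extremal digits $\{-1,-2\}$ and $\{k,k+1\}$ from the $x$-coordinate, tile the domain displayed in \eqref{e:jtUnion}. Three movements carry this out: first I show $\mathcal T(\mathcal Z)\supseteq \mathcal H$; second, I verify by induction on word length that each stratum on the right-hand side of \eqref{e:jtUnion} arises as $\mathcal T^{|u|}$ of a piece of $\mathcal Z$, with disjointness controlled by the word-reversal bijection; third, I conclude bijectivity.

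For the first movement, note that $[\lambda_{-3},\rho_{k+2})$ is partitioned up to measure zero by the cylinders $\Delta_\alpha(i,1)$ with $i\notin\{-1,-2,k,k+1\}$. Because $\alpha<\gamma_n$, every $\alpha$-digit has $l=1$, and per Remark~\ref{rmk:bddNonfullCylinders} the only possibly non-full cylinders contain $\ell_0(\alpha)$ or $r_0(\alpha)$; in particular every such interior $\Delta_\alpha(i,1)$ is full. Consequently $\mathcal T(\Delta_\alpha(i,1)\times\Phi)=\mathbb I_\alpha\times(RA^iCR^{-1}\cdot\Phi)$, and Lemma~\ref{l:conjByRofAtoPc} identifies $RA^iCR^{-1}\cdot\Phi$ with $-\Delta_\alpha(i,1)$. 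Assembled over the admissible $i$'s, these images tile $\mathbb I_\alpha\times[-\rho_{k+2},-\lambda_{-3}]=\mathcal H$ up to $\mu$-measure zero.

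For the second movement, fix an admissible $u=u_1\cdots u_m\in\{-1,-2\}^*$. A point $(x,y)\in\Delta_\alpha(u)\times\Phi^-$ has first-coordinate image $A^{u_m}C\cdots A^{u_1}C\cdot x$ after $m$ iterations, which sweeps $T_\alpha^m(\Delta_\alpha(u))$ and equals $\mathbb I_\alpha$ whenever the cylinder is full. The second coordinate becomes $RA^{u_m}C\cdots A^{u_1}CR^{-1}\cdot y$; repeated application of Lemma~\ref{l:conjByRofAtoPc} rewrites this as $-(A^{u_1}C\cdots A^{u_m}C)^{-1}\cdot(-y)$, so as $-y$ traverses $\mathbb I_\alpha$ the value traces out $-\Delta_\alpha(\overleftarrow u)$; restricting to $y\in\Phi^-$ yields the appropriate horizontal slice. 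Crucially, reversal $u\mapsto\overleftarrow u$ is a bijection of admissible words in $\{-1,-2\}^*$ by Lemma~\ref{l:admissibility}, so as $u$ ranges over this set so does $\overleftarrow u$, and the rectangles $\mathcal T^{|u|}(\Delta_\alpha(u)\times\Phi^-)$ disjointly tile the portion of $\Omega$ above $\mathbb I_\alpha$ that is not in $\mathcal H$. The same calculation with $\Phi^+$ and $\{k+1,k\}^*$ handles the right-band contribution.

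The main obstacle is the word-reversal bijection itself: admissibility with respect to $T_\alpha$ is governed simultaneously by both endpoint orbits $\ell_0(\alpha)$ and $r_0(\alpha)$, and its restriction to one-sided alphabets is the substantive content of Lemma~\ref{l:admissibility}. For the parameters $\zeta_{k,v}$ and $\eta_{k,v}$ this stems from the palindromic structure of words in $\mathcal V$ together with the ergodicity of Proposition~\ref{p:NaturallyErgodicEndPts}; a limiting argument then propagates it to non-synchronizing $\alpha$. With \eqref{e:jtUnion} in hand, $\Omega\supseteq\mathcal Z$ is immediate (the boundary strips of $\mathcal Z$ appear as the shortest-word terms, and $\mathcal H\subseteq\Omega$ by the first movement), and bijectivity of $\mathcal T$ on $\Omega$ up to $\mu$-measure zero follows: each rectangle on the right-hand side is the image under a single-branch, hence injective, M\"obius action of $\mathcal T^{|u|}$, the rectangles are pairwise disjoint, and the union is forward-invariant.
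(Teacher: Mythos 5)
Your proposal follows the same architecture as the paper's proof: the central strip of $\mathcal Z$ maps under one application of $\mathcal T$ onto $\mathcal H$ (the paper's Lemma~\ref{l:upTheMiddle}); the iterated images of the left and right bands are computed via the reversed-word formula for the $y$-coordinate action (Lemma~\ref{l:imagesAndReversal}); and disjointness of the resulting strata is governed by the word-reversal bijection on the one-sided alphabets, which you correctly trace to Lemma~\ref{l:admissibility} together with a limiting argument (Corollary~\ref{c:symmetryInLimitAlph}).

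There is, however, a gap in your second movement. You write that restricting to $y\in\Phi^-$ yields ``the appropriate horizontal slice'' of $-\Delta_\alpha(\overleftarrow u)$, and then assert that the rectangles $\mathcal T^{|u|}(\Delta_\alpha(u)\times\Phi^-)$ disjointly tile the remaining region. But the word-reversal bijection alone does not deliver this: you must actually compute that the slice is $-[\lambda_{\overleftarrow u,-3},\,\rho_{\overleftarrow u})$, and then prove that over $u\in\{-1,-2\}^*\cap\mathcal L_\alpha$ the intervals $[\lambda_{\overleftarrow u,-3},\,\rho_{\overleftarrow u})$ form a disjoint cover of $[\ell_0,\lambda_{-3}]$ up to measure zero. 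Without this, the slices $-\Delta_\alpha(\overleftarrow u)$ nest (since $\Delta_\alpha(\overleftarrow u,d)\subset\Delta_\alpha(\overleftarrow u)$), and your asserted tiling is false. The paper's Lemma~\ref{l:fillingUpI}, together with the explicit fiber formula in Lemma~\ref{l:imagesAndReversal}, is precisely what plugs this hole; Lemmas~\ref{l:minusOneAndTwoOnZ} and~\ref{l:kAndKplusOneOnZ} then package the result as the statement that the $y$-fiber over each $x\in[\mathscr L,r_0)$ (respectively $x\in[\ell_0,\mathscr R)$) is exactly $-[\ell_0,\lambda_{-3})$ (respectively $-[\rho_{k+2},r_0)$), from which both $\Omega\supset\mathcal Z$ and injectivity follow. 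Your outline would be correct after you insert these steps.
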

  The surjectivity of $\mathcal T$ on $\Omega$ follows from the definition of $\Omega$.  
 Key to proving the remaining statements of the proposition is to show that each $x\in \mathbb I_{\alpha}$ of $y$-fiber $\Phi$ in $\mathcal Z$ have exactly that same fiber in $\Omega$. 
  Our choice of the basic shapes of $\mathcal Z$ and $\mathcal H$ is motivated by Lemma~\ref{l:imagesAndReversal} below, which also begins the proof of \eqref{e:jtUnion}.   Together, Lemmas~\ref{l:minusOneAndTwoOnZ} and \ref{l:kAndKplusOneOnZ} establish the key step.   From this, both injectivity and  \eqref{e:jtUnion} then follow. %
  
 \medskip

We first introduce notation.  
\begin{Def}\label{d:notationForMatricesOfWords}    Given any simplified digit  $d$,  we let $M_d =  A^{d}C$.  For any finite word $u = u_1\, u_2\, \dots u_{|u|}$ with the $u_i \in \mathbb Z$,  we let 
$M_u = M_{u_{|u|}} \cdots M_{u_1}$.    We also define $\overleftarrow{u}$ to be the reversed word of $u$, thus  $\overleftarrow{u} = u_{|u|} \dots u_2\, u_1$. 
\end{Def}
 
 We have the following immediate implication of Lemma~\ref{l:conjByRofAtoPc}.     (Temporarily, we allow intervals to include $\infty$.) 
 \begin{Lem}\label{l:negAndReverse}   Let $u$ be a finite length word in nonzero integers and $x \in \mathbb R$.  Then $R M_u R^{-1}\cdot (-x) = - (M_{\overleftarrow{u}})^{-1} \cdot x$.  
 \end{Lem}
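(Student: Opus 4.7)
The plan is to reduce this to a single invocation of Lemma~\ref{l:conjByRofAtoPc}. First I would unwind the notation: with $u = u_1 u_2 \cdots u_{|u|}$, Definition~\ref{d:notationForMatricesOfWords} gives $M_u = M_{u_{|u|}} \cdots M_{u_1}$, so the literal product-reversal of this expression is $M_{u_1} M_{u_2} \cdots M_{u_{|u|}}$. On the other hand, the reversed word $\overleftarrow{u} = u_{|u|} \cdots u_1$ has matrix $M_{\overleftarrow{u}} = M_{(\overleftarrow{u})_{|u|}} \cdots M_{(\overleftarrow{u})_1} = M_{u_1} \cdots M_{u_{|u|}}$. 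Thus the word-reversal operator $\overleftarrow{\,\cdot\,}$ corresponds precisely to reversing the matrix product; in particular $\overleftarrow{M_u} = M_{\overleftarrow{u}}$, in the sense in which the symbol $\overleftarrow{X}$ was used in Lemma~\ref{l:conjByRofAtoPc}.

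With this identification in hand, I would apply Lemma~\ref{l:conjByRofAtoPc} to $X = M_u$: it says that $R M_u R^{-1} \cdot (-x_1) = -x_2$ if and only if $x_1 = M_{\overleftarrow{u}}\cdot x_2$. Setting $x_1 = x$ and solving for $x_2$ gives $x_2 = (M_{\overleftarrow{u}})^{-1}\cdot x$, which is exactly the claimed identity $R M_u R^{-1}\cdot(-x) = -(M_{\overleftarrow{u}})^{-1}\cdot x$. Since the statement is labelled an ``immediate implication'' of Lemma~\ref{l:conjByRofAtoPc}, there is no substantive obstacle; the only point requiring care is the index bookkeeping that aligns $\overleftarrow{M_u}$ with $M_{\overleftarrow{u}}$, which I would spell out explicitly (perhaps checking the length-two case $u = u_1 u_2$ by hand, where $M_u = M_{u_2}M_{u_1}$ and $M_{\overleftarrow{u}} = M_{u_1}M_{u_2}$) so that the reader sees the correspondence without ambiguity.
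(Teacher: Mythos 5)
Your proposal is correct and is exactly the paper's intended argument: the paper declares the lemma an ``immediate implication'' of Lemma~\ref{l:conjByRofAtoPc}, and your calculation simply makes that implication explicit. The index-bookkeeping step identifying $\overleftarrow{M_u}$ (in the sense of Lemma~\ref{l:conjByRofAtoPc}) with $M_{\overleftarrow{u}}$ is the only nontrivial point, and you have it right.
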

 
 \begin{Lem}\label{l:imagesAndReversal}    Fix $n\ge 3,  k \in \mathbb N$ and $\alpha < \gamma_{n}$.  Suppose that finite length $u$ is in $\mathcal L_{\alpha}$ and $a, b \in \overline{\mathbb I}_{\alpha}$ with $a<b$.    Then   
 \[ \mathcal T^{|u|}(\, \Delta_{\alpha}(u) \times [-b, -a]\,)  =   [\, M_{u}\cdot\lambda_u,   \, M_{u}\cdot \rho_u \,) \times    [- (M_{\overleftarrow{u}})^{-1}  \cdot b, -(M_{\overleftarrow{u}})^{-1} \cdot a].\]
 \end{Lem}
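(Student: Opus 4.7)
The plan is to evaluate $\mathcal T^{|u|}$ by composing its component-wise actions one step at a time, and then invoke Lemma~\ref{l:negAndReverse} to recast the resulting second-coordinate action in the form stated.

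First I would establish by induction on $|u|$ the intermediate identity
\[\mathcal T^{|u|}(x,y) \;=\; (M_u\cdot x,\; R M_u R^{-1}\cdot y)\qquad \text{for }x\in\Delta_\alpha(u).\]
For $|u|=1$ this is the very definition of $\mathcal T_\alpha$ on the branch indexed by the first (and only) digit of $u$. For the inductive step, write $u = u_1 u''$; admissibility of $u$ gives $T_\alpha(\Delta_\alpha(u))\subseteq\Delta_\alpha(u'')$, so after one application of $\mathcal T_\alpha$ we are in the branch $u''$ and the inductive hypothesis applies. Composing yields $M_{u_{|u|}}\cdots M_{u_1}=M_u$ on the $x$-coordinate and, by conjugation, $R M_u R^{-1}$ on the $y$-coordinate, matching our Definition~\ref{d:notationForMatricesOfWords}.

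Second, I would compute the image of the product $\Delta_\alpha(u)\times[-b,-a]$. For the first coordinate: each $M_d=A^dC$ has positive determinant, and on its defining branch it is the (increasing) inverse branch of $T_\alpha$; inducting, $M_u$ is monotone increasing on $\Delta_\alpha(u)$, so $M_u\cdot[\lambda_u,\rho_u)=[M_u\cdot\lambda_u,\,M_u\cdot\rho_u)$. For the second coordinate, apply Lemma~\ref{l:negAndReverse} to each endpoint,
\[ R M_u R^{-1}\cdot(-a)= -(M_{\overleftarrow{u}})^{-1}\cdot a,\qquad R M_u R^{-1}\cdot(-b)= -(M_{\overleftarrow{u}})^{-1}\cdot b. \]
Since $(M_{\overleftarrow{u}})^{-1}$ is likewise a composition of positive-determinant M\"obius transformations, it is monotone increasing on the relevant range, so $a<b$ gives $(M_{\overleftarrow{u}})^{-1}\cdot a<(M_{\overleftarrow{u}})^{-1}\cdot b$. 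Negating reverses the inequality and produces the interval $[-(M_{\overleftarrow{u}})^{-1}\cdot b,\,-(M_{\overleftarrow{u}})^{-1}\cdot a]$, as required.

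The only mild obstacle is justifying that these monotonicity statements hold globally on the intervals in question, that is, that no pole of $R M_u R^{-1}$ lies inside $[-b,-a]$. This is implicit in $\mathcal T^{|u|}(\Delta_\alpha(u)\times[-b,-a])$ being meaningful as a set; moreover, since $-b,-a\in[-r_0(\alpha),-\ell_0(\alpha)]$, the successive $y$-fibers remain in a region bounded away from the relevant poles, comparable to the bounded-mass arguments already made in Lemmas~\ref{l:finiteMassUpperSmallAlps} and~\ref{l:finiteMassLowerSmallAlps}. Taking this as given, the computation above goes through and gives exactly the product set claimed.
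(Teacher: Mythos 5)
Your proof is essentially the same as the paper's: expand $\mathcal T^{|u|}$ on $\Delta_\alpha(u)\times[-b,-a]$ to $(M_u\cdot x,\,RM_uR^{-1}\cdot y)$, use monotonicity for the first coordinate, and apply Lemma~\ref{l:negAndReverse} for the second. You simply make explicit the induction and the pole-avoidance point that the paper's very terse proof leaves implicit, both of which are correctly handled.
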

\begin{proof}   For $x \in \Delta_{\alpha}(u)$, we have that  $T^{|u|}(x) = M_{u}\cdot x$.    Since each $M_{u_j}$ defines an increasing function, the image of $\Delta_{\alpha}(u)$ under $T^{|u|}$ is the interval $[\, M_{u}\cdot\lambda_u,   \, M_{u}\cdot \rho_u \,)$.  Similarly,   the image of the $y$-coordinates is given by $y \mapsto R M_u R^{-1}\cdot y$.   Lemma~\ref{l:negAndReverse}  shows that our result holds.
\end{proof}

\begin{Lem}\label{l:upTheMiddle}   For each full cylinder  $\Delta_{\alpha}(j) $, we have that
 \[ \mathcal T_{\alpha}(\, \Delta_{\alpha}(j) \times \Phi \,)  = \mathbb I_{\alpha} \times -\Delta_{\alpha}(j).\]   Furthermore,  
 \[  \mathcal T_{\alpha}(\, [\lambda_{-3}, \rho_{k+2})\times \Phi  \,) = \mathcal H.\]
\end{Lem}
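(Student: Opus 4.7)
My plan is to reduce both claims to Lemma~\ref{l:imagesAndReversal} applied to single-letter words, with the only real work being to decompose $[\lambda_{-3},\rho_{k+2})$ into full cylinders. For the first statement, taking $u=j$ (so $\overleftarrow{u}=u$ and $M_u=M_j$) and $[-b,-a]=\Phi=[-r_0,-\ell_0]$, Lemma~\ref{l:imagesAndReversal} gives
\[\mathcal T_\alpha(\Delta_\alpha(j)\times\Phi) = [M_j\cdot\lambda_j,\,M_j\cdot\rho_j)\times[-M_j^{-1}\cdot r_0,\,-M_j^{-1}\cdot\ell_0].\]
Fullness of $\Delta_\alpha(j)$ means exactly that $M_j$ sends $\Delta_\alpha(j)=[\lambda_j,\rho_j]$ bijectively and monotonically onto $\mathbb I_\alpha=[\ell_0,r_0]$; hence $M_j\cdot\lambda_j=\ell_0$, $M_j\cdot\rho_j=r_0$, $M_j^{-1}\cdot\ell_0=\lambda_j$ and $M_j^{-1}\cdot r_0=\rho_j$. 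Substituting yields the image $\mathbb I_\alpha\times[-\rho_j,-\lambda_j]=\mathbb I_\alpha\times(-\Delta_\alpha(j))$.

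For the second statement, I first describe the cylinder geometry. Since $C$ exchanges $0$ and $\infty$ while $A$ acts by translation by $t$, the cylinders $\Delta_\alpha(-1),\Delta_\alpha(-2),\Delta_\alpha(-3),\dots$ sit on the negative side of $\mathbb I_\alpha$, arranged left to right and accumulating at $0^-$; symmetrically, $\dots,\Delta_\alpha(k+2),\Delta_\alpha(k+1),\Delta_\alpha(k)$ sit on the positive side, arranged left to right and accumulating at $0^+$. By Remark~\ref{rmk:bddNonfullCylinders} the only non-full cylinders are those containing $\ell_0(\alpha)$ or $r_0(\alpha)$, and for small $\alpha$ their first simplified digits lie in $\{-1,-2\}$ and $\{k,k+1\}$, respectively. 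Consequently every $\Delta_\alpha(j)$ with $j\le -3$ or $j\ge k+2$ is full, and $[\lambda_{-3},\rho_{k+2})$ is (up to the single accumulation point $0$, which is $\mu$-negligible) the disjoint union of all these full cylinders.

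Applying the first statement to each cylinder in this decomposition produces rectangles whose $x$-projections all equal $\mathbb I_\alpha$ and whose $y$-projections tile
\[\bigcup_{j\le -3\,\text{or}\,j\ge k+2}\bigl(-\Delta_\alpha(j)\bigr) \;=\; -[\lambda_{-3},\rho_{k+2}] \;=\; [-\rho_{k+2},-\lambda_{-3}] \;=\; \Phi^-\cap\Phi^+,\]
so the total image equals $\mathbb I_\alpha\times(\Phi^-\cap\Phi^+)=\mathcal H$, as claimed.

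The main (and really only) obstacle is the cylinder-structure bookkeeping: establishing the left-to-right ordering of the $\Delta_\alpha(j)$ on each side of $0$, identifying precisely which cylinders are non-full for a non-synchronizing small $\alpha$, and noting that the accumulation at $0$ contributes $\mu$-measure zero. Once these are in place, both claims follow directly from Lemma~\ref{l:imagesAndReversal} and the monotonicity of the M\"obius branches.
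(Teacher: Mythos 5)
Your proof is correct and follows essentially the same route as the paper's: identify the full cylinders composing $[\lambda_{-3},\rho_{k+2})$, observe that $\mathcal T_\alpha(\Delta_\alpha(j)\times\Phi)=\mathbb I_\alpha\times(-\Delta_\alpha(j))$ for each such cylinder, and take the union. The paper derives the single-cylinder image directly from $\lambda_j = M_j^{-1}\cdot\ell_0$ and $\rho_j = M_j^{-1}\cdot r_0$ rather than explicitly invoking Lemma~\ref{l:imagesAndReversal}, but the mechanism is identical.
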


\begin{proof}   The interval $[\lambda_{-3}, \rho_{k+2})$ is the union of the full cylinders $\Delta_{\alpha}(j), k+2 \succeq j \succeq -3$.    Since $\lambda_j = (M_j)^{-1}\cdot \ell_0$ and $\rho_j = (M_j)^{-1}\cdot r_0$,   we find that $\mathcal T_{\alpha}$ sends $\Delta_{\alpha}(j) \times \Phi$ to   $\mathbb I_{\alpha} \times -\Delta_{\alpha}(j)$.   Taking the union over the various $j$ gives $\mathcal T_{\alpha}(\, [\lambda_{-3}, \rho_{k+2})\times \Phi  \,) =  \mathbb I_{\alpha}\times -[\lambda_{-3}, \rho_{k+2})$.  The result now follows. 
\end{proof}

 \bigskip
 
 When applying Lemma~\ref{l:imagesAndReversal}, the following is a natural aid. 
 \begin{Lem}\label{l:fillingUpI}   Fix $n\ge 3$,  and  either a non-synchronizing $\alpha<\gamma_{n}$ or $\alpha \in \{\zeta_{k,v}, \eta_{k,v}\}$ with $k\in \mathbb N$ and $v \in \mathcal V$.   Then 
 \[ [\ell_0, \lambda_{-3}] = \overline{\sqcup_{u\in \{-1,-2\}^{^*}\cap \mathcal L_{\alpha}}\, [\lambda_{u,-3}, \, \rho_u)\,}\]
 and 
 \[ (\rho_{k+2}, r_0] = \overline{ \sqcup_{u\in \{k+1,k\}^{^*}\cap \mathcal L_{\alpha}}\, [\lambda_u,\,  \rho_{u,k+2})\,}.\] 
 \end{Lem}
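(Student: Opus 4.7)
The plan is to prove the first equality by an iterative symbolic decomposition of $[\ell_0,\lambda_{-3}]$, with the second following by the parallel argument.

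The starting observation is that $[\ell_0,\lambda_{-3}]=\Delta_\alpha(-1)\cup\Delta_\alpha(-2)$ is the set of those $x\in\mathbb I_\alpha$ whose first simplified $\alpha$-digit lies in $\{-1,-2\}$. For any admissible $u\in\{-1,-2\}^*\cap\mathcal L_\alpha$, the orientation-preserving M\"obius map $M_u$ carries $\Delta_\alpha(u)$ onto a subinterval of $\mathbb I_\alpha$ with sub-cylinders $\Delta_\alpha(u,d)$ appearing in the same left-to-right order as the $\Delta_\alpha(d)$ within $\mathbb I_\alpha$. Consequently the portion of $\Delta_\alpha(u)$ whose $(|u|{+}1)$-st digit falls outside $\{-1,-2\}$ is precisely $[\lambda_{u,-3},\rho_u)$, while the complement consists of the (at most two) admissible sub-cylinders $\Delta_\alpha(u,d)$ with $d\in\{-1,-2\}$.

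Iterating this dichotomy at every admissible $u$ of length up to $n$ yields the disjoint partition
\[
[\ell_0,\lambda_{-3}]=\Bigl(\bigsqcup_{j=1}^{n}\bigsqcup_{u\in\{-1,-2\}^j\cap\mathcal L_\alpha}[\lambda_{u,-3},\rho_u)\Bigr)\sqcup E_n,\quad E_n=\bigsqcup_{u\in\{-1,-2\}^n\cap\mathcal L_\alpha}\Delta_\alpha(u).
\]
The residual $E=\bigcap_n E_n$ is the closed subset of $[\ell_0,\lambda_{-3}]$ consisting of those $x$ whose entire $\alpha$-expansion uses only $-1$ and $-2$. The lemma is equivalent to showing that $E$ has empty interior, since then passing to the closure in the level-$n$ decomposition produces the stated identity.

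The main obstacle is controlling $|\Delta_\alpha(u)|$ as $|u|\to\infty$. Proposition~\ref{p:expansive} supplies $N$ and $\lambda>1$ with $|(T_\alpha^N)'|\geq\lambda$ on every $T_\alpha^N$-cylinder. For $u\in\{-1,-2\}^n\cap\mathcal L_\alpha$, writing $u'$ for the prefix of $u$ of length $m=N\lfloor n/N\rfloor$, the restriction of $T_\alpha^m$ to $\Delta_\alpha(u')$ is the single M\"obius map $M_{u'}$, a composition of $\lfloor n/N\rfloor$ expansive iterates of $T_\alpha^N$, so $|M_{u'}'|\geq\lambda^{\lfloor n/N\rfloor}$ pointwise on $\Delta_\alpha(u')$. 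Integrating and using $M_{u'}(\Delta_\alpha(u'))\subset\mathbb I_\alpha$ (of length $t$), one gets $|\Delta_\alpha(u)|\leq|\Delta_\alpha(u')|\leq t\,\lambda^{-\lfloor n/N\rfloor}$, which tends uniformly to $0$. Were $E$ to contain a non-empty open interval $J$, connectedness and the disjointness of cylinders at each level would force $J\subset\Delta_\alpha(u_n)$ for some nested sequence $u_1\sqsubset u_2\sqsubset\cdots$, whence $|J|\leq|\Delta_\alpha(u_n)|\to 0$, a contradiction. Thus $E$ has empty interior, and the first equality is proved. The second equality, for $(\rho_{k+2},r_0]$ in terms of admissible $u\in\{k+1,k\}^*$, follows by the parallel argument with the substitutions $\{-1,-2\}\mapsto\{k,k+1\}$, $\ell_0\mapsto r_0$, $\lambda_{-3}\mapsto\rho_{k+2}$, and $[\lambda_{u,-3},\rho_u)\mapsto[\lambda_u,\rho_{u,k+2})$; eventual expansivity of $T_\alpha$ again delivers the required contraction of cylinders.
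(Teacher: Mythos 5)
Your combinatorial skeleton is exactly the paper's: the splitting $\Delta_{\alpha}(u)=\Delta_{\alpha}(u,-1)\sqcup\Delta_{\alpha}(u,-2)\sqcup[\lambda_{u,-3},\rho_u)$, its iteration starting from $[\ell_0,\lambda_{-3}]=\Delta_{\alpha}(-1)\sqcup\Delta_{\alpha}(-2)$, and the reduction to showing that the residual set of points whose entire expansion stays in $\{-1,-2\}$ contains no interval (equivalently, that each infinite admissible $\{-1,-2\}$-word codes a unique point). The gap is in how you justify the shrinking of cylinders. You invoke Proposition~\ref{p:expansive}, but in this paper that proposition is proved only in \S\ref{s:dynamProp}, and its proof relies on the planar domains $\Omega_{n,\alpha}$ having already been constructed for \emph{all} $\alpha$ — in particular for non-synchronizing small $\alpha$, where the construction (Proposition~\ref{p:fillUpFromZ}) rests on Lemma~\ref{l:minusOneAndTwoOnZ}, which cites precisely the lemma you are proving. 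So for the non-synchronizing case (the case the lemma is chiefly needed for) your appeal is circular. In addition, eventual expansivity as defined here is a pointwise statement ($|(T_\alpha^N)'|>1$), and your uniform constant $\lambda>1$ on every $T_\alpha^N$-cylinder is an extra claim that does not follow verbatim; it needs an argument (though for words confined to $\{-1,-2\}$, where only finitely many branch types occur, this is repairable).

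The repair is what the paper actually does, and it is worth noting because it also removes the circularity: an orbit whose simplified digits remain in $\{-1,-2\}$ is identical to a $T_0$-orbit (the digits $-1,-2$ do not see the parameter), so the uniqueness of the point coded by an infinite $\{-1,-2\}$-word follows from the eventual expansiveness of the \emph{accelerated $T_0$ map}, established in the earlier papers \cite{CaltaSchmidt,CKStoolsOfTheTrade} and independent of the present construction. (This acceleration is genuinely needed: on the $\{-1,-2\}$-cylinders $|x|$ can exceed $1$, so $T_\alpha$ itself is not expansive there.) For your second identity the situation is simpler than you suggest: the $\{k,k+1\}$-cylinders lie in $(0,r_0(\alpha))\subset(0,1)$, where each branch $A^jC$ is already expansive, so no acceleration or appeal to Proposition~\ref{p:expansive} is required at all.
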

\begin{proof}   We consider the first equality.  Note first that $[\ell_0, \lambda_{-3}) = \Delta_{\alpha}(-1) \sqcup \Delta_{\alpha}(-2)$.   Fix any $u\in \{-1,-2\}^{^*}\cap \mathcal L_{\alpha}$.   Then $u,-2$ is also an admissible word, and of course,  $\rho_{u,-2} = \lambda_{u,-3}$.   It follows that 
\[ \Delta_{\alpha}(u) =\Delta_{\alpha}(u,-1) \sqcup \Delta_{\alpha}(u,-2)\sqcup [\lambda_{u,-3}, \, \rho_u),\]
where by definition $\Delta_{\alpha}(u,-1)$ is empty whenever $u, -1 \notin \mathcal L_{\alpha}$.

  Thus, we can iterate the partitioning of $\Delta_{\alpha}(u)$ beginning with each of $u=-1, u=-2$.  By the (eventual) expansiveness of the accelerated $T_0$ map, any infinite admissible word of simplified digits  in $\{-1, -2\}$ corresponds to a unique point.  Therefore,  $\overline{ \Delta_{\alpha}(-1) \sqcup \Delta_{\alpha}(-2)\,} = \overline{\sqcup_{u\in \{-1,-2\}^{^*}\cap \mathcal L_{\alpha}}\, [\lambda_{u,-3}, \, \rho_u)\,}$ and our equality holds. 

  The veracity of the second equation is similarly argued.
\end{proof}

\subsection{Sweeping out $\Omega_{\alpha}$ for $\alpha$ an endpoint of a synchronization interval}
 \begin{Lem}\label{l:fillUpWithZimagesZetaEta}    Fix $n\ge 3,  k \in \mathbb N, v \in \mathcal V$ and $\alpha \in  \{\zeta_{k,v}, \eta_{k,v}\}$.
  Then $\Omega_{\alpha}  \supset \mathcal Z_{\alpha} $ and 
  \[\Omega_{\alpha}  = \overline{\cup_{j=1}^{\infty}\, \mathcal T_{\alpha}^{j}(\, \mathcal Z_{\alpha})\,} \]
  up to $\mu$-measure zero.
 \end{Lem}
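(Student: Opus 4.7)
The plan is to establish the inclusion $\mathcal Z_{\alpha} \subseteq \Omega_{\alpha}$ by direct comparison with the explicit construction of $\Omega_{\alpha}$, then to deduce one direction of the claimed equality via $\mathcal T_{\alpha}$-invariance, and finally to obtain the reverse direction via ergodicity.

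First I would verify $\mathcal Z_{\alpha} \subseteq \Omega_{\alpha}$ by unpacking Definition~\ref{d:notationForEndpoints} into three fiber-wise containments: over $x \in [\lambda_{-3}, \rho_{k+2})$, the full fiber $\Phi = [-r_0, -\ell_0]$ must lie in $\Omega_{\alpha}$; over $[\ell_0, \lambda_{-3})$, the smaller fiber $\Phi^{-}$ suffices; and over $[\rho_{k+2}, r_0)$, the fiber $\Phi^{+}$ suffices. Comparing with Propositions~\ref{p:OmegaForZeta} and~\ref{p:OmegaForEta}, the central containment reduces to two spatial facts. On the lower side, the leftmost rectangle of $\Omega^{-}$ has depth $-r_0$ \textemdash\ by Lemma~\ref{l:easyHeightPairDifference} in the $\eta$ case, and, in the $\zeta$ case, because $y_{\beta(\iota)} = -r_0(\zeta_{k,v})$ via Proposition~\ref{p:rOrbitsOfSynIntEndptsMeet} \textemdash\ and its $x$-range extends past $\rho_{k+2}$ because the relevant right endpoint ($r_{\overline{S}}$, or $r_{\iota}$ for $\zeta$) lies in $\Delta_{\alpha}(k) \cup \Delta_{\alpha}(k+1)$, hence strictly exceeds $\rho_{k+2}$. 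On the upper side, the rightmost rectangle of $\Omega^{+}$ attains height $-\ell_0$ by Corollary~\ref{c:secondHighest} (or directly for $\zeta$), while its left endpoint $\ell_{i_{\underline{S}}}$ (respectively $\ell_{\underline{S}}$ for $\zeta$) lies in $\Delta_{\alpha}(-2)$, by Lemma~\ref{l:lastEllValueIsLargest}(ii) in the $\eta$ case and directly from the identity $A^{-2}C \cdot \ell_{\underline{S}} = \ell_1$ at $\zeta_{k,v}$, and is therefore strictly less than $\lambda_{-3}$. The two side-strip containments then follow from the same data with strictly weaker vertical requirements.

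With $\mathcal Z_{\alpha} \subseteq \Omega_{\alpha}$ in hand, the bijectivity of $\mathcal T_{\alpha}$ on $\Omega_{\alpha}$ supplied by Propositions~\ref{p:OmegaForZeta} and~\ref{p:OmegaForEta} gives $\mathcal T_{\alpha}^{j}(\mathcal Z_{\alpha}) \subseteq \Omega_{\alpha}$ for every $j \geq 1$; since $\Omega_{\alpha}$ is a finite union of closed rectangles, taking the closure preserves the inclusion, yielding $\overline{\bigcup_{j \geq 1} \mathcal T_{\alpha}^{j}(\mathcal Z_{\alpha})} \subseteq \Omega_{\alpha}$. For the reverse inclusion, set $A = \bigcup_{j \geq 0} \mathcal T_{\alpha}^{j}(\mathcal Z_{\alpha})$. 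By construction $A \subseteq \mathcal T_{\alpha}^{-1}(A)$, and the $\mathcal T_{\alpha}$-invariance of the finite measure $\mu_{\alpha}$ then forces $\mu_{\alpha}(\mathcal T_{\alpha}^{-1}(A) \setminus A) = 0$, so $A$ is essentially $\mathcal T_{\alpha}^{-1}$-invariant. Proposition~\ref{p:NaturallyErgodicEndPts} provides ergodicity of $(\mathcal T_{\alpha}, \Omega_{\alpha}, \mu_{\alpha})$, and since $\mu_{\alpha}(A) \geq \mu_{\alpha}(\mathcal Z_{\alpha}) > 0$, ergodicity forces $\mu_{\alpha}(\Omega_{\alpha} \setminus A) = 0$. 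Closedness of $\Omega_{\alpha}$ then promotes this measure-zero equality to $\Omega_{\alpha} = \overline{A}$ up to $\mu_{\alpha}$-measure zero.

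The main obstacle is the first step. Although each individual spatial bound is a direct consequence of results already established, orchestrating them requires carefully separating the two cases $\alpha = \zeta_{k,v}$ and $\alpha = \eta_{k,v}$ (with the distinct degenerate rectangles appearing in each) and handling the short-length cases of $v$, where several of the invoked digit expansions must be read in their special shortened form. Once this bookkeeping is dispatched, the remaining arguments are purely measure-theoretic and follow the outline above.
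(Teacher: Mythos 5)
Your proposal is correct and matches the paper's proof in structure: the containment $\mathcal Z_\alpha \subseteq \Omega_\alpha$ is read off from Propositions~\ref{p:OmegaForZeta} and~\ref{p:OmegaForEta}, and the equality then follows from the ergodicity of Proposition~\ref{p:NaturallyErgodicEndPts} applied to the essentially $\mathcal T_\alpha$-invariant set $\overline{\cup_j \mathcal T_\alpha^j(\mathcal Z_\alpha)}$. You simply fill in the measure-theoretic step (finite invariant measure promotes forward invariance to essential invariance) and the geometric bookkeeping that the paper compresses into two sentences.
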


\begin{proof}   The containment follows from Propositions~\ref{p:OmegaForZeta} and \ref{p:OmegaForEta}.  Since $\overline{\cup_{j=1}^{\infty}\, \mathcal T_{\alpha}^{j}(\, \mathcal Z)\,}$ is  $\mathcal T_{\alpha}$-invariant, the ergodicity, shown in Proposition~\ref{p:NaturallyErgodicEndPts},  implies  the equality. 
\end{proof}

 \begin{Lem}\label{l:admissibility}  Fix $n\ge 3,  k \in \mathbb N, v \in \mathcal V$ and $\alpha \in  \{\zeta_{k,v}, \eta_{k,v}\}$.   The map $u \mapsto \overleftarrow{u}$ defines a self-bijection on each of  $ \{-1,-2\}^{^*}\cap \mathcal L_{\alpha}$ and $\{k+1, k\}^{^*}\cap \mathcal L_{\alpha}$.
  \end{Lem}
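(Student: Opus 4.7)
\smallskip

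\noindent\textbf{Proof proposal.} The map $u\mapsto \overleftarrow u$ is trivially an involution on the set of all words over any fixed alphabet, so the claim reduces to showing that reversal preserves admissibility. The arguments for the two alphabets $\{-1,-2\}$ and $\{k,k+1\}$ are entirely parallel, and we focus on the former; the empty word is its own reverse, so we may assume $|u|\ge 1$.

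The plan is to exploit the bijectivity of $\mathcal T_\alpha$ on $\Omega_\alpha$ (Propositions~\ref{p:OmegaForZeta} and \ref{p:OmegaForEta}) together with Lemma~\ref{l:imagesAndReversal}, whose $y$-coordinate formula introduces precisely the matrix $M_{\overleftarrow u}^{-1}$ encoding the reversed word. Given $u\in \{-1,-2\}^*\cap \mathcal L_\alpha$, Lemma~\ref{l:fillingUpI} shows that $R_u := [\lambda_{u,-3},\rho_u)\times \Phi^-$ is a non-empty subset of $[\ell_0,\lambda_{-3})\times \Phi^-\subset \mathcal Z_\alpha$, hence of $\Omega_\alpha$ by Lemma~\ref{l:fillUpWithZimagesZetaEta}. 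Bijectivity of $\mathcal T_\alpha$ puts every intermediate image $\mathcal T_\alpha^{j}(R_u)$, for $0\le j\le |u|$, inside $\Omega_\alpha$, and Lemma~\ref{l:imagesAndReversal} makes each such image explicit. In particular the terminal image is
\[
[\lambda_{-3},r_0) \,\times\, \bigl[-M_{\overleftarrow u}^{-1}\cdot r_0,\; -M_{\overleftarrow u}^{-1}\cdot \lambda_{-3}\bigr]\,.
\]

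To read off admissibility of $\overleftarrow u$, set $x_0 := M_{\overleftarrow u}^{-1}\cdot r_0$, so that $M_{u_1}M_{u_2}\cdots M_{u_m}\cdot x_0 = r_0$. The containment $\Omega_\alpha\subset \overline{\mathbb I_\alpha}\times \Phi$ already forces $x_0\in \overline{\mathbb I_\alpha}$. Applying Lemma~\ref{l:imagesAndReversal} instead to each prefix $u_{[1,j]}$, $1\le j<|u|$, the intermediate images $\mathcal T_\alpha^{|u|-j}(R_u)$ must likewise lie in $\Omega_\alpha$, forcing each partial composition $M_{u_{m-j+1}}\cdots M_{u_m}\cdot x_0$ to lie in $\overline{\mathbb I_\alpha}$. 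Combined with the cylinder structure of $\Omega_\alpha$ at the endpoints $\zeta_{k,v}$ and $\eta_{k,v}$ established in Sections~\ref{s:relationsOnHts} and \ref{s:MainForSmallAlps}, these intermediate points moreover land in the prescribed cylinders $\Delta_\alpha(u_{m-j})$. This exhibits $\overleftarrow u = u_m u_{m-1}\cdots u_1$ as a forward-admissible digit sequence from $x_0$, proving the required inclusion.

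The principal obstacle is upgrading the endpoint containment $x_0\in \overline{\mathbb I_\alpha}$ to the intermediate cylinder containments $M_{u_{m-j+1}}\cdots M_{u_m}\cdot x_0\in \Delta_\alpha(u_{m-j})$. The bridge is that the block decomposition of $\Omega_\alpha$ at a synchronization endpoint (recall Figures~\ref{f:topBottomVerticesSmallAlpsZeta},~\ref{f:topBottomVerticesSmallAlpsEta}) is such that the $y$-coordinate of each partial image $\mathcal T_\alpha^{|u|-j}(R_u)$, again computed via Lemma~\ref{l:imagesAndReversal}, pins down which digit is being consumed at step $j$; tracking this for all $j$ simultaneously recovers the digits of $\overleftarrow u$ and not merely the total group element.
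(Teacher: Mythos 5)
Your route is genuinely different from the paper's. The paper proves this lemma by a self-contained induction on word length, using the explicit combinatorial structure of $\underline{d}{}^{\alpha}_{[1,\infty)}$ at $\zeta_{k,v}$ and $\eta_{k,v}$ (the blocks $w,\mathcal C,\mathcal D$, the palindromic structure of $v$, and the identity \eqref{e:reverseWordWithWpower}) to show a shortest inadmissible reverse cannot exist; no planar machinery enters. Your idea---push $[\lambda_{u,-3},\rho_u)\times\Phi^-$ forward and read the reversed word off the $y$-fibers via Lemma~\ref{l:imagesAndReversal}---is non-circular (Propositions~\ref{p:OmegaForZeta}, \ref{p:OmegaForEta} and Lemmas~\ref{l:imagesAndReversal}, \ref{l:fillingUpI}, \ref{l:fillUpWithZimagesZetaEta} all precede this lemma and do not use it), and your computation of the intermediate images is right: containment of $\mathcal T_\alpha^{\,|u|-j}(R_u)$ in $\Omega_\alpha\subset\overline{\mathbb I_\alpha}\times\Phi$ does force every partial composition $(M_{\overleftarrow{u_{[1,i]}}})^{-1}\cdot r_0$ into $[\ell_0,r_0]$.

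But there is a genuine gap exactly where you flag it, and your proposed bridge does not close it. Membership of the partial compositions in $\overline{\mathbb I_\alpha}$ does not by itself place them in the prescribed cylinders, and ``the $y$-coordinate of each partial image pins down which digit is being consumed'' is an assertion, not an argument: the fibers only record the group elements of the prefixes, which you already have, while cylinder membership is a statement about the $x$-coordinate dynamics. The repair is cheaper and different from what you suggest: since $\zeta_{k,v},\eta_{k,v}<\gamma_n$, every $x\in\mathbb I_\alpha$ has digit $(k,1)$ with $k$ the \emph{unique} integer such that $A^kC\cdot x\in\mathbb I_\alpha$ (the $A$-translates differ by $t$, the length of the half-open interval); hence if a point and its prescribed image both lie in $\mathbb I_\alpha$, the prescribed digit is automatically its actual digit, and cylinder membership comes for free. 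This also exposes a second slip: your witness $x_0=M_{\overleftarrow u}^{-1}\cdot r_0$ does not work, because its forward orbit terminates exactly at $r_0\notin\mathbb I_\alpha$; at the last step the point is $\rho_{u_1}$, whose true digit is $u_1-1$, so the expansion of $x_0$ does not begin with $\overleftarrow u$. Track instead an interior fiber point: for $y^*\in(\lambda_{-3},r_0)$ the points $(M_{\overleftarrow{u_{[1,i]}}})^{-1}\cdot y^*$ lie strictly below the fiber endpoints you computed, hence in $[\ell_0,r_0)$ for every $i$, and the digit rule above then shows their expansion begins with $\overleftarrow u$. With these two corrections (and the parallel bookkeeping with $\Phi^+$ and $\rho_{w,k+2}$ for the alphabet $\{k,k+1\}$), your argument goes through; what the paper's word-combinatorial proof buys in exchange for its case analysis is independence from the planar domains, which is why it transfers verbatim to the limit argument of Corollary~\ref{c:symmetryInLimitAlph} and to the large-$\alpha$ analogue Lemma~\ref{l:symmetry}, where digits with $l=2$ would complicate your digit-identification step.
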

 
\begin{proof}  {\bf  Reversed admissible words are admissible.}   Certainly taking the reversed word has no effect on the set of digits appearing in a word, and also defines a bijective function on the set of words.     Since whenever   $u$ is of length one   we trivially have  $\overleftarrow{u}$ also in $\mathcal L_{\alpha}$,  and also the admissibility of any word implies the admissibility of any of its subwords (that is, of any of its factors),  we prove that the reverses of admissible words are admissible by induction on the lengths of words.  Thus, we suppose that for $u$ given every admissible word of  length less than that of $u$ (satisfying our digit restrictions) has its reversed word also admissible.

We first consider $u \in  \{-1,-2\}^{^*}\cap \mathcal L_{\alpha}$.   For  $u = u_1\cdots u_s$ with $s>1$, we let $u' = u_2\cdots u_s$.

\noindent
{\bf Step 1: Reduction to special case.} Were $\overleftarrow{u}$ inadmissible, then there is some suffix of this word, of say length $j$, which is less than $\underline{d}^{\alpha}_{[1,j]}$ (see \cite{CaltaKraaikampSchmidt}, Lemma~1.5).   Invoking our induction hypothesis, we find that it must be the word itself which has this property; that is,   $\overleftarrow{u} \prec \underline{d}^{\alpha}_{[1,s]}$.  On the other hand,  $\overleftarrow{u'} \succeq \underline{d}^{\alpha}_{[1,s-1]}$, but since we are using a `dictionary order'   $\overleftarrow{u'} = \underline{d}^{\alpha}_{[1,s-1]}$ and (since only digits $-1,-2$ are involved) $u_1 = -1$ while $\underline{d}^{\alpha}_{[s,s]} = -2$.     

Thus, we can and do assume that $\overleftarrow{u'} = \underline{d}^{\alpha}_{[1,s-1]}$ and $u_1 = -1$.

We remind the reader that the expansion for $\eta_{k,v}$ is  given above  in \eqref{e:lowerDetaExpansion} and that of $\zeta = \zeta_{k,v}$ in \eqref{e:lowerDzetaExpan}. 

\medskip 

\noindent
{\bf Step 2:   Special case is void.}  Since appearances of $-2$  in $\underline{d}^{\alpha}_{[1,\infty)}$ are separated by either $(-1)^{n-2}$ or $(-1)^{n-3}$,   but the admissible $u$ cannot begin with $(-1)^{n-1}$, it must be the case that $\underline{d}^{\alpha}_{[1,s]}$ has suffix $-2, (-1)^{n-3}, -2$ and $u$ thus has prefix  $(-1)^{n-2}, -2$.  (It is easily seen that shorter $u$ do have admissible $\overleftarrow{u}$; as we proceed other ``short" possibilities will similarly arise, but these also are easily seen to give admissible $\overleftarrow{u}$ and thus we will not mention them.)

 Now,  the admissible $u$ cannot begin  $(-1)^{n-2}, -2,  (-1)^{n-2}, -2$ and hence $\underline{d}^{\alpha}_{[1,s]}$ has suffix $-2, (-1)^{n-3}, -2, (-1)^{n-3}, -2$ and $u$ has prefix  $w$.   
We now note that for any $j \in \mathbb N$
\begin{equation}\label{e:reverseWordWithWpower}
 \overleftarrow{-2, (-1)^{n-3}, -2, w^j, (-1)^{n-2}} = w^{j+1}.
 \end{equation}
This then gives $\overleftarrow{-2, \mathcal C, (-1)^{n-2}} = w^k$ and  thus $\overleftarrow{-2, \mathcal D, (-1)^{n-2}} = w^{k+1}$.     Since $w^{k+1}$ is not admissible, we see that $\underline{d}^{\alpha}_{[1,s]}$ cannot be of suffix $-2, \mathcal D,  (-1)^{n-3}, -2$.   

Appearances of $-2, (-1)^{n-3}, -2, (-1)^{n-3}, -2$ in $\underline{d}^{\alpha}_{[1,\infty)}$ occur only when the suffix  $(-1)^{n-3}, -2$ of some final $w$ of a $\mathcal C$ or $\mathcal D$ is followed by the prefix $(-1)^{n-3}, -2$ of again some $\mathcal C$ or $\mathcal D$.  (When $k=1$ there is also the possibility of consecutive $\mathcal C$ giving this.)  Hence, we must have that $\underline{d}^{\alpha}_{[1,s]}$ has  suffix   $-2, \mathcal C,   (-1)^{n-3}, -2$, and $u$ has prefix $w^k$.  (The identity  $\mathcal D = [\mathcal C, (-1)^{n-2}], -2, (-1)^{n-3}, -2$ shows that we have yet to find an inadmissible $\overleftarrow{u}$.)    Equation \eqref{e:reverseWordWithWpower} gives in fact that  $\overleftarrow{-2, \mathcal C^i, (-1)^{n-2} }  = w^k   \mathcal C^{i-1}$ for any positive $i$.  Since $c_s = c_1 = \max{c_i}$ and $\mathcal D \prec \mathcal C$, admissibility of $u$ entails that $\underline{d}^{\alpha}_{[1,s]}$ has  suffix $-2, \mathcal C^{c_1},   (-1)^{n-3}, -2 = -2, \mathcal C^{c_s},   (-1)^{n-3}, -2$.   

Recall from \cite{CaltaKraaikampSchmidt} that either all $c_i$ equal one and each $d_i \in \{d_1, d_1+1\}$, or all $d_i$ equal one and each $c_i \in \{c_1, c_1-1\}$.   Since   \eqref{e:reverseWordWithWpower} gives  that the reverse of any  $-2,\mathcal D^{j_1}, \mathcal C^{i_1},\cdots,  \mathcal D^{j_r}, \mathcal C^{i_r}, (-1)^{n-2}$ is  $w^k   \mathcal C^{i_r-1}, \mathcal D^{j_r}, \dots, \mathcal C^{i_1}, \mathcal D^{j_1}$, we next see that $\underline{d}^{\alpha}_{[1,s]}$ has  suffix   $-2, \mathcal D^{d_1}, \mathcal C^{c_1},   (-1)^{n-3}, -2$.    We now do mention a short possibility: If the $c_i$ are all one and $d_2 = d_1 + 1$ then $u$ of the form $w^k, \mathcal C^{c_1-1} \mathcal D^{d_1}, \mathcal C^{c_2}, \mathcal D^{d_2-1}$ is admissible, however as usual $\overleftarrow{u}$ is admissible.   With appropriate generalization of this, we can continue our arguments and find that (under our restrictions on $u$) no finite length $u$  gives an inadmissible $\overleftarrow{u}$.     That is, this case is empty and thus for all $u \in  \{-1,-2\}^{^*}\cap \mathcal L_{\alpha}$ we have also $\overleftarrow{u} \in  \{-1,-2\}^{^*}\cap \mathcal L_{\alpha}$. 
  
\medskip 
\noindent
{\bf Involution shows onto.} Since any admissible $u$ is the reverse of its own reverse, we now have that $u \mapsto \overleftarrow{u}$ defines an involution on $ \{-1,-2\}^{^*}\cap \mathcal L_{\alpha}$.
 
 \medskip 
 
 The proof in the case of $u \in \{k+1, k\}^{^*}\cap \mathcal L_{\alpha}$ is given {\em mutatis mutandis}.  
\end{proof}

\subsection{Finishing the proof of bijectivity, and finite mass for non-synchronizing $\alpha$}  Throughout this subsection, we again fix $n\ge 3,  k \in \mathbb N$ and a non-synchronizing $\alpha < \gamma_{n}$.  Together,  Lemma~\ref{l:upTheMiddle} above, and Lemmas~\ref{l:minusOneAndTwoOnZ} and  ~\ref{l:kAndKplusOneOnZ}  show the statement of Proposition~\ref{p:fillUpFromZ} that $\Omega \supset \mathcal Z$.  The  latter two lemmas rely on the following corollary to Lemma~\ref{l:admissibility}.
 
 \begin{Cor}\label{c:symmetryInLimitAlph}   The map $u \mapsto \overleftarrow{u}$ defines a self-bijection on each of  $ \{-1,-2\}^{^*}\cap \mathcal L_{\alpha}$ and $ \{k+1, k\}^{^*}\cap \mathcal L_{\alpha}$.
 \end{Cor}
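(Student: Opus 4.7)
The plan is to transfer the reversal symmetry from synchronization endpoints---where it is furnished by Lemma~\ref{l:admissibility}---to non-synchronizing $\alpha$ via a density-and-continuity argument. Fix such $\alpha<\gamma_n$. Since the union of the synchronization intervals $J_{k,v}$ covers $(0,\gamma_n)$ up to a set of measure zero, the endpoints $\zeta_{k,v}$ and $\eta_{k,v}$ cluster at $\alpha$, so one can select a sequence $\alpha_m\to\alpha$ of such endpoints.

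I would next establish that admissibility of any fixed finite word is \emph{locally constant} in the parameter near $\alpha$: for every finite $u$, there is an open neighborhood $U$ of $\alpha$ on which the property $u\in\mathcal L_{\alpha'}$ is either uniformly true or uniformly false for $\alpha' \in U$. Preservation of admissibility is routine, since if $u\in\mathcal L_\alpha$ then the cylinder $\Delta_\alpha(u)$ has positive length, and the cylinder endpoints---M\"obius iterates of $\ell_0(\alpha)$ and $r_0(\alpha)$---depend continuously on $\alpha$. Preservation of inadmissibility uses the fact that, for any $N$, the first $N$ simplified digits of $\underline{d}^{\alpha'}$ and $\overline{d}^{\alpha'}$ coincide with those of $\underline{d}^{\alpha}$ and $\overline{d}^{\alpha}$ whenever $\alpha'$ is sufficiently close to $\alpha$; this is a continuity statement on finite segments of the endpoint orbits that holds precisely because non-synchronization rules out the coincidences (orbits meeting or landing on cylinder boundaries) that could force a digit change under perturbation.

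With local constancy in hand, given $u\in\{-1,-2\}^{*}\cap\mathcal L_\alpha$, I would choose $m$ large enough that $\alpha_m$ lies in a neighborhood on which admissibility of both $u$ and $\overleftarrow{u}$ is locally constant, and such that $\alpha_m \in \{\zeta_{k,v_m}, \eta_{k,v_m}\}$ for the same $k$ relevant at $\alpha$ (which is possible since the set of admissible first digits is itself locally constant). Then $u\in\mathcal L_{\alpha_m}$, Lemma~\ref{l:admissibility} supplies $\overleftarrow{u}\in\mathcal L_{\alpha_m}$, and local constancy applied to $\overleftarrow{u}$ yields $\overleftarrow{u}\in\mathcal L_\alpha$. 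Since reversal is an involution on finite words, this gives the desired self-bijection of $\{-1,-2\}^{*}\cap\mathcal L_\alpha$. The argument for $\{k+1,k\}^{*}\cap\mathcal L_\alpha$ is identical.

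The main obstacle will be the clean verification of local constancy of admissibility, particularly the stability of the first $N$ digits of the endpoint expansions on a neighborhood of a non-synchronizing $\alpha$. One must rule out that an iterate $T^j_\alpha(\ell_0(\alpha))$ or $T^j_\alpha(r_0(\alpha))$ lands exactly on a cylinder boundary for some $j\le N$; any such coincidence would identify a point in the orbit of $\ell_0(\alpha)$ with one in the orbit of $r_0(\alpha)$ (or force periodicity of one of the orbits), which is precisely the synchronization phenomenon being excluded. Hence non-synchronization makes the required stability automatic on sufficiently small neighborhoods, and the plan should go through.
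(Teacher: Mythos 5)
Your plan is essentially the paper's own: approximate the non-synchronizing $\alpha$ by a sequence of synchronization-interval endpoints, observe that admissibility of words of any fixed length eventually stabilizes along the sequence, and transfer the reversal symmetry from Lemma~\ref{l:admissibility}. The one cosmetic difference is that the paper does not posit an abstract ``local constancy'' of admissibility on a neighborhood of $\alpha$; instead it explicitly produces the approximating sequence as $\eta_{k,v_i}$ for a nested chain $v_{i+1}=\Theta_{q_i}(v_i)$ in the tree $\mathcal V$, from which the convergence $\underline{d}^{\alpha}_{[1,\infty)} = \lim_i \underline{d}(k,v_i)$, $\overline{d}^{\alpha}_{[1,\infty)} = \lim_i \overline{d}(k,v_i)$ (in the product/sequence topology) is immediate, giving exactly the eventual agreement of languages on words of length $\le L$ that you want, without any side argument about orbit points avoiding cylinder boundaries. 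Your heuristic for local constancy --- that a boundary coincidence forces the orbits of $\ell_0$ and $r_0$ to meet or forces periodicity, which is excluded for genuine non-synchronization points (complement of the \emph{closure} of the synchronization intervals, so the periodic $\eta_{k,v}$ are already out) --- is morally sound, but it is exactly the kind of step the tree-structure description renders unnecessary; if you keep your formulation you should flag that you are relying on the structural fact that non-synchronization $\alpha$ lie in the interior of an infinite nested chain of $I_{k,v}$'s, which is where the paper's convergence claim comes from.
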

\begin{proof}   Lemma~\ref{l:admissibility} holds directly for the various $\eta_{k,v}$.   Any other non-synchronizing $\alpha$ is such that  there is a sequence of $v_i \in \mathcal V$ and $q_i \in \mathbb Z_{\ge 0}$ such that $v_{i+1} = \Theta_i(v_i)$ with  $\underline{d}{}^{\alpha}_{[1,\infty)}  = \lim_{i\to \infty} \underline{d}(k, v_i)$ and  $\overline{d}{}^{\alpha}_{[1,\infty)} = \lim_{i\to \infty} \overline{d}(k, v_i)$ (with the usual metric on sequences).    Furthermore,   $\alpha = \lim_{i\to \infty} \eta_{k, v_i}$ and hence for each $L\in \mathbb N$,   there is some $i$ such that for any word $u$ with $|u| = L$, we have $u \in \mathcal L_{\alpha}$  if and only if $u \in \mathcal L_{\eta_{k,v_i}}$.   Therefore the symmetry for  $ \{-1,-2\}^{^*}\cap \mathcal L_{\alpha}$ and $ \{k+1, k\}^{^*}\cap \mathcal L_{\alpha}$ follows from the symmetry for each of the corresponding sets of words for every $\eta_{k, v_i}$. 
\end{proof}

The following two results provide the heart of the justification of injectivity.  
\medskip

Let $\mathscr L = \sup_j\, \{\ell_j(\alpha)\}$.    Note that $\mathscr L < \lambda_{-3}$.   
 \begin{Lem}\label{l:minusOneAndTwoOnZ}       We have that
 \[ \overline{\sqcup_{u\in \{-1,-2\}^{^*}\cap \mathcal L_{\alpha}}\; \mathcal T_{\alpha}^{|u|}(\, \Delta_{\alpha}(u) \times \Phi^{-} \,)}   \cap \{(x,y)\mid x \in  [\mathscr L, r_0)\,\} =  [\mathscr L, r_0)  \times -[\ell_0, \lambda_{-3}).\]
 \end{Lem}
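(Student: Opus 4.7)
The plan is to apply Lemma~\ref{l:imagesAndReversal} directly with $a=\lambda_{-3}$ and $b=r_0$ (so $[-b,-a]=\Phi^{-}$), obtaining
\[
\mathcal T_{\alpha}^{|u|}\bigl(\Delta_{\alpha}(u)\times \Phi^{-}\bigr)=\bigl[M_{u}\cdot\lambda_{u},\,M_{u}\cdot\rho_{u}\bigr)\times -\bigl[(M_{\overleftarrow{u}})^{-1}\cdot\lambda_{-3},\,(M_{\overleftarrow{u}})^{-1}\cdot r_{0}\bigr].
\]
Set $v=\overleftarrow{u}$. Since the only way a cylinder $\Delta_{\alpha}(w)$ with $w\in\{-1,-2\}^{*}\cap\mathcal L_{\alpha}$ can fail to be full is if $w$ is a prefix of $\underline{d}{}^{\alpha}_{[1,\infty)}$ (note that such a $w$ cannot contain $r_0$, whose expansion begins with $k$ or $k+1$), and since $\lambda_{-3}>\mathscr L\ge \ell_{|v|}$, in every case one has $(M_{v})^{-1}\cdot\lambda_{-3}=\lambda_{v,-3}$ and $(M_{v})^{-1}\cdot r_{0}=\rho_{v}$; here the continuation $v,-3$ is automatically admissible because in the simplified-digit order $-3$ exceeds every $\underline{d}{}^{\alpha}_{[|v|+1,|v|+1]}\in\{-1,-2\}$.

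Next I would show the $x$-range of $\mathcal T_{\alpha}^{|u|}(\Delta_{\alpha}(u)\times\Phi^{-})$ always contains $[\mathscr L,r_{0})$. If $u$ is not a prefix of $\underline{d}{}^{\alpha}_{[1,\infty)}$, then $\Delta_{\alpha}(u)$ is full and the image equals $\mathbb I_{\alpha}$; if $u$ is such a prefix of length $s$, then $M_{u}\cdot\lambda_{u}=\ell_{s}\le\mathscr L$ and $M_{u}\cdot\rho_{u}=r_{0}$, so the image is $[\ell_{s},r_{0})\supseteq[\mathscr L,r_{0})$. Intersecting with $\{x\in[\mathscr L,r_{0})\}$ therefore gives
\[
[\mathscr L,r_{0})\times -[\lambda_{v,-3},\rho_{v}].
\]

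Now I would invoke Corollary~\ref{c:symmetryInLimitAlph}: the map $u\mapsto\overleftarrow{u}$ is a bijection on $\{-1,-2\}^{*}\cap\mathcal L_{\alpha}$, so taking the union over $u$ amounts to taking the union over $v$. Applying Lemma~\ref{l:fillingUpI} then yields
\[
\overline{\bigsqcup_{v\in\{-1,-2\}^{*}\cap\mathcal L_{\alpha}}[\lambda_{v,-3},\rho_{v})}=[\ell_{0},\lambda_{-3}),
\]
so the union in $y$ closes up to $-[\ell_{0},\lambda_{-3})$, and the disjointness of the intervals $[\lambda_{v,-3},\rho_{v})$ supplied by the same lemma justifies the disjoint-union symbol in the statement. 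Combining with the uniform $x$-slice $[\mathscr L,r_{0})$ establishes the claimed equality.

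The main obstacle is verifying that the $x$-range always covers $[\mathscr L,r_{0})$ uniformly in $u$; this is where the definition $\mathscr L=\sup_{j}\ell_{j}(\alpha)$ is essential, since it lets the possibly non-full cylinders (those with $u$ a prefix of $\underline{d}{}^{\alpha}_{[1,\infty)}$) still cover the desired slice. Once that is in hand, the genuinely substantive ingredient is the reversal symmetry of Corollary~\ref{c:symmetryInLimitAlph}, which converts the $y$-coordinate data (indexed by $\overleftarrow{u}$ via Lemma~\ref{l:conjByRofAtoPc}) back into the original admissible-word index set so that Lemma~\ref{l:fillingUpI} can be applied verbatim.
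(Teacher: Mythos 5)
Your proposal tracks the paper's proof closely: apply the image-and-reversal formula to convert $y$-fibers into intervals indexed by $\overleftarrow{u}$, observe that the $x$-range of each image contains $[\mathscr L, r_0)$, and then use the reversal bijection (Corollary~\ref{c:symmetryInLimitAlph}) together with Lemma~\ref{l:fillingUpI} to assemble the $y$-fibers into $-[\ell_0, \lambda_{-3})$.

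The one misstep is the claim that a cylinder $\Delta_\alpha(w)$ with $w \in \{-1,-2\}^* \cap \mathcal L_\alpha$ can fail to be full only if $w$ is a prefix of $\underline{d}{}^{\alpha}_{[1,\infty)}$; this is false. For example $\Delta_\alpha(-2,-1)$ is not full (its image under $T_\alpha^2$ is $[\ell_1, r_0)$, not all of $\mathbb I_\alpha$) even though $(-2,-1)$ is not a prefix of $\underline{d}{}^{\alpha}_{[1,\infty)}$, which begins with $-1$; more generally, any $w$ with a proper suffix that is a prefix of $\underline{d}{}^{\alpha}_{[1,\infty)}$ is non-full. The correct statement, and the one the paper uses, is weaker and suffices: because $\overline{d}{}^{\alpha}_{[1,\infty)}$ has digits only in $\{k, k+1\}$, the image $T_\alpha^{|u|}(\Delta_\alpha(u))$ always has right endpoint $r_0$, and its left endpoint is some $\ell_i$ with $i \ge 0$. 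Since $\ell_i \le \mathscr L < \lambda_{-3}$, this yields both the containment of $[\mathscr L, r_0)$ and the identities $(M_v)^{-1}\cdot r_0 = \rho_v$, $(M_v)^{-1}\cdot \lambda_{-3} = \lambda_{v,-3}$, with no dichotomy on fullness needed. With that repair your argument is sound and coincides with the paper's.
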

\begin{proof}  Since  $\underline{d}{}^{\alpha}_{[1,\infty)}  \in \{-1,-2\}^{^*}$ whereas $\overline{d}{}^{\alpha}_{[1,\infty)}  \in \{k+1,-k\}^{^*}$,    for all $u\in \{-1,-2\}^{^*}\cap \mathcal L_{\alpha}$  $T_{\alpha}^{|u|}(\Delta_{\alpha})$ has  some some $\ell_i(\alpha), i\ge 0$ as its left endpoint   and has right endpoint equal to $r_0$.   In particular,    $T_{\alpha}^{|u|}(\Delta_{\alpha}) \supset [\mathscr L, r_0)$.     Furthermore,   $RM_uR^{-1}\cdot \Phi^{-} = -[\lambda_{\overleftarrow{u},-3}, \rho_{\overleftarrow{u}})$. 

It remains to show that $(\ell_0, \lambda_{-3})$ is the union over all $u\in \{-1,-2\}^{^*}\cap \mathcal L_{\alpha}$ of $[\lambda_{\overleftarrow{u},-3}, \rho_{\overleftarrow{u}})$.   By Lemma~\ref{l:fillingUpI}, 
$(\ell_0, \lambda_{-3}) = \sqcup_{u\in \{-1,-2\}^{^*}\cap \mathcal L_{\alpha}}\; [\lambda_{u,-3}, \rho_u)$ and hence 
 Corollary~\ref{c:symmetryInLimitAlph} now implies $(\ell_0, \lambda_{-3}) = \sqcup_{u\in \{-1,-2\}^{^*}\cap \mathcal L_{\alpha}}\; [\lambda_{\overleftarrow{u},-3}, \rho_{\overleftarrow{u}})$.  
\end{proof}

The next result is proven exactly analogously.    Let $\mathscr R = \inf_j\, \{r_j(\alpha)\}$ and note that $\mathscr R > \rho_{k+2}$.

 \begin{Lem}\label{l:kAndKplusOneOnZ}       We have that
 \[ \overline{\sqcup_{u\in \{k+1,k\}^{^*}\cap \mathcal L_{\alpha}}\; \mathcal T_{\alpha}^{|u|}(\, \Delta_{\alpha}(u) \times \Phi^{+} \,)}\cap \{(x,y)\mid x \in  [\ell_0, \mathscr R)\,\} =  [\ell_0, \mathscr R) \times  -[\rho_{k+2}, r_0).\]
 \end{Lem}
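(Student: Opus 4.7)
The plan is to mirror the proof of Lemma~\ref{l:minusOneAndTwoOnZ} verbatim, interchanging the roles of $\{-1,-2\}$ with $\{k+1,k\}$ and of the left endpoint orbit with the right endpoint orbit. The starting point is Lemma~\ref{l:imagesAndReversal}, which for any finite $u \in \mathcal L_\alpha$ gives
\[ \mathcal T_\alpha^{|u|}\bigl(\Delta_\alpha(u) \times \Phi^{+}\bigr) = \bigl[M_u\cdot \lambda_u,\, M_u\cdot \rho_u\bigr) \times \bigl[-\,(M_{\overleftarrow u})^{-1}\cdot(-\rho_{k+2}),\ -\,(M_{\overleftarrow u})^{-1}\cdot(-\ell_0)\bigr),\]
and the $y$-interval simplifies to $-[\lambda_{\overleftarrow u},\, \rho_{\overleftarrow u,\, k+2})$ by applying $M_{\overleftarrow u}^{-1}$ to the endpoints of $\Phi^{+}$.

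Next I would handle the $x$-coordinate. For any $u \in \{k+1,k\}^* \cap \mathcal L_\alpha$, since $\overline{d}{}^\alpha_{[1,\infty)} \in \{k+1,k\}^{\mathbb N}$ while $\underline{d}{}^\alpha_{[1,\infty)} \in \{-1,-2\}^{\mathbb N}$, the iterate $T_\alpha^{|u|}$ restricted to $\Delta_\alpha(u)$ has left endpoint equal to $\ell_0$ (the orbit of $\ell_0$ never falls into a cylinder labeled by $k$ or $k{+}1$) and right endpoint some $r_i(\alpha)$, $i\ge 0$. Hence $T_\alpha^{|u|}(\Delta_\alpha(u)) \supset [\ell_0,\,\mathscr R)$, and intersecting the image with $\{(x,y)\mid x\in [\ell_0,\mathscr R)\}$ yields the full strip $[\ell_0,\,\mathscr R) \times -[\lambda_{\overleftarrow u},\,\rho_{\overleftarrow u,\,k+2})$.

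It then remains to verify that the union of these $y$-strips over all admissible $u$ exhausts $-(\rho_{k+2},\,r_0)$. By Lemma~\ref{l:fillingUpI},
\[ (\rho_{k+2},\, r_0] \;=\; \overline{\bigsqcup_{u \in \{k+1,k\}^* \cap \mathcal L_\alpha} [\lambda_u,\,\rho_{u,k+2})}. \]
Applying Corollary~\ref{c:symmetryInLimitAlph}, which provides a self-bijection $u \mapsto \overleftarrow u$ on $\{k+1,k\}^* \cap \mathcal L_\alpha$, allows me to re-index the disjoint union by $\overleftarrow u$ without changing it, yielding
\[ (\rho_{k+2},\, r_0] \;=\; \overline{\bigsqcup_{u \in \{k+1,k\}^* \cap \mathcal L_\alpha} [\lambda_{\overleftarrow u},\,\rho_{\overleftarrow u,\,k+2})}, \]
and the lemma follows by taking negatives and pairing each $y$-strip with its full $x$-base $[\ell_0,\mathscr R)$.

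The only non-routine ingredient is the reversal symmetry of admissible words supplied by Corollary~\ref{c:symmetryInLimitAlph}; everything else is a direct computation with M\"obius actions via Lemma~\ref{l:imagesAndReversal} and a tiling identity from Lemma~\ref{l:fillingUpI}. Since the symmetry was established via a limiting argument from $\eta_{k,v}$, I expect no new obstacle to arise here: the step-by-step structure is identical to Lemma~\ref{l:minusOneAndTwoOnZ} with the digit sets, endpoints, and the roles of $\Phi^{-}$ and $\Phi^{+}$ interchanged.
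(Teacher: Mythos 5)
Your proof is correct and matches the paper's intended argument: the paper itself proves this lemma by the single sentence ``proven exactly analogously'' to Lemma~\ref{l:minusOneAndTwoOnZ}, and you have simply supplied those details by swapping $\{-1,-2\}$ with $\{k+1,k\}$, $\Phi^-$ with $\Phi^+$, $\mathscr L$ with $\mathscr R$, and the roles of the two endpoint orbits. The three ingredients you invoke — the image formula of Lemma~\ref{l:imagesAndReversal}, the tiling identity of Lemma~\ref{l:fillingUpI}, and the reversal bijection of Corollary~\ref{c:symmetryInLimitAlph} — are exactly those used in the twin lemma, and the observation that $T_\alpha^{|u|}(\lambda_u)=\ell_0$ (since the digit set $\{k+1,k\}$ is disjoint from that of $\underline d^\alpha$) is the correct dual of the right-endpoint claim there.
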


\bigskip  
Although not necessary for the proof of bijectivity, more can be shown about the vertical fibers.
 \begin{Lem}\label{l:fibersNice}   For each $x \in \mathbb I_{\alpha}$,  let $\mathcal F_x$ be the vertical fiber of $\Omega$ above $x$.  Then for all $x \in \mathbb I_{\alpha}$,  the fiber $\mathcal F_x$ is an interval.  In particular,    $\mathcal F_{\ell_0} = [-r_0, -L]$ where $L = \sup_j\, \{\ell_j(\alpha)\}$ and $\mathcal F_{\ell_0} = [-\mathscr R, -\ell_0]$ where $\mathscr R = \inf_j\, \{r_j(\alpha)\}$.  
 Furthermore,
(i) if $x < x' < \rho_{k+2}$ then $\mathcal F_x \subseteq F_{x'}$,  and (ii)   if $\rho_{k+2}< x < x' $ then $\mathcal F_x \supseteq F_{x'}$.

 \end{Lem}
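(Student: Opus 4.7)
The plan is to analyze the fiber $\mathcal F_x$ through the disjoint decomposition~\eqref{e:jtUnion}. The middle piece $\mathcal H$ contributes the constant interval $\Phi^{-}\cap\Phi^{+}=[-\rho_{k+2},-\lambda_{-3}]$ for every $x$. The pieces indexed by $u\in\{-1,-2\}^{\ast}\cap\mathcal L_\alpha$ contribute only values above $-\lambda_{-3}$, and those indexed by $u\in\{k+1,k\}^{\ast}\cap\mathcal L_\alpha$ only values below $-\rho_{k+2}$. Indeed, each such piece is a rectangle $I_u\times J_u$ by Lemma~\ref{l:imagesAndReversal}, and for $u\in\{-1,-2\}^{\ast}$ one computes $J_u=[-\rho_{\overleftarrow{u}},-\lambda_{\overleftarrow{u},-3}]$, which lies in $[-\lambda_{-3},-\ell_0]$ since $\Delta_\alpha(\overleftarrow{u})\subseteq[\ell_0,\lambda_{-3})$. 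So it suffices to show that the $\{-1,-2\}$-contribution $\mathcal F_x^{-}$ is an interval of the form $(-\lambda_{-3},-a_x]$ abutting the middle piece from above, and symmetrically that the $\{k+1,k\}$-contribution satisfies $\mathcal F_x^{+}=[-b_x,-\rho_{k+2})$.

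For the upper contribution I would combine Lemma~\ref{l:fillingUpI} with Corollary~\ref{c:symmetryInLimitAlph} to observe that the intervals $-J_u=[\lambda_{\overleftarrow{u},-3},\rho_{\overleftarrow{u}})$ partition $[\ell_0,\lambda_{-3})$ modulo $\mu$-measure zero as $u$ ranges over $\{-1,-2\}^{\ast}\cap\mathcal L_\alpha$. Hence each $y\in(-\lambda_{-3},-\ell_0]$ lies in a unique $J_u$, namely with $\overleftarrow{u(y)}$ equal to the longest $\{-1,-2\}$-prefix of the $\alpha$-expansion of $-y$. Next, $x\in I_u$ is equivalent to the concatenated sequence $u\,d_\alpha(x)$ lying in $\mathcal L_\alpha$, since $M_u^{-1}$ acts on expansions by prepending $u$. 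The key claim is that the set of qualifying $y$ is connected and of the form $(-\lambda_{-3},-a_x]$: as $y$ decreases from $-\ell_0$ to $-\lambda_{-3}$, the point $-y$ moves upward through $[\ell_0,\lambda_{-3})$ and $\overleftarrow{u(y)}$ lengthens monotonically in the total order $\prec$, so admissibility of $u(y)\,d_\alpha(x)$ flips at most once as $y$ sweeps downward. The same analysis applied to $\{k+1,k\}^{\ast}$ yields $\mathcal F_x^{+}=[-b_x,-\rho_{k+2})$, so $\mathcal F_x=[-b_x,-a_x]$ is an interval, establishing~(a).

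For~(c), when $x<x'<\rho_{k+2}$ both points lie left of $\mathscr R$, so Lemma~\ref{l:kAndKplusOneOnZ} gives $\mathcal F_x^{+}=\mathcal F_{x'}^{+}=[-r_0,-\rho_{k+2})$; meanwhile, since $d_\alpha(x)\prec d_\alpha(x')$, the set of $u\in\{-1,-2\}^{\ast}\cap\mathcal L_\alpha$ with $u\,d_\alpha(x)\in\mathcal L_\alpha$ can only gain members when $x$ is replaced by $x'$, forcing $a_x\ge a_{x'}$ and hence $\mathcal F_x\subseteq\mathcal F_{x'}$. The opposite direction for $\rho_{k+2}<x<x'$ follows from the parallel analysis with the roles of $\mathcal F^{-}$ (saturated by Lemma~\ref{l:minusOneAndTwoOnZ}) and $\mathcal F^{+}$ interchanged. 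Part~(b) is then an endpoint evaluation via monotonicity: for $x=\ell_0<\mathscr R$ the lower contribution is the full $[-r_0,-\rho_{k+2})$, while the upper contribution is the limit as $x\downarrow\ell_0$; the thresholds at which a new $u$ enters the qualifying set are exactly the orbit points $\ell_j$, whose supremum is $\mathscr L=L$, giving $a_{\ell_0}=L$ and $\mathcal F_{\ell_0}=[-r_0,-L]$. The evaluation $\mathcal F_{r_0}=[-\mathscr R,-\ell_0]$ is symmetric.

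The main obstacle is the connectedness claim for $\mathcal F_x^{-}$: one must verify that as $y$ varies, admissibility of $u(y)\,d_\alpha(x)$ in $\mathcal L_\alpha$ toggles at most once, rather than alternating. The expected argument uses the prefix-tree structure on $\{-1,-2\}^{\ast}\cap\mathcal L_\alpha$ together with the tree-compatible lamination already implicit in the disjointness of~\eqref{e:jtUnion}, plus the monotone behavior of the concatenation map $u\mapsto u\,d_\alpha(x)$ in the order $\prec$ against the admissibility bounds $\underline{d}^{\alpha}\preceq\,\cdot\,\preceq\overline{d}^{\alpha}$.
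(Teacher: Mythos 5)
Your decomposition via~\eqref{e:jtUnion}, the identification of the fiber pieces $[-\rho_{\overleftarrow{u}},-\lambda_{\overleftarrow{u},-3}]$, and the observation that the qualifying condition $x\ge T_\alpha^{|u|}(\lambda_u)$ is monotone in $x$ (yielding the containments in parts~(i) and~(ii)) all track the paper's proof. But the central claim --- that the union $\mathcal F_x^{-}$ of the $\{-1,-2\}$-indexed pieces is an interval abutting $-\lambda_{-3}$ --- is precisely what you flag as ``the main obstacle,'' and the sketch you give for it does not hold up. You assert that ``$\overleftarrow{u(y)}$ lengthens monotonically in $\prec$'' as $-y$ increases, but this is false: if $v'$ extends $v$ in $\{-1,-2\}^{*}$, the piece $[\lambda_{v',-3},\rho_{v'})$ sits strictly to the \emph{left} of $[\lambda_{v,-3},\rho_v)$, so word lengths oscillate arbitrarily as $-y$ sweeps rightward. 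Moreover the qualifying condition $x\ge T_\alpha^{|u|}(\lambda_u)$ is governed by $j(u)$, the length of the longest suffix of $u$ that is a prefix of $\underline{d}^\alpha$ (since $T_\alpha^{|u|}(\lambda_u)=\ell_{j(u)}$), and there is no obvious monotone relation between $j(u)$ and the position of $\Delta_\alpha(\overleftarrow{u})$ inside $[\ell_0,\lambda_{-3})$. So ``flips at most once'' is unsupported, and your evaluation $a_{\ell_0}=L$ in part~(ii of the Lemma's statement) quietly presupposes the very intervalness you have not established.

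The paper's actual route is quite different at this step: it does not attack intervalness combinatorially at all. Instead it writes $\mathcal F_{\ell_0}$ as the union over \emph{full} cylinders $u\in\{-1,-2\}^{*}\cap\mathcal L_\alpha$, observes via Propositions~\ref{p:OmegaForZeta} and~\ref{p:OmegaForEta} (through Lemma~\ref{l:fillUpWithZimagesZetaEta}) that for every endpoint parameter $\alpha'=\zeta_{k,v}$ or $\eta_{k,v}$ the leftmost fiber is the explicit interval $[-r_0(\alpha'),-L(\alpha')]$, and then exhibits $\mathcal F_{\ell_0}$ as an increasing limit of such intervals as $\eta_{k,v_i}\to\alpha$ (using that fullness of cylinders and admissibility stabilize along the approximating sequence). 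Once $\mathcal F_{\ell_0}$ is known to be an interval, the intervalness of each $\mathcal F_{\ell_j}$ follows by applying $\mathcal T_\alpha$, and the remaining fibers are constant between consecutive $\ell_j$. To rescue your approach you would either need to supply a genuine combinatorial proof of the ``toggles at most once'' claim --- which looks hard, since it requires relating the suffix structure of $u$ to the cylinder ordering of $\overleftarrow{u}$ --- or, more realistically, replace it with the paper's limiting-plus-propagation argument.
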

\begin{proof}   The arguments for  ``left" versus ``right" sides of our setting are completely analogous.  We thus only give the arguments for the latter case.    

    We first show containment of fibers.    For $L<x < \rho_{k+2}$,  we have that $\mathcal F_x = \Phi$.   For $x\le L$,  we have 
$\mathcal F_x  = \Phi^{-}\bigcup\, \overline{\cup_{\substack{u\in \{-1,-2\}^{^*}\cap \mathcal L_{\alpha}\\x\ge T_{\alpha}^{|u|}(\lambda_u)}}\;  RM_uR^{-1}\cdot \Phi^{-}}$,  thus certainly if $x<x' \le L$ then $\mathcal F_x \subseteq F_{x'}$.

Now,   $\mathcal F_{\ell_0} = \Phi^{-}\bigcup\, \overline{\cup_{\substack{u\in \{-1,-2\}^{^*}\cap \mathcal L_{\alpha}\\ \Delta_{\alpha}(u)\, \text{full}}}\;   -[\lambda_{\overleftarrow{u},-3}, \rho_{\overleftarrow{u}})}$.    It is easily confirmed that Lemma~\ref{l:fillUpWithZimagesZetaEta}, with  Propositions~\ref{p:OmegaForZeta} and \ref{p:OmegaForEta},  implies that when $\alpha$ is replaced by any  $\alpha'$ equal to some $\zeta_{k,v}$ or $\eta_{k,v}$ then the leftmost fiber equals  $[- r_0(\alpha'), - L(\alpha')]$ where $L(\alpha')$ is the largest element of the $T_{\alpha'}$-orbit of $\ell_0(\alpha')$.

We thus now aim to argue that  $\underline{d}{}^{\alpha}_{[1,\infty)}  = \lim_{i\to \infty} \underline{d}(k, v_i)$ leads to the result holding here.    It is easily established that $\Phi^{-}$  is the limit of the corresponding interval for each $\eta_{k, v_i}$ and also $L$ is the limit from below of the $L(\eta_{k, v_i})$.     For any $u\in \{-1,-2\}^{^*}\cap \mathcal L_{\alpha}$ there is some $i_0$ (depending only on $|u|$) such that for all $i \ge i_0$  we have $u \in \mathcal L_{\eta_{k, v_i}}$.      The non-full cylinders are  given by those $u$ of suffix $\underline{d}{}^{\alpha}_{[1,j]}$ for some $j \in \mathbb N$.    For each such $j$, there is some $i_1$ such that for all $i \ge i_1$ we have $\underline{d}{}^{\alpha}_{[1,j]} = \underline{d}{}^{\eta_{k, v_i}}_{[1,j]}$.   Thus,  each ``full" $u$ is such that $\Delta_{\eta_{k, v_i}}(u)$ is full for all $i$ sufficiently large.

 On the other hand,  again since $\underline{d}{}^{\alpha}_{[1,\infty)}  = \lim_{i\to \infty} \underline{d}(k, v_i)$,   for each $i$ every $u \in \mathcal L_{\eta_{k, v_i}}$ of length less than $\underline{d}(k, v_i)$ is $\alpha$-admissible and if $\Delta_{\eta_{k, v_i}}(u)$ is full then also $\Delta_{\alpha}(u)$ is full.    Thus, $\mathcal F_{\ell_0}$ is the limit of ever increasing subsets of the $\mathcal F_{\ell_0(\eta_{k, v_i})}= [- r_0(\alpha'), - L(\alpha')]$ from which the result now follows.

 Finally,   since $\mathcal F_{\ell_0}$ is an interval so also is each  $\mathcal F_{\ell_j}$.    Since any left non-full cylinder $\Delta_{\alpha}(u)$ has its  left endpoint equal to some $\ell_j$ by $T_{\alpha}^{|u|}$,  the fibers are constant (and hence intervals) between the various $\ell_j$.   
 
  We repeat that the analogous arguments succeed for the remaining cases.   
\end{proof} 

\subsection{Ergodic natural extension for non-synchronizing $\alpha$} 
The proof of Theorem~\ref{t:naturallyErgodicParTout} for small, non-synchronizing $\alpha$ relies on (\cite{CKStoolsOfTheTrade}, Theorem~2.3).    The following verifies one of the hypotheses of that theorem for such $\alpha$.

 \begin{Prop}\label{p:finMeas}     
 With $\Omega$ as in Proposition~\ref{p:fillUpFromZ},  $\mu(\Omega)< \infty$.
 \end{Prop}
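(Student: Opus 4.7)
My plan is to compute $\mu(\Omega)$ fiberwise using Lemma~\ref{l:fibersNice}, and to reduce the finiteness to the extreme-fiber inequalities already proved in Lemmas~\ref{l:finiteMassUpperSmallAlps} and \ref{l:finiteMassLowerSmallAlps} for synchronizing $\alpha$.  By Lemma~\ref{l:fibersNice} each vertical fiber of $\Omega$ is a single interval $\mathcal F_x = [a(x), b(x)] \subseteq \Phi$, with $\mathcal F_{\ell_0} = [-r_0, -L]$, $\mathcal F_{r_0} = [-\mathscr R, -\ell_0]$, and nested in $x$ (increasing on $[\ell_0, \rho_{k+2})$, decreasing on $(\rho_{k+2}, r_0]$).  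Carrying out the inner integral explicitly gives
\[
\mu(\Omega) \;=\; \int_{\ell_0}^{r_0} \frac{b(x) - a(x)}{\bigl(1 + x\,a(x)\bigr)\bigl(1 + x\,b(x)\bigr)}\, dx,
\]
so the task reduces to showing the denominator is bounded below by a positive constant uniformly in $x \in \mathbb I_\alpha$.

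The critical cases occur at the extreme fibers.  At $x = \ell_0$ one needs $L > 1/\ell_0$ (equivalently $1 - \ell_0 L > 0$), and at $x = r_0$ one needs $\mathscr R < 1/r_0$.  These are exactly the inequalities at the heart of Lemmas~\ref{l:finiteMassUpperSmallAlps} and \ref{l:finiteMassLowerSmallAlps}; their proofs there relied only on the open conditions $\ell_0(\alpha) \in (-t, 0)$ and $r_0(\alpha) \in (0, t)$---that is, that $\ell_0$ (resp.\ $r_0$) lies strictly between the real roots of $x^2 + tx - 1$ (resp.\ $x^2 - tx - 1$).  Both hold for every $\alpha \in (0, 1)$, independent of synchronization, so the inequalities transfer verbatim to the non-synchronizing case once one identifies $-L$ as the top of $\mathcal F_{\ell_0}$ (the r\^ole played by $y_1$ in Lemma~\ref{l:finiteMassUpperSmallAlps}).

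For interior $x$, I would propagate the bound via the invariance of the singular locus $\{1 + xy = 0\}$ under every $\mathcal T_M$ (by the direct computation $1 + x'y' = (1+xy)/\bigl((cx+d)(a-by)\bigr)$ for $M = \bigl(\begin{smallmatrix} a&b\\c&d\end{smallmatrix}\bigr) \in \mathrm{SL}_2(\mathbb R)$).  Each of the three rectangular pieces of $\mathcal Z_\alpha$ sits strictly on one side of this locus, by the elementary bounds $|\lambda_{-3}|\cdot|\ell_0| = (t-r_0)/(2t+r_0) < 1$ and $r_0\cdot \rho_{k+2} = \alpha t/((k+2-\alpha)t+1) < 1$.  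Hence every iterate $\mathcal T_\alpha^j(\mathcal Z)$ sits on the same side, and so does $\Omega = \overline{\bigcup_j \mathcal T_\alpha^j(\mathcal Z)}$.  Combined with fiber nesting, this confines the worst-case behavior of the denominator to the extreme fibers, which are already controlled.

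The main obstacle is upgrading the pointwise separation of each iterate $\mathcal T_\alpha^j(\mathcal Z)$ from $\{1+xy=0\}$ to a uniform separation that survives passage to the closure.  This would be handled by noting that the boundary functions $x \mapsto a(x)$ and $x \mapsto b(x)$ of $\Omega$, built from iterated $T_\alpha$-orbits of $\ell_0$ and $r_0$, extend to continuous functions on the compact interval $\overline{\mathbb I_\alpha}$ with values that never reach the singular locus (by the pointwise check above); a compactness argument then gives a strictly positive uniform lower bound on the denominator, so the integrand is bounded and $\mu(\Omega) < \infty$.
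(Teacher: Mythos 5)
Your plan differs from the paper's in a significant way, and I think the place where you acknowledge "the main obstacle" is exactly where the proof breaks. The paper does \emph{not} try to prove a uniform lower bound on the denominator $(1+xa(x))(1+xb(x))$; instead it argues by \emph{containment in a reference domain of known finite mass}. Splitting $\Omega = \Omega^+ \cup \Omega^-$, it shows the extreme fiber $\mathcal F_{\ell_0}$ (top $-\mathscr L = RA^{-1}R^{-1}\cdot(-\ell_0) < 1/t$) lies strictly inside $\Omega_0$ (the $\alpha=0$ domain from \cite{CaltaSchmidt}), then uses that the $T_\alpha$-orbit of $\ell_0$ coincides with its $T_0$-orbit so the $\mathcal T_0$-invariance of $\Omega_0$ and the step-function structure of the fibers (Lemma~\ref{l:fibersNice}) force $\Omega^+ \cap \{x\le 0\}$ into $\Omega_0$; since $\mu(\Omega_0) < \infty$ (it touches $y=-1/x$ only at vertices), finiteness follows. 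The lower half $\Omega^-$ is handled symmetrically by pushing past the locus $y=-1/x$ and comparing with $\Omega_1$. This argument never needs the distance to the singular locus to be uniformly positive.

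Your proposed repair of the "main obstacle" does not work as stated: the boundary functions $a(x)$ and $b(x)$ of $\Omega_\alpha$ are \emph{step functions}, constant between consecutive orbit points $\ell_j$ (resp.\ $r_j$) with jumps accumulating at $\mathscr L$ (resp.\ $\mathscr R$) --- see the last paragraph of the proof of Lemma~\ref{l:fibersNice}. They therefore do \emph{not} extend to continuous functions on $\overline{\mathbb I_\alpha}$, and the compactness argument you invoke (continuous nonvanishing function on a compact set) is not available. What you would actually need is something like: the boundary of $\Omega_\alpha$ is dominated by the boundary of a reference domain which, restricted to the compact interval $[\ell_0(\alpha),0]$ (strictly inside $[-t,0]$), stays away from the singular locus --- but at that point you have essentially re-derived the paper's containment argument, and the detour through an attempted uniform bound adds nothing. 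You also assert that Lemmas~\ref{l:finiteMassUpperSmallAlps}--\ref{l:finiteMassLowerSmallAlps} "transfer verbatim"; in fact the paper must compute the limits $-\mathscr L = \lim_{i} -\ell_{\underline{S}(\zeta_{k,v_i})}$ and $-\mathscr R = \lim_i RAR^{-1}\cdot(-r_0(\eta_{k,v_i}))$ to identify the extreme fiber heights for non-synchronizing $\alpha$, which is not automatic.
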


\begin{proof}   Let $\Omega^{+} = \{ (x,y) \in \Omega \mid y \ge 0\}$ and $\Omega^{-} = \{ (x,y) \in \Omega \mid y \le 0\}$.   

Since    $\underline{d}{}^{\alpha}_{[1,\infty)} \in \{-1, -2\}^{\mathbb N}$ the $T_{\alpha}$-orbit of $\ell_0(\alpha)$ agrees with the $T_0$-orbit of this same value.    The vertical fiber in $\Omega^+$ above $\ell_0$ is $-[\mathscr L, 0]$, where  $\mathscr L = \sup_j\, \{\ell_j(\alpha)\}$.      Now,  $- \mathscr L = \lim_{i\to \infty} -\ell_{\underline{S}(\zeta_{k, v_i})} =  \lim_{i\to \infty} RA^{-1}R^{-1} \cdot -\ell_0(\zeta_{k, v_i}) = RA^{- 1}R^{-1} \cdot -\ell_0$.     Since $-t < \ell_0$, we have    $RA^{- 1}R^{-1} \cdot -\ell_0 <RA^{- 1}R^{-1} \cdot t$.  Direct computation shows $RA^{- 1}R^{-1} \cdot t = 1/(t+1/t)$.  This value is certainly less than $1/t$, and hence it follows that the fiber $\mathcal F_{\ell_0}$ lies inside $\Omega_0$, see \cite{CaltaSchmidt}.    That is,  the $\mathcal T_0$-orbit of the vertical fiber lies strictly inside $\Omega_0$.   But,  $\Omega_0$ touches the locus $y = -1/x$, the locus of non-finiteness for $d \mu = (1+xy)^{-2}\, dx\, dy$,  only at its vertices.  Since  the vertical fibers of $\Omega^{+}$ are constant between the various $\ell_j(\alpha)$ we find that all of $\Omega^{+} \cap \{x\le 0\}$  lies within a set of finite $\mu$-measure.   Elementary considerations  show that all of $\Omega^{+}$ has finite $\mu$-mass. 

  With   $\mathscr R = \inf_j\, \{r_j(\alpha)\}$, Lemma~\ref{l:bottomValuesFirstRelations}, Proposition~\ref{p:OmegaForEta}, and an elementary calculation give that  $-\mathscr R=  \lim_{i\to \infty} R A R^{-1}\cdot -r_0(\eta_{k, v_i})= R A R^{-1}\cdot -r_0 = -r_0/(1 + t r_0)$.     Clearly,   $(r_0, - \mathscr R)$, and hence all of $\mathcal F_{r_0}$,  lies above the locus $y=-1/x$.   Since the locus is invariant under $\mathcal T_M$ for any $M \in \text{SL}_2(\mathbb R)$, we find every fiber $\mathcal F_{r_j}$ also lies above this locus.  Since the fibers  are increasing with respect to $x\ge0$ we find that  $\Omega^{-} \cap \{x\ge 0\}$  lies within a set of finite $\mu$-measure.   Elementary considerations  show that all of $\Omega^{-}$ has finite $\mu$-mass. 
\end{proof}
 
 \begin{Prop}\label{p:bijectivityDomIsNatExtErgSmallNonSyn}       With $\alpha$ as in the notation of  Proposition~\ref{p:fillUpFromZ}, 
the system $(\mathcal T_{\alpha}, \Omega_{\alpha}, \mathscr B'_{\alpha}, \mu_{\alpha})$ is the natural extension of  $(T_{\alpha},\mathbb I_{\alpha},  \mathscr B_{\alpha}, \nu_{\alpha})$, where $\nu_{\alpha}$ is the marginal measure of $\mu_{\alpha}$ and $\mathscr B_{\alpha}$  the Borel sigma algebra on $\mathbb I_{\alpha}$.  Finally,   both systems are ergodic. 
 \end{Prop}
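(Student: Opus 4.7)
The plan is to deduce this proposition as a direct application of (\cite{CKStoolsOfTheTrade}, Theorem~2.3), which we have already invoked to prove Proposition~\ref{p:NaturallyErgodicEndPts}. That theorem asserts that for a piecewise M\"obius interval map $T$ with an explicit planar extension $(\mathcal T, \Omega, \mu)$ of finite $\mu$-mass on which $\mathcal T$ is bijective up to measure zero, and for which the bounded non-full cylinders condition holds, the planar system is the natural extension of the interval system (with respect to the marginal of the normalized $\mu$), and both systems are ergodic. Thus the proof reduces to verifying each of these three hypotheses for our non-synchronizing $\alpha < \gamma_n$.

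First, Proposition~\ref{p:fillUpFromZ} already establishes the bijectivity of $\mathcal T_{\alpha}$ on $\Omega_{\alpha}$ up to $\mu$-measure zero, while Proposition~\ref{p:finMeas} gives that $\mu(\Omega_{\alpha})< \infty$. It therefore remains to confirm the bounded non-full cylinders condition. Per Remark~\ref{rmk:bddNonfullCylinders}, it suffices to show that the expansions $\underline{d}{}^\alpha_{[1,\infty)}$ and $\overline{d}{}^\alpha_{[1,\infty)}$ involve only finitely many distinct simplified digit values. But for every $\alpha < \gamma_n$, $\underline{d}{}^\alpha_{[1,\infty)} \in \{-1,-2\}^{\mathbb N}$, and since $\alpha$ is non-synchronizing with $\alpha < \gamma_n$, by the description of the parameter partition recalled in \S\ref{gpsMapsEtc}, there is a unique $k \in \mathbb N$ with $\alpha$ in the closure of the union of the $J_{k,v}$; then $\overline{d}{}^\alpha_{[1,\infty)} \in \{k,k+1\}^{\mathbb N}$. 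Hence only the four digit values $-1,-2,k,k+1$ can appear, and the orbits of $\ell_0(\alpha)$ and $r_0(\alpha)$ (which are the only sources of non-full cylinder endpoints, by the discussion in \S\ref{gpsMapsEtc}) stay away from the interior of any full cylinder whose single digit lies outside this finite set, for example the full cylinder $\Delta_{\alpha}(-3)$.

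With all three hypotheses verified, (\cite{CKStoolsOfTheTrade}, Theorem~2.3) applies directly to give both that $(\mathcal T_{\alpha}, \Omega_{\alpha}, \mathscr B'_{\alpha}, \mu_{\alpha})$ is the natural extension of $(T_{\alpha}, \mathbb I_{\alpha}, \mathscr B_{\alpha}, \nu_{\alpha})$ and that both systems are ergodic. I expect the only nontrivial point in execution to be a careful recitation of the bounded non-full cylinders condition for the precise flavor of non-synchronizing $\alpha$ at hand; the main analytic obstacles (the explicit construction of $\Omega_{\alpha}$, the bijectivity, and the finiteness of mass) have already been surmounted in the preceding results of this section.
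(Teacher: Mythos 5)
Your proposal follows essentially the same route as the paper: cite Proposition~\ref{p:fillUpFromZ} for bijectivity, Proposition~\ref{p:finMeas} for finite mass, and verify the bounded non-full cylinders condition by observing that the digits of the endpoint expansions are restricted to $\{-1,-2,k,k+1\}$ for the fixed $k$ determined by the non-synchronizing $\alpha$, then invoke (\cite{CKStoolsOfTheTrade}, Theorem~2.3) via Remark~\ref{rmk:bddNonfullCylinders}. One small omission relative to the paper's proof: (\cite{CKStoolsOfTheTrade}, Theorem~2.3) also requires that the vertical fibers of $\Omega_{\alpha}$ have bounded Lebesgue measure, which the paper explicitly notes follows from the description in Proposition~\ref{p:fillUpFromZ} (and implicitly from Lemma~\ref{l:fibersNice}); your recitation of the theorem's hypotheses as only three is slightly incomplete, though the missing item is indeed available from the cited work.
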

 \begin{proof} From Proposition~\ref{p:fillUpFromZ}, $\Omega_\alpha = \Omega$ is a bijectivity domain for $\mathcal T_{\alpha}$.    From Proposition~\ref{p:finMeas} this domain has finite measure.   From its description in Proposition~\ref{p:fillUpFromZ} the domain has fibers of bounded Lebesgue measure.  Since $\alpha$ is a non-synchronizing value, it corresponds to a limit under a sequence $\Theta_{q_i}$ applied to a word in $\mathcal V$;  the digits which enter the expansion of the endpoints of such sequences form a finite set.  In light of Remark~\ref{rmk:bddNonfullCylinders},  $T_{\alpha}$  satisfies the bounded non-full cylinders hypothesis for (\cite{CKStoolsOfTheTrade}, Theorem~2.3) and indeed all other hypotheses are then also easily verified.  That is, the proof of the proposition is then complete.
 \end{proof}

\subsection{Continuity of $\alpha \mapsto \mu(\Omega_{\alpha})$} 

We can now prove the main aim of this section.  
 \begin{Thm}\label{t:continuity}     The function  $\alpha \mapsto  \mu(\Omega_{\alpha})$ is continuous on $(0, \gamma_{n})$.
 \end{Thm}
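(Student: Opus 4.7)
The plan is to treat $\alpha_0 \in (0, \gamma_n)$ according to whether it lies in the interior of a synchronization interval, is an endpoint of one, or is a non-synchronizing point. The interior case is immediate: for $\alpha_0$ in the interior of some $J_{k,v}$, the heights $y_a, y_b$ appearing in Definitions~\ref{d:topYvalues} and~\ref{d:bottomYvalues} are constant on $J_{k,v}$, while the horizontal endpoints of the constituent rectangles $K_a, L_b$ are continuous functions of $\alpha$ (being images of $\ell_0(\alpha), r_0(\alpha)$ under fixed M\"obius transformations determined by admissible prefixes that are constant on $J_{k,v}$). Hence $\mu(\Omega_\alpha)$ is a finite sum of integrals of $(1+xy)^{-2}$ over rectangles with continuously varying widths, which is continuous. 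Left-continuity at $\eta_{k,v}$ from within $J_{k,v}$ follows from Corollary~\ref{c:contAtRightEndPtSyncIntervalSmallAlps}: the Hausdorff convergence there, combined with uniform boundedness of the density $(1+xy)^{-2}$ on a compact set avoiding the locus $y=-1/x$ (cf.\ the proof of Proposition~\ref{p:finMeas}), yields $\mu$-convergence. An entirely analogous argument, now applied to $\Omega^{-}$, gives right-continuity at $\zeta_{k,v}$ from within $J_{k,v}$.

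For continuity at the remaining points --- non-synchronizing $\alpha_0$, and endpoints $\zeta_{k,v}, \eta_{k,v}$ approached from outside $J_{k,v}$ --- the plan is to exploit the common disjoint-union description
\[
\Omega_\alpha \,=\, \mathcal H_\alpha \,\sqcup\, \overline{\bigsqcup_{u\in \{-1,-2\}^{*}\cap \mathcal L_{\alpha}} \mathcal T_\alpha^{|u|}\!\bigl(\Delta_\alpha(u) \times \Phi^{-}\bigr)} \,\sqcup\, \overline{\bigsqcup_{u\in \{k+1,k\}^{*}\cap \mathcal L_{\alpha}} \mathcal T_\alpha^{|u|}\!\bigl(\Delta_\alpha(u) \times \Phi^{+}\bigr)},
\]
which holds for non-synchronizing $\alpha$ by Proposition~\ref{p:fillUpFromZ} and at the endpoints $\zeta_{k,v}, \eta_{k,v}$ by Lemma~\ref{l:fillUpWithZimagesZetaEta} combined with Lemma~\ref{l:admissibility} (the same lemmas underlying the proof of Proposition~\ref{p:fillUpFromZ}). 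Given any sequence $\alpha_n \to \alpha_0$ in this class, and any $\epsilon > 0$, my plan is to truncate the union at a finite length $L$ and argue in three steps. Step (i): for each fixed $L$ there is $N$ such that $\{u : |u| \le L\}\cap\mathcal L_{\alpha_n} = \{u : |u| \le L\}\cap\mathcal L_{\alpha_0}$ for every $n \ge N$, since the finite prefixes $\underline{d}_{[1,L]}^\alpha, \overline{d}_{[1,L]}^\alpha$ are eventually constant in $\alpha$ near $\alpha_0$. Step (ii): for each fixed admissible word $u$, the vertices of $\mathcal T_\alpha^{|u|}(\Delta_\alpha(u)\times\Phi^\pm)$ are obtained by applying the fixed transformations $M_u, RM_uR^{-1}$ to the $\alpha$-varying endpoints, hence depend continuously on $\alpha$, and so does the $\mu$-measure of this rectangle. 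Step (iii): the contribution from all words with $|u| > L$ is uniformly small in $\alpha$ near $\alpha_0$.

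The main obstacle is Step (iii), the uniform tail estimate. My plan is to combine two ingredients. First, expansiveness on the sub-systems generated by the digits $\{-1,-2\}$ and $\{k,k+1\}$ (the relevant accelerated maps on these full-measure subsets) forces the widths $|\Delta_\alpha(u)|$ of admissible cylinders to shrink at a geometric rate in $|u|$ that is uniform over a sufficiently small neighborhood of $\alpha_0$. Second, by the containment arguments already used in Proposition~\ref{p:finMeas}, for $\alpha$ in a small neighborhood of $\alpha_0$ all the fibers of $\Omega_\alpha$ lie in a common vertical window bounded away from the locus $y=-1/x$, so the density $(1+xy)^{-2}$ is uniformly bounded on each summand. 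Multiplying the geometrically decreasing widths by the uniformly bounded fiber heights and density, and summing over $|u| > L$, yields a tail bound tending to $0$ as $L \to \infty$ uniformly in $\alpha$ near $\alpha_0$. Combined with Steps (i) and (ii), this delivers $\mu(\Omega_{\alpha_n}) \to \mu(\Omega_{\alpha_0})$ and completes the proof.
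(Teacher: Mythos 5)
Your proposal follows essentially the same route as the paper for interior points, for endpoints of $J_{k,v}$ approached from inside, and for limit points within a fixed $I_k := \cup_{v\in\mathcal V} I_{k,v}$. However, there is a genuine gap: you do not treat continuity at the points $\alpha = \zeta_{k,1}$, i.e.\ the boundaries between consecutive $I_k$'s. This is not subsumed by your ``endpoints approached from outside $J_{k,v}$'' case, because your whole argument rests on the disjoint-union decomposition of $\Omega_\alpha$ with $\Omega^-$ built from words in $\{k+1,k\}^{*}$ for a \emph{fixed} $k$. When $\alpha_n \to \zeta_{k,1}$ from the left, the approximating parameters $\alpha_n$ lie in $I_{k+1}$: the first digit of $r_0(\alpha_n)$ is $k+1$, and the $\Omega^-_{\alpha_n}$ are built from words in $\{k+2,k+1\}^{*}$, not $\{k+1,k\}^{*}$. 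Your Step (i) then fails, since the languages $\mathcal L_{\alpha_n}$ and $\mathcal L_{\alpha_0}$ do \emph{not} agree on words of bounded length in a common alphabet; the letter $k$ simply never appears in $\overline{d}^{\alpha_n}$. The paper handles this case by a separate argument (its third paragraph), comparing $\underline{d}^{\zeta_{k,1}}_{[1,\infty)}$ and $\underline{d}^{\zeta_{k+1,h}}_{[1,\infty)}$ explicitly, and observing that in $\mathcal L_{\zeta_{k,1}}$ the letter $k$ occurs only in prefixes of $\overline{d}^{\zeta_{k,1}}_{[1,\infty)}$ (and only as an initial letter), so that the corresponding contribution to $\mu(\Omega^-)$ vanishes in the limit. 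You need an analogous explicit matching of languages across the alphabet shift; without it the proof is incomplete.

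A smaller remark: your Step~(iii) appeals to geometric shrinking of cylinders from expansivity of the accelerated subsystems. That is a plausible route, though it needs to be verified that the acceleration (which collapses runs of digits in $\{-1,-2\}$, resp.\ $\{k,k+1\}$) still produces exponentially many cylinders at each depth with uniformly geometric widths in a neighborhood of $\alpha_0$; the paper instead simply invokes finiteness of $\mu(\Omega_\alpha)$ to control the tail term-by-term, which is a lighter (though terser) argument. Either way, that part of your plan is defensible; the missing case at $\zeta_{k,1}$ is the substantive gap.
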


\begin{proof}  Since the both top and bottom heights are constant along any synchronization interval $J_{k,v}$, with fibers constant between the various $r_i(\alpha), \ell_j(\alpha)$, continuity and indeed smoothness,  of $\alpha \mapsto  \mu(\Omega_{\alpha})$ in the interior of the interval is clear.  Propositions~\ref{p:OmegaForZeta} and ~\ref{p:OmegaForEta} easily imply that the continuity holds on the closed interval. 

     Now fix $k$.   To  show continuity throughout  $I_k := \cup_{v\in \mathcal V}\, I_{k,v}$, it only remains to show that $\mu(\Omega_{\alpha'}) \to \mu(\Omega_{\alpha})$  when $\alpha$ is non-synchronizing or (for the remaining one sided limits to) some $\eta_{k,v}$ or $\zeta_{k,v}$ with all values in $I_k$.   Fix one such value $\alpha\in I_k$, it suffices to show that  $\mu(\Omega_{\alpha'}) \to \mu(\Omega_{\alpha})$  when the $\alpha'$ are non-synchronizing or various $\eta_{k,v}$ or $\zeta_{k,v}$ also in $I_k$.    Such $\alpha'$ have  
\[\Omega_{\alpha'} = \mathcal H_{\alpha'}\bigsqcup  \overline{\sqcup_{u\in \{-1,-2\}^{^*}\cap \mathcal L_{\alpha'}}\; \mathcal T_{\alpha'}^{|u|}(\, \Delta_{\alpha'}(u) \times \Phi_{\alpha'}^{-} \,)} \bigsqcup  \overline{\sqcup_{u\in \{k+1,k\}^{^*}\cap \mathcal L_{\alpha'}}\; \mathcal T_{\alpha'}^{|u|}(\, \Delta_{\alpha'}(u) \times \Phi_{\alpha'}^{+} \,)}.\]  
Now, as $\alpha'\to \alpha$ we have $\mu( \mathcal Z_{\alpha'} \setminus  \mathcal Z_{\alpha}) \to 0, \mu( \mathcal H_{\alpha'} \setminus \mathcal H_{\alpha}) \to 0$.  Furthermore both  $\underline{d}{}^{\alpha'}_{[1,\infty)}\to  \underline{d}{}^{\alpha}_{[1,\infty)}$ and $\overline{d}{}^{\alpha'}_{[1,\infty)}\to  \overline{d}{}^{\alpha}_{[1,\infty)}$.   In particular,  there are $b_{\alpha'}, c_{\alpha'} \in \mathbb N$  each going to infinity as $\alpha'\to \alpha$ such that  for all $u\in \{-1,-2\}^{^*}$ with $|u| <  b_{\alpha'}$, one has  $u\in  \mathcal L_{\alpha'}$ if and only if    $u \in \mathcal L_{\alpha}$,   and similarly  for all $u\in \{k+1,k\}^{^*}$ with $|u| <  c_{\alpha'}$ we have  $u \in \mathcal L_{\alpha'}$  if and only if  $u \in \mathcal L_{\alpha}$.  For these $u$, we have also $\Delta_{\alpha'}(u) \to \Delta_{\alpha}(u)$. Since  $\Omega_{\alpha}$ is of finite $\mu$-measure,   as $|u| \to \infty$ we have $\mu(\Delta_{\alpha}(u) \times \Phi_{\alpha}^{-})\to 0$ and similarly $\mu(\Delta_{\alpha}(u) \times \Phi_{\alpha}^{+})\to 0$.     Taken all together, we find that $\mu(\Omega_{\alpha'}) \to \mu(\Omega_{\alpha})$ as $\alpha'\to \alpha$.

It only remains to show continuity at the boundary of each of the various $I_k$.     Fix $k$, we must show continuity from the left at $\alpha = \zeta_{k,1}$, we do so by considering the limit given by $\zeta_{k+1,h} \to \zeta_{k,1}$ as $h \to \infty$.     As in \eqref{e:lowerDzetaExpan},  with  $\mathcal D = (-1)^{n-3}, -2, w^{k+1}$,
\[  
\underline{d}{}^{\zeta_{k,1}}_{[1,\infty)}  =  (-1)^{n-2},-2,\overline{ \mathcal D}\,\;\;\; \text{and}\;\;\;  \underline{d}{}^{\zeta_{k+1,h}}_{[1,\infty)}  =  (-1)^{n-2},-2,\overline{ \mathcal D^h, w}\]
Therefore,  in terms of the argument of the previous paragraph, here also there is agreement among ever greater length words of $\{-1,-2\}^{^*}\cap \mathcal L_{\alpha'}$ with those of $\{-1,-2\}^{^*}\cap \mathcal L_{\alpha}$.  That is, the arguments above apply and give $\mu(\Omega_{\zeta_{k+1,h}}^{+}) \to \mu(\Omega_{\zeta_{k,1}}^{+})$.       

Note that that the only words   in  $\mathcal L_{\zeta_{k,1}}$ which contain the letter $k$ are the prefixes of $\overline{d}{}^{\zeta_{k,1}}_{[1,\infty)}\,$; in each,   $k$ occurs only as the initial letter.   Complementing this, words consisting of $k+1$ repeated any number of times are admissible.   On the other hand, in each $\mathcal L_{\zeta_{k+1,h}}$ there is no word with the letter $k$,  and the words consisting of $k+1$ repeated up to $h$ of times are admissible.     Thus, the arguments from above apply to show that $\mu(\Omega_{\zeta_{k+1,h}}^{-}) \to \mu(\Omega_{\zeta_{k,1}}^{-})$.     Therefore, continuity of $\alpha \mapsto  \mu(\Omega_{\alpha})$ holds on all of $(0, \gamma_{n})$.
\end{proof}

\section{Intermezzo:  Preparation for the treatment for larger values of $\alpha$}\label{ss:terseForBigAlps}
\subsection{Terse review of further notation and terminology}\label{ss:terseRev}  We complement Subsection~\ref{ss:notation} with reminders to the reader of further terminology and notation from \cite{CaltaKraaikampSchmidt}.   For the following, confer (\cite{CaltaKraaikampSchmidt}, Figures~4.1 and ~1.3).

\medskip 

Fix $n \ge 3$.   For each $\alpha$,  let 
\begin{equation}\label{e:frakBis}
\mathfrak b_{\alpha} = C^{-1}\cdot \ell_0(\alpha).
\end{equation} 
   Then 
the right endpoint $\gamma_n$  of the set of small $\alpha$ is   the value of $\alpha$ such that $\mathfrak b_{\alpha}  = r_0(\alpha)$.     The left endpoint of the set of large $\alpha$ is $\epsilon_n$, the value of $\alpha$  such that $A^{-1}C\cdot \ell_0(\alpha) = r_0(\alpha)$.   Thus,  the large $\alpha$ are  exactly the values such that the first $\alpha$-digit of $\ell_0(\alpha)$ is  $(-k,1)$ for some integer $k \ge 2$.

 For large $\alpha$,  we use exactly the same tree of words $\mathcal V$ as for small $\alpha$.    For the following, confer (\cite{CaltaKraaikampSchmidt}, Figure~6.1).
  For each $k \in \mathbb N, k \ge 2$ 
we let  
\begin{equation}\label{e:underDminKv}  \underline{d}(-k,v) = (-k)^{c_1}, (-k-1)^{d_1},\cdots,  (-k-1)^{d_{s-1}},(-k) ^{c_s},
\end{equation} 
and 
so that the corresponding subinterval of parameters is $I_{-k,v} = \{ \alpha \,|\, \underline{d}{}^{^{\alpha}}_{[1, |v|\,]} = \underline{d}(-k,v)\}$.    Thus,  (using redundant notation) set $\underline{S}(-k,v)$ to be the sum of the $c_i$ and $d_j$ of $v$, and
\begin{equation}\label{e:lKminV}
L_{-k,v}  =    (A^{-k}C)^{ c_s}\; (A^{-k-1}C)^{d_{s-1}}(A^{-k}C)^{c_{s-1}}\cdots (A^{-k-1}C)^{d_1} (A^{-k}C)^{c_1}A^{-1},
\end{equation}
then for $\alpha \in I_{-k,v}$ one has  $\ell_{\underline{S}(-k,v)}(\alpha) = 
L_{-k,v}A \cdot  \ell_0(\alpha)$.
The right endpoint of $I_{k,v}$ is denoted $\zeta_{-k,v}$;   one finds $L_{-k,v}A\cdot \ell_0(\zeta_{-k,v}) = r_0(\zeta_{-k,v})$.   
Set $R_{-k,v} = CA^{-1}CL_{-k,v}$ and define $J_{-k,v} = [\eta_{-k,v}, \zeta_{-k,v})$  where    $R_{-k,v}\cdot r_0(\eta_{-k,v})= \mathfrak b_{\eta_{-k,v}}$.

Each  $J_{-k,v}$ is partitioned into two left-closed subintervals, whose common endpoint, $\delta_{-k,v}$,  is characterized by $\ell_{\underline{S}}(\alpha) = \mathfrak b_{\alpha}$ when $\alpha = \delta_{-k,v}$.     We will refer to these as the {\em left portion} and {\em right portion} of the interval.  Recall that synchronization occurs after one step further in the $T_{\alpha}$-orbit of $r_0(\alpha)$ for $\alpha$ in the right portion as opposed to in the left portion.   To be precise,  the synchronication is given by 
\begin{equation}\label{e:synchroExplicitLargeAlps}
\begin{cases}   
\, \ell_{1+\underline{S}}(\alpha) = r_{1+\overline{S}}(\alpha) &\text{if}\;  \alpha <\delta_{-k,v}\,,\\
\\
\,\ell_{1+\underline{S}}(\alpha) = r_{2+\overline{S}}(\alpha)&\text{if}\;  \alpha >\delta_{-k,v}\,.
\end{cases}
 \end{equation} 
 
 Key to proving the synchronization results for large $\alpha$ was the determination of a maximal common prefix of the $\overline{b}_{[1, \infty)}^{\alpha}$ for $\alpha \in J_{-k,v}$.   For that, we set $u = u_{n} = (1,2)^{n-2}, (1,1)$ and   for $k\ge 2$, 
$\mathcal E = \mathcal E_k =(1,1) u^{k-2} (1,2)^{n-3}$ and $ \mathcal F =\mathcal E_{k+1}$.  The common prefix is 
\begin{equation}\label{e:digitsNotSmall} 
\overline{b}(-k,v) =  (1,2)^{n-2} \mathcal E^{c_1}  \mathcal F^{d_1} \,\mathcal E^{c_2 } \,\mathcal F^{d_2} \cdots \mathcal E^{c_{s-1}} \,\mathcal F^{d_{s-1}} \,\mathcal E^{c_s},
\end{equation} 
 denote  its length as a word in the $\alpha$-digits by  $\overline{S}(-k,v)$.   There is an expression for $R_{-k,v}$ related to $\overline{b}(-k,v)$ similar to how \eqref{e:lKminV} relates $L_{-k,v}$ to $\underline{b}(-k,v)$;  see \eqref{e:expressRsubminKv}. 

 Finally, in  the case of intermediate sized parameters, that is for $\alpha \in [\gamma_n, \epsilon_n)$,   one has as for small $\alpha$ that the first $\alpha$-digit of $\ell_0(\alpha)$ is  $(-1,1)$  and thus $k=1$; however, the dynamics are mainly of the type of the large $\alpha$.   To see this, first note  that $\gamma_n = \eta_{-1, n-2}$ and  furthermore, there exists  $\alpha \in J_{-1, n-2}$ such that $(A^{-1}C)^{n-2}\cdot \ell_0(\alpha) = C^{-1}\cdot \ell_0(\alpha)$.   For this value of $\alpha$,  $AC^2 \cdot r_0(\alpha) = \ell_0(\alpha)$.  Arguing as in (\cite{CaltaKraaikampSchmidt}, Section ~6), it follows that  $(1,2)$ is the initial digit of $r_0$ for all parameter values at least as large as this $\alpha$.  Thus for $\alpha$ to the right of $J_{-1, n-2}$, the dynamics are indeed of the type of the large $\alpha$.   On the other hand,   see Subsection~\ref{ss:ExamplesKis1FirstHalfOfSyncInt},  synchronization holds on $J_{-1, n-2}$ with $\overline{S}(-1, n-2) = 0$ and hence the fact that there are $\alpha \in  J_{-1, n-2}$ whose digit of $r_0$ is less than $(1,2)$ is insignificant for our discussions. 

  Therefore to describe the dynamics for the intermediate sized parameters,   we  restrict to words in the subtree $\check{\mathcal V}\subset \mathcal V$  defined by ($i$)  $v = c_1 d_1 \cdots c_s \in \check{\mathcal V}_n$, is such that $c_i \le n-2$, and ($ ii$) the only word in $\check{\mathcal V}_n$ with prefix $n-2$ is $v=n-2$ itself.         As discussed in (\cite{CaltaKraaikampSchmidt}, Section~7)  
we can still use  \eqref{e:digitsNotSmall} since the implied appearances of $u^{k-2}= u^{-1}$ turn out to always have neighboring appearances of  $u$ to positive powers.    Thus, we treat the two-dimensional maps for the cases of $\alpha \ge \gamma_n$   all at the same time.    We give various examples with  intermediate sized $\alpha$,  see Subsection~\ref{ss:ExamplesKis1FirstHalfOfSyncInt} and Examples~\ref{e:kIsMin1LeftEndpt}, ~\ref{e:kIs1SecondHalfOfSyncInt}, ~\ref{e:kIsMin1RightEndpt}, ~\ref{e:kIsMin1Delta}. 

\bigskip 
\begin{figure}[h]
\scalebox{.55}{
\noindent
\begin{tabular}{ll}
\begin{tikzpicture}[x=6cm,y=6cm] 
\draw  (-.32, -0.72)--(0.5, -0.72); 
\draw  (0.5, -0.72)--(0.5,  -0.44); 
\draw  (0.5, -0.44)--(1.1,  -0.44);   
\draw  (1.1, -0.44)--(1.1,  -0.39);   
\draw  (1.1, -0.39)--(1.68,  -0.39);   
\draw  (1.68, -0.39)--(1.68,  0.35); 
\draw  (1.68, 0.35)--(0.25,  0.35); 
\draw  (0.25, 0.35)--(0.25,  0.2);  
\draw  (0.25, 0.2)--(-.32,  0.2);  
\draw  (-.32, 0.2)--(-.32,  -0.72);
 \draw  (0.0, -0.72)--(0.0, .2); 
 \draw  (1, -0.44)--(1, .35); 
\draw[thin,dashed] (-0.17, -0.72)--(-0.17, 0.2);    
\draw[thin,dashed] (0.19, -0.72)--(0.19, 0.2); 
\draw[thin,dashed] (0.33, -0.72)--(0.33, 0.35);      
\draw[thin,dashed] ( 0.78, -0.44)--( 0.78, 0.35);  
\draw[thin,dashed] (0.92, -0.44)--(0.92, 0.35);  
\draw[thin,dashed] (1.2, -0.39)--(1.2, 0.35);  
\draw[thin,dashed] (1.4, -0.39)--(1.4, 0.35);  
\draw[thin,dashed] (1.08, -0.44)--(1.08, 0.35);  
\node at (-.25, 0.12) {\tiny{$(-2,1)$}};         
\node at (0.55, -0.2) {\tiny{$(1,1)$}};  
\node at (0.25, 0) {\tiny{$(2,1)$}}; 
\node at (0.85, 0.12) {\tiny{$(-2,2)$}};      
\node at (0.10, 0) {\tiny{$\cdots$}}; 
\node at (-0.1, 0) {\tiny{$\cdots$}};   
\node at (0.96, -0.01) {\tiny{$\cdots$}}; 
\node at (1.05, -0.01) {\tiny{$\cdots$}}; 
\node at (1.14, 0) {\tiny{$(3,2)$}};  
\node at (1.3, 0) {\tiny{$(2,2)$}};   
\node at (1.55,  -0.2) {\tiny{$(1,2)$}};   
\node at (-.38, -0.8) {$(\ell_0, y_{-3})$}; 
\node at (-.38, 0.3) {$(\ell_0, y_1)$}; 
\node at (0.15,  0.35)  {$(\ell_1, y_2)$};  
\node at (0.5, -0.8)  {$(r_1, y_{-3})$}; 
\node at (1.2, -0.5)  {$(r_2, y_{-2})$}; 
\node at (.78, -0.5)  {$\mathfrak b$};
\node at (1.7, -0.5)  {$(r_0, y_{-1})$};  
\node at (1.7,0.4)  {$(r_0, y_{2})$};   
\node at (0,0)  {$0$};  
\node at (1, -0.5)  {$1$};  
 \foreach \x/\y in {-.32/-0.72, 0.5/-0.72, 1.1/-0.44, 
 1.68/-0.39,  1.68/0.35, 0.25/0.35, -.32/0.2%
} { \node at (\x,\y) {$\bullet$}; } 
\draw[->, ultra thick] (1.5,  -0.65)--(1.6,  -0.65) -- (1.55, -0.7) -- (1.8,-0.7);
\end{tikzpicture}  
&
\begin{tikzpicture}[x=6cm,y=6cm] 
\draw  (-.32, -0.72)--(0.5, -0.72); 
\draw  (0.5, -0.72)--(0.5,  -0.44); 
\draw  (0.5, -0.44)--(1.1,  -0.44);   
\draw  (1.1, -0.44)--(1.1,  -0.39);   
\draw  (1.1, -0.39)--(1.68,  -0.39);   
\draw  (1.68, -0.39)--(1.68,  0.35); 
\draw  (1.68, 0.35)--(0.25,  0.35); 
\draw  (0.25, 0.35)--(0.25,  0.2);  
\draw  (0.25, 0.2)--(-.32,  0.2);  
\draw  (-.32, 0.2)--(-.32,  -0.72); 
\draw  (-.32, 0)--(1.68, 0);   
\draw  (-.5,  .75)--(-0.1, .75);      
\draw  (-.1,  .75)--(-0.1, .45);   
\draw  (-.1,  .45)--(-0.5, .45);   
\draw  (-.5,  .45)--(-0.5, .75);  
\draw  (1, -0.55)--( 1.4, -0.55);      
\draw  (1.4, -0.55)--(1.4, -0.85);   
\draw  (1.4, -0.85)--(1,  -0.85);   
\draw  (1, -0.85)--(1,  -0.55);        
\draw[thin,dashed] (-.32, -0.44)--(0.5, -0.44);      
\draw[thin,dashed] (-.32, -0.35)--(1.68, -0.35);      
\draw[thin,dashed] (-.32, -0.28)--(1.68, -0.28);  
\draw[thin,dashed] (0.25, 0.27)--(1.68, 0.27);      
\draw[thin,dashed] (0.25, 0.2)--(1.68, 0.2);     
\draw[thin,dashed] (-.5,  .6)--(-.1,  .6);             
\draw[thin,dashed] (1,  -.7)--(1.4 , -0.7);           
\draw[thin,dashed] (-0.32, -0.2)--(1, -0.2);                
\draw[thin,dashed] (1, -0.2)--(1, -0.17);  
\draw[thin,dashed] (1, -0.17)--(1.68, -0.17);  
\draw[thin,dashed] (-0.32,  -0.12)--(1.68, -0.12); 
\node at (-.35, 0) {$0$};               
\node at (0, -.6) {$(1,2)$};  
\node at (0, -.4) {$(1,1)$};  
\node at (0.7, -.32) {$(2,2)$};   
\node at (1.1, -.23) {$(2,1)$};
\node at (.85, -.16) {$(3,2)$}; 
\node at (1,  .3) {$(-2,1)$};  
\node at (1,  .23) {$(-2,2)$};  
\node at ( 0.7,  .15) {$(-3,1)$}; 
\node at ( -0.3, .7) {$(k,l)$};            
\node at ( -0.3, .5) {$(k,l+1)$};  
\node at (1.2,  -.8) {$(k,l)$};            
\node at (1.2, -.6) {$(k+1,l+1)$};                 
\node at (-.32, 0) {$\bullet$};         
\node at (0.7, .085) {$\vdots$};  
\node at (0.7, -.05) {$\vdots$};  
\end{tikzpicture}  
\end{tabular}
}
\caption{The domain $\Omega_{3, 0.86}$. Blocks $\mathcal B_{i,j}$   and their images, both denoted by $(i,j)$. Here $L_{-k,v} = A^{-2}CA^{-1}$ and $R_{-k,v} = AC \,AC^{2}$, and $\alpha$ is an interior point of $J_{-2,1}$ lying to the left of $\delta_{-2,1}$.  Also, hints as to the lamination ordering given in small boxes.}
\label{f:omegaLargeAlpLessThanDelta_{-k,v}}%
\end{figure}

\bigskip

\section{The case of  large $\alpha$ in left portion of synchronization interval}\label{s:bigLefties}   Devoted to $\alpha \ge \gamma_n$, this section and the next parallel Sections~\ref{s:relationsOnHts} and ~\ref{s:MainForSmallAlps}.    The relationships between the various rectangle heights in this current are more intricate,  see Figure~\ref{f:orbitRelations} for an indication of this.

\subsection{Definition of $\Omega_{\alpha}$  for large $\alpha$ in left portion of synchronization interval}\label{ss:theTwoHalvesLargeAlps}

Fix $n\ge 3, k \in \mathbb N$.    If  $k\ge 2$, fix  $v \in \mathcal V$, and otherwise fix $v\in \check{\mathcal V}$.      Finally, fix $\alpha$ in the interior of $J_{-k,v}$.   

Compare the following definition with  Figures~\ref{f:omegaLargeAlpLessThanDelta_{-k,v}} and \ref{f:topBottomVertices}.

\begin{Def}\label{d:topYvaluesLargeAlp}   Let  $\underline{S} = \underline{S}(-k,v)$ and  
define   $\Omega^{+}$ as in Definition~\ref{d:topYvalues}  except that 
 \[ \text{For all}\;  i \in \{0, \dots,  \underline{S}\},\, \text{we   set}\; y_{\tau(i)} = - \ell_{\underline{S}-i}(\eta_{-k,v}).\] 
\end{Def}

\bigskip 
\begin{figure}[h]
\scalebox{.42}{
\noindent
\begin{tabular}{lcr}
\begin{tikzpicture}[x=8cm,y=8cm] 
\draw  (-.32, 0)--(-.32, 0.2); 
\draw  (-.32, 0.2)--(-.25, 0.2); 
\draw  (-.25, 0.2)--(-.15, 0.2); 
\draw  (-.15, 0.2)--(-.15, 0.3);  
\draw  (-.15, 0.3)--(-.1, 0.3); 
\draw  (-.1, 0.3)--(-.1, 0.4);
\draw  (-.1, 0.4)--(-.05, 0.4); 
\draw  (0,0.45)--(0, 0.5);    
\draw  (0,0.5)--(0.15, 0.5);    
\draw  (0.15,0.5)--(0.15, 0.6);  
\draw  (0.15,0.6)--(0.2, 0.6); 
\draw  (0.3, 0.65)--(0.3, 0.7);
\draw  (0.3, 0.7)--(0.35, 0.7);   
\draw  (0.35, 0.7)--(0.35, 0.8); 
\draw  (0.35, 0.8)--(1, 0.8); 
\draw  (1, 0.8)--(1, 0.9);
\draw  (1, 0.9)--(1.4, 0.9);  
\draw  (1.4, 0.9)--(1.4, 0); 
\draw[thin,dashed] (0.1, 0)--(0.1, 0.5);  
\draw[thin,dashed] (0.55, 0)--(0.55, 0.8); 
\draw[thin,dashed] (1.1, 0)--(1.1, 0.9); 
\node at (-0.09,0.12) {$(-k,1)$};   
\node at (.35, 0.18) {$(-k-1,1)$};  
 \node at (-.42, 0.25) {$(\ell_0, y_1)$}; 
 \node at (-0.2, 0.55) {$(\ell_{\underline{S}-1}, y_{\tau(\underline{S}-1)})$}; 
 \node at (0.34, .9) {$(\ell_{\iota}, y_{\underline{S}})$};  
 \node at (.9, 1) {$(\ell_{\underline{S}}, y_{\underline{S}+1})$};    
 \node at (1.45, 1) {$(r_0,  y_{\underline{S}+1})$};          
 \node at (1.1, -0.05){$\mathfrak b$} ;        
 \foreach \x/\y in {-.32/0.2, 0.0/0.5, 0.35/0.8, 
1/0.9,  1.4/0.9%
} { \node at (\x,\y) {$\bullet$}; }           
\end{tikzpicture}
&\;\;\;\;\;
&
\begin{tikzpicture}[x=8cm,y=8cm] 
\draw  (-.32, .6)--(-.32, 0); 
\draw  (-.32, 0)--(0.25, 0);  
\draw  (0.25, 0)--(0.25, 0.05); 
\draw  (0.25, 0.05)--(0.3, 0.05);  
\draw  (0.3, 0.05)--(0.3, 0.1);   
\draw  (0.3, 0.1)--(0.35, 0.1);  
\draw  (0.4, 0.12)--(0.4, 0.15);  
\draw  (0.4, 0.15)--(0.45, 0.15);
\draw  (0.45, 0.15)--(0.9, 0.15);
\draw  (0.9, 0.15)--(0.9, 0.2);    
\draw  (0.9, 0.2)--(1.05, 0.2);  
\draw  (1.05, 0.2)--(1.05, 0.25); 
\draw  (1.05, 0.25)--(1.1, 0.25);      
\draw  (1.2, 0.3)--(1.2, 0.35);   
\draw  (1.2, 0.35)--(1.25, 0.35);
\draw  (1.25, 0.35)--(1.25, 0.4);    
\draw  (1.25, 0.4)--(1.3, 0.4);   
\draw  (1.3, 0.4)--(1.4, 0.4);    
\draw  (1.4, 0.4)--(1.4, 0.6);  
\draw[thin,dashed] (0.2, 0)--(0.2, 0.6);  
\draw[thin,dashed] (0.55, 0.15)--(0.55, 0.6);
\draw[thin,dashed] (0.95, 0.215)--(0.95, 0.6); 
 \node at (.35, 0.27) {$(1,1)$}; 
  \node at (1.1, 0.4) {$(1,2)$}; 
  \node at (-.46, -0.05) {$(\ell_0, y_{-\overline{S}-1})$};        
  \node at (0.25, -0.1) {$(r_{j_{-\overline{S}-1}}, y_{-\overline{S}-1})$};       
  \node at (.8, 0.05) {$(r_{\overline{S}}, y_{\beta(\overline{S})})$};    
  \node at (1.23, 0.12) {$(r_{-1 + j_{-\overline{S}-1}}, y_{1 + \beta(\overline{S})})$};     
\node at (1.4,  .3) {$(r_0,  y_{-1})$};    
 \foreach \x/\y in {-.32/0, 0.25/0, .9/0.15, 
1.05/0.2,  1.4/0.4%
} { \node at (\x,\y) {$\bullet$}; }  
\end{tikzpicture}
\end{tabular}
}
\caption{Schematic representations showing the most important vertices of the tops and bottoms of blocks (not to scale), for $\alpha \in (\eta_{-k,v}, \delta_{-k,v})$.   Blocks $\mathcal B_{(i,j)}$   denoted by $(i,j)$.} 
\label{f:topBottomVertices}%
\end{figure}
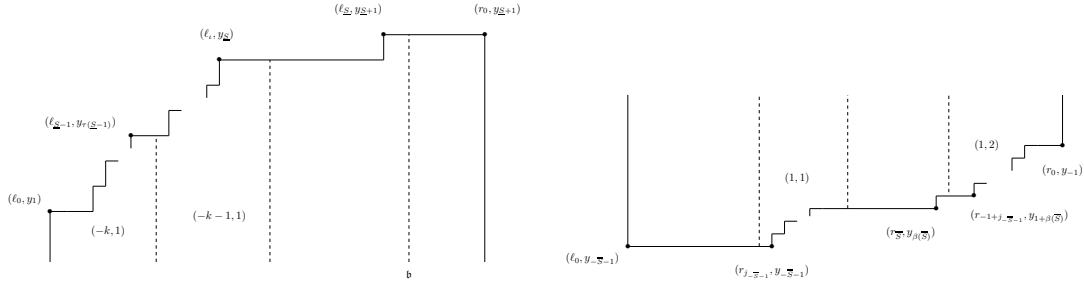

\begin{Def}\label{d:bottomYvaluesLargeAlps}  Let $\overline{S} = \overline{S}(-k,v)$ and 
define   $\Omega^{-}$ as in Definition~\ref{d:bottomYvalues}   except that we let 
\[y_b = \begin{cases}   - \hat r_{j_{-(b+  e +1)}}\;\;\text{if}\; -1\ge b \ge  - e;\\
                                      \\
                                       - \hat r_{j_{-(b+  e  +  \overline{S} +2)}}\;\;\text{otherwise}, 
            \end{cases} 
\]
where for $0 \le i \le \overline{S}$, 
\[\hat r_i = \begin{cases}  C\cdot  r_i(\eta_{-k,v}),\;\;\text{if}\; r_i(\eta_{-k,v}) \in \Delta_{\eta_{-k,v}}(1,2);\\
                                        r_i(\eta_{-k,v}), \;\; \;\; \;\; \;\text{otherwise},
                \end{cases}
\]
and $e = e(-k,v)$ is the number of occurrences of $(1,2)$ in $\overline{b}(-k,v)$.   See Figure~\ref{f:meaningOfrHat} for an indication of these values. 
\end{Def}

\bigskip 
Note that since $\overline{b}(-k,v)$ only has digits $\{(1,2), (1,1)\}$, each $\hat r_i \le \mathfrak b_{\eta_{-k,v}}$ with equality exactly when $i = \overline{S}$.   Furthermore, 
$e(-k,v) = \# \{ i \mid \hat r_i  = C\cdot \hat r_i \}$.\\  

\bigskip

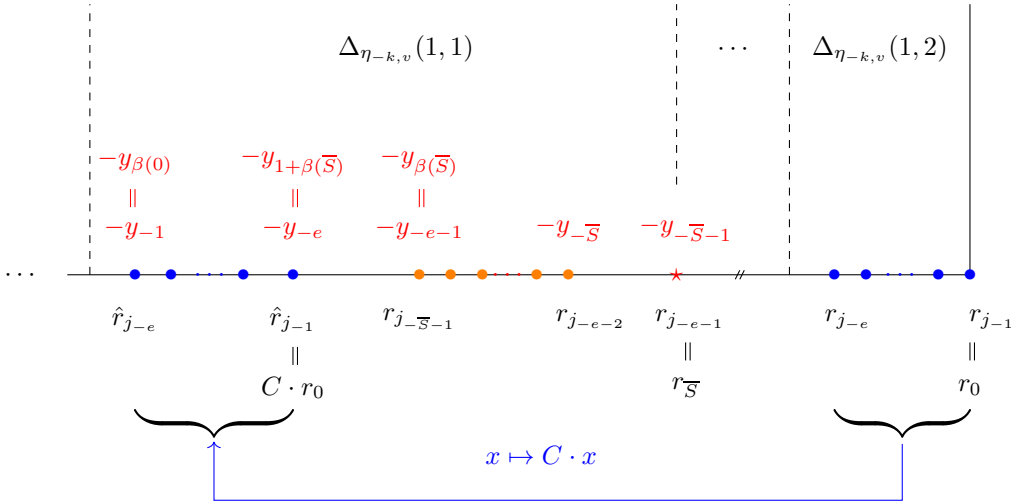
\begin{figure}[h]
\scalebox{1}{
\begin{tikzpicture}[x=6cm,y=6cm] 
\draw  (-1, 0)--(1, 0); 
\node at (-1.1, 0) {$\cdots$}; 
\draw[thin,dashed] (-0.95, 0)--(-0.95, 0.6);  
\draw[thin,dashed] (0.35, 0.2)--(0.35, 0.6);  
\node at (-.85, -0.1) {$\hat{r}_{j_{-e}}$}; 
\node at (-.85, 0.25) [red] {$-y_{\beta(0)}$}; 
\draw[red]  (-0.85, 0.15)--(-0.85, 0.19); 
\draw[red]   (-0.86, 0.15)--(-0.86, 0.19); 
\node at (-.85, 0.1) [red] {$-y_{-1}$}; 
\node at (-0.68, 0)[blue] {$\cdots$}; 
\node at (-0.5, -0.1) {$\hat{r}_{j_{-1}}$}; 
\node at (-.5, 0.25) [red] {$-y_{1 +\beta(\overline{S})}$}; 
\draw[red]  (-0.5, 0.15)--(-0.5, 0.19); 
\draw[red]   (-0.49, 0.15)--(-0.49, 0.19); 
%
\draw  (-0.5, -0.16)--(-0.5, -0.2); 
\draw  (-0.49, -0.16)--(-0.49, -0.2); 
\node at (-0.5, -0.25) {$C \cdot r_0$};
\node at (-.5, 0.1) [red] {$-y_{-e}$}; 
%
\node at (-0.22, 0.25) [red] {$-y_{\beta(\overline{S})}$}; 
\draw[red]  (-0.22, 0.15)--(-0.22, 0.19); 
\draw[red]   (-0.21, 0.15)--(-0.21, 0.19); 
\node at (-0.22, 0.1) [red] {$-y_{-e-1}$}; 
\node at (-0.22, -0.1) {$r_{j_{-\overline{S}-1}}$};
\node at (-0.02, 0) [red] {$\cdots$};
\node at (0.11, 0.1) [red] {$-y_{-\overline{S}}$}; 
\node at (0.16, -0.1) {$r_{j_{-e-2}}$};
\node at (0.37, 0.1) [red] {$-y_{-\overline{S}-1}$}; 
\node at (0.38, -0.1) {$r_{j_{-e-1}}$};
\draw  (0.37, -0.15)--(0.37, -0.19); 
\draw (0.38, -0.15)--(0.38, -0.19); 
\node at (0.37, -0.25) {$r_{\overline{S}}$};
%
\draw  (0.48, -0.01)--(0.49, 0.01); 
\draw   (0.49,  -0.01)--(0.5, 0.01);  
\draw[thin,dashed] (0.6, 0)--(0.6, 0.6);  
\draw[thin] (1, 0)--(1, 0.6);  
\node at (0.73, -0.1) {$r_{j_{-e}}$};
\node at (0.85, 0) [blue] {$\cdots$}; 
\node at (1.05, -0.1) {$r_{j_{-1}}$};
\draw  (1, -0.15)--(1, -0.19); 
\draw (1.01, -0.15)--(1.01, -0.19); 
\node at (1, -0.25) {$r_0$};
\draw [decorate, ultra thick,
    decoration = {calligraphic brace,mirror,amplitude=10pt}] (.7,-0.3) --  (1,-0.3);
\draw [decorate, ultra thick,
    decoration = {calligraphic brace,mirror,amplitude=10pt}] (-0.85,-0.3) --  (-0.5,-0.3);
\draw[->, blue] (0.85,-0.375) -- (0.85, -0.5) -- (-0.675,-0.5) -- (-0.675,-0.37);
\node at (0.05, -0.4) [blue] {$x \mapsto C\cdot x$};
\node at (-0.25, 0.50) {$\Delta_{\eta_{-k,v}}(1,1)$};
\node at (0.48, 0.5){$\cdots$}; 
\node at (0.8, 0.50) {$\Delta_{\eta_{-k,v}}(1,2)$};
 \foreach \x/\y in {-.85/0, -.77/0, -0.61/0,-0.5/0%
} { \node at (\x,\y)[blue] {$\bullet$}; } 
 \foreach \x/\y in {-0.22/0,   -0.15/0, -0.08/0, 0.04/0, 0.11/0}%
{ \node at (\x,\y)[orange] {$\bullet$}; } 
\node at (0.35, 0) [red] {$\star$};
 \foreach \x/\y in {0.7/0,  0.77/0, 0.93/0, 1/0%
} { \node at (\x,\y)[blue] {$\bullet$}; } 
\end{tikzpicture}  
}
\caption{Bottom heights are of the form $-\hat{r}_j$, see Definition~\ref{d:bottomYvaluesLargeAlps}.  For $r_j(\eta_{k,v}) \in \Delta_{\eta_{k,v}}(1,2)$, we have $\hat r_j = C\cdot r_j(\eta_{k,v})$. That $\hat{r}_{j_{-1}} < r_{j_{-\overline{S}-1}}(\eta_{k,v})$ is shown in Lemma~\ref{l:htsIncrease}.}
\label{f:meaningOfrHat}%
\end{figure}

 The following result gives a hint of the use of the $\hat r_j$. 
 \begin{Lem}\label{l:whyHatr}  Considering $\overline{b}(-k,v)$ as a word in $\{(1,2), (1,1)\}$, let $q_j$ be the first letter of $\sigma^j(\overline{b}(k,v)\,)$, and set $M_j = A C^{l_j}$ where $q_j = (1, l_j)$   for $0 \le j < \overline{S}$.
Then 
 \[ R M_j R^{-1} \cdot - \hat r_j = \begin{cases}  - \hat r_{j-1}\;\;\;\text{if } \;\;1 \le j <\overline{S};\\
                                                                           -\hat r_{_{\overline{S}}} \;\;\;\text{if }\;\; j=0 
                                                  \end{cases}
\]
and $R AC R^{-1} \cdot  -\hat r_{_{\overline{S}}} =  -\hat r_{_{\overline{S}-1}} $.
 \end{Lem}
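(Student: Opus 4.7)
The plan is to reduce each of the three identities to a single application of Lemma~\ref{l:conjByRofAtoPc}, after first establishing the universal relation
\[
r_j(\eta_{-k,v}) \;=\; AC\cdot \hat r_{j-1}, \qquad 1 \le j \le \overline S.
\]
This comes for free by splitting on $l_{j-1}$: if $l_{j-1}=1$ then $\hat r_{j-1} = r_{j-1}$ and $r_j = AC\cdot r_{j-1}$ by the orbit definition; if $l_{j-1}=2$ then $r_j = AC^2 \cdot r_{j-1} = AC\cdot(Cr_{j-1}) = AC\cdot\hat r_{j-1}$.

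Granting this relation, the generic cases $1 \le j \le \overline S-1$ are routine. When $l_j = 1$ the matrix $M_j = AC$ is a single letter of $\{A^pC\mid p\in\mathbb Z\}$, so Lemma~\ref{l:conjByRofAtoPc} reduces the claim to $\hat r_j = AC\cdot\hat r_{j-1}$, i.e., to the universal relation. When $l_j = 2$ I view $M_j = AC^2$ as the length-two word $(A^1C)(A^0C)$; its reversal is $(A^0C)(A^1C) = CAC$, so Lemma~\ref{l:conjByRofAtoPc} reduces the claim to $\hat r_j = CAC\cdot \hat r_{j-1}$. Since $\hat r_j = Cr_j$ the leading factor of $C$ cancels and the universal relation again closes the case. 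The boundary case with matrix $AC$ and $j = \overline S$ is handled identically, using the convention $\hat r_{\overline S} = r_{\overline S}$; this is consistent with Definition~\ref{d:bottomYvaluesLargeAlps} and Figure~\ref{f:meaningOfrHat}, because $r_{\overline S}(\eta_{-k,v})$ coincides with the boundary point $\mathfrak b_{\eta_{-k,v}}$ between $\Delta(1,1)$ and the cylinders of $C$-exponent $2$.

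The one step with real content is the wraparound $j=0$. Since $\overline b(-k,v)$ begins with $(1,2)^{n-2}$ we have $l_0 = 2$, so $M_0 = AC^2$ and the same $CAC$-reversal reduces the claim $RM_0R^{-1}\cdot(-\hat r_0) = -\hat r_{\overline S}$ to the single algebraic identity $r_0(\eta_{-k,v}) = AC\cdot \hat r_{\overline S}$. I plan to close this by combining two facts. First, the defining equation $R_{-k,v}\cdot r_0(\eta_{-k,v}) = \mathfrak b_{\eta_{-k,v}}$ of $\eta_{-k,v}$, together with $T^{\overline S}_{\eta_{-k,v}}(r_0) = R_{-k,v}\cdot r_0$, gives
\[
\hat r_{\overline S} \;=\; r_{\overline S}(\eta_{-k,v}) \;=\; \mathfrak b_{\eta_{-k,v}} \;=\; C^{-1}\cdot \ell_0(\eta_{-k,v}).
\]
Second, the trivial identity $r_0(\alpha) - \ell_0(\alpha) = t$, valid for every $\alpha$, is equivalent to $r_0(\alpha) = A\cdot \ell_0(\alpha)$; see \eqref{e:frakBis}. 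Together these yield $AC\cdot \hat r_{\overline S} = AC\cdot C^{-1}\ell_0 = A\ell_0 = r_0$, as required. The only moment demanding care is precisely this cyclic wraparound at $j=0$, forced by the structural identity $r_0=A\cdot\ell_0$ combined with the defining boundary condition of $\eta_{-k,v}$; the remainder of the argument is pure bookkeeping inside $G_n$ via Lemma~\ref{l:conjByRofAtoPc}.
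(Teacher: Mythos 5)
Your proof is correct and takes essentially the same route as the paper's: both reduce everything to Lemma~\ref{l:conjByRofAtoPc} plus the orbit relation at $\eta_{-k,v}$, and your wraparound computation at $j=0$ via $\hat r_{\overline S}=\mathfrak b_{\eta_{-k,v}}=C^{-1}\ell_0$ and $r_0=A\ell_0$ is exactly the paper's chain of equalities read in reverse. The one genuine improvement is organizational: you isolate the universal relation $r_j=AC\cdot\hat r_{j-1}$ up front, which collapses the paper's fourfold case split on $(l_j,l_{j-1})$ into a single case split on $l_{j-1}$ (for the relation) plus a choice of reversal word based on $l_j$; incidentally this also sidesteps what appears to be a typo in the paper's first subcase, where ``$-r_{j-2}$'' should read ``$-r_{j-1}$''.
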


\begin{proof} Since the initial digit of  $\overline{b}(-k,v)$ is $(1,2)$, using Lemma~\ref{l:conjByRofAtoPc}  we evaluate 
\[
\begin{aligned}
 R AC^2 R^{-1}\cdot -\hat r_0 &= -(CAC)^{-1}\cdot (C \cdot r_0(\eta_{-k,v})\,) = -(AC)^{-1}\cdot r_0(\eta_{-k,v})\\
   &= -  C^{-1}\cdot \ell_0(\eta_{-k,v})= -\mathfrak b_{\eta_{-k,v}} = -r_{_{\overline{S}}}(\eta_{-k,v}) = -\hat r_{_{\overline{S}}}. 
 \end{aligned}
 \]
 
Similarly, we treat the four possible cases when $1 \le j <\overline{S}$.   If $M_j = AC$ and $r_{j-1}(\eta_{-k,v}) \in \Delta_{\eta_{-k,v}}(1,1)$, then 
\[R AC R^{-1}\cdot -\hat r_j = - (AC)^{-1}\cdot r_j(\eta_{-k,v}) = -r_{j-2}(\eta_{-k,v}) =   - \hat r_{j-1}.\]
Whereas if instead $r_{j-1}(\eta_{-k,v}) \in \Delta_{\eta_{-k,v}}(1,2)$, then
\[R AC R^{-1}\cdot -\hat r_j = - (AC)^{-1}\cdot (AC^2 \cdot r_{j-1}(\eta_{-k,v})\,)  = - C\cdot r_{j-1}(\eta_{-k,v}) =   - \hat r_{j-1}.\]
If  $M_j = AC^2$ and $r_{j-1}(\eta_{-k,v}) \in \Delta_{\eta_{-k,v}}(1,1)$, then 
\[R AC^2 R^{-1}\cdot -\hat r_j = - (CAC)^{-1}\cdot C \cdot r_j(\eta_{-k,v}) = -r_{j-1}(\eta_{-k,v}) =   - \hat r_{j-1}.\]
And,  if instead $r_{j-1}(\eta_{-k,v}) \in \Delta_{\eta_{-k,v}}(1,2)$, then
\[R AC^2 R^{-1}\cdot -\hat r_j = - (CAC)^{-1}\cdot C \cdot r_j(\eta_{-k,v}) = - C\cdot r_{j-1}(\eta_{-k,v}) =   - \hat r_{j-1}.\]

Finally, 
 \[
 R AC R^{-1}\cdot -\hat r_{_{\overline{S}}} = -(AC)^{-1}\cdot r_{_{\overline{S}}}(\eta_{-k,v}) =-\hat r_{_{\overline{S}-1}},
 \]
by considering the two possibilities for $l_{\overline{S}-1}$.
\end{proof}

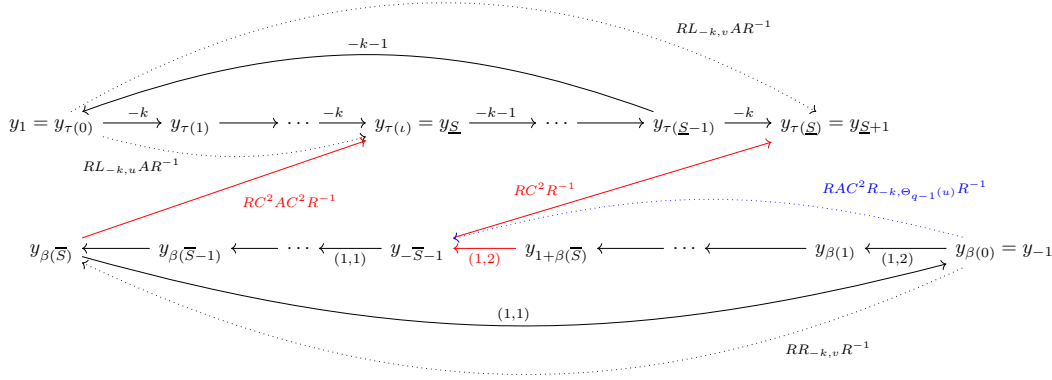
\begin{figure}
\begin{tikzpicture}[baseline= (a).base]
\node[scale=.75] (a) at (0,0){
\begin{tikzcd}[column sep=2pc,row sep=2pc]
y_1= y_{\tau(0)}  \ar[pos=0.6]{r}{-k} \ar[bend left = 30, dotted, pos=0.8]{rrrrrr}{\;R L_{-k,v}AR^{-1}}\ar[bend right = 15, dotted,swap,pos=0.3]{rrr}{R L_{-k,u}AR^{-1}}
 &y_{\tau(1)}  \ar{r}& \cdots  \ar[pos=0.3]{r}{-k} &y_{\tau(\iota)}=y_{\underline{S}}  \ar[pos=0.4]{r}{-k-1}& \cdots  \ar{r}&y_{\tau(\underline{S}-1)}  \ar[pos=0.4]{r}{-k} \ar[bend right = 20, swap, pos=0.5]{lllll}{-k-1} &y_{\tau(\underline{S})}= y_{\underline{S}+1} \\
\\
y_{\beta(\overline{S})}\ar[bend right = 15,  pos=0.5]{rrrrrrr}{(1,1)}\ar[red, swap, pos=0.5]{uurrr}{\;\;\;RC^2AC^2R^{-1}}&y_{\beta(\overline{S}-1)} \ar{l}& \cdots  \ar{l} &
y_{-\overline{S}-1}\ar[pos=0.5]{l}{(1,1)} \ar[red, pos=0.4]{uurrr}{RC^2R^{-1}}&y_{1+\beta(\overline{S})}    \ar[red, pos=0.5]{l}{(1,2)}& \cdots  \ar{l} 
&y_{\beta(1)}  \ar{l}&y_{\beta(0)} = y_{-1}\ar[pos=0.6]{l}{(1,2)}\ar[bend left = 25, dotted, pos=0.2]{lllllll}{R R_{-k,v}R^{-1}}\ar[blue, dotted, swap, bend right=15, pos=0.3]{llll}{\;\;\;R AC^2R_{-k,\Theta_{q-1}(u)}R^{-1}}
\end{tikzcd}
};
\end{tikzpicture}
\caption{{\bf Relations on the heights of rectangles for general $k, v$ and $\alpha \in (\eta_{-k,v}, \delta_{-k,v})$.}  The equality $\tau(\iota) = \underline{S}$  is  Lemma~\ref{l:largestLvalue}.   The red paths are used to prove that lamination occurs; the red arrows are due to Proposition~\ref{p:RelationBottomAndTop} and Corollary~\ref{c:someLamRels}.    The blue dotted arrow is due to Lemma~\ref{l:smallestRvalue}.   The highest solid arrow is due to Lemma~\ref{l:relForSharedTopOfTwoLeftmostCylinders}, the lowest solid arrow is due to Corollary~\ref{c:someLamRels}.   The lowest dotted arrow is due to Corollary~\ref{c:theWholeWord}.  
The top horizontal arrows are as for the small $\alpha$ setting,  the lower horizontal arrows follow mainly from the definition of the $y_b$ combined with Lemma~\ref{l:whyHatr}.    The top two dotted  arrows are implied by definitions, in direct analogy with the case of small $\alpha$.       
}
\label{f:orbitRelations}
\end{figure}

\smallskip

\subsection{Examples with `intermediate' values}\label{ss:ExamplesKis1FirstHalfOfSyncInt}

Recall from  (\cite{CaltaKraaikampSchmidt}, Example~7.4)
 that $R_{-1, n-2} = CA^{-1}C\; (A^{-1}C)^{n-2}A^{-1} = \text{Id}$ holds for all $n\ge 3$; thus $\overline{S} (-1, n-2)= 0$.  Recall that $\eta_{-1,n-2} = \gamma_n$, from whose definition it easily follows that the initial digit of $r_0(\gamma_n)$ is $(-1,2)$.   Thus, on $J_{-1, n-2}$ we have a single bottom height of $y_{-1} = - r_0(\gamma_n)$.   

We now fix $n=3$.   Here $\eta_{-1,1} = \gamma_3 = g^2/2$ where  $g = G-1$ and $G = (1+\sqrt{5})/2$.   Thus $r_0(\eta_{-1,1}) = g^2$; one easily verifies that the $T_{g^2/2}$-orbit of $\ell_0(g^2/2)$ is periodic of length one with preperiod $-G\mapsto -g^2$.  Therefore,  for all $\alpha \in (\gamma_3, \delta_{-1,1})$  we have $y_1 = g^2,  y_2 = G$ and $y_{-1}  =- g^2$.   That is, we find a ``backwards L"-shape:
\[\Omega_{3,\alpha} = (\, [\ell_0(\alpha), \ell_1(\alpha)]\times [-g^2, g^2]\,) \cup (\,  [\ell_1(\alpha), r_0(\alpha)]\times [-g^2, G]\,)\;\; \forall \alpha \in (\gamma_3, \delta_{-1,1}).\]

\smallskip
\subsection{Relations on heights of rectangles, for large $\alpha$ in left portion of synchronization interval}\label{s:RelHtsLargeAlps}  

 The main aim of this subsection is to establish the relations needed to show that $\mathcal T_{n,\alpha}$ is bijective up to measure zero on $\Omega_{n,\alpha}$.   Other than the result that the vertices have the correct images (see Lemma~\ref{l:vertexProgression}, below),  these relations are all indicated in Figure~\ref{f:orbitRelations}.   We also show that $\Omega_{n, \alpha}$ has the expected shape --- the vertices increase in $y$-value as their $x$-coordinates increase.  See Lemma~\ref{l:htsIncrease} for the bottom vertices, the similar behavior of top vertices follows directly from arguments of the small $\alpha$ setting.
 
 Note that since Subsection~\ref{ss:ExamplesKis1FirstHalfOfSyncInt} directly establishes that the lower boundary of $\Omega_{\alpha}$ is of constant height when $\alpha \in J_{-1, n-2}$, in what follows   we will occasionally tacitly assume that $\alpha>\zeta_{-1, n-2}$.

\bigskip

 \begin{Lem}\label{l:oneLamRel}   For $\alpha \in [\eta_{-k,v}, \delta_{-k,v})$, we have\\
  
 \begin{enumerate} 
 \item[i.)]   $r_{\overline{S}}(\alpha) \ge \mathfrak b_{\alpha}$ with equality only for $\alpha = \eta_{-k,v}$, \\

 \item[ii.)]    $r_{\overline{S}}(\alpha)$ lies to the left of $\Delta_{\alpha}(1,2)$,\\
 
\item[iii.)] $\beta(\overline{S}) = -1 - e(-k,v)$,\\
 
\item[iv.)] $\beta(-1+ j_{-\overline{S}-1}) = - e(-k,v)$.
 \end{enumerate}
\end{Lem}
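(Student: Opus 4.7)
The plan is to prove the four parts in order, each building on the previous, drawing on the synchronization identity \eqref{e:synchroExplicitLargeAlps}, the defining conditions $R_{-k,v}\cdot r_0(\eta_{-k,v}) = \mathfrak b_{\eta_{-k,v}}$, $\ell_{\underline{S}}(\delta_{-k,v}) = \mathfrak b_{\delta_{-k,v}}$, and the formula \eqref{e:lKminV} for $L_{-k,v}$.

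For (i), equality at $\alpha = \eta_{-k,v}$ is immediate from $R_{-k,v}\cdot r_0(\eta_{-k,v}) = \mathfrak b_{\eta_{-k,v}}$ together with the fact that $\overline{b}(-k,v)$ is the common length-$\overline{S}$ prefix of the orbit of $r_0(\alpha)$ for every $\alpha \in J_{-k,v}$, so $r_{\overline{S}}(\alpha) = R_{-k,v}\cdot r_0(\alpha)$. For strict inequality on $(\eta_{-k,v}, \delta_{-k,v})$, I would view both $\alpha \mapsto r_{\overline{S}}(\alpha) = R_{-k,v}\cdot(\alpha t)$ and $\alpha \mapsto \mathfrak b_{\alpha} = C^{-1}\cdot ((\alpha-1)t)$ as smooth M\"obius functions of $\alpha$ and compare their derivatives at $\alpha = \eta_{-k,v}$; equivalently, use that if the inequality reversed at some $\alpha_0 \in (\eta_{-k,v}, \delta_{-k,v})$, then by continuity there would be a second collision $r_{\overline{S}}(\alpha_1) = \mathfrak b_{\alpha_1}$ inside $J_{-k,v}$, which contradicts the role of $\delta_{-k,v}$ as the unique interior value where the orbit of $\ell_0$ first meets $\mathfrak b$ (and hence, via synchronization, where the structure changes).

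For (ii), I would combine (i) with the synchronization identity $\ell_{1+\underline{S}}(\alpha) = r_{1+\overline{S}}(\alpha)$. In the left portion, $\ell_{\underline{S}}(\alpha) < \mathfrak b_{\alpha}$ so $\ell_{\underline{S}}$ has $\alpha$-digit $(-1,1)$, giving $\ell_{1+\underline{S}} = A^{-1}C\cdot \ell_{\underline{S}}$. Writing $(k_r,l_r)$ for the $\alpha$-digit of $r_{\overline{S}}$, the synchronization equation becomes $A^{-1}C\cdot \ell_{\underline{S}} = A^{k_r}C^{l_r}\cdot r_{\overline{S}}$. Assuming for contradiction that $(k_r,l_r)=(1,2)$, one can solve for $r_{\overline{S}}$ in terms of $\ell_{\underline{S}}$ and check that the resulting value forces $\ell_{\underline{S}}(\alpha) \ge \mathfrak b_{\alpha}$, violating the defining property of the left portion. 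Thus $r_{\overline{S}}\notin \Delta_\alpha(1,2)$, and combined with $r_{\overline{S}}\ge \mathfrak b_\alpha$ from (i), it lies strictly left of $\Delta_\alpha(1,2)$. Then (iii) is a counting argument: the $e=e(-k,v)$ orbit points $r_i$ $(0\le i<\overline{S})$ with digit $(1,2)$ lie in $\Delta_\alpha(1,2)$ and so exceed $r_{\overline{S}}$, while the remaining $\overline{S}-e$ lie in $\Delta_\alpha(1,1)\subset [\ell_0,\mathfrak b_\alpha)$ and so are smaller than $r_{\overline{S}}$; hence $r_{\overline{S}}$ is the $(e+1)$-st largest, i.e.\ $\beta(\overline{S})=-(1+e)$.

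For (iv), the smallest $r_i$ has index $j^{*}:=j_{-\overline{S}-1}\in\{0,\dots,\overline{S}-1\}$ and lies in $\Delta_\alpha(1,1)$. Its predecessor $r_{j^{*}-1}$ satisfies $r_{j^{*}}=A C^{l_{j^{*}-1}}\cdot r_{j^{*}-1}$ with $l_{j^{*}-1}\in\{1,2\}$. Since $AC$ and $AC^{2}$ are each orientation-preserving bijections from $\Delta_\alpha(1,1)$, respectively $\Delta_\alpha(1,2)$, onto $\mathbb I_\alpha$, the preimage minimizing the image among the orbit points considered must be the smallest orbit point in the relevant cylinder. A direct computation comparing $AC\cdot\lambda_{(1,1)}$ with $AC^{2}\cdot\lambda_{(1,2)}$ (where $\lambda$ denotes the left endpoint of the cylinder) shows that the minimum is attained from $\Delta_\alpha(1,2)$, so $r_{j^{*}-1}$ is the smallest of the $e$ orbit points in $\Delta_\alpha(1,2)$, i.e.\ the $e$-th largest among all $r_i$, giving $\beta(-1+j_{-\overline{S}-1})=-e$.

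The main obstacle will be parts (i) and (iv): in (i) the strict inequality on an open interval requires pinning down the sign of the relevant M\"obius derivatives or carefully ruling out interior collisions, while in (iv) one must verify uniformly in the word $v\in\mathcal V$ that the orbit predecessor of the minimum is indeed drawn from $\Delta_\alpha(1,2)$ and is the leftmost orbit element there --- a combinatorial assertion about the digit pattern $\overline{b}(-k,v)$ that may ultimately be handled by induction via the derived-words operator $\mathscr D$ together with the self-dominance property (Lemma~\ref{l:selfD}) transferred through Lemma~\ref{l:bothOrders}.
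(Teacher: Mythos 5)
Your part (iii) matches the paper's counting argument, and your observation that (i) holds with equality at $\eta_{-k,v}$ by definition is right, but the proposed routes for (i), (ii) and (iv) diverge from the paper and each has a problem.

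For (ii), the paper does not go through synchronization at all. It uses the algebraic identity $R_{-k,v} = CA^{-1}CL_{-k,v}$ together with $\ell_{\underline S}(\alpha)=L_{-k,v}A\cdot\ell_0(\alpha)$ and $r_{\overline S}(\alpha)=R_{-k,v}\cdot r_0(\alpha)$ to get $\ell_{\underline S}(\alpha)=C^{-1}AC^{-1}\cdot r_{\overline S}(\alpha)$ directly, and then rewrites the defining property $\ell_{\underline S}(\alpha)<\mathfrak b_\alpha=C^{-1}\cdot\ell_0(\alpha)$ of the left portion into a statement that $AC^2\cdot r_{\overline S}(\alpha)\notin\mathbb I_\alpha$. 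Your version inserts an unjustified step: from $\ell_{\underline S}<\mathfrak b_\alpha$ you assert the $\alpha$-digit of $\ell_{\underline S}$ is $(-1,1)$. That is not forced — the digit is some $(a,1)$, and near $\eta_{-k,v}$ one has $\ell_{\underline S}(\eta_{-k,v})=C^{-1}AC\cdot\ell_0(\eta_{-k,v})<0$ so $a\le -2$ there. The contradiction you sketch ("forces $\ell_{\underline S}\ge\mathfrak b_\alpha$") also does not come out of the algebra: substituting the exact relation $r_{\overline S}=CA^{-1}C\cdot\ell_{\underline S}$ into the synchronization equation with digit $(1,2)$ for $r_{\overline S}$ yields $a=0$, which is excluded because no digit $(0,1)$ exists; that is the actual contradiction.

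The genuine gap is in (iv). You propose a "direct computation comparing $AC\cdot\lambda_{(1,1)}$ with $AC^2\cdot\lambda_{(1,2)}$." But both of these equal $\ell_0(\alpha)$ — the left endpoint of each branch maps to the left endpoint of $\mathbb I_\alpha$ — so this comparison is vacuous and cannot decide which cylinder contains the predecessor $r_{j^*-1}$ of the minimum $r_{j^*}=r_{j_{-\overline S-1}}$. The argument that works (and which the paper leaves implicit) uses strict minimality of $r_{j^*}$: if $r_{j^*-1}\in\Delta_\alpha(1,1)$, then since $r_{j^*-1}>r_{j^*}$ and $AC$ is increasing on $\Delta_\alpha(1,1)$ one would have $r_{j^*}=AC\cdot r_{j^*-1}>AC\cdot r_{j^*}=r_{j^*+1}$, contradicting minimality; hence $r_{j^*-1}\in\Delta_\alpha(1,2)$, and the same monotonicity argument shows it is the leftmost orbit point in $\Delta_\alpha(1,2)$, i.e.\ the $e$-th largest overall. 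Finally, for (i) your "no second collision" alternative misuses the role of $\delta_{-k,v}$: it is characterized by $\ell_{\underline S}$ (not $r_{\overline S}$) meeting $\mathfrak b$, so uniqueness of $\delta_{-k,v}$ does not directly exclude a second parameter with $r_{\overline S}(\alpha_1)=\mathfrak b_{\alpha_1}$.
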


\begin{proof} For all $k \in \mathbb N$ and $v \in \mathcal V$,  the definition of  $\eta_{-k,v}$ results in $r_{\overline{S}}(\eta_{-k,v})= \mathfrak b_{\eta_{-k,v}}$.   The commonality of the initial digits along the synchronization interval then leads to the inequality. 

For $\alpha \in (\eta_{-k,v}, \delta_{-k,v})$ we have $\ell_{\underline{S}}(\alpha) < \mathfrak b_{\alpha}$, and hence  $C^{-1}A C^{-1}\cdot r_{\overline{S}}(\alpha) < C^{-1}\cdot \ell_0(\alpha)$,  whence $A C^{-1}\cdot r_{\overline{S}}(\alpha)< \ell_0(\alpha)$.  Since $C^{-1}$ acts as $C^2$, each $r_{\overline{S}}(\alpha)$ lies to the left of $\Delta_{\alpha}(1,2)$.

We thus have that there are $e(-k,v)$ orbit entries larger than $r_{\overline{S}}(\alpha)$, and hence $\beta(\overline{S}) = -1 - e(-k,v)$.

 We certainly have that $r_{j_{-\overline{S}-1}}$ is the  leftmost $r_j$ ($j< \overline{S}$) in  the cylinder $\Delta_{\alpha}(1,1)$.     Since all $r_j$  with $j< \overline{S}$ are in $\Delta_{\alpha}(1,1) \cup \Delta_{\alpha}(1,2)$ while  $r_{\overline{S}}$  lies between these cylinders,  it follows that $r_{-1+j_{-\overline{S}-1}}$ is the image of the leftmost $r_j$ in 
$\Delta_{\alpha}(1,2)$. That is,  $\beta(-1+ j_{-\overline{S}-1}) = -e(-k,v)$. 
\end{proof}

\newpage%

 Compare the following  with Figures~\ref{f:orbitRelations} and \ref{f:topBottomVertices}. 
 \begin{Cor}\label{c:someLamRels}     The following hold:
 
 \begin{enumerate}
 \item[i.)] 
 $RC^2R^{-1} \cdot y_{-\overline{S}-1} = y_{\underline{S}+1}$, \\
 
 \item[ii.)] $y_{-\overline{S}-1}= R AC^2 R^{-1}\cdot y_{1 + \beta(\overline{S})}\,$,\\
 
 \item[iii.)]  $y_{\underline{S}+1}= R C^{-1}AC^2 R^{-1}\cdot y_{1 + \beta(\overline{S})}\,$,\\
 
  \item[iv.)]  $y_{-1}= R AC R^{-1}\cdot y_{\beta(\overline{S})}\,$.
 \end{enumerate} 
\end{Cor}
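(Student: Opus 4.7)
The plan is to reduce each of the four identities to a short invocation of either Lemma~\ref{l:conjByRofAtoPc} or Lemma~\ref{l:whyHatr}, after first expressing every $y$-value occurring in the statement in terms of an explicit orbit point at $\alpha=\eta_{-k,v}$.

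First I would unpack the heights. Definition~\ref{d:topYvaluesLargeAlp} yields $y_{\underline{S}+1}=-\ell_0(\eta_{-k,v})$ (since $\tau(\underline{S})=\underline{S}+1$ by the natural analog of Lemma~\ref{l:lastEllValueIsLargest}). For the bottom heights, Lemma~\ref{l:oneLamRel}(iii) gives $\beta(\overline{S})=-1-e$, and Lemma~\ref{l:oneLamRel}(iv) identifies $j_{-\overline{S}-1}=1+j_{-e}$ as the index of the leftmost $r_j$ in $\Delta_{\eta_{-k,v}}(1,1)$, while its predecessor $r_{j_{-e}}$ is the leftmost in $\Delta_{\eta_{-k,v}}(1,2)$. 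Since $\overline{b}(-k,v)$ begins with the digit $(1,2)$, also $r_0(\eta_{-k,v})\in\Delta_{\eta_{-k,v}}(1,2)$, while $r_{\overline{S}}(\eta_{-k,v})=\mathfrak{b}_{\eta_{-k,v}}=C^{-1}\cdot\ell_0(\eta_{-k,v})$ sits on the boundary of $\Delta_{\eta_{-k,v}}(1,1)$ and is \emph{not} an interior point of $\Delta_{\eta_{-k,v}}(1,2)$. Substituting into Definition~\ref{d:bottomYvaluesLargeAlps} then yields
\[
y_{-\overline{S}-1}=-\mathfrak{b}_{\eta_{-k,v}},\quad y_{1+\beta(\overline{S})}=-\hat{r}_0,\quad y_{\beta(\overline{S})}=-\hat{r}_{j_{-\overline{S}-1}},\quad y_{-1}=-\hat{r}_{j_{-e}}.
\]

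With the heights in this form, statements (ii) and (iv) become direct applications of Lemma~\ref{l:whyHatr}. For (ii), take $j=0$: because the first letter of $\overline{b}(-k,v)$ is $(1,2)$, the matrix $M_0$ equals $AC^2$, and the lemma gives $RAC^2R^{-1}\cdot(-\hat{r}_0)=-\hat{r}_{\overline{S}}$, which is exactly (ii). For (iv), take $j=j_{-\overline{S}-1}$: because $r_{j_{-\overline{S}-1}}\in\Delta_{\eta_{-k,v}}(1,1)$, the matrix $M_{j_{-\overline{S}-1}}$ equals $AC$, and the lemma delivers $RAC\,R^{-1}\cdot(-\hat{r}_{j_{-\overline{S}-1}})=-\hat{r}_{j_{-\overline{S}-1}-1}=-\hat{r}_{j_{-e}}$, which is exactly (iv). For (i), I would argue directly: since projectively $C^2=C^{-1}$, applying Lemma~\ref{l:conjByRofAtoPc} to the word $X=C\cdot C$ (for which $\overleftarrow{X}=X=C^{-1}$) gives $RC^2R^{-1}\cdot(-C^{-1}\cdot\ell_0(\eta_{-k,v}))=-\ell_0(\eta_{-k,v})$, which is (i). Finally (iii) follows by factoring $RC^{-1}AC^2R^{-1}=(RC^{-1}R^{-1})(RAC^2R^{-1})$ and composing (ii) with (i), using $RC^{-1}R^{-1}=RC^2R^{-1}$ projectively.

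The main obstacle is the index bookkeeping in the first step: one has to confirm, from Lemma~\ref{l:oneLamRel} and from the fact that $\mathfrak{b}$ is the right endpoint of $\Delta(1,1)$, that $\hat{r}_{\overline{S}}=r_{\overline{S}}(\eta_{-k,v})$ rather than $C\cdot r_{\overline{S}}(\eta_{-k,v})$, and that the digit $l$ of $r_{j_{-\overline{S}-1}}$ is $1$ rather than $2$. Once these conventions are settled, the four asserted relations are essentially one-line consequences of the lemmas already on hand.
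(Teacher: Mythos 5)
Your proof is correct and follows essentially the same route as the paper: unpack the heights via Definitions~\ref{d:topYvaluesLargeAlp} and~\ref{d:bottomYvaluesLargeAlps} together with Lemma~\ref{l:oneLamRel}, then apply Lemma~\ref{l:conjByRofAtoPc} for (i) and Lemma~\ref{l:whyHatr} for (ii) and (iv), with (iii) obtained by composing (i) and (ii). The index bookkeeping you flag at the end (that $\hat r_{\overline S}=r_{\overline S}(\eta_{-k,v})$ rather than $C\cdot r_{\overline S}(\eta_{-k,v})$, and that $r_{j_{-\overline S-1}}\in\Delta_{\eta_{-k,v}}(1,1)$) is indeed the only subtle point, and Lemma~\ref{l:oneLamRel}(ii)--(iv) settles it exactly as you describe.
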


\begin{proof}
By definition,  $y_{-\overline{S}-1} =  - \hat r_{j_{- e(-k,v)-1}}$.   The lemma then implies that $y_{-\overline{S}-1} =  -  r_{\overline{S}}(\eta_{-k,v})$.  By definition, this equals $-\mathfrak b_{\eta_{-k,v}}$.    Lemma~\ref{l:conjByRofAtoPc}  gives $RC^2R^{-1} \cdot  (-\mathfrak b_{\eta_{-k,v}}) = - C\cdot \mathfrak b_{\eta_{-k,v}} = -  \ell_0(\eta_{-k,v})$.   This final value equals  $y_{\underline{S}+1}$.

From the lemma, $\beta(\overline{S})+1 =  - e(-k,v)$.  Hence $y_{1 + \beta(\overline{S})} = -\hat r_{j_{-1}}$.   Since $j_{-1} = 0$ we have that $y_{1 + \beta(\overline{S})} = -\hat r_0$ and Lemma~\ref{l:whyHatr}  gives $R AC^2 R^{-1}\cdot (-\hat r_0) = - \hat r_{\overline{S}}$. From the above, this value is indeed  $y_{\underline{S}+1}$.   

The third equality follows directly from the first two. 

Finally, from Definition~\ref{d:bottomYvaluesLargeAlps}, Lemma~\ref{l:whyHatr}, Lemma~\ref{l:oneLamRel} and again Definition~Lemma~\ref{d:bottomYvaluesLargeAlps}, we have
\[ R AC R^{-1}\cdot y_{\beta(\overline{S})} = R AC R^{-1}\cdot  (-\hat r_{j_{-\overline{S}-1}}) = -\hat r_{-1 +j_{-\overline{S}-1}} = -\hat r_{j_{-e} }= y_{-1}.\]
\end{proof} 
 
\bigskip

Recall from \cite{CaltaKraaikampSchmidt} that $U = AC (AC^2)^{n-2}$, $\tilde{\mathcal E} = (AC^2)^{n-3}U^{k-2}AC$, and $\tilde{\mathcal F} = (AC^2)^{n-3}U^{k-1}AC$.  And, 
\begin{equation} \label{e:expressRsubminKv}
R_{-k,v} =  \tilde{\mathcal E}^{c_s}  \tilde{\mathcal F}^{d_{s-1} }\;\tilde{\mathcal E}^{c_{s-1}} \tilde{\mathcal F}^{d_{s-2}}\cdots \tilde{\mathcal E}^{c_{2}}\tilde{\mathcal F}^{d_{1}} \tilde{\mathcal E}^{c_1}(AC^2)^{n-2}.
\end{equation}
 Recall also that $r_{j_{-\overline{S}-1}}$ is the least element in the initial $T_{\alpha}$-orbit of $r_0 = r_0(\alpha)$; from  \S~\ref{ss:ExamplesKis1FirstHalfOfSyncInt},  when $(k,v) = (1, n-2)$ that initial orbit consists only of $r_0$ itself.
 In the following, we temporarily use $s$   to also denote a  word in $\mathcal V$.

 \begin{Lem}\label{l:smallestRvalue}   Suppose that $(k,v) \neq (1, n-2)$.   Then
 \[ r_{j_{-\overline{S}-1}} = \begin{cases}    (AC^2)^{n-3-c}\cdot r_0&\text{if} \;\; v = c,  k=1\,;\\
 \\
  (AC^2)^{n-2} U^{k-2}\cdot r_0&\text{if} \;\; v = c,  k>1\,;\\
 \\
                                                                  AC^2 R_{-k,\Theta_{q-1}(s)}\cdot r_0&\text{if}\;\; v = \Theta_{0}^{h}\circ\Theta_q(s),\, \text{with}\; q\ge 1, h\ge 0.\\
                                        \end{cases}
\]   
\end{Lem}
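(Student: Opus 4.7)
The plan is to identify which position $j\in\{0,\dots,\overline{S}\}$ realizes the minimum of the initial orbit $r_0,\dots,r_{\overline{S}}$, and then translate that position into the group-element expression claimed. By Lemma~\ref{l:oneLamRel}, all $r_j$ with $j<\overline{S}$ lie in $\Delta_\alpha(1,1)\cup\Delta_\alpha(1,2)$ while $r_{\overline{S}}=\mathfrak b_\alpha$ sits between these two cylinders, so $r_{j_{-\overline{S}-1}}$ must be the leftmost $r_j$ in $\Delta_\alpha(1,1)$. Since the digit-sequence $\prec$-order agrees with the real order and the digit sequence of $r_j$ begins with $\sigma^j(\overline{b}(-k,v))$, the task reduces to finding the $j$ that $\prec$-maximizes $\sigma^j(\overline{b}(-k,v))$ subject to the constraint that $(1,1)$ is its first letter.

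For the base cases $v=c$, I would expand $\overline{b}(-k,c)$ explicitly using the definitions of $\mathcal E$, $\mathcal F$ and $u$. When $k=1$, $\mathcal E_1=(1,1)(1,2)^{n-3}$ contains no internal $(1,1)$, and a direct shift comparison shows the $\prec$-maximum is reached after the first $n-3-c$ letters of the initial $(1,2)^{n-2}$, yielding the prefix matrix $(AC^2)^{n-3-c}$. When $k\ge 2$, the internal structure of $\mathcal E_k$ includes $u^{k-2}$ where $u$ encodes the digit word $(1,2)^{n-2}(1,1)$; here the $\prec$-maximum shift is reached after the initial $(1,2)^{n-2}$ followed by $k-2$ copies of $u$, giving the prefix matrix $(AC^2)^{n-2}U^{k-2}$ since $U$ encodes the digit word associated to $u$.

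For $v=\Theta_0^h\circ\Theta_q(s)$ with $q\ge 1$, I would argue by induction on the depth of $v$ in $\mathcal V$, exploiting the compatibility $\mathscr D\circ\Theta_q=\Theta_q\circ\mathscr D$ and the palindromicity of every $v\in\mathcal V$. Lemma~\ref{l:wordOrder} (stating $v'\pprec\sigma^j(v)$ for all $j<|v|$) combined with Lemma~\ref{l:maxSuffixPrefix} (the parent of $v$ is its longest proper suffix-prefix) pinpoint the ancestor $\Theta_{q-1}(s)$ as the one governing the extremal shift: the maximal shift of $\overline{b}(-k,v)$ beginning with $(1,1)$ occurs immediately after the prefix corresponding to $\Theta_{q-1}(s)$ together with the opening $(1,2)$ of the next $\mathcal E$-block, which translates into $AC^2\cdot R_{-k,\Theta_{q-1}(s)}\cdot r_0$. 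The outer factor $\Theta_0^h$ merely interposes additional copies of $v'$, which by Lemma~\ref{l:wordOrder} cannot host the extremal shift, so the formula is unchanged.

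The main obstacle will be the last step: precisely tracking how a shift of $\overline{b}(-k,v)$ corresponds back to a group-element prefix of $R_{-k,v}$, and verifying that the leading factor is exactly $AC^2$ rather than some neighboring digit matrix. This hinges on a careful accounting of the palindromic decomposition $v=uaz$ of \cite{CaltaKraaikampSchmidt}~Remark~4.14, together with the identification of the letter of $\overline{b}(-k,v)$ sitting immediately before the maximal suffix; the self-dominance of $v$ (Lemma~\ref{l:selfD}) is what ultimately forces this letter to be $(1,2)$.
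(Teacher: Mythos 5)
Your strategy mirrors the paper's: reduce to locating a $\prec$-extremal suffix of $\overline{b}^{\alpha}_{[1,\infty)}$ and translate that shift into a prefix of $R_{-k,v}$, treating the base words $v=c$ explicitly via $\mathcal E, \mathcal F, u$ and using the tree/palindrome structure (Lemmas~\ref{l:selfD}, \ref{l:wordOrder}, \ref{l:maxSuffixPrefix}) for longer words, exactly as the paper does by transporting the argument of Lemma~\ref{l:lastEllValueIsLargest}. However, there is a persistent direction error. You write ``$\prec$-maximizes'' throughout, but $r_{j_{-\overline{S}-1}}$ is by definition the \emph{smallest} of $r_0,\dots,r_{\overline{S}}$, and $\prec$ on digit sequences agrees with the order on reals (the branches $AC$, $AC^2$ are orientation-preserving on their cylinders). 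What one needs is the $\prec$-\emph{least} suffix; the paper says exactly this and works with the chain $a\prec\star\prec b\prec c$, with $a=(1,1)$ the smallest admissible opening letter. Maximizing over suffixes that begin with $(1,1)$ would pick out the \emph{rightmost} $r_j$ in $\Delta_\alpha(1,1)$, the wrong element. The shift positions you name happen to coincide with the minima, so this looks like a labelling slip rather than a wrong calculation, but as written the argument does not establish what it must.

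Second, the $k=1$ base case is not actually handled. You set $\mathcal E_1=(1,1)(1,2)^{n-3}$, but that is $\mathcal E_2$; by the paper's definition $\mathcal E_1 = (1,1)\,u^{-1}\,(1,2)^{n-3}$, which involves a negative power of $u$ and is not a positive word in $\{(1,1),(1,2)\}$. The paper treats $k=1$ separately, invoking the end of \S~\ref{ss:terseForBigAlps} to derive a reduced expansion $\overline{b}_{[1,\infty)}^{\alpha} = (1,2)^{n-2-c_1}(1,1)\cdots(1,2)^{n-3-c_s}\star$ from the modified formula for $R_{-1,v}$. For $v=c$, $k=1$ this reduced word is $(1,2)^{n-2-c}\star$, which contains no $(1,1)$ at all; thus the constraint ``$(1,1)$ is its first letter'' that frames your extremization is unsatisfiable in that sub-case, and the claimed answer $(AC^2)^{n-3-c}\cdot r_0$ is a shift whose opening letter is $(1,2)$, not $(1,1)$. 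Your sketch as written therefore cannot recover the first row of the displayed formula without dropping or modifying that constraint and treating $k=1$ by its own reduced expansion, as the paper does.
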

\begin{proof}  We first treat the  case of $k\ge 2$. We can uniquely express $\overline{b}(-k,v)$ as a word in the letters that we temporarily use:  $a:= (1,1),  b:= (1,2)^{n-3}, c:= (1,2)^{n-2}$,  where we view these letters as having increasing value (of course, when $n=3$ we suppress $b$).   We  introduce a fourth symbol,  $\star$ to represent the expansion of $r_{\overline{S}}$,   so that $\overline{b}_{[1, \infty)}^{\alpha}$ can be viewed as a  word of length  $\overline{S}(-k,v)+1$ in these four letters, with the final symbol appearing only as a suffix.  Of course,  $r_{j_{-\overline{S}-1}}$ has expansion given by the least suffix of this word. 

  In terms of these letters,   $u = c a, \mathcal E = a(ca)^{k-2}b, \mathcal F = a(ca)^{k-1}b$.    By Lemma~\ref{l:oneLamRel},  $a \prec \star \prec b \prec  c$,  and thus   $\mathcal E \prec \mathcal F \prec u$.  
  
 Since \eqref{e:expressRsubminKv} gives the expression for cancelling the initial digits of $\overline{b}_{[1,\infty)}^{\alpha}$,
\begin{equation}\label{e:rightExpansionSimplifiedLetters}
\overline{b}_{[1,\infty)}^{\alpha} = c [a(ca)^{k-2}b]^{c_1}  [a(ca)^{k-1}b]^{d_1} \cdots [a(ca)^{k-2}b]^{c_{s-1}} \,\mathcal [a(ca)^{k-1}b]^{d_{s-1}} \,[a(ca)^{k-2}b]^{c_s} \star.
\end{equation}
 If $v = 1$, then $\overline{b}_{[1,\infty)}^{\alpha} =  c(ac)^{k-2}  a b \,  \star$.  The least suffix is visibly $ab\star$ and we find
 \[r_{j_{-\overline{S}-1}} =  [(AC^2)^{n-2} AC]^{k-2}(AC^2)^{n-2} \cdot r_0=   (AC^2)^{n-2} U^{k-2}\cdot r_0\,.
 \]
Similarly, if $v=c_1, c_1 > 1$ or $|v|>1$ then the least three letter word (when $n>3$) is $aba$ (when $n=3$ then this is the least two letter word).  Thus, when $v=c_1$   we find that the least suffix   begins with the letters $ab$ found at the end of the first copy of $\mathcal E$. (Indeed, all other suffixes beginning with an $ab$  reach $\star$ at a length at which this suffix reaches a copy of $a$.) That is, here also $r_{j_{-\overline{S}-1}} =   (AC^2)^{n-2} U^{k-2}\cdot r_0$.

When $|v|>1$, recall that  $c_s= c_1 = \max\{c_j\}$  and $d_1 = \min\{d_j\}$.  
There are no internal occurrences of $c_j = c_1$ in $v$   exactly when $v$ is of the form $v = \Theta_q(c_1)$.  In this case, 
the minimal suffix is $ab \,[a(ca)^{k-2}b]^{c_s}\,\star$.   Note that $AC^2 \tilde{\mathcal E}\cdot \mathcal F = (1,1) (1,2)^{n-3}$.   
Hence, depending on  whether $c_1 =1$ or not, we find $M = AC^2 \tilde{\mathcal E} \tilde{\mathcal F}^{q-1} \tilde{\mathcal E} (AC^2)^{n-2}$ or $M=  AC^2 \tilde{\mathcal E}\,[\tilde{\mathcal E}^{c_1-1}  \tilde{\mathcal F}]^{q} \tilde{\mathcal E}^{c_1}(A C)^{n-2}$ sends $r_0$ to $r_{j_{-\overline{S}-1}}$.
Therefore, 
\[r_{j_{-\overline{S}-1}} =   AC^2\, R_{-k, \Theta_{q-1}(c_1)} \cdot r_0\, .
 \]
 
If there are internal occurrences of $c_j = c_1$, then arguing as in in the proof to Lemma~\ref{l:lastEllValueIsLargest} 
, we find that the minimal word, up to its suffix of $\star$ and prefix $ab$,  corresponds to the  parent of $v$.   Thus, when $v = \Theta_q(s)$,   we find  $r_{j_{-\overline{S}-1}} =  (1,1),(1,2)^{n-3}, \sigma^{n-2}(\,\overline{b}(-k, s)\,), \star$. 
Following again the proof of Lemma~\ref{l:lastEllValueIsLargest}, we
find $AC^2 R_{-k,\Theta_{q-1}(s)}\cdot r_0 = AC^2\tilde{\mathcal E}\cdot\mathcal F\,\sigma^{n-2}(\,\overline{b}(-k, s)\,), \star$.   Thus, this case holds as well.\\
 
  We turn to the case $k=1$.  Recall that the only $v \in \check{\mathcal V}$ containing the letter $n-2$ is the one letter word $v = n-2$ itself, thus the case $n=3$ here is excluded by hypothesis.
For $n>3$,  the brief discussion at the end of \S~\ref{ss:terseForBigAlps} can be shown to imply that in fact
\[
\begin{aligned} 
R_{-1,v} &=  (AC^2)^{n-3-c_s}  AC \, \tilde{\mathcal F}^{-1+d_{s-1} }\;(AC^2)^{n-3-c_{s-1}} AC \,  \tilde{\mathcal F}^{-1 +d_{s-2}} \cdots \\
\\
& \phantom{moveover}\cdots (AC^2)^{n-3-c_{2}} AC \tilde{\mathcal F}^{-1+d_{1}} (AC^2)^{n-2-c_1}.  
\end{aligned}
\]
Equivalently, using $\star$ as above, 
\[ 
\begin{aligned} \overline{b}_{[1,\infty)}^{\alpha} &= (1,2)^{n-2-c_1}(1,1) [(1,2)^{n-3} (1,1)]^{-1+d_1}  (1,2)^{n-3-c_2}(1,1) [(1,2)^{n-3} (1,1)]^{-1+d_2} \dots \\
& \phantom{moveover} \dots (1,2)^{n-3-c_{s-1}}(1,1) [(1,2)^{n-3} (1,1)]^{-1+d_{s-1}} (1,2)^{n-3-c_s}\star.
\end{aligned}
\]
  If $v = c$ with $1\le c <n-2$, then $ \overline{b}_{[1,\infty)}^{\alpha} = (1,2)^{n-2-c}\star$ and we find that our  extremal suffix is given by $(AC^2)^{n-3-c}\cdot r_0$.    For all other words, arguing as above we find that the extremal suffix begins with $(1,1)$ and continues with the digits corresponding to the longest suffix/prefix.  That is, the argument for $k>1$  succeeds in this case as well. 
\end{proof} 
 
\bigskip 

\begin{figure}[h]
\scalebox{1.1}{
\begin{tikzpicture}[x=6cm,y=6cm] 
\draw  (-0.5, 0)--(1, 0); 
\node at (-.6, 0) {$\cdots$}; 
\node at (-0.4, -0.15)[pin={[pin edge=->, pin distance=10pt]90:{}}] {$-\overline{S}-1$};
\node at (-0.18, 0) [red] {$\cdots$};
\node at (-0.20, -0.1) {$\cdots$};
  \node at (-0.01, -0.15)[pin={[pin edge=->, pin distance=10pt]90:{}}] {$-e-2$};
%
\draw  (0.09, -0.01)--(0.11, 0.01); 
\draw   (0.10,  -0.01)--(0.12, 0.01);  
\node at (0.21, 0.1) [red] {$\beta(\overline{S})$}; 
\node at (0.22, -0.1) [red] {$-e-1$};
%
\draw  (0.32, -0.01)--(0.34, 0.01); 
\draw   (0.33,  -0.01)--(0.35, 0.01);   
\node at (0.46, -0.1) {$-e$};
\node at (0.56, 0) [blue] {$\cdots$}; 
\node at (0.69, 0) [blue] {$\cdots$}; 
 \node at (0.77, -0.15)[pin={[pin edge=->, pin distance=10pt]90:{}}] {$e-\overline{S}-2$};
\node at (0.75, 0.1) {$\beta(1)$};
\node at (0.85, 0) [red] {$\cdots$}; 
\node at (0.92, 0) [red] {$\cdots$}; 
\node at (1, -0.1) {$-1$};
\node at (1.05, 0.1) {$\beta(0)$};
\draw [decorate, ultra thick,
    decoration = {calligraphic brace,amplitude=10pt}] (-0.4,0.1) --  (-0.01,0.1);
\draw[->, red] (-0.2, 0.2) -- (-0.2, 0.32) -- (0.87, 0.32) --(0.87, 0.1);
\node at (0.4, 0.4) [red] {$b \mapsto b+e$};
%
\draw [decorate, ultra thick,
    decoration = {calligraphic brace,mirror,amplitude=5pt}] (0.48,-0.15) --  (0.61,-0.15);
\draw [decorate, ultra thick,
    decoration = {calligraphic brace,mirror,amplitude=10pt}] (-0.4,-0.2) --  (-0.01,-0.2);
\draw[->, blue] (0.55,-0.19) -- (0.55, -0.4) -- (-0.2,-0.4) -- (-0.2,-0.3);
\node at (0.8, -0.5) [blue] {$b \mapsto b+e - \overline{S}-1$};
 \foreach \x/\y in {-0.4/0,   -0.31/0, -0.24/0, -0.12/0, -0.01/0}%
{ \node at (\x,\y)[orange] {$\bullet$}; } 
\node at (0.19, 0) [red] {$\star$};
 \foreach \x/\y in {0.48/0,  0.61/0, 0.77/0%
} { \node at (\x,\y)[blue] {$\bullet$}; } 
\node at (1, 0) [blue] {$\star$};
\end{tikzpicture}  
}
\caption{{\bf Dynamics of the initial portion of the $T_{\alpha}$-orbit of $r_0(\alpha)$.}   Recall that $\beta(j) = j_b$ means that $r_j$ is the $j_b$-th real value of this orbit, from the right.  Indicated below the $x$-axis are the values of $b$, above it a few  corresponding $\beta(j)$.   See Corollaries~\ref{c:anotherBottomHeight} and ~\ref{c:changeOrder}.}
\label{f:betaChanges}%
\end{figure}
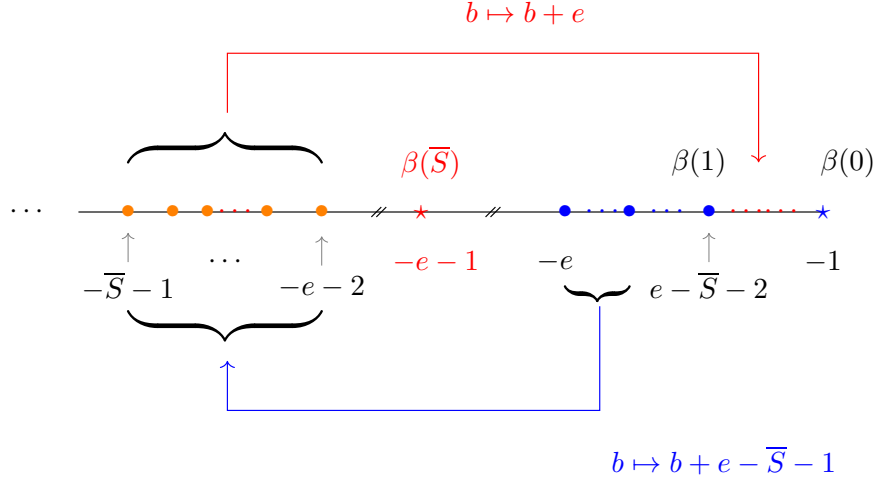

 Compare the following  with Figures ~\ref{f:topBottomVertices} , ~\ref{f:orbitRelations}, ~\ref{f:meaningOfrHat} and  ~\ref{f:betaChanges}. 
 \begin{Cor}\label{c:anotherBottomHeight}    We have 
$1 + \beta(1) = \beta(1+ j_{-\overline{S}-1})\,$.
\end{Cor}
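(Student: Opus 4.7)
The plan is to interpret the claim dynamically: letting $p := j_{-\overline{S}-1}$ denote the time index of the smallest orbit element, the identity $\beta(p+1) = 1 + \beta(1)$ asserts that $r_{p+1}$ is the orbit element immediately above $r_1$ in the decreasing real order on the initial orbit $\{r_0,\ldots,r_{\overline{S}}\}$. I would establish this by analyzing how the two branches of $T_\alpha$ act on the orbit's order structure.

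First, I would set the stage using the preceding results: Lemma~\ref{l:oneLamRel} places $r_{\overline{S}}$ at position $e+1$ and $r_{p-1}$ at position $e$, so that the $e$ largest orbit values (positions $1,\ldots,e$) are exactly the orbit points lying in $\Delta_\alpha(1,2)$, while the $\overline{S}-e$ smallest (positions $e+2,\ldots,\overline{S}+1$) lie in $\Delta_\alpha(1,1)$. Since $T|_{\Delta_\alpha(1,2)} = AC^2$ and $T|_{\Delta_\alpha(1,1)} = AC$ are orientation-preserving on their respective cylinders (both have positive derivative), the order within each branch is preserved. Consequently the $T$-images of the $\Delta_\alpha(1,2)$-orbit points form a decreasing chain beginning with $r_1$ and ending at $r_p$, and the $T$-images of the $\Delta_\alpha(1,1)$-orbit points form another decreasing chain ending at $r_{p+1}$ (which is the minimum of its chain, since $r_p$ is the minimum among its preimages).

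The heart of the proof reduces to the structural claim that these two image-chains occupy two adjacent contiguous ranges of positions within $\{2,\ldots,\overline{S}+1\}$, with the $\Delta_\alpha(1,1)$-chain immediately above the $\Delta_\alpha(1,2)$-chain (accounting for the placement of $r_{\overline{S}}$ inside whichever chain contains it). Once this is established the corollary is immediate: $r_{p+1}$ sits at the bottom of the upper block while $r_1$ sits at the top of the lower block, so their positions differ by exactly one. I would verify the structural claim by computing $r_{p+1} = AC\cdot r_p$ explicitly using the three cases of Lemma~\ref{l:smallestRvalue}. For $v = c_1$ with $k \ge 2$, the identity $AC(AC^2)^{n-2} = U$ yields $r_{p+1} = U^{k-1} r_0$, which can be directly compared with $r_1 = AC^2 r_0$ by tracking the orbit through $\overline{b}(-k,v)$. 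For $v = \Theta_0^h\circ\Theta_q(s)$ with $q \ge 1$, the expression $r_{p+1} = AC\cdot AC^2\cdot R_{-k,\Theta_{q-1}(s)} r_0$ permits an inductive step on the tree $\mathcal V$, the inductive hypothesis being carried up from the parent word via the palindromic and self-dominant properties (Lemmas~\ref{l:selfD} and \ref{l:maxSuffixPrefix}).

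The main obstacle will be making the structural claim precise in the inductive step, since tracking how $r_{\overline{S}}$ is placed within one of the two image-chains, and ruling out any interleaving between them, is delicate. A cleaner alternative which I would attempt first is to invoke the cyclic $\hat r$-chain structure of Lemma~\ref{l:whyHatr} directly: the relation $R\,AC\,R^{-1}\cdot(-\hat r_p) = -\hat r_{p-1}$ underlying Corollary~\ref{c:someLamRels}(iv) provides a group-theoretic link between the heights $y_{\beta(\overline S)}$ and $y_{-1}$, and translating this link into a statement about positions (via the correspondence between $\beta$ and position given in Definition~\ref{d:bottomYvaluesLargeAlps}) should yield the claim with no case analysis required beyond identifying which $\hat r$ appears on each side.
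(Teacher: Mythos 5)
Your structural reduction is the right one and it is the paper's as well. Writing $p := j_{-\overline{S}-1}$: the $T_\alpha$-images of the orbit points in $\Delta_\alpha(1,2)$ form one chain with maximum $r_1$, the images of those in $\Delta_\alpha(1,1)$ form another with minimum $r_{p+1}$, the two chains together exhaust positions $\{-2,\ldots,-\overline{S}-1\}$, and the corollary is equivalent to the single inequality $r_{p+1} > r_1$, which rules out interleaving and forces adjacency. The paper proves that inequality with no induction: from the proof of Lemma~\ref{l:smallestRvalue} one reads off the expansions $r_{p+1} = (1,2)^{n-3},\,\sigma^{n-2}(\overline{b}(-k,s)),\,\star$ and $r_1 = (1,2)^{n-3},\,\sigma^{n-2}(\overline{b}(-k,v)),\,\star$; these share a prefix (since $\overline{b}(-k,s)$ is a prefix of $\overline{b}(-k,v)$) and then diverge at $\star$ versus $(1,1)$, so $(1,1)\prec\star$ from Lemma~\ref{l:oneLamRel}(ii) finishes. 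Your matrix-level comparison of $U^{k-1}\cdot r_0$ against $AC^2\cdot r_0$ would have to reproduce exactly this digit comparison --- a matrix product does not order two real values by itself --- and the separate tree induction you propose is already encapsulated inside Lemma~\ref{l:smallestRvalue}, so it buys nothing.

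The ``cleaner alternative'' you would try first will not work. The heights $y_b$ of Definition~\ref{d:bottomYvaluesLargeAlps} are \emph{defined} as a function of the position data $j_b$; the identity $y_{-1} = R\,AC\,R^{-1}\cdot y_{\beta(\overline{S})}$ of Corollary~\ref{c:someLamRels}(iv) is derived from previously established position facts (specifically from $\beta(p-1) = -e$, which is Lemma~\ref{l:oneLamRel}(iv)), so you cannot run the implication backwards to manufacture a new position fact. Moreover, the $\hat r$-chain step underlying that identity links $\hat r_p$ to $\hat r_{p-1}$ --- the minimal orbit point and its \emph{predecessor} --- whereas the corollary concerns $r_{p+1}$, the \emph{successor} of the minimal point, together with $r_1$. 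These are different orbit elements sitting in different positions, and no reading of Definition~\ref{d:bottomYvaluesLargeAlps} turns the one relation into the other. You have to go to the orbit and compare expansions; there is no height-side shortcut.
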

\begin{proof}[Sketch]  We treat only the main case of $v = \Theta_q(s)$.  The proof of the lemma implies that $r_{1+j_{-\overline{S}-1}} =   (1,2)^{n-3}, \sigma^{n-2}(\,\overline{b}(-k, s)\,), \star$.   
We easily find that $r_1 = (1,2)^{n-3}, \sigma^{n-2}(\,\overline{b}(-k, v)\,), \star$.  Now,  $(1,2)^{n-3}, \sigma^{n-2}(\,\overline{b}(-k, s)\,)$ is a prefix of $(1,2)^{n-3}, \sigma^{n-2}(\,\overline{b}(-k, s)\,)(1,2)^{n-3}, \sigma^{n-2}(\,\overline{b}(-k, v)\,)$ which is followed by a digit of $(1,1)$.  Since (mixing notation and temporary notation),  $(1,1)\prec \star$ we have  $r_{1+j_{-\overline{S}-1}}>r_1$.  As  $r_1$ is the rightmost of the $AC^2\cdot r_j$ while $r_{1+j_{-\overline{S}-1}}$ is the leftmost of the $AC\cdot r_j$,  the statement holds. 
\end{proof}

A second direct result of the previous lemma gives the basic dynamics of  the $T_{\alpha}$-orbit of $r_0(\alpha)$ in terms of 
the indices for the real ordering.   See Figure~\ref{f:betaChanges}.

 \begin{Cor}\label{c:changeOrder}   The position of the successor of $r_{j_b}$  is given by
 \[ \beta(1 + j_b) = \begin{cases}  b+e  \phantom{moveiton}\text{if}\;\; b \le -e-2;\\
 \\
                                                    b+e-\overline{S}-1\;\;\;\text{if}\;\;  b \ge -e.
                                                       \end{cases}       
\] 
\end{Cor}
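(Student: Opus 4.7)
The plan is to reduce the corollary to the combination of two ingredients already in hand: the precise real-line partition of the initial orbit points coming from Lemma~\ref{l:oneLamRel}, and the single adjacency relation in Corollary~\ref{c:anotherBottomHeight}. I would begin by setting $A := \{r_j : 0 \le j < \overline{S},\, r_j \in \Delta_\alpha(1,1)\}$ and $B := \{r_j : 0 \le j < \overline{S},\, r_j \in \Delta_\alpha(1,2)\}$; since $\overline{b}(-k,v)$ involves only the two digits $(1,1)$ and $(1,2)$, we have $|B| = e$ and $|A| = \overline{S} - e$. The first step is to verify that $A$ lies strictly to the left of $r_{\overline{S}}$ on the real line and $B$ strictly to its right. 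The second half is immediate from Lemma~\ref{l:oneLamRel}(ii), since $B \subset \Delta_\alpha(1,2)$ and $r_{\overline{S}}$ is to the left of that cylinder. Lemma~\ref{l:oneLamRel}(iii) then says exactly $e$ orbit points strictly exceed $r_{\overline{S}}$; the $e$ elements of $B$ account for all of them, so every element of $A$ must sit to the left of $r_{\overline{S}}$. Consequently the ranks are distributed as: $A$ at positions $\{-\overline{S}-1,\ldots,-e-2\}$, the point $r_{\overline{S}}$ at $-e-1$, and $B$ at $\{-e,\ldots,-1\}$, with $r_0 = r_{j_{-1}}$ the rightmost element of~$B$.

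Next I would exploit monotonicity. On $\Delta_\alpha(1,1)$ and $\Delta_\alpha(1,2)$ the map $T_\alpha$ acts as $AC$ and $AC^2$ respectively, both M\"obius of positive determinant and hence monotone increasing; thus $T_\alpha$ preserves the relative real-line order within $A$ and, separately, within $B$. Corollary~\ref{c:anotherBottomHeight} reads $\beta(1+j_{-\overline{S}-1}) = 1 + \beta(1)$, which is precisely the statement that the image of the leftmost element of $A$ sits exactly one position to the right of the image of the rightmost element of $B$. Together with the cylinder-wise monotonicity, this forces $T_\alpha(B)$ to occupy a contiguous block of $e$ positions ending at $\beta(1)$, and $T_\alpha(A)$ to occupy the immediately adjacent contiguous block of $\overline{S}-e$ positions starting at $\beta(1)+1$.

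The remaining step is a counting argument. Since $\{r_1,\ldots,r_{\overline{S}}\} = T_\alpha(A) \sqcup T_\alpha(B)$ and $r_0$ fixes position $-1$, the two runs must together fill $\{-2,\ldots,-\overline{S}-1\}$ exactly, pinning $\beta(1) = e-\overline{S}-2$. A direct rank count inside each run, using order preservation, then yields $\beta(1+j_b) = b + e$ for $b \in [-\overline{S}-1,-e-2]$ and $\beta(1+j_b) = b + e - \overline{S} - 1$ for $b \in [-e,-1]$, which is the claimed formula. The only step that is not purely monotone bookkeeping is the initial real-line separation $A < r_{\overline{S}} < B$, which is where the interplay of Lemma~\ref{l:oneLamRel}(ii) and (iii) is essential; once that separation and Corollary~\ref{c:anotherBottomHeight} are in place, the rest of the argument is essentially automatic.
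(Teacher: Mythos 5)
Your proposal is correct, and it uses the same essential ingredients as the paper's proof: the separation $A < r_{\overline{S}} < B$ from Lemma~\ref{l:oneLamRel}, the adjacency $\beta(1+j_{-\overline{S}-1}) = 1+\beta(1)$ from Corollary~\ref{c:anotherBottomHeight}, cylinder-wise monotonicity, and the cardinality count pinning $\beta(1) = e-\overline{S}-2$. The one structural difference is that the paper reaches the ordering ``image of $\Delta_\alpha(1,2)$-orbit points lies to the left of image of $\Delta_\alpha(1,1)$-orbit points'' partly by analyzing which orbit points fall in the subcylinder $\Delta_\alpha((1,2)^{n-2})$ (in particular arguing $T_\alpha(\max A)=r_{j_{-2}}$ via that subcylinder), whereas you obtain it directly from the single inequality of Corollary~\ref{c:anotherBottomHeight} plus monotonicity and then back out $T_\alpha(\max A)=r_{j_{-2}}$ by counting. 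Your route is a bit cleaner and avoids the digit-level bookkeeping about where $(1,2)^{n-2}$ can appear in $\overline{b}(-k,v)$; both reach the same conclusion.
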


\begin{proof}    The $r_j, j <\overline{S}$ in the cylinder $\Delta_{\alpha}(\, (1,2)^{n-2}\,)$ are $r_0 = r_{j_{-1}}$  and the images of the larger 
$r_j, j <\overline{S}$ in $\Delta_{\alpha}(1,1)$.   In particular, the largest of these latter values, $r_{j_{-e-2}}$,  is sent to $r_{j_{-2}}$.    On the other hand, we have that the smallest of the    $r_j, j <\overline{S}$ in $\Delta_{\alpha}(1,1)$ has image greater than $r_1$.    Hence, the image of  $\Delta_{\alpha}(\, (1,2)^{n-2}\,)$ lies to the left of the image of $\Delta_{\alpha}(1,1)$.  It follows that $\beta(1 + j_b) =  b+e$ for all $r_{j_b} \in  \Delta_{\alpha}(1,1)$.   This is the set of $r_{j_b}$ with $b\le -e-2$.    

Since there are $e$ values  $r_{j_b}$ in $\Delta_{\alpha}(1,2)$ and the one value, $r_{\overline{S}}$ outside of  $\Delta_{\alpha}(1,1) \cup \Delta_{\alpha}(1,2)$,  the set  
$\{b\mid r_{j_b} \in \Delta_{\alpha}(1,1)\}$ has cardinality  $\overline{S}-e$.   Hence,  
\begin{equation}\label{e:BetaOfOne}
 \beta(1) = - \overline{S}+e -2.
 \end{equation}
     Therefore for all $r_b \in \Delta_{\alpha}(1,2)$, 
we have $\beta(1 + j_b) = b+e-\overline{S}-1$.
\end{proof} 

 \begin{Lem}\label{l:vertexProgression}    For $0\le j < \overline{S}$, let $M_j$ be as in Lemma~\ref{l:whyHatr}.   For each  $b$ we have 
 \[ \mathcal T_{M_{j_b}}(r_{j_b}, y_b) =  (r_{1+j_b}, y_{\beta(1+j_b)}).  \]
 \end{Lem}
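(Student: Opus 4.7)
The plan is as follows.  The first coordinate equality is immediate: by construction, $M_{j_b} = AC^{l_{j_b}}$ is the M\"obius factor realizing $T_\alpha$ on the cylinder containing $r_{j_b}$, so $M_{j_b}\cdot r_{j_b} = T_\alpha(r_{j_b}) = r_{1+j_b}$.  The substance is therefore in identifying the image of the second coordinate.

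For that, I would rewrite $y_b = -\hat r_{\phi(b)}$ using Definition~\ref{d:bottomYvaluesLargeAlps}, where one defines a bijection $\phi\colon \{-1, \dots, -\overline{S}-1\} \to \{0, 1, \dots, \overline{S}\}$ piecewise by $\phi(b) = j_{-(b+e+1)}$ for $-e\le b \le -1$ and $\phi(b) = j_{-(b+e+\overline{S}+2)}$ otherwise.  The target then reduces to
\[
R M_{j_b} R^{-1} \cdot (-\hat r_{\phi(b)}) \;=\; -\hat r_{\phi(\beta(1+j_b))}\,.
\]
This should follow from Lemma~\ref{l:whyHatr} once two subsidiary facts are established: first, $M_{j_b} = M_{\phi(b)}$ (with the convention $M_{\overline{S}}=AC$ per the last sentence of Lemma~\ref{l:whyHatr}); and second, the cyclic index identity $\phi(\beta(1+j_b)) \equiv \phi(b)-1 \pmod{\overline{S}+1}$, where the wrap $\phi(b)=0\mapsto \overline{S}$ corresponds to the $j=0$ special case of that lemma.

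The matrix matching amounts to showing $r_{j_b}$ and $r_{\phi(b)}$ lie in a common cylinder among $\Delta_\alpha(1,1)$ and $\Delta_\alpha(1,2)$.  Lemma~\ref{l:oneLamRel}(ii)--(iv) identifies the $e$ top-ranked orbit entries as those in $\Delta_\alpha(1,2)$, locates $r_{\overline{S}}=\mathfrak b$ as the $(e+1)$-th largest, and places the remaining $\overline{S}-e$ entries in $\Delta_\alpha(1,1)$.  A brief case split on the range of $b$ then shows that $j_b$ and $\phi(b)$ always land in the same cylinder, with the boundary case $b=-\overline{S}-1$ (where $\phi(b)=\overline{S}$) handled by noting that $r_{j_{-\overline{S}-1}}\in\Delta_\alpha(1,1)$, so $M_{j_b}=AC$ matches the $j=\overline{S}$ branch of Lemma~\ref{l:whyHatr}.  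The cyclic index identity follows by substituting Corollary~\ref{c:changeOrder}'s two formulas ($\beta(1+j_b)=b+e$ for $b\le -e-2$, and $\beta(1+j_b)=b+e-\overline{S}-1$ for $b\ge -e$) into the piecewise definition of $\phi$ and simplifying in each sub-case.  Granted both facts, Lemma~\ref{l:whyHatr} with $j=\phi(b)$ delivers $R M_{\phi(b)} R^{-1}\cdot(-\hat r_{\phi(b)}) = -\hat r_{\phi(b)-1\bmod(\overline{S}+1)} = -\hat r_{\phi(\beta(1+j_b))} = y_{\beta(1+j_b)}$, completing the identification.

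The main obstacle will be the bookkeeping for the cyclic index identity --- especially at the boundary between the two branches of $\phi$'s definition and at the cyclic wrap-around --- where one must carefully track the interplay of the two permutations $b\mapsto c(b)$ and $c\mapsto j_c$.  Everything else is a routine consequence of identities already recorded in Lemmas~\ref{l:whyHatr} and~\ref{l:oneLamRel} and Corollary~\ref{c:changeOrder}.
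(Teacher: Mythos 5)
Your proposal is correct and follows essentially the same route as the paper: identify the second coordinate as $-\hat r$ at a re-indexed position, invoke Lemma~\ref{l:whyHatr} to step that index down cyclically by one, and reconcile via Corollary~\ref{c:changeOrder} and Lemma~\ref{l:oneLamRel}. Your bijection $\phi$ is just a clean packaging of Definition~\ref{d:bottomYvaluesLargeAlps}, and the two subsidiary facts you isolate (matrix matching and the cyclic index identity) are exactly what the paper establishes, split there into the cases $-1\ge b\ge -e$ and $-e>b$.

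One point worth flagging: you describe the cyclic index identity as following ``by substituting and simplifying,'' but there is a genuine subtlety hiding there. After substituting, one must determine \emph{which} branch of $\phi$ applies to $\beta(1+j_b)$, and separately which branch of the inverted Corollary~\ref{c:changeOrder} gives $\beta(\phi(b)-1)$; these selection conditions are not syntactically the same. The paper resolves this with a combinatorial counting argument (``the two remaining possible combinations of inequalities\ldots can never be fulfilled''). Your arithmetic can be made to work, but the observation that makes it go through --- namely, that the two candidate indices arising from each computation coincide as a pair and differ by exactly $\overline{S}+1$, so exactly one lands in $\{-\overline{S}-1,\dots,-1\}$ and this uniquely forces agreement --- needs to be stated explicitly. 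Your final paragraph shows you are aware of the bookkeeping burden, so I would not call this a gap, but in a written-out proof that consistency argument is the crux and cannot be waved off as ``simplification.''
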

\begin{proof} We treat the setting where $n>3$ and hence $(1,1)$ appears only as an isolated letter in  $\overline{b}(-k, v)\,)$.  
(When $n=3$, it is $(1,2)$ that appears as an isolated letter, allowing analogous arguments.)

  {\bf Case 1.  $-1 \ge b \ge -e\,$:} \quad   From Corollary~\ref{c:changeOrder},   we have  $\beta(1 + j_b) = b+e-\overline{S}-1$.   Thus,   Definition~\ref{d:bottomYvaluesLargeAlps} gives
\[ y_{\beta(1 + j_b)} =  \begin{cases} -\hat{r}_{j_{-(b+2e +1)}}\;\; \text{if} \;\;\; -e> \beta(1 + j_b)\\
\\
                                                                        -\hat{r}_{j_{-(b+2e -\overline{S})}}\;\; \text{otherwise}.  
                                                  \end{cases}  
\]                         
 
  Definition~\ref{d:bottomYvaluesLargeAlps} also gives   $y_b = - \hat r_{j_{-(b+  e +1)}}$.  Note that  $ M_{j_b} =  M_{j_{-(b+  e +1)}} = AC^2$ here.       If $b=-e$, then $y_b = - \hat r_0$ and Lemma~\ref{l:whyHatr} gives in this case $R M_{j_b} R^{-1} \cdot y_b = -\hat{r}_{\overline{S}}$.   Here $y_{\beta(1 + j_b)} = -\hat r_{j_{-e-1}} = -\hat{r}_{\overline{S}}$; the result holds for $b=-e$.

 For $-1 \ge b >-e$,  Lemma~\ref{l:whyHatr} combines with Corollary~\ref{c:changeOrder} to give   
\[ R M_{j_b} R^{-1} \cdot y_b = \begin{cases} -\hat{r}_{j_{-(b+2e +1)}}\;\; \text{if} \;\;\; \beta(-1+j_{-(b+e+1)}) \le -e-2,\\
\\
                                                                        -\hat{r}_{j_{-(b+2e -\overline{S})}}\;\; \text{if}  \;\;\;  -1\ge \beta(-1+j_{-(b+e+1)}) \ge -e.
                                                  \end{cases}  
\]                         

We see   that the result holds whenever  $\beta(-1+j_{-(b+e+1)}), \beta(1 + j_b)$ are both at least, or are both less than, $-e$. 

 In this first case, it remains to show that the two remaining possible combinations of inequalities on the values of $\beta(-1+j_{-(b+e+1)})$ and $\beta(1 + j_b)$ can never be fulfilled.  
For this, note that the cardinality of  the set $\{r_j \mid  r_j \in  \Delta_{\alpha}(\, (1,2)(1,1)\,), j\le \overline{S} \}$ equals   $\overline{S}-e$, as the set has image  consisting of all of the $r_j$ in $\Delta_{\alpha}(1,1)$.   Therefore,  the corresponding indices $j$ are such that the real ordering gives $-e \le  \beta(j) \le \overline{S}-2 e-1$.  

Thus,  if $\beta(1 + j_b)<-e-1$, then    $-1 \ge  -(b+e+1)\ge -\overline{S}+e$. From \eqref{e:BetaOfOne},  $-(b+e+1) > \beta(1)$.   Therefore, $\beta(-1+j_{-(b+e+1)})< -e-1$.      Now,  if  $ -1\ge \beta(-1+j_{-(b+e+1)}) \ge -e$, then (as also $-(b+e+1) \ge -e$),  we have $-(b+e+1)\le \beta(1)$; but then  $\beta(1 + j_b)\ge -e-1$.     The result thus holds in this first case.

 \bigskip 
 \noindent
  {\bf Case 2.  $-e > b\,$:} \quad  In this case, Definition~\ref{d:bottomYvaluesLargeAlps} gives   $y_b = - \hat r_{j_{-(b+  e  +  \overline{S} +2)}}$.   Of course $- \overline{S}-1 \le b$ and thus we find  $-(b+  e  +  \overline{S} +2)<-e$.   Therefore,  $ M_{j_b} =  M_{j_{-(b+  e  +  \overline{S} +2)}} = AC$ here; as well   $-1> \beta(-1+j_{-(b+  e  +  \overline{S} +2)}) \ge -e$.  
   
     Hence, Lemma~\ref{l:whyHatr} combines with Corollary~\ref{c:changeOrder} to give   
\[ R M_{j_b} R^{-1} \cdot y_b = -\hat{r}_{j_{-(b+ 2 e +1)}}.\]
                                         
 From Corollary~\ref{c:changeOrder},   we have  $\beta(1 + j_b) =  b+e$, which must be at least $-e$ (assuming $n>3$)   Thus, 
\[ y_{\beta(1 + j_b)} =    -\hat{r}_{j_{-(b+2e +1)}}.
\]                         
\end{proof}

 \begin{Cor}\label{c:theWholeWord}    We have $y_{\beta(\overline{S})} = R R_{-k,v} R^{-1}\cdot y_{\beta(0)}$.
\end{Cor}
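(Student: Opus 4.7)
The plan is to iterate Lemma~\ref{l:vertexProgression} exactly $\overline{S}$ times, starting from the basepoint $(r_0, y_{\beta(0)})$. Since $r_0$ is the largest element among $r_0, r_1, \dots, r_{\overline{S}}$, one has $j_{-1} = 0$, so this basepoint is $(r_{j_{-1}}, y_{-1})$, and Lemma~\ref{l:vertexProgression} applies at $b = -1$ to give $\mathcal T_{M_0}(r_0, y_{-1}) = (r_1, y_{\beta(1)})$. At the next stage, writing $b' = \beta(1)$ so that $j_{b'} = 1$, the lemma applies again to yield $\mathcal T_{M_1}(r_1, y_{\beta(1)}) = (r_2, y_{\beta(2)})$. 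By straightforward induction, the $j$-th iterate carries $(r_0, y_{-1})$ to $(r_j, y_{\beta(j)})$, and after $\overline{S}$ steps we reach $(r_{\overline{S}}, y_{\beta(\overline{S})})$.

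Reading off only the $y$-coordinate evolution of this iterated composition, the cumulative action is
\[
y_{\beta(0)} \longmapsto (R M_{\overline{S}-1} R^{-1}) \circ \cdots \circ (R M_1 R^{-1}) \circ (R M_0 R^{-1}) \cdot y_{\beta(0)} = R\,(M_{\overline{S}-1}\cdots M_1 M_0)\, R^{-1} \cdot y_{\beta(0)}\,,
\]
so that once we verify $M_{\overline{S}-1}\cdots M_1 M_0 = R_{-k,v}$, the corollary follows.

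This matrix identification is a direct comparison between the word $\overline{b}(-k,v)$ displayed in~\eqref{e:digitsNotSmall} and the expression~\eqref{e:expressRsubminKv} for $R_{-k,v}$. Each letter $(1, l_j)$ of $\overline{b}(-k,v)$ contributes the factor $M_j = AC^{l_j}$, and because matrix multiplication reverses the order of function composition, concatenating the digits from left to right gives a matrix product that is indexed right to left. The word factors as $(1,2)^{n-2}\,\mathcal E^{c_1}\mathcal F^{d_1}\cdots \mathcal E^{c_s}$, and the corresponding matrix products of $\mathcal E$ and $\mathcal F$ become $\tilde{\mathcal E}$ and $\tilde{\mathcal F}$ respectively; reading in the reversed order therefore produces $\tilde{\mathcal E}^{c_s}\tilde{\mathcal F}^{d_{s-1}}\cdots \tilde{\mathcal E}^{c_1}(AC^2)^{n-2}$, which is the expression for $R_{-k,v}$.

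I do not anticipate any genuine obstacle: the argument is purely a bookkeeping exercise, with Lemma~\ref{l:vertexProgression} doing all of the real work. The only care needed is to track the order-reversal convention between digit concatenation and matrix multiplication, which has already been absorbed into the definition of $R_{-k,v}$ in~\eqref{e:expressRsubminKv}, and to note that the lemma is iterable because $\beta$ is a bijection between $\{0,\dots,\overline{S}\}$ and its image, so at every step the current state $(r_j, y_{\beta(j)})$ is in the form required by the lemma.
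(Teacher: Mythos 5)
Your proposal is correct and is essentially the paper's own argument: iterate Lemma~\ref{l:vertexProgression} starting from $(r_0, y_{-1})$, note that the $y$-coordinates evolve by $RM_jR^{-1}$, and identify the accumulated product $M_{\overline{S}-1}\cdots M_0$ with $R_{-k,v}$ by comparing~\eqref{e:digitsNotSmall} with~\eqref{e:expressRsubminKv}. You are actually more careful with the indexing than the paper's one-line proof (which writes $M_{\overline{S}}\cdots M_0$ and $M_j\cdots M_0\cdot r_0 = r_j$, apparently an off-by-one slip), and your explicit check that $\beta(0)=-1$ and that the iteration is well-posed is a welcome addition.
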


\begin{proof}  Since $R_{-k,v} =  M_{\overline{S}} \cdots M_0$, while for each $j\le \overline{S}$ we have   $M_j \cdots M_0 \cdot r_0 = r_j$,  this does follow from the lemma.
\end{proof}

\bigskip

 \begin{Lem}\label{l:htsIncrease}   The values of the
 $y_b, b<0$ increase with $b$.     
 \end{Lem}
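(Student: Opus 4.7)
The plan is to partition the index set $\{-\overline{S}-1,\dots,-1\}$ into the two ranges appearing in Definition~\ref{d:bottomYvaluesLargeAlps}, the \emph{lower chunk} $\mathcal{I}^{-}=\{-\overline{S}-1,\dots,-e-1\}$ and the \emph{upper chunk} $\mathcal{I}^{+}=\{-e,\dots,-1\}$, to verify monotonicity of $b\mapsto y_b$ separately on each piece, and then to cross the boundary with the single \emph{jump inequality} $y_{-e-1}<y_{-e}$.

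For monotonicity within $\mathcal{I}^{-}$, as $b$ ascends from $-\overline{S}-1$ to $-e-1$ the index $-(b+e+\overline{S}+2)$ traverses $-e-1,-e-2,\dots,-\overline{S}-1$, so by Definition~\ref{d:bottomYvalues}(ii) the corresponding values $r_{j_{-e-1}}=r_{\overline{S}},\,r_{j_{-e-2}},\dots,r_{j_{-\overline{S}-1}}$ are strictly decreasing. None of them lies in $\Delta_{\alpha}(1,2)$, so $\hat r_{j_{\cdot}}=r_{j_{\cdot}}$ throughout, and negation delivers strict increase of $y_b$. Within $\mathcal{I}^{+}$, the indices traverse $j_{-1},j_{-2},\dots,j_{-e}$ and the corresponding $r_j$'s lie in $\Delta_{\alpha}(1,2)\subset(0,r_0(\alpha))$. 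Since $C:x\mapsto 1-1/x$ has derivative $1/x^{2}>0$ on any interval avoiding the pole at $0$, the strictly decreasing order of the $r_{j_{\cdot}}$'s transfers to $\hat r_{j_{\cdot}}=C\cdot r_{j_{\cdot}}$, again yielding strict increase of $y_b=-\hat r_{j_{\cdot}}$.

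The only remaining content is the jump, which, after substituting the definitions, unfolds to
\[
C\cdot r_0 \;<\; r_{j_{-\overline{S}-1}}.
\]
Both sides lie in $\Delta_{\alpha}(1,1)$: the right side by the choice of $j_{-\overline{S}-1}$, and the left side because any $x\in\Delta_{\alpha}(1,2)$ satisfies $Cx\in\mathbb{I}_{\alpha}$, $C(Cx)\notin\mathbb{I}_{\alpha}$ and $AC(Cx)=T_{\alpha}(x)\in\mathbb{I}_{\alpha}$, which are exactly the defining conditions for $Cx$ to carry $\alpha$-digit $(1,1)$. The M\"obius map $AC:x\mapsto 1+t-1/x$ has derivative $1/x^{2}>0$ on the positive interval containing $\Delta_{\alpha}(1,1)$, so it suffices to establish the transported inequality
\[
r_1\;=\;AC\cdot(C\cdot r_0)\;<\;AC\cdot r_{j_{-\overline{S}-1}}\;=\;r_{1+j_{-\overline{S}-1}}.
\]
Here Corollary~\ref{c:anotherBottomHeight} supplies precisely $\beta(1+j_{-\overline{S}-1})=1+\beta(1)$; since raising $\beta$ by one moves the corresponding orbit element one slot closer to the rightmost slot $r_{j_{-1}}=r_0$ in the decreasing ordering of the $r_{j_{\cdot}}$'s, this yields $r_{1+j_{-\overline{S}-1}}>r_1$, closing the argument.

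The crux is this jump step: the two chunks are disconnected at the level of the raw definition of $y_b$, and one must leave the definition and invoke the dynamical identification of Corollary~\ref{c:anotherBottomHeight} (which itself rests on Lemma~\ref{l:oneLamRel} and Corollary~\ref{c:changeOrder}) in order to bridge them. The degenerate situations cause no extra trouble: when $e=\overline{S}$ the lower chunk reduces to the single index $-\overline{S}-1$ and only the jump need be checked; when $(k,v)=(1,n-2)$ one has $\overline{S}=0$ and the statement is vacuous, in agreement with the single-height description recorded in \S\ref{ss:ExamplesKis1FirstHalfOfSyncInt}.
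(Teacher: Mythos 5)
Your proof is correct and takes essentially the same approach as the paper's: decompose the index set at $b=-e$, verify monotonicity on each piece via the sign structure of the maps $b\mapsto -e-1-b$ (resp. $b\mapsto -(b+e+\overline S+2)$), the decreasing labelling $r_{j_{-1}}>\cdots>r_{j_{-\overline S-1}}$, the monotonicity of $C$, and the final negation; then bridge the two chunks using $r_{1+j_{-\overline{S}-1}}>r_1$. The only difference is presentational: the paper asserts $r_{1+j_{-\overline{S}-1}}>r_1$ directly (it was established inside the proof of Corollary~\ref{c:anotherBottomHeight}), while you extract it from that corollary's statement and spell out why the transported inequality closes — checking $C\cdot r_0\in\Delta_{\alpha}(1,1)$ and that $AC$ is increasing there — which is a welcome bit of extra care but not a different route.
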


\begin{proof}  By Definition~\ref{d:bottomYvaluesLargeAlps}, $y_b = -C\cdot r_{j_{-e-1 -b}}(\eta_{-k,v})$  when $b \in \{-e, \cdots, -1\}$.  
 By definition,   $r_{j_{-e}}(\eta_{-k,v}) \le \cdots \le r_{j_{-1}}(\eta_{-k,v})$.  The map $b \mapsto -e-1 -b$ is order reversing, the action of $C$ is order preserving, and finally multiplication by $-1$ is order reversing.   Thus,  $b \mapsto y_b$ increases with $b$ for these values of $b$. 
 
   Since $r_{1+j_{-\overline{S}-1}}>r_1$,  we deduce that the $e$ number of values $C\cdot r_{j_{-e-1 -b}}(\eta_{-k,v})$ all lie to the left of the values of any of the  $r_j(\eta_{-k,v}), 0\le j \le \overline{S}$.     In particular,  those $y_b$ with $-1 \ge b \ge -e$ are larger than the remaining values. 

For these remaining values,  $y_b =  - \hat r_{j_{-(b+  e  +  \overline{S} +2)}} = - r_{j_{-(e  +  \overline{S} +2)-b}}(\eta_{-k,v})$ increases with $b$ because the multiplication by $-1$ reverses the decreasing nature of the   $r_{j_{-(e  +  \overline{S} +2)-b}}(\eta_{-k,v})$.   The result thus holds, we have:   $y_{-\overline{S}-1} \le \cdots \le y_{-1}$. 
\end{proof} 

\bigskip
We give a result that could have been stated directly after the definition of the set of the various $y_{\tau(i)}$.
 \begin{Lem}\label{l:relForSharedTopOfTwoLeftmostCylinders}    We have 
\[ RA^{-k-1}CR^{-1} \cdot y_{\tau(\underline{S}-1)} = y_1.\]   
 \end{Lem}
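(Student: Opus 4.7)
The plan is to unpack the two heights on each side in terms of the orbit of $\ell_0$ at $\alpha=\eta_{-k,v}$, reduce the $y$-statement to a statement on the $\ell_i$'s via Lemma~\ref{l:conjByRofAtoPc}, and then derive that orbital identity from the defining relation of $\eta_{-k,v}$.

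First I would read off the two heights.  Since $\ell_0(\alpha)$ is the left endpoint of $\mathbb I_\alpha$, it is the smallest value in its $T_\alpha$-orbit, so $\tau(0)=1$.  Hence, by Definition~\ref{d:topYvaluesLargeAlp},
\[
y_1=y_{\tau(0)}=-\ell_{\underline{S}}(\eta_{-k,v})\qquad\text{and}\qquad y_{\tau(\underline{S}-1)}=-\ell_{1}(\eta_{-k,v}).
\]
Thus, since $\overleftarrow{A^{-k-1}C}=A^{-k-1}C$, Lemma~\ref{l:conjByRofAtoPc} reduces the claim to the one-dimensional identity
\begin{equation}\label{e:planIdentity}
A^{-k-1}C\cdot\ell_{\underline{S}}(\eta_{-k,v})=\ell_{1}(\eta_{-k,v}).
\end{equation}

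Next I would derive \eqref{e:planIdentity} from the definitions.  By definition of $\eta_{-k,v}$ one has $R_{-k,v}\cdot r_0(\eta_{-k,v})=\mathfrak{b}_{\eta_{-k,v}}=C^{-1}\cdot\ell_0(\eta_{-k,v})$.  Substituting the factorization $R_{-k,v}=CA^{-1}CL_{-k,v}$ and using $C^{-2}=C$ (since $C^3=\mathrm{Id}$ projectively), a short rearrangement gives
\[
L_{-k,v}\cdot r_0(\eta_{-k,v})=C^{-1}AC\cdot\ell_0(\eta_{-k,v}).
\]
The universal identity $A\cdot\ell_0(\alpha)=r_0(\alpha)$ (which holds since $r_0(\alpha)-\ell_0(\alpha)=t$) and the basic relation $L_{-k,v}A\cdot\ell_0(\alpha)=\ell_{\underline S}(\alpha)$ then yield $\ell_{\underline S}(\eta_{-k,v})=C^{-1}AC\cdot\ell_0(\eta_{-k,v})$.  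Left-multiplying by $A^{-k-1}C$ and collapsing $C\cdot C^{-1}=\mathrm{Id}$ together with $A^{-k-1}\cdot A=A^{-k}$ produces $A^{-k-1}C\cdot\ell_{\underline S}(\eta_{-k,v})=A^{-k}C\cdot\ell_0(\eta_{-k,v})$; the right-hand side is exactly $\ell_1(\eta_{-k,v})$ because the initial $\alpha$-digit of $\ell_0$ is $(-k,1)$ throughout $J_{-k,v}$.  This establishes \eqref{e:planIdentity} and completes the proof.

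There is no serious obstacle here; the only bookkeeping risk is the correct handling of $C$-powers in the finite-order relation $C^3=\mathrm{Id}$ and the careful identification of $y_{\tau(0)}$ versus $y_{\tau(\underline{S}-1)}$, both of which are routine.  Everything else is a direct application of the already-established structural identities attached to the endpoint $\eta_{-k,v}$.
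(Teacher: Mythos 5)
Your proof is correct, and its first half is exactly the paper's: identify $y_{\tau(\underline S-1)}=-\ell_1(\eta_{-k,v})$ and $y_1=y_{\tau(0)}=-\ell_{\underline S}(\eta_{-k,v})$, then use Lemma~\ref{l:conjByRofAtoPc} to reduce the claim to the orbital identity $A^{-k-1}C\cdot\ell_{\underline S}(\eta_{-k,v})=\ell_1(\eta_{-k,v})$. Where you diverge is in how that identity is obtained: the paper simply cites Lemma~6.3 of \cite{CaltaKraaikampSchmidt} (periodicity of the $\eta_{-k,v}$-expansion of $\ell_0$), whereas you re-derive it from the defining relation $R_{-k,v}\cdot r_0(\eta_{-k,v})=\mathfrak b_{\eta_{-k,v}}=C^{-1}\cdot\ell_0(\eta_{-k,v})$, the factorization $R_{-k,v}=CA^{-1}CL_{-k,v}$, the relations $A\cdot\ell_0=r_0$, $C^3=\mathrm{Id}$, and $\ell_{\underline S}=L_{-k,v}A\cdot\ell_0$ on $I_{-k,v}$ (which applies at $\eta_{-k,v}$ since $J_{-k,v}\subset I_{-k,v}$, the same implicit fact the paper's height definitions rest on). Your computation checks out ($L_{-k,v}\cdot r_0=C^{-1}AC^{-2}\cdot\ell_0=C^{-1}AC\cdot\ell_0$, then left-multiplying by $A^{-k-1}C$ collapses to $A^{-k}C\cdot\ell_0=\ell_1$), so the only trade-off is presentational: the paper's citation is shorter, while your version makes the lemma self-contained within the structural identities already recorded in this paper, at the cost of a few lines of matrix bookkeeping.
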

 
\begin{proof} By definition,  $y_{\tau(\underline{S}-1)}  = - \ell_1(\eta_{-k,v})$.  By Lemma~\ref{l:conjByRofAtoPc}, $RA^{-k-1}CR^{-1}$ sends this to $-x$ where $x$ satisfies $\ell_1 = A^{-k-1}C\cdot x$.  Now, by (\cite{CaltaKraaikampSchmidt},  Lemma~6.3),  $x = \ell_{\underline{S}}(\eta_{-k,v})$.   Since $y_1 = y_{\tau(0)}$, by definition this has the value $-\ell_{\underline{S}}(\eta_{-k,v})$, and the result holds. 
\end{proof}

\bigskip

Since $\underline{d}(-k, v)$ differs only slightly from  $\overline{d}(k, v)$, we use notation as in Section~\ref{s:relationsOnHts}.   We also revert to using $u$ to denote a word in $\mathcal V$.  (We trust that the reader can easily distinguish this from our other standard usage of $u$.)
 \begin{Def}\label{d:iotaLargeAlps}   Just as for $\alpha<\gamma_{n}$, we let                                                                
\[ \iota = \begin{cases} 0 & \text{if}\;  v = 1,\\
                                     \underline{S}(-k,u)& \text{if}\;  v = \Theta_q(u).
                                     
           \end{cases}                          
\]
\end{Def}

 Recall from (\cite{CaltaKraaikampSchmidt}, Lemma~6.7) 
 that   $v \prec w$ if and only if $\underline{d}(-k,v) \succ \underline{d}(-k,w)$.
 Lemma~\ref{l:2ndLargestR} thus implies the following.
 \begin{Lem}\label{l:largestLvalue}   We have 
    \[ \ell_\iota = \max_{0\le i < \underline{S}}\,\ell_i\,.\]
    Equivalently,   $\tau(\iota) =  \underline{S}$.                                                                   
\end{Lem}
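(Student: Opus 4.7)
The plan is to reduce the claim to a word-level $\prec$-comparison of shifts and then to apply Lemma~\ref{l:2ndLargestR} after translating through the order-reversing correspondence of (\cite{CaltaKraaikampSchmidt}, Lemma~6.7).

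First, I note that for all $\alpha \in J_{-k,v}$ the initial $\underline{S}$ simplified digits of $\ell_0(\alpha)$ coincide with $\underline{d}(-k,v)$, and the relevant tails are determined by a common extension throughout $J_{-k,v}$ (analogous to the role played by $\eta_{k,v}$ in the small-$\alpha$ setting). Hence the $\prec$-ordering of $\ell_0(\alpha), \ldots, \ell_{\underline{S}-1}(\alpha)$ is constant on $J_{-k,v}$, and the assertion $\tau(\iota) = \underline{S}$ reduces to the word-level statement that the shift $\sigma^\iota$ of the digit expansion of $\ell_0$ is $\prec$-maximal among the shifts $\sigma^i$ for $0 \le i < \underline{S}$. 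The case $v = 1$ is handled by direct inspection, as in the corresponding step of the proof of Lemma~\ref{l:2ndLargestR}.

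Next, observe that $\overline{d}(k,v) = k^{c_1}(k+1)^{d_1}\cdots k^{c_s}$ and $\underline{d}(-k,v) = (-k)^{c_1}(-k-1)^{d_1}\cdots (-k)^{c_s}$ share identical block data $(c_1, d_1, \ldots, c_s)$ and identical common length $\sum_i c_i + \sum_j d_j$; in particular $\underline{S}(-k, v) = \overline{S}(v)$. With respect to $\prec$ one has $k+1 \prec k$ in the small-$\alpha$ setting and $-k-1 \prec -k$ in the large-$\alpha$ setting, so in both cases the $c_i$-exponents decorate the $\prec$-greater letter and the $d_j$-exponents the $\prec$-smaller. The order reversal of \cite{CaltaKraaikampSchmidt}, Lemma~6.7 --- which swaps $v \prec w$ for $\underline{d}(-k,v) \succ \underline{d}(-k,w)$ --- then extends to shifts: the positional index of the $\prec$-minimum among shifts $\sigma^j(\overline{d}(k,v)\cdot \mathrm{tail})$ for $0 \le j < \overline{S}$ coincides with the positional index of the $\prec$-maximum among shifts $\sigma^i(\underline{d}(-k,v)\cdot \mathrm{tail})$ for $0 \le i < \underline{S}$.

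Finally, Lemma~\ref{l:2ndLargestR} identifies this extremal index as $\iota = |\overline{d}(k,u)| = \underline{S}(-k,u)$, where $u$ is the parent of $v$; the two expressions give the same numerical value, so the same $\iota$ serves in the large-$\alpha$ setting (consistently with Definitions~\ref{d:iota} and \ref{d:iotaLargeAlps}). Since $\prec$ agrees with real ordering on $\mathbb I_\alpha$, this yields $\ell_\iota(\alpha) = \max_{0 \le i < \underline{S}} \ell_i(\alpha)$, equivalently $\tau(\iota) = \underline{S}$. The hard part will be the careful verification that the order-reversing correspondence of (\cite{CaltaKraaikampSchmidt}, Lemma~6.7), stated there between whole words, propagates uniformly to all shifts of the infinite sequences involved; this ultimately rests on the observation that $\overline{d}(k,v)$ and $\underline{d}(-k,v)$ have identical block combinatorics, the interchange of the ``bigger'' and ``smaller'' letter pairs being precisely what swaps $\prec$-minima and $\prec$-maxima at corresponding positions.
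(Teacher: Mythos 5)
Your proposal is correct and follows essentially the same route as the paper: it reduces to the word-level extremality statement and transfers the conclusion of Lemma~\ref{l:2ndLargestR} via the order-reversing correspondence of (\cite{CaltaKraaikampSchmidt}, Lemma~6.7). The paper records this in two sentences without elaborating the shift-level verification you flag; your caution there is reasonable but not something the paper itself spells out.
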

 \bigskip

 Compare the following with Figure~\ref{f:orbitRelations}.
 \begin{Prop}\label{p:RelationBottomAndTop}   We have 
    \[y_{\underline{S}}  = R C^2AC^2R^{-1}\cdot y_{\beta(\overline{S})}  .\]                                                                  
\end{Prop}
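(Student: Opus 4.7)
The plan is to reduce the identity to a point equality at $\eta_{-k,v}$ and then verify it using the defining relation of $\eta_{-k,v}$ together with the explicit formula for $r_{j_{-\overline{S}-1}}$. First, by Lemma~\ref{l:largestLvalue} and Definition~\ref{d:topYvaluesLargeAlp}, $y_{\underline{S}} = y_{\tau(\iota)} = -\ell_{\underline{S}-\iota}(\eta_{-k,v})$. By Lemma~\ref{l:oneLamRel}, $\beta(\overline{S}) = -1-e$, and substitution into the second clause of Definition~\ref{d:bottomYvaluesLargeAlps} gives $y_{\beta(\overline{S})} = -\hat{r}_{j_{-\overline{S}-1}}(\eta_{-k,v})$. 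I next write $C^2AC^2$ as a word of length four in $\{A^pC : p\in\mathbb{Z}\}$, namely $(A^0C)(A^0C)(AC)(A^0C)$, whose reverse is $(A^0C)(AC)(A^0C)(A^0C) = CA\cdot C^3 = CA$ (using $C^3 = I$ projectively in $G_n$). Lemma~\ref{l:conjByRofAtoPc} then reduces the proposition to the point equality
\[
\hat{r}_{j_{-\overline{S}-1}}(\eta_{-k,v}) = CA\cdot \ell_{\underline{S}-\iota}(\eta_{-k,v}).
\]

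To establish this, the key tool is the defining condition $R_{-k,v}\cdot r_0(\eta_{-k,v}) = \mathfrak{b}_{\eta_{-k,v}} = C^{-1}\ell_0(\eta_{-k,v})$, combined with the factorization $R_{-k,v} = CA^{-1}CL_{-k,v}$. Together these yield $L_{-k,v}\cdot r_0(\eta_{-k,v}) = C^{-1}A\cdot \ell_0(\eta_{-k,v})$, which serves as a bridge between the $r_0$-orbit side (on which $\hat{r}_{j_{-\overline{S}-1}}$ lies) and the $\ell_0$-orbit side (on which $\ell_{\underline{S}-\iota}$ lies). Combining with the explicit matrix expressions from Lemma~\ref{l:smallestRvalue} for $r_{j_{-\overline{S}-1}} = N\cdot r_0(\eta_{-k,v})$ and the case-specific determination (extractable from the proof of that lemma) of whether this point lies in $\Delta_{\eta_{-k,v}}(1,1)$ or $\Delta_{\eta_{-k,v}}(1,2)$ --- which dictates whether $\hat{r}_{j_{-\overline{S}-1}} = r_{j_{-\overline{S}-1}}$ or $= C\cdot r_{j_{-\overline{S}-1}}$ --- we obtain an explicit expression for $\hat{r}_{j_{-\overline{S}-1}}$ in terms of $r_0(\eta_{-k,v})$. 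For $\ell_{\underline{S}-\iota}$, I exploit the palindromic structure of $v$: when $v=\Theta_q(u)$ with $u$ the parent, the last $\iota = |\underline{d}(-k,u)|$ letters of $\underline{d}(-k,v)$ coincide with $\underline{d}(-k,u)$, whence $\ell_{\underline{S}}(\eta_{-k,v}) = L_{-k,u}A\cdot\ell_{\underline{S}-\iota}(\eta_{-k,v})$ and therefore $\ell_{\underline{S}-\iota}(\eta_{-k,v}) = (L_{-k,u}A)^{-1}L_{-k,v}A\cdot\ell_0(\eta_{-k,v})$. Substituting into the desired point equality and invoking the bridge reduces the identity to a group-theoretic equality in $G_n$.

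The main obstacle is the verification of this final group-theoretic identity in the principal case $v = \Theta_q(u)$ with $q \geq 1$, where we must manipulate the explicit product defining $R_{-k,\Theta_{q-1}(u)}$ (see \eqref{e:expressRsubminKv}) together with $L_{-k,u}$ and $L_{-k,v}$. As in the analogous argument for Proposition~\ref{p:rOrbitsOfSynIntEndptsMeet}, I expect the palindromic nature of words in $\mathcal{V}$ to produce the requisite cancellations, with an induction via the derived-words operator $\mathscr{D}$ (together with verification in the base cases $v = 1$ and $v = c$, which are handled by direct computation and which separate according to the three cases of Lemma~\ref{l:smallestRvalue}, including the subcase $k=1$ versus $k\geq 2$) completing the argument.
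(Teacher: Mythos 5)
Your reduction is clean and correct: writing $C^2AC^2 = (A^0C)(A^0C)(AC)(A^0C)$ and applying Lemma~\ref{l:conjByRofAtoPc} does turn the proposition into the single pointwise relation $\hat r_{j_{-\overline{S}-1}}(\eta_{-k,v}) = CA\cdot\ell_{\underline{S}-\iota}(\eta_{-k,v})$ in the $\eta_{-k,v}$-orbits, and this is a genuinely different framing than the paper's: the paper instead chains together the diagrammatic relations of Figure~\ref{f:orbitRelations} (via Corollary~\ref{c:someLamRels}, Lemma~\ref{l:vertexProgression}, Lemma~\ref{l:smallestRvalue} and Corollary~\ref{c:theWholeWord}) to arrive at $y_{\beta(\overline{S})} = R\,R_{-k,v}[AC^2R_{-k,\Theta_{q-1}(u)}]^{-1}C\,R^{-1}\cdot y_{\underline{S}+1}$, then proves the matrix cancellation $R_{-k,v}[AC^2R_{-k,\Theta_{q-1}(u)}]^{-1}C = R_{-k,u}$ and matches both sides against $R\,L_{-k,u}R^{-1}\cdot y_{\underline{S}+1}$. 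A minor tightening of your version: by Lemma~\ref{l:oneLamRel}(iv) and its proof, $r_{j_{-\overline{S}-1}}$ always lies in $\Delta_{\eta_{-k,v}}(1,1)$, so $\hat r_{j_{-\overline{S}-1}} = r_{j_{-\overline{S}-1}}$ with no case split.

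The gap is that you stop exactly where the real content begins. After the reduction, the heart of the proposition is the verification that $r_{j_{-\overline{S}-1}}(\eta_{-k,v}) = CA\cdot\ell_{\underline{S}-\iota}(\eta_{-k,v})$ holds as a group-theoretic identity for all $v\in\mathcal V$, and you defer this entirely to the hope that "the palindromic nature of words in $\mathcal V$ will produce the requisite cancellations, with an induction via the derived-words operator $\mathscr D$." That identity is of essentially the same depth as the cancellation $R_{-k,v}[AC^2R_{-k,\Theta_{q-1}(u)}]^{-1}C = R_{-k,u}$ that the paper actually carries out (using the explicit product form \eqref{e:expressRsubminKv} and the decomposition $v = \mathfrak z'\mathfrak a\mathfrak Z\mathfrak a\mathfrak z$ of words in $\mathcal V$), and since you have the explicit expression for $r_{j_{-\overline{S}-1}}$ from Lemma~\ref{l:smallestRvalue} together with $\ell_{\underline{S}-\iota} = (L_{-k,u}A)^{-1}L_{-k,v}A\cdot\ell_0$, the remaining manipulation is concrete and must be done to have a proof; it cannot be cited as an expectation. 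Until it is carried through (including the anomalous base cases $v=1$, $v=c$ and the $k=1$ subcase), this is a plan rather than a proof.
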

\begin{proof}[Sketch]  We treat the main case of $v = \Theta_q(u)$,  $q\ge 1$. 
From Corollary~\ref{c:someLamRels}, combining Lemma~\ref{l:vertexProgression} with Lemma~\ref{l:smallestRvalue}, and Corollary~\ref{c:theWholeWord}, 
\[
y_{\beta(\overline{S})} = R\, R_{-k,v}\,[AC^2 R_{-k, \Theta_{q-1}(u)}]^{-1} C R^{-1}\cdot y_{\underline{S}+1}.\\
\]

Using $R'_{-k, w}$ to denote  $R_{-k,w}[(AC^2)^{n-2}]^{-1}$ (for general words $w$) as well as the notation of the proof of (\cite{CaltaKraaikampSchmidt}, Proposition~4.13), 
 we have 
\[
\begin{aligned}
R_{-k,v}\,[AC^2 R_{-k, \Theta_{q-1}(u)}]^{-1} C &= R_{-k, u'u''} R_{u''}^{-1} (AC^2)^{-1} C\\
                                                                          &=  R'_{-k, \mathfrak z' \mathfrak a \mathfrak Z\mathfrak a \mathfrak z} {R'}_{\mathfrak a \mathfrak z}^{-1} (AC^2)^{-1} C\\
                                                                          &=  R'_{-k, \mathfrak a \overleftarrow{\mathfrak z'}\mathfrak Z\mathfrak a \mathfrak z} {R'}_{\mathfrak a \mathfrak z}^{-1} (AC^2)^{-1} C\\
                                                                          & R'_{-k, \overleftarrow{\mathfrak z'}u} {R'}_{\mathfrak z}^{-1} (AC^2)^{-1} C\\
                                                                          &=R'_{-k, u} \tilde{\mathcal F} {\tilde{\mathcal E}}^{-1} (AC^2)^{-1} C\\
                                                                          &= R'_{-k, u} \tilde{\mathcal F} \tilde{\mathcal E}^{-1} (AC^2)^{-1} C\\
                                                                          &= R'_{-k, u} (AC^2)^{n-3}U  [(AC^2)^{n-3}]^{-1} (AC^2)^{-1} C\\
                                                                          &= R'_{-k, u} (AC^2)^{n-2}\\
                                                                          &= R_{-k, u}\,.
\end{aligned}  
\]
Therefore,   $y_{\beta(\overline{S})} = R\, R_{-k,u}R^{-1}\cdot y_{\underline{S}+1}$.  Hence,   $R C^2AC^2R^{-1}\cdot y_{\beta(\overline{S})} =  R L_{k,u}R^{-1}\cdot  y_{\underline{S}+1}$.

On the other hand, by Lemma~\ref{l:largestLvalue}, $y_{\underline{S}} = y_{\tau(\iota)}$.  Therefore, from the definition of the $y_a$, we have that  $y_{\underline{S}} = R L_{k,u}AR^{-1}\cdot  y_{\tau(0)}$.    Arguing as for Lemma~\ref{l:topValuesFirstRelations}, one has  $R A^{-k-1}CR^{-1}\cdot y_{\tau(\underline{S}-1)} = y_{\tau(0)}$ and $R A^{-k}CR^{-1}\cdot y_{\tau(\underline{S}-1)} = y_{\tau(\underline{S})}$.   Since $ y_{\tau(\underline{S})} =  y_{\underline{S}}$, one finds that   $y_{\underline{S}}$ also is equal to $R L_{k,u}R^{-1}\cdot  y_{\underline{S}+1}$.
\end{proof}

\section{Bijectivity of $\mathcal T_{\alpha}$ on  $\Omega_{\alpha}$ for large $\alpha$ in left portion of synchronization interval}\label{leftBlocks}    

\subsection{Partitioning $\Omega$ by blocks $\mathcal B_i$}\label{ss:theBlocksLargeAlps}

For each $i \in \{-k, -k+1, \dots\}\cup \mathbb N$ and $j\in \{1,2\}$,  let the {\em block} $\mathcal B_{i,j}$ be the closure of the set $\{(x,y)\in \Omega\,|\, x \in \Delta_{\alpha}(i, j)\,\}$.   Thus the  blocks partition $\Omega$ up to $\mu$-measure zero.  

Since $\mathcal T$ is invertible, it is clear that Theorem ~\ref{t:Omega} when $\alpha \in (\eta_{-k,v},  \delta_{-k,v})$ follows from the following two results, the proofs of which are given by combining the results of the ensuing three subsections.  

 \begin{Prop}\label{p:topFromNegI}     The union of $\mathcal T(\mathcal B_{i,j})$ taken over all   $i \in \{-k, -k+1, \dots\}$ and $j\in \{1,2\}$ equals $\Omega^{+}$ up to $\mu$-measure zero.  
 \end{Prop}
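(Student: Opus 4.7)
The plan is to follow the template of the small-$\alpha$ proof of Proposition~\ref{p:leftGivesTop}, but now one must track the extra letter coordinate $j\in\{1,2\}$ and use the more subtle lamination relations encoded in Figure~\ref{f:orbitRelations}. The injectivity half of the statement is already built into the construction (each $\mathcal B_{i,j}$ is a union of rectangles fibering over a single cylinder $\Delta_\alpha(i,j)$ on which $\mathcal T_\alpha$ acts by a single M\"obius pair), so the real content is the surjection onto $\Omega^+$ together with the measure-zero disjointness of the images.

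First I would establish the \emph{internal} lamination for the left blocks, namely that $\mathcal T(\mathcal B_{i,j})$ lies immediately above $\mathcal T(\mathcal B_{i-1,j})$ for $i<-k$ (or between the two $j$-values when $i$ is fixed), by applying Lemmas~\ref{l:lamEqsSameCexpon}, \ref{l:lamEqsSameAexpon}, \ref{l:lamEqsDifferingAandCexpons} to the bottom/top heights of each $\mathcal B_{i,j}$. The heights involved are always either $y_{-\overline{S}-1}$ or $y_{-\overline{S}}$ on the bottom (by the analogs of Lemmas~\ref{l:bottomsOfBlocks} and ~\ref{l:topsOfBlocks}, which must be reformulated for large $\alpha$ using Lemma~\ref{l:oneLamRel}) and either $y_{\underline{S}}$ or $y_{\underline{S}+1}$ on the top. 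The critical identities that make the lamination close up are Corollary~\ref{c:someLamRels} (for the matchup at the $(-k-1,\cdot)$/$(-k,\cdot)$ interface and its image) and Proposition~\ref{p:RelationBottomAndTop}, $y_{\underline{S}}=RC^2AC^2R^{-1}\cdot y_{\beta(\overline{S})}$, which is precisely the relation needed by Lemma~\ref{l:lamEqsDifferingAandCexpons} to show that the block straddling $\ell_{\underline{S}}$ produces the highest top height in the image.

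Next I would prove the analog of Lemmas~\ref{l:intervalMeetsTwoCylinders}, \ref{l:rightmostIntervalMeetsTwoCylinders} and \ref{l:coveringUpperBoundary}: that on the interval $K_{\tau(\underline{S}-1)}$ (and on the rightmost interval $K_{\underline{S}}$) the map $T_\alpha$ splits the $x$-coordinate into two subintervals coming from different digits, whose images together with the action on the $y$-coordinate sweep out exactly the missing top heights $y_1,\ldots,y_{\underline{S}+1}$. The key ingredient here is Lemma~\ref{l:vertexProgression} (which tracks the orbit combinatorics via $\beta$) together with Lemma~\ref{l:relForSharedTopOfTwoLeftmostCylinders} and Lemma~\ref{l:largestLvalue} identifying $\ell_\iota$ as the largest interior orbit point. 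Together these imply that the union of $\mathcal T(\mathcal B_{i,j})$ over all left blocks covers $\Omega^+$ except possibly on a set of arbitrarily small $\mu$-mass accumulating at $y=0$; a direct verification that $\mathcal T(\mathcal B_{i,j})\to \mathbb I_\alpha\times\{0\}$ as $i\to -\infty$ closes the gap.

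The hard part will be the careful bookkeeping at the transition block containing $\mathfrak b_\alpha$ and $\ell_{\underline{S}}$, because in the left portion we have $\ell_{\underline{S}}<\mathfrak b_\alpha<r_{\overline{S}}$, so the block $\mathcal B_{-k,j}$ (and the one immediately above it in the indexing on the $j$ coordinate) simultaneously sees contributions to both $y_{\underline{S}}$ and $y_{\underline{S}+1}$ on top, and its image must laminate precisely against the image of the bottom blocks across the $(1,1)/(1,2)$ interface; this is where the red arrows in Figure~\ref{f:orbitRelations} must be read in both directions. Once the lamination identity of Proposition~\ref{p:RelationBottomAndTop} is used to certify that these images abut without overlap or gap, the remaining steps are the direct analogs of the small-$\alpha$ argument.
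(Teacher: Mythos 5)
Your proposal takes the same overall route as the paper --- establish the lamination chain on the left blocks, show the upper boundary is covered by images of block tops, and close the remaining mass with the limit $\mathcal T_\alpha(\mathcal B_{i,j}) \to \mathbb I_\alpha\times\{0\}$ as $i\to-\infty$. But there is a concrete error in your statement of the lower-boundary heights that would derail the lamination step if carried out as written.

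You assert the bottoms of the blocks are \emph{always either $y_{-\overline{S}-1}$ or $y_{-\overline{S}}$}. That is the small-$\alpha$ situation and it does not carry over. For large $\alpha$ in the left portion, the $j=1$ blocks do have uniform bottom height $y_{-\overline{S}-1}$ (Lemma~\ref{l:bottomsOfBlocksLargeAlps}), but the $j=2$ blocks have bottom heights $y_{\beta(\overline{S})}$ and/or $y_{1+\beta(\overline{S})}$, and Lemma~\ref{l:oneLamRel}(iii) identifies these as $y_{-1-e}$ and $y_{-e}$ with $e = e(-k,v)$, which in general are \emph{not} $y_{-\overline{S}-1}$ or $y_{-\overline{S}}$. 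This discrepancy is precisely why Definition~\ref{d:bottomYvaluesLargeAlps} introduces the $\hat r_i$-map and the count $e(-k,v)$, and why the lamination of $\mathcal T_\alpha(\mathcal B_{i,2})$ above $\mathcal T_\alpha(\mathcal B_{i-1,1})$ must invoke Proposition~\ref{p:RelationBottomAndTop}, $y_{\underline{S}} = RC^2AC^2R^{-1}\cdot y_{\beta(\overline{S})}$, together with Corollary~\ref{c:someLamRels}(iii), $y_{\underline{S}+1} = RC^{-1}AC^2R^{-1}\cdot y_{1+\beta(\overline{S})}$; these identities cannot be forced with $y_{-\overline{S}}$ on the bottom. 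You do cite Proposition~\ref{p:RelationBottomAndTop} correctly later in your sketch, which contradicts your earlier height claim. A secondary inaccuracy: the block carrying both $y_{\underline{S}}$ and $y_{\underline{S}+1}$ on its top is the one fibering over the cylinder containing $\ell_{\underline{S}}$, i.e.\ $\mathcal B_{c,d}$ with $(c,d)$ the first digit of $\ell_{\underline{S}}$, which is not in general $\mathcal B_{-k,j}$. With those corrections your plan matches the paper's argument, which indeed proceeds by the small-$\alpha$ template: lamination (the chain $(i,1),(i,2),(i-1,1),\ldots$ with the two edge blocks handled separately), the upper-boundary covering, and the $i\to-\infty$ limit.
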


 \begin{Prop}\label{p:bottomFromPosI}     The union of $\mathcal T(\mathcal B_{i,j})$ taken over all  $i\in\mathbb N$ and $j\in \{1,2\}$ equals $\Omega^{-}$ up to  $\mu$-measure zero. 
 \end{Prop}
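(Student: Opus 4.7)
\textbf{Proof proposal for Proposition~\ref{p:bottomFromPosI}.}
The plan is to mirror the argument used for Proposition~\ref{p:rightGivesBottom} in the small $\alpha$ setting, but with the added complication of the two-letter digit alphabet $j \in \{1,2\}$ and the relations involving the modified orbit values $\hat r_i$ from Definition~\ref{d:bottomYvaluesLargeAlps}. The argument factors into three steps: (a) the $\mathcal T$-images of the blocks laminate above one another; (b) the lower boundary of $\Omega^{-}$ is carried onto itself by the dynamics; (c) a limit argument fills in the remaining strip near the $x$-axis. Injectivity then follows from the fact that $\mathcal T_\alpha$ is invertible where defined.

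For step (a), the essential input is Lemma~\ref{l:whyHatr}, which describes how the individual matrices $M_j = AC^{l_j}$ act on the bottom heights $-\hat r_j$, together with the cross identities of Corollary~\ref{c:someLamRels}. Applying the generic lamination identities of Section~\ref{s:elemIdsAndOrder} --- specifically Lemma~\ref{l:lamEqsSameCexpon} when the $C$-exponent is preserved, Lemma~\ref{l:lamEqsSameAexpon} when only the $C$-exponent changes, and Lemma~\ref{l:lamEqsDifferingAandCexpons} when both exponents change --- I would check case by case that any two adjacent positive-index blocks $\mathcal B_{i,j}$ and $\mathcal B_{i',j'}$ have images sharing exactly one horizontal edge. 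Which of the three lamination lemmas applies is dictated by the $(j,j')$ combination, and the closure of the lamination at the top of $\Omega^{-}$ is provided by Corollary~\ref{c:someLamRels}(i).

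For step (b), the main engine is Lemma~\ref{l:vertexProgression}, which says precisely that $\mathcal T_{M_{j_b}}$ sends the corner $(r_{j_b}, y_b)$ to the corner $(r_{1+j_b}, y_{\beta(1+j_b)})$. Walking the $T_\alpha$-orbit of $r_0(\alpha)$ along the lower boundary in the order prescribed by Corollary~\ref{c:changeOrder} then exhibits, for each $b$, a preimage block whose image covers the bottom edge $L_b \times \{y_b\}$ of $\Omega^{-}$. The passage between the $r_j \in \Delta_\alpha(1,1)$ group and the $r_j \in \Delta_\alpha(1,2)$ group is exactly the content of Corollary~\ref{c:anotherBottomHeight}, and the ``outer'' closure coming from the return of the full word $R_{-k,v}$ is given by Corollary~\ref{c:theWholeWord}. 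Finally, the link back up to $\Omega^{+}$ --- the cross identity tying the tallest bottom rectangle to the tallest top rectangle --- is supplied by Proposition~\ref{p:RelationBottomAndTop}.

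For step (c), as $i \to \infty$ the cylinders $\Delta_\alpha(i,j)$ shrink to $r_0(\alpha)$, and an elementary estimate on the conjugate $RA^iC^jR^{-1}$ applied to the bounded fiber over $\mathcal B_{i,j}$ shows that $\mathcal T(\mathcal B_{i,j})$ collapses toward $\mathbb I_\alpha \times \{0\}$, filling the portion of $\Omega^{-}$ adjacent to the $x$-axis. The main obstacle I anticipate is the bookkeeping in step (a) at the junction between the portion of $\Omega^{-}$ whose heights come from $\Delta_\alpha(1,2)$-orbit entries and the portion coming from $\Delta_\alpha(1,1)$-entries: closing the lamination there requires the simultaneous use of the identity $y_{-\overline S-1} = R\,AC^2 R^{-1}\cdot y_{1+\beta(\overline S)}$ from Corollary~\ref{c:someLamRels}(ii) and the dual identity $y_{\underline S} = R\,C^2AC^2 R^{-1}\cdot y_{\beta(\overline S)}$ of Proposition~\ref{p:RelationBottomAndTop}, since exactly at this junction the top and bottom boundaries of $\Omega$ must be knit together consistently.
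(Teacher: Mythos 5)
Your proposal tracks the paper's own proof quite closely: the paper indeed factors the proof into the same three ingredients, namely the lamination of the positive-index blocks (Lemmas~\ref{l:topsOfBlocksLargeAlps}--\ref{l:laminRightmostBlock}), the inclusion of the lower boundary in the image (Subsection~\ref{ss:lowerBoundaryInImageLargeAlps}, which defers to the small-$\alpha$ treatment plus Corollary~\ref{c:someLamRels}), and the elementary limit argument sending $\mathcal T(\mathcal B_{i,j})\to\mathbb I_\alpha\times\{0\}$ as $i\to\infty$. You have also identified the right height relations (Lemma~\ref{l:whyHatr}, Corollary~\ref{c:someLamRels}, Corollary~\ref{c:theWholeWord}, Proposition~\ref{p:RelationBottomAndTop}) and the right orbit-ordering machinery (Lemma~\ref{l:vertexProgression}, Corollaries~\ref{c:changeOrder} and~\ref{c:anotherBottomHeight}), and you correctly single out the $(1,1)$/$(1,2)$-cylinder junction as the place where the cross identity of Proposition~\ref{p:RelationBottomAndTop} and Corollary~\ref{c:someLamRels}(ii) must be knitted together.

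One small over-inclusion: the same-$C$-exponent lamination identity (Lemma~\ref{l:lamEqsSameCexpon}) does not actually occur among the positive-index blocks. Because each block $\mathcal B_{i,j}$ abuts $\mathcal B_{i,2}$ or $\mathcal B_{i-1,1}$, the only lamination types that arise here are same-$A$/different-$C$ (Lemma~\ref{l:lamEqsSameAexpon}) and different-$A$/different-$C$ (Lemma~\ref{l:lamEqsDifferingAandCexpons}); Lemma~\ref{l:lamEqsSameCexpon} is what gets used in the small-$\alpha$ setting where the $C$-exponent is fixed at $1$. This is a harmless miscalibration of the checklist, not a gap. You might also note that, since the proposition is purely about $\Omega^{-}$, the role of Proposition~\ref{p:RelationBottomAndTop} is not really to ``link back up to $\Omega^{+}$'' but to control how the \emph{top} edge of a block (which is a height of $\Omega^{+}$) is mapped onto the \emph{bottom} of the adjacent image inside $\Omega^{-}$: under $\mathcal T$ a positive-index block's top boundary is sent downward, so the lamination between $\mathcal T(\mathcal B_{i,2})$ and $\mathcal T(\mathcal B_{i-1,1})$ genuinely requires a relation between a top height and a bottom height, which is exactly what Proposition~\ref{p:RelationBottomAndTop} supplies.
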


\subsection{Blocks laminate one above the other}\label{ss:laminateLargeAlps}     

The arguments for the case of  $\alpha<\gamma_{n}$ give also the following.
\begin{Lem}\label{l:topsOfBlocksLargeAlps}    Let $(c,d)$ denote the first digit of $\ell_{\underline{S}}$.  Then 
for all $(i,j) \notin \{(-k,1), (-k-1,1)\}$,   
 the top boundary of the block $\mathcal B_{i,j}$ is given by 
\[ \begin{cases}    
    y = y_{\underline{S}+1}& \text{if} \;\;(c,d) \prec (i,j)\,; \\
     (y = y_{\underline{S}+1})\cup (y = y_{\underline{S}})& \text{if} \;\;(c,d) = (i,j)\,; \\
     y = y_{\underline{S}}& \text{if} \;\;(c,d) \succ (i,j)\,.
     \end{cases}
\]     
\end{Lem}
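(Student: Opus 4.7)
The plan is to mimic the proof of Lemma~\ref{l:topsOfBlocks} from the small-$\alpha$ setting, using one key observation specific to the left portion of the synchronization interval for large $\alpha$: by \eqref{e:underDminKv} the word $\underline{d}(-k,v)$ uses only the letters $(-k,1)$ and $(-k-1,1)$. Consequently every orbit point $\ell_i$ with $0 \le i < \underline{S}$ lies in either $\Delta_{\alpha}(-k,1)$ or $\Delta_{\alpha}(-k-1,1)$. The only point of the initial orbit $\ell_0,\dots,\ell_{\underline{S}}$ whose first digit can be anything else is $\ell_{\underline{S}}$ itself, whose first digit is $(c,d)$ by hypothesis.

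I would then translate this into information about the upper boundary of $\Omega^{+}$. By Lemma~\ref{l:largestLvalue} one has $\tau(\iota) = \underline{S}$, and combined with Definition~\ref{d:topYvaluesLargeAlp} this gives $K_{\underline{S}} = [\ell_{\iota}, \ell_{\underline{S}})$ and $K_{\underline{S}+1} = [\ell_{\underline{S}}, r_0)$, carrying heights $y_{\underline{S}}$ and $y_{\underline{S}+1}$ respectively. For any $(i,j) \notin \{(-k,1), (-k-1,1)\}$ with $(i,j) \neq (c,d)$ the cylinder $\Delta_{\alpha}(i,j)$ contains no point of $\ell_0,\dots,\ell_{\underline{S}-1}$, so its portion of the upper boundary of $\Omega^{+}$ sits at a single height. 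Comparing positions via the order $\prec$: if $(c,d) \prec (i,j)$ then $\Delta_{\alpha}(i,j)$ lies to the right of $\ell_{\underline{S}}$, hence inside $K_{\underline{S}+1}$, giving top height $y_{\underline{S}+1}$; if $(c,d) \succ (i,j)$ then $\Delta_{\alpha}(i,j)$ lies between $\ell_{\iota}$ and $\ell_{\underline{S}}$, hence inside $K_{\underline{S}}$, giving top height $y_{\underline{S}}$. Finally if $(c,d) = (i,j)$ then $\ell_{\underline{S}}$ lies in the interior of $\Delta_{\alpha}(i,j)$ and splits it into two subintervals, the left one sitting at height $y_{\underline{S}}$ and the right one at height $y_{\underline{S}+1}$.

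The only subtle point, arising in the case $(c,d) \succ (i,j)$, is to verify that such a cylinder $\Delta_{\alpha}(i,j)$ really lies to the right of $\ell_{\iota}$ rather than extending further left into the staircase of blocks having smaller top heights $y_a$ with $a < \underline{S}$. This is exactly where the digit observation is invoked: since $\ell_{\iota}$ sits in $\Delta_{\alpha}(-k,1) \cup \Delta_{\alpha}(-k-1,1)$, and $(i,j)$ is assumed distinct from both $(-k,1)$ and $(-k-1,1)$, any $(i,j)$ with $(i,j) \prec (c,d)$ necessarily falls in the $\prec$-interval between $\ell_{\iota}$'s cylinder and $\ell_{\underline{S}}$'s cylinder, forcing $\Delta_{\alpha}(i,j) \subset (\ell_{\iota}, \ell_{\underline{S}}) = K_{\underline{S}}$. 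Beyond this careful bookkeeping of the $\prec$-order relative to the two distinguished digits, I do not foresee a serious obstacle; indeed the authors' remark that ``the arguments for the case of $\alpha < \gamma_{n}$ give also the following'' suggests the proof is essentially a transcription of Lemma~\ref{l:topsOfBlocks} with this single adjustment.
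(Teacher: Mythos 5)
Your proposal is correct and follows essentially the same approach as the paper's (terse) argument, which points back to Lemma~\ref{l:topsOfBlocks} and, in turn, to Lemma~\ref{l:bottomsOfBlocks}: identify $K_{\underline{S}+1}=[\ell_{\underline{S}},r_0)$ as the rightmost piece of $\Omega^{+}$, observe that every $\ell_i$ with $i<\underline{S}$ lies in $\Delta_{\alpha}(-k,1)\cup\Delta_{\alpha}(-k-1,1)$ so that only $\ell_{\underline{S}}$ can interrupt a cylinder $\Delta_{\alpha}(i,j)$ with $(i,j)\notin\{(-k,1),(-k-1,1)\}$, and then read off the top height from the position of $\Delta_{\alpha}(i,j)$ relative to $\ell_{\iota}$ and $\ell_{\underline{S}}$. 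Your extra care in verifying that such a cylinder lies entirely to the right of $\ell_{\iota}$ (so that only $y_{\underline{S}}$ or $y_{\underline{S}+1}$ can occur) is precisely the point one needs to make explicit, and your direct derivation of the $\prec$ conditions from the spatial ordering is the right way to confirm the case structure rather than copying it mechanically from the small-$\alpha$ statement.
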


\bigskip 

 \begin{Lem}\label{l:bottomsOfBlocksLargeAlps}     Let $(a,2)$ denote the first digit of $r_{\overline{S}}$.  
Then 
for all $(i,j) \notin \{(1,1), (1,2)\}$,   
 the lower boundary of the block $\mathcal B_{i,j}$ is given by
\[ \begin{cases}    
    y = y_{-\overline{S}-1}& \text{if} \;\;j=1\,; \\
    y = y_{\beta(\overline{S})}& \text{if} \;\;j=2\, \text{and}\; (a,2) \succ (i,j)\,;\\
     (y = y_{\beta(\overline{S})} )\cup (y = y_{1+\beta(\overline{S})})& \text{if} \;\;(a,2) = (i,j)\,; \\
     y = y_{1+\beta(\overline{S})}& \text{if} \;\;(a,2) \prec (i,j)\,.
     \end{cases}
\]     
 \end{Lem}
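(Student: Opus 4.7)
The plan is to mirror the proof of Lemma~\ref{l:topsOfBlocksLargeAlps} (and of Lemma~\ref{l:bottomsOfBlocks} from the small-$\alpha$ setting) by tracking which of the partition intervals $L_b$ from Definition~\ref{d:bottomYvalues} meet the cylinder $\Delta_\alpha(i,j)$. Since the lower boundary of $\Omega^-$ restricted to $L_b$ has height $y_b$, the lower boundary of the block $\mathcal B_{i,j}$ will consist precisely of the horizontal segments at heights $y_b$ for those indices $b$ with $L_b \cap \Delta_\alpha(i,j) \ne \emptyset$.

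First I would observe, using Lemma~\ref{l:oneLamRel}, that the initial orbit points $r_0, r_1, \dots, r_{\overline{S}-1}$ all lie in $\Delta_\alpha(1,1) \cup \Delta_\alpha(1,2)$, while $r_{\overline{S}}$ itself lies in $\Delta_\alpha(a,2)$ (this being the very meaning of $(a,2)$ being its first digit). Consequently, for $(i,j) \notin \{(1,1),(1,2)\}$ the cylinder $\Delta_\alpha(i,j)$ contains no partition point except possibly $r_{\overline{S}}$, and the latter occurs exactly when $(i,j) = (a,2)$. So there are four geometric scenarios to analyze.

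Next I would treat the four cases. For $j=1$, the cylinder $\Delta_\alpha(i,1)$ with $i \ne 1$ is either on the negative side of $\mathbb I_\alpha$ or sits between $0$ and $\Delta_\alpha(1,1)$; in either case it lies strictly to the left of the smallest orbit point $r_{j_{-\overline{S}-1}}$, hence is contained in $L_{-\overline{S}-1}$ with resulting lower boundary height $y_{-\overline{S}-1}$. For $j=2$, the defining property of $\prec$ converts the comparison of $(i,j)$ with $(a,2)$ into a comparison of the spatial position of $\Delta_\alpha(i,j)$ with $\Delta_\alpha(a,2)$, and hence with $r_{\overline{S}}$: strictly to the left but right of $\mathfrak b$ places the cylinder inside $L_{\beta(\overline{S})}$ (here using $\beta(\overline{S}) = -1-e$ from Lemma~\ref{l:oneLamRel}(iii)), with height $y_{\beta(\overline{S})}$; strictly to the right but left of the leftmost orbit point in $\Delta_\alpha(1,2)$ places it inside $L_{1+\beta(\overline{S})}$, with height $y_{1+\beta(\overline{S})}$; and the boundary case $(i,j) = (a,2)$ splits $\Delta_\alpha(a,2)$ at $r_{\overline{S}}$ into left and right subintervals lying in these two consecutive $L_b$'s, so both heights appear.

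The only subtlety is the identification of the relevant $L_b$ in the third subcase, which amounts to showing that the leftmost orbit point in $\Delta_\alpha(1,2)$ is $r_{j_{-e}}$: by Lemma~\ref{l:oneLamRel} together with the definition of $e = e(-k,v)$, the $e$ orbit points strictly to the right of $r_{\overline{S}}$ are exactly those in $\Delta_\alpha(1,2)$, and so in the decreasing real-order labelling they occupy indices $-1, -2, \dots, -e$, with $r_{j_{-e}}$ being the leftmost. Combined with Definition~\ref{d:bottomYvaluesLargeAlps}, this pins down every height and completes the proof.
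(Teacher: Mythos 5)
Your proof is correct and takes essentially the same approach as the paper's (much terser) proof: partition the lower boundary of $\Omega^-$ by the intervals $L_b$ and track which $L_b$ meet each cylinder $\Delta_\alpha(i,j)$, using the facts that all $r_j$ with $j<\overline{S}$ lie in $\Delta_\alpha(1,1)\cup\Delta_\alpha(1,2)$ while $r_{\overline{S}}$ lies between those two cylinders. One small citation imprecision: the fact that $r_0,\dots,r_{\overline{S}-1}$ all lie in $\Delta_\alpha(1,1)\cup\Delta_\alpha(1,2)$ is not literally part of Lemma~\ref{l:oneLamRel}; it follows directly from $\overline{b}(-k,v)$ having all digits in $\{(1,1),(1,2)\}$, as in \eqref{e:digitsNotSmall}.
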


\begin{proof}   Since all of the $r_j$, with $j\le \overline{S}$, lie in or to the right of $\Delta_{\alpha}(1,1)$,   the result for $j=1$ holds.  All of the $r_j$ with $j< \overline{S}$  lie in  $\Delta_{\alpha}(1,1)\cup \Delta_{\alpha}(1,2)$, and thus the bottom heights of $\mathcal B_{i,2}$ for $i\neq 1$ are one or both
$y_{\beta(\overline{S})}, y_{1+\beta(\overline{S})}$ as determined by the location of $r_{\overline{S}}$ as per the remaining statements.
\end{proof}

 \begin{Lem}\label{l:laminationInEasyCaseLargeAlps}  Suppose that $(i,j) \notin\{(1,1), (1,2), (-k,2)\}$.  Then $\mathcal T_{\alpha}(\mathcal B_{i,j})$  laminates above  $\mathcal T_{\alpha}(\mathcal B_{i', j'})$, where 
\[ (i',j') =    \begin{cases} (i, 2)& \text{if}\; j=1,\\
\\
                 (i-1, 1)& \text{otherwise}.
\end{cases}                        
\] 
\end{Lem}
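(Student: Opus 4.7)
My plan follows the template of Lemma~\ref{l:laminationInEasyCase}. For each admissible pair $((i,j),(i',j'))$ I identify the bottom $y$-height of the upper block $\mathcal B_{i,j}$ and the top $y$-height of the lower block $\mathcal B_{i',j'}$ by appealing to Lemmas~\ref{l:topsOfBlocksLargeAlps} and~\ref{l:bottomsOfBlocksLargeAlps}, then verify the lamination criterion from the appropriate member of Lemmas~\ref{l:lamEqsSameAexpon}, \ref{l:lamEqsSameCexpon}, \ref{l:lamEqsDifferingAandCexpons}. The key height identities come from Corollary~\ref{c:someLamRels} and Proposition~\ref{p:RelationBottomAndTop}, used freely with the substitution $C^{-1}=C^2$ (valid because $C^3=\mathrm{Id}$) in order to convert among the various equivalent forms.

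In the case $j=1$, so $(i',j')=(i,2)$, the two cylinders share $A$-exponent $i$, and Lemma~\ref{l:lamEqsSameAexpon} reduces the claim to checking $y_{\mathrm{bot}}(\mathcal B_{i,1}) = RCR^{-1}\cdot y_{\mathrm{top}}(\mathcal B_{i,2})$. Since $i\neq 1$, Lemma~\ref{l:bottomsOfBlocksLargeAlps} pins the bottom of $\mathcal B_{i,1}$ at $y_{-\overline{S}-1}$; Lemma~\ref{l:topsOfBlocksLargeAlps} gives that the top of $\mathcal B_{i,2}$ is $y_{\underline{S}+1}$, $y_{\underline{S}}$, or both, according to how $(c,d)$ compares with $(i,2)$. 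Corollary~\ref{c:someLamRels}(i), rearranged via $C^{-2}=C$, yields the identity for the generic $y_{\underline{S}+1}$ sub-case directly.

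In the case $j=2$, so $(i',j')=(i-1,1)$, Lemma~\ref{l:lamEqsDifferingAandCexpons} with $(k,l)=(i-1,1)$ requires $y_{\mathrm{top}}(\mathcal B_{i-1,1}) = RC^{-1}AC^2R^{-1}\cdot y_{\mathrm{bot}}(\mathcal B_{i,2})$. The two generic pairings are provided precisely by the two main height identities: Proposition~\ref{p:RelationBottomAndTop} (rewritten using $C^2=C^{-1}$) is exactly $y_{\underline{S}} = RC^{-1}AC^2R^{-1}\cdot y_{\beta(\overline{S})}$, matching the ``small-side'' pairing where $\mathcal B_{i-1,1}$ lies to the left of $\ell_{\underline{S}}$ and $\mathcal B_{i,2}$ lies to the left of $r_{\overline{S}}$; Corollary~\ref{c:someLamRels}(iii) is exactly $y_{\underline{S}+1} = RC^{-1}AC^2R^{-1}\cdot y_{1+\beta(\overline{S})}$, matching the complementary ``large-side'' pairing.

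The chief obstacle is to verify that the pairings singled out above are precisely the ones forced by the geometry, so that a block whose top equals $y_{\underline{S}}$ (resp.\ $y_{\underline{S}+1}$) is laminated above a block whose bottom equals the correct partner. This requires a careful match of the digit comparisons $(c,d)\lessgtr(i,j)$ (which govern tops via Lemma~\ref{l:topsOfBlocksLargeAlps}) with the corresponding comparisons $(a,2)\lessgtr(i,j)$ (which govern bottoms via Lemma~\ref{l:bottomsOfBlocksLargeAlps}); consistency of these comparisons along the two pieces of the lamination will follow from the synchronization identity $T_\alpha(\ell_{\underline{S}})=T_\alpha(r_{\overline{S}})$, valid throughout the left portion of $J_{-k,v}$ by~\eqref{e:synchroExplicitLargeAlps}, which forces the cylinder containing $\ell_{\underline{S}}$ and the cylinder containing $r_{\overline{S}}$ to split at points with a common $T_\alpha$-image. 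The transitional blocks — those in which both top heights or both bottom heights are realized — decompose into two sub-rectangles, each of which laminates via one of the two generic identities already verified; the required matching of $x$-partitions across upper and lower blocks is then secured by Lemma~\ref{l:vertexProgression}.
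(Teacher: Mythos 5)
Your plan follows the paper's proof quite closely, but the two places you flag as ``the chief obstacle'' are handled too loosely, and in one place a simpler observation is missed.

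For the $j=1$ case, you list the top of $\mathcal B_{i,2}$ as possibly $y_{\underline{S}+1}$, $y_{\underline{S}}$, or both, and then only treat the ``generic $y_{\underline{S}+1}$ sub-case,'' leaving the others dangling. In fact, on the left portion of $J_{-k,v}$ one has $\ell_{\underline{S}}(\alpha) < \mathfrak b_{\alpha}$, so $\ell_{\underline{S}}$ sits in a cylinder $\Delta_\alpha(c,1)$ to the left of $\mathfrak b_\alpha$, whereas every $\Delta_\alpha(i,2)$ lies to its right; hence $(c,d)\prec(i,2)$ always, and Lemma~\ref{l:topsOfBlocksLargeAlps} gives top height $y_{\underline{S}+1}$ for every $\mathcal B_{i,2}$. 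That observation removes the sub-cases at once. You also omit the check that the $x$-extents of $T_\alpha(\Delta_\alpha(i,1))$ and $T_\alpha(\Delta_\alpha(i,2))$ agree; this is automatic for full cylinders, but for $i=-k$ one needs that the images of the two non-full cylinders $\Delta_\alpha(-k,1)$ and $\Delta_\alpha(-k,2)$ coincide (both are $[\ell_1, r_0)$ since $\mathfrak b_\alpha = C^{-1}\cdot\ell_0$).

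For the $j=2$ case, the pairing you need is precisely $r_{\overline{S}}\in\Delta_\alpha(a,2)\iff \ell_{\underline{S}}\in\Delta_\alpha(a-1,1)$, and this does \emph{not} follow from the synchronization identity $T_\alpha(\ell_{\underline{S}})=T_\alpha(r_{\overline{S}})$ alone: that only says $A^cC\cdot\ell_{\underline{S}}=A^aC^2\cdot r_{\overline{S}}$ for the respective digits $(c,1)$ and $(a,2)$, without pinning down $a=c+1$. What is actually needed is the stronger algebraic relation $r_{\overline{S}}=CA^{-1}C\cdot\ell_{\underline{S}}$ (equivalently $\ell_{\underline{S}}=C^2AC^2\cdot r_{\overline{S}}$), which comes from $R_{-k,v}=CA^{-1}CL_{-k,v}$ and $\ell_{\underline{S}}=L_{-k,v}\cdot r_0$. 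With it, $A^{a-1}C\cdot\ell_{\underline{S}}=A^{a-1}C\cdot C^2AC^2\cdot r_{\overline{S}}=A^aC^2\cdot r_{\overline{S}}$, which gives the index shift. The paper invokes this via the cited Lemma~6.2 of \cite{CaltaKraaikampSchmidt}; your argument needs to cite or reproduce that fact rather than appealing to synchronization, which is a consequence of it rather than a substitute for it.
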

 
\begin{proof} We first prove the result in the case of $j=1$.   By Lemma~\ref{l:bottomsOfBlocksLargeAlps},  $\mathcal B_{i,1}$ has lower boundary height $y_{-\overline{S}-1}$.  Similarly, since $\ell_{\underline{S}}(\alpha) < \mathfrak b_{\alpha}$,  Lemma~\ref{l:topsOfBlocksLargeAlps} gives that for any $i$, 
$\mathcal B_{i,2}$ has upper boundary height $y_{\underline{S}+1}$.  From Lemma~\ref{c:someLamRels}, $y_{-\overline{S}-1} = R CR^{-1}\cdot  y_{\underline{S}+1}$.     All cylinders are full with the possible exception of those of index $(-k,1), (-k,2),  (1,2)$, and the $T_{\alpha}$-images of the cylinders of index $(-k,1), (-k,2)$ agree.  Therefore, the $T_{\alpha}$-images of the respective $\Delta_{\alpha}(i,j)$ and $\Delta_{\alpha}(i',j')$ agree.
Lemma~\ref{l:lamEqsSameAexpon}  thus gives the result in this case.   

We now treat the case of $j=2$.    The proof of (\cite{CaltaKraaikampSchmidt}, Lemma~6.2)
 shows that $r_{\overline{S}} \in \Delta_{\alpha}(a,2)$ (with  $a\ge 2$ because $\alpha \in (\eta_{-k,v}, \delta_{-k,v})\,$) if and only if 
$\ell_{\underline{S}} \in \Delta_{\alpha}(a-1,1)$.   Thus, if $j=2$ and $(a,2) \succ (i,j)$ then also $(c,d) \succ (i-1,1)$ (with the notation of Lemma~\ref{l:topsOfBlocksLargeAlps}); in this case,  the top of $\mathcal B_{i-1,1}$ is $y_{\underline{S}}$ and the bottom of  $\mathcal B_{i,2}$ is $y_{\beta(\overline{S})}$.   Both $\Delta_{\alpha}(i-1,1)$ and $\Delta_{\alpha}(i, 2)$ are full cylinders, thus  as Proposition ~\ref{p:RelationBottomAndTop} shows that the hypothesis for Lemma~\ref{l:lamEqsDifferingAandCexpons}  is fulfilled, the result holds in this subcase. 

When  $j=2$ and $(a,2) \prec (i,j)$ we have also $(c,d) \prec (i-1,1)$;  Lemmas~\ref{l:topsOfBlocksLargeAlps}, \ref{l:bottomsOfBlocksLargeAlps} and Corollary~\ref{c:someLamRels}  show that  the hypothesis for Lemma~\ref{l:lamEqsDifferingAandCexpons}  is fulfilled, and the result also holds in this subcase. 

 Finally, in the case where $\mathcal B_{i,2}$ has two heights at its bottom, we find that $\mathcal B_{i-1,1}$ has two heights on its top, with the pairs of heights fulfilling the hypothesis of Lemma~\ref{l:lamEqsDifferingAandCexpons}.   Because of synchronization,  the images of the pairs of boundary pieces indeed match perfectly. 
\end{proof}

\begin{Lem}\label{l:laminLeftmostBlock}    
The  $\mathcal T_{\alpha}$-image of the block $\mathcal B_{-k,2}$ laminates above a portion of $\mathcal T_{\alpha}(\mathcal B_{-k-1,1})$.
\end{Lem}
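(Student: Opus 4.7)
The plan is to mirror the structure of Lemma~\ref{l:laminationFromLeftBlocks} in the small-$\alpha$ setting, adapting the argument of Lemma~\ref{l:laminationInEasyCaseLargeAlps} for the case $j=2$. Three checks are needed: that $\mathcal B_{-k,2}$ has a single bottom height, that the rightmost fiber of $\mathcal B_{-k-1,1}$ has the matching single top height, and that these two horizontal segments map under $\mathcal T_\alpha$ to abutting segments at a common $y$-value.

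First I will identify the bottom of $\mathcal B_{-k,2}$. Every $r_j$ with $j<\overline S$ lies in $\Delta_\alpha(1,1)\cup\Delta_\alpha(1,2)$ because $\overline b(-k,v)$ uses only the letters $(1,1)$ and $(1,2)$, and by Lemma~\ref{l:oneLamRel} $r_{\overline S}$ lies in $\Delta_\alpha(a,2)$ with $a\ge 2$. Hence no $r$-landmark with $j\le\overline S$ lies in $\Delta_\alpha(-k,2)$. Since $(-k,2)\prec(a,2)$ in the ordering of digits with second entry $2$, Lemma~\ref{l:bottomsOfBlocksLargeAlps} yields the single bottom height $y_{\beta(\overline S)}$ for $\mathcal B_{-k,2}$.

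Next I analyze the top of $\mathcal B_{-k-1,1}$ and claim that its rightmost piece has height $y_{\underline S}$. Since $\underline d(-k,v)$ uses only the letters $-k$ and $-k-1$, every $\ell_j$ with $j<\underline S$ has first simplified digit $-k$ or $-k-1$, while the reasoning used in the proof of Lemma~\ref{l:laminationInEasyCaseLargeAlps} places $\ell_{\underline S}$ in $\Delta_\alpha(a-1,1)$ with $a-1\ge 1$. Consequently no $\ell$-landmark lies in any $\Delta_\alpha(-k-2,1),\Delta_\alpha(-k-3,1),\ldots$, and by Lemma~\ref{l:largestLvalue} the point $\ell_\iota$ is the maximum of $\ell_0,\ldots,\ell_{\underline S-1}$. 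A case split now gives the claim: if $\ell_\iota\in\Delta_\alpha(-k,1)$ then $\Delta_\alpha(-k-1,1)$ contains no $\ell$-landmark, so the entire cylinder lies inside the sort interval $K_{\underline S}=[\ell_\iota,\ell_{\underline S})$ and the top of $\mathcal B_{-k-1,1}$ is constant at $y_{\underline S}$; if instead $\ell_\iota\in\Delta_\alpha(-k-1,1)$ then $\ell_\iota$ is the rightmost $\ell$-landmark of that cylinder, immediately to the right of which lies $K_{\tau(\iota)}=K_{\underline S}$, again at height $y_{\underline S}$.

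Finally, Lemma~\ref{l:lamEqsDifferingAandCexpons}, applied with $(k,l)=(-k-1,1)$ and $(k+1,l+1)=(-k,2)$, reduces the lamination criterion to the identity $y_{\underline S}=R\,C^{-1}AC^2\,R^{-1}\cdot y_{\beta(\overline S)}$. Since $C^3=\mathrm{Id}$ projectively, $C^{-1}=C^2$, and this identity is precisely Proposition~\ref{p:RelationBottomAndTop}. The overlap of $x$-images is immediate from the shared endpoint behavior of $A^{-k-1}C$ on $\Delta_\alpha(-k-1,1)$ and $A^{-k}C^2$ on $\Delta_\alpha(-k,2)$, and the \emph{portion} qualifier in the statement reflects that the $x$-image of $\Delta_\alpha(-k,2)$ need not fill that of $\Delta_\alpha(-k-1,1)$. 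The main obstacle is the middle paragraph: verifying in each of the two admissible locations for $\ell_\iota$ that the rightmost fiber of $\mathcal B_{-k-1,1}$ falls inside the sort interval $K_{\underline S}$, so that it indeed carries the single height $y_{\underline S}$ needed to invoke Proposition~\ref{p:RelationBottomAndTop}.
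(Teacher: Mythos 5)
Your proof is correct and follows the same route the paper takes: the paper's own proof is essentially a one-liner ("the cylinder $\Delta_{\alpha}(-k,2)$ is right full, with image having left endpoint $\ell_1(\alpha)$; the cylinder $\Delta_{\alpha}(-k-1,1)$ is full; otherwise, the lamination is as above"), deferring to the machinery of Lemma~\ref{l:laminationInEasyCaseLargeAlps}, and your argument explicitly unpacks exactly that machinery — Lemma~\ref{l:bottomsOfBlocksLargeAlps} for the bottom height $y_{\beta(\overline S)}$, Lemma~\ref{l:largestLvalue} and the case analysis for the rightmost top height $y_{\underline S}$, Lemma~\ref{l:lamEqsDifferingAandCexpons} plus Proposition~\ref{p:RelationBottomAndTop} (via $C^{-1}=C^2$) for the height match, and the right-fullness of $\Delta_\alpha(-k,2)$ versus the fullness of $\Delta_\alpha(-k-1,1)$ for the "portion" qualifier. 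The only thing left slightly implicit in both your write-up and the paper's is the large-$\alpha$ analog of Lemma~\ref{l:ellOneIsBigEnough} ensuring that $\ell_{\iota+1}\le\ell_1$, so the $x$-range of the bottom of $\mathcal T_\alpha(\mathcal B_{-k,2})$ really sits inside the $x$-range of the matching top piece of $\mathcal T_\alpha(\mathcal B_{-k-1,1})$; the paper absorbs this into the blanket statement that Subsection~\ref{ss:upperBoundaryInImage} carries over to the large-$\alpha$ setting.
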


\begin{proof}    The cylinder $\Delta_{\alpha}(-k,2)$ is right full, with image having left endpoint $\ell_1(\alpha)$.  The cylinder $\Delta_{\alpha}(-k-1,1)$ is full.   Otherwise, the lamination is as above. 
\end{proof}

\begin{Lem}\label{l:laminRightmostBlock}    
The  $\mathcal T_{\alpha}$-image of the block $\mathcal B_{1,1}$ laminates above a portion of $\mathcal T_{\alpha}(\mathcal B_{1,2})$.
\end{Lem}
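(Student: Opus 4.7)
My plan is to mirror the proof of Lemma~\ref{l:laminationFromRightBlocks} (the small-$\alpha$ analog) using the large-$\alpha$ machinery just developed, the role of Lemma~\ref{l:rOneIsBigEnough} now being played by Corollary~\ref{c:anotherBottomHeight}. The two essential structural facts are that $\Delta_\alpha(1,1)$ is a full cylinder while $\Delta_\alpha(1,2)$, containing $r_0(\alpha)$ as its right endpoint, is non-full. Consequently $\mathcal T_\alpha$ sends $\Delta_\alpha(1,1)$ bijectively onto all of $\mathbb I_\alpha$, whereas $\mathcal T_\alpha(\Delta_\alpha(1,2)) = [\ell_0(\alpha), r_1(\alpha)]$ is a proper sub-interval; this is what forces "a portion of" into the statement.

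First I would identify the boundary heights of the main pieces of each block. The top boundary of $\mathcal B_{1,2}$ has uniform height $y_{\underline S+1}$: since $\underline d(-k,v)$ only uses digits $-k, -k-1$, no orbit element of $\ell_0(\alpha)$ lies in $\Delta_\alpha(1,2)$, so Lemma~\ref{l:topsOfBlocksLargeAlps} applies with $(c,d)\prec (1,2)$. Similarly, the bottom boundary of the relevant (leftmost) sub-piece of $\mathcal B_{1,1}$ is at $y_{-\overline S-1}$. Since $(1,1)$ and $(1,2)$ share the same $A$-exponent, Lemma~\ref{l:lamEqsSameAexpon} tells us lamination of the $\mathcal T_\alpha$-image of $\mathcal B_{1,1}$ above that of $\mathcal B_{1,2}$ holds precisely when $y_{-\overline S-1} = RCR^{-1} \cdot y_{\underline S+1}$. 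Using $C^3 = I$ projectively together with Corollary~\ref{c:someLamRels}(i), which states $RC^2R^{-1} \cdot y_{-\overline S-1} = y_{\underline S+1}$, the required identity follows immediately.

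Next I would handle the cut-off at $x = r_1(\alpha)$, which is where $\mathcal T_\alpha(\mathcal B_{1,2})$ terminates on the right. Corollary~\ref{c:anotherBottomHeight}, i.e.\ the identity $1 + \beta(1) = \beta(1 + j_{-\overline S - 1})$, says exactly that the $T_\alpha$-image of the smallest orbit point $r_{\overline S} = r_{j_{-\overline S-1}}$ sits immediately to the right of $r_1$ in the real ordering. Thus the right edge of $\mathcal T_\alpha(\mathcal B_{1,2})$ and the interior vertex of $\mathcal T_\alpha(\mathcal B_{1,1})$ coming from $r_{\overline S}$ coincide, so the two images meet cleanly, with no gap or overlap, exactly at the interface.

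The main obstacle, and the step requiring most care, is the bookkeeping verification that each of the finer sub-pieces of $\mathcal T_\alpha(\mathcal B_{1,1})$ and $\mathcal T_\alpha(\mathcal B_{1,2})$ --- those arising from the various $r_{j_b}$ in each cylinder, together with the $\hat r$-modification of bottom heights built into Definition~\ref{d:bottomYvaluesLargeAlps} --- laminates edge-to-edge rather than merely at the main boundary. This is orchestrated by Lemma~\ref{l:vertexProgression}, which tracks each bottom vertex through $\mathcal T_\alpha$, combined with the monotonicity of Lemma~\ref{l:htsIncrease}; given these, the remaining verifications reduce to the two-case analysis of Lemma~\ref{l:whyHatr}, already carried out in Lemma~\ref{l:vertexProgression}'s proof, so that the full lamination assertion follows exactly as in the proof of Lemma~\ref{l:laminationInEasyCaseLargeAlps}.
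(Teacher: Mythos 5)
Your approach is essentially the paper's: the paper's proof is a one-liner noting that $\Delta_\alpha(1,1)$ is full, $\Delta_\alpha(1,2)$ is left-full with image $[\ell_0,r_1]$, and that ``otherwise the lamination is as above,'' i.e.\ as in Lemma~\ref{l:laminationInEasyCaseLargeAlps}, which is precisely where you get the heights $y_{-\overline S-1}$ and $y_{\underline S+1}$ and the identity $y_{-\overline S-1}=RCR^{-1}\cdot y_{\underline S+1}$ from Corollary~\ref{c:someLamRels}(i) and $C^3=\mathrm{Id}$.

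One small slip worth fixing: you write ``the smallest orbit point $r_{\overline S}=r_{j_{-\overline S-1}}$.'' The identity $j_{-\overline S-1}=\overline S$ holds in the small-$\alpha$ setting (Lemma~\ref{l:lastRValueIsLeast}), but \emph{not} for large $\alpha$: there, Lemma~\ref{l:oneLamRel} puts $r_{\overline S}$ between $\Delta_\alpha(1,1)$ and $\Delta_\alpha(1,2)$, whereas the minimum orbit element $r_{j_{-\overline S-1}}$ lies strictly inside $\Delta_\alpha(1,1)$ and is identified differently (Lemma~\ref{l:smallestRvalue}); cf.\ Figure~\ref{f:meaningOfrHat}, which marks them as distinct points. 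Your use of Corollary~\ref{c:anotherBottomHeight} is nonetheless correct as stated there, since the corollary is phrased in terms of $j_{-\overline S-1}$; the argument survives once you drop the spurious identification with $r_{\overline S}$.
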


\begin{proof}    The cylinder $\Delta_{\alpha}(1,2)$ is left full, with image having right endpoint $r_1(\alpha)$.  The cylinder $\Delta_{\alpha}(1,1)$ is full.   Otherwise, the lamination is as above.
\end{proof}

\subsection{Upper boundary  is in image}\label{ss:upperBoundaryInImageLargeAlps}  

The statements and proofs in the case of $\alpha<\gamma_{n}$, see Subsection~\ref{ss:upperBoundaryInImage},  go through here with only minor adjustments.

\subsection{Lower boundary  is in image} \label{ss:lowerBoundaryInImageLargeAlps} 

Using the definition of the $y_b$ and  Corollary~\ref{c:someLamRels}, the arguments for the case of $\alpha<\gamma_{n}$, see Subsection~\ref{ss:lowerBoundaryInImage},  succeed here.   
 
\subsection{Bijectivity and ergodicity at left endpoints}   Whereas the left endpoint of a synchronization interval for small $\alpha$ is determined by the orbit of $r_{\overline{S}}$  being at the left end of $\mathbb I_{\alpha}$ (and thus there   is one less rectangle in $\Omega^{-}$ than for the values in the interior of its synchronization interval),  that of $J_{-k,v}$ is announced by  $r_{\overline{S}} = \mathfrak b$.    Thus, see \eqref{e:frakBis}, we have  $C\cdot r_{\overline{S}}(\eta_{-k,v}) = \ell_0(\eta_{-k,v})$.  Hence, $r_{\overline{S}+1}(\eta_{-k,v}) = A^{-k} C^2 \cdot r_{\overline{S}}(\eta_{-k,v}) = \ell_1(\eta_{-k,v})$.   That is, here the synchronization is of the same form as for the rest of $(\eta_{-k,v}, \delta_{k,v})$.  Thus,  
there is no change necessary to the definitions of $\Omega^{\pm}$.   

\begin{Prop}\label{p:OmegaForEtaKneg}   Fix $n\ge 3,  k \in \mathbb N, v \in \mathcal V$  (with $v \in \check{\mathcal V}$ if $k=1$) and $\alpha = \eta_{-k,v}$.    Let $\Omega^{+} $ be as in Definition~\ref{d:topYvaluesLargeAlp} and $\Omega^{-}$ be as in Definition~\ref{d:bottomYvaluesLargeAlps}.    

Then 
$\mathcal T_{n,\alpha}$ is bijective on $\Omega_{n, \alpha}  := \Omega^{+} \cup \Omega^{-}$, up to 
$\mu$-measure zero.   
\end{Prop}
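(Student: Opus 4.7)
\medskip

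\noindent\textbf{Proof proposal for Proposition~\ref{p:OmegaForEtaKneg}.}
The plan is to observe that at the boundary value $\alpha=\eta_{-k,v}$ the structural data describing $\Omega^\pm$ suffer no discontinuity, and then to reuse, essentially verbatim, the lamination and boundary-covering arguments that established bijectivity in the interior of $J_{-k,v}$. First, I would note that by the very definition of $\eta_{-k,v}$ one has $R_{-k,v}\cdot r_0(\eta_{-k,v})=\mathfrak b_{\eta_{-k,v}}$, i.e.\ $r_{\overline S}(\eta_{-k,v})=\mathfrak b_{\eta_{-k,v}}$. Applying $C$ we get $C\cdot r_{\overline S}=\ell_0$, so $r_{\overline S+1}=A^{-k}C^2\cdot r_{\overline S}=A^{-k}C\cdot\ell_0=\ell_1$, which is precisely the synchronization pattern \eqref{e:synchroExplicitLargeAlps} for $\alpha$ in the left portion of $J_{-k,v}$. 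Moreover $\ell_{\underline S}(\eta_{-k,v})<\mathfrak b_{\eta_{-k,v}}$ strictly (equality characterizes $\delta_{-k,v}$), so the top-block description of Lemma~\ref{l:topsOfBlocksLargeAlps} is unaffected, and Lemma~\ref{l:oneLamRel}(ii) still places $r_{\overline S}$ at the left boundary of $\Delta_\alpha(1,2)$.

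Next, I would verify that every relation established in Section~\ref{s:bigLefties} between the heights $y_a$ and $y_b$ remains valid at $\alpha=\eta_{-k,v}$: Corollary~\ref{c:someLamRels}, Lemmas~\ref{l:whyHatr}, \ref{l:smallestRvalue}, \ref{l:vertexProgression}, \ref{l:relForSharedTopOfTwoLeftmostCylinders}, and Proposition~\ref{p:RelationBottomAndTop} are all algebraic identities in $G_n$ applied to the concrete orbit points $\ell_i(\eta_{-k,v})$ and $r_j(\eta_{-k,v})$. Since Definitions~\ref{d:topYvaluesLargeAlp} and \ref{d:bottomYvaluesLargeAlps} define the heights $y_{\tau(i)}$ and $y_b$ directly in terms of these very orbit points at $\eta_{-k,v}$, each identity holds by exactly the same calculation. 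In particular $y_{\beta(\overline S)}=-\hat r_{\overline S}=-\mathfrak b_{\eta_{-k,v}}$ by either clause of the definition of $\hat r$, which is consistent with the fact that $r_{\overline S}$ lies on the common boundary of $\Delta_\alpha(1,1)$ and $\Delta_\alpha(1,2)$ (a $\mu$-null ambiguity between the blocks $\mathcal B_{1,1}$ and $\mathcal B_{1,2}$).

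With those relations in hand, I would copy the block-lamination arguments of Subsection~\ref{ss:laminateLargeAlps}: Lemmas~\ref{l:laminationInEasyCaseLargeAlps}, \ref{l:laminLeftmostBlock}, \ref{l:laminRightmostBlock} apply verbatim, as they only invoke the height identities just verified together with $\ell_{\underline S}<\mathfrak b$. The arguments of Subsections~\ref{ss:upperBoundaryInImageLargeAlps} and \ref{ss:lowerBoundaryInImageLargeAlps} similarly transfer, so the $\mathcal T_\alpha$-images of the blocks $\mathcal B_{i,j}$ with $i\le -k$ cover $\Omega^+$ and those with $i\ge 1$ cover $\Omega^-$, each up to measure zero. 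Finiteness of $\mu(\Omega_{\eta_{-k,v}})$ follows from the finite-mass arguments analogous to Lemmas~\ref{l:finiteMassUpperSmallAlps} and \ref{l:finiteMassLowerSmallAlps}, using the explicit group-theoretic formulas for the boundary $y$-coordinates.

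The only potential obstacle is bookkeeping at the degenerate point $r_{\overline S}(\eta_{-k,v})=\mathfrak b$: the two blocks $\mathcal B_{1,2}$ and $\mathcal B_{-k,1}$ (via the image of $\mathfrak b$) meet, which might seem to break the distinction between left- and right-portion synchronization. The resolution is that one can equally well treat $\eta_{-k,v}$ as the limit from the right of interior-parameter domains; since the $\ell_i$, $r_j$ and hence the rectangle vertices of $\Omega^\pm$ depend continuously on $\alpha$ throughout $[\eta_{-k,v},\delta_{-k,v})$, bijectivity established on the interior passes to $\alpha=\eta_{-k,v}$ up to $\mu$-null overlap, giving the result.
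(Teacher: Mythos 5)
Your proof follows essentially the same approach as the paper: observe that at $\alpha=\eta_{-k,v}$ the defining relation is $r_{\overline S}=\mathfrak b_{\eta_{-k,v}}$, deduce $r_{\overline S+1}=A^{-k}C^2\cdot r_{\overline S}=\ell_1$, conclude the synchronization has the same form as for interior $\alpha$ in the left portion, and therefore the lamination and boundary-covering arguments transfer unchanged — which is precisely the paper's (terser) reasoning. One small slip in your final paragraph: $\mathfrak b$ is $\rho_{1,1}=\lambda_{-k,2}$, so the two blocks meeting at the degenerate point are $\mathcal B_{1,1}$ and $\mathcal B_{-k,2}$, not $\mathcal B_{1,2}$ and $\mathcal B_{-k,1}$; this is cosmetic and does not affect the validity of the measure-zero resolution or the limit argument you give.
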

 
\begin{Eg}\label{e:kIsMin1LeftEndpt} Recall the setting of Subsection~\ref{ss:ExamplesKis1FirstHalfOfSyncInt}:   $n=3$, $k=1$, $v=1$.  Since $g^2/2 = \gamma_{3} =  \eta_{-1,1}$, we find 
\[\Omega_{3,g^2/2} = (\, [-G,  -g^2]\times  [-g^2, g^2]\,) \cup (\, [-g^2, g^2]\times [-g^2, G]\,).\]
\end{Eg}

\begin{Prop}\label{p:ergodicLargeEta}   With $\alpha = \eta_{-k,v}$ as in the previous proposition, 
The system $(\mathcal T_{\alpha}, \Omega_{\alpha}, \mathscr B'_{\alpha}, \mu_{\alpha})$ is ergodic.   

Furthermore,    this two dimensional system is the natural extension of  $(T_{\alpha},\mathbb I_{\alpha},  \mathscr B_{\alpha}, \nu_{\alpha})$, where $\nu_{\alpha}$ is the marginal measure of $\mu_{\alpha}$ and $\mathscr B_{\alpha}$  the Borel sigma algebra on $\mathbb I_{\alpha}$.   In particular, the one dimensional system is ergodic. 
\end{Prop}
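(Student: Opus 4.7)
The plan is to apply \cite{CKStoolsOfTheTrade}, Theorem~2.3, exactly as was done in the proof of Proposition~\ref{p:NaturallyErgodicEndPts} for the small-$\alpha$ endpoints of synchronization intervals. The three hypotheses to verify are: (a) $\mathcal T_{\alpha}$ is bijective on $\Omega_{\alpha}$ up to $\mu$-measure zero; (b) $\Omega_{\alpha}$ has finite $\mu$-mass and vertical fibers of uniformly bounded Lebesgue measure; and (c) the interval map $T_{\alpha}$ satisfies the bounded non-full cylinders condition of Remark~\ref{rmk:bddNonfullCylinders}. Hypothesis (a) is precisely Proposition~\ref{p:OmegaForEtaKneg}, just proven. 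For (b), the explicit description of $\Omega^{\pm}$ in Definitions~\ref{d:topYvaluesLargeAlp} and \ref{d:bottomYvaluesLargeAlps} makes $\Omega_{\eta_{-k,v}}$ a finite union of rectangles with heights $\{y_1,\ldots,y_{\underline S +1}\}$ above and $\{y_{-1},\ldots,y_{-\overline S-1}\}$ below, so the fibers are manifestly bounded.

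To obtain $\mu(\Omega_\alpha)<\infty$, I would mimic Lemmas~\ref{l:finiteMassUpperSmallAlps} and \ref{l:finiteMassLowerSmallAlps}: show that every upper fiber $\{\ell_{i_a}(\alpha)\}\times[0,y_a]$ with $\ell_{i_a}<0$ lies strictly below the hyperbola $y=-1/x$, and every lower fiber $\{r_{j_b}(\alpha)\}\times[y_b,0]$ with $r_{j_b}>0$ lies strictly above it. The base case $y_1 = RA^{-k-1}CR^{-1}\cdot(-\ell_{\underline S}(\eta_{-k,v}))$ (via Lemma~\ref{l:relForSharedTopOfTwoLeftmostCylinders}) gives $y_1<-1/\ell_0$ by a direct quadratic-root check, and then Lemma~\ref{l:topValuesFirstRelations} together with the fact that each $\mathcal T_M$ preserves both $\mu$ and the locus $\{xy=-1\}$ propagates this to every $y_a$. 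Symmetrically, $y_{-1}=RACR^{-1}\cdot(-\hat r_{j_{-e}})$ from Corollary~\ref{c:someLamRels} gives the base bottom estimate, and Lemma~\ref{l:whyHatr} and Proposition~\ref{p:RelationBottomAndTop} propagate it. Since $d\mu=dx\,dy/(1+xy)^2$ is integrable on any compact region bounded away from $\{xy=-1\}$, finiteness follows.

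For (c), note that at $\alpha = \eta_{-k,v}$ both $\underline{b}_{[1,\infty)}^{\alpha}$ and $\overline{b}_{[1,\infty)}^{\alpha}$ are eventually periodic with digits drawn from the finite alphabet $\{(-k,1),(-k-1,1),(1,1),(1,2)\}$ (with the initial $(1,2)^{n-2}$ prefix in the $r_0$-orbit). By Remark~\ref{rmk:bddNonfullCylinders}, the only cylinders that can fail to be full are those containing $\ell_0(\alpha)$, $r_0(\alpha)$, or $\mathfrak b_\alpha$; the remaining endpoints of these cylinders lie on one of the two endpoint orbits, whose digits are confined to the finite set above. Thus any full cylinder whose digit lies outside this finite range—of which there are infinitely many—witnesses the required avoidance condition. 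With (a), (b), (c) in hand, \cite{CKStoolsOfTheTrade}, Theorem~2.3 yields simultaneously that the planar system is the natural extension of the interval system and that both are ergodic, which is the content of the proposition.

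The main obstacle I anticipate is the finite-mass verification in (b): unlike the small-$\alpha$ setting, here the bottom heights are the $-\hat r_j$ of Definition~\ref{d:bottomYvaluesLargeAlps}, which involve an extra action of $C$ on some of the $r_j(\eta_{-k,v})$. One must check that this $C$-adjustment does not push any fiber above $y=-1/x$; the most delicate case is the rightmost rectangle, where one uses Corollary~\ref{c:someLamRels}(iv) together with $\hat r_{j_{-e}}=C\cdot r_{j_{-e}}(\eta_{-k,v})$ and the fact that $\mathcal T_M$-invariance of $\{xy=-1\}$ holds for every $M\in\mathrm{SL}_2(\mathbb R)$, including $C$, to reduce the claim to the already-established base estimate.
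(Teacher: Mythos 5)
Your proposal follows the same route as the paper: verify the three hypotheses of \cite{CKStoolsOfTheTrade}, Theorem~2.3 (bijectivity, finite $\mu$-mass with bounded vertical fibers, bounded non-full range) and then cite that theorem. The paper's own proof is a terse sketch that leans on (\cite{CaltaKraaikampSchmidt}, Lemma~6.3) for the periodicity of the $\ell_0(\eta_{-k,v})$-expansion, from which the finitely-many-digits conclusion for $r_0$ and $\mathfrak b_{\alpha}$ follows and hence the bounded non-full range condition via Remark~\ref{rmk:bddNonfullCylinders}; since $\Omega_{\eta_{-k,v}}$ is a finite union of bounded rectangles, once it is known to avoid the singular locus $\{1+xy=0\}$ both finite mass and bounded fibers are near-automatic, so your more elaborate mimic of Lemmas~\ref{l:finiteMassUpperSmallAlps}--\ref{l:finiteMassLowerSmallAlps} is overkill in this endpoint case, though sound in strategy. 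One small slip: by Definition~\ref{d:topYvaluesLargeAlp} one has $y_1 = -\ell_{\underline{S}}(\eta_{-k,v})$ directly, and Lemma~\ref{l:relForSharedTopOfTwoLeftmostCylinders} rewrites this as $RA^{-k-1}CR^{-1}\cdot(-\ell_1(\eta_{-k,v}))$, not $RA^{-k-1}CR^{-1}\cdot(-\ell_{\underline{S}}(\eta_{-k,v}))$ as you wrote; either form serves as a base case for your hyperbola estimate, so this does not affect the validity of the argument.
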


\begin{proof} [Sketch]    From (\cite{CaltaKraaikampSchmidt},  Lemma~6.3),  
 $\ell_0(\alpha)$ has a periodic $\alpha$-expansion.   Thus, $r_0(\alpha)$ and $\mathfrak b_{\alpha}$ are also all of finitely many $\alpha$-digits. In view of Remark~\ref{rmk:bddNonfullCylinders}, it follows that 
 the non-full cylinders of $T_{\alpha}$ are bounded in range.   Due to this and the finiteness of $\mu(\Omega_{\alpha})$ and of its vertical fibers, the hypotheses of (\cite{CKStoolsOfTheTrade}, Theorem~2.3) are met, thus giving the result here.
\end{proof} 

\medskip  
\section{Bijectivity for  large $\alpha$ in right portion of synchronization interval}\label{s:largeAlpsRightSide}
This section  treats the right portion of synchronization intervals for large $\alpha$ in a completely analogous manner to that of \S~\ref{s:bigLefties} and  \S~\ref{leftBlocks} for the left portions. 

\subsection{Bijectivity on the interior} 
Suppose now that $\alpha \in (\delta_{-k,v}, \zeta_{-k,v})$.
We define $\Omega_{\alpha}^{-}$ with ``one more" rectangle than for the left portion, and
sketch the proof that bijectivity of $\mathcal T_{\alpha}$ on $\Omega_{\alpha}$ then holds here as well.

\begin{figure}[h]
\scalebox{0.9}{
\begin{tikzpicture}[x=6cm,y=6cm] 
\draw  (-.26, -0.83)--(-0.18, -0.83)-- (-0.18, -0.74) -- (0.65, -0.74)-- (0.65, -0.44)-- (1.46, -0.44)--(1.46, -0.39)--(1.74, -0.39) --(1.74, 0.35)--(0.85, 0.35)--(0.85, 0.2)--(-.26, 0.2)--cycle; 
 \draw  (0.0, -0.74)--(0.0, .2); 
 \draw  (1, -0.44)--(1, .35); 
 \fill [opacity= 0.15, gray]   (-.26, -0.83)--(-0.18, -0.83)-- (-0.18, -0.74)--(-.26, -0.74)-- cycle; 
\draw[thin,dashed] (-0.21, -0.83)--(-0.21, 0.2);    
\draw[thin,dashed] (-0.15, -0.74)--(-0.15, 0.2);  
\draw[thin,dashed] (0.19, -0.74)--(0.19, 0.2); 
\draw[thin,dashed] (0.31, -0.74)--(0.31, 0.2);      
\draw[thin,dashed] ( 0.78, -0.44)--( 0.78, 0.2);  
\draw[thin,dashed] (0.83, -0.44)--(0.83, 0.2);  
\draw[thin,dashed] (0.87, -0.44)--(0.87, 0.35);  
\draw[thin,dashed] (1.2, -0.44)--(1.2, 0.35);  
\draw[thin,dashed] (1.4, -0.44)--(1.4, 0.35);  
\draw[thin,dashed] (1.08, -0.44)--(1.08, 0.35);  
\node at (-.4, -0.2)[pin={[pin edge=->, pin distance=12pt]0:{}}] {$\mathcal B_{-2,1}$};
\node at (-.4, -0.4)[pin={[pin edge=->, pin distance=24pt]0:{}}] {$\mathcal B_{-3,1}$};     
\node at (0.55, -0.3) {\tiny{$(1,1)$}};  
\node at (0.25, -0.3) {\tiny{$(2,1)$}}; 
\node at (0.65, 0.05)[pin={[pin edge=->, pin distance=12pt]0:{}}] {$\mathcal B_{-2,2}$};    
\node at (0.65, -0.1)[pin={[pin edge=->, pin distance=20pt]0:{}}] {$\mathcal B_{-3,2}$};   
\node at (0.10, -0.3) {\tiny{$\cdots$}}; 
\node at (-0.06, -0.3) {\tiny{$\cdots$}};   
\node at (0.96, 0) {\tiny{$\cdots$}}; 
\node at (1.05, 0) {\tiny{$\cdots$}}; 
\node at (1.14, 0) {\tiny{$(3,2)$}};  
\node at (1.3, 0) {\tiny{$(2,2)$}};   
\node at (1.55, 0) {\tiny{$(1,2)$}};   
\node at (-.42, -0.87) {$(\ell_0, y_{-4})$}; 
\node at (-.42, 0.2) {$(\ell_0, y_1)$}; 
\node at (0.7,  0.35)  {$(\ell_1, y_2)$};  
\node at (0.7, -0.85)  {$(r_1, y_{-3})$}; 
\node at (1.5, -0.55)  {$(r_2, y_{-2})$}; 
\node at (-0.05, -0.87)  {$(r_3, y_{-4})$}; 
\node at (.78, -0.5)  {$\mathfrak b$};
\node at (1.9, -0.39)  {$(r_0, y_{-1})$};  
\node at (1.75,0.45)  {$(r_0, y_{2})$};   
\node at (0,0)  {$0$};  
\node at (1, -0.5)  {$1$};  
 \foreach \x/\y in {-.26/-0.83, -0.18/ -0.83, 0.65/-0.74, 1.46/-0.44,
1.74/-0.39,  1.74/0.35, 0.85/0.35, -.26/0.2%
} { \node at (\x,\y) {$\bullet$}; } 
\end{tikzpicture}  
}
\caption{The domain $\Omega_{3, 0.87}$. Blocks $\mathcal B_{i,j}$ also denoted by $(i,j)$. Here $L_{-k,v} = A^{-2}CA^{-1}$ and $R_{-k,v} = AC \,AC^{2}$, and $\alpha$ is an interior point of $J_{-2,1}$ lying to the right of $\delta_{-2,1}$. Compare with $\Omega_{3, 0.86}$, of Figure~\ref{f:omegaLargeAlpLessThanDelta_{-k,v}}.  Since both $0.86, 0.87$ give $\alpha$ values in $J_{-2,1}$,  the heights $y_1, y_2, y_{-1}, y_{-2}, y_{-3}$ are the same.  Highlighted in gray: rectangle of lower height not seen on left portion of $J_{-k,v}$; here, $y_{-4} = y_{-\overline{S}-2} = RAC^2R^{-1}\cdot y_{\beta(\overline{S})}$, as per Definition~\ref{d:bottomYvaluesPastDelta}.}
\label{f:omegaLargeAlpBiggerThanDelta_{-k,v}}%
\end{figure}
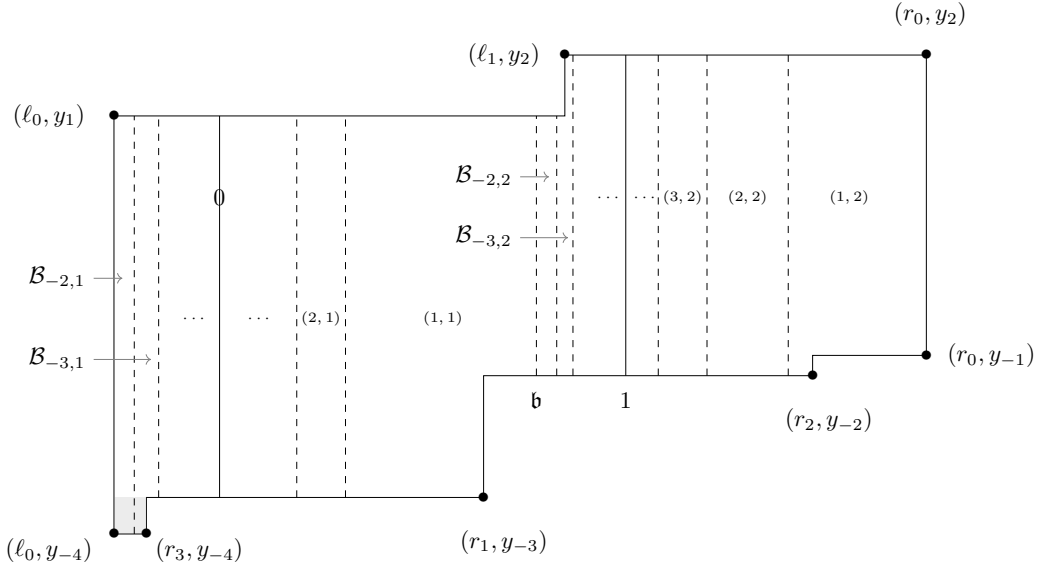

\bigskip
As a first step, we show that $r_{1+\overline{S}}$ lies to the left of the initial portion of the orbit of $r_0$.
\begin{Lem}\label{l:locatingFinalRpastDelta}   We have 
$r_{1+\overline{S}}(\alpha)< r_{j_{-\overline{S}-1}}(\alpha)$.
\end{Lem}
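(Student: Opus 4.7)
The plan is to prove the lemma in three steps, leveraging the non-fullness of the cylinder $\Delta_\alpha(1,2)$ and the order-preserving action of $AC^2$ on it.

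\textbf{Step 1.} I first show that $r_{\overline{S}}(\alpha)\in\Delta_\alpha(1,2)$ for all $\alpha\in(\delta_{-k,v},\zeta_{-k,v})$. The identity $R_{-k,v}=CA^{-1}CL_{-k,v}$ together with $r_0(\alpha)=A\cdot\ell_0(\alpha)$ gives
\[
r_{\overline{S}}(\alpha)=CA^{-1}C\cdot\ell_{\underline{S}}(\alpha).
\]
Let $\mathfrak{L}(\alpha):=CA^{-1}\cdot\ell_0(\alpha)$. Using $C^3=\mathrm{Id}$ projectively, one checks $AC^2\cdot\mathfrak{L}=\ell_0$, so $\mathfrak{L}$ is precisely the left endpoint of the non-full cylinder $\Delta_\alpha(1,2)$. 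At $\alpha=\delta_{-k,v}$ we have $\ell_{\underline{S}}=\mathfrak{b}=C^{-1}\ell_0$, and therefore $r_{\overline{S}}(\delta_{-k,v})=CA^{-1}\ell_0=\mathfrak{L}(\delta_{-k,v})$. For $\alpha$ in the interior of the right portion, the defining property of $\delta_{-k,v}$ gives $\ell_{\underline{S}}(\alpha)>\mathfrak{b}_\alpha$, and monotonicity of the Möbius action $CA^{-1}C$ together with $r_{\overline{S}}<r_0$ places $r_{\overline{S}}(\alpha)$ strictly inside $\Delta_\alpha(1,2)$.

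\textbf{Step 2.} Since $\Delta_\alpha(1,2)$ is non-full with right endpoint $r_0(\alpha)$ and $T_\alpha(r_0)=r_1$, the map $T_\alpha$ restricts to an order-preserving bijection $\Delta_\alpha(1,2)\to[\ell_0(\alpha),r_1(\alpha))$, since the derivative of $AC^2\cdot x=(2x-3)/(x-1)$ (for $n=3$; the general form is analogous) is $(x-1)^{-2}>0$. Hence $r_{1+\overline{S}}(\alpha)=AC^2\cdot r_{\overline{S}}(\alpha)\in[\ell_0,r_1)$, and the strict inequality $r_{\overline{S}}<r_0$ gives $r_{1+\overline{S}}<r_1$.

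\textbf{Step 3.} To upgrade to the desired $r_{1+\overline{S}}<r_{j_{-\overline{S}-1}}$, I apply Lemma~\ref{l:smallestRvalue}: the minimum of the initial orbit takes the explicit form $r_{j_{-\overline{S}-1}}=AC^2\cdot M\cdot r_0$ in each of its three cases, where $M$ is respectively $(AC^2)^{n-3-c-1}$, $(AC^2)^{n-3}U^{k-2}$, or $R_{-k,\Theta_{q-1}(s)}$. By the order-preserving action of $AC^2$ on $\Delta_\alpha(1,2)$, the inequality reduces to $R_{-k,v}\cdot r_0<M\cdot r_0$, i.e., $r_{\overline{S}}<M\cdot r_0$. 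In the short cases (length-one $v$), $M\cdot r_0=r_0$ and the inequality is immediate from $r_{\overline{S}}\in[\mathfrak{L},r_0)$. In the general case $v=\Theta_0^h\Theta_q(s)$, the predecessor $M\cdot r_0$ is an intermediate orbit point $r_{j^*}$ with $j^*<\overline{S}$, both lying in $\Delta_\alpha(1,2)$; the comparison then reduces via Lemma~\ref{l:bothOrders} to the dictionary $\pprec$-order, and Lemma~\ref{l:wordOrder} combined with the palindromic/self-dominant structure of $v\in\mathcal V$ yields the strict inequality.

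The main obstacle is Step~3: reconciling the $x$-coordinate comparison of orbit points with the dictionary-order comparison of their digit expansions, particularly handling the ancestor relation $\Theta_{q-1}(s)\prec v$ and verifying that the shift of $\overline{b}(-k,v)$ corresponding to $r_{j^*}$ is properly dominated by $\overline{b}(-k,v)$ itself. An elegant alternative is to observe that $r_{1+\overline{S}}(\delta_{-k,v})=\ell_0(\delta_{-k,v})$, establishing the inequality at the left endpoint by strict continuity, and then to propagate it across the right portion by monotonicity of $r_{\overline{S}}(\alpha)$ together with the fixed ordering between $r_{\overline{S}}$ and the other orbit points in $\Delta_\alpha(1,2)$ throughout $(\delta_{-k,v},\zeta_{-k,v})$.
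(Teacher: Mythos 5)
Your Steps~1 and~2 are correct and their algebra is sound: the identity $r_{\overline{S}} = CA^{-1}C\cdot\ell_{\underline{S}}$, the identification of $\mathfrak{L} = CA^{-1}\cdot\ell_0$ as the left endpoint of $\Delta_\alpha(1,2)$, and the conclusions $r_{\overline{S}}(\alpha)\in\Delta_\alpha(1,2)$ and $r_{1+\overline{S}}<r_1$ all go through. These are genuine preliminaries, and indeed the observation $r_{1+\overline{S}}(\delta_{-k,v})=\ell_0(\delta_{-k,v})$ that you record at the end is the one the paper makes in \S~\ref{ss:smackDab}. But $r_1$ can be much larger than $r_{j_{-\overline{S}-1}}$, so Step~2 does not give the statement, and the real work is Step~3.

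Step~3 has gaps. First, Lemma~\ref{l:smallestRvalue} is proved in the left-portion regime where $r_{\overline{S}}$ sits \emph{between} $\Delta_\alpha(1,1)$ and $\Delta_\alpha(1,2)$, which is what justifies the ordering $a\prec\star\prec b$ on the temporary alphabet of its proof. In the right portion $r_{\overline{S}}\in\Delta_\alpha(1,2)$, and the paper has to verify separately (via CKS Lemma~6.2 and the placement of $r_{\overline{S}}$ relative to those $x\in\Delta_\alpha(1,2)$ with $T_\alpha x\in\Delta_\alpha(1,2)$) that $\star\prec b$ still holds so that the lemma's proof transfers; you cite the lemma without addressing this. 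Second, the reduction from $AC^2\cdot r_{\overline{S}}<AC^2\cdot M\cdot r_0$ to $r_{\overline{S}}<M\cdot r_0$ silently uses that $M\cdot r_0\in\Delta_\alpha(1,2)$, where $AC^2$ is monotone; this is true (it is $(AC^2)^{-1}\cdot r_{j_{-\overline{S}-1}}$ and $r_{j_{-\overline{S}-1}}\in[\ell_0,r_1)$), but you give no argument, and the case $v=c$, $k=1$, $c=n-3$ degenerates to $r_{j_{-\overline{S}-1}}=r_0$ where no factor $M$ exists. Third, the claim ``in the short cases $M\cdot r_0=r_0$'' is generally false: for $v=c$, $k>1$ one has $M\cdot r_0=(AC^2)^{n-3}U^{k-2}\cdot r_0$, which is not $r_0$. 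Fourth, Lemma~\ref{l:bothOrders} and Lemma~\ref{l:wordOrder} concern the small-$\alpha$ digit expansions $\overline{d}(k,\cdot)$; the analogous order comparison for $\overline{b}(-k,\cdot)$ is needed here and is not supplied. Finally, your ``elegant alternative'' begs the question: the ``fixed ordering between $r_{\overline{S}}$ and the other orbit points in $\Delta_\alpha(1,2)$ throughout $(\delta_{-k,v},\zeta_{-k,v})$'' is exactly what must be proved and is not an a priori consequence of continuity.

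For comparison, the paper's route uses the synchronization together with CKS Lemma~6.2 to read off the cylinder $\Delta_\alpha(s,1)$ containing $r_{1+\overline{S}}$, splits into $s\ge 2$ (immediate, since $\Delta_\alpha(s,1)$ lies to the left of $\Delta_\alpha(1,1)\ni r_{j_{-\overline{S}-1}}$) versus the extreme $s=1$, and in the latter case uses precisely your Step~2 bound $r_{2+\overline{S}}<r_1$ to pin down the initial digits of $r_{1+\overline{S}}$ and then finishes by a direct expansion comparison with the form of $r_{j_{-\overline{S}-1}}$ from (the extended) Lemma~\ref{l:smallestRvalue}.
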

\begin{proof}  Here,   $\ell_{\underline{S}}(\alpha)  \ge \mathfrak b_{\alpha}$.  The proof of Lemma~\ref{l:oneLamRel} shows that here also  $r_{\overline{S}}(\alpha)\in \Delta_{\alpha}(1,2)$, and that now synchronization occurs with $\ell_{1+\underline{S}}(\alpha) = r_{2+\overline{S}}(\alpha)$.  Furthermore, (\cite{CaltaKraaikampSchmidt}, Lemma~6.2)
 shows that in this setting  we have $\ell_{\underline{S}}(\alpha) \in \Delta_{\alpha}(s,2)$ if and only if    $r_{2+\overline{S}}(\alpha) = A^sCAC^2\cdot r_{\overline{S}}(\alpha)$.  In particular,  $r_{\overline{S}}(\alpha)$ lies to the right of $\Delta_{\alpha}(1,1)$ and to the left of all values in 
$\Delta_{\alpha}(1,2)$ whose images are also in $\Delta_{\alpha}(1,2)$; 
thus, the proof of Lemma~\ref{l:smallestRvalue} succeeds also in this setting.     When $r_{2+\overline{S}}(\alpha) = A^sCAC^2\cdot r_{\overline{S}}(\alpha)$  with $(s,1) \preceq  (2,1)$,  we certainly have that $r_{1+\overline{S}}(\alpha) < r_{j_{-\overline{S}-1}}(\alpha)$.  We must consider the extreme case when $s=1$.   Since $r_{\overline{S}}(\alpha) = C A^{-1}C\cdot \ell_{\underline{S}}(\alpha)$ and $\ell_{\underline{S}}(\alpha)<r_0(\alpha)$, we have that 
$r_{1+\overline{S}}(\alpha) < AC^2 \cdot( C A^{-1}C\cdot r_0(\alpha)\,)$.   That is, $r_{1+\overline{S}}(\alpha) < C\cdot r_0(\alpha)$ and therefore, 
$AC\cdot r_{1+\overline{S}}(\alpha) < r_1(\alpha)$.  In terms of expansions, using the notation of the proof of Lemma~\ref{l:smallestRvalue},  $r_{1+\overline{S}}(\alpha) < (1,1),(1,2)^{n-3}, \sigma^{n-2}(\,\overline{b}(-k, v)\,), \star$ which one easily finds is less than   $(1,1),(1,2)^{n-3}, \sigma^{n-2}(\,\overline{b}(-k, u)\,), \star$.   That is,   $r_{1+\overline{S}}(\alpha)< r_{j_{-\overline{S}-1}}(\alpha)$.
\end{proof}

We must ``update" Definition ~\ref{d:bottomYvaluesLargeAlps}, confer Figure~\ref{f:omegaLargeAlpBiggerThanDelta_{-k,v}}.   
\begin{Def}\label{d:bottomYvaluesPastDelta} 
 Refine $L_{-\overline{S}-1} = [r_{1+\overline{S}}, r_{\overline{S}}]$, and let 
$L_{-\overline{S}-2}  = [\ell_0, r_{1+\overline{S}})$.  
Set 
 \[ y_{-\overline{S}-2} = RAC^2R^{-1}\cdot y_{\beta(\overline{S})}.\]
Let 
\[ \Omega^{-} = \bigcup_{b=-1}^{ -\overline{S}-2}\,   L_{b} \times [y_b, 0].\]
\end{Def}

The following is necessary to show that the leftmost blocks $\mathcal B_{i,1}$ continue to laminate above $\mathcal B_{i,2}$.   
\begin{Lem}\label{l:locatingFinalRpastDeltaOtherAlp}   We have 
$y_{-\overline{S}-2} = RCR^{-1}\cdot y_{\underline{S}}$.
\end{Lem}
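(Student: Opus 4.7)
The plan is to reduce the desired equality to an application of Proposition~\ref{p:RelationBottomAndTop} combined with the (easily verified) fact that $C$ has order three as a projective transformation. First, I would rewrite each side as an explicit transform of the single quantity $y_{\beta(\overline{S})}$. The left-hand side is, by Definition~\ref{d:bottomYvaluesPastDelta}, simply
\[
y_{-\overline{S}-2} \;=\; RAC^2R^{-1}\cdot y_{\beta(\overline{S})}\,.
\]
For the right-hand side, Proposition~\ref{p:RelationBottomAndTop} supplies $y_{\underline{S}} = RC^2AC^2R^{-1}\cdot y_{\beta(\overline{S})}$, so applying $RCR^{-1}$ gives
\[
RCR^{-1}\cdot y_{\underline{S}} \;=\; RC \cdot C^2AC^2 \cdot R^{-1}\cdot y_{\beta(\overline{S})} \;=\; RC^3AC^2R^{-1}\cdot y_{\beta(\overline{S})}\,.
\]
A direct $2\times 2$ computation with the matrix $C$ of \eqref{e:generators} shows that $C^3$ is the identity (equivalently, $C$ represents a rotation of order three in the projective group $G_n$), so $RC^3AC^2R^{-1} = RAC^2R^{-1}$ and the two expressions coincide.

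The one subtlety to address is that Proposition~\ref{p:RelationBottomAndTop} is stated for $\alpha \in (\eta_{-k,v},\delta_{-k,v})$, whereas the lemma concerns $\alpha$ in the right portion of $J_{-k,v}$. I would point out that this is not a genuine obstacle: both $y_{\underline{S}}$ and $y_{\beta(\overline{S})}$ are defined (via Definition~\ref{d:topYvaluesLargeAlp} and Definition~\ref{d:bottomYvaluesLargeAlps}, which per Table~\ref{t:defsLargeOm} still prescribes this particular height throughout the right portion) as fixed real numbers depending only on the data at $\eta_{-k,v}$.  Thus the identity in Proposition~\ref{p:RelationBottomAndTop} is an equality among specific real numbers rather than a statement that depends on $\alpha$, and it carries over without change.

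In short, the proof will be a two-line algebraic calculation after one cites Proposition~\ref{p:RelationBottomAndTop} and remarks on the order-three relation for $C$; I expect no genuine difficulty, with the only thing requiring comment being the transfer of the earlier proposition across $\delta_{-k,v}$.
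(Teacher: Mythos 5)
Your proposal is correct and follows essentially the same route as the paper, which simply cites the definition of $y_{-\overline{S}-2}$ and Proposition~\ref{p:RelationBottomAndTop}; you have merely spelled out the algebra, including the order-three relation $C^3=\mathrm{Id}$ (already used implicitly in the paper, e.g.\ the remark in the proof of Lemma~\ref{l:oneLamRel} that $C^{-1}$ acts as $C^2$). Your observation about the heights $y_{\underline{S}}$ and $y_{\beta(\overline{S})}$ being defined from $\eta_{-k,v}$ and therefore constant across the synchronization interval is also accurate and correctly explains why Proposition~\ref{p:RelationBottomAndTop} transfers past $\delta_{-k,v}$.
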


\begin{proof}   This follows directly from the definition of $y_{-\overline{S}-2}$ and Proposition~\ref{p:RelationBottomAndTop}.
\end{proof} 
\bigskip  

We must be sure that lamination, thus Lemma~\ref{l:laminationInEasyCaseLargeAlps},   still holds.   The only  change is that now there are blocks $\mathcal B_{i,1}$ whose bottom height is $y_{-\overline{S}-2}$\,.    For each such $\mathcal B_{i,1}$,  due to the relation between $\ell_{\underline{S}}$ and $r_{1+\overline{S}}\,$,  the block $\mathcal B_{i, 2}$ has top height $y_{\underline{S}}$ (or this and $y_{\underline{S}+1}$).   Therefore,  Lemma~\ref{l:locatingFinalRpastDeltaOtherAlp} shows that Lemma~\ref{l:lamEqsSameAexpon} continues to apply (in the two bottom heights case, synchronization yields that the $x$-coordinates are correct for lamination).   The remainder of the proof goes through directly.

\begin{Eg}\label{e:kIs1SecondHalfOfSyncInt}  As in Subsection~\ref{ss:ExamplesKis1FirstHalfOfSyncInt} and Example~\ref{e:kIsMin1LeftEndpt}, let  $n=3$, $k=1$, $v=1$; thus,   $y_1 = g^2,  y_2 = G$, $y_{-1} = - g^2$.   We now consider $\alpha \in  (\delta_{-1,1},  \zeta_{-1,1}) =  (\delta_{-1,1},  \epsilon_3)$.  
From its definition, we find  $y_{-2} =  RAC^2R^{-1}\cdot (- g^2) = -(5 +\sqrt{5})/10$.     From the proof of Lemma~\ref{l:locatingFinalRpastDeltaOtherAlp}, and the fact that 
$\Delta_{\alpha}(s,2)$ lies to the right of $\Delta_{\alpha}(s,1)$, we certainly have that $r_{\overline{S}+1}< \ell_{\underline{S}}$.   Thus here we find that for all $\alpha \in (\delta_{-1,1},\epsilon_3 )$, one has 
\[\Omega_{3,\alpha} = (\, [\ell_0,  r_1]\times  [-(5 +\sqrt{5})/10, g^2]\,) \cup ([r_1, \ell_1]\times [-g^2, g^2]\,) \cup (\,[\ell_1, r_0]\times [-g^2, G]\,).\]
\end{Eg}
 
\smallskip
\subsection{Bijectivity and ergodicity at right endpoints} 
Although for   $k \in \mathbb N, v \in \mathcal V$, the $ \zeta_{-k,v}$  are values of $\alpha$ for which the  $T_{\alpha}$-obits of $\ell_0(\alpha), r_0(\alpha)$ do not synchronize,  we still can precisely describe domains on which their associated two-dimensional maps are bijective.     Indeed, one has $\ell_{\underline{S}} = \ell_0$   and thus the various preceding sections now easily imply the following results.  

\smallskip  
\begin{Prop}\label{p:OmegaForZetaKneg}   Fix $n\ge 3,  k \in \mathbb N, v \in \mathcal V$  (with $v \in \check{\mathcal V}$ if $k=1$) and $\alpha = \zeta_{-k,v}$.     Let $\Omega^{-}$ be as in Definition~\ref{d:bottomYvaluesPastDelta}.  Let $\Omega^{+}$ be as in Definition~\ref{d:topYvaluesLargeAlp} except that we redefine $K_{\underline{S}}$ to be  $[\ell_{i_{\underline{S}}}, r_0]$, and $\Omega^{+} = \bigcup_{a=1}^{ \underline{S}}\,   K_a \times [0, y_a]$.    

Then 
$\mathcal T_{n,\alpha}$ is bijective on $\Omega_{n, \alpha}  := \Omega^{+} \cup \Omega^{-}$, up to 
$\mu$-measure zero.   
\end{Prop}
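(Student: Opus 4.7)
The plan is to treat $\alpha = \zeta_{-k,v}$ as the left-sided limit of the interior values treated in the preceding subsection, so that the previous lamination and boundary-covering arguments carry over with a single degeneracy absorbed into the domain. The defining property of $\zeta_{-k,v}$ is $L_{-k,v}A\cdot \ell_0(\alpha) = r_0(\alpha)$, i.e.\ $\ell_{\underline{S}}(\zeta_{-k,v}) = r_0(\zeta_{-k,v})$; equivalently, the cylinder $K_{\underline{S}+1} = [\ell_{\underline{S}}, r_0]$ collapses to a point. Absorbing this degenerate column into the adjacent $K_{\underline{S}}$ is exactly the modification announced in the proposition. The two heights $y_{\underline{S}}$ and $y_{\underline{S}+1}$ remain distinct, but $y_{\underline{S}+1}$ is no longer needed because no rectangle sits on it.

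First I would check that the identities of \S\ref{s:RelHtsLargeAlps} that are used in the lamination arguments continue to hold at $\alpha = \zeta_{-k,v}$: the top relations of Lemma~\ref{l:topValuesFirstRelations}-type (including Lemma~\ref{l:relForSharedTopOfTwoLeftmostCylinders}), the bottom relations organized around Lemma~\ref{l:whyHatr} and Corollary~\ref{c:theWholeWord}, and the cross-relation Proposition~\ref{p:RelationBottomAndTop}. All of these are algebraic identities among matrices $R_{-k,v}, L_{-k,v}, A^{\pm k}C, AC, AC^{2}$ that were established on $J_{-k,v}$ without use of the strict inequality $\ell_{\underline S}<r_{0}$ and are stable under $\alpha \to \zeta_{-k,v}$. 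Since $\alpha$ is on the right portion of $J_{-k,v}$, the complementary right-portion relation Lemma~\ref{l:locatingFinalRpastDeltaOtherAlp} is what makes the extra bottom rectangle $L_{-\overline{S}-2}\times[y_{-\overline{S}-2},0]$ laminate correctly, and it too is unaffected.

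Next I would redo the three block-level checks from \S\ref{leftBlocks} and \S\ref{s:largeAlpsRightSide}. For lamination, the arguments of Lemmas~\ref{l:laminationInEasyCaseLargeAlps}, \ref{l:laminLeftmostBlock} and \ref{l:laminRightmostBlock} apply verbatim for $(i,j)\notin\{(-k,1),(-k,2),(1,1),(1,2)\}$, and the endpoint blocks are handled as before since the $T_{\alpha}$-images of $\Delta_{\alpha}(-k,1)$ and $\Delta_{\alpha}(-k,2)$ still agree and $\Delta_{\alpha}(1,2)$ is still left-full. For the upper boundary, the analog of Lemma~\ref{l:coveringUpperBoundary} is actually slightly simpler: since $K_{\underline{S}+1}$ has disappeared, the image of $K_{\underline{S}}\cap\Delta_{\alpha}(-k-1,1)$ under $A^{-k-1}C$ sweeps out what was the old $K_1$, while $K_{\underline S}\cap\Delta_{\alpha}(-k,1)$ maps onto $K_{\tau(1+i_{\underline{S}})}$, and the remaining $K_a$ map as dictated by the simplified digits of $\ell_{i_a}$. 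For the lower boundary, Lemma~\ref{l:coveringLowerBoundary} goes through unchanged, now inclusive of the extra piece $L_{-\overline{S}-2}$ whose image is absorbed into the left end by Lemma~\ref{l:locatingFinalRpastDelta}.

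Finally, surjectivity together with the limit statements $\lim_{i\to-\infty}\mathcal T(\mathcal B_{i,j})\subset\mathbb I_{\alpha}\times\{0\}$ and $\lim_{i\to\infty}\mathcal T(\mathcal B_{i,j})\subset\mathbb I_{\alpha}\times\{0\}$ then give the two partition identities $\bigcup_{i\le -k,j}\mathcal T(\mathcal B_{i,j})=\Omega^{+}$ and $\bigcup_{i\ge 1,j}\mathcal T(\mathcal B_{i,j})=\Omega^{-}$ up to $\mu$-measure zero, and injectivity on each block (hence on $\Omega_{n,\alpha}$) follows from the invertibility of each $\mathcal T_{M}$ together with the disjointness of the cylinders $\Delta_{\alpha}(i,j)$. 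The only real subtlety I expect is bookkeeping at the collapse $\ell_{\underline S}=r_{0}$: one must check that the degenerate vertical fiber $\{r_0\}\times[0,y_{\underline{S}+1}]$ contributes measure zero and no orbit-point is double-counted; this is straightforward once one notes that the image of the top of $\mathcal B_{-k-1,1}$ continues to split at $\ell_{\underline{S}} = r_0$ exactly as it did in Lemma~\ref{l:intervalMeetsTwoCylinders} and its right-portion analog, and that $y_{\underline S+1}$ is now simply $-\ell_0(\zeta_{-k,v})$ by Lemma~\ref{l:relForSharedTopOfTwoLeftmostCylinders} evaluated at the limit. Finite $\mu$-mass follows from the same comparison with the locus $y=-1/x$ used in Lemmas~\ref{l:finiteMassUpperSmallAlps} and \ref{l:finiteMassLowerSmallAlps}, since the defining vertex $(\ell_0,-r_0)$ is $\mathcal T_M$-equivalent (through the orbit of $\ell_0$) to each top vertex and all lie strictly below $y=-1/x$.
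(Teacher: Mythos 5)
Your approach is the same as the paper's: identify the degeneracy at $\alpha = \zeta_{-k,v}$ and carry over the lamination and boundary-covering arguments from the interior of $J_{-k,v}$, the paper's own justification being a single sentence pointing at the preceding sections, so your account is considerably more explicit. One factual correction: you write $\ell_{\underline S}(\zeta_{-k,v}) = r_0(\zeta_{-k,v})$, but since $T_\alpha$ takes values in $\mathbb I_\alpha = [\ell_0, r_0)$ the true value is $\ell_{\underline S}(\zeta_{-k,v}) = \ell_0(\zeta_{-k,v})$ (as the paper states just before the proposition); the value $r_0$ is only the left-sided limit of $\ell_{\underline S}(\alpha)$ as $\alpha \uparrow \zeta_{-k,v}$, which is indeed what drives the collapse of $K_{\underline S + 1}$, but it is not the value at $\zeta_{-k,v}$ itself. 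Relatedly, by Definition~\ref{d:topYvaluesLargeAlp} the heights are fixed by the orbit at the \emph{left} endpoint $\eta_{-k,v}$ of $J_{-k,v}$, so $y_{\underline S+1} = -\ell_0(\eta_{-k,v})$ rather than $-\ell_0(\zeta_{-k,v})$; since you rightly observe this height is dropped from the modified $\Omega^+$, neither slip affects the argument. Finally, finiteness of $\mu$-mass is not asserted by this proposition, so that last paragraph of yours is unneeded here.
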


\smallskip 

\begin{Eg}\label{e:kIsMin1RightEndpt}  We continue the examples with $n=3$, $k=1$, $v=1$; since $G/2 = \epsilon_3 =  \zeta_{-1,1}$,  
we  find 
 \[\Omega_{3,G/2} = (\,  [-g^2, g^2]\times  [-(5 +\sqrt{5})/10, g^2]\,)  \; \cup\;  (\,[g^2, G]\times [-g^2, g^2]\,) .\]
\end{Eg}

\smallskip 
\begin{Prop}\label{p:ergodicLargeZeta}   With $\alpha = \zeta_{-k,v}$ as in the previous proposition, 
the system $(\mathcal T_{\alpha}, \Omega_{\alpha}, \mathscr B'_{\alpha}, \mu_{\alpha})$ is ergodic.   

Furthermore,    this two dimensional system is the natural extension of  $(T_{\alpha},\mathbb I_{\alpha},  \mathscr B_{\alpha}, \nu_{\alpha})$, where $\nu_{\alpha}$ is the marginal measure of $\mu_{\alpha}$ and $\mathscr B_{\alpha}$  the Borel sigma algebra on $\mathbb I_{\alpha}$.   In particular, the one dimensional system is ergodic. 
\end{Prop}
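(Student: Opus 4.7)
The plan is to verify that the hypotheses of (\cite{CKStoolsOfTheTrade}, Theorem~2.3) are satisfied and then invoke that theorem, exactly as was done in the proofs of Propositions~\ref{p:NaturallyErgodicEndPts} and~\ref{p:ergodicLargeEta}.

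First I would confirm the bounded non-full cylinders condition in the sense of Remark~\ref{rmk:bddNonfullCylinders}. Since $\alpha=\zeta_{-k,v}$ is the right endpoint of the synchronization interval $J_{-k,v}$, the matching relation $L_{-k,v}A\cdot\ell_{0}(\zeta_{-k,v})=r_{0}(\zeta_{-k,v})$ forces the $T_\alpha$-orbit of $\ell_0(\alpha)$ to reach $r_0(\alpha)$ in $\underline{S}(-k,v)$ steps, and the synchronization relation of \eqref{e:synchroExplicitLargeAlps} simultaneously ties the $T_\alpha$-orbit of $r_0(\alpha)$ (and hence of $\mathfrak b_\alpha$) into the same finite orbit. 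In particular the $\alpha$-expansions of $\ell_0(\alpha)$, $r_0(\alpha)$, and $\mathfrak b_\alpha$ use only digits taken from a finite set, so every non-full cylinder of $T_\alpha$ has endpoints with expansions involving only finitely many digits. By Remark~\ref{rmk:bddNonfullCylinders} this supplies the bounded non-full cylinders hypothesis.

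Next I would assemble the remaining hypotheses. Proposition~\ref{p:OmegaForZetaKneg} already provides a bijectivity domain $\Omega_\alpha$ up to $\mu$-measure zero. The set $\Omega_\alpha$ is a finite union of rectangles with horizontal extents in the compact interval $\mathbb I_\alpha$ and vertical extents bounded by $\max\{|y_1|,\ldots,|y_{\underline S}|,|y_{-1}|,\ldots,|y_{-\overline S-2}|\}$, so the vertical fibers of $\Omega_\alpha$ have bounded Lebesgue measure. Finiteness of $\mu(\Omega_\alpha)$ follows by the same fiber-by-fiber comparison with the hyperbola $y=-1/x$ used in the proofs of Lemmas~\ref{l:finiteMassUpperSmallAlps} and~\ref{l:finiteMassLowerSmallAlps} and their large-$\alpha$ analogues, combined with the bounded-fiber observation just made; concretely, one checks that the extremal fiber over $\ell_0(\alpha)$ sits strictly below the curve $y=-1/x$ and uses that each $\mathcal T_M$ preserves both $\mu$-mass and the locus $xy=-1$.

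Finally, with the bounded non-full cylinders condition, the bijectivity of $\mathcal T_\alpha$ on $\Omega_\alpha$, the finiteness $\mu(\Omega_\alpha)<\infty$, and the boundedness of vertical fibers, the hypotheses of (\cite{CKStoolsOfTheTrade}, Theorem~2.3) are all met. That theorem then yields both that $(\mathcal T_\alpha,\Omega_\alpha,\mathscr B'_\alpha,\mu_\alpha)$ is the natural extension of $(T_\alpha,\mathbb I_\alpha,\mathscr B_\alpha,\nu_\alpha)$ and that this two-dimensional system is ergodic; ergodicity of the one-dimensional factor is then automatic. The step I anticipate needing the most care is checking that the orbits of $\ell_0(\alpha)$, $r_0(\alpha)$, and especially $\mathfrak b_\alpha$ really do collapse into a common finite set at $\alpha=\zeta_{-k,v}$ (this is the ``non-synchronizing'' endpoint, so one must use the matching that happens through the image of $\ell_0$ landing on $r_0$ rather than through the usual synchronization \eqref{e:synchroExplicitLargeAlps}); everything else reduces to bookkeeping from the earlier sections.
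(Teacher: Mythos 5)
Your overall strategy matches the paper's exactly: confirm the hypotheses of (\cite{CKStoolsOfTheTrade}, Theorem~2.3) — finite $\mu$-mass, bounded vertical fibers, bounded non-full cylinders — and invoke it. The gap lies in your middle paragraph, where you invoke the synchronization relation \eqref{e:synchroExplicitLargeAlps} to conclude that the orbits of $\ell_0(\alpha)$, $r_0(\alpha)$, and $\mathfrak b_\alpha$ collapse into a common finite set. But that relation holds on the {\em interior} of $J_{-k,v}$; the paper explicitly remarks, in the paragraph preceding Proposition~\ref{p:OmegaForZetaKneg}, that the $T_\alpha$-orbits of $\ell_0$ and $r_0$ do {\em not} synchronize at $\alpha=\zeta_{-k,v}$. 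You flag this tension yourself in the final sentence, but you don't resolve it, and the argument as written does not establish the bounded non-full cylinders condition. The paper's route is more direct and avoids the issue: it cites (\cite{CaltaKraaikampSchmidt}, Lemma~6.3), which states that at $\alpha=\zeta_{-k,v}$ {\em both} $\ell_0(\alpha)$ and $r_0(\alpha)$ have periodic $\alpha$-expansions (these are fixed points of the hyperbolic elements $L_{-k,v}A$ and its conjugate, not synchronizing orbits); periodicity of $\ell_0(\alpha)$'s expansion then immediately forces $\mathfrak b_\alpha=C^{-1}\cdot\ell_0(\alpha)$ to have an expansion in only finitely many digits, and Remark~\ref{rmk:bddNonfullCylinders} finishes the verification. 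If you replace your appeal to \eqref{e:synchroExplicitLargeAlps} with that citation, your sketch becomes a complete proof along the paper's lines; the remaining points you make — finite mass via the hyperbola comparison and the boundedness of fibers from the rectangle decomposition — are consistent with what the paper assumes.
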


\begin{proof}     From (\cite{CaltaKraaikampSchmidt},  Lemma~6.3),  
 both $\ell_0(\alpha)$ and  $r_0(\alpha)$ have  periodic $\alpha$-expansions.   From the first of these,  $\mathfrak b_{\alpha}$  also has of finitely many $\alpha$-digits.  In view of Remark~\ref{rmk:bddNonfullCylinders},  it follows that 
 the non-full cylinders of $T_{\alpha}$ are bounded in range.   Due to this and the finiteness of $\mu(\Omega_{\alpha})$ and of its vertical fibers, the hypotheses of (\cite{CKStoolsOfTheTrade}, Theorem~2.3) are met, thus giving the result here.
\end{proof}

\subsection{Bijectivity and ergodicity at division point between portions: $\alpha = \delta_{-k,v}$}\label{ss:smackDab}  At $\delta = \delta_{-k,v}$, we will find that the ``extra" rectangle in $\Omega_{\alpha}$ for $\alpha >\delta$ (as opposed for $\alpha<\delta$ in the same synchronization interval) has width zero and thus can be ignored.    From the proof of Lemma~\ref{l:locatingFinalRpastDelta}, one has that for the value of $s$ such that $\ell_1(\delta) = A^sC \cdot \ell_0(\delta)$ we also have $A^sC^2\cdot \ell_{\overline{S}}(\delta) = \ell_1(\delta)$ and $A^s C \cdot r_{\overline{S}+1}(\delta) = \ell_1(\delta)$.   (Of course, here $s = -k$.)  That is, $r_{\overline{S}+1}(\delta) = \ell_0(\delta)$.    Thus,    Definition ~\ref{d:bottomYvaluesPastDelta} reverts back to 
Definition~\ref{d:bottomYvaluesLargeAlps} and the bijectivity of $\mathcal T_{\delta}$ on the resulting $\Omega_{\delta}$ holds as above. 

\begin{Eg}\label{e:kIsMin1Delta}  We continue the examples with $n=3$, $k=1$, $v=1$, as in Subsection~\ref{ss:ExamplesKis1FirstHalfOfSyncInt} and Examples~\ref{e:kIsMin1LeftEndpt}, ~\ref{e:kIs1SecondHalfOfSyncInt}.  We have  $\delta_{-1,1} = (5-\sqrt{5})/4$ and thus $\ell_0(\delta_{-1,1}) =  -g$.       We  find 
 \[\Omega_{3,\delta_{-1,1}} = (\,  [-g, g]\times  [-g^2, g^2]\,)  \; \cup\;  (\,[g, G]\times [-g^2, G]\,) .\]
\end{Eg}

\medskip
\begin{Prop}\label{p:ergodicDelta}   With $\alpha = \delta_{-k,v}$ as in the previous proposition, let $\mathscr B'_{\alpha}$ denote the Borel sigma algebra on $\Omega_{\alpha}$.   
The system $(\mathcal T_{\alpha}, \Omega_{\alpha}, \mathscr B'_{\alpha}, \mu_{\alpha})$ is ergodic.   
Furthermore,    this two dimensional system is the natural extension of  $(T_{\alpha},\mathbb I_{\alpha},  \mathscr B_{\alpha}, \nu_{\alpha})$, where $\nu_{\alpha}$ is the marginal measure of $\mu_{\alpha}$ and $\mathscr B_{\alpha}$  the Borel sigma algebra on $\mathbb I_{\alpha}$.   In particular, the one dimensional system is ergodic. 
\end{Prop}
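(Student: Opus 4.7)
The plan is to mirror the arguments used for Propositions~\ref{p:ergodicLargeEta} and~\ref{p:ergodicLargeZeta}: verify the hypotheses of \cite[Theorem~2.3]{CKStoolsOfTheTrade} and invoke it to obtain simultaneously the natural extension property and the ergodicity of the planar system, from which ergodicity of the base system follows by standard factor arguments. Thus the task reduces to checking three items for $\alpha = \delta := \delta_{-k,v}$: (i) bijectivity of $\mathcal T_{\delta}$ on $\Omega_{\delta}$ up to $\mu$-measure zero, (ii) finiteness of $\mu(\Omega_{\delta})$ together with boundedness of its vertical fibers, and (iii) the bounded non-full cylinders hypothesis in the sense of Remark~\ref{rmk:bddNonfullCylinders}.

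For (i), the discussion just preceding the statement (see Subsection~\ref{ss:smackDab}) already shows that at $\alpha = \delta$ the relation $r_{\overline{S}+1}(\delta) = \ell_0(\delta)$ collapses the ``extra'' leftmost rectangle in $\Omega^{-}$ to a vertical segment, so that $\Omega_{\delta}$ agrees with the domain built from Definitions~\ref{d:topYvaluesLargeAlp} and~\ref{d:bottomYvaluesLargeAlps}. The proofs of lamination and of ``upper/lower boundary is in the image'' from Sections~\ref{s:bigLefties} and~\ref{leftBlocks} then apply verbatim, yielding bijectivity on $\Omega_{\delta}$. For (ii), the finiteness and fiber-boundedness arguments of Lemmas~\ref{l:finiteMassUpperSmallAlps},~\ref{l:finiteMassLowerSmallAlps} transplant mutatis mutandis to the large-$\alpha$ heights, using the explicit formulas for the extremal heights together with the invariance of the curve $y = -1/x$ under each $\mathcal T_M$.

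The main point is (iii). I would establish it by showing that all three of the sequences $\underline{b}_{[1,\infty)}^{\,\delta}$, $\overline{b}_{[1,\infty)}^{\,\delta}$, and the orbit of $\mathfrak b_{\delta}$ involve only finitely many distinct digits. The defining equation $\ell_{\underline{S}}(\delta) = \mathfrak b_{\delta}$ immediately places $\mathfrak b_{\delta}$ inside the forward $T_{\delta}$-orbit of $\ell_0(\delta)$, so it suffices to handle $\ell_0(\delta)$ and $r_0(\delta)$. Using the synchronization relation \eqref{e:synchroExplicitLargeAlps} at $\delta$ (which here takes the form $\ell_{1+\underline{S}}(\delta) = r_{1+\overline{S}}(\delta)$, since $\delta$ lies at the boundary where the two synchronization modes meet and $r_{\overline{S}+1}(\delta) = \ell_0(\delta)$), one sees that the $T_{\delta}$-orbit of $r_0(\delta)$ enters the $T_{\delta}$-orbit of $\ell_0(\delta)$ after finitely many steps. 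Thus it is enough to show that $\ell_0(\delta)$ has an eventually periodic expansion, which follows from the fact that $\ell_{\underline{S}}(\delta) = \mathfrak b_{\delta} = C^{-1}\cdot \ell_0(\delta)$ gives an explicit algebraic equation $L_{-k,v}A\cdot \ell_0(\delta) = C^{-1}\cdot\ell_0(\delta)$, identifying $\ell_0(\delta)$ with a fixed point of an element of $G_n$ and forcing its $\delta$-digits to lie in the finite alphabet $\{(-k,1),(-k-1,1),(1,1),(1,2)\}$ determined by $\underline{d}(-k,v)$ and $\overline{b}(-k,v)$.

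With (i)--(iii) in hand, \cite[Theorem~2.3]{CKStoolsOfTheTrade} yields the result. The principal obstacle is item (iii); once the algebraicity of $\ell_0(\delta)$ and the finite-alphabet property of its expansion are confirmed, the remainder of the argument is a direct invocation of the machinery already developed in the paper.
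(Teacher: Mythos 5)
Your proposal is correct and mirrors the paper's own (much terser) proof: both invoke (\cite{CKStoolsOfTheTrade}, Theorem~2.3) by verifying finiteness of $\mu(\Omega_{\delta})$, bounded fibers, and the bounded non-full cylinders condition of Remark~\ref{rmk:bddNonfullCylinders}, with the last reduced, via synchronization, to the eventual periodicity of the $\delta$-expansion of $\ell_0(\delta)$. One small refinement to item (iii): the inference from ``$\ell_0(\delta)$ is fixed by $CL_{-k,v}A$'' to ``the digits lie in a finite alphabet'' is not automatic, since $CL_{-k,v}A$ is not itself a product of branches of $T_\delta$; the periodicity is seen directly from the computation $T_\delta(\mathfrak b_{\delta}) = A^{-k}C^2\cdot\mathfrak b_{\delta} = A^{-k}C^2 C^{-1}\cdot\ell_0(\delta) = A^{-k}C\cdot\ell_0(\delta) = \ell_1(\delta)$, which gives $\ell_{\underline{S}+1}(\delta)=\ell_1(\delta)$ and hence pre-periodicity of period $\underline{S}$ after the first step.
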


\begin{proof}   Since  $\ell_0(\alpha)$ has a periodic $\alpha$-expansion and synchronization holds, in view of Remark~\ref{rmk:bddNonfullCylinders},  it follows  that 
 the non-full cylinders of $T_{\alpha}$ are bounded in range.   Due to this and the finiteness of $\mu(\Omega_{\alpha})$ and of its vertical fibers, the hypotheses of (\cite{CKStoolsOfTheTrade}, Theorem~2.3) are met, thus giving the result here.
\end{proof}  

\smallskip   
 
\section{Limit values for larger $\alpha$}\label{s:limLarge}   
 The main result of this section are Propositions~\ref{p:fillUpFromW} and ~\ref{p:biDomIsNatExtErgLargeNonSyn} giving a bijectivity domain for $\mathcal T_{\alpha}$ and then showing this to be an ergodic natural extension of $T_{\alpha}$,  for each large non-synchronizing $\alpha$.  As in \S~\ref{s:ContSmall}, due to the piecewise `twisted'  action on $y$-values, the proof of surjectivity requires that certain reversed words are admissible.  Here also, we prove this admissibility by showing the admissibility of such words for the systems of endpoints of synchronization intervals   and then arguing by taking limits.    
 
\bigskip
\noindent
{\bf  Convention}   Just as in Subsection~\ref{s:ContSmall}  where we used $u$ as a general word in the setting of small $\alpha$ values,  here we   use $w$ as a general word.  

\subsection{Domain for non-synchronizing values}
We introduce some more notation and define basic sets for our construction.    For legibility,  we suppress various indications of dependence on $\alpha$ and the like, trusting that this causes no confusion for the reader.  
\begin{Def}\label{d:notationForEndpointsLargeAlps}   Fix $n\ge 3,  k \in \mathbb N$ and $\alpha > \gamma_{n}$. Let $\mathcal L= \mathcal L_{\alpha}$ denote the language of admissible   digits.   For finite length $u \in \mathcal L$,  as usual let $\Delta_{\alpha}(u)$ be the corresponding $T_{\alpha}$-cylinder.    Let $\lambda_u,  \rho_u$ denote the left- and right endpoints, respectively of $\Delta_{\alpha}(u)$; note that $\rho_{1,1} = \lambda_{-k,2} = \mathfrak b$.   For any $d \in \mathbb N$, let  $\mu_{-d,1} = (A^{-d}C)^{-1}\cdot \mathfrak b$.    

We then define vertical fibers, see  Figure~\ref{f:theW},  
\[ \Phi_1 = [ -\mathfrak b, -\mu_{-k-1, 1}], \; \Phi_2 = [ -\mathfrak b, -\mu_{-k,1}], \]
\[\Phi_3 = [ -\mathfrak b, - \ell_0], \;\Phi_4 = [-C\cdot r_0, - \ell_0],  \; \text{and}\; \Phi_5 = [-\rho_{2,1}, -\ell_0]\,,\]  

and regions
\[ \mathcal J= \mathbb I_{\alpha} \times [-\rho_{2,1},  -\mu_{-k-1, 1}]\;\cup\; [\ell_1, r_0]\times [-\rho_{-k,1},  -\mu_{-k, 1}] \]

and 
\[
\begin{aligned} 
\mathcal W =  [\ell_0, \ell_1]  \times \Phi_1\;&\cup\;  [\ell_1, \lambda_{-k-2,1})\times \Phi_2 \;\cup \;[\lambda_{-k-2,1}, \rho_{2,1}]\times \Phi _3 \\
                                                                     & \;\cup\; [ \rho_{2,1}, \rho_{2,2}) \times \Phi _4 \;\cup \; [\rho_{2,2}, r_0) \times \Phi_5\,.
\end{aligned}
 \]
   Note that $\mathcal W \supset \mathcal J$.
\end{Def}

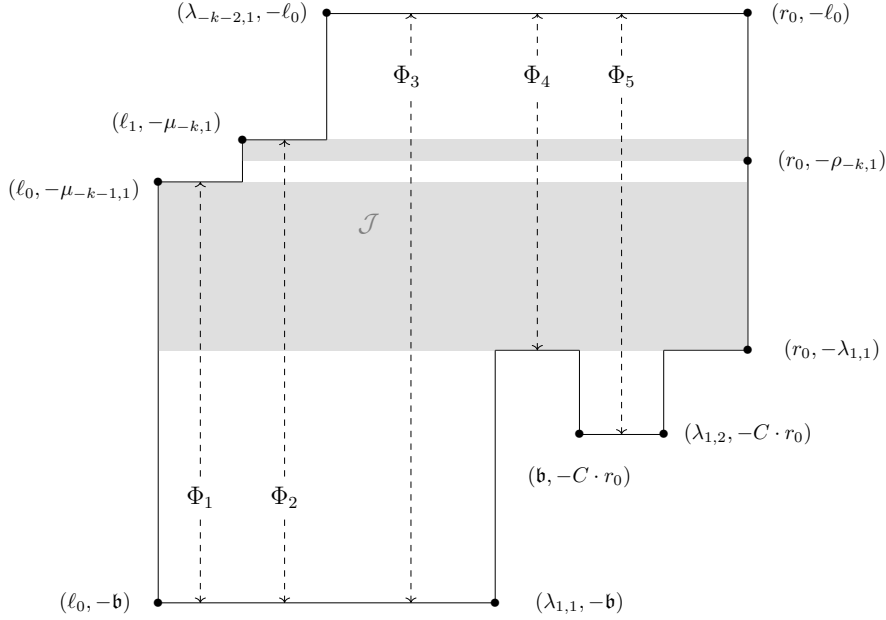
\begin{figure}
\scalebox{.8}{
\begin{tikzpicture}[x=7cm,y=7cm] 
\draw  (0, 0)--(0, 1.0); 
\draw  (0, 1)--(0.2, 1);
\draw  (0.2,1)--(0.2, 1.1);
\draw  (0.2,1.1)--(0.4, 1.1);
\draw  (0.4,1.1)--(0.4, 1.4);
\draw  (0.4, 1.4)--(1.4, 1.4);
\draw (1.4, 1.4)--(1.4, 0.6);
\draw (1.4, 0.6)--(1.2, 0.6);
\draw (1.2, 0.6)--(1.2, 0.4);
\draw (1.2, 0.4)--(1, 0.4);
\draw (1, 0.4)--(1, 0.6);
\draw (1, 0.6)--(.8, 0.6);
\draw (.8, 0.6)--(.8, 0.0);
\draw (.8, 0.0)--(0,0);
\fill [opacity= 0.25, gray] (0.2, 1.1) -- (1.4,1.1) --(1.4, 1.05)--(0.2, 1.05)--cycle;
\fill [opacity= 0.25, gray] (0, 1) -- (1.4,1) --(1.4, 0.6)--(0, 0.6)--cycle;
\node [gray] at (0.5,  .9) {\Large{$\mathcal J$}}; 
\draw[<-, dashed] (0.1, 0)--(0.1, .2);
\node at (0.1,  .25) {\Large{$\Phi_1$}}; 
\draw[->, dashed] (0.1, 0.3)--(0.1, 1.0);
\draw[<-, dashed] (0.3, 0)--(0.3, .2);
\node at (0.3,  .25) {\Large{$\Phi_2$}}; 
\draw[->, dashed] (0.3, 0.3)--(0.3, 1.1);
\draw[<-, dashed] (0.6, 0)--(0.6, 1.2);
\node at (0.59,  1.25) {\Large{$\Phi_3$}}; 
\draw[->, dashed] (0.6, 1.3)--(0.6, 1.4);
\draw[<-, dashed] (.9, 0.6)--(.9, 1.2);
\node at (.9,  1.25) {\Large{$\Phi_4$}}; 
\draw[->, dashed] (.9, 1.3)--(.9, 1.4);
\draw[<-, dashed] (1.1, 0.4)--(1.1, 1.2);
\node at (1.1,  1.25) {\Large{$\Phi_5$}}; 
\draw[->, dashed] (1.1, 1.3)--(1.1, 1.4);
 \foreach \x/\y in {0/0,   0/1, 0.2/1.1, 0.4/1.4, 1.4/1.4, 1.4/0.6,1.4/1.05,1.2/0.4,1/0.4,.8/0.0%
 } { \node at (\x,\y) {$\bullet$}; } 
\node at ( -0.15, 0) {$(\ell_0, -\mathfrak b)$};  
\node at ( -0.2, 0.98) {$(\ell_0, -\mu_{-k-1,1})$};  
\node at ( 0.02, 1.14) {$(\ell_1, -\mu_{-k,1})$};  
\node at ( 0.2, 1.4) {$(\lambda_{-k-2,1}, -\ell_0)$};  
\node at ( 1,0.3) {$(\mathfrak b, -C \cdot r_0)$};
\node at ( 1.4,0.4) {$(\lambda_{1,2}, -C \cdot r_0)$};
\node at ( 1.6,1.05) {$(r_0, -\rho_{-k,1})$};
\node at (1.55, 1.4) {$(r_0, -\ell_0)$};  
\node at (1.6, 0.6) {$(r_0, -\lambda_{1,1})$};  
\node at (1, 0) {$(\lambda_{1,1}, -\mathfrak b)$};  
\end{tikzpicture}
}
\caption{Schematic representation of $\mathcal W_{\alpha}$, see Definition~\ref{d:notationForEndpointsLargeAlps}.   For large $\alpha = \eta_{-k,v}$ or for non-synchronizing  $\alpha$,   Lemma~\ref{l:jIsCorrect} shows that the image of the regions in $\mathcal W_{\alpha}$  fibering over all cylinders other than those of the digits $(-k,1), (-k-1,1), (1,1), (1,2)$ gives the  (gray) region $\mathcal J = \mathcal J_\alpha$  (of two connected components).    Proposition~\ref{p:fillUpFromW}  shows that the  complementary pieces are sent by powers of  $\mathcal T_{\alpha}$ to fill in a region that strictly includes the complement of $\mathcal J_\alpha$ in $\mathcal W_{\alpha}$.}
\label{f:theW}%
\end{figure}

 \begin{Lem}\label{l:jIsCorrect}   Fix a non-synchronizing $\alpha$ with $\alpha >\gamma_{n}$.   Then 
 \[J_\alpha =    \cup_{\substack{(i,1)\succ (-k-1,1)\\ i\neq 1}}\;   \mathcal T_{\alpha}(\, \Delta_{\alpha}(i,1)\times \Phi_3\,)  \bigcup\,  \cup_{(i,2)\prec (1,2)}\,  \mathcal T_{\alpha}(\,\Delta_{\alpha}(i,2)\times \Phi_5\,).\]
 \end{Lem}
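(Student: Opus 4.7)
The plan is to verify the equality by applying Lemma~\ref{l:imagesAndReversal} to each summand in the union and then showing the resulting rectangles tile $\mathcal J_\alpha$. The argument parallels the combination of Lemmas~\ref{l:upTheMiddle}, \ref{l:minusOneAndTwoOnZ}, \ref{l:kAndKplusOneOnZ}, and~\ref{l:fillingUpI} from the small $\alpha$ case, adapted to handle the richer two-letter digits $(i,j)$ with $j\in\{1,2\}$.

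First I would classify the cylinders appearing in the union. By Remark~\ref{rmk:bddNonfullCylinders}, for $\alpha>\gamma_n$ non-synchronizing, the only possibly non-full cylinders are those containing $\ell_0, r_0,$ or $\mathfrak b$. Since the first digit of $\ell_0$ is $(-k,1)$, these are $\Delta_\alpha(-k,1)$ (containing $\ell_0$), $\Delta_\alpha(-k,2)$ (having $\mathfrak b$ as left endpoint), $\Delta_\alpha(1,1)$ (having $\mathfrak b$ as right endpoint), and $\Delta_\alpha(1,2)$ (containing $r_0$). The index conditions $(i,1)\succ(-k-1,1)$, $i\neq 1$, and $(i,2)\prec(1,2)$ exclude all of these from the union except $\Delta_\alpha(-k,2)$; every other cylinder appearing is full.

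Next I would apply Lemma~\ref{l:imagesAndReversal} with $|u|=1$ to each summand. For a full $\Delta_\alpha(i,1)$ in the union, one obtains
\[
\mathcal T_\alpha(\Delta_\alpha(i,1)\times\Phi_3)=\mathbb I_\alpha\times[-(A^iC)^{-1}\mathfrak b,\,-(A^iC)^{-1}\ell_0],
\]
and similarly for full $\Delta_\alpha(i,2)$ using $\Phi_5=[-\rho_{2,1},-\ell_0]$. For the non-full $\Delta_\alpha(-k,2)$, the identity $A^{-k}C^2\cdot\mathfrak b=A^{-k}C\cdot\ell_0=\ell_1$ (using $C\cdot\mathfrak b=\ell_0$) forces the $x$-image to shrink to $[\ell_1,r_0)$, and the corresponding $y$-image is computed from the same Möbius identity; these together form exactly the second strip $[\ell_1,r_0]\times[-\rho_{-k,1},-\mu_{-k,1}]$ of $\mathcal J_\alpha$.

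The final step is the tiling argument: as $(i,j)$ ranges over the admissible digits in the union, the intervals $[-(A^iC^j)^{-1}b,\,-(A^iC^j)^{-1}a]$ stack contiguously in the vertical direction. Adjacency of cylinders under $\prec$ (i.e.\ $\rho_{i,j}=\lambda_{i',j'}$ for consecutive digits) translates through the $RA^iC^jR^{-1}$-action to adjacency of the resulting $y$-intervals, so the $\Phi_3$-images of the full $(i,1)$-cylinders and the $\Phi_5$-images of the full $(i,2)$-cylinders interlock to fill exactly $\mathbb I_\alpha\times[-\rho_{2,1},-\mu_{-k-1,1}]$. The main obstacle is this bookkeeping: verifying that the $(i,1)$ and $(i,2)$ images meet with neither gaps nor overlaps, and that the endpoints of the tiled interval are precisely $-\rho_{2,1}$ and $-\mu_{-k-1,1}$. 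This reduces to repeated elementary Möbius identities using the specific form of $A^iC^j$ together with Lemma~\ref{l:conjByRofAtoPc}, together with an admissibility argument (cf.\ Corollary~\ref{c:symmetryInLimitAlph}) to guarantee that the digits bordering the extremal excluded cylinders really do interleave as required.
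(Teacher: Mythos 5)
Your approach matches the paper's: compute the $y$-images of the fiber endpoints via the conjugation-by-$R$ identities (Lemma~\ref{l:conjByRofAtoPc}), treat the one non-full cylinder $\Delta_\alpha(-k,2)$ in the union separately, and then invoke adjacency of consecutive cylinders to see that the resulting rectangles tile $\mathcal J_\alpha$. The paper's proof is simply a three-equality computation ($RA^iCR^{-1}\cdot(-\ell_0)=-\lambda_{i,1}$, $RA^iCR^{-1}\cdot(-\mathfrak b)=-\mu_{i,1}=RA^iC^2R^{-1}\cdot(-\ell_0)$, $RA^iC^2R^{-1}\cdot(-C\cdot r_0)=-\rho_{i,1}$) followed by the same adjacency remark.

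Two cautions on the bookkeeping you defer. First, Lemma~\ref{l:imagesAndReversal} is stated for $\alpha<\gamma_n$ with $M_u=A^dC$; the maps $A^iC^2$ on the $(i,2)$-cylinders are not of that form, so you must appeal to Lemma~\ref{l:conjByRofAtoPc} or Lemma~\ref{l:negAndReverse} directly, as the paper does. Second, and more substantively: you apply $\mathcal T_\alpha$ to $\Delta_\alpha(i,2)\times\Phi_5$ with $\Phi_5=[-\rho_{2,1},-\ell_0]$, following the lemma's statement, yet your claimed $y$-image for the $(-k,2)$-block is $[-\rho_{-k,1},-\mu_{-k,1}]$ and you assert the tiled interval reaches down to $-\rho_{2,1}$. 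Those conclusions require $RA^iC^2R^{-1}$ to carry the lower fiber endpoint to $-\rho_{i,1}$; the paper's proof computes $RA^iC^2R^{-1}\cdot(-C\cdot r_0)=-\rho_{i,1}$, i.e.\ it uses the lower endpoint of $\Phi_4=[-C\cdot r_0,-\ell_0]$. Since $\rho_{2,1}=(AC)^{-1}\cdot\ell_0\neq C\cdot r_0$ in general, $\Phi_5$ is strictly smaller than $\Phi_4$, and $RA^iC^2R^{-1}\cdot\Phi_5$ stops short of $-\rho_{i,1}$, leaving a gap between each $(i,2)$-strip and the next $(i',1)$-strip. The correct fiber is $\Phi_4$ (which is also what $\mathcal W$ carries over $[\rho_{2,1},\rho_{2,2})$, hence over the $(i,2)$-cylinders with $(i,2)\prec(1,2)$); the $\Phi_5$ in the statement of Lemma~\ref{l:jIsCorrect} is evidently a slip for $\Phi_4$. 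When you carry out your bookkeeping, anchor it to $\Phi_4$ and the intervals close up exactly to $[-\rho_{2,1},-\mu_{-k-1,1}]$ together with the detached $(-k,2)$-piece.
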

 \begin{proof}  From Lemma~\ref{l:conjByRofAtoPc},  for each $i \in \mathbb Z$, 
 \[  RA^iCR^{-1}\cdot -\ell_0 = -\lambda_{i,1}, \;RA^iCR^{-1}\cdot -\mathfrak b = -\mu_{i,1} = RA^iC^2R^{-1}\cdot -\ell_0, \;RA^iC^2R^{-1}\cdot (-C\cdot r_0) = \rho_{i,1}.\]
Since adjacent cylinders share a common endpoint, and $(-k,2)$ is the only non-full cylinders considered in the right hand side of the expression,   the result follows. 
 \end{proof}
 
\smallskip
\noindent
{\bf Convention.}  Throughout this section, we will use $w$ to denote a general word.  (Compare this to our use of $u$ in Section \ref{s:ContSmall}.)

 \begin{Prop}\label{p:fillUpFromW}     Fix a non-synchronizing $\alpha$ with $\alpha >\zeta_{-1, n-2}$. 
 With notation as above, let $\mathcal T = \mathcal T_{\alpha}$ and define 
 \[\Omega = \overline{\cup_{j=1}^{\infty}\, \mathcal T^j(\, \mathcal W)\,}\,.\]
 Then    $\Omega \supset \mathcal W$ and $\mathcal T$ is bijective  up to $\mu$-measure zero on $\Omega$.   Furthermore,  letting 
 \[  \mathscr A =  \{(-k,1), (-k-1,1)\}^{^*}\cap \mathcal L_{\alpha},\;\;\;   \mathscr B =  \{(1,1), (1,2)\}^{^*}\cap \mathcal L_{\alpha} \] 
 and 
\[ \Phi'_2 =  \Phi_2\setminus (-\mu_{-k-1,1},-\rho_{-k,1})\]
we have that $\Omega$ is the closure of 
 \begin{equation} \label{e:jtUnionBig} 
 \begin{aligned} \mathcal J \;  
 &\bigsqcup \, \sqcup_{\substack{w\in \mathscr A  \\ \rho_w < \ell_1}}\;  \mathcal T^{|w|}(\, \Delta_{\alpha}(w) \times \Phi_1 \,)
 \, \bigsqcup \, \sqcup_{\substack{w\in \mathscr A  \\ \lambda_w > \ell_1}}\;  \mathcal T^{|w|}(\, \Delta_{\alpha}(w) \times \Phi'_2 \,)  \\
 \\
 &\,\bigsqcup \, \sqcup_{s = 2}^{\infty}\, \mathcal T^{|w|}(\, \Delta_{\alpha}(\underline{b}_{[2, s]}^{\alpha}) \times \Phi_1 \,) \, \sqcup \, [\ell_s, r_0)\times  RM_{\underline{b}_{[2, s]}^{\alpha}}R^{-1}(\, \Phi'_2\setminus \Phi_1\,)
\\
\\
                                      &\, \bigsqcup \,\sqcup_{w\in   \mathscr B \, \cap \mathcal L_{\alpha}}\; \mathcal T^{|w|}(\, \Delta_{\alpha}(w) \times \Phi_4 \,).
                         \end{aligned}
\end{equation} 
Furthermore,  the system $(\mathcal T_{\alpha}, \Omega_{\alpha}, \mathscr B'_{\alpha}, \mu_{\alpha})$ is the natural extension of  $(T_{\alpha},\mathbb I_{\alpha},  \mathscr B_{\alpha}, \nu_{\alpha})$, where $\nu_{\alpha}$ is the marginal measure of $\mu_{\alpha}$ and $\mathscr B_{\alpha}$  the Borel sigma algebra on $\mathbb I_{\alpha}$.  Finally,   both systems are ergodic.
\end{Prop}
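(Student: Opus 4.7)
\medskip

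\noindent
\emph{Proof proposal.} The plan is to adapt the strategy of Proposition~\ref{p:fillUpFromZ} together with Proposition~\ref{p:bijectivityDomIsNatExtErgSmallNonSyn} to the large-$\alpha$ setting, with additional bookkeeping to accommodate the fact that both extremes of $\mathbb I_\alpha$ now carry two digit families: $\{(-k,1),(-k-1,1)\}$ on the left and $\{(1,1),(1,2)\}$ on the right. First I would verify that $\Omega \supset \mathcal W$ by showing $\mathcal T_\alpha(\mathcal W) \supset \mathcal W$ up to measure zero. The piece  fibering over cylinders with digit $(i,j)$ distinct from the four extremal ones is sent to $\mathcal J$ by a direct computation using Lemma~\ref{l:conjByRofAtoPc} together with the defining images of the $\Phi_i$ in Lemma~\ref{l:jIsCorrect}. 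The remaining pieces, fibering over the four extremal cylinders, are pushed outward by $\mathcal T_\alpha$ and their iterates fill the complementary portions of $\mathcal W$.

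Second, I would establish the explicit decomposition \eqref{e:jtUnionBig} by tracking the image of each $\Delta_\alpha(w)\times \Phi_i$ under $\mathcal T_\alpha^{|w|}$ via the analog of Lemma~\ref{l:imagesAndReversal}: the $y$-coordinate action is governed by $R M_w R^{-1}$ which Lemma~\ref{l:conjByRofAtoPc} rewrites in terms of $M_{\overleftarrow{w}}^{-1}$. The central lemma needed is the large-$\alpha$ analog of Lemma~\ref{l:admissibility}: the reversal $w \mapsto \overleftarrow{w}$ is a self-bijection of each of $\mathscr A$ and $\mathscr B$. I would prove this first at the synchronization endpoints $\eta_{-k,v}, \delta_{-k,v}, \zeta_{-k,v}$ by means of the ergodicity established in Propositions~\ref{p:ergodicLargeEta}, \ref{p:ergodicDelta} and \ref{p:ergodicLargeZeta}, then transport to non-synchronizing $\alpha$ by an approximation argument as in Corollary~\ref{c:symmetryInLimitAlph}: non-synchronizing parameters are limits of $\eta_{-k,v_i}$ where $v_{i+1} = \Theta_{q_i}(v_i)$, and for each fixed word length $L$ the two languages agree on words of length $\le L$ for all $i$ sufficiently large. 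The verification at the endpoints would use the explicit formulas \eqref{e:underDminKv} and \eqref{e:digitsNotSmall}, the palindromic structure of $v \in \mathcal V$, and the self-dominance property of Lemma~\ref{l:selfD}. The role of $\Phi'_2$ and the ``extra" disjoint union indexed by $s \ge 2$ in \eqref{e:jtUnionBig} reflects the fact that the fiber above $\ell_1$ is strictly larger than that above $\ell_0$, so that the portion of $\Phi_2 \setminus \Phi_1$ only gets propagated along the prefix $\underline{b}^\alpha_{[2,s]}$ of the orbit of $\ell_0$; this is a bookkeeping matter once the admissibility statement is in place.

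Third, I would prove finiteness of $\mu(\Omega)$ in analogy with Proposition~\ref{p:finMeas}. The extreme fibers $\mathcal F_{\ell_0}$ and $\mathcal F_{r_0}$ are computed from $\Phi_1$ and $\Phi_5$ and are seen to lie strictly in the region $\{(x,y) : 1+xy > 0\}$ by evaluating $RA^{-k-1}CR^{-1}$ and $RAC^2R^{-1}$ on the relevant endpoints; since every matrix $RMR^{-1}$ with $M \in G_n$ preserves the locus $y=-1/x$, every fiber of $\Omega$ avoids this locus by the same uniform gap, yielding integrability of $d\mu$ on $\Omega$.

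Finally, for the natural-extension and ergodicity statements, I would invoke (\cite{CKStoolsOfTheTrade}, Theorem~2.3). Its hypotheses are now at hand: bijectivity of $\mathcal T_\alpha$ on $\Omega$ (just established), finiteness of $\mu(\Omega)$, boundedness of the vertical fibers of $\Omega$ (visible from the form of $\mathcal W$ together with \eqref{e:jtUnionBig}), and the bounded non-full cylinder condition of Remark~\ref{rmk:bddNonfullCylinders}; the last follows because any non-synchronizing $\alpha$ arises as a limit under a sequence $\Theta_{q_i}$ applied to a word in $\mathcal V$, so the set of digits appearing in $\underline b^\alpha$ and $\overline b^\alpha$ is finite. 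The principal obstacle will be the admissibility of reversed words: the combinatorial analysis is more delicate than in Lemma~\ref{l:admissibility} because one must simultaneously control the interaction between occurrences of $(1,1), (1,2)$ in $\overline b(-k,v)$ and of $(-k,1), (-k-1,1)$ in $\underline b(-k,v)$, and the reversal operation can interchange prefixes and suffixes across the boundary marked by $\mathfrak b_\alpha$. Once that combinatorial lemma is in place, the remaining steps are parallel to the small-$\alpha$ case and essentially formal.
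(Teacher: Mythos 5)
Your overall architecture matches the paper's: establish the decomposition, prove admissibility of reversed words, show finite measure, then invoke Theorem~2.3 of \cite{CKStoolsOfTheTrade}. But two points in your outline misjudge where the difficulty lies.

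First, the ``interaction'' you flag as the principal obstacle is not actually there. The reversal operation in the decomposition \eqref{e:jtUnionBig} never mixes the two digit families: $\mathscr A$ lives entirely in the alphabet $\{(-k,1),(-k-1,1)\}$ and $\mathscr B$ entirely in $\{(1,1),(1,2)\}$, and admissibility of a word confined to one of these alphabets depends only on one endpoint expansion ($\underline{b}^\alpha$ for $\mathscr A$, $\overline{b}^\alpha$ for $\mathscr B$). The paper treats the two alphabets independently --- Lemma~\ref{l:admissible} for $\mathscr B$-type words, Lemma~\ref{l:symmetry} for $\mathscr A$-type --- and each reduces to exactly the small-$\alpha$ argument of Lemma~\ref{l:admissibility}, with no interchange ``across $\mathfrak b_\alpha$'' to control. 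Your invocation of ergodicity at the synchronization endpoints is also a slight misattribution: the paper uses the explicit bijectivity domain from Proposition~\ref{p:OmegaForEtaKneg} at those $\alpha$-values (see the start of the proof of Lemma~\ref{l:correctFibersBottom}), not ergodicity directly; the limiting step that transports to non-synchronizing $\alpha$ is a metric convergence of languages, parallel to Corollary~\ref{c:symmetryInLimitAlph}.

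Second, what you dismiss as ``a bookkeeping matter'' is in fact the genuinely new technical content in the large-$\alpha$ case: the analysis of the upper fibers carried by $\Phi_1$ and $\Phi'_2$. Because the digit $(-k-1)$ can be repeated arbitrarily often, the fiber over $\ell_0$ is assembled by a limit along accumulation points $\nu_w$, and the intervals $[\lambda_w,\mu_w]$ decompose not along a linear chain of prefixes but along a tree of ``children'' $w,(-k-1)^i,-k$ (Definition~\ref{d:children}, Lemma~\ref{l:childrenAreAdmissible}, Lemma~\ref{l:coversInterval}). The interval $[\ell_0,\mu_{-k}]\cup[\lambda_{-k-1},\mu_{-k-1}]$ is then tiled by the pieces $(\nu_w,\rho_w)$, and matching these against the image fibers from $\Phi'_2\setminus\Phi_1$ requires the bijection $w \mapsto \widetilde{w}$ of Lemma~\ref{l:goodRelationWwTilde} between words of length $\ge 2$ with suffix $-k$ and words dominating $\underline b^\alpha_{[2,\infty)}$. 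This is not a cosmetic adjustment of the small-$\alpha$ argument; without it, the $s\ge 2$ piece in \eqref{e:jtUnionBig} has no organizing principle. I would expect that if you carried out your sketch as written, you would find yourself reinventing this structure under pressure, so it is worth identifying it explicitly before starting.
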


 The surjectivity of $\mathcal T$ on $\Omega$ follows from the definition of $\Omega$.    Key to proving the remaining statements of the proposition is to show that each $x\in \mathbb I_{\alpha}$ of  first digit other than $\{(-k,1), (-k-1,1), (1,1), (1,2)\}$    has the same $y$-fiber in $\Omega$ as in $\mathcal W$.

\subsection{Agreement of fibers on lower portions of $\mathcal W, \Omega$}   

\begin{Lem}\label{l:fiberIntervals}  For 
 \[ w = (1,1)^{a_1}(1,2)^{b_1}\cdots (1,1)^{a_s}(1,2)^{b_s} \in \mathcal L_{\alpha}\,,\]
 $\mathcal T^{|w|}(\, \Delta_{\alpha}(w) \times \Phi_4 \,)$ has $y$-fiber equal to 
\[\begin{cases} [ - ( M_{\overleftarrow{w}} AC)^{-1} \cdot \ell_0,\, - ( (AC)^{-1} M_{\overleftarrow{w}} AC)^{-1} \cdot \ell_0]& \text{if}\;\; a_1>0\\
\\
                        [-(M_{\overleftarrow{w}} AC)^{-1} \cdot \ell_0,\, - (AC (AC^2)^{-1} M_{\overleftarrow{w}} AC)^{-1} \cdot r_0]& \text{if}\;\; a_1=0
  \end{cases}
 \]                       
\end{Lem}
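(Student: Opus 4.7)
The plan is to apply Lemma~\ref{l:imagesAndReversal} directly to $[-b,-a]=\Phi_4=[-C\cdot r_0,-\ell_0]$. This immediately expresses $\mathcal T^{|w|}(\Delta_\alpha(w)\times\Phi_4)$ as a single rectangle whose $y$-fiber is
\[
\bigl[\,-(M_{\overleftarrow{w}})^{-1}\cdot(C\cdot r_0),\; -(M_{\overleftarrow{w}})^{-1}\cdot\ell_0\,\bigr].
\]
The remaining work is to rewrite these two endpoints in the concatenation forms asserted in the statement, via the matrix identities $M_{(1,1)}=AC$, $M_{(1,2)}=AC^2$, and the concatenation identity $M_{\overleftarrow{w}}\cdot M_d = M_{\overleftarrow{w\circ d}}$ (which is immediate from Definition~\ref{d:notationForMatricesOfWords}).

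The two cases in the statement reflect whether the last letter of $\overleftarrow w$ is $(1,1)$ (case $a_1>0$) or $(1,2)$ (case $a_1=0$); equivalently, whether the rightmost factor of $M_{\overleftarrow w}$ is $AC$ or $AC^2$. In either case I will right-multiply inside the bracket by an extra $AC$, producing the factor $(M_{\overleftarrow{w}}\cdot AC)^{-1}$ that appears in both claimed lower endpoints. In case $a_1>0$, this extra $AC$ combines cleanly with the last letter of $\overleftarrow w$, and a short computation using the defining relation $\mathfrak b = C^{-1}\cdot\ell_0$ converts the $C\cdot r_0$ appearing on the left into $\ell_0$, producing $-(M_{\overleftarrow w}\cdot AC)^{-1}\cdot\ell_0$ as the lower endpoint; the upper endpoint $-(M_{\overleftarrow{w}})^{-1}\cdot \ell_0$ is then recast by inserting an inner conjugation $AC \cdot (AC)^{-1}$, yielding $-((AC)^{-1}M_{\overleftarrow w}AC)^{-1}\cdot \ell_0$. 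In case $a_1=0$, the same right-multiplication by $AC$ does \emph{not} combine with the trailing $AC^2$ of $M_{\overleftarrow w}$, which is the source of the leftover factor $AC(AC^2)^{-1}$ in the upper endpoint; it also forces the reference point on that endpoint to be $r_0$ rather than $\ell_0$, accounting for the asymmetry between the two claimed formulas.

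The main obstacle is not the algebra itself but verifying that each matrix manipulation is internally consistent: specifically, the step that transports $C\cdot r_0$ to $\ell_0$ (or, in case $a_1=0$, to $r_0$) must be compatible with the defining identity $\mathfrak b=C^{-1}\cdot\ell_0$, with the admissibility hypothesis $w\in\mathcal L_\alpha$, and with the fact that $\alpha>\zeta_{-1,n-2}$ places us in the large-$\alpha$ regime where the cylinders $\Delta_\alpha(1,1)$ and $\Delta_\alpha(1,2)$ abut at $\mathfrak b$. I expect this consistency check to be handled cleanly by a short induction on the number of blocks of $w$, using admissibility at each step to control which of the two forms of $M_d$ is being absorbed into $M_{\overleftarrow w}$; once that bookkeeping is complete the two displayed fiber formulas follow.
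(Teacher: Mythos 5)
Your first step already fails: Lemma~\ref{l:imagesAndReversal} (and its ingredient Lemma~\ref{l:negAndReverse}) is proved only for words $u$ whose letters are \emph{simplified} digits, i.e.\ each letter contributes a generator of the form $A^pC$. In the present lemma the word $w$ has letters $(1,1)$ and $(1,2)$, contributing $AC$ and $AC^2$ respectively, and $AC^2$ is not of the form $A^pC$. Consequently the naive reversal identity $R M_w R^{-1}\cdot(-x) = -(M_{\overleftarrow{w}})^{-1}\cdot x$ that you invoke does not hold. A direct computation (as in the paper's proof) gives $R(AC^2)R^{-1}\cdot(-x)=-(CAC)^{-1}\cdot x$, not $-(AC^2)^{-1}\cdot x$; the presence of $CAC$ rather than $AC^2$ in the reversed word is exactly what forces the extra conjugation by $AC$ that appears in the statement. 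Concretely, using $AC^2=(AC)(CAC)(AC)^{-1}$ one telescopes to find
\[
R M_w R^{-1}\cdot(-x)\;=\;-\bigl((AC)^{-1}M_{\overleftarrow{w}}\,AC\bigr)^{-1}\cdot x,
\]
and the conjugation by $AC$ cannot be discarded, nor recovered after the fact.

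This also defeats your subsequent step: you propose to ``recast'' $-(M_{\overleftarrow{w}})^{-1}\cdot\ell_0$ as $-\bigl((AC)^{-1}M_{\overleftarrow{w}}AC\bigr)^{-1}\cdot\ell_0$ by ``inserting an inner conjugation $AC\cdot(AC)^{-1}$.'' But inserting $(AC)(AC)^{-1}$ somewhere in a product leaves the matrix unchanged, whereas $\bigl((AC)^{-1}M_{\overleftarrow{w}}AC\bigr)^{-1}=(AC)^{-1}M_{\overleftarrow{w}}^{-1}\,AC$ is a genuinely different Möbius transformation from $M_{\overleftarrow{w}}^{-1}$ and acts differently on $\ell_0$. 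So both the starting formula for the fiber and the repair you offer are incorrect. The correct route is the one the paper uses: establish the conjugated reversal identity directly from Lemma~\ref{l:conjByRofAtoPc} (treating the $AC^2$ letters via the relation $AC^2=(AC)\cdot C$ and collecting the resulting $CAC$ factors into a single conjugation by $AC$), and only then specialize $x$ to the endpoints of $\Phi_4$.
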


 \begin{proof}  We have  $M_w = (AC^2)^{b_s}(AC)^{a_s}\cdots (AC^2)^{b_1}(AC)^{a_1}$,  and  for any real $x$  Lemma~\ref{l:conjByRofAtoPc} gives  
 \[ R M_w R^{-1}\cdot -x = - (\,(AC)^{a_1} (CAC)^{b_1}\cdots (AC)^{a_s}(CAC)^{b_s} \,)^{-1}\cdot x.\]  
 Independent of the vanishing of $a_1$,  one finds that  $R M_w R^{-1}\cdot -x$ equals $- ( (AC)^{-1} M_{\overleftarrow{w}} AC)^{-1} \cdot x$.  When $x = \lambda_{1,1} = (AC)^{-1}\cdot \ell_0$, this value equals $- ( M_{\overleftarrow{w}} AC)^{-1} \cdot \ell_0$.  When $a_1 = 0$,  the expression  $(AC)^{-1} M_{\overleftarrow{w}}$ factors as a $C$ followed by a product of powers of $AC^2$ and $AC$;  replacing $\ell_0$ by $r_0 = A\cdot \ell_0$ results in an expression which is a word in our matrices, while giving the formula indicated for the largest value in the image of the fiber.  
 \end{proof}

The previous result shows that corresponding to $w$ are fibers that, upon using a negative exponent to indicate cancelling a letter in a word, one  might express as  $[-\lambda_{(1,1)\overleftarrow{w}}, -\lambda_{(1,1)\overleftarrow{w}(1,1)^{-1}}\,]$  and  $[-\lambda_{(1,1)\overleftarrow{w}}, -\rho_{(1,1)\overleftarrow{w}(1,2)^{-1}(1,1)}\,]$.  For these expressions to be valid,  we must then also prove that each of the subscripts is given by an admissible word.      The next two lemmas give us this result.

\begin{Lem}\label{l:fillDelOneOne}    Fix $n\ge 3$,  and  either a non-synchronizing $\alpha > \gamma_{n}$ or $\alpha = \eta_{-k,v}$ with $k\in \mathbb N$ and $v \in \mathcal V$.   Then 
 \[ \Delta_{\alpha}(1,1)  = \overline{\sqcup_{w\in (1,1)\{(1,1), (1,2)\}^{^*}\cap \mathcal L_{\alpha}}\, [\lambda_w,   \lambda_{w,(1,1)}) \cup [\rho_{w,(1,1)},   \,  \lambda_{w,(1,2)})}.\]
\end{Lem}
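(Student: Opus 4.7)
The plan is to prove the identity by iteratively partitioning $\Delta_\alpha(1,1)$, in direct analogy with the argument of Lemma~\ref{l:fillingUpI} in the small-$\alpha$ setting. I would work with three ingredients: (i) every $w \in (1,1)\{(1,1),(1,2)\}^*\cap \mathcal L_\alpha$ yields a \emph{full} cylinder; (ii) the digit $(1,2)$ is maximal in the order $\prec$, so $\rho_{w,(1,2)} = \rho_w$; and (iii) the eventual expansiveness of $T_\alpha$ (Proposition~\ref{p:expansive}) forces the residual set of points whose entire $T_\alpha$-expansion lies in $\{(1,1),(1,2)\}^{\mathbb N}$ to be Lebesgue null.

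First I would establish (i). By Remark~\ref{rmk:bddNonfullCylinders}, the only non-full cylinders of $T_\alpha$ have digit sequences that are prefixes of $\underline b^\alpha$ or $\overline b^\alpha$. For the $\alpha$ under consideration, $\underline b^\alpha$ begins with $(-k,1)$ for some $k \ge 1$ and, by \eqref{e:digitsNotSmall}, $\overline b^\alpha$ begins with $(1,2)$; no such prefix starts with $(1,1)$. Hence every such $w$ is full, $T_\alpha^{|w|}$ bijects $\Delta_\alpha(w)$ with $\mathbb I_\alpha$, and both extensions $w,(1,1)$ and $w,(1,2)$ remain admissible with full sub-cylinders.

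Exploiting (ii), which reflects that $(1,2)$ labels the rightmost cylinder of $\mathbb I_\alpha$ (see Figure~\ref{f:omegaLargeAlpLessThanDelta_{-k,v}}), I obtain for each such full $w$ the one-step decomposition
\begin{equation*}
\Delta_\alpha(w) \;=\; \big[\lambda_w,\lambda_{w,(1,1)}\big)\;\cup\;\Delta_\alpha(w,(1,1))\;\cup\;\big[\rho_{w,(1,1)},\lambda_{w,(1,2)}\big)\;\cup\;\Delta_\alpha(w,(1,2)),
\end{equation*}
where the middle interval gathers all sub-cylinders $\Delta_\alpha(w,(c,d))$ with $(1,1)\prec(c,d)\prec(1,2)$ and no piece lies to the right of $\Delta_\alpha(w,(1,2))$ thanks to the maximality of $(1,2)$. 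The first and third pieces are exactly the contributions of $w$ to the right-hand side of the lemma, while $\Delta_\alpha(w,(1,1))$ and $\Delta_\alpha(w,(1,2))$ are full and so admit the analogous decomposition.

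Iterating $N$ times exhausts all pieces indexed by $w$ with $|w|\le N+1$ and leaves a residue equal to the disjoint union of $\Delta_\alpha(w')$ over $w'\in(1,1)\{(1,1),(1,2)\}^{N}\cap\mathcal L_\alpha$. Every point of the residue has $T_\alpha$-expansion lying entirely in $\{(1,1),(1,2)\}^{\mathbb N}$, and by (iii) the Lebesgue measure of this residue tends to zero as $N\to\infty$. Passing to closures then yields the claimed equality. The principal obstacle, as in the small-$\alpha$ case, is (iii): one must verify that the first-return map on $\Delta_\alpha(1,1)\cup\Delta_\alpha(1,2)$ is sufficiently expanding for the residual cylinders to shrink uniformly, and this is the place where care with the reduction to Proposition~\ref{p:expansive} is required.
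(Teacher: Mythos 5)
Your overall plan---iteratively split $\Delta_\alpha(1,1)$ into two "gap" intervals plus the two sub-cylinders $\Delta_\alpha(w,(1,1))$, $\Delta_\alpha(w,(1,2))$, then argue the residual set of points with all digits in $\{(1,1),(1,2)\}$ has empty interior---is the same strategy the paper uses. But your intermediate claim (i) is false, and it does real work in your write-up.

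Take $w=(1,1)(1,2)$. Since $\Delta_\alpha(1,1)$ is full, $T_\alpha$ carries $\Delta_\alpha((1,1)(1,2))$ onto $\Delta_\alpha(1,2)=[\lambda_{1,2},r_0)$, and then $T_\alpha^2(\Delta_\alpha((1,1)(1,2)))=T_\alpha(\Delta_\alpha(1,2))=[\ell_0,r_1)$, which is a proper subinterval of $\mathbb I_\alpha$ unless $r_1=r_0$ (generically false for the $\alpha$ in question). So $\Delta_\alpha((1,1)(1,2))$ is \emph{not} full, even though it contains none of $\ell_0$, $r_0$, $\mathfrak b_\alpha$. The statement you extract from Remark~\ref{rmk:bddNonfullCylinders} concerns the rank-one cylinders; non-fullness propagates to deeper cylinders through the recursion $T^{m}(\Delta_\alpha(w_1\cdots w_m))=T(T^{m-1}(\Delta_\alpha(w_1\cdots w_{m-1}))\cap\Delta_\alpha(w_m))$, and any $w$ whose tail hits the $(1,2)$-branch inherits the short image. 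Consequently you cannot assert that both $w,(1,1)$ and $w,(1,2)$ are always admissible, nor write $\rho_{w,(1,2)}=\rho_w$ for all $w$. The decomposition you display is only literally correct for $w$ with $w,(1,2)\in\mathcal L_\alpha$; for the others the rightmost piece is empty and the gap interval ends at $\rho_w$. The paper sidesteps fullness entirely: it uses the order characterization of admissibility for words in $\{(1,1),(1,2)\}^*$, namely that such a $w$ is admissible if and only if $w\preceq\overline d^\alpha_{[1,\infty)}$, whence $w,(1,1)$ is automatically admissible because $(1,1)$ is the minimal digit, while $\Delta_\alpha(w,(1,2))$ is simply declared empty when $w,(1,2)\notin\mathcal L_\alpha$. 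You should make that the backbone of step (i) rather than a fullness claim.

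Separately, invoking Proposition~\ref{p:expansive} for step (iii) is circular here: that proposition is proved in \S\ref{s:dynamProp} from the bijectivity-domain machinery, and the present lemma is one of the ingredients used (via Lemma~\ref{l:correctFibersBottom} and Proposition~\ref{p:fillUpFromW}) to build that machinery. The paper avoids this by appealing to the already-established dynamics of the boundary map $T_{n,1}$ for the shrinking-cylinder/uniqueness property on the sub-shift over $\{(1,1),(1,2)\}$, citing \cite{CaltaKraaikampSchmidt} and \cite{CKStoolsOfTheTrade}; you should do the same.
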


 \begin{proof}  By definition in the union, we include $w = (1,1)$.  Note that any $w\in \{(1,1), (1,2)\}^{^*}$ is in $\mathcal L_{\alpha}$ if and only if   $w \preceq \overline{d}_{[1, \infty)}^{\alpha}$; in particular, whenever $w \in \mathcal L_{\alpha}$ so is also $w,(1,1)$. 
 
  For any $w\in (1,1)\{(1,1), (1,2)\}^{^*}\cap \mathcal L_{\alpha}$,   we have 
 \[  \Delta_{\alpha}(w) = [\lambda_w,   \lambda_{w,(1,1)}) \cup \Delta_{\alpha}(w, (1,1)\,)\, \cup [\rho_{w,(1,1)},   \,  \lambda_{w,(1,2)}) \cup  \Delta_{\alpha}(w, (1,2)\,),\]
 where by definition $ \Delta_{\alpha}(w, (1,2)\,) = \emptyset$ if $w, (1,2) \notin \mathcal L_{\alpha}$.   From this, by beginning with $w=(1,1)$,   it follows that $\Delta_{\alpha}(1,1)$ does equal the closure of the union over all $w\in (1,1)\{(1,1), (1,2)\}^{^*}\cap \mathcal L_{\alpha}$ of  $ [\lambda_w,   \lambda_{w,(1,1)}) \cup [\rho_{w,(1,1)},   \,  \lambda_{w,(1,2)})$.   
 
 We have tacitly used the fact that each admissible infinite word in $\{(1,1), (1,2)\}^{^*}$  represents a unique point,  this holds due to the dynamics of the $T_{n,1}$; see (\cite{CaltaKraaikampSchmidt}, \S ~3) and (\cite{CKStoolsOfTheTrade}, \S~2.6).
\end{proof}
 
\begin{Lem}\label{l:admissible}    Fix $n\ge 3$,  and  either a non-synchronizing $\alpha > \gamma_{n}$ or $\alpha = \eta_{-k,v}$ with $k\in \mathbb N$ and $v \in \mathcal V$.    Let $w\in (1,1)\{(1,1), (1,2)\}^{^*}\cap \mathcal L_{\alpha}$.  Then $(1,1)\overleftarrow{w} \in\mathcal L_{\alpha}$ and so are $(1,1)\overleftarrow{w}(1,1)^{-1}$ if $w$ has prefix $(1,1)$ and $(1,1)\overleftarrow{w}(1,2)^{-1}(1,1)$ otherwise.  
\end{Lem}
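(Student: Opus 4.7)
The plan is to mirror the two-step proof of Lemma~\ref{l:admissibility} and Corollary~\ref{c:symmetryInLimitAlph} from the small $\alpha$ setting. First I would establish the claim for $\alpha = \eta_{-k,v}$, where $\overline{b}^{\alpha}_{[1,\infty)}$ is eventually periodic with an explicit description obtained by extending \eqref{e:digitsNotSmall}. Then for a non-synchronizing $\alpha > \gamma_n$ I would use that $\alpha$ is a limit of a sequence $\eta_{-k,v_i}$ with $v_{i+1} = \Theta_{q_i}(v_i)$, so that for each fixed length $L$ one has $\overline{b}^{\alpha}_{[1,L]} = \overline{b}^{\eta_{-k,v_i}}_{[1,L]}$ for all sufficiently large $i$. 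This local agreement of the languages transfers admissibility of any fixed finite word between the limit and a tail of the sequence, so the crux lies in the endpoint case.

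For $\alpha = \eta_{-k,v}$ I would argue by induction on $|w|$, following the template of Lemma~\ref{l:admissibility}. Admissibility of subwords is inherited, and the inductive hypothesis handles $w$ minus its last letter, so any failure of admissibility of the candidate word, call it $\widetilde{w}$, forces $\widetilde{w}$ itself (and not a proper prefix) to witness the violation, i.e.\ $\widetilde{w} \succ \overline{b}^{\alpha}_{[1,|\widetilde{w}|]}$ in the relevant order. As in the proof of Lemma~\ref{l:smallestRvalue}, I would introduce temporary letters $a := (1,1)$, $b := (1,2)^{n-3}$, $c := (1,2)^{n-2}$ and a terminal symbol $\star$, with ordering $a \prec \star \prec b \prec c$, so that $u = ca$, $\mathcal{E} = a(ca)^{k-2}b$, $\mathcal{F} = a(ca)^{k-1}b$, and $\overline{b}(-k,v)$ factors accordingly. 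The self-similarity of the $\mathcal{V}$-tree (mediated by the derived-words operator $\mathscr{D}$) and the palindromic nature of $v$ then force any reversed admissible word in $\{(1,1),(1,2)\}^{*}$ to stay bounded above in $\prec$ by the analogous reversed block decomposition of $\overline{b}(-k,v)$, \emph{mutatis mutandis} from the argument in the $\{-1,-2\}^{*}$ setting. The two end-modifications, $(1,1)^{-1}$ and $(1,2)^{-1}(1,1)$, correspond to a one-letter change in the \emph{rightmost} block of $\widetilde{w}$, which by the reversal is the \emph{leftmost} block of $w$; since the hypothesis on the prefix of $w$ selects which of these two edits is applied, the edit stays within a single $\mathcal{E}$ or $\mathcal{F}$ factor and therefore respects the block-level ordering.

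The main obstacle is not the induction itself but the extra care required by these end-modifications. Lemma~\ref{l:admissibility} exploited a clean involution $u \mapsto \overleftarrow{u}$; here the prepended $(1,1)$ together with $(1,1)^{-1}$ or $(1,2)^{-1}(1,1)$ breaks that symmetry, so the argument must verify an additional tail inequality showing that the edited word still lies below $\overline{b}^{\alpha}_{[1,\infty)}$ in the dictionary order. This is exactly the statement adapted to the vertical-fiber calculation of Lemma~\ref{l:fiberIntervals} and justifies the words appearing as subscripts of $\lambda$ and $\rho$ in the proof of Proposition~\ref{p:fillUpFromW}.
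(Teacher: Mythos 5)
Your proposal is correct and follows essentially the same route as the paper: the paper likewise reduces the non-synchronizing case to the endpoint values $\eta_{-k,v}$ by the limiting argument of Corollary~\ref{c:symmetryInLimitAlph}, and settles the endpoint case by the minimal-counterexample word argument modeled on Lemma~\ref{l:admissibility}. The one simplification you overlook is that, since admissibility of words over $\{(1,1),(1,2)\}$ is a pure suffix-domination condition relative to $\overline{b}_{[1,\infty)}^{\alpha}$, the three end-modified words are admissible if and only if $\overleftarrow{w}$ is, so no ``additional tail inequality'' is needed beyond the plain reversal statement that $w\in\mathcal L_{\alpha}$ implies $\overleftarrow{w}\in\mathcal L_{\alpha}$.
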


 \begin{proof}  For $w$ of digits in $\{(1,1), (1,2)\}$, we have $w$ is admissible if and only if it and all its  suffixes are smaller or equal to the expansion of $r_0(\alpha)$, thus than $\overline{b}_{[1, \infty)}^{\alpha}$.  In particular,  $w$ is admissible if and only if $w, (1,1)$ is.  Furthermore, 
 $\overleftarrow{w} \in\mathcal L_{\alpha}$ if and only if the various $(1,1)\overleftarrow{w}, (1,1)\overleftarrow{w}(1,1)^{-1}, (1,1)\overleftarrow{w}(1,2)^{-1}(1,1)$ are (with cases depending again on whether $w$ has prefix $(1,1)$ or not).    
 
 We thus aim to show that $w \in \mathcal L_{\alpha}$ implies  $\overleftarrow{w} \in\mathcal L_{\alpha}$.  For this, we argue by contradiction and can assume that  $w$ is of minimal length such that $\overleftarrow{w} \notin  \mathcal L_{\alpha}$.    We can now argue as in the proof of  Lemma~\ref{l:admissibility}  and Corollary~\ref{c:symmetryInLimitAlph}.
\end{proof} 
 
\begin{Lem}\label{l:correctFibersBottom}   With $\alpha$ as above, each $x\in \mathbb I_{\alpha}$ of  first digit other than $\{(1,1), (1,2)\}$    has the same $y$-fiber in $\Omega \cap \{y\le 0\}$ as in $\mathcal W \cap \{y\le 0\}$.  
\end{Lem}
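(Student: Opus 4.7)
The plan is to establish the claimed equality of fibers by decomposing, for each such $x$, the set $\Omega \cap \{y \le 0\}$ above $x$ into the ingredients appearing in the candidate formula \eqref{e:jtUnionBig}, and matching each ingredient to the appropriate portion of $\Phi_1$, $\Phi_2$, or $\Phi_3$ prescribed by $\mathcal{W}$.

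First I would invoke Lemma~\ref{l:jIsCorrect} to place $\mathcal{J} \subset \mathcal{T}(\mathcal{W}) \subset \Omega$, thereby covering the common upper part $[-\rho_{2,1},-\mu_{-k-1,1}]$ of the desired fiber over every $x$, plus the extra slab $[-\rho_{-k,1},-\mu_{-k,1}]$ over $x \in [\ell_1,r_0]$. Next I would verify that among the other terms of \eqref{e:jtUnionBig}, only the $\mathscr{B}$-iterates $\mathcal{T}^{|w|}(\Delta_\alpha(w)\times \Phi_4)$ can contribute to $y$-fibers over $x$ whose first digit lies outside $\{(1,1),(1,2)\}$: the $\mathscr{A}$-iterates land by direct computation in $\Delta_\alpha(1,1)\cup\Delta_\alpha(1,2)$, and the orbit-of-$\ell_0$ slabs have $x$-extent contained in $[\ell_s, r_0)$, which, for all but small $s$, sits in those same two rightmost cylinders; the small-$s$ cases are absorbed into the explicit rectangle there.

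For the remaining $\mathscr{B}$-contributions, I would compute each image explicitly via Lemma~\ref{l:fiberIntervals}, obtaining a rectangle whose $y$-extent is an interval with endpoints of the form $-\lambda_{(1,1)\overleftarrow{w}}$ (and a partner endpoint depending on whether $w$ starts with $(1,1)$ or $(1,2)$); Lemma~\ref{l:admissible} certifies that the reversed-word subscripts are admissible. Summing over $w \in \mathscr{B}$ and applying the decomposition argument of Lemma~\ref{l:fillDelOneOne} in the ``reversed-word'' parameterization then shows that the $x$-extents of these rectangles laminate over the three left-hand pieces $[\ell_0,\ell_1]$, $[\ell_1,\lambda_{-k-2,1})$, $[\lambda_{-k-2,1},\rho_{2,1}]$, and that the $y$-extents fill the part of each $\Phi_i$ lying below the $\mathcal{J}$-strip, down to $y=-\mathfrak{b}$, without overshooting the upper bounds $-\mu_{-k-1,1}$, $-\mu_{-k,1}$, or $-\ell_0$. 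Combined with $\mathcal{J}$, this recovers exactly $\Phi_i$.

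The main obstacle is this lamination/tiling match: showing that the $\mathscr{B}$-rectangles abut without gaps and without overshoot. I would carry it out by induction on $|w|$, at each step checking that a child rectangle shares a horizontal boundary with its parent via one of the four elementary identities among $R(AC)^{\pm 1}R^{-1}$ and $R(AC^2)^{\pm 1}R^{-1}$ (mirroring the lamination arguments of Section~\ref{leftBlocks}), and closing up the resulting countable union by the same expansivity-uniqueness fact for admissible infinite words in $\{(1,1),(1,2)\}^{\mathbb{N}}$ that was used to prove Lemma~\ref{l:fillDelOneOne}.
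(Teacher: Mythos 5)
Your plan correctly identifies the toolkit (Lemmas~\ref{l:fiberIntervals}, \ref{l:fillDelOneOne}, \ref{l:admissible}, \ref{l:jIsCorrect}) and correctly concludes that only the $\mathscr{B}$-iterates $\mathcal T^{|w|}(\Delta_\alpha(w)\times\Phi_4)$ need to be accounted for in $\{y\le 0\}$. But the justification you offer for discarding the $\mathscr{A}$-iterates and the orbit-of-$\ell_0$ slabs is wrong: their $x$-extents are $T^{|w|}(\Delta_\alpha(w))$, which for a full cylinder $\Delta_\alpha(w)$ is all of $\mathbb I_\alpha$, certainly not contained in $\Delta_\alpha(1,1)\cup\Delta_\alpha(1,2)$. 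The actual reason these pieces are irrelevant here is that their $y$-extents lie in $\{y>0\}$: for nonempty $w$, $RM_wR^{-1}\cdot\Phi_1 = -[\mu_{\overleftarrow w,-k-1},\mu_{\overleftarrow w}]$ with both $\mu$'s negative (they are $M_v^{-1}\cdot\mathfrak b$, a point near $\ell_0<0$), so the image interval is strictly positive, and similarly for the orbit-of-$\ell_0$ slabs with $\Phi'_2\setminus\Phi_1$. You get the right conclusion, but for the wrong reason.

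The larger problem is that your proposal never addresses the sub-range $x\in[\mathfrak b,\lambda_{1,2})$, which consists of $x$ with first digit $(i,2)$, $i\neq 1$, and is therefore squarely within the scope of the lemma. Over this range $\mathcal W$'s fiber in $\{y\le 0\}$ is $[-C\cdot r_0,0]$ from $\Phi_4$, strictly shallower than the $[-\mathfrak b,0]$ of $\Phi_1,\Phi_2,\Phi_3$. For such $x$, not every $\mathscr B$-image covers $x$: the $x$-extent of $\mathcal T^{|w|}(\Delta_\alpha(w)\times\Phi_4)$ is $[\ell_0, r_j]$ with $r_j = T^{|w|}(\rho_w)$, so it covers $x\in[\mathfrak b,\lambda_{1,2})$ only when $r_j\in\Delta_\alpha(1,2)$. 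One must therefore show that the images indexed by the complementary $w$ (those with $r_j\in\Delta_\alpha(1,1)$, called ``of $(1,1)$ right value'' in the paper) fill exactly $[-\mathfrak b,-C\cdot r_0]$; subtracting this from the Part-1 total $[-\mathfrak b,-\lambda_{1,1}]$ then gives $[-C\cdot r_0,-\lambda_{1,1}]$ over $[\mathfrak b,\lambda_{1,2})$, which together with $\mathcal J$'s $[-\lambda_{1,1},0]$ matches $\Phi_4$. Your induction/tiling plan controls the totality of $\mathscr B$-images and recovers only $[-\mathfrak b,-\lambda_{1,1}]$; it has no mechanism to locate the internal dividing line $-C\cdot r_0$ or to split the tiling there according to the location of $r_j$. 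The paper obtains this by a limiting argument: the dichotomy is known for $\alpha=\eta_{-k,v}$ via the explicit bijectivity domain of Proposition~\ref{p:OmegaForEtaKneg}, and is passed to non-synchronizing $\alpha$ by letting $\eta_{-k,v}\to\alpha$ along a branch of $\mathcal V$, using that any fixed finite $w$ is eventually admissible, and eventually of the same ``$(1,1)$ right value'' type, for the approximating parameters, while $-\ell_0(\eta_{-k,v})\to-\ell_0(\alpha)$ and $-C\cdot r_0(\eta_{-k,v})\to-C\cdot r_0(\alpha)$. That approximation step is the missing ingredient in your proposal.
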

 
\begin{proof} Lemmas~\ref{l:fiberIntervals}, \ref{l:fillDelOneOne}, and \ref{l:admissible} combine to show that each $x$ less than the infimum of the elements of the $T_{\alpha}$-orbit has $y$ fiber in $\Omega_{\alpha}$ meeting $\{y\le 0\}$ in exactly  $[\mathfrak b, - \lambda_{1,1}]$.    Thus equality of fibers holds for all such $x$. 
 
 It remains to show the equality when $x \in [\mathfrak b, \lambda_{1,2})$.   For this, it suffices to show that $[-\mathfrak b, -C\cdot r_0]$ equals the closure of the union of the images of $\Phi_4$ corresponding to the cylinders  $\Delta_{\alpha}(w)$ such that $T^{|w|}(\rho_w) = r_j$ for some $j$ and such that $r_j \in  \Delta_{\alpha}(1,1)$.   Let us say that such a word $w$ is {\em of $(1,1)$ right value}.
 
 We first note that for any $\eta_{-k,v}$ this equality holds due to Proposition~\ref{p:OmegaForEtaKneg}.    Let $\alpha$ be a non-synchronizing value, then there  there is a fixed $k$ and a branch of $\mathcal V$ such that taking $v$ along the branch gives values of the form $\eta_{-k,v}$ converging to $\alpha$ from the right, and such that the $\overline{b}_{[1, \infty)}^{\eta_{-k,v}}$ agree to ever greater length with $\overline{b}_{[1, \infty)}^{\alpha}$.      Given $w\in (1,1)\{(1,1), (1,2)\}^{^*}\cap \mathcal L_{\alpha}$, since in particular $w \preceq \overline{b}_{[1, \infty)}^{\alpha}$ the finiteness of $w$ implies that for all $v$ sufficiently far along the branch of $\mathcal V$ that also $w \in \mathcal L_{\eta_{-k,v}}$.    Furthermore,  if $w$ is of $(1,1)$ right value with respect to $\alpha$ then it is also for all for all $v$ sufficiently far along the branch.   
 
 The convergence of the various $\eta_{-k,v}$  to $\alpha$ gives that  
 $RM_wR^{-1} \cdot -\ell_0(\eta_{-k,v}) \to RM_wR^{-1}\cdot -\ell_0(\alpha)$ and also   $-C\cdot r_0(\eta_{-k,v})\to -C\cdot r_0(\alpha)$.   Since for each $w$ of $(1,1)$ right value  $RM_wR^{-1} \cdot -\ell_0(\eta_{-k,v})$ is below $-C\cdot r_0(\eta_{-k,v})$,  we conclude that also $RM_wR^{-1} \cdot -\ell_0(\alpha) \le -C\cdot r_0(\alpha)$.    For any other $w\in (1,1)\{(1,1), (1,2)\}^{^*}\cap \mathcal L_{\alpha}$, we argue similarly to find that $RM_wR^{-1}\cdot -\ell_0(\alpha) \ge -C\cdot r_0(\alpha)$.   We conclude that $[-\mathfrak b, -C\cdot r_0]$ equals the closure of the union of the images of $\Phi_4$ corresponding to the cylinders indexed by words of $(1,1)$ right value.  The result thus holds. 
\end{proof} 

\subsection{Agreement of fibers on upper portions of $\mathcal W, \Omega$}   

\subsubsection{Statement and beginning results}
We announce the main result of this subsection.
 \begin{Prop}\label{p:correctFibersTop}   With $\alpha$ as above, each $x\in \mathbb I_{\alpha}$ of  first digit other than $\{(-k,1), (-k-1,1)\}$    has the same $y$-fiber in $\Omega \cap \{y\ge 0\}$ as in $\mathcal W \cap \{y\ge 0\}$.  
 \end{Prop}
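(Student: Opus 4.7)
The plan is to mirror the structure of Lemma~\ref{l:correctFibersBottom}, working now on the left side of $\mathbb I_\alpha$ with admissible words in the letters $\{(-k,1),(-k-1,1)\}$ in place of those in $\{(1,1),(1,2)\}$. The key geometric point is that the top boundary of $\mathcal{W}_\alpha$ above the two leftmost cylinders is determined by $\Phi_1$ and $\Phi_2$, with a transition at $\ell_1$ analogous to the transition at $\mathfrak{b}$ on the bottom. As in Lemma~\ref{l:correctFibersBottom}, surjectivity at the relevant $y$-heights is what must be checked.

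First I would prove an analog of Lemma~\ref{l:fiberIntervals}: for any $w = (-k,1)^{a_1}(-k-1,1)^{b_1}\cdots(-k,1)^{a_s}(-k-1,1)^{b_s}\in\mathcal{L}_\alpha$, use Lemma~\ref{l:conjByRofAtoPc} to express $RM_wR^{-1}\cdot(-x) = -(M_{\overleftarrow{w}})^{-1}\cdot x$ and read off the $y$-fiber of $\mathcal{T}^{|w|}(\Delta_\alpha(w)\times\Phi_3)$ in terms of the endpoints of admissible cylinders indexed by $(-k,1)\overleftarrow{w}$ (with a suffix adjustment in the cases $a_1=0$ and $a_1>0$, exactly paralleling the two cases in Lemma~\ref{l:fiberIntervals}). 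Next I would prove the analog of Lemma~\ref{l:fillDelOneOne}, namely that $\Delta_\alpha(-k,1)\cup\Delta_\alpha(-k-1,1)$ decomposes, up to measure zero, into the closure of the union over $w\in(-k,1)\{(-k,1),(-k-1,1)\}^*\cap\mathcal{L}_\alpha$ of pieces of the form $[\lambda_w,\lambda_{w,(-k,1)})\cup[\rho_{w,(-k,1)},\lambda_{w,(-k-1,1)})$; this follows from the same recursive partitioning argument together with eventual expansiveness on these digits (see (\cite{CaltaKraaikampSchmidt}, \S3) and (\cite{CKStoolsOfTheTrade}, \S2.6)).

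The third ingredient is the admissibility of reversed words: if $w\in\{(-k,1),(-k-1,1)\}^*\cap\mathcal{L}_\alpha$ then so is $(-k,1)\overleftarrow{w}$ (and the appropriate short modification thereof). For this I would first verify admissibility at the endpoint values $\alpha = \eta_{-k,v}$, where $\underline{d}(-k,v)$ is given explicitly by \eqref{e:underDminKv} and the arguments of Lemma~\ref{l:admissibility} apply verbatim with the roles of lower and upper expansions interchanged. Then, exactly as in Corollary~\ref{c:symmetryInLimitAlph}, any non-synchronizing $\alpha$ is obtained as $\eta_{-k,v_i}\to\alpha$ along a branch of $\mathcal V$ on which $\underline{b}{}^{\eta_{-k,v_i}}_{[1,\infty)}\to\underline{b}{}^{\alpha}_{[1,\infty)}$; for any fixed finite length $L$, admissibility of length-$L$ words in these two letters stabilizes, giving the needed involution.

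With these three tools in hand, the first two combine to show that for every $x\in\mathbb{I}_\alpha$ lying strictly to the right of $\sup_j\ell_j(\alpha)$ and whose first digit is not in $\{(-k,1),(-k-1,1)\}$, the $y$-fiber above $x$ in $\Omega\cap\{y\ge 0\}$ fills out exactly the same interval (determined by $-\ell_0$ at its top and by the left-side images at its bottom) as in $\mathcal{W}\cap\{y\ge 0\}$. The main obstacle, paralleling the second half of the proof of Lemma~\ref{l:correctFibersBottom}, is handling the intermediate range of $x$-values near $\ell_1$ — specifically the fibers above $[\lambda_{-k-2,1},\rho_{2,1}]$ where the upper boundary jumps between $-\mu_{-k-1,1}$, $-\mu_{-k,1}$, and $-\ell_0$. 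Here I would use the limiting argument: for each $\eta_{-k,v_i}\to\alpha$, Proposition~\ref{p:OmegaForEtaKneg} gives the exact fiber structure, the vertices $\ell_j(\eta_{-k,v_i})$, $\mu_{-k,1}(\eta_{-k,v_i})$, $\mu_{-k-1,1}(\eta_{-k,v_i})$ converge to their counterparts for $\alpha$, and each cylinder $\Delta_\alpha(w)$ with $w$ of $(-k,1)$-left value (the analog of the ``$(1,1)$ right value'' notion in the proof of Lemma~\ref{l:correctFibersBottom}) is eventually admissible for $\eta_{-k,v_i}$ with $RM_wR^{-1}\cdot(-\ell_0(\eta_{-k,v_i}))$ converging to $RM_wR^{-1}\cdot(-\ell_0(\alpha))$. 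Continuity then transports the correct fiber endpoints from the synchronizing values to $\alpha$, completing the proof.
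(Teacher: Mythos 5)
Your proposal treats the upper (left-digit) side as a near-verbatim mirror of Lemma~\ref{l:correctFibersBottom}, but the paper's actual argument for Proposition~\ref{p:correctFibersTop} is structurally different, and the mirroring breaks down at precisely the point you flag as ``the main obstacle.''

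The concrete issues are these. First, the fiber you would image over a left-side cylinder is not $\Phi_3$. In $\mathcal W$ (Definition~\ref{d:notationForEndpointsLargeAlps}) the cylinders $\Delta_\alpha(-k,1)$ and $\Delta_\alpha(-k-1,1)$ carry the fibers $\Phi_1$ (over $[\ell_0,\ell_1]$) and $\Phi_2$ (over $[\ell_1,\lambda_{-k-2,1})$), with a split at $\ell_1$ sitting \emph{inside} one of the two cylinders; and in the decomposition \eqref{e:jtUnionBig} the paper actually uses $\Phi_1$ for cylinders with $\rho_w<\ell_1$, the truncated $\Phi'_2$ (which deletes the middle gap $(-\mu_{-k-1,1},-\rho_{-k,1})$) for cylinders with $\lambda_w>\ell_1$, and a split treatment for the cylinders $\Delta_\alpha(\underline b^\alpha_{[2,s]})$ that straddle $\ell_1$. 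This is fundamentally different from the bottom side, where all of $\mathscr B$ is fibered by the single interval $\Phi_4$. Second, the covering in the paper is not of the cylinder union $\Delta_\alpha(-k,1)\cup\Delta_\alpha(-k-1,1)$, but of the smaller interval $[\ell_0,\mu_{-k}]\cup[\lambda_{-k-1},\mu_{-k-1}]$ (Lemma~\ref{l:coversInterval}), and obtaining it requires the devices the paper introduces expressly for this proposition: the limit points $\nu_w$, the correspondence $w\mapsto\widetilde w$ (Definition~\ref{d:wTilde}, Lemma~\ref{l:goodRelationWwTilde}), and the ``children'' machinery (Definition~\ref{d:children}, Lemmas~\ref{l:childrenAreAdmissible}, \ref{l:coversInterval}). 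Your plan to replace this structural covering argument by a limiting argument from the $\eta_{-k,v_i}$ is not how the paper argues here — the limiting step is used only in \S~\ref{s:largeAlpsRightSide} and in the short gap-filling substep \S 8.3.5, not to establish Lemma~\ref{l:topFilledIn} — and as stated it does not explain how the two fiber widths $\Phi_1$ and $\Phi'_2$ laminate to produce the required interval for generic $\alpha$. The reversed-word admissibility ingredient (Lemma~\ref{l:symmetry}) is correct and is indeed proved as you describe; the gap is in the covering step, which needs the $\nu_w$/$\widetilde w$/children apparatus rather than a straight transposition of the bottom-side scheme.
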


\noindent
{\bf Convention.} 
The arguments in this subsection mainly involve the digits $(-k,1)$ and $(-k-1,1)$,  thus  we will use simplified digits.  (In particular,  we consider  $\underline{b}_{[1, \infty)}^{\alpha}$ to be equal to $\underline{d}_{[1, \infty)}^{\alpha}$.)\\

For further typographic ease,   for $w \in \mathcal L_{\alpha}$,  let  $\nu_w$ be the point of simplified digit expansion  $w, (-k-1)^{\infty}$. \\

The following is proved just as in the small $\alpha$ setting, see Lemma~\ref{l:admissibility} and its corollary.    
\begin{Lem}\label{l:symmetry}   With $\alpha$ as above,   if $w \in \{-k, -k-1\}^{*} \cap \mathcal L_{\alpha}$   then   also $\overleftarrow{w} \in \mathcal L_{\alpha}$.
\end{Lem}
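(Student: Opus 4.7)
The plan is to adapt the proof of Lemma~\ref{l:admissibility}, together with its passage to limits in Corollary~\ref{c:symmetryInLimitAlph}, to the present setting of large $\alpha$ and simplified digits in $\{-k, -k-1\}$. First I would induct on the length of $w$. For $|w|=1$ there is nothing to prove. For the inductive step, write $w = w_1 w_2 \cdots w_s$ and note that by induction the reverses of all proper factors of $w$ lie in $\mathcal L_\alpha$. If $\overleftarrow{w} \notin \mathcal L_\alpha$, the obstruction must therefore be that $\overleftarrow{w}$ itself violates the lower admissibility bound, i.e.\ $\overleftarrow{w} \prec \underline{b}^{\alpha}_{[1,s]}$, since the only relevant admissibility condition on words in $\{-k,-k-1\}^*$ is the lower bound (any such word automatically lies below $\overline{b}^{\alpha}_{[1,\infty)}$, whose leading digits are of the form $(1,\cdot)$). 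Writing $w' = w_2\cdots w_s$, the induction hypothesis gives $\overleftarrow{w'} \succeq \underline{b}^{\alpha}_{[1,s-1]}$; since the order is a dictionary order, we must then have $\overleftarrow{w'} = \underline{b}^{\alpha}_{[1,s-1]}$ with $w_1 = -k$ and $\underline{b}^{\alpha}_{[s,s]} = -k-1$. This is the special case that has to be ruled out.

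Next I would dispose of this special case first when $\alpha = \eta_{-k,v}$. Here the expansion $\underline{b}^{\eta_{-k,v}}_{[1,\infty)}$ is determined by $\underline{d}(-k,v)$ from \eqref{e:underDminKv}, extended periodically using the palindromic and derived-word structure of $v \in \mathcal V$; occurrences of the letter $-k-1$ are separated by blocks $(-k)^{c_i}$ or $(-k)^{c_i -1}$ whose pattern is governed by $v$. By direct calculation --- the analog of identity \eqref{e:reverseWordWithWpower} in the present alphabet --- reversing any prefix that ends at a letter $-k-1$ produces a word which itself appears as a suffix, indexed by some $\sigma^j$, of the expansion: this uses that each $v \in \mathcal V$ is a palindrome and that maximal simultaneous prefix/suffix occurrences land at the parent of $v$ in $\mathcal V$ (Lemma~\ref{l:maxSuffixPrefix}). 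This is incompatible with our hypothesis that the forced continuation of $\overleftarrow{w'} = \underline{b}^{\alpha}_{[1,s-1]}$ is $-k-1$ while $w_1 = -k$, so the special case is void and the induction closes for $\alpha = \eta_{-k,v}$.

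Third, I would pass to an arbitrary non-synchronizing $\alpha > \zeta_{-1,n-2}$ by the same limiting procedure used in Corollary~\ref{c:symmetryInLimitAlph}. For such $\alpha$ there is a sequence of $v_i \in \mathcal V$ and $q_i \in \mathbb Z_{\ge 0}$ (and fixed $k$) with $v_{i+1} = \Theta_{q_i}(v_i)$, $\alpha = \lim_i \eta_{-k,v_i}$, and $\underline{b}^{\alpha}_{[1,\infty)} = \lim_i \underline{d}(-k,v_i)$. For any fixed length $L$, for all $i$ sufficiently large the words of length at most $L$ in $\{-k,-k-1\}^* \cap \mathcal L_\alpha$ coincide exactly with those in $\{-k,-k-1\}^* \cap \mathcal L_{\eta_{-k,v_i}}$, so the symmetry for $\alpha$ follows from the symmetry established for each $\eta_{-k,v_i}$.

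The main obstacle I anticipate is the case analysis in the middle step: establishing the precise analog of \eqref{e:reverseWordWithWpower} for the blocks appearing in $\underline{d}(-k,v)$ and verifying that reversing a forced prefix really does land inside some $\sigma^j(\underline{b}^{\alpha}_{[1,\infty)})$. The palindromic structure of $v \in \mathcal V$ and the derived-words operator $\mathscr D$ are the same ingredients used in the small-$\alpha$ setting, so one expects the argument to go through \emph{mutatis mutandis}; however the roles of $-k$ and $-k-1$ in the ordering, together with the bookkeeping on the exponents $c_i, d_j$ of $v$ in each maximal block of consecutive $(-k)$'s and $(-k-1)$'s, must be tracked carefully to ensure that no configuration is missed.
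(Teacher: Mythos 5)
Your proposal is correct and follows essentially the same route as the paper, which proves this lemma simply by declaring it is "proved just as in the small $\alpha$ setting" via Lemma~\ref{l:admissibility} and Corollary~\ref{c:symmetryInLimitAlph}; your induction, reduction to the single critical case $\overleftarrow{w'}=\underline{b}^{\alpha}_{[1,s-1]}$ with $w_1=-k$, verification at the endpoints $\eta_{-k,v}$ using the palindromic structure of $v$, and passage to non-synchronizing $\alpha$ by limits is exactly that adaptation, spelled out.
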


\subsubsection{Images of fibers}
 We begin by collecting images of $\Phi_1$.
\begin{Lem}\label{l:extendToFixedPoint}   With $\alpha$ as above,   if $w \in \{-k, -k-1\}^{*} \cap \mathcal L_{\alpha}$   then 
 \[\cup_{j=0}^{\infty}   \, R M_{w, (-k-1)^j} R^{-1} \cdot \Phi_1 =   -(\nu_{\overleftarrow{w}}, \mu_{\overleftarrow{w}}].\]
\end{Lem}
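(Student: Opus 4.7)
The plan is to reduce the claim, via Lemma~\ref{l:negAndReverse}, to a one-dimensional telescoping identity for iterates of $[\mu_{-k-1,1},\mathfrak b]$ under $M_{-k-1}^{-1}$.  Writing $u = w,(-k-1)^j$ for brevity, Lemma~\ref{l:negAndReverse} gives $R M_u R^{-1}\cdot(-x) = -(M_{\overleftarrow u})^{-1}\cdot x$.  Unwinding the conventions of Definition~\ref{d:notationForMatricesOfWords} yields the factorisation
\[
(M_{\overleftarrow u})^{-1} \;=\; M_{\overleftarrow w}^{-1}\,M_{-k-1}^{-j},
\]
in which the iterated factor stands on the right, so that $M_{-k-1}^{-j}$ acts \emph{first}.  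Passing to the coordinate $z=-y$, it therefore suffices to prove
\[
\bigcup_{j=0}^{\infty} M_{\overleftarrow w}^{-1}\, M_{-k-1}^{-j}\,[\mu_{-k-1,1},\mathfrak b] \;=\; (\nu_{\overleftarrow w},\mu_{\overleftarrow w}].
\]

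Second, I would pull the orientation-preserving M\"obius transformation $M_{\overleftarrow w}^{-1}$ outside the union and first handle the base case $|w|=0$.  The defining identity $\mu_{-k-1,1} = M_{-k-1}^{-1}\mathfrak b$ makes the intervals
\[
M_{-k-1}^{-j}\,[\mu_{-k-1,1},\mathfrak b] \;=\; \bigl[M_{-k-1}^{-(j+1)}\mathfrak b,\; M_{-k-1}^{-j}\mathfrak b\bigr]
\]
adjacent in the sense that consecutive ones share a common endpoint, so their union telescopes to $(p,\mathfrak b]$ with $p = \lim_{j\to\infty}M_{-k-1}^{-j}\mathfrak b$.

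Third, I would identify $p$ as the fixed point of $M_{-k-1}$ in $\mathbb I_\alpha$ whose simplified digit expansion is $(-k-1)^\infty$.  By the eventual expansiveness of $T_\alpha$ (Proposition~\ref{p:expansive}), this fixed point is repelling for $M_{-k-1}$, hence attracting for $M_{-k-1}^{-1}$; since $\mathfrak b$ is not itself a fixed point of $M_{-k-1}$, its backward orbit converges to $p$.  Applying the orientation-preserving map $M_{\overleftarrow w}^{-1}$ sends $(p,\mathfrak b]$ to $(\nu_{\overleftarrow w},\mu_{\overleftarrow w}]$ directly from the definitions $\nu_{\overleftarrow w} = M_{\overleftarrow w}^{-1} p$ and $\mu_{\overleftarrow w} = M_{\overleftarrow w}^{-1}\mathfrak b$, and negating via Lemma~\ref{l:negAndReverse} recovers the claim.

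The main obstacle I anticipate is the bookkeeping of the first paragraph: verifying that the factorisation of $(M_{\overleftarrow u})^{-1}$ places $M_{-k-1}^{-j}$ on the \emph{inside}, so that the telescoping happens at the level of the single tile $[\mu_{-k-1,1},\mathfrak b]$ before any twisting by $M_{\overleftarrow w}^{-1}$.  Once that factorisation is pinned down, the remaining steps are routine M\"obius calculations.
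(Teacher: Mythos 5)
Your overall route is the paper's own: apply the reversal identity (Lemma~\ref{l:negAndReverse}) to convert each fiber into an interval of the form $-[\mu_{\,\cdot\,,-k-1},\mu_{\,\cdot\,}]$ and then telescope out to the point with expansion ending in $(-k-1)^{\infty}$; the paper compresses exactly this into ``$RM_{w}R^{-1}\cdot\Phi_1=-[\mu_{\overleftarrow{w},-k-1},\mu_{\overleftarrow{w}}]$, the result easily follows,'' preceded by an admissibility paragraph. However, the step you yourself flag as the crux is not settled by ``unwinding the conventions,'' and as you state it it is backwards relative to them. If $u$ is literally the word $w$ followed by $j$ copies of $-k-1$, then $\overleftarrow{u}=(-k-1)^{j}\,\overleftarrow{w}$, so Definition~\ref{d:notationForMatricesOfWords} gives $M_{\overleftarrow{u}}=M_{\overleftarrow{w}}\,M_{-k-1}^{\,j}$ and hence $(M_{\overleftarrow{u}})^{-1}=M_{-k-1}^{-j}\,(M_{\overleftarrow{w}})^{-1}$, with the iterated factor on the \emph{outside}. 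With that order the $j$-th fiber is $-M_{-k-1}^{-j}\,[\mu_{\overleftarrow{w},-k-1},\mu_{\overleftarrow{w}}]$: consecutive pieces are not adjacent (adjacency would force $M_{-k-1}^{-1}M_{\overleftarrow{w}}^{-1}\cdot\mathfrak b=M_{\overleftarrow{w}}^{-1}M_{-k-1}^{-1}\cdot\mathfrak b$), and they accumulate at the fixed point of $M_{-k-1}$ rather than at $\nu_{\overleftarrow{w}}$, so on that literal reading the displayed identity is false for nonempty $w$. The identity, and your factorisation, hold precisely under the reading in which the block of $(-k-1)$'s is adjoined so as to act first on the $x$-side, equivalently $\overleftarrow{u_j}=\overleftarrow{w},(-k-1)^{j}$ and the $j$-th fiber is $-[\mu_{\overleftarrow{w},(-k-1)^{j+1}},\,\mu_{\overleftarrow{w},(-k-1)^{j}}]$; this is also the reading the lemma's application requires (in the proof of Lemma~\ref{l:topFilledIn} one strips powers of $-k-1$ and uses $\nu_{w,(-k-1)^{i}}=\nu_{w}$). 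So you must make that reading explicit and verify it rather than attribute it to the convention; once you do, your telescoping is exactly the intended argument.

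A second, smaller omission: you skip the admissibility input that occupies the first paragraph of the paper's proof. One needs $\overleftarrow{w}\in\mathcal L_{\alpha}$ (Lemma~\ref{l:symmetry}) and that adjoining any power of $-k-1$ preserves admissibility, in order for $\mu_{\overleftarrow{w}}=M_{\overleftarrow{w}}^{-1}\cdot\mathfrak b$ and $\nu_{\overleftarrow{w}}=M_{\overleftarrow{w}}^{-1}\cdot p$ to be points of $\mathbb I_{\alpha}$ with the asserted digit expansions, lying in $\Delta_{\alpha}(\overleftarrow{w})$ in the asserted order; this is what justifies applying the monotone inverse branch $M_{\overleftarrow{w}}^{-1}$ to $(p,\mathfrak b]$ and reading off the endpoints, and it is hidden in your ``directly from the definitions.'' Relatedly, the identification of the limit point is cleaner via expansions (the points $\mu_{\overleftarrow{w},(-k-1)^{j}}$ share ever longer prefixes with $\nu_{\overleftarrow{w}}$, and cylinders shrink by uniqueness of expansions) than via your appeal to eventual expansiveness to decide which fixed point of $A^{-k-1}C$ is repelling, though that route can also be made precise.
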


 \begin{proof} Given $w \in \{-k, -k-1\}^{*} \cap \mathcal L_{\alpha}$ from the previous result we have that $\overleftarrow{w} \in  \mathcal L_{\alpha}$.  Furthermore, for words of digits $-k, -k-1$ admissibility is determined by being not less than $\underline{b}_{[1, \infty)}^{\alpha}$ and thus  (since $-k \prec -k-1$) appending (or prepending) any power of $-k-1$ to such an admissible word results in an admissible word.  Thus, the right hand side of the displayed equation is sensical. 
 
 When $w$ is a word in simplified digits, Lemma~\ref{l:conjByRofAtoPc} gives that $R M_w R^{-1}\cdot -x = (M_{\overleftarrow{w}})^{-1}\cdot x$.  Thus, for $w \in \{-k, -k-1\}^{*} \cap \mathcal L_{\alpha}$ we find that $R M_{w} R^{-1} \cdot \Phi_1 =- [\mu_{\overleftarrow{w}, -k-1}, \mu_{\overleftarrow{w}}]$.  The result easily follows. 
\end{proof} 
 
\begin{Def}\label{d:wTilde}  For $w \in \{-k, -k-1\}^{*} \cap \mathcal L_{\alpha}$ of length at least two and with suffix $-k$,  let 
\[ \widetilde{w} = \overleftarrow{\phantom{b}w_{[1, |w|-1}]}.\]
\end{Def}

 \begin{Lem}\label{l:goodRelationWwTilde}   With $\alpha$ as above,   the function $w \mapsto \widetilde{w}$ gives  a one-to-one correspondence between the set of $w \in \{-k, -k-1\}^{*} \cap \mathcal L_{\alpha}$ of length at least two and with suffix $-k$ and the set of $w \in \{-k, -k-1\}^{*} \cap \mathcal L_{\alpha}$ with $\widetilde{w} \succ \underline{b}_{[2, \infty)}^{\alpha}$.    
 \end{Lem}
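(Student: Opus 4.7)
The plan is to establish the bijection by exhibiting the natural inverse $w' \mapsto \overleftarrow{w'}, -k$, proving injectivity of the forward map directly, and then verifying that the admissibility/inequality conditions at each end of the correspondence translate into one another.

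Injectivity of $\phi : w \mapsto \widetilde{w}$ is immediate from the construction: given $\widetilde{w}$, the word $w_{[1,\,|w|-1]} = \overleftarrow{\widetilde w}$ is recovered uniquely, and the last letter is forced to be $-k$. To see that $\widetilde{w}$ lies in the proposed codomain, first apply Lemma~\ref{l:symmetry} to the admissible prefix $w_{[1,\,|w|-1]}$ to conclude $\widetilde{w} \in \{-k,-k-1\}^{*} \cap \mathcal L_{\alpha}$. For the inequality $\widetilde{w} \succ \underline{b}_{[2,\infty)}^{\alpha}$, write $w = w_1 w_2 \cdots w_{m-1}(-k)$ and $\underline{b}_{[1,\infty)}^{\alpha} = (-k)\, d_2\, d_3 \cdots$. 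Admissibility of $w$ itself forces $(w_1, \ldots, w_{m-1}, -k) \succeq (-k, d_2, \ldots, d_m)$; after cancelling the agreeing initial letter of the underlying word and the forced final $-k$ of $w$, reversing the comparison translates this to a strict comparison of $\widetilde{w}$ against $\underline{b}_{[2,\infty)}^{\alpha}$.

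For surjectivity, given $w' = w'_1 \cdots w'_s$ in the codomain, set $w := \overleftarrow{w'},\, -k$; then $\widetilde{w} = w'$, the length is $s+1 \ge 2$, and the last letter is $-k$ by construction. The substantive requirement is $w \in \mathcal L_{\alpha}$, i.e.\ every suffix is $\succeq$ the corresponding prefix of $\underline{b}^{\alpha}$. The one-letter suffix $(-k)$ is trivially admissible; a general suffix has the form $u_i := (w'_i, w'_{i-1}, \ldots, w'_1, -k)$. The case $w'_i = -k-1$ is immediate since $-k-1 \succ -k$, so $u_i \succ \underline{b}_{[1, i+1]}^{\alpha}$. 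The case $w'_i = -k$ reduces, after cancelling the agreeing first letters, to $(w'_{i-1}, \ldots, w'_1, -k) \succeq \underline{b}_{[2, i+1]}^{\alpha}$, which must be obtained from the global hypothesis $w' \succ \underline{b}_{[2,\infty)}^{\alpha}$ together with the admissibility of $w'$.

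The main obstacle is precisely this last deduction: the given hypothesis controls $w'$ read left-to-right against $\underline{b}_{[2,\infty)}^{\alpha}$, while admissibility of $\psi(w')$ demands a family of comparisons of reversed sub-suffixes against shifts of $\underline{b}^{\alpha}$. The natural tactic is induction on $i$, using that admissibility of $w'$ itself controls every suffix of $w'$ (constraining $(w'_j, \ldots, w'_s)$ relative to $\underline{b}_{[1, \cdot]}^{\alpha}$) while the global inequality controls the comparison in the opposite direction. To close the loop, one must exploit the specific structure of $\underline{b}^{\alpha}$ at the $\alpha$ under consideration: for $\alpha = \eta_{-k,v}$ the sequence $\underline{b}^{\alpha}$ is eventually periodic with a palindromic period determined by $v$ (the large-$\alpha$ analog of \eqref{e:lowerDetaExpansion}), and for non-synchronizing $\alpha$ one approximates by a sequence of $\eta_{-k, v_i}$ along a branch of $\mathcal V$ and passes to the limit in the style of Corollary~\ref{c:symmetryInLimitAlph}. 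The work is essentially bookkeeping, but it is the only step requiring real care.
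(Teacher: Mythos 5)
Your proposal misses the key insight of the paper's proof, and as you yourself acknowledge in the final paragraph, it leaves a real gap unfinished. The paper's argument is short precisely because Lemma~\ref{l:symmetry} is invoked so that \emph{you never verify admissibility of the reversed word by hand}.

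For surjectivity, you set $w := \overleftarrow{w'},\, -k$ and then try to confirm $w \in \mathcal L_\alpha$ by inspecting each suffix $u_i = (w'_i,\dots,w'_1,-k)$ against the corresponding prefix of $\underline{b}^{\alpha}$. The case $w'_i = -k$ forces you into a recursion on reversed sub-suffixes of $w'$ that the hypothesis $w' \succ \underline{b}_{[2,\infty)}^\alpha$ does not directly control, and you concede you cannot close it without exploiting specific structure of $\underline{b}^\alpha$. The clean route is to reverse the order of operations: first show that $-k,\,w'$ is admissible, which is immediate --- the only suffix of $-k,\,w'$ that is not already a suffix of the admissible $w'$ is the full word $-k,\,w'$ itself, and that is $\succ \underline{b}_{[1,\infty)}^\alpha$ precisely because $w' \succ \underline{b}_{[2,\infty)}^\alpha$ and both begin with $-k$ --- and \emph{then} apply Lemma~\ref{l:symmetry} to $-k,\,w'$ to conclude $\overleftarrow{-k,\,w'} = \overleftarrow{w'},\,-k \in \mathcal L_\alpha$. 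No suffix recursion, no limiting argument, no structure of $\underline{b}^\alpha$ beyond its first letter.

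The forward direction also has a flaw. You apply Lemma~\ref{l:symmetry} to the prefix $w_{[1,|w|-1]}$ to get $\widetilde w \in \mathcal L_\alpha$, which is fine, but your derivation of $\widetilde w \succ \underline{b}_{[2,\infty)}^\alpha$ does not work: you write ``admissibility of $w$ forces $w \succeq \underline{b}_{[1,m]}^\alpha$; after cancelling the agreeing initial letter... reversing the comparison translates this...''. The initial letter of $w$ need not be $-k$ (it can be $-k-1$, in which case the comparison against $\underline{b}_{[1,m]}^\alpha$ is won at position one and tells you nothing about the remaining letters), and in any case reversing a word does not reverse or preserve a dictionary comparison. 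The correct move, as in the paper, is to apply Lemma~\ref{l:symmetry} to $w$ itself: then $\overleftarrow w \in \mathcal L_\alpha$, and since $\overleftarrow w$ begins with $-k$ just as $\underline{b}^\alpha$ does, admissibility $\overleftarrow w \succeq \underline{b}_{[1,m]}^\alpha$ cancels to $\widetilde w \succeq \underline{b}_{[2,m]}^\alpha$, with strictness because $w$ is finite.
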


 \begin{proof}  We first note that the function $w \mapsto \widetilde{w}$ is clearly injective into the set $\{-k, -k-1\}^{*}$.
  
If  $w \in \{-k, -k-1\}^{*} \cap \mathcal L_{\alpha}$ of length at least two and with suffix $-k$, then we know that also $\overleftarrow{w} \in \mathcal L_{\alpha}$ and in particular  $\overleftarrow{w} \succeq \underline{b}_{[1, \infty)}^{\alpha}$.    Hence, $\widetilde{w} \succeq \underline{b}_{[2, \infty)}^{\alpha}$, since $w$ is of finite length we have that   $\widetilde{w} \succ \underline{b}_{[2, \infty)}^{\alpha}$. 
 
 On the other hand, if $w \in \{-k, -k-1\}^{*} \cap \mathcal L_{\alpha}$ is such that $w \succ \underline{b}_{[2, \infty)}^{\alpha}$, then  $-k, w \succ \underline{b}_{[1, \infty)}^{\alpha}$ and thus  $-k, w \in \mathcal L_{\alpha}$.  From this,   $\overleftarrow{-k,w} \in \mathcal L_{\alpha}$ is of length at least two and has $-k$ as a suffix.   
\end{proof} 
  
\begin{Lem}\label{l:givesFiberPieceForW}   With $\alpha$ as above,   
 
 \[ - [\mu_w, \rho_w) = R M_{\widetilde{w}}R^{-1} (\, \Phi'_2\setminus \Phi_1\,).\] 
\end{Lem}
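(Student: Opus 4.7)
The plan is to unwind the three ingredients in the statement: the set $\Phi_2'\setminus \Phi_1$ is actually a single sub-interval of $\Phi_2$, the transformation $RM_{\widetilde w}R^{-1}$ can be re-expressed on $x$-coordinates by way of Lemma~\ref{l:conjByRofAtoPc}, and the suffix $-k$ of $w$ together with the definitions of $\mu_{-k,1}$ and $\rho_{-k,1}$ lets each endpoint be evaluated explicitly.

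First I would compute $\Phi_2'\setminus \Phi_1$.  By Definition~\ref{d:notationForEndpointsLargeAlps}, $\Phi_1=[-\mathfrak b,-\mu_{-k-1,1}]$ and $\Phi_2=[-\mathfrak b,-\mu_{-k,1}]$, so $\Phi_2\setminus \Phi_1=(-\mu_{-k-1,1},-\mu_{-k,1}]$; removing the middle open slit $(-\mu_{-k-1,1},-\rho_{-k,1})$ from $\Phi_2$ therefore leaves $\Phi_2'\setminus\Phi_1=[-\rho_{-k,1},-\mu_{-k,1}]$.  (Here we use $\mu_{-k,1}<\rho_{-k,1}$, which follows from the fact that $A^{-k}C$ is increasing on $[\ell_0,0)$ and $\mathfrak b<r_0$ for $\alpha>\gamma_n$.)

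Next, since $w$ has suffix $-k$, write $w=w'(-k)$ with $w'=w_{[1,|w|-1]}$, so $\widetilde w=\overleftarrow{w'}$ and hence $\overleftarrow{\widetilde w}=w'$.  Applying Lemma~\ref{l:conjByRofAtoPc} to each real $x$ gives
\[
RM_{\widetilde w}R^{-1}\cdot(-x)=-(M_{w'})^{-1}\cdot x=-M_w^{-1}\bigl(A^{-k}C\cdot x\bigr),
\]
because $M_w=M_{-k}M_{w'}=A^{-k}C\,M_{w'}$, so $(M_{w'})^{-1}=M_w^{-1}A^{-k}C$.

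Finally, apply this to the two endpoints of the interval $\Phi_2'\setminus\Phi_1=[-\rho_{-k,1},-\mu_{-k,1}]$.  By definition of $\mu_{-k,1}$ one has $A^{-k}C\cdot\mu_{-k,1}=\mathfrak b$, and because the cylinder $\Delta_\alpha(-k,1)$ is right-full (its right-hand endpoint $\rho_{-k,1}$ is mapped by $A^{-k}C$ to $r_0(\alpha)$; this is the only step that requires a remark, as left-fullness fails for this particular cylinder when $\alpha$ is large), one has $A^{-k}C\cdot\rho_{-k,1}=r_0$.  Therefore
\[
RM_{\widetilde w}R^{-1}\cdot(-\mu_{-k,1})=-M_w^{-1}\cdot\mathfrak b=-\mu_w,\qquad
RM_{\widetilde w}R^{-1}\cdot(-\rho_{-k,1})=-M_w^{-1}\cdot r_0=-\rho_w.
\]
Since $RM_{\widetilde w}R^{-1}$ is a monotone bijection of the real line, the image of the closed interval $[-\rho_{-k,1},-\mu_{-k,1}]$ is the closed interval $[-\rho_w,-\mu_w]$, which equals $-[\mu_w,\rho_w)$ up to the measure-zero ambiguity we have allowed ourselves throughout.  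This yields the displayed equality.

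The only mildly delicate point is verifying the right-fullness relation $A^{-k}C\cdot\rho_{-k,1}=r_0$; this is a general feature of the cylinder structure for large $\alpha$ recorded in \cite{CaltaKraaikampSchmidt} and already tacitly used in the construction of $\mathcal W_\alpha$.  Everything else is a routine cancellation of matrices and an unraveling of definitions.
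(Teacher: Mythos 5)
Your proof is correct and follows the same route as the paper's (very terse) argument. The paper simply observes that $\Phi'_2\setminus\Phi_1 = -[\mu_{-k},\rho_{-k})$ and invokes Lemma~\ref{l:conjByRofAtoPc}; you expand this by computing $\Phi'_2\setminus\Phi_1$ explicitly from Definition~\ref{d:notationForEndpointsLargeAlps}, factoring $M_w = A^{-k}C\,M_{w'}$ with $w'=w_{[1,|w|-1]}$, and evaluating the two endpoints via $A^{-k}C\cdot\mu_{-k,1}=\mathfrak b$ and $A^{-k}C\cdot\rho_{-k,1}=r_0$, which is exactly what the paper's cited lemma accomplishes implicitly.
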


 \begin{proof}  Since $\Phi'_2\setminus \Phi_1 = -[\mu_{-k}, \rho_{-k})$,  using Lemma~\ref{l:conjByRofAtoPc}   we find that $R M_{\widetilde{w}}R^{-1} (\, \Phi'_2\setminus \Phi_1\,) = - [\mu_w, \rho_w)$.
 \end{proof}

 \subsubsection{Partitioning an $x$-interval related to the upper portion of fibers $\Phi_3, \Phi_4, \Phi_5$}

\begin{Def}\label{d:children}  Suppose that $w \in \{-k, -k-1\}^{*} \cap \mathcal L_{\alpha}$.   Since every $w, (-k-1)^d \in \mathcal L_{\alpha}$, there is a minimal $d\ge 0$ such that $w, (-k-1)^d, -k$ is admissible. The {\em children} of $w$ are the words $w, (-k-1)^i, -k$ with $i \ge d$.      
\end{Def}

 \begin{Lem}\label{l:childrenAreAdmissible}   With $\alpha$ as above and $w \in \{-k, -k-1\}^{*} \cap \mathcal L_{\alpha}$, all of the children of $w$ are admissible.  Furthermore, for each child  $w'$ of $w$,  we have $\rho_{w'} \le \nu_{w}$.     Finally, if $x \in [\lambda_w, \nu_w)$ then there is a largest  child $w'$ of $w$ such that $x \in \Delta_{\alpha}(w')$. 
\end{Lem}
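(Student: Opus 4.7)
The plan is to address the three assertions in order, leveraging the fact that for the large-$\alpha$ regime under consideration the only admissible simplified digits at the left end of $\mathbb I_\alpha$ are $-k$ and $-k-1$, together with the order relation $-k \prec -k-1$ on simplified digits that is compatible with the usual order on real numbers.

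For the first assertion I would argue as follows. By the defining minimality of $d$, the word $w,(-k-1)^d,-k$ is admissible, so every suffix of it is $\succeq \underline b{}^\alpha_{[1,\infty)}$. Passing from $w,(-k-1)^i,-k$ to $w,(-k-1)^{i+1},-k$ with $i\ge d$ replaces some suffixes of the shorter word by longer suffixes beginning with $-k-1$; since $-k-1\succ -k$ and $\underline b{}^\alpha_{[1,\infty)}$ begins with $-k$, any such new suffix is $\succ \underline b{}^\alpha_{[1,\infty)}$, while the shifted old suffixes retain their original comparison against $\underline b{}^\alpha_{[1,\infty)}$. By induction on $i\ge d$ every child is admissible.

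For the second assertion, fix a child $w'=w,(-k-1)^i,-k$. Every point of $\Delta_\alpha(w')$ has expansion beginning with $w,(-k-1)^i,-k$, while every point of $\Delta_\alpha(w,(-k-1)^{i+1})$ has expansion beginning with $w,(-k-1)^{i+1}$. Since $-k\prec -k-1$ in the digit order and this order matches the real order, $\Delta_\alpha(w')$ lies entirely to the left of $\Delta_\alpha(w,(-k-1)^{i+1})$, so $\rho_{w'}\le \lambda_{w,(-k-1)^{i+1}}$. The point $\nu_w$ has expansion $w,(-k-1)^\infty$, hence lies in the closure of $\Delta_\alpha(w,(-k-1)^{i+1})$, giving $\lambda_{w,(-k-1)^{i+1}}\le \nu_w$ and thus $\rho_{w'}\le \nu_w$.

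For the third assertion, let $x\in[\lambda_w,\nu_w)$. Since $\nu_w\le \rho_w$ by the previous step (applied symbolically) and the initial inclusion $\Delta_\alpha(w)=[\lambda_w,\rho_w)$, the expansion of $x$ begins with $w$. As $x<\nu_w$, its expansion must differ from $w,(-k-1)^\infty$; let $i$ be the largest integer with $w,(-k-1)^i$ a prefix of the expansion of $x$, so the $(|w|+i+1)$-st simplified digit $d$ of $x$ satisfies $d\prec -k-1$. Here is the key observation that makes the statement work: in the large-$\alpha$ regime, the only simplified digits that are $\preceq -k-1$ are $-k$ and $-k-1$ themselves, because $\ell_0(\alpha)\in\Delta_\alpha(-k,1)$ is the leftmost point of $\mathbb I_\alpha$ and there is no cylinder with simplified digit smaller than $-k$. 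Therefore $d=-k$ and $x\in\Delta_\alpha(w,(-k-1)^i,-k)$. Admissibility of this prefix forces $i\ge d$ by minimality of $d$, so $w'=w,(-k-1)^i,-k$ is indeed a child; uniqueness of $i$ shows it is the unique, hence largest, child containing $x$.

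The main obstacle, and the point where care is required, is the digit-alphabet step in the third assertion: one must be sure that no simplified digit strictly between the ``$-\infty$ end'' and $-k-1$ can occur. I would treat this by reminding the reader that for $\alpha$ large the leftmost two cylinders with $l=1$ are precisely $\Delta_\alpha(-k,1)$ and $\Delta_\alpha(-k-1,1)$, with all other cylinders lying to the right, which is exactly the content of the partition discussion in \S\ref{ss:terseForBigAlps}. Once this is granted, the three assertions follow cleanly and no further delicate orbit combinatorics are needed.
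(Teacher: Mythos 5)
Your proof is correct and follows essentially the same approach as the paper: part one by suffix-comparison against $\underline b{}^\alpha_{[1,\infty)}$ noting that $-k-1$ is the maximal digit that can appear in a run without losing admissibility; part two by the symbolic comparison $w,(-k-1)^i,-k \prec w,(-k-1)^\infty$; part three by identifying the unique $i$ at which $x$'s expansion first departs from $w,(-k-1)^\infty$ and noting the departing digit must be $-k$ since $-k$ is the $\prec$-minimal simplified digit. Your explicit flagging of the alphabet observation in part three (that $-k$ is the only digit $\prec -k-1$, since $\ell_0(\alpha)\in\Delta_\alpha(-k,1)$ is leftmost) is actually a useful clarification of a point the paper leaves largely tacit, and your phrasing ``largest $i$ with $w,(-k-1)^i$ a prefix of $x$'' is cleaner than the paper's ``largest $i$ with $w,(-k-1)^{i+1},-k$ a prefix'' (which in fact picks out a unique $i$, so ``largest'' there is vacuous).
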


\begin{proof}  Recall that for any  $w \in \{-k, -k-1\}^{*} \cap \mathcal L_{\alpha}$ we have that every  $w, (-k-1)^i$ with $i\in \mathbb N$ is admissible.   Admissibility for words in  $\{-k, -k-1\}^{*}$ is a matter of being at least as large as $\underline{b}_{[1, \infty)}^{\alpha}$, thus the requirement that $j$ be at least as large as the minimal $d$ with $w, (-k-1)^d, -k$ admissible does imply that all of the children are admissible.
 
Let $w'$ be a child of $w$.   For any $x \in \Delta_{\alpha}(w')$, there some $i$ such that the simplified digits $x$ has prefix $w'$  since $\nu_w$ is the point of simplified digits $w, (-k-1)^{\infty}$ we find that $x < \nu_w$.  By continuity we find that $\rho_{w'} \le \nu_w$ and by induction on $i$ one  finds that strict inequality holds. 
  
  For any $j\ge 0$,  we have  $w, (-k-1)^j, -k \prec w, (-k-1)^{j+1}, -k$.  Given $x \in [\lambda_w, \nu_w)$,  there is a largest $i\ge 0$ such that the simplified digits of $x$ begin $w, (-k-1)^{i+1}, -k$.  Hence with $w' = w, (-k-1)^{i+1}, -k$ we have both that $x \in \Delta_{\alpha}(w')$ and that $w'$ is the largest child of $w$ with this property.    
\end{proof} 

 Recall that the final letter of a word $w$ is denoted $w_{[-1]}$.
\begin{Lem}\label{l:coversInterval}   With $\alpha$ as above,   
  \[[\ell_0, \mu_{-k}] \cup [\lambda_{-k-1}, \mu_{-k-1}] =  \overline{ (\nu_{-k}, \mu_{-k}] \cup (\nu_{-k-1}, \mu_{-k-1}] \cup\, \bigcup_{\substack{w \in \{-k, -k-1\}^{*} \cap \mathcal L_{\alpha}\\ |w|\ge 2\\w_{[-1]}=-k}} (\nu_{w}, \rho_{w}) }\,.\] 
\end{Lem}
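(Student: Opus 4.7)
Proof plan. The plan is to decompose the LHS into its two constituent intervals and show each is covered by suitable pieces of the RHS. First I will show that $[\ell_0,\mu_{-k}]$ is precisely $(\nu_{-k},\mu_{-k}]\cap\mathbb I_\alpha$. This reduces to verifying the inequality $\nu_{-k}\le \ell_0$: using $\nu_{-k}=M_{-k}^{-1}\cdot p$ where $p$ is the repelling fixed point of $A^{-k-1}C$, a direct matrix computation with the formula $(A^{-k}C)^{-1}\cdot p = -1/(p+kt)$ yields this under the standing hypothesis. The second interval $[\lambda_{-k-1},\mu_{-k-1}]$ splits naturally at $p=\nu_{-k-1}$ (which lies in $\Delta_\alpha(-k-1,1)$ since $(-k-1)^{\infty}$ is admissible under our assumptions on $\alpha$): the right portion $(p,\mu_{-k-1}]$ coincides with $(\nu_{-k-1},\mu_{-k-1}]$, leaving the ``gap'' $[\lambda_{-k-1},p)$ to be covered by the $(\nu_w,\rho_w)$ terms.

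Next I will handle this gap using Lemma~\ref{l:childrenAreAdmissible} applied to $w=-k-1$. That lemma gives a partition (up to measure zero) of $[\lambda_{-k-1},\nu_{-k-1})$ into disjoint child cylinders $\Delta_\alpha(w')$, where $w'=(-k-1)^{j+1},-k$ for $j$ at least the minimal admissible value. Each such $w'$ is an admissible word in $\{-k,-k-1\}^{*}$ of length $\ge 2$ ending in $-k$, so it appears as a term in the RHS union. I then proceed recursively: within each child $\Delta_\alpha(w')$, the sub-interval $[\lambda_{w'},\nu_{w'})$ (when nonempty) is again partitioned by grandchildren of $w'$, each of which is itself an admissible word in the same family ending in $-k$. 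Iterating through all depths produces the full indexing family appearing in the RHS union.

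Taking closures, the resulting union exhausts the second LHS interval: by the eventual expansiveness of $T_{n,\alpha}$ (Proposition~\ref{p:expansive}), the Lebesgue measure of the portion of $[\lambda_{-k-1},p)$ uncovered at depth $d$ tends to zero as $d\to\infty$, so after closure every point of $[\lambda_{-k-1},p]$ lies in the union. The reverse containment---that each $(\nu_w,\rho_w)$ contributes only points already in the LHS---follows because $(\nu_w,\rho_w)\cap\mathbb I_\alpha$ lies between $\lambda_{-k-1}$ and $p$, as one verifies from the conjugation relations for $M_w^{-1}$ acting on $p$ and on $r_0$, together with Lemma~\ref{l:childrenAreAdmissible}'s inequality $\rho_{w'}\le \nu_w$ at each level of the recursion.

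The main obstacle is this last containment step. The concern is that for $w$ ending in $-k$ the image $M_w\cdot\Delta_\alpha(w)$ generally lies near the right end of $\mathbb I_\alpha$ rather than containing $p$, so $\nu_w=M_w^{-1}\cdot p$ need not lie in $\Delta_\alpha(w)$; consequently $(\nu_w,\rho_w)$ may a priori spill beyond $\Delta_\alpha(w)$. Controlling this spillover requires the hypothesis that $\alpha$ is either non-synchronizing or an endpoint $\eta_{-k,v}$: in these regimes the explicit form of $\underline{b}^{\alpha}_{[1,\infty)}$ and $\overline{b}^{\alpha}_{[1,\infty)}$ coming from the tree $\mathcal V$ forces the overshoot $(\nu_w,\lambda_w)$ to be absorbed into a longer-word term $(\nu_{w''},\rho_{w''})$ already present in the union. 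Making this cancellation explicit, using the self-similarity of $\mathcal V$ and Lemma~\ref{l:admissible} on reversal-invariance of $\mathcal L_\alpha\cap\{-k,-k-1\}^{*}$, is the key technical step.
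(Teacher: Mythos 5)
There is a genuine gap, and it is in your Step 1. You claim $[\ell_0,\mu_{-k}]$ coincides (up to closure) with $(\nu_{-k},\mu_{-k}]\cap\mathbb I_\alpha$, reducing this to the inequality $\nu_{-k}\le\ell_0$. That inequality is false for the $\alpha$ in question, and indeed it \emph{must} fail for the lemma to make sense: for non-synchronizing $\alpha$ and for $\alpha=\eta_{-k,v}$ the expansion $\underline{b}^{\alpha}_{[1,\infty)}$ is an infinite word in $\{-k,-k-1\}$, so (appending powers of $-k-1$ preserving admissibility) the sequence $-k,(-k-1)^{\infty}$ is admissible and $\nu_{-k}$ is a point of $\overline{\mathbb I_\alpha}$, i.e.\ $\nu_{-k}\ge\ell_0$. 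Generically the inequality is strict: e.g.\ for $\alpha=\eta_{-k,v}$ the digit string of $\nu_{-k}$ eventually has a $-k-1$ where the periodic string of $\ell_0$ has a $-k$, so $\nu_{-k}\succ\underline{b}^{\alpha}$ and $\nu_{-k}>\ell_0$. Consequently $[\ell_0,\nu_{-k})$ is a nondegenerate subinterval of the first piece of the left-hand side which your construction never covers: you invoke the child decomposition of Lemma~\ref{l:childrenAreAdmissible} only for $w=-k-1$, whereas this gap must be exhausted by the terms $(\nu_{w'},\rho_{w'})$ with $w'$ an iterated child of $-k$ (words beginning with $-k$ and ending with $-k$). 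This also undercuts your reverse-containment claim that every $(\nu_w,\rho_w)$ lies between $\lambda_{-k-1}$ and $p$: the terms indexed by words starting with $-k$ lie in $[\ell_0,\nu_{-k})$. The paper's proof treats the two letters $-k$ and $-k-1$ symmetrically: each $x$ in the left-hand side lies in $[\lambda_w,\nu_w)\cup[\nu_w,\mu_w]$ for one $w\in\{-k,-k-1\}$; if $x\in[\lambda_w,\nu_w)$ one passes to the largest child containing $x$ and recurses, every iterated child being an admissible word of length at least two with suffix $-k$, and the closure absorbs the points that never terminate.

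Secondarily, the ``main obstacle'' you isolate is not actually an obstacle under the standing hypotheses: since $w,(-k-1)^j$ is admissible for every $j$ when $w\in\{-k,-k-1\}^{*}\cap\mathcal L_{\alpha}$, the point $\nu_w$ lies in $\overline{\Delta_\alpha(w)}$, so $(\nu_w,\rho_w)\subseteq\Delta_\alpha(w)$ and there is no spillover to absorb; the heavy appeal to the self-similarity of $\mathcal V$ and to reversal-invariance is unnecessary (and Lemma~\ref{l:admissible}, which you cite for it, concerns the languages in $\{(1,1),(1,2)\}$ and $\{-1,-2\}$, $\{k,k+1\}$; the relevant reversal statement for $\{-k,-k-1\}^{*}$ is Lemma~\ref{l:symmetry}, which is not needed here). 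Repairing your argument amounts to running your gap-covering recursion on both $[\ell_0,\nu_{-k})$ and $[\lambda_{-k-1},\nu_{-k-1})$, which is exactly the paper's induction.
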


 \begin{proof}  Since $\ell_0 = \lambda_{-k}$, we have that each $x \in [\ell_0, \mu_{-k}] \cup [\lambda_{-k-1}, \mu_{-k-1}]$ is in $[\lambda_w, \nu_w)\cup [\nu_w, \mu_w]$ for one $w \in \{-k, -k-1\}$.   If $x \notin [\nu_w, \mu_w]$ then there is a largest child $w'$ such that $x\in [\lambda_{w'}, \nu_{w'})\cup [\nu_{w'}, \mu_{w'}]$.   Since every child $w'$ is in the set $\{-k, -k-1\}^{*} \cap \mathcal L_{\alpha}$ with  $|w|\ge 2$ and $w_{[-1]}=-k$, the result follows by induction.   
\end{proof}

 \subsubsection{Proof that fibers agree:  reaching top}    Recycling notation, let  $\mathscr L = \sup_j\, \{\ell_j \}$.  Note that $\mathscr L < \lambda_{-k-2,1}$.   
\begin{Lem}\label{l:topFilledIn}       The intersection of $[\mathscr L, r_0)\times [-\mu_{-k-1}, \infty)$ with the closure of 
\[\begin{aligned} 
&\bigsqcup \, \sqcup_{\substack{w\in \mathscr A  \\ \rho_w < \ell_1}}\;  \mathcal T^{|w|}(\, \Delta_{\alpha}(w) \times \Phi_1 \,)
 \, \bigsqcup \, \sqcup_{\substack{w\in \mathscr A  \\ \lambda_w > \ell_1}}\;  \mathcal T^{|w|}(\, \Delta_{\alpha}(w) \times \Phi'_2 \,)  \\
 \\
 &\,\bigsqcup \, \sqcup_{s = 2}^{\infty}\, \mathcal T^{|w|}(\, \Delta_{\alpha}(\underline{b}_{[2, s]}^{\alpha}) \times \Phi_1 \,) \, \sqcup \, [\ell_s, r_0)\times  RM_{\underline{b}_{[2, s]}^{\alpha}}R^{-1}(\, \Phi'_2\setminus \Phi_1\,)
 \end{aligned}
 \]
equals  
$[\mathscr L, r_0)  \times -(  [\ell_0, \mu_{-k}] \cup [\lambda_{-k-1}, \mu_{-k-1}])$. 
\end{Lem}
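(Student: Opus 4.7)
The plan is to fix $x \in [\mathscr L, r_0)$ and compute the vertical fiber of the displayed set at $x$, intersect with $[-\mu_{-k-1},\infty)$, and identify the result with $-([\ell_0,\mu_{-k}]\cup[\lambda_{-k-1},\mu_{-k-1}])$. First I would verify that every term in the displayed union actually contributes at $x$: the cylinders $\Delta_{\alpha}(w)$ indexed by $w \in \mathscr A$ with either $\rho_w<\ell_1$ or $\lambda_w>\ell_1$ are full, so their $T_{\alpha}^{|w|}$-image is all of $\mathbb I_{\alpha}$; and for the prefix words $w=\underline b_{[2,s]}^{\alpha}$ one has $x\ge\mathscr L\ge\ell_s$, whence $x\in[\ell_s,r_0)$. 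Thus the fiber of $\Omega$ above such $x$ (restricted to $y\ge-\mu_{-k-1}$) is a union of $\mathcal T$-images of $\Phi_1$ and of $\Phi'_2\setminus\Phi_1$ indexed by appropriate subfamilies of $\mathscr A$.

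Next I would consolidate the $\Phi_1$-contributions. Every $w\in\mathscr A$ decomposes uniquely as $w=w',(-k-1)^{j}$ with $j\ge 0$ and $w'$ either empty or ending in $-k$; Lemma~\ref{l:extendToFixedPoint} collapses the subfamily $\{RM_{w',(-k-1)^j}R^{-1}\cdot\Phi_1\}_{j\ge 0}$ to the single interval $-(\nu_{\overleftarrow{w'}},\mu_{\overleftarrow{w'}}]$. Invoking Lemma~\ref{l:symmetry}, as $w'$ ranges over empty or $-k$-terminated admissible roots, so does $\overleftarrow{w'}$, so this yields precisely the intervals $-(\nu_{u},\mu_{u}]$ for $u$ empty (giving the two boundary intervals $-(\nu_{-k},\mu_{-k}]$ and $-(\nu_{-k-1},\mu_{-k-1}]$ after one more descent) or $u$ an admissible word ending in $-k$.

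Then I would consolidate the $\Phi'_2\setminus\Phi_1$-contributions, which arise from $w\in\mathscr A$ with $\lambda_w>\ell_1$ and from the prefix words $w=\underline b_{[2,s]}^{\alpha}$. The inequality $\lambda_w>\ell_1$, combined with the fact that left endpoints of cylinders in $\mathscr A$ have expansions ending in $(-k-1)^{\infty}$, is equivalent to $\overleftarrow w\succ\underline b_{[2,\infty)}^{\alpha}$; by Lemma~\ref{l:goodRelationWwTilde} this is precisely the condition $w=\widetilde{w''}$ for $w''\in\mathscr A$ of length at least two with suffix $-k$. The remaining $\widetilde{w''}$-preimages, namely those $w''$ whose $\widetilde{w''}$ fails $\widetilde{w''}\succ\underline b_{[2,\infty)}^{\alpha}$, correspond to the prefix cases $w=\underline b_{[2,s]}^{\alpha}$. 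Lemma~\ref{l:givesFiberPieceForW} then identifies each such image as the interval $-[\mu_{w''},\rho_{w''})$, with $w''$ running over all admissible words in $\{-k,-k-1\}^{\ast}$ of length at least two with suffix $-k$.

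Combining these two tallies, the fiber at $x$ is, up to closure, the right-hand side of Lemma~\ref{l:coversInterval}, multiplied by $-1$, which is exactly $-([\ell_0,\mu_{-k}]\cup[\lambda_{-k-1},\mu_{-k-1}])$. The main obstacle will be the combinatorial bookkeeping in the third paragraph: checking carefully that the bijection of Lemma~\ref{l:goodRelationWwTilde} pairs the set $\{w\in\mathscr A:\lambda_w>\ell_1\}\cup\{\underline b_{[2,s]}^{\alpha}:s\ge 2\}$ with the full indexing set $\{w''\in\mathscr A:|w''|\ge 2,\,w''_{[-1]}=-k\}$ appearing in Lemma~\ref{l:coversInterval}, with no overcounting and no gaps beyond those absorbed by the closure. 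A subsidiary point, routine but worth spelling out, is the identification $\lambda_w>\ell_1\iff\overleftarrow w\succ\underline b_{[2,\infty)}^{\alpha}$, which requires verifying that $\lambda_w$ has simplified expansion $w,(-k-1)^{\infty}$ in the present range of $\alpha$.
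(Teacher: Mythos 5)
Your overall route is the paper's route: restrict to the fiber over $x \in [\mathscr L, r_0)$, consolidate the $\Phi_1$-contributions via Lemmas~\ref{l:extendToFixedPoint} and~\ref{l:symmetry}, reindex the $\Phi'_2\setminus\Phi_1$-contributions via Lemmas~\ref{l:goodRelationWwTilde} and~\ref{l:givesFiberPieceForW}, and finish with Lemma~\ref{l:coversInterval}. However, two of the claims you rest this on are false as stated. The lesser one: the cylinders $\Delta_{\alpha}(w)$, $w\in\mathscr A$ with $\rho_w<\ell_1$ or $\lambda_w>\ell_1$, need not be full — for instance $w=(-k-1),(-k)$ (when not a prefix of $\underline{b}^{\alpha}_{[2,\cdot]}$) has $T_{\alpha}^{2}$-image $[\ell_1,r_0)$, not $\mathbb I_{\alpha}$. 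What is true, and all you need, is that the $T_{\alpha}^{|w|}$-image of each such cylinder is $[\ell_i,r_0)$ for some $i\ge 0$ (only $\ell_0$ can truncate these cylinders, since the digits of $r_0$ and the point $\mathfrak b$ never enter), hence contains $[\mathscr L, r_0)$; this is exactly the paper's observation that the image of the left endpoint of every cylinder lies to the left of $\mathscr L$.

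The more serious problem is your ``subsidiary point'', which is in fact the heart of the bookkeeping you flag as the main obstacle. The point with simplified expansion $w,(-k-1)^{\infty}$ is $\nu_w$, an \emph{interior} point of $\Delta_{\alpha}(w)$ (the paper's Lemmas~\ref{l:childrenAreAdmissible} and~\ref{l:coversInterval} use precisely $\lambda_w<\nu_w<\mu_w<\rho_w$); the left endpoint $\lambda_w$ is $M_w^{-1}\cdot\ell_i$ with $[\ell_i,r_0)$ the image of the cylinder, so its continuation after $w$ is a tail of $\underline{b}^{\alpha}_{[1,\infty)}$, not $(-k-1)^{\infty}$. Moreover the image of $w\mapsto\widetilde w$ in Lemma~\ref{l:goodRelationWwTilde} is characterized by $w\succ\underline{b}^{\alpha}_{[2,\infty)}$ with no reversal, and reversal does not preserve order relative to $\underline{b}^{\alpha}_{[2,\infty)}$; so the proposed equivalence $\lambda_w>\ell_1\iff\overleftarrow w\succ\underline{b}^{\alpha}_{[2,\infty)}$ fails on both ends, and the pairing in your third paragraph does not verify as written. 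The repair is the direct comparison the paper uses: $\lambda_w>\ell_1$ exactly when $w$ strictly exceeds $\underline{b}^{\alpha}_{[2,|w|+1]}$ at a first difference, while the prefix words $\underline{b}^{\alpha}_{[2,s]}$ supply exactly the remaining elements of the image of the tilde map (together with the degenerate case $w=-k-1$, whose $\widetilde w$ is the empty word); with that, your tally of the $\Phi'_2\setminus\Phi_1$ pieces as $-[\mu_u,\rho_u)$ over admissible $u$ of length at least two with suffix $-k$, and hence the appeal to Lemma~\ref{l:coversInterval}, goes through as in the paper.
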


\begin{proof}   Since $\ell_1 \in  [\lambda_w, \rho_w)$ exactly for $w$ of the form $\underline{b}_{[2, s]}^{\alpha}$,  and $\Phi'_2 \supset \Phi_1$, we can rewrite the displayed quantity as 
\[\begin{aligned} 
&\bigsqcup \, \sqcup_{w\in \mathscr A}\;  \mathcal T^{|w|}(\, \Delta_{\alpha}(w) \times \Phi_1 \,)\\ \\
& \; \bigsqcup \, \sqcup_{\substack{w\in \mathscr A  \\ \lambda_w > \ell_1}}\;  \mathcal T^{|w|}(\, \Delta_{\alpha}(w) \times (\Phi'_2\setminus \Phi_1) \,) \,\bigsqcup \, \sqcup_{s = 2}^{\infty}\,  \, [\ell_s, r_0)\times  RM_{\underline{b}_{[2, s]}^{\alpha}}R^{-1}(\, \Phi'_2\setminus \Phi_1\,).
 \end{aligned}
 \]
By Lemma~\ref{l:goodRelationWwTilde}, this equals 
\[\begin{aligned} 
&\bigsqcup \, \sqcup_{w\in \mathscr A}\;  \mathcal T^{|w|}(\, \Delta_{\alpha}(w) \times \Phi_1 \,)
 \, \bigsqcup \, \sqcup_{\substack{w\in \mathscr A\\w_{[-1]} = -k\\ \text{or}\, w=-k-1}}\;  \mathcal T^{|\widetilde{w}|}(\, \Delta_{\alpha}(\widetilde{w}) \times (\Phi'_2\setminus \Phi_1) \,),
 \end{aligned}
 \] 
 which we can of course rewrite as 
\[\begin{aligned} 
&\bigsqcup \, \sqcup_{w\in \mathscr A}\;   T^{|w|}(\, \Delta_{\alpha}(w)\,)  \times R M_w R^{-1} (\Phi_1) \,)
 \, \bigsqcup \, \sqcup_{\substack{w\in \mathscr A\\w_{[-1]} = -k\\ \text{or}\, w=-k-1}}\;  T^{|\widetilde{w}|}(\, \Delta_{\alpha}(\widetilde{w})\,)  \times RM_{\widetilde{w}}R^{-1}(\,\Phi'_2\setminus \Phi_1 \,),
 \end{aligned}
 \] 
\bigskip 
 
  Since the image of the left endpoint of every cylinder lies to the left of $\mathscr L$,  we find that the $y$-fiber in $\Omega$ above any $x \in [\mathscr L, r_0)$ includes   
\[\begin{aligned} 
\mathcal F_{x}^{+}&= \sqcup_{\substack{w\in \mathscr A\\w_{[-1]} = -k\\ \text{or}\, w=-k-1}}\;  RM_{\widetilde{w}}R^{-1}(\,\Phi'_2\setminus \Phi_1 \,),
 \end{aligned}
 \]   
 
For all $w$ and all $i$, we have  $\nu_{w, (-k-1)^i} = \nu_w$, and thus  applying Lemmas \ref{l:extendToFixedPoint}  and \ref{l:givesFiberPieceForW}  and then Lemma~\ref{l:coversInterval} shows that 
$\mathcal F_{x}^{+} = -(  [\ell_0, \mu_{-k}] \cup [\lambda_{-k-1}, \mu_{-k-1}])$.
\end{proof}

 \subsubsection{Proof that fibers agree: filling in gap}  It remains to show that for $x \in [\ell_1, \mathscr L)$ that $\Phi_2\setminus\Phi'_2$ is contained in the $y$-fiber of $\Omega$ over $x$.   Since $R M_{-k-1} R^{-1} (\Phi_3) =  [-\mu_{-k-1}, -\lambda_{-k-1}]$ and of course $\lambda_{-k-1} = \rho_{-k}$ we must simply show that $(A^{-k-1}C)^{-1} \cdot \ell_1 \ge \mathscr L$.   
 
 Just as Lemma~\ref{l:largestLvalue} follows from Lemma~\ref{l:2ndLargestR}, so also for any $\alpha = \eta_{-k,v}$ we have that $\ell_{\underline{S}}$ is maximal by arguing analogously to Lemma~\ref{l:lastRValueIsLeast}.   For such $\alpha$, we also have  $A^{-k-1}C\cdot \ell_{\underline{S}} = \ell_1$, and thus our results holds for these values of  $\alpha$.   By taking limits,  we find that it holds for all of the values of $\alpha$ which we are considering.  \qed

 \subsection{Ergodic natural extension for non-synchronizing large $\alpha$} 
 
\begin{Prop}\label{p:finMeasLargeAlp}     Fix $n$ and a non-synchronizing $\alpha$ with $\alpha >\gamma_{n}$.  
 With $\Omega$ as in Proposition~\ref{p:fillUpFromW},  $\mu(\Omega)< \infty$.
\end{Prop}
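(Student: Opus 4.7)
The plan is to adapt the argument of Proposition~\ref{p:finMeas} to the large-$\alpha$ setting, exploiting that the locus $y = -1/x$ on which $d\mu$ is singular is preserved by every $\mathcal{T}_M$ with $M \in G_n$, and that $\Omega$ admits the explicit decomposition of Proposition~\ref{p:fillUpFromW} as $\mathcal{J}$ together with countable unions of images $\mathcal{T}_\alpha^{|w|}(\Delta_\alpha(w) \times \Phi_j)$. Splitting $\Omega = \Omega^+ \cup \Omega^-$ across $y=0$, the singular locus meets $\Omega^+$ only where $x<0$ and meets $\Omega^-$ only where $x>0$, so the argument reduces to showing both that each vertical fiber of $\Omega$ avoids $y=-1/x$ with enough room to integrate, and that the individual pieces sum to finite $\mu$-measure.

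For the first point, I would verify that each of the five ``generating'' fibers $\{\ell_0\}\times \Phi_1$, $\{\ell_1\}\times \Phi_2'$, the fibers through $\mathcal{J}$, $\{\rho_{2,1}\}\times \Phi_4$, and $\{r_0\}\times \Phi_5$ sits strictly on the side of $y=-1/x$ where $1+xy>0$. This is a direct M\"obius computation from Definition~\ref{d:notationForEndpointsLargeAlps}; for instance, $\mathfrak{b} = C^{-1}\cdot \ell_0 = \ell_0/(\ell_0-1)$ gives $1 - \ell_0\,\mathfrak{b} = 1/(1-\ell_0) > 0$, and analogous one-line identities handle $\mu_{-k-1,1}$, $\rho_{2,1}$, and $C\cdot r_0$. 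Since each $\mathcal{T}_{M_w}$ preserves both $\mu$ and the locus $y=-1/x$ and is orientation-preserving, the inequality transfers to every iterate and hence to every fiber of $\Omega$.

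For the finiteness of the total mass I would appeal, in the style of the proof of Theorem~\ref{t:continuity}, to the explicit finite-mass bijectivity domains $\Omega_{\eta_{-k,v}}$ of Propositions~\ref{p:OmegaForEtaKneg} and \ref{p:ergodicLargeEta}. Any non-synchronizing $\alpha>\gamma_n$ is a limit of values $\eta_{-k,v}$ with $\overline{b}{}^{\eta_{-k,v}}_{[1,\infty)}\to \overline{b}{}^{\alpha}_{[1,\infty)}$ and $\underline{b}{}^{\eta_{-k,v}}_{[1,\infty)}\to \underline{b}{}^{\alpha}_{[1,\infty)}$; for each length bound $L$ the admissible words of length at most $L$ eventually coincide for $\alpha$ and $\eta_{-k,v}$, so the finite-word truncation of the decomposition~\eqref{e:jtUnionBig} is dominated in $\mu$-measure by $\mu(\Omega_{\eta_{-k,v}})$, uniformly bounded in $v$. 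The tail contribution vanishes as $L\to\infty$ by the same pointwise inequality that controlled the fibers, since it forces the $y$-extent of each deep image rectangle to shrink as $|w|$ grows.

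The main obstacle will be handling the ``middle'' region $\mathcal{J}$, which is not itself of product form but a union of horizontal strips crossing $y=0$. For this I would invoke Lemma~\ref{l:jIsCorrect} to write $\mathcal{J}$ as a finite union of images of cylinders crossed with $\Phi_3$ or $\Phi_5$, transferring the pointwise inequality from those generating fibers. A secondary delicate point is the two-sided nature of the approximation, since $\alpha$ need not be a one-sided limit along any single branch of $\mathcal{V}$; I would bound $\Omega^+$ and $\Omega^-$ independently using approximating sequences from opposite sides, which is legitimate because each half of $\Omega$ is determined by a separate half of the endpoint orbits.
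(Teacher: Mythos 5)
Your proposal diverges substantially from the paper's proof, and the key step has a circularity problem. The paper's proof is a direct containment argument: it observes that the upper portion of $\Omega$ sits inside $\Omega_{n,0}$ (exactly as in the proof of Proposition~\ref{p:finMeas}, where one shows the supremal fiber $[-\mathscr{L},0]$ over $\ell_0$ is contained in $\Omega_{n,0}$ by an explicit evaluation $-\mathscr{L}=RA^{-1}R^{-1}\cdot(-\ell_0)$, then propagates by invariance of the locus $y=-1/x$), and the lower portion sits inside $\Omega_{n,1}$, whose finite $\mu$-mass is \cite{CKStoolsOfTheTrade}, Proposition~2.4. Finiteness is then immediate by monotonicity of $\mu$.

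Your approach instead tries to pass to the limit along the approximating $\eta_{-k,v}$, and this is where the gap lies. The claim that the finite-word truncations of~\eqref{e:jtUnionBig} have $\mu$-measure ``dominated by $\mu(\Omega_{\eta_{-k,v}})$, uniformly bounded in $v$'' presupposes a uniform bound on $\mu(\Omega_{\eta_{-k,v}})$ along the branch, but that boundedness is essentially the continuity statement of Theorem~\ref{t:continuityBigAlps}, which is proved \emph{after} this proposition and indeed relies on it. Moreover the domination is not literal set containment: the cylinders $\Delta_\alpha(w)$ and $\Delta_{\eta_{-k,v}}(w)$ differ as subsets of $\mathbb{R}$, so the pieces are merely close, not nested, and a genuine quantitative comparison would need to be established. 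Finally, the claim that ``the tail contribution vanishes as $L\to\infty$'' because individual deep rectangles shrink is circular: summability of the tail is equivalent to $\mu(\Omega)<\infty$, the very thing one is trying to prove; shrinking of individual pieces does not control the sum.

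Your first-paragraph idea --- verifying that the generating fibers of $\mathcal{W}$ stay strictly on the correct side of $y=-1/x$ and propagating by $\mathcal{T}_M$-invariance --- is sound and is morally what underlies the paper's containment, but on its own it does not give integrability: one must also show the fibers stay inside a set of finite $\mu$-mass (the fibers could still approach the singular locus asymptotically as $|x|\to\infty$). The missing ingredient is precisely the comparison to $\Omega_{n,0}$ and $\Omega_{n,1}$, which are already known to be finite and which bound the two halves of $\Omega$ from the outside.
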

\begin{proof} As in the proof of Proposition~\ref{p:finMeas}, we can show the finiteness of the measure of the upper portion of $\Omega$ by comparing with $\Omega_{n,0}$.   Similarly, here one can show the finiteness of the measure of the lower portion of $\Omega$ by comparing with $\Omega_{n,1}$, see (\cite{CKStoolsOfTheTrade},  Proposition~2.4).  
See also (\cite{CKStoolsOfTheTrade},  Figure~2).   
\end{proof} 

 The following is proven as is Proposition~\ref{p:bijectivityDomIsNatExtErgSmallNonSyn},  {\it mutatis mutandis}.
\begin{Prop}\label{p:biDomIsNatExtErgLargeNonSyn}       With $\alpha$ as in the notation of  Proposition~\ref{p:fillUpFromW}, 
the system $(\mathcal T_{\alpha}, \Omega_{\alpha}, \mathscr B'_{\alpha}, \mu_{\alpha})$ is the natural extension of  $(T_{\alpha},\mathbb I_{\alpha},  \mathscr B_{\alpha}, \nu_{\alpha})$, where $\nu_{\alpha}$ is the marginal measure of $\mu_{\alpha}$ and $\mathscr B_{\alpha}$  the Borel sigma algebra on $\mathbb I_{\alpha}$.  Finally,   both systems are ergodic. 
\end{Prop}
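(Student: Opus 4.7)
The strategy is to verify each of the hypotheses of (\cite{CKStoolsOfTheTrade}, Theorem~2.3) for the interval map $T_{\alpha}$ and its candidate planar extension on $\Omega_{\alpha}$, exactly as in the proof of Proposition~\ref{p:bijectivityDomIsNatExtErgSmallNonSyn}. First, Proposition~\ref{p:fillUpFromW} already identifies $\Omega_{\alpha}$ as a bijectivity domain for $\mathcal T_{\alpha}$ up to $\mu$-measure zero, together with the explicit decomposition in \eqref{e:jtUnionBig}. Second, Proposition~\ref{p:finMeasLargeAlp} gives $\mu(\Omega_{\alpha})<\infty$, so that $\mu_{\alpha}$ is a well-defined probability measure.

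Next I would verify that the vertical fibers of $\Omega_{\alpha}$ have uniformly bounded Lebesgue length. This is immediate from the description of $\Omega_{\alpha}$ in Proposition~\ref{p:fillUpFromW}: every fiber lies in a set bounded above by $-\ell_0$ and below by $-\mathfrak b$ or $-C\cdot r_0$, all finite real numbers independent of $x$. The disjoint union \eqref{e:jtUnionBig} shows that the fibers refine to nested intervals inside $\Phi_1 \cup \Phi_2' \cup \Phi_3 \cup \Phi_4 \cup \Phi_5$, which is a bounded interval.

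The key step is then to verify the bounded non-full cylinders condition required by (\cite{CKStoolsOfTheTrade}, Theorem~2.3). By Remark~\ref{rmk:bddNonfullCylinders}, since any non-full cylinder of $T_{\alpha}$ contains one of $r_0(\alpha)$, $\ell_0(\alpha)$ or $\mathfrak b_{\alpha}$, it suffices to show that the $T_{\alpha}$-orbits of these three endpoints involve only finitely many distinct digits. Here the non-synchronizing $\alpha$ is a limit of parameters $\eta_{-k, v_i}$ along a branch of $\mathcal V$ obtained by successive applications of the operators $\Theta_{q_i}$ starting from a fixed $v\in\mathcal V$ and a fixed $k\in\mathbb N$. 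As in the proof of Corollary~\ref{c:symmetryInLimitAlph}, both $\underline{b}_{[1,\infty)}^{\alpha}$ and $\overline{b}_{[1,\infty)}^{\alpha}$ are limits of the corresponding endpoint expansions at the $\eta_{-k,v_i}$, and hence use only digits drawn from $\{(-k,1),(-k-1,1),(1,1),(1,2)\}$. Since $\mathfrak b_{\alpha}=C^{-1}\cdot \ell_0(\alpha)$ has an expansion obtainable from that of $\ell_0(\alpha)$ (its orbit rejoins that of $\ell_0(\alpha)$ after at most one step), its digit set is also finite. Thus the bounded non-full cylinders hypothesis is satisfied.

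With all hypotheses of (\cite{CKStoolsOfTheTrade}, Theorem~2.3) in hand --- bijectivity a.e., finite $\mu$-mass of the planar domain, bounded vertical fibers, and the bounded non-full cylinders property --- that theorem produces simultaneously the natural extension property of $(\mathcal T_{\alpha},\Omega_{\alpha},\mathscr B'_{\alpha},\mu_{\alpha})$ over $(T_{\alpha},\mathbb I_{\alpha},\mathscr B_{\alpha},\nu_{\alpha})$ with $\nu_{\alpha}$ the marginal of $\mu_{\alpha}$, and the ergodicity of the two-dimensional system. Ergodicity of the factor system then follows by the standard fact that the natural extension and base share ergodic properties. I do not expect any serious obstacle; the only point requiring slight care is the verification of the bounded non-full cylinders condition for $\mathfrak b_{\alpha}$, which is handled by the observation that its forward orbit coincides with the tail of the orbit of $\ell_0(\alpha)$.
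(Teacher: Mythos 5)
Your proposal is correct and takes essentially the same route as the paper: the paper's own treatment is a one-line appeal to proving this ``{\it mutatis mutandis}'' from Proposition~\ref{p:bijectivityDomIsNatExtErgSmallNonSyn}, whose proof consists precisely of the steps you list (bijectivity via Proposition~\ref{p:fillUpFromW}, finite $\mu$-mass via Proposition~\ref{p:finMeasLargeAlp}, bounded fibers, and the bounded non-full cylinders check for non-synchronizing $\alpha$, feeding into (\cite{CKStoolsOfTheTrade}, Theorem~2.3)). Your added care about $\mathfrak b_{\alpha}$ is sound and is anyway covered by Remark~\ref{rmk:bddNonfullCylinders}, which already notes that the remaining endpoint of each potential non-full cylinder meets the orbit of one of the interval endpoints.
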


\section{Continuity of mass for larger $\alpha$; Completing proof of Theorem~\ref{t:continuityOfMass}}\label{s:conLarge}  
 In view of Theorem~\ref{t:continuity}, the results of this section complete the proof of Theorem~\ref{t:continuityOfMass}.

\begin{Thm}\label{t:continuityBigAlps}     The function  $\alpha \mapsto  \mu(\Omega_{\alpha})$ is continuous on $(\gamma_{n},1)$.
\end{Thm}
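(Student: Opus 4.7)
The strategy mirrors that of Theorem~\ref{t:continuity}, but must accommodate the richer structure on the large-$\alpha$ side, where each synchronization interval $J_{-k,v}$ splits into a left portion $[\eta_{-k,v},\delta_{-k,v}]$ and right portion $[\delta_{-k,v},\zeta_{-k,v}]$, and where the bottom region $\Omega^{-}$ changes shape across $\delta_{-k,v}$. We proceed in three stages: continuity on closed synchronization intervals, continuity at non-synchronizing points within a fixed $I_{-k}=\cup_{v}I_{-k,v}$, and continuity at the boundaries separating the $I_{-k}$ from each other and from the region of intermediate $\alpha$.

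Fix $k\ge 1$ and $v$. On the interior of the left portion $(\eta_{-k,v},\delta_{-k,v})$, the horizontal boundary values $y_a$ ($1\le a\le\underline{S}+1$) and $y_b$ ($-\overline{S}-1\le b\le -1$) of Definitions~\ref{d:topYvaluesLargeAlp} and~\ref{d:bottomYvaluesLargeAlps} depend only on $(-k,v)$, while the $x$-coordinates of all rectangle vertices are given by fixed M\"obius images of $\ell_0(\alpha)$ and $r_0(\alpha)$; hence $\mu(\Omega_\alpha)$ is smooth there. The same holds on $(\delta_{-k,v},\zeta_{-k,v})$ using Definition~\ref{d:bottomYvaluesPastDelta}. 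Continuity at $\eta_{-k,v}$ and $\zeta_{-k,v}$ follows directly from Propositions~\ref{p:OmegaForEtaKneg} and~\ref{p:OmegaForZetaKneg}, since the defining formulas agree in the appropriate one-sided limit. Continuity at $\alpha=\delta_{-k,v}$ is the one new feature: the ``extra'' bottom rectangle $L_{-\overline{S}-2}\times[y_{-\overline{S}-2},0]$ that appears for $\alpha>\delta_{-k,v}$ has $L_{-\overline{S}-2}=[\ell_0(\alpha),r_{1+\overline{S}}(\alpha))$, and by the discussion of \S~\ref{ss:smackDab} one has $r_{1+\overline{S}}(\delta_{-k,v})=\ell_0(\delta_{-k,v})$, so this rectangle degenerates to a vertical segment at $\delta_{-k,v}$, giving continuity from both sides.

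For a non-synchronizing $\alpha\in(\gamma_n,1)$, use the description of $\Omega_\alpha$ as the closure of the disjoint union in~\eqref{e:jtUnionBig} furnished by Proposition~\ref{p:fillUpFromW}. As $\alpha'\to\alpha$ through any combination of non-synchronizing values and endpoints $\eta_{-k,v}$, $\zeta_{-k,v}$ within the same $I_{-k}$, the central regions $\mathcal J_{\alpha'}$ and $\mathcal W_{\alpha'}$ vary continuously in $\mu$-measure (their defining vertices are continuous functions of $\ell_0(\alpha),r_0(\alpha),\mathfrak b_\alpha$). Both $\underline{b}{}^{\alpha'}_{[1,\infty)}\to\underline{b}{}^{\alpha}_{[1,\infty)}$ and $\overline{b}{}^{\alpha'}_{[1,\infty)}\to\overline{b}{}^{\alpha}_{[1,\infty)}$, so for each fixed length $N$ the truncated alphabets $\mathscr A_{\alpha'}\cap\{\text{words of length}\le N\}$ and $\mathscr B_{\alpha'}\cap\{\text{words of length}\le N\}$ eventually coincide with those of $\alpha$, with the corresponding cylinders $\Delta_{\alpha'}(w)$ converging to $\Delta_\alpha(w)$. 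The tails are uniformly small by Proposition~\ref{p:finMeasLargeAlp} (finiteness of $\mu(\Omega_\alpha)$), so summing over the decomposition in~\eqref{e:jtUnionBig} gives $\mu(\Omega_{\alpha'})\to\mu(\Omega_\alpha)$.

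Continuity across boundaries between $I_{-k}$ and $I_{-k-1}$ is handled as in the small-$\alpha$ proof: at $\zeta_{-k,1}$, approaching from within $I_{-k-1}$ via $\eta_{-k-1,h}$ with $h\to\infty$, the expansions $\underline{b}^{\eta_{-k-1,h}}_{[1,\infty)}$ and $\overline{b}^{\eta_{-k-1,h}}_{[1,\infty)}$ converge digit-by-digit to those at $\zeta_{-k,1}$, and the argument of the previous paragraph applies. The only remaining delicate point is continuity from the right at $\gamma_n=\eta_{-1,n-2}$, which joins the small-$\alpha$ regime of Theorem~\ref{t:continuity} to the large-$\alpha$ regime; here we compare the two descriptions of $\Omega_{\gamma_n}$ coming from the definitions at the right endpoint of $(0,\gamma_n)$ and the left endpoint of $J_{-1,n-2}$, and verify that both yield the same $\mu$-value (this is essentially forced by the bijectivity result of Theorem~\ref{t:Omega} together with finiteness of the measures). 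The main technical obstacle is precisely the behavior at $\delta_{-k,v}$ and at the transition $\gamma_n$, since these are the only places where the combinatorial structure of $\Omega_\alpha$ changes discontinuously; fortunately the dimension-drop arguments (degeneration of an extra rectangle to a segment of width zero) make each of these into a one-sided limit computation.
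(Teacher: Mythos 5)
Your proof follows the paper's own three-stage strategy essentially verbatim: smoothness on each half-interval $(\eta_{-k,v},\delta_{-k,v})$ and $(\delta_{-k,v},\zeta_{-k,v})$ of a synchronization interval, with one-sided matching at $\eta_{-k,v}$, $\zeta_{-k,v}$, and $\delta_{-k,v}$ via Propositions~\ref{p:OmegaForEtaKneg}, \ref{p:OmegaForZetaKneg} and the degeneration of the extra rectangle from \S~\ref{ss:smackDab}; then the $\mathcal W$-decomposition of Proposition~\ref{p:fillUpFromW} together with digit-sequence convergence and the measure-finiteness tail estimate on the interior of each $I_{-k}$; and finally a limiting sequence of synchronization-interval endpoints across boundaries between consecutive $I_{-k}$. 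One small note: the statement being proved is for the open interval $(\gamma_n,1)$, so your final paragraph on continuity at $\gamma_n$ is outside its scope — the paper treats $\gamma_n$ separately in Lemma~\ref{l:atGammaToo}, and does so by a limit of $\mathcal Z$-type decompositions from the small-$\alpha$ side rather than by a width-zero degeneration as you suggest.
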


\begin{proof}  Since the both top and bottom heights are constant along any synchronization interval $J_{-k,v}$, with fibers constant between the various $\ell_i(\alpha), r_j(\alpha)$, continuity and indeed smoothness,  of $\alpha \mapsto  \mu(\Omega_{\alpha})$ in each of $(\eta_{-k,v}, \delta_{-k,v})$ and $(\delta_{-k,v}, \eta_{-k,v})$ is clear.   Propositions~\ref{p:OmegaForEtaKneg}  and ~\ref{p:OmegaForZetaKneg} along with Subsection~\ref{ss:smackDab}  then imply that  continuity holds on the closed interval.  

Continuity throughout    $I_{-k} := \cup_{v\in \mathcal V}\, I_{k,v}$ for fixed $k$ is shown as in the proof of Theorem~\ref{t:continuity}, using that we can express every $\Omega_{\eta_{-k,v}}$ and $\Omega_{\zeta_{-k,v}}$ as the union of regions of the type of $\mathcal W$ similar to the result of Proposition~\ref{p:fillUpFromW}.   

Continuity at the boundaries of the various $I_{-k}$   follows from arguing analogously to the proof of Theorem~\ref{t:continuity}.    Indeed, the right endpoint of $I_{-k}$ is $\zeta_{-k,1}$, and one argues that $\Omega_{\zeta_{-k-1,h}} \to \Omega_{\zeta_{-k,1}}$ as $h \to \infty$.  For this,  (\cite{CaltaKraaikampSchmidt}, Lemma~6.6) gives $\underline{d}{}^{\zeta_{-k,1}}_{[1,\infty)}  = \overline{-k-1}$ while  $\underline{d}{}^{\zeta_{-k-1,h}}_{[1,\infty)}  = (-k)^{h-1}, \overline{-k-1}$ (in simplified digits) from which also the $\overline{d}{}^{\zeta_{-k,1}}_{[1,\infty)}$ and   $\overline{d}{}^{\zeta_{-k-1,h}}_{[1,\infty)}$  sufficiently agree to deduce both the convergence of the  $\Omega_{\zeta_{-k-1,h}}^{+}$ to $\Omega_{\zeta_{-k,1}}^{+}$ and similarly for the lower portions. 
\end{proof}

 There is only one remaining $\alpha$ value at which we must prove continuity. 
\begin{Lem}\label{l:atGammaToo}   Fix $n$.  The function $\alpha \mapsto \mu(\, \Omega_{\alpha}\,)$ is continuous at $\alpha = \gamma_n$.
\end{Lem}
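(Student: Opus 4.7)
Write $\gamma_n=\eta_{-1,n-2}$, so that $\Omega_{\gamma_n}$ is the domain furnished by Proposition~\ref{p:OmegaForEtaKneg}.  The right limit is immediate:  the proof of Theorem~\ref{t:continuityBigAlps} already establishes continuity on the closure of each synchronization interval $\overline{J_{-k,v}}$, and in particular at the left endpoint $\gamma_n=\eta_{-1,n-2}$ of $J_{-1,n-2}$.  Thus $\lim_{\alpha\to\gamma_n^+}\mu(\Omega_\alpha)=\mu(\Omega_{\gamma_n})$.

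For the left limit, I would take an arbitrary sequence $\alpha_j\nearrow\gamma_n$ with $\alpha_j\in(0,\gamma_n)$ and prove $\mu(\Omega_{\alpha_j})\to\mu(\Omega_{\gamma_n})$.  Since the right neighbourhood of $\gamma_n$ in $(0,\gamma_n)$ is contained in $I_1$, one may assume $\alpha_j\in I_1$ for all $j$.  Proposition~\ref{p:fillUpFromZ}, together with Propositions~\ref{p:OmegaForZeta} and~\ref{p:OmegaForEta} which extend the description to endpoints of synchronization intervals, expresses each $\Omega_{\alpha_j}$ as a disjoint union $\mathcal H_{\alpha_j}\sqcup\bigsqcup_{u\in\{-1,-2\}^{*}\cap\mathcal L_{\alpha_j}}\mathcal T_{\alpha_j}^{|u|}(\Delta_{\alpha_j}(u)\times\Phi_{\alpha_j}^{-})\sqcup\bigsqcup_{u\in\{2,1\}^{*}\cap\mathcal L_{\alpha_j}}\mathcal T_{\alpha_j}^{|u|}(\Delta_{\alpha_j}(u)\times\Phi_{\alpha_j}^{+})$, with $k=1$ in the right alphabet.

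The main step is to rewrite $\Omega_{\gamma_n}$ in a matching form.  Because $\mathfrak b_{\gamma_n}=r_0(\gamma_n)$, the would-be large-$\alpha$ cylinder $\Delta_{\gamma_n}(-1,2)$ is degenerate and the digit $(-1,2)$ never appears in the $T_{\gamma_n}$-orbit of $\ell_0(\gamma_n)$.  Similarly, as noted in the discussion preceding Example~\ref{e:kIsMin1LeftEndpt}, one has $R_{-1,n-2}=\mathrm{Id}$ and $\overline S(-1,n-2)=0$, so the orbit of $r_0(\gamma_n)$ is controlled by the same short prefix throughout $J_{-1,n-2}$.  Consequently the countable-union description of $\Omega_{\gamma_n}$ coming from Proposition~\ref{p:OmegaForEtaKneg} can be rewritten using only the alphabets $\{-1,-2\}$ and $\{2,1\}$, augmented by a single boundary rectangle of the type that appears in the small-$\alpha$ description and which accounts for $\ell_{\underline S}(\gamma_n)=\ell_0(\gamma_n)$.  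Once the two descriptions are matched combinatorially, the standard limit argument from the proofs of Theorems~\ref{t:continuity} and~\ref{t:continuityBigAlps} applies:  the endpoint orbits converge, $\ell_i(\alpha_j)\to\ell_i(\gamma_n)$ and $r_i(\alpha_j)\to r_i(\gamma_n)$, each fixed admissible word $u$ is admissible for all sufficiently large $j$ with $\Delta_{\alpha_j}(u)\to\Delta_{\gamma_n}(u)$ in Lebesgue measure, and the uniform finiteness of $\mu(\Omega_{\alpha_j})$ (Propositions~\ref{p:finMeas} and~\ref{p:finMeasLargeAlp}) permits truncation at finite word length with uniformly small tail error.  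Together these yield $\mu(\Omega_{\alpha_j})\to\mu(\Omega_{\gamma_n})$.

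The main obstacle is the combinatorial bookkeeping in the previous paragraph:  identifying the countable family of rectangles making up $\Omega_{\gamma_n}$ in the large-$\alpha$ description with the limits of the corresponding rectangles arising in the small-$\alpha$ description of $\Omega_{\alpha_j}$.  Given the explicit nature of both descriptions and the degeneracy $\mathfrak b_{\gamma_n}=r_0(\gamma_n)$ at the transition point, this is genuinely a matter of bookkeeping rather than new dynamical input; once it is carried out, the dominated-convergence step at the end is routine.
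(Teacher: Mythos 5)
Your proof follows essentially the same route as the paper's: use the already-established continuity on the closure of $J_{-1,n-2}$ for the right limit, and for the left limit recast $\Omega_{\gamma_n}$ in a form compatible with the countable-rectangle description \eqref{e:jtUnion} used on $(0,\gamma_n)$, then apply the limit argument of Theorem~\ref{t:continuity}. Both of your key observations are the ones the paper uses: $\mathfrak b_{\gamma_n}=r_0(\gamma_n)$ forces the $(a,2)$-cylinders to degenerate (the paper phrases this as: the only admissible words for $T_{\gamma_n}$ with a digit not of the form $(a,1)$ are prefixes of $\overline{b}{}^{\gamma_n}_{[1,\infty)}$), and $R_{-1,n-2}=\mathrm{Id}$, $\overline{S}(-1,n-2)=0$ keeps the right-endpoint bookkeeping trivial. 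One small structural difference: the paper first invokes Theorem~\ref{t:continuity} to reduce the left limit to the single sequence $\eta_{1,h}\to\gamma_n$, for which Lemma~\ref{l:fillUpWithZimagesZetaEta} directly supplies the $\mathcal Z$-sweep description of $\Omega_{\eta_{1,h}}$. Your ``arbitrary sequence $\alpha_j$'' framing needs the same reduction made explicit, since Proposition~\ref{p:fillUpFromZ} and Propositions~\ref{p:OmegaForZeta}, \ref{p:OmegaForEta} do not cover $\alpha_j$ in the interior of a synchronization interval; you gesture at this with ``the standard limit argument ... applies,'' but it would be cleaner to reduce to the $\eta_{1,h}$ up front.

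One factual slip worth correcting: you write that the extra rectangle ``accounts for $\ell_{\underline{S}}(\gamma_n)=\ell_0(\gamma_n)$.'' That identity holds at small-$\alpha$ right endpoints $\eta_{k,v}$ (see the discussion before Proposition~\ref{p:OmegaForEta}), but $\gamma_n=\eta_{-1,n-2}$ is the \emph{left} endpoint of a large-$\alpha$ synchronization interval, where what one has instead is periodicity of the $\ell_0$-orbit. Concretely for $n=3$: $\ell_0(\gamma_3)=-G$ and $\underline{S}(-1,1)=1$ gives $\ell_{\underline{S}}(\gamma_3)=\ell_1(\gamma_3)=-g^2\neq -G$. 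This does not break the argument—the correct observation, which you also state, is the degeneracy of the $(a,2)$-cylinders—but the asserted identity should be removed. (Also, ``right neighbourhood of $\gamma_n$ in $(0,\gamma_n)$'' should read ``left neighbourhood.'')
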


\begin{proof} Recall that $\gamma_n = \eta_{-1, n-2}$, thus we have already shown continuity from the right.  

Since $\eta_{1,h}$ converges from the left to $\gamma_n$, by Theorem~\ref{t:continuity}  it suffices to show that $\mu(\Omega_{\eta_{1,h}}) \to \mu(\Omega_{\eta_{-1,n-2}})$ as $h \to \infty$.   By Lemma~\ref{l:fillUpWithZimagesZetaEta},  for each $h$   we have that $\Omega_{\eta_{1,h}}$ is swept out by the $\mathcal T_{\eta_{1,h}}$-orbit of $\mathcal Z_{\eta_{1,h}}$.  Furthermore, a version of \eqref{e:jtUnion}   holds for these values as well.   That is,   $\Omega_{\eta_{1,h}}$ is the union of the image of $\mathcal Z_{\eta_{1,h}}$ with pieces of the form $\mathcal T^{|u|}(\, \Delta_{\alpha}(u) \times \Phi^{\pm}\,)$, with signs depending upon whether $u \in \{-1, -2\}^*$ or $u \in \{2,1\}^*$ (in simplified digits).       

Now, the only admissible words in $\mathcal L_{\eta_{-1,n-2}}$ of digit not of the form  $(a, 1)$ are prefixes of  $\overline{b}{}^{\eta_{-1,n-2}}_{[1,\infty)}$.  Thus we can temporarily delete these from the language and use simplified digits.   One can then extend   the definition of $\mathcal Z_{\alpha}$  in Definition~\ref{d:notationForEndpoints}    to include $\alpha = \eta_{-1,n-2}$, and easily find that (upon taking closures) here also a partition  of the type described in \eqref{e:jtUnion}   holds for $\alpha = \eta_{-1,n-2}$.   Therefore, we can once again argue as in Theorem~\ref{t:continuity} and conclude that continuity does hold here.
\end{proof}
     
\section{Ergodicity and natural extensions;  Proof of Theorem~\ref{t:naturallyErgodicParTout}}\label{s:dynamProp}   
In \cite{CKStoolsOfTheTrade}, we gave theoretical tools which can be applied to show main dynamical properties of systems of the type studied here.   (Indeed, we placed several theoretical tools in that paper so as to retain the brevity of this section.)   There, the  ``bounded non-full range" condition is a key hypothesis to prove ergodicity of a planar system associated to interval maps such as our own, as well as to show that the planar system is the natural extension.   See Remark~\ref{rmk:bddNonfullCylinders} for a reminder of this condition.  As recalled in the proof of Theorem~\ref{t:naturallyErgodicParTout}, see page~\pageref{pf:TheoremOne} below, we have already proven the theorem in various cases.  This was done invoking (\cite{CKStoolsOfTheTrade}, Theorem~2.3).  In \S~\ref{ss:bddNonFullRangeValDense}, we show the  denseness  of the set of parameter values indexing maps which satisfy the bounded non-full range condition.  This dense set hence parametrizes maps whose planar natural extensions are ergodic.

A second result in \cite{CKStoolsOfTheTrade} is that the properties of ergodicity  and being a natural extension of a planar system are induced onto sufficiently nearby systems by way of our quilting.
In  \S~\ref{ss:closeNeighbors}   establish that every $\alpha$ for which the bounded non-full range condition fails is contained in an open neighborhood of parameter values  all of which are sufficiently nearby so that ergodicity  and being a natural extension of a planar system could  be 
shared with its system by way of quilting.    The aforementioned denseness result than allows the section to end with a completion of the proof of Theorem~\ref{t:naturallyErgodicParTout}.

We begin the section by proving that all of our maps are eventually expansive.   

\subsection{Eventual expansivity; Proof of Proposition~\ref{p:expansive}}     
 We show eventual expansiveness for all of our maps follows from results of (\cite{CKStoolsOfTheTrade}.    
  

\begin{proof}[{\bf Proof of Proposition~\ref{p:expansive}}]  Fix $n, \alpha$.  We must show that the hypotheses (a)--(c) of (\cite{CKStoolsOfTheTrade}, Proposition~2.3), listed in the statement of (\cite{CKStoolsOfTheTrade}, Theorem~2.3), are fulfilled.    
We have already established that for any $n, \alpha$ the domain $\Omega_{n, \alpha}$ has all  vertical fibers being intervals of finite length.  In the case of $\alpha \notin \{0,1\}$ we have also established that these fibers are a bounded distance from the locus of $y =- 1/x$.   Finally,   we have also showed that every block of  $\Omega_{n, \alpha}$  is such that its vertical fibers are intervals such that the ratio  of the Lebesgue measure  of the image  by  $\mathcal T_{n, \alpha}$-image to that of the receiving vertical fiber is bounded away from zero and one.    That is, the proposition holds for all of our maps. 
\end{proof}

\subsection{Close neighbors: shared properties and related entropy}\label{ss:closeNeighbors} 
  
 We aim to invoke a result from (\cite{CKStoolsOfTheTrade},  which depends on the use of ``matching exponents" (there denoted $m,n$ --- here we will use $e_\ell, e_r$).    In our setting, if $\alpha < \gamma_n$ is in some synchronization interval $J_{k,v}$ then we let $e_\ell = \underline{S}+1$ and $e_r = \overline{S}+1$;  if  $\alpha > \gamma_n$ is in the left hand portion of some synchronization interval $J_{-k,v}$  then let $e_\ell = \underline{S}+1$ and $e_r = \overline{S}+1$; finally, if $\alpha > \gamma_n$ is in the left hand portion of some synchronization interval $J_{-k,v}$  then let $e_\ell = \underline{S}+1$ and $e_r = \overline{S}+2$.   Then equations \eqref {e:synchronizationExplicit}, \eqref{e:synchroExplicitLargeAlps} show that for all such $\alpha$, 
 \[ T_{\alpha}^{e_\ell}(\, \ell_0(\alpha) \,) = T_{\alpha}^{e_r}(\, r_0(\alpha) \,).\]  
Furthermore,  in \cite{CaltaKraaikampSchmidt}, we show that for all but finitely many $\alpha$ in any synchronization  interval that these exponents are appropriately minimal for such an equality to hold.  
 
Now consider $n \ge 3$  fixed and suppose that $\alpha' \neq\alpha$.  
Denote the orbits of the respective endpoints of the intervals of definition by $\ell'_i, r'_i$ for each $\ell_i(\alpha'), r_i(\alpha')$, and similarly simply $\ell_i, r_i$ for each $\ell_i(\alpha), r_i(\alpha)$, respectively.     For the ease of the reader we 
 include the following direct translation of (\cite{CKStoolsOfTheTrade},  Definition~3.8).  

 \bigskip
 
\begin{Def} 
  \label{d: tightSyn} Suppose that $J$ is a matching interval with corresponding matching exponents $e_\ell, e_r$.  We say that $\alpha, \alpha' \in J$  are {\em close neighbors} if
  $\ell'_i, \ell_i, r'_j, r_j \in  \mathbb I_{\alpha'} \cap \mathbb I_{\alpha}$ for all  $1\le i   \le  e_\ell$ and $1 \le j   \le e_r$.  
\end{Def}

Recall from \S~\ref{ss:notation} that $b_{\alpha}(x)$ gives the $\alpha$-digit of $x$.  We let 
 \[  \mathcal C   = \{ \, (x,y) \in \Omega_{\alpha} \mid  x \in \mathbb I_{\alpha} \cap \mathbb I_{\alpha'} \,,\, b_{\alpha}(x) \neq \, b_{\alpha'}(x)\, \}\,.   
  \]
  
   Compare the following with Figure~\ref{f:smallAlpQuilt}.  
\begin{Prop}\label{p:quiltCloseNeighsSmall}  Fix $n$.   Suppose that $\alpha, \alpha'$, with $\alpha' < \alpha< \gamma_n$, are close neighbors. 
  Then both of  $\Omega_{\alpha'}, \Omega_{\alpha}$  can be finitely quilted from each other.  In particular, 
\begin{equation}\label{e:peterPaulWorks}
\Omega_{\alpha'} = \bigg(\, \Omega_{\alpha} \setminus \coprod_{i=1}^{\overline{S}+1}\, \mathcal T_{\alpha}^i(\,\mathcal C\,)\,\bigg) \amalg\; \coprod_{j=1}^{\underline{S}+1}\,\mathcal T_{\alpha'}^j(\,\mathcal C\,).
\end{equation}
  Furthermore,  if $\mathcal T_{\alpha}$ is the natural extension of $T_{\alpha}$, then the entropy of $T_{\alpha'}$ is 
 \[ h(T_{\alpha'}) = \bigg(1 + (\underline{S} - \overline{S})  \;\nu_{\alpha}(\,[r'_0, r_0]\,) \bigg)^{-1} h(T_{\alpha}), \]
where $\nu_{\alpha}$ is $T_{\alpha}$-invariant probability measure induced from $\mu$ on $\Omega_{\alpha}$. 
Finally, since we always have $\underline{S} > \overline{S}$,  the entropy is a monotonically increasing continuous function in $\alpha'$ (for fixed $\alpha$). 
\end{Prop}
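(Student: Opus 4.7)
The plan is to read the quilting identity \eqref{e:peterPaulWorks} off the synchronization equality \eqref{e:synchronizationExplicit}, extract the measure ratio $\mu(\Omega_{\alpha'})/\mu(\Omega_\alpha)$ by taking $\mu$ of both sides and invoking $\mathcal T$-invariance of $\mu$, and then combine with Abramov's formula applied to the first-return map onto $\Omega_0:=\Omega_\alpha\setminus\coprod_{i=1}^{\overline{S}+1}\mathcal T_\alpha^i(\mathcal C)$. Monotonicity and continuity will fall out of the resulting closed formula.

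First I would identify $\pi(\mathcal C)$ explicitly, where $\pi$ projects to the first coordinate. For $x\in\mathbb I_\alpha\cap\mathbb I_{\alpha'}$ the initial $\alpha$- and $\alpha'$-digits coincide exactly when $T_\alpha(x)=A^kC\cdot x$ remains in $\mathbb I_{\alpha'}$, i.e.\ when $T_\alpha(x)\notin[r'_0,r_0)$. Hence $\pi(\mathcal C)=T_\alpha^{-1}([r'_0,r_0))$; since $\mathcal C$ is a full-fiber set above its projection, and $\nu_\alpha$ is $T_\alpha$-invariant, we get $\mu_\alpha(\mathcal C)=\nu_\alpha([r'_0,r_0])$. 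Next, I would use the close-neighbor hypothesis together with \eqref{e:synchronizationExplicit} to argue that $\{\mathcal T_\alpha^i(\mathcal C)\}_{i=1}^{\overline{S}+1}$ and $\{\mathcal T_{\alpha'}^j(\mathcal C)\}_{j=1}^{\underline{S}+1}$ are each pairwise disjoint families (by bijectivity of the corresponding two-dimensional maps, Theorem~\ref{t:Omega}), and that the same string of M\"obius transformations carrying $r_0(\alpha)$ forward $\overline{S}+1$ steps equals, after synchronization, the string carrying $\ell_0(\alpha)$ forward $\underline{S}+1$ steps; acting by these strings on all of $\mathcal C$ (not just the endpoints) keeps the intermediate iterates inside $\mathbb I_\alpha\cap\mathbb I_{\alpha'}$ by the close-neighbor condition. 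The two families therefore fill the two sides of the symmetric difference $\Omega_\alpha\triangle\Omega_{\alpha'}$, which is precisely \eqref{e:peterPaulWorks}.

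Taking $\mu$-measure of \eqref{e:peterPaulWorks} and using invariance yields
\[
\frac{\mu(\Omega_{\alpha'})}{\mu(\Omega_\alpha)}=1+(\underline{S}-\overline{S})\,\nu_\alpha([r'_0,r_0]).
\]
By construction $\Omega_0$ is \emph{simultaneously} a first-return set for $\mathcal T_\alpha$ on $\Omega_\alpha$ (excursions last exactly $\overline{S}+1$ steps) and for $\mathcal T_{\alpha'}$ on $\Omega_{\alpha'}$ (excursions last $\underline{S}+1$ steps), and the two induced maps on $\Omega_0$ coincide (the tails agree after synchronization). Applying Abramov to each side, with the common induced entropy $h_0$ cancelling out, gives $h(T_\alpha)/\mu(\Omega_\alpha)=h_0\,\mu(\Omega_0)^{-1}=h(T_{\alpha'})/\mu(\Omega_{\alpha'})$; rearranging and substituting the displayed ratio produces the stated entropy formula. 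Finally, since $\underline{S}>\overline{S}$ and $\alpha'\mapsto\nu_\alpha([r'_0,r_0])$ is continuous and strictly decreasing in $\alpha'$ (the interval $[r'_0,r_0]=[\alpha't,\alpha t]$ strictly shrinks as $\alpha'\uparrow\alpha$), the factor $(1+(\underline{S}-\overline{S})\nu_\alpha([r'_0,r_0]))^{-1}$ is continuous and strictly increasing in $\alpha'$, giving both the monotonicity and the continuity.

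The main obstacle will be promoting \eqref{e:synchronizationExplicit}, which is a scalar equation about two real orbits, to the planar set identity \eqref{e:peterPaulWorks}: one must verify that the $\overline{S}+1$ forward translates under $\mathcal T_\alpha$ and the $\underline{S}+1$ translates under $\mathcal T_{\alpha'}$ remain disjoint from $\Omega_0$ throughout the excursion and attach correctly along the synchronized boundary. This is where the close-neighbor hypothesis does its real work, preventing any premature re-entry into $\Omega_0$ and ensuring that the same group elements act on both fibers of $\mathcal C$ throughout the excursion.
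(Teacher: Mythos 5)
Your approach differs from the paper's: the authors prove this proposition simply by citing their companion paper, namely (\cite{CKStoolsOfTheTrade}, Proposition~3.10) combined with (\cite{CKStoolsOfTheTrade}, Theorem~3.3 and its associated propositions), where the quilting machinery is set up once and for all and this statement is a specialization. Your plan---identify $\mathcal C$ and its $\mu$-measure, derive the set identity \eqref{e:peterPaulWorks} from synchronization and the close-neighbor control, take $\mu$ of both sides, and conclude via Abramov applied to the first-return system on $\Omega_0:=\Omega_\alpha\setminus\coprod_{i=1}^{\overline{S}+1}\mathcal T_\alpha^i(\mathcal C)$---is a faithful reconstruction of what that machinery must be doing behind the citation, so in spirit you recover the imported argument rather than finding a genuinely new route. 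What the paper buys by citing is not re-verifying the delicate disjointness and attachment steps here; what you buy is making the measure-theoretic bookkeeping visible in this instance.

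There is, however, a sign error in your Abramov step that, taken at face value, inverts the stated formula. Writing $h_0$ for the entropy of the common induced map on $\Omega_0$, Abramov applied to the normalized system on $\Omega_\alpha$ gives
\[
h_0 \;=\; \frac{h(\mathcal T_\alpha)}{\hat\mu_\alpha(\Omega_0)} \;=\; \frac{h(T_\alpha)\,\mu(\Omega_\alpha)}{\mu(\Omega_0)}\,,
\]
so the quantity that cancels after equating with the $\alpha'$-side is $h_0\,\mu(\Omega_0)=h(T_\alpha)\,\mu(\Omega_\alpha)=h(T_{\alpha'})\,\mu(\Omega_{\alpha'})$. You instead wrote $h(T_\alpha)/\mu(\Omega_\alpha)=h_0\,\mu(\Omega_0)^{-1}=h(T_{\alpha'})/\mu(\Omega_{\alpha'})$; the constant is the \emph{product} $h\cdot\mu(\Omega)$, not the quotient. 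Carrying your relation forward together with $\mu(\Omega_{\alpha'})=\mu(\Omega_\alpha)\big(1+(\underline{S}-\overline{S})\nu_\alpha([r'_0,r_0])\big)$ yields $h(T_{\alpha'})=\big(1+(\underline{S}-\overline{S})\nu_\alpha([r'_0,r_0])\big)\,h(T_\alpha)$, the reciprocal of the asserted formula, which would then make the entropy monotonically \emph{decreasing} in $\alpha'$. Beyond this algebraic slip, the gap you yourself flag at the end---promoting the scalar equality \eqref{e:synchronizationExplicit} to the planar identity \eqref{e:peterPaulWorks}, verifying pairwise disjointness of the excursion pieces, and proving that the induced maps on $\Omega_0$ genuinely coincide (which needs the matrix relation $L_{k,v}=C^{-1}ACR_{k,v}$, not just equality of the endpoint orbits)---is precisely the content that the cited companion results supply. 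Finally, note that your use of Abramov also tacitly requires that $\mathcal T_{\alpha'}$ be the natural extension of $T_{\alpha'}$, not merely that $\mathcal T_\alpha$ be one of $T_\alpha$; transferring the natural-extension property across the quilt is another piece delegated to (\cite{CKStoolsOfTheTrade}, Theorem~3.3).
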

 
\begin{figure}[h]
\scalebox{.75}{
\begin{tikzpicture}[x=5cm,y=5cm] 
\draw  (-1.72, -0.4)--(-0.55, -0.4); 
\draw  (-0.55, -0.4)--(-0.55, -0.2); 
\draw  (-0.55, -0.2)--(0.28, -0.2); 
\draw  (0.28,  1.79)--(-0.25,  1.79); 
\draw  (-0.25,  1.79)--(-0.25,  1.64); 
\draw  (-0.25,  1.64)--(-0.44,  1.64); 
\draw  (-0.44,  1.64)--(-0.44,  0.74); 
\draw  (-0.44,  0.74)--(-0.74,  0.74);
\draw  (-0.74,  0.74)--(-0.74,  0.44);
\draw  (-0.74,  0.44)--(-1.64,  0.44);
\draw  (-1.64,  0.44)--(-1.64,  0.39);   
\draw  (-1.64,  0.39)--(-1.72,  0.39);  
\draw  (-1.72,  0.39)--(-1.72,  -0.4); 
\draw  (0.0, -0.2)--(0.0, 1.79);         
\draw[thin,dashed] ( -0.84, -0.4)--(-0.84, 0.44);        
\draw[thin,dashed] (-0.32, 1.64)--(-0.32, -0.2); 
\draw[thin,dashed] (-0.19, 1.79)--(-0.19, -0.2); 
\node at (-1, 0) {$-1$};  
\node at (-0.55, 0) {$-2$};      
\node at (-0.26, 0) {$-3$};      
\node at (-0.1, 0) {$\cdots$};  
\node at (0.13, 0) {$\cdots$}; 
 \node at (-1.7, -0.5) {$(\ell_0, y_{-2})$}; 
 \node at (-1.95,  0.40) {$(\ell'_0, y_1)$}; 
 \node at ( -1.5, 0.55) {$(\ell_3, y_2)$}; 
 \node at (-0.82, 0.82) {$(\ell'_2, y_3)$};
 \node at (-0.6, 1.65)  {$(\ell_1, y_4)$}; 
 \node at (-0.3, 1.9)  {$(\ell'_4, y_5)$}; 
 \node at (0.5, -0.2) {$(r_0, y_{-1})$}; 
 \node at (0.35, 1.9) {$(r'_0, y_5)$}; 
 \node at (-0.5, -0.5)  {$(r_1, y_{-2})$}; 
 \node at (0, -0.2)  {$0$}; 
 \foreach \x/\y in {-1.72/-0.4,-0.55/-0.4, 
 0.28/-0.2,  0.24 /1.79, -0.28/1.79, -0.44/1.64, -0.77/0.74,
-1.64/0.44, -1.76/0.39%
} { \node at (\x,\y) {$\bullet$}; } 
\draw[pattern=north east lines, pattern color=red]    (0.24, -0.2)--(0.24, 1.79) -- (0.27, 1.79) --(0.27, -0.2) -- cycle; 
\draw[pattern=north east lines, pattern color=red]     (-0.55, -0.4)--(-0.55, -0.2) --  (-0.59, -0.2)--(-0.59, -0.4) -- cycle; 
\draw[pattern=north west lines, pattern color=blue]    (-1.72, -0.4)--(-1.76, -0.4) -- (-1.76, 0.39) --(-1.72, 0.39) -- cycle; 
\draw[pattern=north west lines, pattern color=blue]   (-1.64,  0.44)--(-1.64,  0.39) -- (-1.67,  0.39)--(-1.67,  0.44) -- cycle; 
\draw[pattern=north west lines, pattern color=blue]    (-0.74,  0.74)--(-0.74,  0.44) -- (-0.77,  0.44)--(-0.77,  0.74) -- cycle; 
\draw[pattern=north west lines, pattern color=blue]     (-0.44,  1.64)--(-0.44,  0.74) --  (-0.47,  0.74)--(-0.47,  1.64) -- cycle;  
\draw[pattern=north west lines, pattern color=blue]    (-0.25,  1.79)--(-0.25,  1.64) --  (-0.28,  1.64)--(-0.28,  1.79) -- cycle;  
\draw[pattern=north west lines, pattern color=blue]    (-1.0, 0.35 )--(-1.0, 0.3 ) --  (-1.1, 0.3 )--(-1.1, 0.35 ) -- cycle; 
\draw[pattern=north east lines, pattern color=red]    (-1.0, 0.35 )--(-1.0, 0.3 ) --  (-1.1, 0.3 )--(-1.1, 0.35 ) -- cycle; 
\node at (0.27,  0.8)[pin={[pin edge=<-, pin distance=12pt]0:{$\mathcal D := \mathcal T_{\alpha}(\mathcal C)$}}] {};
\node at (-0.55,  -0.3)[pin={[pin edge=<-, pin distance=12pt]0:{$\mathcal T_{\alpha}^{\,2}(\mathcal C)$}}] {};
\node at (-1.76,  0)[pin={[pin edge=<-, pin distance=12pt]180:{$ \mathcal T_{A^{-1}}(\mathcal D) = \mathcal T_{\alpha'}(\mathcal C)$}}] {};
\node at (-1.67,  0.42)[pin={[pin edge=<-, pin distance=60pt]135:{$\mathcal T_{\alpha'}^{\,2}(\mathcal C)$}}] {};
\node at (-0.77,  0.58)[pin={[pin edge=<-, pin distance=80pt]160:{$\mathcal T^{\,3}_{\alpha'}(\mathcal C)$}}] {};
\node at (-0.47,  1.2)[pin={[pin edge=<-, pin distance=60pt]180:{$\mathcal T^{\,4}_{\alpha'}(\mathcal C)$}}] {};
\node at (-0.3,  1.74)[pin={[pin edge=<-, pin distance=80pt]180:{$\mathcal T^{\,5}_{\alpha'}(\mathcal C)$}}] {};
\node at (-1.1,  0.3)[pin={[pin edge=<-, pin distance=5pt]250:{$\mathcal T^{\,6}_{\alpha'}(\mathcal C) = \mathcal T^{\,3}_{\alpha}(\mathcal C)$}}] {};
\end{tikzpicture} 
}
\caption{{\bf Quilting for close neighbors.}  Quilting from  the  $\Omega_{3, 0.14}$ to $\Omega_{3, 0.135}$, with blocks $\mathcal B_i, i = -1, -2, -3$ indicated (not fully to scale); compare with Figure~\ref{f:omegaSmallAlp}.  The forward $\mathcal T_{\alpha}$-orbit of $\mathcal C$ is deleted, while the forward  $\mathcal T_{\alpha'}$-orbit of $\mathcal C$ is added, until the ``hole" created by excising  $\mathcal T_{\alpha}^{\,3}(\mathcal C)$ is ``patched" in by  $\mathcal T_{\alpha'}^{\,6}(\mathcal C)$.   See Proposition~\ref{p:quiltCloseNeighsSmall}. }%
\label{f:smallAlpQuilt}%
\end{figure}
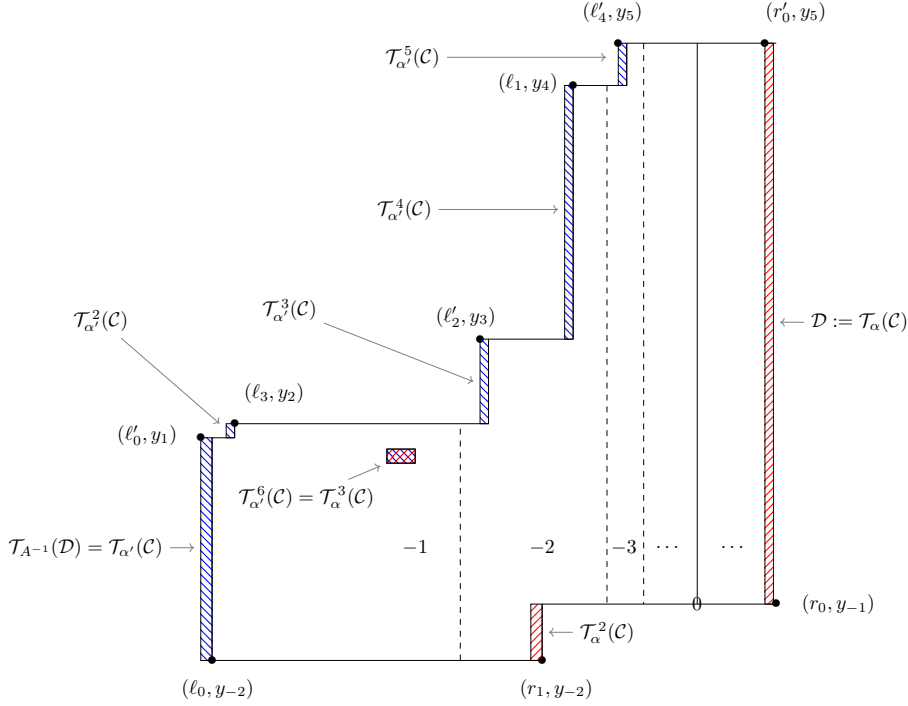

\begin{proof}    This follows from combining (\cite{CKStoolsOfTheTrade}, Proposition~3.10)
with (\cite{CKStoolsOfTheTrade}, Theorem~3.3 
and its associated propositions). 
\end{proof}

\begin{figure}[h]
\scalebox{.7}{
\begin{tikzpicture}[x=6cm,y=6cm] 
\draw  (-.28, -0.72)--(0.6, -0.72); 
\draw  (0.6, -0.72)--(0.6,  -0.44); 
\draw  (0.6, -0.44)--(1.1,  -0.44);   
\draw  (1.1, -0.44)--(1.1,  -0.39);   
\draw  (1.1, -0.39)--(1.72,  -0.39);   
\draw  (1.72, -0.39)--(1.72,  0.35); 
\draw  (1.72, 0.35)--(0.25,  0.35); 
\draw  (0.25, 0.35)--(0.25,  0.2);  
\draw  (0.25, 0.2)--(-.28,  0.2);  
\draw  (-.28, 0.2)--(-.28,  -0.72);
 \draw  (0.0, -0.72)--(0.0, .2); 
 \draw  (1, -0.44)--(1, .35); 
\draw[thin,dashed] (-0.17, -0.72)--(-0.17, 0.2);    
\draw[thin,dashed] (0.19, -0.72)--(0.19, 0.2); 
\draw[thin,dashed] (0.33, -0.72)--(0.33, 0.35);      
\draw[thin,dashed] ( 0.78, -0.44)--( 0.78, 0.35);  
\draw[thin,dashed] (0.92, -0.44)--(0.92, 0.35);  
\draw[thin,dashed] (1.2, -0.39)--(1.2, 0.35);  
\draw[thin,dashed] (1.4, -0.39)--(1.4, 0.35);  
\draw[thin,dashed] (1.08, -0.44)--(1.08, 0.35);  
\node at (-.23, 0)[pin={[pin edge=<-, pin distance=20pt]300:{$\mathcal B_{-2,1}$}}] {};
\node at (0.55, -0.2) {\tiny{$(1,1)$}};  
\node at (0.25, 0) {\tiny{$(2,1)$}}; 
\node at (0.85, 0.12) {\tiny{$(-2,2)$}};      
\node at (0.10, 0) {\tiny{$\cdots$}}; 
\node at (-0.1, 0) {\tiny{$\cdots$}};   
\node at (0.96, -0.01) {\tiny{$\cdots$}}; 
\node at (1.05, -0.01) {\tiny{$\cdots$}}; 
\node at (1.14, 0) {\tiny{$(3,2)$}};  
\node at (1.3, 0) {\tiny{$(2,2)$}};   
\node at (1.55,  -0.2) {\tiny{$(1,2)$}};   
\node at (-.3, -0.8) {$(\ell_0, y_{-3})$}; 
\node at (-.45, 0.2) {$(\ell'_0, y_1)$}; 
\node at (0.3,  0.45)  {$(\ell_1, y_2)$};  
\node at (0.62, -0.8)  {$(r_1, y_{-3})$}; 
\node at (1.2, -0.5)  {$(r_2, y_{-2})$}; 
\node at (1.7, -0.5)  {$(r'_0, y_{-1})$};  
\node at (1.75,0.45)  {$(r_0, y_{2})$};   
\node at (0,0)  {$0$};  
\node at (1, -0.5)  {$1$};  
 \foreach \x/\y in {-.28/-0.72, 0.6/-0.72, 1.1/-0.44, 
 1.68/-0.39,  1.72/0.35, 0.25/0.35, -.3/0.2%
} { \node at (\x,\y) {$\bullet$}; } 
\draw[pattern=north west lines, pattern color=orange]    (1.68, -0.3)--(1.68, 0.35) -- (1.72, 0.35)--(1.72,-0.3)-- cycle; 
\draw[pattern=north east lines, pattern color=red]    (1.68, -0.39)--(1.68, -0.3) -- (1.72, -0.3)--(1.72,-0.39)-- cycle; 
\draw[pattern=north west lines, pattern color=orange]     (0.6, -0.65)--(0.6,  -0.44) --  (0.55, -0.44)--(0.55,  -0.65)-- cycle; 
\draw[pattern=north east lines, pattern color=red]          (0.6, -0.72)--(0.6,  -0.65) --  (0.55, -0.65)--(0.55,  -0.72)-- cycle; 
\draw [decorate, ultra thick,
    decoration = {calligraphic brace,mirror,amplitude=6pt}] (0.63,-0.73) --  (0.63,-0.45);
\node at (0.8, -0.6){$\mathcal T_{\alpha}^{\,2}(\mathcal C\,)$};    
\node at (1.9, 0.1){$\mathcal D_1 = $};
\node at (1.73, 0)[pin={[pin edge=<-, pin distance=12pt]0:{$\mathcal T_{\alpha}(\mathcal C_1\,)$}}] {};
\node at (1.73,  -0.35)[pin={[pin edge=<-, pin distance=12pt]0:{$\mathcal T_{\alpha}(\mathcal C_2\,)$}}] {};
\draw[pattern=north west lines, pattern color=blue]    (-0.28, -0.72)--(-0.28, 0.2) -- (-0.3,0.2)--(-0.3,-0.72)-- cycle; 
\draw[pattern=north east lines, pattern color=green]    (0.25,0.2)--(0.22, 0.2) -- (0.22, 0.28)--(0.25,0.28)-- cycle; 
\draw[pattern=north west lines, pattern color=blue]    (0.25,0.35)--(0.22, 0.35) -- (0.22, 0.28)--(0.25,0.28)-- cycle; 
\node at (-.3,  -0.3)[pin={[pin edge=<-, pin distance=12pt]180:{$\mathcal T_{\alpha'}(\mathcal C_1)$}}] {};
\node at (-.55, -0.2){$\mathcal T_{A^{-1}}(\mathcal D_1) = $};
\node at (0.22,  0.22)[pin={[pin edge=<-, pin distance=40pt]165:{$\mathcal T_{\alpha'}(\mathcal C_2\,)$}}] {};
\node at (0.22,  0.32)[pin={[pin edge=<-, pin distance=40pt]130:{$\mathcal T_{\alpha'}^{\,2}(\mathcal C_1\,)$}}] {};
\fill [opacity= 0.15, gray]  (0.72,0.35)--(0.78, 0.35) -- (0.78, -0.44)--(0.72,-0.44)-- cycle; 
\node at (.75, 0.38)[pin={[pin edge=<-, pin distance=12pt]90:{$\mathcal C_2$}}] {};
\end{tikzpicture}  
}
\caption{ {\bf Quilting for close neighbors, large $\alpha$.}  Quilting from  $\Omega_{3, 0.86}$ to $\Omega_{3, 0.855}$ (not fully to scale); compare with Figure~\ref{f:omegaLargeAlpBiggerThanDelta_{-k,v}}. Blocks $\mathcal B_{i,j}$, also denoted by $(i,j)$.   The forward $\mathcal T_{\alpha}$-orbit of $\mathcal C  = \mathcal C_1 \cup \mathcal C_2$ is deleted, while the forward  $\mathcal T_{\alpha'}$-orbit of $\mathcal C$ is added, until synchronization causes a ``hole" excised due to the first of these, but to be ``patched" due to the second (not shown here, but compare with Figure~\ref{f:smallAlpQuilt}\,).   See Proposition~\ref{p:quiltCloseNeighsLargeLeft}. }%
\label{f:quiltLargeAlpLessThanDelta}%
\end{figure}

   Given large $\alpha, \alpha'$,  their respective digits of an $x \in \mathbb I_{\alpha'}\cap \mathbb I_{\alpha}$ can differ in two basic ways.  
\begin{Def}\label{d:changeDigBigAlp}   Fix $n$ and suppose $\alpha>\alpha'>\gamma_n$ are as above.    
We define  the two sets 
 \[
 \begin{aligned} 
  \mathcal C_1 &= \{ \, (x,y) \in \Omega_{\alpha} \mid  x \in \mathbb I_{\alpha} \cap \mathbb I_{\alpha'} \,,\, b_{\alpha}(x) = (i,j), \, b_{\alpha'}(x) = (i',j)\, \text{with}\,   i\neq i' \, \}\,, \\
  \mathcal C_2 &= \{ \, (x,y) \in \Omega_{\alpha} \mid  x \in \mathbb I_{\alpha} \cap \mathbb I_{\alpha'} \,,\,  b_{\alpha}(x) = (i,j), \, b_{\alpha'}(x) = (i',j')\, \text{with}\,   j\neq j'   \, \}.
  \end{aligned}
  \]
\end{Def} 
 
 Compare the following with Figure~\ref{f:quiltLargeAlpLessThanDelta}.
\begin{Prop}\label{p:quiltCloseNeighsLargeLeft}  Suppose that  $\alpha, \alpha'$ are close neighbors in the  synchronization interval $J_{-k,v}$ for some $k,v$, with $\alpha'<\alpha< \delta_{-k,v}$.     Then both of  $\Omega_{\alpha'}, \Omega_{\alpha}$  can be finitely quilted from each other.  In particular,
\begin{equation}\label{e:peterPaulWorksLarge}
\Omega_{\alpha'} = \bigg(\, \Omega_{\alpha} \setminus \coprod_{i=1}^{\overline{S}+1}\, \mathcal T_{\alpha}^i(\,\mathcal C\,)\,\bigg) \amalg\; \coprod_{j=1}^{\underline{S}+1}\,\mathcal T_{\alpha'}^j(\, \mathcal C_1\,)\, \amalg\; \coprod_{j=1}^{\underline{S}}\,\mathcal T_{\alpha'}^j(\,\mathcal C_2\,).
\end{equation}

 Furthermore,  if $\mathcal T_{\alpha}$ is the natural extension of $T_{\alpha}$, then entropy of $T_{\alpha'}$ is 
  \[ h(T_{\alpha'}) = \bigg(1 + (\underline{S} - \overline{S})  \;\nu_{\alpha}(\,[r'_0, r_0]\,) - \nu_{\alpha}(\,[\mathfrak b_{\alpha'}, \mathfrak b_{\alpha}]\,) \bigg)^{-1} h(T_{\alpha}), \]
where $\nu_{\alpha}$ is $T_{\alpha}$-invariant probability measure induced from $\mu$ on $\Omega_{\alpha}$. Finally,   the entropy is a monotonically decreasing  continuous function in $\alpha'$ (for fixed $\alpha$). 
\end{Prop}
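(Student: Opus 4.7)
The plan is to adapt the proof of Proposition~\ref{p:quiltCloseNeighsSmall} to the present large-$\alpha$, left-portion setting, by invoking (\cite{CKStoolsOfTheTrade}, Proposition~3.10 and Theorem~3.3) once the correct quilting triangle is identified. The essential novelty is the pair structure $(k,l)$ of large-$\alpha$ digits, which forces a bifurcation of the disagreement set $\mathcal C$ into $\mathcal C_1$ and $\mathcal C_2$ of Definition~\ref{d:changeDigBigAlp}. First I would pin down the $x$-projections of these sets inside $\Omega_\alpha$: using that $\mathfrak b_\alpha = C^{-1}\cdot \ell_0(\alpha)$ separates the cylinders of type $(*,1)$ from those of type $(*,2)$, one finds that $\mathcal C_1$ fibers over the symmetric-difference pieces $[\ell_0(\alpha),\ell_0(\alpha')]\cup[r'_0,r_0]$ (where only the $A$-exponent can change), while $\mathcal C_2$ fibers over $[\mathfrak b_{\alpha'},\mathfrak b_{\alpha}]$ (where a single factor of $C$ is absorbed or emitted).

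The step I expect to be the main obstacle is verifying the quilting identity \eqref{e:peterPaulWorksLarge}: one must track the forward $\mathcal T_{\alpha}$-orbit of $\mathcal C$ (excised) and the forward $\mathcal T_{\alpha'}$-orbits of $\mathcal C_1$ and $\mathcal C_2$ (added), and check pairwise disjointness together with closure at synchronization. The key asymmetry is that a point $x$ in the $x$-projection of $\mathcal C_2$ satisfies $T_{\alpha'}^{j+1}(x) = T_{\alpha}^j(x)$ for small $j$ (because flipping the $l$-coordinate of the digit absorbs one factor of $C$), so $\mathcal C_2$ rejoins the $\alpha$-orbit one $T_{\alpha'}$-step earlier than $\mathcal C_1$; this accounts for the $\underline{S}+1$ iterates of $\mathcal C_1$ versus only $\underline{S}$ iterates of $\mathcal C_2$ appearing in \eqref{e:peterPaulWorksLarge}. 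The requisite case analysis should employ the synchronization identity \eqref{e:synchroExplicitLargeAlps} for $\alpha,\alpha' < \delta_{-k,v}$, the orbit diagram of Figure~\ref{f:orbitRelations}, and the lamination relations of Subsection~\ref{ss:laminateLargeAlps}. Once the identity is in hand, (\cite{CKStoolsOfTheTrade}, Proposition~3.10) delivers finite quiltability in both directions.

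For the entropy formula, I would use $\mathcal T$-invariance of $\mu$ to compute, from \eqref{e:peterPaulWorksLarge}, the difference
\[
\mu(\Omega_{\alpha'}) - \mu(\Omega_{\alpha}) = (\underline{S}-\overline{S})\,\mu(\mathcal C_1) + (\underline{S}-\overline{S}-1)\,\mu(\mathcal C_2),
\]
then pass to the $\nu_\alpha$-marginal using that the $x$-projection of $\mathcal C_1$ meets $\mathbb I_{\alpha}$ with $\nu_\alpha$-measure equal to $\nu_\alpha([r'_0,r_0])$ (the piece $[\ell_0(\alpha),\ell_0(\alpha')]$ sits in $\mathbb I_\alpha\setminus\mathbb I_{\alpha'}$ and contributes trivially after cancellation with the corresponding removed rectangles), while $\mathcal C_2$ has projection of $\nu_\alpha$-measure $\nu_\alpha([\mathfrak b_{\alpha'},\mathfrak b_\alpha])$. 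Combined with the constancy $h(T_{\alpha'})\mu(\Omega_{\alpha'}) = h(T_{\alpha})\mu(\Omega_{\alpha})$ supplied by (\cite{CKStoolsOfTheTrade}, Theorem~3.3) via Abramov applied to the common induced system, this yields the displayed formula. Monotonicity then follows by signing the $\alpha'$-derivative of the denominator: an inspection of \eqref{e:digitsNotSmall} and \eqref{e:underDminKv} shows $\underline{S}-\overline{S}<0$ throughout the left portion, and both $\nu_\alpha([r'_0,r_0])$ and $\nu_\alpha([\mathfrak b_{\alpha'},\mathfrak b_\alpha])$ decrease as $\alpha'$ increases, so the denominator increases in $\alpha'$ and $h(T_{\alpha'})$ decreases; continuity is immediate from continuity of $\alpha'\mapsto r'_0$ and $\alpha'\mapsto\mathfrak b_{\alpha'}$.
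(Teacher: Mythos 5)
Your overall approach matches the paper's, which proves this result by a direct citation of (\cite{CKStoolsOfTheTrade}, Proposition~3.10 and Theorem~3.3); the mechanics you try to fill in are a reasonable elaboration of what sits behind that citation, but there is a concrete bookkeeping error in your treatment of $\mathcal C_1$ that would give the wrong entropy formula.

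First, the $x$-projection of $\mathcal C_1$ is not the symmetric-difference pieces. By Definition~\ref{d:changeDigBigAlp}, $\mathcal C_1 \subset \Omega_\alpha$ lives over $\mathbb I_\alpha \cap \mathbb I_{\alpha'} = [\ell_0(\alpha), r_0(\alpha'))$, whereas $[\ell_0(\alpha'), \ell_0(\alpha)] \subset \mathbb I_{\alpha'} \setminus \mathbb I_\alpha$ and $[r'_0, r_0] \subset \mathbb I_\alpha \setminus \mathbb I_{\alpha'}$; neither can carry $\mathcal C_1$. Rather, the $x$-projection of $\mathcal C_1$ is a countable union of short intervals adjacent to the right endpoint $\rho_{(i,j)}(\alpha)$ of every (full, non-$(1,1)$) cylinder of $T_\alpha$, each of which $T_\alpha$ sends bijectively onto $[r'_0, r_0)$. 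The correct role of $[r'_0, r_0]$ is as the $x$-projection of $\mathcal T_\alpha(\mathcal C)$, the full fiber of $\Omega_\alpha$ over that interval; and that full fiber splits as $\mathcal T_\alpha(\mathcal C_1) \sqcup \mathcal T_\alpha(\mathcal C_2)$, as Figure~\ref{f:quiltLargeAlpLessThanDelta} shows (the orange and red stacked strips). Consequently $\mu(\mathcal C) \propto \nu_\alpha([r'_0, r_0])$, and since $\mathcal C_2$ is the full fiber over $[\mathfrak b_{\alpha'}, \mathfrak b_\alpha)$ one has $\mu(\mathcal C_2) \propto \nu_\alpha([\mathfrak b_{\alpha'}, \mathfrak b_\alpha])$; hence $\mu(\mathcal C_1) \propto \nu_\alpha([r'_0, r_0]) - \nu_\alpha([\mathfrak b_{\alpha'}, \mathfrak b_\alpha])$, \emph{not} $\nu_\alpha([r'_0, r_0])$.

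This matters: your difference identity $\mu(\Omega_{\alpha'}) - \mu(\Omega_\alpha) = (\underline{S}-\overline{S})\mu(\mathcal C_1) + (\underline{S}-\overline{S}-1)\mu(\mathcal C_2)$ is correct, but substituting your values for $\mu(\mathcal C_1), \mu(\mathcal C_2)$ produces $1 + (\underline{S}-\overline{S})\nu_\alpha([r'_0,r_0]) + (\underline{S}-\overline{S}-1)\nu_\alpha([\mathfrak b_{\alpha'},\mathfrak b_\alpha])$ rather than the stated denominator $1 + (\underline{S}-\overline{S})\nu_\alpha([r'_0,r_0]) - \nu_\alpha([\mathfrak b_{\alpha'},\mathfrak b_\alpha])$. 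Substituting the corrected value of $\mu(\mathcal C_1)$ recovers the stated formula, after which your monotonicity and continuity observations (using $\underline{S} < \overline{S}$ and the monotone dependence of $r'_0$ and $\mathfrak b_{\alpha'}$ on $\alpha'$) go through as written.
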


\begin{proof}  This also follows from combining (\cite{CKStoolsOfTheTrade}, Proposition~3.10) 
with (\cite{CKStoolsOfTheTrade}, Theorem~3.3 
and its associated propositions). 
\end{proof} 

  We now treat the case of $\alpha, \alpha'$ close neighbors in the right portion of some $J_{-k,v}$.   We have $\mathcal C_1, \mathcal C_2$ as above. 

 \begin{Prop}\label{p:quiltCloseNeighsLargeRight}  Suppose that  $\alpha, \alpha'$ are close neighbors in the  synchronization interval $J_{-k,v}$ for some $k,v$, with $\delta_{-k,v}< \alpha'<\alpha$.     Then both of  $\Omega_{\alpha'}, \Omega_{\alpha}$  can be finitely quilted from each other.  In particular,
\[
\Omega_{\alpha'} = \bigg(\, \Omega_{\alpha} \setminus \coprod_{i=1}^{\overline{S}+2}\, \mathcal T_{\alpha}^i(\,\mathcal C\,)\,\bigg) \amalg\; \coprod_{j=1}^{\underline{S}+1}\,\mathcal T_{\alpha'}^j(\, \mathcal C_1\,)\, \amalg\; \coprod_{j=1}^{\underline{S}}\,\mathcal T_{\alpha'}^j(\,\mathcal C_2\,).
\]

 Furthermore,  if $\mathcal T_{\alpha}$ is the natural extension of $T_{\alpha}$, then entropy of $T_{\alpha'}$ is 
  \[ h(T_{\alpha'}) = \bigg(1 + (1+\underline{S} - \overline{S})  \;\nu_{\alpha}(\,[r'_0, r_0]\,) - \nu_{\alpha}(\,[\mathfrak b_{\alpha'}, \mathfrak b_{\alpha}]\,) \bigg)^{-1} h(T_{\alpha}), \]
where $\nu_{\alpha}$ is $T_{\alpha}$-invariant probability measure induced from $\mu$ on $\Omega_{\alpha}$. Finally,   the entropy is a monotonically decreasing  continuous function in $\alpha'$ (for fixed $\alpha$). 
 \end{Prop}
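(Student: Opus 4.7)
The plan is to repeat, mutatis mutandis, the argument used for Proposition~\ref{p:quiltCloseNeighsLargeLeft}, invoking (\cite{CKStoolsOfTheTrade}, Proposition~3.10) in combination with (\cite{CKStoolsOfTheTrade}, Theorem~3.3). The sole structural change between the two propositions is that, on the right portion of $J_{-k,v}$, the synchronization equation \eqref{e:synchroExplicitLargeAlps} reads $\ell_{1+\underline{S}}(\alpha)=r_{2+\overline{S}}(\alpha)$, so the matching exponents are $e_{\ell}=\underline{S}+1$ and $e_{r}=\overline{S}+2$ (one more step on the $r$ side than before). The close-neighbor hypothesis then ensures exactly the orbit-containment required to apply the quilting machinery: all of $\ell_i,\ell'_i$ for $1\le i\le \underline{S}+1$ and $r_j,r'_j$ for $1\le j\le \overline{S}+2$ sit inside $\mathbb{I}_{\alpha}\cap\mathbb{I}_{\alpha'}$.

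I would first decompose $\mathcal C=\mathcal C_1\sqcup\mathcal C_2$ as in Definition~\ref{d:changeDigBigAlp} and show that the $\mathcal T_\alpha$-forward orbit of $\mathcal C$ and the $\mathcal T_{\alpha'}$-forward orbit of $\mathcal C_1,\mathcal C_2$ consist, up to measure zero, of pairwise disjoint sets for the indices indicated in the statement, and then that synchronization forces the quilting to terminate at exactly those indices. The $\mathcal C_2$ piece, concerning the jump between digits $(i,1)$ and $(i,2)$ near $\mathfrak b$, behaves just as in the left-portion case and terminates after $\underline S$ steps; for $\mathcal C_1$ the $x$-coordinate projection to $[r'_0,r_0]$ (and its counterparts) closes up after $\underline S+1$ steps under $\mathcal T_{\alpha'}$ and after $\overline S+2$ steps under $\mathcal T_\alpha$. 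Combining these with the definition of the two-dimensional maps via $\mathcal T_M(x,y)=(M\cdot x, RMR^{-1}\cdot y)$ and the lamination lemmas of Section~\ref{s:bigLefties} and Section~\ref{s:largeAlpsRightSide} then gives the displayed union identity for $\Omega_{\alpha'}$.

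For the entropy formula I would apply Abramov's formula exactly as in the proofs of Propositions~\ref{p:quiltCloseNeighsSmall} and~\ref{p:quiltCloseNeighsLargeLeft}. Since the $\mathcal T_\alpha$-invariant measure $\mu$ is preserved on each piece of the quilting, the $\mu$-masses of the orbits of $\mathcal C$ and of $\mathcal C_1,\mathcal C_2$ can be read directly: the projections of $\mathcal C_1$ to the first coordinate are supported on $[r'_0,r_0]$, those of $\mathcal C_2$ on $[\mathfrak b_{\alpha'},\mathfrak b_\alpha]$, and the corresponding $\nu_\alpha$-measures appear once per time step in the orbit. Summing the $\overline S+2$ removed copies and the $\underline S+1$ plus $\underline S$ added copies, with the appropriate signs dictated by the quilting relation, yields $\mu(\Omega_{\alpha'})=\mu(\Omega_\alpha)\bigl(1+(1+\underline S-\overline S)\,\nu_\alpha([r'_0,r_0])-\nu_\alpha([\mathfrak b_{\alpha'},\mathfrak b_\alpha])\bigr)$, which via Abramov's formula translates to the claimed expression for $h(T_{\alpha'})$.

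The main obstacle is bookkeeping: one must verify that the additional step on the right side (the $\overline S+2$ in place of $\overline S+1$) does not create any spurious overlap between a member of the $\mathcal T_\alpha$-orbit of $\mathcal C$ and a member of one of the $\mathcal T_{\alpha'}$-orbits of $\mathcal C_1,\mathcal C_2$, and that the combinatorial count giving the coefficient $1+\underline S-\overline S$ is correct with the new synchronization. This is where the palindromic structure of $v$ and the explicit description of $\ell_{1+\underline S}=r_{2+\overline S}$ from Subsection~\ref{ss:terseRev} enter; once they are invoked, monotonicity and continuity of $h(T_{\alpha'})$ in $\alpha'$ (for fixed $\alpha$) follow from the positivity of $1+\underline S-\overline S$ and the continuity of $\alpha'\mapsto \nu_\alpha([r'_0,r_0])$ and $\alpha'\mapsto\nu_\alpha([\mathfrak b_{\alpha'},\mathfrak b_\alpha])$, completing the proof exactly as in Proposition~\ref{p:quiltCloseNeighsLargeLeft}.
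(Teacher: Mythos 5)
Your proposal is correct and takes essentially the same approach as the paper: the paper's proof of this proposition consists solely of citing (\cite{CKStoolsOfTheTrade}, Proposition~3.10) together with (\cite{CKStoolsOfTheTrade}, Theorem~3.3 and its associated propositions), exactly the route you take, with the only structural adjustment being the shift of the matching exponent on the $r$-side from $\overline{S}+1$ to $\overline{S}+2$ on the right portion of $J_{-k,v}$, as dictated by \eqref{e:synchroExplicitLargeAlps}. Your supplementary discussion of $\mathcal C_1$, $\mathcal C_2$, the projections to $[r'_0,r_0]$ and $[\mathfrak b_{\alpha'},\mathfrak b_\alpha]$, and Abramov's formula fills in material the paper delegates to \cite{CKStoolsOfTheTrade}; it is consistent with the quilting mechanism, though the one claim you assert but do not verify (that the count of removed and added copies gives precisely the coefficient $1+\underline{S}-\overline{S}$, and that its positivity alone forces the monotone decrease of $h(T_{\alpha'})$ despite the competing $-\nu_\alpha([\mathfrak b_{\alpha'},\mathfrak b_\alpha])$ term) is also asserted without verification in the paper, being subsumed in the citation.
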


\begin{proof} 
This again follows from combining (\cite{CKStoolsOfTheTrade}, Proposition~3.10) 
with (\cite{CKStoolsOfTheTrade}, Theorem~3.3 
and its associated propositions). 
\end{proof}
 
\subsection{Bounded non-full range values are dense}\label{ss:bddNonFullRangeValDense}   

\begin{figure}
\scalebox{.65}{
\begin{tikzpicture}[x=7cm,y=7cm] 
 \draw[domain= -0.25:1.2, smooth,  variable=\x, thick, dashed ] plot ({\x}, {(3) *(-1/(\x+3)+0.625)});
 \node at (1.25, 1.25) {$y= \rho_{a, b}(\alpha)$};
 \draw[domain= -0.2:1.2, smooth,  variable=\x, thick] plot ({\x}, {-1/(\x+1)+1.3});
  \node at (-0.3, 0.2) {$y= \lambda_{a, b}(\alpha)$};
 \draw[domain= 0:1, smooth, red, variable=\x, thick] plot ({\x}, {(2*\x -1)/(\x+1)+1});
\draw[gray] (0, -0.1)--(0, 1.55);
\draw[gray] (1, -0.1)--(1, 1.55);
\draw[<-] (0, 1.55)--(0.45, 1.55);
\node at (0.5,  1.55) {$J$};
\draw[->] (0.55, 1.55)--(1, 1.55);
\draw[blue, thin, <-] (-0.1, 0.75)--(1, 0.75);
\draw[cyan, thin] (0.333, 0.55)--(0.333, 0.97);
\draw[cyan, thin] (0.333, 0.55)--(0.224, 0.55);
\node at (0.224, 0.55)[cyan] {$\bullet$};
\draw[cyan, dashed] (0.224, -0.1)--(0.224, 0.55);
\node at (0.23, -0.2) {$\phi_1$};
\draw[cyan, thin] (0.333,  0.97)--(0.47,  0.97);
\node at (0.475, 0.97)[cyan] {$\bullet$};
\draw[cyan, dashed] (0.475, -0.1)--(0.475, 0.97);
\node at (0.475, -0.2) {$\phi_2$};
\draw[ dotted] (0.176, -0.1)--(0.176, 0.45);
\node at (0.176, 0.45)[red] {$\bullet$};
\node at (0.16, -0.2) {$\beta_1$};
\draw[ dotted] (0.512, -0.1)--(0.512, 1.02);
\node at (0.512, 1.02)[red] {$\bullet$};
\node at (0.512, -0.1)[pin={[pin edge=<-, pin distance=12pt]300:{$\beta_2$}}] {};
\node at (0.818, 0.75)[blue] {$\bullet$};
\draw[ dotted] (0.818, -0.1)--(0.818, 0.75);
\node at (0.818, -0.2) {$\beta_4$};
\node at (0.333, 0.75) {$\bullet$};
\node at (0.333, 0.75)[pin={[pin edge=<-, pin distance=12pt]150:{$(\alpha_0,p)$}}] {};
\draw[ dotted] (0.333, -0.1)--(0.333, 0.75);
\node at (0.333, -0.2) {$\alpha_0$};
\end{tikzpicture}
}
\caption{Determining the open set $N(p)$ of $\alpha$ with agreement of $\alpha$- and $\alpha_0$-digits for both $p$ and corresponding $T_{\alpha}$-image of endpoint, where $p$ is in the $T_{\alpha_0}$-orbit of an endpoint of $\mathbb I_{\alpha_0}$ (one possible configuration), see the proof of Proposition~\ref{p:goodNeighborsAreThere}.}
\label{f:goodNeighborsExist}%
\end{figure}
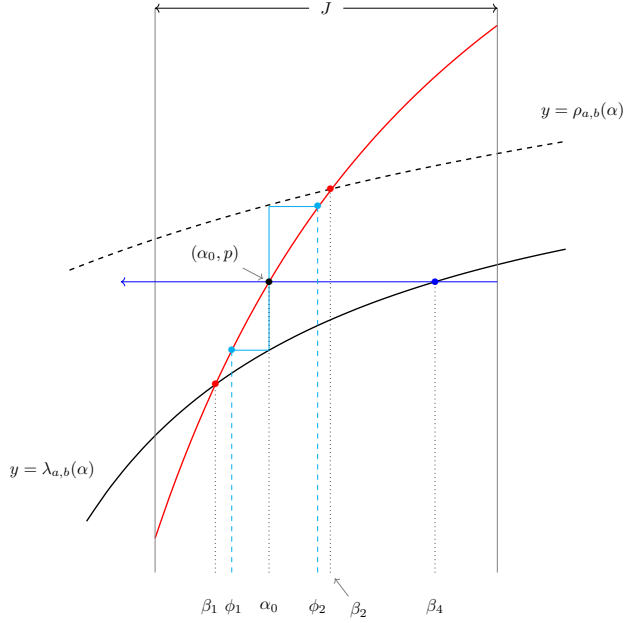

\begin{Prop}\label{p:goodNeighborsAreThere}   If $T_{\alpha}$ is not of bounded non-full range, then there is an open neighborhood of $\alpha$ within which every $\alpha'$ is a close neighbor of $\alpha$. 
\end{Prop}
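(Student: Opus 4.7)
The plan is to realize the open neighborhood of $\alpha_0:=\alpha$ as a finite intersection of open sets $N(p)$, one per orbit point $p$ of an endpoint of $\mathbb I_{\alpha_0}$ that enters the close-neighbor condition. This is exactly what Figure~\ref{f:goodNeighborsExist} depicts for a single $p$. I would first set up the bookkeeping: for each $p$ in the $T_{\alpha_0}$-orbit of $\ell_0(\alpha_0)$ (respectively $r_0(\alpha_0)$, and, for large $\alpha_0$, also $\mathfrak b_{\alpha_0}$) with $p=\ell_i(\alpha_0)$ for $1\le i\le e_\ell$ (resp.\ $p=r_j(\alpha_0)$ for $1\le j\le e_r$), let $(a_p,b_p)$ denote its $\alpha_0$-digit, so that $p\in\Delta_{\alpha_0}(a_p,b_p)$ with cylinder boundaries $\lambda_{a_p,b_p}(\alpha)=(A^{a_p}C^{b_p})^{-1}\!\cdot\ell_0(\alpha)$ and $\rho_{a_p,b_p}(\alpha)=(A^{a_p}C^{b_p})^{-1}\!\cdot r_0(\alpha)$ continuous in $\alpha$.

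Second, for each such $p$ I would impose the two open conditions
\[
\lambda_{a_p,b_p}(\alpha)<p<\rho_{a_p,b_p}(\alpha)
\qquad\text{and}\qquad
\lambda_{a_p,b_p}(\alpha)<\ell_i(\alpha)<\rho_{a_p,b_p}(\alpha),
\]
where the first guarantees that the fixed point $p$ sits in the same cylinder under $T_\alpha$ as under $T_{\alpha_0}$, while the second guarantees that the moving orbit point at parameter $\alpha$ carries the same digit $(a_p,b_p)$ as its $\alpha_0$-version. At $\alpha=\alpha_0$ both inequalities hold strictly, so by continuity of $\lambda_{a_p,b_p}(\cdot)$, $\rho_{a_p,b_p}(\cdot)$, and the orbit map $\alpha\mapsto\ell_i(\alpha)$ (which, inductively in $i$, is a fixed M\"obius element applied to $\ell_0(\alpha)$ on a neighborhood), each inequality cuts out an open $\alpha$-interval about $\alpha_0$. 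I define $N(p)$ as the intersection of these two intervals.

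Third, I would take $N:=\bigcap_{p}N(p)$ over the finitely many orbit points indexed by $i\le e_\ell$ and $j\le e_r$. The intersection is open, contains $\alpha_0$, and by a straightforward induction on $i$ (and on $j$) the digits of $\ell_i(\alpha')$ and of $r_j(\alpha')$ agree with those of the corresponding points at $\alpha_0$ for every $\alpha'\in N$, which is precisely the condition of Definition~\ref{d: tightSyn} that $\alpha'$ be a close neighbor of $\alpha_0$. The borderline subtlety in which some $p$ sits on a cylinder boundary at $\alpha=\alpha_0$ is handled using the synchronization identities \eqref{e:synchronizationExplicit}, \eqref{e:synchroExplicitLargeAlps}: a coincidence $p=\rho_{a,b}(\alpha_0)$ or $p=\lambda_{a,b}(\alpha_0)$ is exactly a matching event, at which the two orbit arcs $\ell_i(\alpha)$ and $r_j(\alpha)$ meet continuously, so one can drop one copy and only one strict-interior condition is needed.

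The main obstacle I expect is justifying that only finitely many $p$ need to be tracked under the stated hypothesis. Since $T_{\alpha_0}$ fails the bounded non-full range condition, a priori the orbits of the non-full cylinder endpoints visit infinitely many cylinders. The finiteness required here has to be extracted from the close-neighbor definition itself (the exponents $e_\ell,e_r$ bound how many orbit points enter), which in turn presupposes that $\alpha_0$ sits in some matching interval in a suitable generalized sense. Carefully coupling Remark~\ref{rmk:bddNonfullCylinders} (which already classifies the possible non-full cylinders as the ones containing $\ell_0(\alpha_0), r_0(\alpha_0), \mathfrak b_{\alpha_0}$) with the matching structure of \S~\ref{ss:notation}, \S~\ref{ss:terseRev} to extract finite $e_\ell,e_r$ from the failure of the BNF range is the delicate part; once this is in place, the open-set construction above closes the argument.
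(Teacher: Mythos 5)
Your construction is essentially the paper's: a finite intersection of open intervals, one per tracked orbit point $p$, each cut out by conditions that hold strictly at $\alpha_0$ and vary continuously nearby. Two points, though.

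First, the step you flag as the ``delicate part'' is actually immediate. The paper simply observes that every endpoint of a synchronization interval, every $\delta_{-k,v}$, and every non-synchronizing parameter indexes a map that \emph{is} of bounded non-full range (this is already in hand from the proofs of Propositions~\ref{p:NaturallyErgodicEndPts}, \ref{p:ergodicLargeEta}, \ref{p:ergodicLargeZeta}, \ref{p:ergodicDelta}, together with Remark~\ref{rmk:bddNonfullCylinders}). Hence the hypothesis that $T_\alpha$ is \emph{not} of bounded non-full range already forces $\alpha_0$ into the \emph{interior} of some $J_{k,v}$, $(\eta_{-k,v},\delta_{-k,v})$, or $(\delta_{-k,v},\zeta_{-k,v})$, so $e_\ell$ and $e_r$ are defined and finite, only finitely many $p$ need tracking, and --- since $\alpha_0$ is strictly interior --- the ``borderline subtlety'' you raise (an orbit point sitting on a cylinder boundary at $\alpha_0$) never arises. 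No appeal to a ``generalized'' notion of matching interval is required.

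Second, your two conditions per $p$ are not enough to verify Definition~\ref{d: tightSyn}. Taking $p=\ell_i(\alpha_0)$, your condition~(1) gives $\ell_i(\alpha_0)\in\mathbb I_{\alpha'}$, and your condition~(2) gives $\ell_i(\alpha')\in\mathbb I_{\alpha'}$, which is automatic. But the definition also requires $\ell'_i=\ell_i(\alpha')\in\mathbb I_{\alpha_0}$, and nothing you impose controls that. The paper's proof adds exactly this third constraint via the cut-off values $\phi_1,\phi_2$ (see Figure~\ref{f:goodNeighborsExist}): one additionally demands that the moving orbit point lie in the \emph{fixed} cylinder, $\ell_i(\alpha)\in\Delta_{\alpha_0}(a_p,b_p)\subset\mathbb I_{\alpha_0}$. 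Add this open condition to each of your $N(p)$ and the intersection $\bigcap_p N(p)$ genuinely consists of close neighbors.
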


\begin{proof}   All endpoints of synchronization intervals, all $\delta_{-k,v}$, and all non-synchronizing $\alpha$ are such that $T_{\alpha}$ is of bounded non-full range.   We thus fix  $\alpha_0$ in the interior of some  interval $J$ of the form $J_{k,v}$, $(\eta_{-k,v}, \delta_{-k,v})$ or $(\delta_{-k,v}, \zeta_{-k,v}, )$.   

On $J$,  each $\alpha \mapsto  \ell_i(\alpha)$ with $0\le i \le \underline{S}$ and $\alpha \mapsto   r_j(\alpha)$ with $0\le j \le \overline{S}$ is  continuous.  
Similarly,  letting  $\Delta(a,b) = \cup_{\alpha\in [0,1]}\, \Delta_{\alpha}(a,b)$,  we have that the left and right boundaries $\lambda_{a,b}(\alpha)$  and $\rho_{a,b}(\alpha)$ also give continuous functions  (at least for those $(a,b)$ which are admissible for all of $J$), compare this with (\cite{CaltaKraaikampSchmidt}, Figure~1.4), where such boundaries are plotted over all of $[0,1]$ in the setting of $n=3$.   

Fix $p \in \{\ell_i(\alpha_0) \mid 1\le i \le \underline{S}\} \cup \{r_j(\alpha_0) \mid 1\le j \le \overline{S}\}$, and thus also fix the value of $i$ or $j$ corresponding to $p$.  We now solve for those $\alpha$ for which  the $T_{\alpha}$-digit of $p$ agrees with its $T_{\alpha_0}$-digit, confer Figure~\ref{f:goodNeighborsExist}.    Denote the  $T_{\alpha_0}$-digit of $p$ by $(a,b)$.  Let $\beta_1, \beta_2$ be the endpoints of the subinterval of $J$ where all $\ell_i(\alpha)$ (respectively,  $r_j(\alpha)$\,)  have $\alpha$-digit equal to $(a,b)$.  Thus,  $\beta_1 < \alpha_0 < \beta_2$.    Let $\beta_3, \beta_4$ be defined by  $\rho_{(a,b)}(\beta_3) = p$ and  $\lambda_{(a,b)}(\beta_4) = p$.   Then $\beta_3 < \alpha_0 < \beta_4$.  Now let $M(p)$ be the open interval whose left endpoint is the greatest of  $\beta_1, \beta_3$ and whose right endpoint is the least of  $\beta_2, \beta_4$.   Then for all $\alpha \in  M(p)$,   the $\alpha$-digit of $p$ is also $(a,b)$.    Thus,  for all $\alpha \in M(p)$,  both $p \in \mathbb I_{\alpha_0}\cap \mathbb I_{\alpha}$ and   $T_{\alpha_0}(p) = A^aC^b\cdot p = T_{\alpha}(p)$.

Now let $\phi_1$ be defined by $\lambda_{a,b}(\alpha_0) = \ell_i(\phi_1)$ and $\phi_2$  by $\rho_{a,b}(\alpha_0)= \ell_i(\phi_2)$ (respectively, $r_j(\phi_1), r_j(\phi_2)\,$).
Let  $N(p)$ be the open interval whose left endpoint is the greatest of  $\beta_1, \beta_3, \phi_1$ and whose right endpoint is the least of   $\beta_2, \beta_4, \phi_2$.   Then for all $\alpha \in N(p)$,    $T_{\alpha_0}( \ell_i(\alpha)) = A^aC^b\cdot \ell_i(\alpha)) = T_{\alpha}( \ell_i(\alpha))$ (respectively, with $r_j$ appropriately replacing $\ell_i$).   

We now let $\mathcal N(\alpha_0) = \cap_p \, N(p)$.   Then every $\alpha$ in the open set $\mathcal N(\alpha_0)$ is a close neighbor of $\alpha_0$.    
\end{proof}

For our purposes, we only need that in every open set $\mathcal N(\alpha)$ of close neighbors there is some $\alpha'$ with $T_{\alpha'}$ of bounded non-full range.  We show a stronger result.   
\begin{Prop}\label{p:bddNonFullAreDense}  The set of $\alpha$ with $T_{\alpha}$ of bounded non-full range is dense in $[0,1]$. 
\end{Prop}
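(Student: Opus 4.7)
My plan is to exhibit a dense subset $G \subseteq [0,1]$ of parameters of bounded non-full range by assembling points whose orbit structure has already been controlled in earlier sections. Concretely, I let $G$ contain every $\alpha$ for which the expansions of $\ell_0(\alpha)$, $r_0(\alpha)$, and (when $\alpha > \gamma_n$) $\mathfrak{b}_\alpha$ involve only finitely many distinct digits, so that Remark~\ref{rmk:bddNonfullCylinders} immediately forces bounded non-full range. As recorded at the start of the proof of Proposition~\ref{p:goodNeighborsAreThere}, $G$ then contains every endpoint of every synchronization interval ($\zeta_{k,v}, \eta_{k,v}$ for small $\alpha$; $\eta_{-k,v}, \zeta_{-k,v}$ for large $\alpha$), every $\delta_{-k,v}$, and every non-synchronizing parameter: the endpoints and $\delta$-values admit purely periodic endpoint orbits via explicit fixed-point relations such as $AR_{k,v}\cdot r_0(\zeta_{k,v}) = r_0(\zeta_{k,v})$ and $L_{k,v}\cdot r_0(\eta_{k,v}) = r_0(\eta_{k,v})$ and their large-$\alpha$ analogues, while each non-synchronizing $\alpha$ inherits a finite digit alphabet from the limiting sequences $\eta_{k,v_i}$ (or $\eta_{-k,v_i}$) along branches of $\mathcal{V}$ used in Sections~\ref{s:ContSmall} and~\ref{s:limLarge}.

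The density argument then proceeds case-by-case on an arbitrary nonempty open $I \subset (0,1)$. Since the union of synchronization intervals has full Lebesgue measure on each of $(0,\gamma_n)$ and $(\gamma_n,1)$ by the main results of \cite{CaltaKraaikampSchmidt}, its complement is closed and nowhere dense, and one of three alternatives must hold: $I$ contains a non-synchronizing $\alpha$; or $I$ meets two distinct synchronization intervals and therefore contains at least one of their endpoints; or $I$ lies entirely in the interior of a single synchronization interval $J$. The first two alternatives put a point of $G$ into $I$ at once. In the third case I would augment $G$ by those $\alpha$ in the interior of $J$ for which the common post-matching point
\[
q(\alpha) := T_\alpha^{\overline{S}+1}(r_0(\alpha)) = T_\alpha^{\underline{S}+1}(\ell_0(\alpha))
\]
(using \eqref{e:synchronizationExplicit} or, in the large-$\alpha$ case, \eqref{e:synchroExplicitLargeAlps}) is pre-periodic under $T_\alpha$; for such $\alpha$ all three relevant orbits use only finitely many digits, so Remark~\ref{rmk:bddNonfullCylinders} applies. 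The boundary points $0,1$ of the full parameter interval belong to the closure of $G$ since $\zeta_{k,1} \to 0$ and $\zeta_{-k,1} \to 1$ as $k\to \infty$, and $\gamma_n = \eta_{-1,n-2}$ is itself in $G$.

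The principal obstacle is the third case, namely the density inside each matching interval $J$ of parameters for which $q(\alpha)$ is pre-periodic. My intended approach is algebraic and continuity-based: $\alpha \mapsto q(\alpha)$ is real analytic and nonconstant on $J$ (being an explicit Möbius image of $\alpha t$), and its range sweeps out a nondegenerate subinterval of $\mathbb{I}_\alpha$; for each integer $N \geq 1$ the set of solutions in $J$ of $T_\alpha^N(q(\alpha)) = q(\alpha)$ is the zero set of a nonconstant piecewise real-analytic function, hence discrete. The hard part is to show that, as $N$ varies, these discrete loci accumulate densely in $J$. I would exploit the eventual expansivity provided by Proposition~\ref{p:expansive} — which drives periodic points to be dense in phase space — and transfer this phase-space density into parameter space by using the self-similar structure of $\mathcal{V}$ and the derived-words operator $\mathscr{D}$ to produce, for prescribed subintervals of $J$, $q$-orbits that must return arbitrarily close to themselves after a controlled number of iterations. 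Making this rigorous, rather than appealing to it informally, is the crux of the argument.
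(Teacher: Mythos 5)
You have correctly narrowed the problem to the hard case (a nonempty open $I$ contained in the interior of a single matching interval $J$), since the non-synchronizing parameters and the endpoints and $\delta$-points are handled as you say by Remark~\ref{rmk:bddNonfullCylinders}. However, you explicitly leave that hard case unresolved: your proposed route — viewing $\{\alpha \in J : T_\alpha^N(q(\alpha)) = q(\alpha)\}$ as a discrete zero set and hoping that these loci accumulate densely as $N$ grows, using phase-space density of periodic points and the self-similarity of $\mathcal V$ — is sketched but not carried out, and you acknowledge this yourself. The gap is real: phase-space density of periodic points for a fixed $T_\alpha$ does not automatically produce density in the parameter $\alpha$ of parameters for which the \emph{specific} moving point $q(\alpha)$ is pre-periodic, and no mechanism for the transfer is supplied.

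The paper's proof attacks the third case head-on and more concretely. Fix $\alpha$ in the interior of a matching interval with $T_\alpha$ not of bounded non-full range. Then the $T_\alpha$-orbit of $\ell_0(\alpha)$ must visit every full cylinder, in particular it enters full cylinders at an infinite increasing sequence of times $i_j$. One sets $z_j = \underline{b}^{\alpha}_{[1,i_j]}$ and $a_j = M_{z_j}^{-1}\cdot 0$, so that $a_j$ lies in the rank-$i_j$ cylinder of $\ell_0(\alpha)$ and its $M_{z_j}$-image is $0$. By eventual expansivity these cylinders shrink, so $a_j \to \ell_0(\alpha)$, and for $j$ large there is $\alpha_j$ in the same matching interval with $\ell_0(\alpha_j) = a_j$. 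If $M_{z_j}$ is $\alpha_j$-admissible, the $T_{\alpha_j}$-orbit of $\ell_0(\alpha_j)$ hits $0$ in finitely many steps, and (because of matching) so does that of $r_0(\alpha_j)$; thus $T_{\alpha_j}$ is of bounded non-full range and $\alpha_j \to \alpha$. If admissibility persistently fails, the failure produces, for each large $i_j$, some $i<i_j$ such that $M_i$ (or $A^{-1}M_i$) has a fixed point $b_k$ trapped between $a_j$ and $\ell_0(\alpha)$; taking $\alpha_k$ with $\ell_0(\alpha_k) = b_k$ yields parameters whose $\ell_0$-orbit is eventually periodic, again of bounded non-full range, and again $\alpha_k \to \alpha$. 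This direct perturbation — pulling $0$ (or a fixed point) back through the cylinder structure of the orbit of $\ell_0(\alpha)$ — is precisely the ingredient your proposal is missing, and it replaces the analytic ``zero-set accumulation'' heuristic with an explicit construction that exploits the assumption that $T_\alpha$ fails the bounded non-full range condition.
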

\begin{proof}   The bounded non-full range condition is fulfilled by the maps indexed by: the endpoints of synchronization intervals, see the proofs of Propositions~\ref{p:NaturallyErgodicEndPts},  ~\ref{p:ergodicLargeEta}, ~\ref{p:ergodicLargeZeta}; and the division points of synchronization intervals for large $\alpha$, see the proof of  Proposition~\ref{p:ergodicDelta}; and, in light of Remark~\ref{rmk:bddNonfullCylinders}, also  at all non-synchronization points:  for each such $\alpha$,  both endpoints of the interval of definition have only finitely many distinct digits in their $\alpha$-expansions.

  Fix any $\alpha$ with $T_{\alpha}$ not of bounded non-full range.  From the above,   $\alpha$ lies in the interior of some synchronization interval;   the $T_{\alpha}$-orbits of $\ell_0(\alpha)$ and $r_0(\alpha)$ meet.  
Since $T_{\alpha}$ is  not of bounded non-full range, the $T_{\alpha}$-orbit of $\ell_0(\alpha)$ must visit every full cylinder at least once.  Choose an increasing subsequence of $i_j, j \ge 1$ with $i_1 >\underline{S}$   such that $\ell_{i_j-1}(\alpha)$ is in a full cylinder.   Zero is clearly included in the $T_{\alpha}^{i_j}$-image of the corresponding rank $i_j$ cylinder, for each $j$ .  Set  $z_j = \underline{b}_{[1, i_j]}^{\alpha}$    and let $a_j = M_{z_j}^{-1}\cdot 0$.  
For each sufficiently large  $j$, there is an $\alpha_j$ such that $\ell_0(\alpha_j) = a_j$ and such that $\alpha_j$ is in the same synchronization interval as $\alpha_0$.   If $M_{z_j}$ is $\alpha_j$-admissible, then the  $T_{\alpha_j}$-orbit of $\ell_0(\alpha_j)$,  and hence that of $r_0(\alpha_j)$,  reaches zero in finitely many steps.  In particular,  these $T_{\alpha_j}$ are all of bounded non-full range.   

  Since the $\alpha_j$ certainly converge to $\alpha$, we are done unless in fact we cannot find an infinite subsequence along which each $M_{z_j}$ is $\alpha_j$-admissible.   If this obstruction exists,  then it must be that for all  $i_j$ sufficiently large there is some $i<i_j$ such that $M_i\cdot a_j \notin [a_j, t+ a_j)$, where $M_i$ corresponds to $\underline{b}_{[1, i]}^{\alpha}$.   For each such pair $(i,j)$, either $M_i\cdot a_j <a_j$ or  $M_i\cdot a_j \ge t+ a_j$.   In the first case, $M_i\cdot a_j <a_j < \ell_0(\alpha) < M_i\cdot \ell_0(\alpha)$ and hence $M_i$ has a fixed point in $(a_j, \ell_0(\alpha)\,)$.   Recall that $A\cdot x = x + t$ for any $x$;   in the second case,  $A^{-1}M_i\cdot a_j <a_j < \ell_0(\alpha) < A^{-1}M_i\cdot \ell_0(\alpha)$ and  it is $A^{-1}M_i$ that has a fixed point in $(a_j, \ell_0(\alpha)\,)$.    
   
  We now define a new subsequence of   indices, $(i_k)_{k> 0}$ such that the distance of a fixed point of $M_{i_k}$ or of $A^{-1}M_{i_k}$ to $\ell_0(\alpha)$ is less than all such distances for all $0<i<i_k$.    Let $b_k$ be the fixed point defining $i_k$, and let $\alpha_k$ be such that $\ell_0(\alpha_k) = b_k$.    By construction,  each $M_i, i <i_k$ is $\alpha_k$-admissible,  and $T_{\alpha_k}^{i_k+1}$ fixes $\ell_0(\alpha_k)$.   In particular,  the $T_{\alpha_k}$-orbit of $\ell_0(\alpha_k)$ meets only finitely many cylinders.     We can and so assume that  $i_k>\underline{S}$ holds for all $k$,  and hence each $T_{\alpha_k}$ is of bounded non-full range.    The result holds.
\end{proof}

\begin{proof}[\bf {Proof of Theorem~\ref{t:naturallyErgodicParTout}}.] \label{pf:TheoremOne}   We have already shown that the result holds for: any endpoint of a synchronization interval, by Propositions~\ref{p:NaturallyErgodicEndPts} ,~\ref{p:ergodicLargeEta}, and ~\ref{p:ergodicLargeZeta};    all of the $\delta_{-k,v}$   by Proposition~\ref{p:ergodicDelta}; and,  also  all non-synchronizing values of $\alpha$ by  Propositions~\ref{p:bijectivityDomIsNatExtErgSmallNonSyn}   and ~\ref{p:biDomIsNatExtErgLargeNonSyn}.    It thus remains to consider the case of $\alpha$ in an interval $J$ of the form $J_{k,v}$,  or $(\eta_{-k, v}, \delta_{-k,v})$,  or $(\delta_{-k,v}, \zeta_{-k,v})$.   By Proposition~\ref{p:goodNeighborsAreThere} , either $\alpha$ already satisfies the hypotheses of   (\cite{CKStoolsOfTheTrade}, Theorem~2)  and the result holds, or else     the set of close neighbors of $\alpha$ is an open neighborhood.  Thus, the denseness  of the set of $\alpha'$ with $T_{\alpha'}$ of bounded non-full range implies that  we can apply the results of  Subsection~\ref{ss:closeNeighbors}  to conclude that our statements hold for this value of $\alpha$ as well. 
\end{proof}

\section{Entropy changes continuously}\label{s:contEntropy}   As in  (\cite{KraaikampSchmidtSteiner}, proof of Theorem~2), we employ one of the key tools for proving statements about change of entropy:  Abramov's formula relates entropy values of systems which have a common subsystem to which the respective  first return maps agree.    We show such common first returns to subsystems of the $\alpha =0$ system  for a leftmost parameter subinterval, and  similarly use $\alpha=1$ for a rightmost parameter subinterval.   With this in hand, continuity of entropy on each of these subintervals does hold; fortunately enough, the subintervals overlap and the continuity holds on the entire parameter interval.

\subsection{Common first returns} 

For each $\alpha<1$, we seek a subset of $\Omega_{\alpha}$ contained in $\Omega_0$ for which $\mathcal T_{\alpha}$ and $\mathcal T_0$ have common first returns.   Due to the shape of $\Omega_0$, such a subset must surely have  negative $x$-values and positive $y$-values; this leads to the following set $N$, confer Figures~\ref{f:omegaSmallAlp}  and \ref{f:omegaLargeAlpLessThanDelta_{-k,v}}.   
 \begin{Lem}\label{l:conversionAlpToZero}  Suppose that $1/t_n \ge \alpha>0$ and set  $N = \bigcup_{i \in \mathbb N}\, \Delta_{\alpha}(-i,1) \cup \Delta_{\alpha}(-i,2)$.   Suppose $x \in \mathbb I_{\alpha}$ is negative and that there exists $m\ge 0$ such that both $T_{\alpha}^{m}(x) \in N$ and $T_{\alpha}^{m+1}(x)<0$.   Then there is some $k\in \mathbb N$ such that $T_{0}^{k}(x) =  T_{\alpha}^{m+1}(x)$.    Furthermore,  for any $y$ one has $\mathcal T_{0}^{k}(x,y) =  \mathcal T_{\alpha}^{m+1}(x,y)$. 
\end{Lem}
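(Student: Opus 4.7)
I will prove the stronger statement that the M\"obius matrices coincide: $M_\alpha^{(m+1)}_x = M_0^{(k)}_x$ in $\mathrm{PSL}_2(\mathbb R)$, where $M_\alpha^{(j)}_x$ denotes the product of the first $j$ digit-matrices along the $T_\alpha$-orbit of $x$ (and similarly for $T_0$). This upgrade is costless but useful: once the matrices coincide, so do their $R(-)R^{-1}$ conjugates, and the planar conclusion $\mathcal T_0^k(x,y) = \mathcal T_\alpha^{m+1}(x,y)$ follows for every $y$ from the definition of the two-dimensional maps in \S\ref{ss:2dMaps}.

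The argument proceeds by strong induction on $m$. For the base case $m=0$, note that $x<0$ in $\mathbb I_\alpha\subseteq[-t,0)$ forces $C\cdot x = 1 - 1/x > (t+1)/t > 1 \geq \alpha t$, using the hypothesis $\alpha \leq 1/t_n$. Hence the first-step digit has $l=1$ for both $T_\alpha$ and $T_0$, giving $T_\alpha(x) = A^{-j_\alpha}Cx$ and $T_0(x) = A^{-j_0}Cx$ with uniquely determined $j_\alpha, j_0 \geq 1$. The assumption $T_\alpha(x) < 0$ places $T_\alpha(x) \in [(\alpha-1)t, 0)\subset \mathbb I_0$, which forces $j_\alpha = j_0$; so $M_\alpha^{(1)}_x = M_0^{(1)}_x$ and $k=1$ works.

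For the inductive step $m\geq 1$, I split on the sign of $T_\alpha(x)$. The easy case is $T_\alpha(x)<0$: the base-case analysis applied to the first step gives $M_\alpha^{(1)}_x = M_0^{(1)}_x$ and $T_\alpha(x)=T_0(x)$; the inductive hypothesis then applies to $x' := T_\alpha(x) < 0$ with parameter $m-1$ (the hypotheses $T_\alpha^{m-1}(x') = T_\alpha^m(x)\in N$ and $T_\alpha^m(x') = T_\alpha^{m+1}(x) < 0$ transfer verbatim), yielding $M_\alpha^{(m)}_{x'} = M_0^{(k')}_{x'}$; composing with the first step, $k=k'+1$ works. In the hard case $T_\alpha(x)\geq 0$, the same base-step computation yields the key identity $M_\alpha^{(1)}_x = A\cdot M_0^{(1)}_x$ (equivalently $T_\alpha(x) = A\cdot T_0(x)$), and the $T_\alpha$-orbit now enters a \emph{positive excursion} in $[0,\alpha t)$. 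I trace this excursion forward until it first returns to negative territory, using that for $y\in(0,\alpha t)$ one has $Cy<0$ (hence the excursion digits are tightly constrained) and the commutation $RAR^{-1} = C^{-1}AC$ from Lemma~\ref{l:aAndCrCommute}, to rewrite the accumulated matrix as a product of $T_0$-digit matrices whose partial products, applied successively to $x$, remain in $\mathbb I_0$. Once the orbit returns to a negative value, say at step $m_0+1$, one has $M_\alpha^{(m_0+1)}_x = M_0^{(k_0)}_x$, and the remaining $m - m_0$ steps fall under the easy case.

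The main obstacle is precisely this bookkeeping for the positive excursion: showing that the $T_\alpha$-matrix accumulated during an excursion can be rewritten as a valid $T_0$-word. I expect to handle it by a secondary induction on the excursion length, analyzing the two subcases $l=1$ versus $l=2$ of the $T_\alpha$-digit at each positive iterate (the split is governed by whether $Cy < (\alpha-1)t$ or not) and exploiting the explicit form of the products $A^{-p}C^l A$ for $p\in\mathbb Z$, $l\in\{1,2\}$, to pull the ``extra'' factor of $A$ (introduced at the start of the excursion) through to the end, where it is absorbed when the orbit returns to $\mathbb I_0$.
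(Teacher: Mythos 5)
Your base case and your ``easy case'' (consecutive negative iterates) are fine and coincide with the paper's observations, but the entire substance of the lemma lives in the positive excursions, and there your text is a plan rather than a proof. What is actually needed is not the commutation $RAR^{-1}=C^{-1}AC$ of Lemma~\ref{l:aAndCrCommute} (which plays no role here), but the specific relations in $G_n$, namely $ACA=CAW$ with $W=A^{-2}C(A^{-1}C)^{n-3}A^{-2}C(A^{-1}C)^{n-2}$ as in \eqref{e:ItooAmW}, and $C^2A=C^{-1}A=(A^{-1}C)^{n-1}$, organized into a rewriting of the accumulated $T_\alpha$-word into a word in the letters $A^{-i}C$, \emph{together with} a verification that the resulting word is $T_0$-admissible (no more than $n-2$ consecutive $A^{-1}C$, isolated occurrences of $A^{-2}C(A^{-1}C)^{n-2}$), so that its partial products really do trace the $T_0$-orbit of $x$; this is exactly the paper's substitution-rule argument, and nothing in your sketch produces these identities or the admissibility check. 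The case $\gamma_n<\alpha\le 1/t_n$, where digits with $C^2$ occur, needs additional rules still.

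More seriously, your reduction contains a step that is false as stated: the claim that at the \emph{first} return of the $T_\alpha$-orbit to a negative value one has $M_\alpha^{(m_0+1)}=M_0^{(k_0)}$. If the last excursion digit is $A^bC$ with $b>0$ (which happens whenever the last positive iterate $y$ satisfies $Cy<(\alpha-1)t$, i.e.\ $y$ is small and positive, so $y\notin N$), then $A^bC\,A^{j}C = CA\,W^b\,A^{j-1}C$, and the $T_0$-side, after the digit $A^{j-1}C$ and the letters of $W^b$, sits at $A^{-1}C^{-1}\cdot T_\alpha^{m_0+1}(x)$, not at $T_\alpha^{m_0+1}(x)$: a dangling prefix $CA$, which is not a product of $T_0$-digits, remains and is absorbed only when a later digit with negative $A$-exponent is applied (the paper's rules (iii)/(v)). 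This is precisely why the hypothesis $T_\alpha^{m}(x)\in N$ appears in the statement and why the paper processes the entire word up to step $m+1$ rather than inducting on returns to negative territory. To repair your induction you would either need the $N$-hypothesis at every intermediate return (which you do not have) or carry the dangling factor along explicitly --- at which point you are reproducing the paper's word-processing argument rather than giving an alternative to it.
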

\begin{proof}  Note that some of the cylinders in the expression defining $N$ may be empty for any given $\alpha$.  

We can and do assume that $m$ is minimal with respect to the hypotheses.    From the definition of our maps, we can write
\[ T_{\alpha}^{m+1}(x) = A^{a_{m+1}}C^{c_{m+1}}A^{a_{m}}C^{c_{m}} \cdots A^{a_1}C^{c_1}\cdot x,\]
with $c_1 = 1$ and both $a_{m+1}, a_1$ negative.   If $m=0$, then $A^{a_1}C^{c_1}\cdot x = T_0(x)$ and $k=1$.   We now assume that $m>0$ and  
 perform word processing on $A^{a_{m+1}}C^{c_{m+1}}A^{a_{m}}C^{c_{m}} \cdots A^{a_1}C^{c_1}$ with the goal to achieve an expression for this element of the group $G_n$ that is in an admissible form for $T_0$.  
 
 We process from right to left,  using the following substitution rules.  Here ``;" is placed so that the processed portion lies to its right.   

\noindent
{\bf Substitution rules}:
 \[
 \begin{aligned} 
(i)&\;\;\;\;\;\;\;\;\;  a<0,\;   A^a C;                       &\mapsto &\;\;\;\;\; A; A^{a-1}C&\\
(ii)&\;\;\;\;\;\;\;\;\;   a>0,\;   A^a C A; \;               &\mapsto &\;\;\;\;\; C A; W^a&\\  
(iii)&\;\;\;\;\;\;\;\;\;  A^a C^2 A; \;                        &\mapsto &\;\;\;\;\; A^{a-1}C; (A^{-1}C)^{n-2}&\\  
(iv)&\;\;\;\;\;\;\;\;\;  a>0, b<0, \; A^a C;  A^b C\; &\mapsto &\;\;\;\;\; C A; W^a A^{b-1}C&\\    
(v)&\;\;\;\;\;\;\;\;\;   b<0, \;  A^a C^2;  A^b C\;     &\mapsto &\;\;\;\;\; A^{a-1}C;  (A^{-1}C)^{n-2}A^{b-1}C&\\     
  \end{aligned}
  \]
  Recall from (\cite{CaltaKraaikampSchmidt}, Equation~2.1) that 
 \begin{equation}\label{e:ItooAmW} W = A^{-2}C (A^{-1}C)^{n-3} A^{-2}C (A^{-1}C)^{n-2}.
 \end{equation}
After a rule is applied, we either treat the $A^{a-1}C;$ which has been defined, or we multiply on the left by the next unprocessed $A^{a_i}C^{c_i}$,  and continue until only the empty word is to the left of the demarcating ``;".

 We claim that this process ends with a successful conversion of the element of $G_n$ factored into a $T_0$-admissible form.   Note first that each rule, up to ignoring the separation marker ``;", is the result of applying a group identity.  The first of these is obvious; the second and fourth result from induction and the simple identity $ACA = CAW$ see   (\cite{CaltaKraaikampSchmidt}, Equation~2.2)
and compare with  (\cite{CaltaKraaikampSchmidt}, Lemma~5.1);
finally, the third and fifth result from $C^2 A = C^{-1}A = (A^{-1}C)^{n-1}$.       We now argue that there is exactly one way to apply the rules.  First note that no two rules are possible for any situation.   It remains to show that there is always is a rule to apply at any (non-final) step.\\ 

First assume that $\alpha \le \gamma_n$.   Hence, there is no occurrence of $c_i = 2$.   Since $x<0$, we have $a_1<0$.    Since $m>0$,   there are no consecutive $a_i, a_{i+1}$ which are both negative.  Thus, we apply the first rule immediately, and  the second rule is always applied only after the first rule is.  Both the second and the fourth result in a step where the third rule must be applied.   The third and fifth result in a step where the first or fourth rule must be applied, unless in either case $a=1$ when both result in the fifth rule  being applied.  It follows that there is always an applicable rule at each step.     Since $a_{m+1}<0$ we have $a_m >0$ and thus the final step occurs with an application of the third or fifth rule;   at this point, we can disregard the semi-colon, and have rewritten the original  element of the group.    The main requirements for $T_0$-admissibility of a word in letters of the form $A^a C$ with each $a<0$ come from the fact that  $-t = \ell_0(0)$ is sent to itself by the admissible word $W$.   These requirements are that $A^{-1}C$ cannot appear more than $n-2$ times in a row, and appearances of $A^{-2}C \,(A^{-1}C)^{n-2}$ are isolated.   It is easily checked that our processed word meets these requirements.    The process fully succeeds in this case.\\

Now suppose $1/t_n \ge \alpha >\gamma_n$.  Note that the complement of $N$ in $\mathbb I_{\alpha}$ is $[0,\mathfrak b_{\alpha})$.     We of course can again assume $m>0$, and also observe that at most one rule can be applied at any step of the word processing.  Since $x<0$ and we cannot have consecutive occurrences of $(a_i, 1)$ with $a_i$ negative, the first rule is again applied immediately. It then is followed by an application of the second or third rule. Both the second and the fourth result in a step where the third rule must be applied, unless the next unprocessed portion is of the form $A^{a_i} C^2$.  In this case,  a simplification gives $A^{a_i} C^2 C A; W^a = A^{a_i-1}  C; (A^{-1}C)^{n-3} A^{-2}C (A^{-1}C)^{n-2} W^{a-1}$ and (assuming processing is not yet complete) either rule $i$ or rule $iv$ is applied, unless $a_i = 1$ in which case similar considerations are applied with $A^{a_{i+1}} C^{c_{i+1}} C; \cdots$.  As well, there are further, easily determined, possibilities after either the third of fifth rule is applied and $a_i = 1$.  That is,  here also the process fully succeeds.\\ 

Finally, with $M = A^{a_{m+1}}C^{c_{m+1}}A^{a_{m}}C^{c_{m}} \cdots A^{a_1}C^{c_1}$, both $\mathcal T_{0}^{k}(x,y) =  \mathcal T_M(x,y)$ and $\mathcal T_{\alpha}^{m+1}(x,y)=  \mathcal T_M(x,y)$.
 \end{proof}

  We now wish to find a region for which the {\em first} return under $\mathcal T_0$ agrees with that of $\mathcal T_{\alpha}$.  To this end, we restrict $N$ and the region to which we return, so that $\{-1,-2\}$ digits introduced in the word processing could never index  an `early return' of a $\mathcal T_0$-orbit. 
 
\begin{Lem}\label{l:firstReturnToDoubleNegVal}  Suppose that $1/t_n \ge \alpha>0$.  Let $d = \min\{-4,  -2 + d_{\alpha}(\ell_0(\alpha))\,\}$,  $N_d = \bigcup_{i \le d}\, \Delta_{\alpha}(i,1) \cup \Delta_{\alpha}(i,2)$  and  $\Omega^{+}_{d}=  \Omega_{\alpha} \cap \{(x,y) \mid y>0, x<0,  x \in N_d\; \text{and}\; x' \in N_d, {\text where}\; \mathcal T_{\alpha}^{-1}(x,y) = (x',y')\,\}$.   
  The first return maps of  $\mathcal T_0$ and of $\mathcal T_{\alpha}$ to $\Omega^{+}_{d}$ agree. 
\end{Lem}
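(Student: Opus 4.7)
The plan is to invoke Lemma~\ref{l:conversionAlpToZero} and then check that no intermediate $\mathcal T_0$-iterate drops back into $\Omega^+_d$ earlier than the matched step. Fix $(x,y)\in \Omega^+_d$ and let $\tau$ be the first return time of the $\mathcal T_\alpha$-orbit to $\Omega^+_d$. The definition of $\Omega^+_d$ gives $T_\alpha^{\tau-1}(x)\in N_d\subset N$ and $T_\alpha^\tau(x)<0$, so Lemma~\ref{l:conversionAlpToZero} (with $m=\tau-1$) produces an integer $k\ge 1$ and a $T_0$-admissible factorization $A^{b_k}C\cdots A^{b_1}C$ with $\mathcal T_0^k(x,y)=\mathcal T_\alpha^\tau(x,y)\in\Omega^+_d$. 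It then suffices to show that $\mathcal T_0^j(x,y)\notin \Omega^+_d$ for every $0<j<k$; the reverse direction, starting from the first $\mathcal T_0$-return and recovering the matching $\mathcal T_\alpha$-return, is symmetric.

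I would call a letter $A^bC$ in the processed word \emph{large} if $b\le d$ and \emph{small} otherwise. Since $d\le -4$, every letter of exponent $-1$ or $-2$ appearing in the blocks $W^a$ and $(A^{-1}C)^{n-2}$ on the right-hand sides of rules~(ii)--(v) is small. From the preimage condition in the definition of $\Omega^+_d$, the point $\mathcal T_0^j(x,y)$ can lie in $\Omega^+_d$ only when both the letter $b_j$ applied to reach it and the letter $b_{j+1}$ applied to leave it are large, so it is enough to rule out two adjacent large letters in the processed word.

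This structural claim is the crux of the argument. A large letter is produced either by rule~(i) (transforming $A^aC$ with $a<0$ into $A^{a-1}C$) or as the first output of rule~(iii) or~(v) (transforming $A^aC^2$ into $A^{a-1}C$). By the no-consecutive-negatives observation from the proof of Lemma~\ref{l:conversionAlpToZero}, the $T_\alpha$-letter immediately to the left (in the original word) of any such negative-$a$ letter has either $a'>0$ or $c'=2$, so the very next rule invoked after such a rule~(i), (iii) or~(v) application is one of (ii)--(v). Each of these prepends a nonempty block of small letters ($W^{a'}$ or $(A^{-1}C)^{n-2}$) to the $T_0$-word before the next large letter can appear. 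Hence every large letter in the processed word is immediately followed by a small one, which gives the structural claim and, with it, the agreement of first-return maps.

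The chief obstacle is this casewise verification, which involves navigating how rule~(i) leaves an $A$ on the un-processed side to be absorbed into the next original letter, and how rules~(iv), (v) process two consecutive original letters simultaneously; the terminating step at the leftmost original letter also requires attention. The conditions $y>0$ and $x<0$ in the definition of $\Omega^+_d$ are preserved automatically, since on each common cylinder $\mathcal T_0$ and $\mathcal T_\alpha$ are two-dimensional lifts of the same M\"obius action.
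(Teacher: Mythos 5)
Your overall strategy coincides with the paper's: use Lemma~\ref{l:conversionAlpToZero} to see that the first $\mathcal T_\alpha$-return is \emph{some} $\mathcal T_0$-return, then argue from the word processing that no intermediate $\mathcal T_0$-iterate lands in $\Omega^{+}_{d}$. The gap is in the step you yourself call the crux. You justify the claim ``the $T_\alpha$-letter immediately to the left of any negative-$a$ letter has $a'>0$ or $c'=2$'' by appealing to the no-consecutive-negatives observation from the proof of Lemma~\ref{l:conversionAlpToZero}; but that observation was a consequence of the minimality of $m$ for the condition ``$T_\alpha^m(x)\in N$ and $T_\alpha^{m+1}(x)<0$'' with $N$ the union of \emph{all} negative cylinders. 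Here minimality is only with respect to returns to $\Omega^{+}_{d}$, which involves the much smaller set $N_d$ (digits $\le d\le -4$) together with the preimage condition, so between the start and the first return the $\alpha$-word may perfectly well contain consecutive negative letters such as $A^{-1}C\,A^{-1}C$ or $A^{-1}C\,A^{-5}C$. Consequently your structural claim is unproven, and your rule set is not even total in this setting: after an application of rule (i) one can be left with $A^{a'}CA$, $a'<0$, which none of (ii)--(v) covers. This is precisely why the paper's proof \emph{adds} the extra substitution $(i')\colon\ A^{a'}C\,A^{a}C;\ \mapsto\ A^{a'}C;\,A^{a}C$ for $a,a'<0$, and then argues along different lines: inserted letters always have exponents in $\{-1,-2\}$, each exponent is changed at most once and then only decreased by one, and any point of the $T_0$-orbit newly lying in $N_d$ is either isolated with respect to that property or is mapped by $T_0$ onto $T_\alpha^{m}(x)$ with its predecessor outside $N_d$, the latter being harmless because $\mathcal T_\alpha^{m}(x,y)\notin\Omega^{+}_{d}$ by the first-return hypothesis.

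What would repair your argument is the observation you never make: two \emph{consecutive} $\alpha$-digits both $\le d$ would already produce an earlier $\mathcal T_\alpha$-return to $\Omega^{+}_{d}$ (the $y$-coordinate is automatically positive after a negative digit, and the $\mathcal T_\alpha$-preimage is the previous orbit point, whose $x$-coordinate lies in $N_d$), contradicting minimality; so only ``mixed'' consecutive negatives can occur, and these must then be tracked through the processing, including rule $(i')$ and the possibility that a single decrement turns an original exponent $d+1$ into the large exponent $d$. Without this reduction and the accompanying bookkeeping, the assertion that every large letter in the processed word is immediately followed by a small one does not follow, and the proof is incomplete.
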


\begin{proof}  Since $\Omega_{\alpha}^{+}$ is of finite $\mu$-mass, it follows that $\Omega^{+}_{d} \subset \Omega_0$ and hence both return maps are defined.   Suppose that  $(x,y) \in \Omega^{+}_{d}$ and  $\mathcal T_{\alpha}^{m+1}(x,y)$ is its first $\mathcal T_{\alpha}$-return. 
From the previous lemma, we can deduce that this first return map   is also  {\em some} return under $\mathcal T_0$.  

 If $m=0$ then certainly $T_{\alpha}(x) = T_0(x)$, and the first return of the two-dimensional maps  also agree.  
 Now assume that $m>0$.  
Suppose  
 \[ T_{\alpha}^{m+1}(x) = A^{a_{m+1}}C^{c_{m+1}}A^{a_{m}}C^{c_{m}} \cdots A^{a_1}C^{c_1}\cdot x.\]   
Of course,   if for all $i$ both $c_i = 1$ and $a_i$ are negative, then the orbits are the same and the result clearly holds.  In general, there may be intermediate consecutive appearances of this type.  We thus add a special case to the first rule:
 \[
(i')\;\;\;\;\;  A^{a'} C  A^a C;\;     \mapsto     A^{a'} C;  A^a C \;\;\text{if}\;   a<0, a'<0,                    
 \]
 and process so as to achieve an admissible $T_0$ expression.   
 Whenever any of our substitution rules inserts $T_0$-steps,  these insertions are always of corresponding simplified $0$-digits  in $\{-1, -2\}$.   Of course,  any $x$-value  with either of these as its  simplified $0$-digit cannot have simplified $\alpha$-digit less than or equal to $d$.
 
 On the other hand, our rules also make changes in the exponents of appearances of $A$.       However,  any such exponent is changed at most once throughout the process, and if it is changed then it is decreased exactly by one.   Since each substitution realizes a group identity,  it follows that  any element in $N_d$ that is in the $T_0$-orbit segment of $x$  but not already present in the $T_{\alpha}$-orbit segment is either (1) isolated (with respect to this property),  or (2) is sent by $T_0$ to  $T_{\alpha}^{m}(x)$ while its $T_0$-orbit predecessor is not in $N_d$.    But, by hypothesis, $\mathcal T_{\alpha}^{m}(x, y) \notin \Omega^{+}_{d}$.   Therefore the result holds.  \end{proof} 
 
We now treat the case of large $\alpha$.    Note that $\alpha=1$ gives a system defined on positive $x$, but importantly each digit is then of the form $(i,1)$ or $(i,2)$ with $i$ {\em positive}.   
\begin{Lem}\label{l:conversionAlpToOne}  Let $\gamma_n< \alpha<1$ and set $P = \bigcup_{i \in \mathbb N}\, \Delta_{\alpha}(i,1) \cup \Delta_{\alpha}(i,2)$.     Suppose $x \in P$ is such that there exists $m\ge 0$ such that both $T_{\alpha}^{m}(x)$ and $T_{\alpha}^{m+1}(x)$ are in $P$.   Then there is some $k\in \mathbb N$ such that $T_{1}^{k}(x) =  T_{\alpha}^{m+1}(x)$.    Furthermore,  for any $y$ one has $\mathcal T_{1}^{k}(x,y) =  \mathcal T_{\alpha}^{m+1}(x,y)$. 
\end{Lem}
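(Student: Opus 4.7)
The plan is to mirror the word-processing proof of Lemma~\ref{l:conversionAlpToZero}, interchanging the roles of positive and negative $A$-exponents. Write
\[ M = A^{a_{m+1}}C^{c_{m+1}} A^{a_m}C^{c_m} \cdots A^{a_1}C^{c_1}, \]
so that $T_\alpha^{m+1}(x) = M \cdot x$ and $\mathcal T_\alpha^{m+1}(x,y) = \mathcal T_M(x,y)$. Because $x \in P$ and $T_\alpha^{m+1}(x) \in P$, both $a_1$ and $a_{m+1}$ are positive. I would then process $M$ from right to left, using a marker ``$;$'' between processed and unprocessed portions, with the aim of producing a factorization $M = N_k \cdots N_1$, with each $N_j = A^{a'_j}C^{c'_j}$ having $a'_j > 0$ and $c'_j \in \{1,2\}$, and the whole sequence admissible for $T_1$. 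Since each substitution realizes a group identity, this will give $T_1^k(x) = M \cdot x = T_\alpha^{m+1}(x)$ and, by the same matrix-level argument, $\mathcal T_1^k(x,y) = \mathcal T_M(x,y) = \mathcal T_\alpha^{m+1}(x,y)$ for every $y$ (modulo the measure-zero points where the maps are undefined).

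The required substitution rules are mirror images of rules $(i)$--$(v)$ of the previous lemma. The identity $ACA = CAW$ (as in the proof of Lemma~\ref{l:conversionAlpToZero}) lets one pivot a non-positive factor $A^a C$ across an adjacent positive factor, at the cost of inserting a block involving $W$; the relations $C^3 = I$ and $(A^{-1}C)^n = I$ (both projectively) eliminate occurrences of $C^2$ by trading them for a block of $AC$-factors; and combinations of these handle consecutive $A^a C \cdot A^b C$ with one of $a,b$ non-positive. As in the previous lemma, I would check that at most one rule applies at each step, that some rule always applies until processing terminates, and that termination yields a $T_1$-admissible factorization. The admissibility conditions for $T_1$ here take the place of those forced by $W$ in the $T_0$-case, and amount to requiring that each partial product $N_j \cdots N_1 \cdot x$ lie in $\mathbb I_1 = [0,t)$.

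The main obstacle will be verifying completeness of the case analysis for the mirror rules and checking $T_1$-admissibility throughout. In particular, the boundary cases $a_i = 1$ and $c_i = 2$ must be handled with care, paralleling the dichotomy between $\alpha \le \gamma_n$ and $\gamma_n < \alpha \le 1/t_n$ in the previous lemma. I expect that the natural bifurcation here is between $\gamma_n < \alpha < \epsilon_n$, where the dynamics of $\ell_0(\alpha)$ are of large-$\alpha$ type but with $k=1$, and $\epsilon_n \le \alpha < 1$, where $k \ge 2$; this threshold controls which pairs $(a_i, c_i)$ appear in the $\alpha$-expansions and hence which rules are needed. Once this bookkeeping is in place, the conclusion follows by the same logic as in Lemma~\ref{l:conversionAlpToZero}: the rewritten word $N_k \cdots N_1$ realizes a valid $T_1$-orbit segment whose terminal point coincides with $T_\alpha^{m+1}(x)$, and the two-dimensional statement is immediate from both sides equalling $\mathcal T_M(x,y)$.
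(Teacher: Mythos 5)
Your overall strategy --- right-to-left word processing by substitution rules realizing group identities, terminating in a $T_1$-admissible factorization, with the two-dimensional claim following because both sides equal $\mathcal T_M(x,y)$ --- is indeed how the paper argues. But the concrete rewriting device you propose is the wrong one, and what you leave unspecified is exactly the nontrivial content. Here the target alphabet consists of the letters $A^aC^c$ with $a>0$, $c\in\{1,2\}$, and (taking $m$ minimal, as the paper does) the word $M$ has its \emph{positive} exponents isolated while its \emph{negative} exponents may occur in runs; so it is each negative letter that must be rewritten as a block of positive letters. The paper's engine for this is the identity
\[ A^{-i}CA^{-1} \;=\; C^2(AC^2)^{n-2}\,U^{i-1}, \qquad U = AC(AC^2)^{n-2}, \]
imported from (\cite{CaltaKraaikampSchmidt}, Lemma~8.1): $U$ is the $\alpha=1$ analogue of $W$, the identity converts a negative letter (after borrowing an $A^{-1}$ from its right-hand neighbor) into a $1$-admissible block, and the leftover prefix $C^2(AC^2)^{n-2}$ is then absorbed by the next letter to the left, with a final rule turning $A^{a_{m+1}}C^{c_{m+1}}\,C^2(AC^2)^{n-2}$ into a positive word. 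Your proposed pivot $ACA=CAW$ goes in the wrong direction: it inserts $W$, a word written entirely in the letters $A^{-1}C$, $A^{-2}C$, and no such block can appear in a $1$-admissible factorization; the rewriting you describe therefore cannot terminate in the required form, and you give no mechanism (or termination argument) for subsequently eliminating the inserted negative blocks. Deriving the needed $U$-identity from $C^3=\mathrm{Id}$ and $(A^{-1}C)^n=\mathrm{Id}$ is possible in principle, but that derivation is precisely the missing step.

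Two smaller corrections. First, $a_{m+1}>0$ follows from $T_\alpha^{m}(x)\in P$ (whose digit is $(a_{m+1},c_{m+1})$), not from $T_\alpha^{m+1}(x)\in P$. Second, the case analysis actually required is not a split at $\epsilon_n$: the paper's rules split on $c_i\in\{1,2\}$, and an intermediate digit with $c_i=2$ forces $a_i<0$ with the corresponding orbit point in $[\mathfrak b_{\alpha},1)\subset\Delta_1(1,1)$, which is handled by inserting $(AC)^{-1}AC$ and continuing the processing; this is the wrinkle your plan would need in place of the $\gamma_n<\alpha<\epsilon_n$ versus $\epsilon_n\le\alpha$ dichotomy you anticipate.
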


\begin{proof}  We argue as in the case of small $\alpha$.   Here also the case of $m=0$ is trivial, and we now suppose that $m>0$ and minimal.  

Recall that $U = A C (AC^2)^{n-2}$.   The relation given in (\cite{CaltaKraaikampSchmidt}, Lemma~8.1) 
directly implies that for any $i \in \mathbb N$ we have 
\[ A^{-i}C A^{-1} = C^2 (AC^2)^{n-2}U^{i-1}.\]
Suppose  
 \[ T_{\alpha}^{m+1}(x) = A^{a_{m+1}}C^{c_{m+1}}A^{a_{m}}C^{c_{m}} \cdots A^{a_1}C^{c_1}\cdot x,\]  
with the right hand side being the $\alpha$-admissible factorization.   

We  process $M = A^{a_{m+1}}C^{c_{m+1}}A^{a_{m}}C^{c_{m}} \cdots A^{a_1}C^{c_1}$ by  the following rules.   Note that if    $c_2=2$   then $a_2<0$ and $T_{\alpha}(x) \in [\mathfrak b_{\alpha}, 1)$.  It is easily verified that  $ [\mathfrak b_{\alpha}, 1) \subset \Delta_1(1,1)$ and hence the $1$-word retains $A^{a_1}C^{c_1}$ and continues by the result of a processing of   $\cdots A^{a_2}C^2 (AC)^{-1} AC$.  Similar considerations apply anytime some intermediate $c_i = 2$. 

 \[
 \begin{aligned} 
A^{a_2}C^{c_2} A^{a_1}C^{c_1}; &\mapsto  \begin{cases}  C^2(AC^2)^{n-2}; U^{-a_2-1}A^{1 + a_1}C^{c_1}&\text{if}\;\; c_2 = 1\\
                                                                                              C^2(AC^2)^{n-2}; U^{-a_2-1}AC A^{a_1}C^{c_1}&\text{otherwise.}
                                                                      \end{cases}\\
A^{a_i}C^{c_i} \,C^2(AC^2)^{n-2}; &\mapsto   \begin{cases} C^2  (AC^2)^{n-2};U^{-a_i-1} AC  (AC^2)^{n-3}&\text{if}\;\; c_i = 1\\
                                                                                              C^2(AC^2)^{n-2}; U^{-a_i-1}A^2C^2 (AC^2)^{n-3} &\text{otherwise.}
                                                                      \end{cases}\\ 
A^{a_{m+1}}C^{c_{m+1}}\, C^2(AC^2)^{n-2}; &\mapsto   \begin{cases}  ;A^{a_{m+1}}C (AC^2)^{n-2} &\text{if}\;\; c_{m+1} = 2\\
                                                                                                           ;A^{1+ a_{m+1}}C^2 (AC^2)^{n-3} &\text{otherwise.}
                                                                                   \end{cases}
  \end{aligned}
  \]
Once again, upon suppressing the semi-colon of demarcation, each rule is an identity.   
The processed word is $1$-admissible, and the result holds. 
 \end{proof} 

\begin{Lem}\label{l:firstReturnToDoubleLarge}  Suppose that $\gamma_n < \alpha <1$, set $Q   = \bigcup_{i \ge 4}\, \Delta_{\alpha}(i,1) \cup \Delta_{\alpha}(i,2)$, and let 
$\Omega^{Q}=  \Omega_{\alpha} \cap \{(x,y) \mid y<0;  x, x' \in Q,\; {\text where}\; \mathcal T_{\alpha}^{-1}(x,y) = (x',y')\,\}$.
 The first return maps of  $\mathcal T_1$ and of $\mathcal T_{\alpha}$ to $\Omega^{Q}$ agree. 
\end{Lem}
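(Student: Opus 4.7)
The plan is to mimic the proof of Lemma~\ref{l:firstReturnToDoubleNegVal}, invoking Lemma~\ref{l:conversionAlpToOne} in place of Lemma~\ref{l:conversionAlpToZero}. By the finiteness of mass of $\Omega_\alpha^-$ established in Proposition~\ref{p:finMeasLargeAlp}, one first checks $\Omega^Q \subset \Omega_1$ up to $\mu$-measure zero, so both return maps are defined almost everywhere on $\Omega^Q$.

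Fix $(x,y) \in \Omega^Q$ with first $\mathcal T_\alpha$-return time $m+1$. The case $m=0$ is handled directly: the hypothesis $\mathcal T_\alpha(x,y) \in \Omega^Q$ forces $T_\alpha(x) \in Q \subset [0, r_0(\alpha)) \subset [0,t)$, so the $\alpha$-digit $(a_1, c_1)$ of $x$ also realizes its $1$-digit (no $A$-translation is required to land in $\mathbb I_1$), whence $\mathcal T_\alpha(x,y) = \mathcal T_1(x,y)$, and this is the first $\mathcal T_1$-return. For $m \geq 1$, Lemma~\ref{l:conversionAlpToOne} supplies $k \in \mathbb N$ with $\mathcal T_1^k(x,y) = \mathcal T_\alpha^{m+1}(x,y)$, and it remains to verify that none of the intermediate iterates $\mathcal T_1^j(x,y)$, $0 < j < k$, lies in $\Omega^Q$.

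For this step, I would inspect the substitution rules displayed in the proof of Lemma~\ref{l:conversionAlpToOne}: all newly inserted factors are strings in $\{AC, AC^2\}$, namely the $U^{-a_i-1}$ blocks and the $(AC^2)^{n-2}$, $(AC^2)^{n-3}$ buffers, plus the isolated $A^2C^2$ arising in the $c_i = 2$ subcase of Rule~2.  All of these contribute $1$-digits of first component at most $2$.  Following the strategy used in Lemma~\ref{l:firstReturnToDoubleNegVal}, I would also adjoin a trivial rule allowing an $\alpha$-digit already of the form $(1,j)$ to be passed through unchanged.  Thus only the terminal factor, corresponding to the $\mathcal T_1$-step landing at $\mathcal T_\alpha^{m+1}(x,y)$, can carry a first component $\geq 4$.

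It follows that each intermediate $z = T_1^j(x)$ lies in the union $\Delta_1(1,1) \cup \Delta_1(1,2) \cup \Delta_1(2,1) \cup \Delta_1(2,2)$, which an elementary endpoint computation shows is disjoint from $Q$: for $i \geq 4$, the cylinder $\Delta_\alpha(i,j)$ is pressed close to $0$, while each $\Delta_1(i',j')$ with $i' \leq 2$ is bounded away from $0$.  Hence no intermediate $\mathcal T_1$-iterate lies in $\Omega^Q$, so $k$ is precisely the first $\mathcal T_1$-return time.  The main obstacle is the branch-by-branch inspection of the substitution rules to verify that every inserted intermediate factor really does have $1$-digits with first component at most $2$, in particular ensuring that the $c_i = 2$ subcases and the $1$-admissibility cleanup performed by Rule~3 never slip a digit of first component $\geq 3$ into an intermediate slot.
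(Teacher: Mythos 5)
There is a genuine gap, and it lies in the claim that ``only the terminal factor \ldots can carry a first component $\geq 4$.''  The substitution rules of Lemma~\ref{l:conversionAlpToOne} do two different things: they \emph{insert} new factors (the $U^{-a_i-1}$ blocks, the $(AC^2)^{n-2}$ and $(AC^2)^{n-3}$ buffers, the isolated $AC$, $A^2C^2$), which indeed have first component at most $2$; but they also \emph{modify} existing factors, replacing $A^{a_i}C^{c_i}$ by $A^{1+a_i}C^{c_i'}$ for certain positive $a_i$.  Your proposal accounts only for the insertions.  The modifications matter: Rule~1 already turns the very first factor $A^{a_1}C^{c_1}$ (with $a_1\geq 4$ since $x\in Q$) into $A^{1+a_1}C^{c_1}$, giving first component $\geq 5$; and the same phenomenon recurs at the boundary of every excursion through $Q$.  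So several of the factors $M_j$ with $1<j<k$ can, in general, carry first component $\geq 4$, which already falsifies the deduction ``It follows that each intermediate $z=T_1^j(x)$ lies in $\bigcup_{i\leq 2}\Delta_1(i,\cdot)$.''  Indeed, even taking your claim at face value, the terminal factor $M_k$ with large first component forces $T_1^{k-1}(x)\in\Delta_1(i,\cdot)$ with $i\geq 4$, so $T_1^{k-1}(x)$ itself is not in the union $\bigcup_{i\leq 2}\Delta_1(i,\cdot)$; the claim that \emph{all} intermediate iterates lie there is internally inconsistent with allowing a large terminal factor.

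The paper's proof is built exactly to handle the modified factors, which your argument skips.  It observes that the one-time changes $(a_i,c_i)\mapsto(1+a_i,\cdot)$ occur only at the boundary of an excursion from a consecutive $Q$-pair, and then argues at the level of \emph{consecutive pairs}: when such a large-component factor appears at the $T_1$-step into the return point, the $T_1$-predecessor of that point lies in $\Delta_1(1+a_i,2)$, but \emph{its} $T_1$-predecessor lies in $\Delta_1(1,1)\cup\Delta_1(1,2)$, which the cylinder-inclusion computation shows is disjoint from $Q$.  Thus the two-point condition defining $\Omega^Q$ fails at the intermediate iterates.  Your ``elementary endpoint computation'' that $\bigcup_{i\leq 2}\Delta_1(i,\cdot)\cap Q=\emptyset$ is the right tool, but it must be applied to the predecessor of the problematic iterate, not directly to every intermediate iterate.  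Also, the pass-through rule you propose, for $\alpha$-digits ``of the form $(1,j)$,'' is not the right analogue: the small-$\alpha$ rule $(i')$ passes through consecutive digits lying in the set being returned to, and $Q$ is the cylinders with first component $\geq 4$, not $(1,j)$; as written your auxiliary rule does not cover the case of the orbit staying in $Q$.
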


\begin{proof}  We have that $\Omega^{Q} \subset \Omega_1$ and hence the return maps are defined.     Suppose that  $(x,y) \in \Omega^{Q}$ and  $\mathcal T_{\alpha}^{m+1}(x,y)$ is its first $\mathcal T_{\alpha}$-return. 
From the previous lemma, we can deduce that this first return map   is also  {\em some} return under $\mathcal T_1$.  

 If $m=0$ then certainly $T_{\alpha}(x) = T_1(x)$, and the first return of the two-dimensional maps  also agree.  
 Now assume that $m>0$.  Suppose  
 \[ T_{\alpha}^{m+1}(x) = A^{a_{m+1}}C^{c_{m+1}}A^{a_{m}}C^{c_{m}} \cdots A^{a_1}C^{c_1}\cdot x.\]   
Of course,   if this $x$-orbit stays within $Q$, then it certainly stays within the positive reals and we find that this gives also the $T_1$-orbit  and the result  holds.  In general,  we process as in the previous lemma each subword corresponding to a return to a consecutive pair in $Q$ (including the possibility of the use of the ``$m=0$ case"). Note that  our rules insert only steps corresponding to one  of $AC, A^2C, AC^2, A^2 C^2$ and cause  one-time changes of the form $(a_i, c_i)$ replaced by  $(1+ a_i, 1), (1+ a_i, 2)$ for certain positive $a_i$.   These one time changes can only occur at the end of a return to a consecutive pair in $Q$, or at the beginning of an excursion from such a pair.  In the latter case, the value of the point at the beginning of the excursion is unchanged.  When such an increase to $1+a_i$ is made at the end of an excursion, the $T_1$-predecessor of the return lies in $\Delta_1(1+a_i,2)$;  in all cases its own $T_1$-predecessor lies in $\Delta_1(1,1) \cup \Delta_1(1,2)$
(with only the case of $n=3$ and  $a_{i-1} = -1$ at all complicated).   

Since the left endpoint of $\Delta_{1}(2,2)$ is sent by $A^2C^2$ to $0$, we find that $\Delta_{1}(2,2) \cup \Delta_{1}(1,2) \subset \Delta_{\alpha}(2,2) \cup \Delta_{\alpha}(1,2) \cup [r_0(\alpha), t)$.  One similarly finds that $\Delta_{1}(2,1) \cup \Delta_{1}(1,1) \subset \Delta_{\alpha}(2,1) \cup \Delta_{\alpha}(1,1) \cup [\mathfrak b_{\alpha}, 1)$.   Thus,   $\bigcup_{i=1}^{2}\, \Delta_{1}(i,1) \cup \Delta_{1}(i,2)$ does not intersect  $Q$.     Thus, our $1$-admissible word is such that it gives the   {\em first}  return under $\mathcal T_1$. 
\end{proof}

\subsection{Entropy times mass is constant;  Proofs of Theorems~\ref{t:prodIsConst},~\ref{t:continuityOfEntropy} }\label{ss:EM=C}
We now argue by way of Abramov's formula, as in \cite{CKStoolsOfTheTrade},   that the following holds. 

\begin{Prop}\label{p:continuityOfEntropyOnIntervals}     The function   $\alpha \mapsto  h(T_{\alpha}) \mu(\Omega_{\alpha})$ is constant on each of the intervals $(0, 1/t_n]$ and $(\gamma_n, 1)$.  
\end{Prop}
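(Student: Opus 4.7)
The plan is to apply Abramov's formula on a subsystem to which both $\mathcal{T}_\alpha$ and a reference two-dimensional map share a common first return, in the spirit of \cite{KraaikampSchmidtSteiner} and \cite{CKStoolsOfTheTrade}. For the interval $(0, 1/t_n]$ the reference will be $(\mathcal{T}_0, \Omega_0)$, and for $(\gamma_n, 1)$ it will be $(\mathcal{T}_1, \Omega_1)$. The invertibility, finite $\mu$-mass, and ergodicity needed in the interior of each interval are furnished by Theorem~\ref{t:naturallyErgodicParTout}; for the boundary values $\alpha \in \{0,1\}$ the corresponding statements are provided by \cite{CaltaSchmidt} and \cite{CKStoolsOfTheTrade}.

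For $\alpha \in (0, 1/t_n]$ I would take $d = d(\alpha)$ and let $\Omega^+_d$ be as in Lemma~\ref{l:firstReturnToDoubleNegVal}. That lemma guarantees both $\Omega^+_d \subset \Omega_\alpha \cap \Omega_0$ and that the first-return map of $\mathcal{T}_\alpha$ to $\Omega^+_d$ equals that of $\mathcal{T}_0$ to $\Omega^+_d$; call this common first-return map $S$. The measure $\mu$ restricts identically from both perspectives, so after normalization $S$ preserves one and the same probability measure. Writing $\mu_\beta = \mu/\mu(\Omega_\beta)$ for $\beta \in \{0, \alpha\}$, Abramov's formula applied from each side will give
\[
\frac{h(\mathcal{T}_\alpha, \mu_\alpha)\, \mu(\Omega_\alpha)}{\mu(\Omega^+_d)} \;=\; h(S) \;=\; \frac{h(\mathcal{T}_0, \mu_0)\, \mu(\Omega_0)}{\mu(\Omega^+_d)}\,.
\]
Cancelling $\mu(\Omega^+_d)$ and invoking the natural-extension identity $h(\mathcal{T}_\beta, \mu_\beta) = h(T_\beta)$ forces $h(T_\alpha)\mu(\Omega_\alpha) = h(T_0)\mu(\Omega_0)$, a quantity independent of $\alpha$. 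The argument for $\alpha \in (\gamma_n, 1)$ will proceed identically, using $\Omega^Q$ from Lemma~\ref{l:firstReturnToDoubleLarge} and the reference pair $(\mathcal{T}_1, \Omega_1)$, to conclude $h(T_\alpha)\mu(\Omega_\alpha) = h(T_1)\mu(\Omega_1)$ on that interval.

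The main obstacle is not analytic but bookkeeping: verifying the hypotheses of Abramov's formula from both sides simultaneously. Ergodicity and finite mass are already in hand. Positivity of $\mu(\Omega^+_d)$ (and of $\mu(\Omega^Q)$) is immediate from its explicit description as a union of nontrivial rectangles in the interior of $\Omega_\alpha$, whose vertical fibres are bounded away from the singularity locus $y = -1/x$. The nontrivial input is that $S$ genuinely realises the first return from both perspectives, and not merely \emph{some} common return; but this is exactly what Lemmas~\ref{l:firstReturnToDoubleNegVal} and~\ref{l:firstReturnToDoubleLarge} establish by the word-processing argument of the preceding subsection, which rules out any earlier $\mathcal{T}_0$- or $\mathcal{T}_1$-return into $\Omega^+_d$ (resp.\ $\Omega^Q$). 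Once these ingredients are assembled, the two expressions for $h(S)$ collapse to the desired constancy on each interval.
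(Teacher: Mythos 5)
Your plan has a genuine gap at one crucial point: you use $(\mathcal T_0, \Omega_0)$ (resp.\ $(\mathcal T_1, \Omega_1)$) as the reference system and normalize $\mu$ on $\Omega_0$ to a probability measure. But $\mu(\Omega_0) = \mu(\Omega_1) = \infty$ --- the paper states this explicitly in the lead-up to Lemma~\ref{l:integralsMatch}, and it is precisely the reason the Proposition is stated over the half-open intervals $(0, 1/t_n]$ and $(\gamma_n, 1)$ and not their closures. Consequently $\mu_0$ and $\mu_1$ are not probability measures, the natural-extension identity $h(\mathcal T_\beta,\mu_\beta)=h(T_\beta)$ has no meaning for $\beta\in\{0,1\}$ in the probability framework, and the purported constant $h(T_0)\,\mu(\Omega_0)$ at which you want to land is an indeterminate $0\cdot\infty$. (One could try to rescue the argument via Krengel entropy for the conservative infinite-measure system $\mathcal T_0$, but this is both more delicate and unnecessary.)

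The paper sidesteps this by never applying Abramov to the endpoint systems: it fixes two \emph{interior} values $\alpha,\alpha'$ of the same parameter interval, for which both $\mu(\Omega_\alpha)$ and $\mu(\Omega_{\alpha'})$ are finite, and forms $\Omega_{\alpha,\alpha'} = \Omega^{+}_{d(\alpha)}\cap\Omega^{+}_{d(\alpha')}$ (resp.\ the intersection of the two $\Omega^{Q}$'s). Lemmas~\ref{l:firstReturnToDoubleNegVal} and~\ref{l:firstReturnToDoubleLarge} then show that the first-return maps of $\mathcal T_\alpha$ and $\mathcal T_{\alpha'}$ to this common set coincide --- transitively, via agreement of each with the first return of $\mathcal T_0$ (resp.\ $\mathcal T_1$), so that $\mathcal T_0$ serves only as a word-processing intermediary, never as a system normalized by its total mass. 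Abramov is then applied to the two finite-measure systems $(\mathcal T_\alpha,\Omega_\alpha)$ and $(\mathcal T_{\alpha'},\Omega_{\alpha'})$ with the common inducing set $\Omega_{\alpha,\alpha'}$; the factor $\mu(\Omega_{\alpha,\alpha'})$ cancels, giving $h(T_\alpha)\,\mu(\Omega_\alpha) = h(T_{\alpha'})\,\mu(\Omega_{\alpha'})$ without ever naming a specific constant.
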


\begin{proof}  Let $\alpha, \alpha'$ both belong to one of the parameter intervals being considered.    In the respective cases, the subset $\Omega^{+}_{d}$ or $\Omega^{Q}$ of $\Omega_{\alpha}$ is of positive $\mu$-mass, and similarly for when $\alpha'$ replaces $\alpha$.  Let $\Omega_{\alpha, \alpha'}$ be the intersection of these respective subsets.    From Lemmas~\ref{l:firstReturnToDoubleNegVal} and ~\ref{l:firstReturnToDoubleLarge} we deduce that the first return map of  $\mathcal T_{\alpha}$ and $\mathcal T_{\alpha'}$  agree.   These hence define the same dynamical system, and Abramov's formula yields that their entropy can be expressed as 
\[ \dfrac{h(\mathcal T_{\alpha}) \, \mu( \Omega_{\alpha})}{\mu(\Omega_{\alpha, \alpha'})} =   \dfrac{h(\mathcal T_{\alpha'}) \, \mu( \Omega_{\alpha'})}{\mu(\Omega_{\alpha, \alpha'})}.\]
Since the entropy of a dynamical system equals the entropy of the natural extension system, the result holds.   
\end{proof} 

  Since the two parameter subintervals of the proposition overlap, that is $1/t_n>\gamma_n$, Theorem~\ref {t:prodIsConst} is proven.  Recall that from this, and Theorem~\ref{t:continuityOfMass}, Theorem~\ref{t:continuityOfEntropy}  follows. \qed 

\section{Conjecture: Entropy times $\mu$-mass equals volume of unit tangent bundle}\label{ss:theEnd}
 
The volume of the unit tangent bundle of the hyperbolic orbifold uniformized by the triangle group $G_n$ is  
\begin{equation}\label{e:volare}
\text{vol}_n = \dfrac{2 (2 n - 3 )\pi^2}{3 n}.
\end{equation}
     This is easily checked, as the hyperbolic area of the base orbifold is twice the hyperbolic area of any hyperbolic triangle of angles  $\pi/3,  \pi/n, 0$,  and  $\text{vol}_n$ is this times $\pi$,   see  say \cite{ArnouxCodage} for this calculation when $G_n$ is replaced by the standard modular group.    From this, Conjecture~\ref{con:Vol} can be restated as the following.
\begin{Conj}   For all $n\ge 3$ and for all $\alpha \in (0,1)$ we conjecture that   
\[     h(T_{\alpha})\, \mu(\Omega_{\alpha})  = \emph{vol}_n\,.  \] 
\end{Conj}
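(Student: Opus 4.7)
The plan would be to reduce to a single parameter by invoking Theorem~\ref{t:prodIsConst}, which asserts that $\alpha \mapsto h(T_\alpha)\mu(\Omega_\alpha)$ is constant on $(0,1)$. It would then suffice to evaluate the product at one convenient $\alpha_\star$, for instance at a value lying deep inside a synchronization interval for small $\alpha$, where the planar natural extension $\Omega_{\alpha_\star}$ is a compact finite union of rectangles with explicit coordinates. With $\alpha_\star$ fixed, Rohlin's formula together with the natural-extension result of Theorem~\ref{t:naturallyErgodicParTout} allows me to write
\[
  h(T_{\alpha_\star})\,\mu(\Omega_{\alpha_\star}) \;=\; \int_{\Omega_{\alpha_\star}} \log\bigl|T_{\alpha_\star}'(x)\bigr|\, \frac{dx\,dy}{(1+xy)^2},
\]
and on each cylinder $\Delta_{\alpha_\star}(k,l)$ the derivative factor equals $-2\log|cx+d|$ for the Möbius transformation $A^{k}C^{l}$ defining $T_{\alpha_\star}$ there.

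The second step would be to invoke the geometric correspondence going back to Series and formalised in the modular setting by Arnoux~\cite{ArnouxCodage}. A pair $(x,y)\in\Omega_{\alpha_\star}$ is identified with the unit tangent vector on $\mathbb{H}$ tangent to the geodesic from $-1/y$ to $x$ at its intersection with a chosen reference cross-section. Under this identification $\mu$ pulls back to a constant multiple of the Liouville measure on $T^{1}(\mathbb{H})$, the planar map $\mathcal{T}_{\alpha_\star}$ realises the first return of geodesic flow to the projected cross-section in $T^{1}(G_n\backslash\mathbb{H})$, and the integrand $\log|T'|$ is precisely the return-time function of that flow. A Kac-style computation would then yield the equality $h(T_{\alpha_\star})\mu(\Omega_{\alpha_\star}) = \text{vol}_n$, provided that the projected image of $\Omega_{\alpha_\star}$ tiles the unit tangent bundle exactly once modulo $G_n$ up to a null set.

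The main obstacle, and the reason the statement remains a conjecture, is twofold. First, the geometric correspondence above is a genuine cross-section only where $T_{\alpha_\star}$ is expansive: on non-expansive regions the coded geodesic flows backwards locally, the return time takes negative values, and the standard Kac argument breaks down. Theorem~\ref{t:expansiveUisFactorOfFlow} circumvents this by passing to the first pointwise expansive power $U$, but it assumes the very identity in the conjecture rather than proving it. I would attempt to sidestep the circularity by choosing $\alpha_\star$ sufficiently small that $T_{\alpha_\star}$ itself is globally expansive, so that the modular-style argument applies verbatim once the cross-section has been verified to tile with multiplicity one. Second, and this is where the real difficulty lies uniformly in $n$, the multiplicity-one tiling check must be carried out using the full combinatorial structure of the tree of words $\mathcal{V}$ and the derived-words operator $\mathscr{D}$ from~\cite{CaltaKraaikampSchmidt}; the explicit descriptions of the boundary of $\Omega_{\alpha_\star}$ from Sections~\ref{s:relationsOnHts}--\ref{s:MainForSmallAlps} would feed in here to confirm that the $G_n$-translates of the image cover the quotient exactly once, yielding the desired equality with $\text{vol}_n = 2(2n-3)\pi^{2}/(3n)$. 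This uniformity-in-$n$ verification is the step that is currently out of reach beyond the $n=3$ base case handled in Theorem~\ref{t:conjConfirmedNis3}.
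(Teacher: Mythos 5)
The statement you are addressing is Conjecture~\ref{con:Vol} itself: the paper does not prove it for general $n$. What the paper actually does is (i) reduce it, via Theorem~\ref{t:prodIsConst} and Lemma~\ref{l:integralsMatch}, to a single integral identity, namely $\int_{\Omega_0}\tau_0\,d\mu=\text{vol}_n$ (equivalently with $\Omega_1$ in place of $\Omega_0$), and (ii) verify that identity for $n=3$ (Lemma~\ref{l:alpOneGivesIt}, giving Theorem~\ref{t:conjConfirmedNis3}), with numerical corroboration for $4\le n\le 12$. Your first step (constancy of $h(T_\alpha)\mu(\Omega_\alpha)$ plus Rohlin's formula) matches the paper's reduction, but the second half of your outline is not a proof: the ``multiplicity-one tiling''/first-return-type verification, uniformly in $n$, is exactly the open content of the conjecture, and as you yourself concede it is not carried out. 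So there is a genuine gap --- unavoidably, since the statement is open --- but it is worth recording precisely where your plan goes wrong and where it diverges from what the paper does.

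Two concrete points. First, your choice of $\alpha_\star$ is internally inconsistent: as computed in the proof of Corollary~\ref{c:oneFactorOfFlow}, $T_{n,\alpha}$ is expansive exactly when $1-1/t_n<\alpha<2/t_n$, whereas for small $\alpha$ (i.e.\ $\alpha<\gamma_n$) one has $\ell_0(\alpha)=(\alpha-1)t_n<-1$, so the branch through $\ell_0$ has derivative $x^{-2}<1$ and the map is never expansive there. You therefore cannot both sit ``deep inside a small-$\alpha$ synchronization interval'' and have global expansivity; you would be forced into the intermediate window $(1-1/t_n,\,2/t_n)$, which is nonempty only because $t_n<3$ and which shrinks as $n\to\infty$, and whose explicit natural extensions are the intricate large-$\alpha$ domains, not the simpler small-$\alpha$ ones. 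Second, the logical direction in the paper for $n=3$ is the reverse of yours: rather than verifying that the Arnoux cross-section is of first-return type and deducing the integral (your Kac-style step), the paper pushes the constant value of the integral to the endpoint $\alpha=1$ (Lemma~\ref{l:integralsMatch}, where the infinite-mass endpoint systems are handled by first-return/tower and acceleration arguments), exploits that $\Omega_{3,1}$ is just two explicit rectangles, evaluates $\int_{\Omega_1}\tau_1\,d\mu$ by a change of variables reducing to the by-excess continued-fraction integral $\pi^2/3$ (Lemmas~\ref{l:alpOneGivesIt} and \ref{l:useMtwoNthreeAlpZero}), and only then deduces the first-return/cross-section statement (Corollary~\ref{c:oneFactorOfFlow}); Theorem~\ref{t:expansiveUisFactorOfFlow} likewise takes the volume identity as a hypothesis. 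Your proposal inverts this flow and thereby rests its weight exactly on the step (tiling with multiplicity one, uniformly in $n$) for which neither the paper nor your outline supplies an argument; by the reduction of \S~\ref{ss:RockTheVolume}, any complete proof must in effect evaluate the dilogarithm-type integral $\int_{\Omega_0}\tau_0\,d\mu$ for every $n$, and that remains open.
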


In \S~\ref{ss:RockTheVolume} we relate Rohlin's formula for entropy to volumes of unit tangent bundles.   This reduces the above conjecture to  the computation of an integral.   \S~\ref{ss:conjWithNis3} shows that the conjecture holds when $n=3$.  \S~\ref{ss:ImpFirst}  gives in particular the proof of Theorem~\ref{t:expansiveUisFactorOfFlow}   (which implies the equivalence of Conjectures~\ref{con:returnReturn} and ~\ref{con:Vol}). Finally, in \S~\ref{ss:conjComputation4to12} we report on computational confirmation for $4\le n \le 12$.   

\bigskip 
\subsection{Rohlin's formula and the volume of unit tangent bundles}\label{ss:RockTheVolume}  We show that for each $n$, the conjecture can be reduced to an integral computation, involving either $\Omega_0$ or $\Omega_1$.
\begin{Def}\label{d:tau}  
  Fix $n \ge 3$.   For $ \alpha \in [0,1]$ and for each $x \in \mathbb I_{\alpha}$,  let $\tau_{\alpha}(x) = - 2 \log \vert c x + d\vert$ where $T_{\alpha}(x) = (a x + b)/(c x + d)$.   Of course, this is simply  $\tau_{\alpha}(x) =  \log \vert T'_{\alpha}(x)\vert$. 
\end{Def}

  We now show that the integral of $\tau_{\alpha}(x)$ over $\Omega_{\alpha}$ with respect to $d \mu$ gives the product of  $h(T_{\alpha})$ with  $\mu(\Omega_\alpha)$.   Furthermore,  these integrals are constant with respect to $\alpha$, including at the endpoints, this although both $\mu(\Omega_0)$ and $\mu(\Omega_1)$ are infinite.
 
\begin{Lem}\label{l:integralsMatch}  Fix $n \ge 3$.   For $ \alpha \in [0,1]$ and for each $x \in \mathbb I_{\alpha}$,  let $\tau_{\alpha}(x) = - 2 \log \vert c x + d\vert$ where $T_{\alpha}(x) = (a x + b)/(c x + d)$.   Then for $0 <\alpha < 1$,  
 \[  h(T_{\alpha})\, \mu(\Omega_{\alpha})  = \int_{\Omega_{\alpha}} \tau_{\alpha}(x)\,  d \mu = \int_{\Omega_{0}} \tau_{0}(x)\,  d \mu =\int_{\Omega_{1}} \tau_{1}(x)\,  d \mu. 
\]
\end{Lem}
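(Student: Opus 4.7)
The plan is to handle the first equality by Rohlin's entropy formula and then use the common first-return structure established in Section~\ref{s:contEntropy} to extend from interior parameters to the endpoints $\alpha=0,1$.

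For the first equality, I would apply Rohlin's formula to $(T_\alpha,\mathbb{I}_\alpha,\mathscr{B}_\alpha,\nu_\alpha)$: Theorem~\ref{t:naturallyErgodicParTout} gives that this system is ergodic with invariant probability $\nu_\alpha$ obtained as the marginal of $\mu_\alpha=\mu/\mu(\Omega_\alpha)$. Rohlin yields $h(T_\alpha)=\int_{\mathbb{I}_\alpha}\log|T'_\alpha|\,d\nu_\alpha$, and pushing up through the projection $\Omega_\alpha\to\mathbb{I}_\alpha$ one obtains $h(T_\alpha)\mu(\Omega_\alpha)=\int_{\Omega_\alpha}\tau_\alpha\,d\mu$. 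That this common value is constant for $\alpha\in(0,1)$ is then immediate from Theorem~\ref{t:prodIsConst}, so it remains only to show agreement with the $\alpha=0$ and $\alpha=1$ integrals (where $\mu(\Omega_\alpha)$ is infinite but the integrals are to be shown finite).

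The core step is the equality with $\int_{\Omega_0}\tau_0\,d\mu$; the $\alpha=1$ case will be symmetric. Fix any $\alpha\in(0,1/t_n]$ and let $A:=\Omega^{+}_{d}\subset\Omega_\alpha\cap\Omega_0$ as in Lemma~\ref{l:firstReturnToDoubleNegVal}. Denote by $R_0,R_\alpha$ the first-return maps to $A$ of $\mathcal{T}_0,\mathcal{T}_\alpha$, with return times $k_0,k_\alpha$. Lemma~\ref{l:firstReturnToDoubleNegVal} asserts $R_0=R_\alpha$ on $A$ up to $\mu$-null. Since $\log|T'|$ is additive along orbits via the chain rule, the return cocycles $\sum_{j=0}^{k_0(x)-1}\tau_0(\mathcal{T}_0^j x)$ and $\sum_{j=0}^{k_\alpha(x)-1}\tau_\alpha(\mathcal{T}_\alpha^j x)$ coincide $\mu$-a.e.\ on $A$, so their integrals over $A$ are equal. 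A Kac-type telescoping based on the $\mathcal{T}_0$-invariance of $\mu$ and the partition
\[
\Omega_0 \;=\; \bigsqcup_{n\ge 0}\mathcal{T}_0^n\bigl(\{x\in A:\,k_0(x)>n\}\bigr),
\]
valid up to $\mu$-null, reshuffles the integral of the cocycle over $A$ into $\int_{\Omega_0}\tau_0\,d\mu$; applying the same reshuffling on the $\alpha$ side yields $\int_{\Omega_\alpha}\tau_\alpha\,d\mu$, and the three integrals agree. The $\alpha=1$ case uses $A:=\Omega^{Q}$, any $\alpha\in(\gamma_n,1)$, and Lemma~\ref{l:firstReturnToDoubleLarge} in place of Lemma~\ref{l:firstReturnToDoubleNegVal}.

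The principal technical hurdle is justifying the displayed partition: one needs $\mathcal{T}_0$ to be conservative on the $\sigma$-finite space $(\Omega_0,\mu)$ and $A$ to be a sweep-out set, i.e.\ $\mu$-a.e.\ point of $\Omega_0$ visits $A$ under some forward iterate of $\mathcal{T}_0$. Conservativity and ergodicity of $\mathcal{T}_0$ on $\Omega_0$ follow from results of \cite{CaltaSchmidt}; once these are in hand the sweep-out property follows automatically from $\mu(A)>0$, and the finiteness of each integral in the claim is inherited from the finite interior value $h(T_\alpha)\mu(\Omega_\alpha)$ via the equality. The parallel hurdle for $\alpha=1$ is dispatched identically, using known conservativity/ergodicity of $\mathcal{T}_1$ on $\Omega_1$.
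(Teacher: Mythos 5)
Your handling of the first equality by Rohlin's formula matches the paper exactly, and the appeal to Theorem~\ref{t:prodIsConst} for constancy on $(0,1)$ is correct. For the remaining equalities, however, your route diverges from the paper's. You propose a direct Kac/Kakutani tower from the common base $\Omega^{+}_{d}$ to all of $\Omega_0$, justified by conservativity and ergodicity of $\mathcal T_0$ on the $\sigma$-finite space $(\Omega_0,\mu)$. The paper explicitly declines to do this: it states that, "Due to the infinite measure of $\Omega_0$, we cannot simply repeat this argument to directly achieve the equality with $\Omega_0$ replacing $\Omega_{\alpha}$," and instead routes through the finite $\mu$-measure, expansive accelerated system that \cite{CaltaSchmidt} associates to $T_0$. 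An adaptation of Lemma~\ref{l:firstReturnToDoubleNegVal} identifies the first return to $\Omega^{+}_{d}$ of $\mathcal T_\alpha$ with that of the accelerated planar system; since the latter is finite-measure and ergodic, the Abramov/Kac step there is unproblematic; and the remaining tower step, from the accelerated planar domain up to $\Omega_0$, only ranges over the one subinterval where acceleration occurs, so it is far more controlled than a tower spanning all of $\Omega_0$. (The $\alpha=1$ side is handled in parallel via the acceleration from (\cite{CKStoolsOfTheTrade}, \S 2.6).)

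The concrete gap in your argument is the integrability implicitly needed for your telescoping. Since $T_0$ is not expansive on all of $\mathbb I_0$, the integrand $\tau_0=\log|T_0'|$ changes sign on $\Omega_0$, and $\mu(\Omega_0)=\infty$. The rearrangement $\int_{\Omega^{+}_{d}} F_0\,d\mu=\int_{\Omega_0}\tau_0\,d\mu$ (with $F_0$ the return cocycle) is a Fubini statement: splitting $\tau_0=\tau_0^{+}-\tau_0^{-}$ and applying Tonelli yields the identity for each sign separately as extended nonnegative reals, but one can only subtract if both halves are finite, that is, essentially if $\tau_0\in L^1(\Omega_0,\mu)$. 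Your closing sentence --- that the finiteness of each integral "is inherited from the finite interior value $h(T_\alpha)\mu(\Omega_\alpha)$ via the equality" --- uses the equality you are trying to prove as the hypothesis needed to justify proving it. The paper's detour through the finite-measure, expansive accelerated system is precisely engineered to break this circularity, and is the step you should imitate rather than bypass.
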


\begin{proof}     For $0 < \alpha < 1$,   Rohlin's formula (see \cite{DenkerKellerUrbanski}) gives 
\[  h(T_{\alpha}) = \int_{\mathbb I_{\alpha}}\, \log \vert T'_{\alpha}(x) \vert\, d \nu_{\alpha}=  \int_{\mathbb I_{\alpha}}\, \tau_{\alpha}(x) d \nu_{\alpha} =  \dfrac{\int_{\Omega_{\alpha}}\, \tau_{\alpha}(x) d \mu}{\mu(\Omega_{\alpha})},
\] 
where the last equality holds because $\nu_{\alpha}$ is the marginal measure for the probability measure on $\Omega_{\alpha}$ induced by $\mu$.   Since  $\mu(\Omega_{\alpha})< \infty$, our first equality holds.  We now show the remaining two equalities hold by showing that each holds for $\alpha$ in a certain subinterval; these subintervals overlap, from this the result itself holds.    
 
 Assume first that $\alpha \le 1/t$.  We employ a tower construction, with base $\Omega^{+}_{d}$.  For this, note that Poincar\'e recurrence for each of $\mathcal T_{\alpha}^{\pm 1}$ to this positive measure subspace of $\Omega_{\alpha}$ implies bijectivity of the first return of $\mathcal T_\alpha$ to $\Omega^{+}_{d}$.    For each $k \in \mathbb N$ we define $R_k \subset \Omega^{+}_{d}$ as the set of points whose first return to $\Omega^{+}_{d}$  is given by applying $\mathcal T_{\alpha}^k$, thus the $R_k$ partition $\Omega^{+}_{d}$.   We have that  $\mathcal T_{\alpha}$ bijectively maps    the tower formed  by the disjoint  union of the sets of the form $\mathcal T_{\alpha}^i(R_k)$ for all $k$ and $0\le i <k$ to itself.  By the ergodicity of $\mathcal T_{\alpha}$, we deduce that this union gives $\Omega_{\alpha}$ up to measure zero.

Let  $\rho$ denote projection onto the $x$-coordinate of the first return map on $\Omega^{+}_{d}$.   
 Due to  the logarithm turning the multiplication appearing in the Chain Rule to addition  ---  in the sense that:  For any reasonable map $T$, we have $\log |(T^k)'(x)| = \sum_{i=0}^{k-1}\, \log |T'(\, T^i(x)\,)|$ ---  and the additivity of integration, one deduces that 
\[ \int_{\Omega_{\alpha}} \tau_{\alpha}(x)\,  d \mu = \int_{\Omega^{+}_{d}}  \log \vert \rho'(x)\vert\,  d \mu.\]
Due to the infinite measure of $\Omega_0$, we cannot simply repeat this argument to directly achieve the equality with $\Omega_0$ replacing $\Omega_{\alpha}$.  However, the proof of Lemma~\ref{l:firstReturnToDoubleNegVal} can easily be adapted to show that the first return map  itself also  agrees with the first return map for the accelerated system that \cite{CaltaSchmidt} associates to $\mathcal T_0$. 
This system is again a finite $\mu$-measure ergodic dynamical system and thus one finds that $\int_{\Omega_{\alpha}} \tau_{\alpha}(x)\,  d \mu$ equals the integral of the logarithm of the absolute value of the first derivative of the accelerated one-dimensional map.    Next, one can show that by taking the union of that accelerated two-dimensional domain with  the forward $\mathcal T_0$ orbits of its points fibering over the subinterval where acceleration occurs (denoted $[-\tau, \epsilon_0)$ in \cite{CaltaSchmidt}, note that their $\tau$ is our $t$)  gives  $\Omega_0$  up to zero measure.    Putting this together, we do find that $\int_{\Omega_{\alpha}} \tau_{\alpha}(x)\,  d \mu = \int_{\Omega_{0}} \tau_{0}(x)\,  d \mu$.     

 We can now argue completely analogously,  beginning with $\alpha >\gamma_n$,  using Lemma~\ref{l:conversionAlpToOne}, and the acceleration for the $\alpha=1$ setting given in (\cite{CKStoolsOfTheTrade}, \S~2.6),  to find that $\int_{\Omega_{\alpha}} \tau_{\alpha}(x)\,  d \mu = \int_{\Omega_{1}} \tau_{1}(x)\,  d \mu$.  
\end{proof}

\subsection{Proof of Conjecture~\ref{con:Vol} when $n=3$}\label{ss:conjWithNis3}
 
 In light of the previous subsection, the following result proves  Conjecture~\ref{con:Vol} when $n=3$.
\begin{Lem}\label{l:alpOneGivesIt}  Let  $n = 3$.   We have 
\[ \int_{\Omega_{1}} \tau_{1}(x)\,  d \mu = \emph{vol}_3\,.
\]
\end{Lem}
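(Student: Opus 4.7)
The plan is to prove the lemma by identifying the two-dimensional system $(\mathcal{T}_1, \Omega_1, \mu)$ with a cross-section to the geodesic flow on the unit tangent bundle $\mathrm{UTB}(G_3\backslash\mathbb H)$, and then applying the standard Ambrose--Kakutani relation: the integral of the first return time over a cross-section equals the total volume of the flow. This would replace a direct integration (feasible but tedious) with a geometric identification that has the advantage of mirroring the structure already built in Theorem~\ref{t:expansiveUisFactorOfFlow}, while of course not invoking that theorem, whose hypothesis is precisely what we are trying to verify.

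First I would make everything fully explicit for $n=3$, $\alpha=1$. Since $t_3=2$, we have $\mathbb I_1=[0,2)$, and one checks that $T_1(x)=1-1/x$ on $(1,2)$ (digit $(0,1)$), while for $x\in(0,1)$, $T_1(x)=\tfrac{1}{1-x}+2k$ for the unique $k\in\mathbb Z_{\le 0}$ with image in $[0,2)$ (digit $(k,2)$). Hence $\tau_1(x)=-2\log x$ on $(1,2)$ and $\tau_1(x)=-2\log(1-x)$ on $(0,1)$. The domain $\Omega_1$ is given by Proposition~\ref{p:fillUpFromW}, with vertical fibers of finite length away from the diagonal $y=-1/x$ but total infinite $\mu$-mass reflecting the cusp of $G_3\backslash\mathbb H$.

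Next I would set up a parametrization of $\mathrm{UTB}(\mathbb H)$ by pairs of geodesic endpoints $(\xi^-,\xi^+)$ together with a time parameter $s$, with Liouville measure $\tfrac{d\xi^-\,d\xi^+}{(\xi^+-\xi^-)^2}\,ds$. Under the change of variable $x=\xi^+$, $y=-1/\xi^-$ (the same conjugation by $R$ used in defining $\mathcal{T}$ in Subsection~\ref{ss:2dMaps}) this measure becomes $d\mu\,ds$, matching the $\mathcal T$-invariant measure. The heart of the argument is then the construction of an explicit cross-section $\Sigma\subset\mathrm{UTB}(G_3\backslash\mathbb H)$, parametrized up to measure zero by $\Omega_1$, such that the first return map under geodesic flow coincides with $\mathcal{T}_1$ and the first return time coincides with $\tau_1(x)$. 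The cylinder structure of $T_1$ drives the identification: a geodesic with forward endpoint in $\Delta_1(k,l)$ returns to $\Sigma$ via the action of $A^kC^l\in G_3$, and the hyperbolic distance between successive crossings is exactly $\log|(A^kC^l)'(\xi^+)|$, which matches $\tau_1$ on the cylinder. The Ambrose--Kakutani theorem then gives
\[
\mathrm{vol}(\mathrm{UTB}(G_3\backslash\mathbb H))=\int_{\Omega_1}\tau_1(x)\,d\mu,
\]
while the hyperbolic area computation for the $(\pi/3,\pi/3,0)$-triangle orbifold gives the left-hand side as $\mathrm{vol}_3 = 2\pi^2/3$, yielding the lemma.

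The main obstacle is the careful construction and verification of $\Sigma$, especially near the cusp: one must show the section covers the unit tangent bundle faithfully up to measure zero and that the return map is genuinely the \emph{first} return, despite the infinite-mass region of $\Omega_1$ along $y=-1/x$ that arises from geodesics spending arbitrarily long excursions into the cusp. The convenient feature for $n=3$ is that $G_3=\langle A,C\rangle=\langle T^2,C\rangle$ is an index-two subgroup of $\mathrm{PSL}_2(\mathbb Z)$ (as $\mathrm{vol}_3$ is twice the volume of the unit tangent bundle of the modular surface), so the construction is a direct adaptation of Arnoux's \cite{ArnouxCodage} cross-section for the regular continued fraction, scaled to the interval $[0,2)$ and to our choice of generators. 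Once $\Sigma$ is built and the return-time identity is verified on each cylinder, the remaining step is routine.
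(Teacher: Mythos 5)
Your route is genuinely different from the paper's, and it has a real gap that is not a matter of routine verification. The paper's proof does \emph{not} build a cross-section for $\mathcal T_1$ from scratch: it observes that $\Omega_1 = \bigl([0,1]\times[-1,0]\bigr)\cup\bigl([1,2]\times[-1/2,0]\bigr)$, that $\mathcal T_M$ with $M=\left(\begin{smallmatrix}1&1\\0&1\end{smallmatrix}\right)$ carries the first rectangle to the second while preserving $\mu$ and transporting $-2\log x$ to $-2\log|x-1|$, and so the whole integral collapses to $2\int_{[0,1]\times[-1,0]}-2\log x\,d\mu$. This last quantity is then evaluated (Lemma~\ref{l:useMtwoNthreeAlpZero}) by recognizing $[-1,0]\times[0,1]$ as the planar natural extension of the by-excess (Nakada $\alpha=0$) map and quoting the known value $\pi^2/3$ from \cite{ArnouxSchmidtCross}, together with the $\mu$-preserving symmetry $(x,y)\mapsto(-x,-y)$. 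So the geodesic-flow machinery enters only through a single cited integral, not through a new cross-section.

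The gap in your plan is exactly the point you label as the ``main obstacle'' and then wave off as ``routine once $\Sigma$ is built'': showing that $\mathcal T_1$ is of \emph{first}-return type for your cross-section $\Sigma$. This is precisely what the paper is at pains to avoid having to prove directly. In the proof of Corollary~\ref{c:oneFactorOfFlow} the authors only deduce that $v\circ\mathcal T_\alpha\circ v^{-1}$ is \emph{some} return of the geodesic flow, and then \emph{use} the integral identity $\int_{\Omega_\alpha}\tau_\alpha\,d\mu=\mathrm{vol}_n$ to upgrade this to first return. Moreover, \S~\ref{ss:theEnd} explicitly cites an example from \cite{ArnouxSchmidtCross} of an expansive piecewise M\"obius map for $\mathrm{PSL}_2(\mathbb Z)$ with a nice planar natural extension that is \emph{not} of first-return type, so the property is not automatic even in the best-behaved setting. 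Establishing first-return-type independently for your $\Sigma$ — without invoking the very equality you are trying to prove — is thus the crux, and is not addressed in your sketch. (The infinite $\mu$-mass of $\Omega_1$ near the corner $(1,-1)$ and the cusp excursions are secondary by comparison; the Ambrose--Kakutani identity tolerates an infinite-measure base when the total suspension volume is finite, which it is here.) If you want to pursue your line, you would need either an explicit combinatorial/geometric argument on the fundamental domain for $G_3$ ruling out intermediate crossings — plausibly by relating $\Sigma$ to the modular surface via the index-two inclusion you mention — or you would have to switch to the paper's strategy of computing the integral directly by symmetry and a known value.
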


\begin{proof}    Restricting  (\cite{CKStoolsOfTheTrade}, Proposition~2.4) to our case of $n=3$, one has 
\[\Omega_1 =  (\, [0,1]\times [-1,0] \,) \; \cup \;  (\,[1, 2]\times [-1/2,0] \,).  \]
For $x<1, T_1(x) = A^{-k}C\cdot x$ for some $k \in \mathbb Z$ and in particular,   $\tau_1(x) = -2 \log  x$.   Similarly, for $2 \ge x>1$ we have $\tau_1(x) = -2 \log \vert x-1 \vert $.   Recall that for any $M \in \text{SL}_2(\mathbb R)$ that $\mathcal T_M$ locally preserves $d \mu$.   One easily verifies that   $M = \begin{pmatrix} 1&1\\0&1\end{pmatrix}$ is such that $\mathcal T_M$ sends  $[0,1]\times [-1,0]$ to  $[1, 2]\times [-1/2,0]$, and hence  a change-of-variables calculation gives
\[ \int_{[0,1]\times [-1,0]}  -2 \log  x     \; d\mu = \int_{ [1, 2]\times [-1/2,0]}  -2 \log \vert x-1 \vert   \, d\mu\, . \] 
That is, $ \int_{\Omega_{1}} \tau_{1}(x)\,  d \mu = 2 \int_{[0,1]\times [-1,0]}  -2 \log  x     \; d\mu$.   The following lemma thus implies our result.
\end{proof}

Note that \eqref{e:volare} specializes to $\text{vol}_3 = 2 \pi^2/3$.   
\begin{Lem}\label{l:useMtwoNthreeAlpZero}     We have 
\[  \int_{[0,1]\times [-1,0]}  -2 \log  x     \; d\mu = \pi^2/3. 
\]
\end{Lem}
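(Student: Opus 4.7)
The plan is to evaluate the double integral directly, noting that the factor $-2\log x$ depends only on $x$, while the $y$-integral against $d\mu$ is elementary.

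First I will carry out the inner integral in $y$. For fixed $x\in(0,1)$, the substitution $u=1+xy$ (so $du = x\,dy$) gives
\[
\int_{-1}^{0}\frac{dy}{(1+xy)^2}
=\frac{1}{x}\int_{1-x}^{1}\frac{du}{u^2}
=\frac{1}{x}\left(\,1-\frac{1}{1-x}\,\right)\cdot(-1)
=\frac{1}{1-x}.
\]
Fubini/Tonelli applies (the integrand is nonnegative), so the double integral reduces to the classical one
\[
\int_{[0,1]\times[-1,0]}\!\!\!-2\log x\,d\mu \;=\; \int_{0}^{1}\frac{-2\log x}{1-x}\,dx.
\]

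Second, I will expand the geometric series $\dfrac{1}{1-x}=\sum_{n\ge0}x^{n}$ and interchange sum and integral (again legitimate by Tonelli since each summand $-x^{n}\log x\ge0$ on $(0,1)$), yielding
\[
\int_{0}^{1}\frac{-2\log x}{1-x}\,dx
=2\sum_{n=0}^{\infty}\int_{0}^{1}(-\log x)\,x^{n}\,dx.
\]
A one-step integration by parts (or the substitution $x=e^{-s}$) gives $\int_{0}^{1}(-\log x)x^{n}\,dx=1/(n+1)^{2}$, so the right-hand side equals
\[
2\sum_{n=0}^{\infty}\frac{1}{(n+1)^{2}}=2\,\zeta(2)=\frac{\pi^{2}}{3},
\]
which is precisely the claimed value.

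There is essentially no obstacle: integrability at the two endpoints is easy (near $x=0$ the integrand behaves like $-2\log x$, near $x=1$ it tends to the finite limit $2$ since $-\log x\sim 1-x$), so all interchanges of sum and integral are automatic. The only thing to record is that the numerical value $\zeta(2)=\pi^{2}/6$ is the standard Basel identity, which converts the series back into the closed-form answer $\pi^{2}/3$ matching $\mathrm{vol}_{3}$ in \eqref{e:volare}.
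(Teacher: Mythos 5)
Your proof is correct, and it takes a genuinely different route from the paper. You compute the integral directly: Tonelli reduces the double integral to $\int_0^1 \frac{-2\log x}{1-x}\,dx$ after integrating out $y$, then the geometric series expansion together with $\int_0^1(-\log x)x^n\,dx = (n+1)^{-2}$ gives $2\zeta(2) = \pi^2/3$; each step is routine and the nonnegativity of the integrand justifies all interchanges. The paper instead proceeds indirectly: it observes that $[-1,0]\times[0,1]$ (related to the given domain by the $\mu$-preserving symmetry $(x,y)\mapsto(-x,-y)$) is the natural extension domain of the by-excess continued fraction map (Nakada's $\alpha=0$ case) and then cites \cite{ArnouxSchmidtCross} for the value $\pi^2/3$, which arises there from identifying the integral with the volume of the unit tangent bundle in a geodesic-flow cross section argument. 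Your calculation is more elementary and self-contained — it requires no reference to natural extensions or geodesic flow, only the Basel identity — while the paper's route emphasizes the geometric meaning of the integral, which is the theme of \S\ref{ss:theEnd}. Both yield the same closed form, and in fact your direct computation independently corroborates the appeal to \cite{ArnouxSchmidtCross} made in the paper.
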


\begin{proof}  It is an exercise to show that the by-excess map   on $[-1,0)$ to itself defined by $x \mapsto -1/x - \lfloor  1 -1/x \rfloor$  has as planar natural extension domain exactly   $[-1,0 ]\times [0,1]$.    (Recall that this is the $\alpha = 0$ Nakada-continued fraction, see say \cite{KraaikampSchmidtSteiner}.)     Arguing as in \cite{ArnouxSchmidtCross} shows that $ \int_{[-1,0 ]\times [0,1]}  -2 \log  \vert x  \vert   \; d\mu =  \pi^2/3$.    (Again, see  say \cite{ArnouxCodage}   for related discussion.)  Since $d \mu = (1 + xy)^{-2}\, dx\, dy$,   the measure is preserved under the map $(x,y) \mapsto  (-x, -y)$ and hence a change-of-variables calculation gives our result.
\end{proof}  

\subsection{First  expansive  power maps, Proofs of Theorems~\ref{t:expansiveUisFactorOfFlow}, ~\ref{t:conjConfirmedNis3}}\label{ss:ImpFirst}  We show that under Conjecture~\ref{con:Vol} every one of our maps can be appropriately accelerated so as to define an interval map which is naturally given by a geometric system. 

\subsubsection{Cross sections for geodesic flow: $n=3, \alpha>1/2$}
We now show that the results of the previous subsection imply that each of infinitely many of our maps has its natural extension given by the geodesic flow returning to a cross section in the  unit tangent bundle of the hyperbolic orbifold uniformized by $G_3$.   See, say,  \cite{Manning} for related background.
 \begin{Cor}\label{c:oneFactorOfFlow}    For $n=3$ and each $\alpha > 1/2$ there is a cross section in the unit tangent bundle of $G_3\backslash \mathbb H$ which when equipped with the first return map under geodesic flow gives the natural extension of the  interval map $T_{3, \alpha}$.
\end{Cor}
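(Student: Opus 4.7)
The plan is to obtain Corollary~\ref{c:oneFactorOfFlow} as an immediate consequence of Theorem~\ref{t:expansiveUisFactorOfFlow} and Theorem~\ref{t:conjConfirmedNis3}, once we verify that for $n=3$ and $\alpha > 1/2$ the map $T_{3,\alpha}$ is itself pointwise expansive, so that its first pointwise expansive power $U$ of Definition~\ref{d:firstExpansiveReturn} coincides with $T_{3,\alpha}$; then Theorem~\ref{t:expansiveUisFactorOfFlow} applies with $T = U$ and delivers the claim directly.

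The first step is pointwise expansiveness. For $n=3$ one has $t_3 = 2$, so $\mathbb{I}_\alpha = [2\alpha-2, 2\alpha)$, and each branch of $T_{3,\alpha}$ is $A^k C^l$ with $l \in \{1,2\}$. Because $A$ is a translation, $|(A^kC^l)'(x)| = |(C^l)'(x)|$, with $|C'(x)| = 1/x^2$ and $|(C^2)'(x)| = 1/(x-1)^2$. The boundary between $l=1$ and $l=2$ cylinders is $\mathfrak{b}_\alpha = C^{-1}\cdot\ell_0(\alpha) = 1/(3-2\alpha)$, which lies in $(1/2,1)$ for $\alpha \in (1/2,1)$. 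On the $l=1$ cylinders $x \in [2\alpha-2,\mathfrak{b}_\alpha) \subset (-1,1)$, so $|C'(x)| > 1$ off the measure-zero singularity at $x=0$; on the $l=2$ cylinders $x \in [\mathfrak{b}_\alpha, 2\alpha) \subset (0,2)$, so $|(C^2)'(x)| > 1$ off $x=1$. Thus $T_{3,\alpha}$ is pointwise expansive almost everywhere, and Definition~\ref{d:firstExpansiveReturn} then forces $U = T_{3,\alpha}$.

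The second step is the short check that the Möbius transformations defining $T_{3,\alpha}$ generate $G_3$. For any large $\alpha$ the digit set certainly contains at least two distinct pairs $(k_1,1), (k_2,1)$ and at least one pair of the form $(k,2)$. From $A^{k_1}C$ and $A^{k_2}C$ one recovers $A^{k_1-k_2}$, and combining such powers of $A$ with a single $A^kC$ (respectively $A^kC^2$) one obtains $A$ and then $C$ (and $C^2$). Hence the set of Möbius transformations defining $T_{3,\alpha}$ generates $G_3 = \langle A, C\rangle$, which is the first hypothesis of Theorem~\ref{t:expansiveUisFactorOfFlow}.

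Finally, Theorem~\ref{t:conjConfirmedNis3} gives $h(T_{3,\alpha})\mu(\Omega_{3,\alpha}) = \mathrm{vol}_3$, and both factors are finite by the explicit descriptions of Sections~\ref{s:bigLefties}--\ref{s:largeAlpsRightSide} and by Rohlin's formula (cf. Lemma~\ref{l:integralsMatch}). All hypotheses of Theorem~\ref{t:expansiveUisFactorOfFlow} are thus met with $T = U = T_{3,\alpha}$, producing a cross section in the unit tangent bundle of $G_3\backslash\mathbb{H}$ whose first-return map under the geodesic flow realizes the natural extension of $T_{3,\alpha}$, as asserted. The main obstacle in this plan is Step one: the uniform verification of pointwise expansiveness across all cylinders, and in particular the observation that the threshold $\alpha > 1/2$ is forced by the condition $|\ell_0(\alpha)| < 1$, which is what prevents contraction on the leftmost $l=1$ cylinder and so is the precise range where $U$ collapses to $T_{3,\alpha}$.
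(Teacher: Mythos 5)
Your proof is correct, and the expansiveness computation in Step~1 (derivative $x^{-2}$ on the $l=1$ branches, $(x-1)^{-2}$ on the $l=2$ branches, with the thresholds $\ell_0(\alpha)>-1$ and $r_0(\alpha)<2$ reducing to $\alpha\in(1/2,1)$ once $t=2$) is precisely the calculation the paper makes; once $T_{3,\alpha}$ is expansive, $U=T_{3,\alpha}$ under Definition~\ref{d:firstExpansiveReturn}. The caveat is organizational rather than mathematical: in the paper, Corollary~\ref{c:oneFactorOfFlow} is proved \emph{directly} via Arnoux's transversal --- the map $v\colon\Omega_\alpha\to T^1(G_3\backslash\mathbb H)$ supplies the cross section, expansiveness guarantees that the conjugate of $\mathcal T_\alpha$ is \emph{some} return of the geodesic flow, and the identity $\int_{\Omega_\alpha}\tau_\alpha\,d\mu=\text{vol}_3$ (from the $n=3$ case of Conjecture~\ref{con:Vol}, settled in Lemma~\ref{l:alpOneGivesIt}) forces it to be the \emph{first} return --- and Theorem~\ref{t:expansiveUisFactorOfFlow} is then established \emph{afterward}, with its proof literally beginning ``Arguing as in the proof of Corollary~\ref{c:oneFactorOfFlow}.'' Citing the theorem to prove the corollary therefore reverses the paper's dependency order. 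There is no genuine circularity, since the theorem reuses the corollary's \emph{argument} rather than its \emph{statement}, but your derivation is not self-contained without unpacking the theorem's proof, which is exactly the Arnoux-transversal argument you have elided. Your Step~2 (the M\"obius branches generate $G_3$) is a sensible explicit check that the paper handles implicitly by constructing the cross section over $G_n\backslash\mathbb H$ from the outset, and your closing observation correctly identifies $\alpha>1/2$ as the precise point at which $\ell_0(\alpha)>-1$ starts to hold, i.e., where the leftmost $l=1$ branch stops contracting.
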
    

\begin{proof} [Sketch]  For any $n\ge 3$  and $\alpha\in [0,1]$, Arnoux's transversal as in \cite{ArnouxSchmidtCross,ArnouxSchmidtCommeUneFraction}  allows for the map $v: \Omega_{\alpha} \to T^1(\,G_n\backslash \mathbb H\,)$ given by $(x,y) \mapsto [ M\cdot \hat i]_{G_n}$ where  $M = \begin{pmatrix} x&-1/(1+ xy )\\1& y/(1+ xy )\end{pmatrix}$,  $\hat i$ is the upwards directed unit tangent vector based at $i \in \mathbb H$, $M\cdot \hat i$ is given by the usual action, and  $ [ M\cdot \hat i]_{G_n}$ denotes the projection to the quotient orbifold of the unit tangent vector $M\cdot \hat i$.   Thus, the cross section here is $v(\Omega_{\alpha})$.   From the cited works, the map $v$ is injective up to measure zero.   Furthermore, the map $v\circ \mathcal T_{n, \alpha}\circ v^{-1}$  has $\mu$ as invariant measure. (Note that the cited works have Lebesgue measure as invariant measure, but  $(x,y) \mapsto (x, y/(1+x y)\,)$ pushes forward $\mu$ to Lebesgue measure.  See (\cite{CKStoolsOfTheTrade}, Section~2.3).   Since $\mathcal T_{n, \alpha}$ on $\Omega_{\alpha}$ gives the natural extension for the interval map, so does the conjugate map on the cross section. 

For any $n\ge 3$  and $\alpha\in [0,1]$, the derivative of $T_{n,\alpha}(x)$ equals $x^{-2}$ if $x < \mathfrak b_{\alpha}$ and  $(x-1)^{-2}$ otherwise.   Thus,   $T_{n,\alpha}(x)$ is expansive exactly when both ($i$) $\ell_0(\alpha) > -1$ and  ($ii$) $r_0(\alpha) <2$.   Since $r_0(\alpha) = \ell_0(\alpha)+ t = \alpha t$, these conditions are ($i$) $\alpha >1-1/t$ and ($ii$) $\alpha<2/t$.

We now fix $n=3$, thus $t=2$, and restrict to $1>\alpha >1/2$.  
Here each interval map $T_{\alpha} = T_{3, \alpha}$ is {\em expansive}. It follows, again from the cited works,  that at each $v(x,y)$ the map $v\circ \mathcal T_{\alpha}\circ v^{-1}$ is given by {\em some} return of the positively oriented geodesic flow to the cross section.  Key to this is that there is a geodesic arc connecting  the basepoint of $v(x,y)$ to that of $v\circ \mathcal T_{n, \alpha}(x,y)$, whose length is in fact  $ \tau_{\alpha}(x)$; 
see    (\cite{ArnouxSchmidtCommeUneFraction}, Proposition~4 and its proof).     By definition,  the geodesic segments determined by each $v(x,y)$ and the first return under the flow to the cross section form a flow invariant set. The ergodicity of the flow implies that this set is all of the unit tangent bundle;   the integral over the cross section of the lengths of the geodesic segments defined by the first return is hence equal to the volume of this bundle.   (See,  \cite{Manning, ArnouxSchmidtCross} for details of the measures involved in this.) Since $  \int_{\Omega_{\alpha}}   \tau_{\alpha}(x)\,  d \mu = \text{vol}_3$ holds here, one finds that $v\circ \mathcal T_{\alpha}\circ v^{-1}$ agrees almost everywhere with the {\em first} return of the geodesic flow.  
\end{proof}  

\subsubsection{First expansive power maps and geodesic flow, under the conjecture}
\begin{Def}\label{d:firstExpansiveReturn}  
Let $T$ be a function on  an interval  $I$, with  an invariant probability measure $\nu$ which is equivalent to Lebesque measure. Suppose that $T$ is eventually expansive, thus there is some $r \in \mathbb N$ such that $T^r$ is expansive.    By the Well Ordering Principle of the integers, for each $x\in I$ there is  a least $\ell(x) \in \mathbb N$ such that $T^{\ell(x)}(x)$ is expansive in the sense that the derivative here is greater than 1 in absolute value.    We define the {\em first pointwise expansive power of $T$}  as the map  $U: I\to I$  sending each $x$  to $T^{\ell(x)}(x)$.
\end{Def}
 
Of course, $\ell(x) \le r$ for all $x \in I$ and also $U(x) = T(x)$ for all $x \in E$, where $E$ is the maximal subset of $I$ on which $T$ itself is expansive.     For each $k \le r$, let $E_k \subset I$ be the set on which $\ell(x) = k$.    Of course,  the $E_k$ give a finite partition of $I$.   We have that $|(T^k)'(x)|>1$ for all $x \in E_k$;  
by  the Chain Rule, $|(T^k)'(x)| = \prod_{i=0}^{k-1}\, |T'(\, T^i(x)\,)|$.   The {\em minimality} of $\ell(x)$ hence implies both  $T(E_k)\subseteq \cup_{i=1}^{k-1} \, E_i$ for $k\ge 2$ and $T^{k-1}(E_k) \subset E_1$ for all $k$.

Note that in the setting where $I$ is an interval and $T$ is given piecewise by M\"obius transformations,  then also  $U$ is so given.

\medskip

We now identify the natural extension of the first pointwise expansive power of $T$ as an induced system within the natural extension of $T$.
 \begin{Prop}\label{p:expansiveMapNatIsInducedSys}     Suppose $T$ as above is eventually expansive.  Let now $\mathcal T, \Omega, \mu$ be the usual aspects of the natural extension of the system of $T$, and for each $k$   let $\mathcal E_k \subset \Omega$ be the portion which projects to  $E_k\subset I$.  Set
 \[ \mathcal F = \Omega \setminus \bigcup_{k=2}^{r}\, \bigcup_{j=1}^{k-1}\, \mathcal T^j(\mathcal E_k).\]

 Suppose that there is a full $T$-cylinder contained in $E_1$\,. Then the system on $\mathcal F \subset \Omega$ induced by $\mathcal T$, in other words the first return system of $\mathcal T$ on $\mathcal F$, gives a natural extension for $U$. 
\end{Prop}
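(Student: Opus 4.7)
The plan is to realize $(\mathcal T_{\mathcal F}, \mathcal F, \mu|_{\mathcal F})$, with $\mathcal T_{\mathcal F}$ the first return of $\mathcal T$ to $\mathcal F$, as the natural extension of $U$ via Kakutani's induced-system construction; throughout let $\pi_1$ denote the projection $\Omega \to I$ to the $x$-coordinate. The combinatorial backbone is: for $x \in E_k$ with $k \ge 2$, since $x \notin E_1$ gives $|T'(x)| \le 1$ while $|(T^k)'(x)| > 1$, the chain rule yields $|(T^{k-1})'(Tx)| > 1$, so $\ell(Tx) \le k-1$; iterating, $T^j(E_k) \subseteq \bigcup_{i \le k-j} E_i$ for $0 \le j \le k$, and in particular $T^{k-1}(E_k) \subseteq E_1$.

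Using this, I would verify that the first $\mathcal T$-return to $\mathcal F$ is $\mathcal T^{\ell(x)}$ for every $(x,y) \in \mathcal F$. For $(x,y) \in \mathcal F \cap \mathcal E_k$ with $k \ge 2$, the iterates $\mathcal T^j(x,y) \in \mathcal T^j(\mathcal E_k)$ lie in the excised set for $1 \le j \le k-1$. To show $\mathcal T^{\ell(x)}(x,y) \in \mathcal F$, I would argue by contradiction: suppose $\mathcal T^{\ell(x)}(x,y) = \mathcal T^{j'}(x',y')$ with $(x',y') \in \mathcal E_{k'}$, $k' \ge 2$, and $1 \le j' \le k'-1$. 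Injectivity of $\mathcal T$ on $\Omega$ yields two cases. If $\ell(x) \ge j'$, then $T^{\ell(x) - j'}(x) = x' \in E_{k'}$, and the combinatorial lemma forces $k' \le j'$, contradicting $k' \ge j'+1$. If $\ell(x) < j'$, then $(x,y) = \mathcal T^{j'-\ell(x)}(x',y')$ lies in the excised set (since $1 \le j' - \ell(x) \le k'-1$), contradicting $(x,y) \in \mathcal F$. Hence $\mathcal T_{\mathcal F}(x,y) = \mathcal T^{\ell(x)}(x,y)$, and $\pi_1 \circ \mathcal T_{\mathcal F} = U \circ \pi_1$ follows.

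The hypothesis that $E_1$ contains a full $T$-cylinder $\Delta$ enters to ensure $\mu(\mathcal F) > 0$: by the same analysis with $k = 1$, no point of $\pi_1^{-1}(\Delta) \subseteq \mathcal E_1$ is excised, so $\pi_1^{-1}(\Delta) \subseteq \mathcal F$; finiteness $\mu(\mathcal F) \le \mu(\Omega) < \infty$ is immediate. To identify the induced system with the natural extension of $U$, I would invoke Rokhlin's characterization: an invertible measure-preserving system factoring onto $U$ via $\pi_1$ is the natural extension precisely when the $\sigma$-algebra generated by all $\mathcal T_{\mathcal F}^{-n}\pi_1^{-1}(B)$, for $n \ge 0$ and $B$ Borel in $I$, coincides with the Borel $\sigma$-algebra on $\mathcal F$. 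This reduces to the observation that the backward $U$-orbit and the backward $T$-orbit of any point determine one another: between successive $U$-preimages lies a deterministic string of $T$-iterates obtained by iterating $T$ forward from the earlier $U$-preimage for $\ell(\cdot)$ steps. Combined with the natural-extension property of $(\mathcal T, \Omega)$ for $T$, this yields the required equality of $\sigma$-algebras.

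The principal obstacle is the forward-stability claim $\mathcal T^{\ell(x)}(x,y) \in \mathcal F$ in the second step: the case $\ell(x) \ge j'$ is cleanly dispatched by the combinatorial lemma, but the remaining case $\ell(x) < j'$ hinges on verifying that $1 \le j' - \ell(x) \le k' - 1$ lies in the excised range, and this delicate inequality is precisely what the definition of $\mathcal F$ in the proposition is crafted to accommodate.
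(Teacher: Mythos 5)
Your first‐return computation is sound and follows essentially the paper's own route: the chain‐rule observation $T^j(E_k)\subseteq\bigcup_{i\le k-j}E_i$ (which should be restricted to $0\le j<k$), the two‐case contradiction argument via injectivity of $\mathcal T$, and the tower realization of the induced system as an extension of $U$. The invocation of Rokhlin's criterion at the end matches, in substance, the paper's appeal to the minimality inherited from the ambient natural extension.

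There is, however, a genuine gap in the paragraph that uses the full‐cylinder hypothesis. The claim that \emph{no point of $\pi_1^{-1}(\Delta)$ is excised}, so that $\pi_1^{-1}(\Delta)\subseteq\mathcal F$, is false in general. Take $k\ge 2$ with $E_k\ne\emptyset$ and $(x',y')\in\mathcal E_k$ with $T(x')\in\Delta$; since $T(E_k)\subseteq\bigcup_{i\le k-1}E_i$ and $\Delta\subseteq E_1$, nothing forbids $T(E_k)$ from meeting $\Delta$. Then $\mathcal T(x',y')\in\mathcal T(\mathcal E_k)$ is an excised point lying over $\Delta$. The first coordinate $x$ lying in $E_1$ constrains the \emph{forward} iterates of $x$, but the excision condition $(x,y)\in\mathcal T^j(\mathcal E_k)$ is a constraint on the \emph{backward} $\mathcal T$-orbit, and membership of $x$ in $\Delta$ says nothing about where its $\mathcal T^{-j}$-preimages land. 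So ``the same analysis with $k=1$'' does not transfer.

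The correct use of the full cylinder is the one the paper makes: consider the set of $(x,y)\in\Omega$ for which $\mathcal T^{-i}(x,y)\in\mathcal E_1$ for $1\le i\le r$. Such a point is never excised, because excision at level $j$ would place $\mathcal T^{-j}(x,y)$ in some $\mathcal E_k$ with $k>j\ge 1$, while by construction it is in $\mathcal E_1$. This set has positive measure precisely because $\Delta$ is full (so the cylinder with $r$ consecutive $\Delta$-digits in the past is realizable and of positive measure over essentially every $x$), and for the same reason it projects onto all of $I$ up to a null set; note that even had $\pi_1^{-1}(\Delta)\subseteq\mathcal F$ held, it would only give surjectivity onto $\Delta$, not onto $I$, so your argument would still leave a hole in the factor‐map requirement. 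With this substitution, the rest of your proof goes through.
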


 \begin{proof}  We can and do assume that $r>1$.    
  
 Since there is a full cylinder contained in $E_1$, there is positive measure subset of $\Omega$ consisting of points in $\mathcal E_1$ whose $\mathcal T^{-1}$-orbit segments of length $r$ remain in $\mathcal E_1$.  Since this set is contained in $\mathcal F$, we find that $\mathcal F$ has positive measure and furthermore  $\mathcal F \subset \Omega$ projects surjectively to $I$.   Let $\mathcal R: \mathcal F \to \mathcal F$ be the first return map of $\mathcal T$.   Since the $\mathcal E_k$ give a partition of $\Omega$ and $\mathcal T$ is bijective on $\Omega$, the fact that one deletes at least $\mathcal T^{k-1}(\mathcal E_k)$ for $k\ge 2$ shows that $\mathcal F \subset \mathcal T(\mathcal E_1)$.
  
We now show that $\mathcal R$ on each $\mathcal F \cap \mathcal E_k, k \ge 1$ is given by $\mathcal T^k$.    Given the definition of the set $\mathcal F$, certainly within this subset the first return map $\mathcal R$ is given by $\mathcal T^j$ with $j\ge k$.  It suffices to show that $\mathcal T^k(\mathcal F \cap \mathcal E_k) \subset \mathcal F$.    

Consider first $ k \ge 2$. We thus show that for each $2\le i \le r$ and $1\le j <i$ we have 
$\mathcal T^k(\mathcal F \cap \mathcal E_k) \cap \mathcal T^j(\mathcal E_i) = \emptyset$.  We discern three cases.  If $k<j$ then $ \mathcal F  \cap \mathcal T^{j-k}(\mathcal E_i) = \emptyset$ implies the result.  Suppose  $j<k$; if  $x \in T^{k-j}(E_k)$ then  we must have that $|(T^j)'(x)|>1$ but since $j<i$ this implies that $x \notin E_i$.   Should  $j=k$  then  since  the $\mathcal T^j(\mathcal E_i), 1\le i \le r$ partition $\Omega$; we find that $i=k$ but $j<i=k$ allows the previous argument to apply. 
Therefore,   $\mathcal R$ on $\mathcal F \cap \mathcal E_k, k \ge 2$ is given by $\mathcal T^k$.  

Finally, if $k=1$ there are two cases.  When $j=1$, since $\mathcal E_1 \cap \mathcal E_i = \emptyset$ the result holds.    Otherwise, $1<j<i$ and $\mathcal F  \cap \mathcal T^{j-1}(\mathcal E_i) = \emptyset$ implies the result.   Thus, we have shown that $\mathcal R$ on each $\mathcal F \cap \mathcal E_k, k \ge 1$ is given by $\mathcal T^k$.

It is now clear that the system of $\mathcal R$ on $\mathcal F$ is an extension of the system of $U$ on $I$.  (Note that $\mu$ and $\nu$ remain the respective invariant measures, up to normalization.)  Since $\mathcal T$ on $\Omega$ gives the natural extension of $T$ on $I$,  the induced system on $\mathcal F$ inherits in particular the property of being a minimal extension.
\end{proof}

 We can now show that under certain conditions, an interval map can be appropriately accelerated so as to define an interval map which is naturally given by a geometric system. 
\begin{proof} [{\bf Proof of Theorem~\ref{t:expansiveUisFactorOfFlow}}] Arguing as in the proof of Corollary~\ref{c:oneFactorOfFlow},  we need only show that $h(U)\, \mu(\mathcal F) = h(T)\, \mu(\Omega)$. But, this is immediate from Abramov's formula.
\end{proof}

Combining the above results and arguments yields Theorem~\ref{t:conjConfirmedNis3}.
 
\subsection{Computational confirmation for $4\le n \le 12$}\label{ss:conjComputation4to12}   In view of Subsection~\ref{ss:RockTheVolume},  we discuss $\int_{\Omega_{0}} \tau_{0}(x)\,  d \mu$ for fixed $n$.
\smallskip

Let  $s: (x,y) \mapsto (-y, -x)$  
  and note that  for any reasonable region $\mathscr R$ in the plane   one has
\[  \int_{\mathscr R}\, -2 \log|xy| d\mu =  \int_{\mathscr R}\, -2 \log|x| d\mu  +  \int_{s(\mathscr R)}\, -2 \log|x| d\mu.\]
Now, for each $n$, the domain  $\Omega_0$ is symmetric under $s$, see \cite{CaltaSchmidt}.    We could thus integrate $ -2 \log|xy|$ with respect to  $\mu$ over half of the domain; indeed, it seems easier for Mathematica to evaluate integrals of this type.    A further improvement is reached by noting that   each $\Omega_0$ contains the square $\mathcal S = [-1,0]\times [0,1]$; using Lemma~\ref{l:useMtwoNthreeAlpZero}, the integral of  $ -2 \log|x| d\mu$ over $\mathcal S$ equals $\pi^2/3$.  
We thus let $\mathcal I(n)$ denote the integration of 
$ -2 \log|xy| d\mu$  over the portion of $\Omega_0$ projecting to  $x$-values less than $-1$.

Using Mathematica, we numerically approximated the values 
$Z(n) = \text{vol}_n - (\,\mathcal I(n)+ \pi^2/3)$ for $4\le n \le 12$.  Clearly, each $Z(n)$ is a real number, and our conjecture states that each $Z(n)$ equals zero.  The approximated values of $Z(n), 4 \le n \le 6$ were real numbers of absolute value less than $10^{-8}$.   The approximated values of $Z(n), 7 \le n \le 11$ were complex (!)  numbers whose real parts were of absolute value less than   $10^{-8}$  and imaginary parts less than $10^{-12}$.  The same code run for $n=12$ gave a complex number of large norm; however, upon using virtually the same code  to separately  recalculate one of constituent integrals, the reported value of $Z(12)$ was a complex number of  real part less than $10^{-4}$ and imaginary part less than $10^{-6}$.   

We have the impression that as $n$ increases the computational difficulty of evaluating $\mathcal I(n)$ increases due to both the increasing number of constituent integrals and also the increasing algebraic degree of the numbers defining the limits of integration.   

\subsection{Remarks on the conjecture}  
 In the setting of {\em expansive} piecewise M\"obius interval maps appropriately given by elements of some Fuchsian group that uniformizes a surface (or orbifold) of finite hyperbolic area,  one can reasonably expect a result as conjectured above.   Indeed,     there is a long history of these matters,  going back at least to E.~ Artin's \cite{Art}   coding by way of regular continued fractions of the geodesic flow on the unit tangent bundle of the hyperbolic orbifold uniformized by the modular group; some 60 years later   Series \cite{Series} determined a cross section to this flow for which the first return map  has as a factor 
 the interval map defining these continued fractions.
For more recent work see  \cite{AF, KatokUgarcoviciStructure}    and   in particular the overview \cite{KatokUgarcoviciBullAMS}.   We ourselves learned of these matters from  Arnoux,  see \cite{ArnouxCodage} and also the discussion of `Arnoux's transversal' in \cite{ArnouxSchmidtCross}.      

Some caution is necessary.  First, an example of an expansive piecewise M\"obius interval map, given by elements of the modular group $\text{PSL}(2, \mathbb Z)$, with a nice planar natural extension but which is not of ``first return type" is given in \cite{ArnouxSchmidtCross}.   To be of first return type, see \cite{ArnouxSchmidtCommeUneFraction}, a map must be such that the induced function on Arnoux's cross section in the unit tangent bundle of the surface/orbifold agrees with the first return map of the geodesic flow.   Under this property, again see \cite{ArnouxSchmidtCommeUneFraction}, one has an equality of the form of our conjectures, but in the setting of expansive maps.

Various approaches to proving the conjectures naturally present themselves.    First, by direct integration.   The integrals which appear are naturally related to dilogarithmic functions and hence are notoriously difficult to evaluate exactly.    See  \cite{FriedSymDynTri} for  discussion of the appearance of the dilogarithm in a related setting.     Secondly, one could attempt to prove that each of our continued fractions gives a coding for the geodesic flow on the corresponding orbifold and then argue as in \cite{KatokUgarcoviciStructure}.    Closely related to this, one could follow \cite{MayerStroemberg08} to determine a natural extension for the interval maps as a cross section to the geodesic flow consisting of  unit tangent vectors (mainly) along the boundary of a particular fundamental domain for the Fuchsian group in consideration.   With this in hand,   one could hope to   use Bonahon's \cite{Bonahon} geodesic current formula as in \cite{AbramsKURigidandFlexible} to obtain the integral value.     Thirdly,  given any of our explicit planar natural extensions, Arnoux's transversal provides  an easily determined cross section. When the cross section is lifted to the upper half-plane,  each $(x,y)$ in the planar natural extension is associated to an inward pointing unit tangent vector on a horocycle tangent to $x$.   One could hope to determine if the interval map is of first return type by consideration of the flow in this model.      We would very much like to see a method to pass from this cross section to one consisting of unit tangent vectors along the boundary of some fundamental domain for the Fuchsian group at hand.

\end{document}